\documentclass[fontsize=11pt]{amsart}

\newcounter{defcounter}
\setcounter{defcounter}{0}

\numberwithin{defcounter}{section}
\setcounter{tocdepth}{1}

\numberwithin{equation}{section}

\usepackage{appendix}
\usepackage{amsmath}
\usepackage{amsthm}
\usepackage{amssymb}
\usepackage{calligra}
\usepackage{mathtools}
\usepackage{mathrsfs}
\usepackage{physics}
\usepackage[margin=1in]{geometry}
\usepackage{scrextend}
\usepackage{tikz-cd}
\usepackage[shortlabels]{enumitem}
\usepackage{colonequals}
\usepackage{hyperref}
\usepackage[all]{xy}
\usepackage{pdfsync}
\usepackage[mathscr]{euscript}
\usepackage[notcolon, notquote]{hanging}
\usepackage{verbatim}
\linespread{1.1}
\usepackage{tikz}
\usepackage{stackengine}
\usepackage{adjustbox}
\usepackage[nopostdot,nogroupskip,sort=use]{glossaries}


\renewcommand{\P}{\mathbb{P}}
\newcommand{\cI}{\mathcal{I}}

\renewcommand{\deg}[1]{\textnormal{deg}({#1})}

\newcommand{\lra}{\longrightarrow}

\newcommand{\OO}{\mathcal{O}}

\DeclareMathOperator{\bl}{\textnormal{bl}}

\DeclareMathOperator{\res}{\textnormal{res}}

\DeclareMathOperator{\pr}{\textnormal{pr}}

\DeclareMathOperator{\Sym}{\textnormal{Sym}}

\DeclareMathOperator{\sm}{\textnormal{sm}}
\DeclareMathOperator{\Gor}{\textnormal{Gor}}
\DeclareMathOperator{\Sing}{\textnormal{Sing}}
\let\ker\relax
\DeclareMathOperator{\ker}{\textnormal{ker}}

\DeclareMathOperator{\SHom}{\mathcal{H}\hspace*{-1.5pt}\textit{om}}

\DeclareMathOperator{\SExt}{\mathcal{E}\hspace*{-1.5pt}\textit{xt}}
\DeclareMathOperator{\STor}{\mathcal{T}\hspace*{-2.5pt}\textit{or}}

\setlength{\footskip}{35pt}
\linespread{1.0}
\setlength{\textwidth}{6.85in}
\setlength{\topmargin} {-.2in}
\setlength{\evensidemargin}{-.2 in} 
\setlength{\oddsidemargin}{-.2 in}
\setlength{\footskip}{.3in}
\setlength{\headheight}{.3 in}
\setlength{\textheight}{8.6  in}
\setlength{\parskip}{.15in minus .08in} 
\setlength{\parindent}{.3 in}

\makeatletter
\renewenvironment{proof}[1][\proofname]{\par
  \pushQED{\qed}%
  \normalfont
  \topsep-5\p@\@plus6\p@\relax 
  \trivlist
  \item[\hskip\labelsep
        \upshape
    \emph{#1}\@addpunct{.}]\ignorespaces
}{%
  \popQED\endtrivlist\@endpefalse
}
\makeatother

\theoremstyle{plain}
\newtheoremstyle{mytheoremstyle}{5pt}{-5pt}{\itshape}{}{\bfseries}{.}{.5em}{} 
\theoremstyle{mytheoremstyle}
\newtheorem{theorem}{Theorem}[section]

\newtheorem{theoremalpha}{Theorem}

\theoremstyle{definition}
\newtheoremstyle{mydefinitionstyle}{5pt}{-5pt}{}{}{\bfseries}{.}{.5em}{}
\theoremstyle{mydefinitionstyle}
\newtheorem{definition}[theorem]{Definition}

\newtheorem{corollary}[theorem]{Corollary}

\newtheorem{remark}[theorem]{Remark}

\newtheorem{proposition}[theorem]{Proposition}

\newtheorem{lemma}[theorem]{Lemma}

\newtheorem{question}[theorem]{Question}

\newtheorem{corollaryalpha}[theoremalpha]{Corollary}

\tikzset{
  symbol/.style={
    draw=none,
    every to/.append style={
      edge node={node [sloped, allow upside down, auto=false]{$#1$}}}
  }
}
\tikzcdset{scale cd/.style={every label/.append style={scale=#1},
    cells={nodes={scale=#1}}}}

\author{Doyoung Choi}
\address{Department of Mathematical Sciences, KAIST, Daejeon 34141, Republic of Korea}
\email{cdy4019@kaist.ac.kr}


\author{Justin Lacini}
\address{Department of Mathematics, Princeton University, Princeton, New Jersey  08544}
\email{{\tt jlacini@princeton.edu}}

\author{Jinhyung Park}
\address{Department of Mathematical Sciences, KAIST, Daejeon 34141, Republic of Korea}
\email{parkjh13@kaist.ac.kr}


\author{John Sheridan}
\address{Department of Mathematics, Princeton University, Princeton, New Jersey  08544}
\email{{\tt john.sheridan@princeton.edu}}

\newglossaryentry{mathcalZk}
{
        name=\ensuremath{\mathcal{Z}_k},
        description={Universal family of length-$k$ schemes,}
}
\newglossaryentry{tilde-mathcalZk}
{
        name=\ensuremath{\widetilde{\mathcal{Z}}_k},
        description={Fiber product of $\pr_2 : \mathcal{Z}_k \lra X_*^{[k]}$ and $\widetilde{q}_k : \widetilde{X}_*^k \lra X_*^{[k]}$,}
}
\newglossaryentry{Deltak}
{
        name=\ensuremath{\Delta_k},
        description={Union of all diagonals of $X^k$,}
}
\newglossaryentry{tautilde}
{
        name=\ensuremath{\widetilde{\tau}},
        description={Base change of $\tau$ via $\pi_m$,}
}
\newglossaryentry{r-tilde}
{
        name=\ensuremath{\widetilde{\rho}},
        description={Base change of $\rho$ via $\pi_m$,}
}
\newglossaryentry{Sigma-k}
{
        name=\ensuremath{\Sigma_k},
        description={Variety of secant $k$-planes relative to $L$,}
}
\newglossaryentry{Hilb-k}
{
        name=\ensuremath{X^{[k]}},
        description={Hilbert scheme of $0$-dimensional schemes of length $k$,}
}
\newglossaryentry{Hilb-k-sm}
{
        name=\ensuremath{X_{\operatorname{sm}}^{[k]}},
        description={Smoothable component of the Hilbert scheme of $0$-dimensional schemes of length $k$,}
}
\newglossaryentry{Hilb-k-gor}
{
        name=\ensuremath{X_{\operatorname{Gor}}^{[k]}},
        description={Hilbert scheme of $0$-dimensional Gorenstein schemes of length $k$,}
}

\newglossaryentry{Hilb-k-sm-gor}
{
        name=\ensuremath{X_{\operatorname{sm},\operatorname{Gor}}^{[k]}},
        description={Smoothable component of the Hilbert scheme of $0$-dimensional Gorenstein schemes of length $k$,}
}
\newglossaryentry{Hilb-k-lci}
{
        name=\ensuremath{X_{\operatorname{lci}} ^{[k]}},
        description={Hilbert scheme of $0$-dimensional local complete intersection schemes of length $k$,}
}
\newglossaryentry{Hilb-k-*}
{
        name=\ensuremath{X_* ^{[k]}},
        description={Hilbert scheme of $0$-dimensional schemes of length $k$ and support of cardinality at least $k-1$,}
}
\newglossaryentry{Hilbert--Chow}
{
        name=\ensuremath{h},
        description={Hilbert--Chow morphism,}
}
\newglossaryentry{HC-exc}
{
        name=\ensuremath{E_k},
        description={Exceptional divisor of the Hilbert--Chow morphism,}
}
\newglossaryentry{tilde-h-k}
{
        name=\ensuremath{\widetilde{h}},
        description={Base-change of $h$ along $q$,}
}
\newglossaryentry{tilde-q-k}
{
        name=\ensuremath{\widetilde{q}},
        description={Base-change of $q$ along $h$,}
}
\newglossaryentry{tilde-X-*-k}
{
        name=\ensuremath{\widetilde{X}_*^k},
        description={Blow up of $X_* ^k$ along $\Delta_k$,}
}
\newglossaryentry{X-*-k}
{
        name=\ensuremath{X_*^k},
        description={Set $X^k \setminus \Delta_k$,}
}
\newglossaryentry{X-k}
{
        name=\ensuremath{X_k},
        description={Isospectral Hilbert scheme,}
}
\newglossaryentry{Sym-X-*-k}
{
        name=\ensuremath{\operatorname{Sym}_*^k(X)},
        description={Open subset of $\operatorname{Sym}^k(X)$ consisting of $0$-cycles supported at least at $k-1$ points,}
}
\newglossaryentry{Gamma-k}
{
        name=\ensuremath{\Gamma_k},
        description={Exceptional divisor of $\widetilde{h}$,}
}
\newglossaryentry{quotient-to-sym}
{
        name=\ensuremath{q},
        description={Quotient morphism $X^k \lra \operatorname{Sym}^k(X)$,}
}
\newglossaryentry{delta-k}
{
        name=\ensuremath{\delta_k},
        description={Vandermonde line bundle on the Hilbert scheme of points,}
}
\newglossaryentry{S-k}
{
        name=\ensuremath{S_{k,L}},
        description={Line bundle descending from $L^{\boxtimes k}$ to $\operatorname{Sym}^k(X)$,}
}
\newglossaryentry{T-k}
{
        name=\ensuremath{T_{k,L}},
        description={Pullback of $S_{k,L}$ via the Hilbert--Chow morphism,}
}
\newglossaryentry{D-k}
{
        name=\ensuremath{D_k},
        description={Differential map $\mathcal{I}_{\mathcal{Z}_k}/ \mathcal{I}_{\mathcal{Z}_k}^2\lra \operatorname{pr}_2 ^* \Omega_{X^{[k]}}^1 \vert_{\mathcal{Z}_k}$,}
}
\newglossaryentry{Nested-Hilb}
{
        name=\ensuremath{X^{[m,k]}},
        description={Nested Hilbert scheme of $m$-subschemes of $k$-schemes,}
}
\newglossaryentry{tau-m-k}
{
        name=\ensuremath{\tau},
        description={Morphism $X^{[m,k]}\lra X^{[m]}$,}
}
\newglossaryentry{a-k}
{
        name=\ensuremath{A_{k,L}},
        description={Line bundle $N_{k,L}\otimes \delta_{k}^{-1}$,}
}
\newglossaryentry{a}
{
        name=\ensuremath{A_L},
        description={Line bundle $N_L\otimes \delta^{-1}$,}
}
\newglossaryentry{r-m-k}
{
        name=\ensuremath{\rho},
        description={Morphism $X^{[m,k]}\lra X^{[k]}$,}
}
\newglossaryentry{res}
{
       name=\ensuremath{\operatorname{res}},
        description={Morphism $X^{[k-1,k]}\lra X$,}
}
\newglossaryentry{relative-family-small}
{
        name=\ensuremath{\mathcal{V}_{m,k}},
        description={Pullback of $\mathcal{Z}_m$ via $X^{[m,k]}\lra X^{[m]}$,}
}
\newglossaryentry{relative-family-big}
{
        name=\ensuremath{\mathcal{W}_{m,k}},
        description={Pullback of $\mathcal{Z}_k$ via $X^{[m,k]}\lra X^{[k]}$,}
}
\newglossaryentry{blow-up-of-universal-family}
{
        name=\ensuremath{\operatorname{bl}},
        description={Morphism $\operatorname{Bl}_{\mathcal{Z}_{k-1}}X\times X^{[k-1]}\lra X\times X^{[k-1]}$ --- it may be identified with $\operatorname{res}\times \tau$,}
}
\newglossaryentry{exceptional-divisor-F}
{
        name=\ensuremath{F_{k-1}},
        description={Exceptional divisor of $\operatorname{bl}_k$,}
}
\newglossaryentry{sigma-k}
{
        name=\ensuremath{\sigma_k},
        description={Secant variety of $k$-secant planes relative to $L$,}
}
\newglossaryentry{sigma-k-X-L}
{
        name=\ensuremath{\sigma_k(X,L)},
        description={Secant variety of $k$-secant planes relative to $L$,}
}
\newglossaryentry{sigma-m-k}
{
        name=\ensuremath{\sigma_{m,k}},
        description={Relative secant variety,}
}
\newglossaryentry{kappa-m-k}
{
        name=\ensuremath{\kappa_{m,k}},
        description={Relative cactus scheme,}
}
\newglossaryentry{secant-bundle}
{
        name=\ensuremath{B^k},
        description={Bundle of $k$-secant planes $\mathbb{P}(E_{k,L})$,}
}
\newglossaryentry{pi-k}
{
        name=\ensuremath{\pi_k},
        description={Morphism $B^k\lra X^{[k]}$,}
}
\newglossaryentry{relative-secant-bundle}
{
        name=\ensuremath{B^{m,k}},
        description={Relative secant bundle of $m$-secant planes contained in $k$-secant planes over $X^{[m,k]}$,}
}
\newglossaryentry{alpha-k}
{
        name=\ensuremath{\alpha_k},
        description={Morphism $B^k\lra \sigma_k$,}
}
\newglossaryentry{alpha-m-k}
{
        name=\ensuremath{\alpha_{m,k}},
        description={Morphism $B^{m,k}\lra \sigma_{m,k}$,}
}
\newglossaryentry{cactus-k}
{
        name=\ensuremath{\kappa_k},
        description={Cactus scheme of $X$ relative to $L$,}
}
\newglossaryentry{u-k}
{
        name=\ensuremath{U^k},
        description={Open subset $B^k\setminus \sigma_{k-1,k}$,}
}
\newglossaryentry{u-m-k}
{
        name=\ensuremath{U^{m,k}},
        description={Pullback of $U^m$ via $\tau$,}
}
\newglossaryentry{e-k}
{
        name=\ensuremath{e_{k}},
        description={Morphism $\pi_k^* \operatorname{pr}_{2,*}(\mathcal{I}_{\mathcal{Z}_k}/\mathcal{I}_{\mathcal{Z}_k}^2)\otimes \mathcal{O}_{B^k}(-1)\lra \pi_k^* \Omega_{X^{[k]}}$,}
}
\newglossaryentry{E-k-L}
{
        name=\ensuremath{E_{k,L}},
        description={Sheaf of $k$-secant planes (associated to the line bundle $L$) on $X^{[k]}$,}
}
\newglossaryentry{E-m-k-L}
{
        name=\ensuremath{E_{m,k,L}},
        description={Relative sheaf of $m$-secant planes (associated to the line bundle $L$) on $X^{[m,k]}$,}
}
\newglossaryentry{M-k-L}
{
        name=\ensuremath{M_{k,L}},
        description={Syzygy bundle of $E_{k,L}$,}
}
\newglossaryentry{M-m-k-L}
{
        name=\ensuremath{M_{m,k,L}},
        description={Syzygy bundle of $E_{m,k,L}$,}
}
\newglossaryentry{N-k-L}
{
        name=\ensuremath{N_{k,L}},
        description={Determinant line bundle $\operatorname{det}(E_{k,L})$,}
}
\newglossaryentry{Theta-k}
{
        name=\ensuremath{\Theta_k},
        description={Incidence subscheme $\{(x,\eta)\vert x \in \operatorname{Supp }\eta\}$ in $X\times \operatorname{Sym}^k(X)$,}
}
\newglossaryentry{con-x-y}
{
        name=\ensuremath{\mathcal{C}_{X/Y}},
        description={Conormal sheaf (of $X$ in $Y$),}
}
\newglossaryentry{gamma-k-L}
{
        name=\ensuremath{\gamma_{k}},
        description={Composition of homomorphisms $\mathcal{C}_{B^k/\mathbb{P}^r\times X^{[k]}}\lra i_k^*\Omega_{\mathbb{P}^r\times X^{[k]}}^1$ and $i_k^*\Omega_{\mathbb{P}^r\times X^{[k]}}^1 \lra \pi_k^*\Omega_{X^{[k]}}^1$,}
}
\newglossaryentry{i-k}
{
        name=\ensuremath{i_k},
        description={Morphism $B^k \lra \mathbb{P}H^0(X,L)\times X^{[k]}$,}
}
\newglossaryentry{Q-m-k}
{
        name=\ensuremath{Q_{m,k}},
        description={Relative cotangent sheaf of $r : X^{[m,k]}\lra X^{[k]}$,}
}
\newglossaryentry{i-m-k}
{
        name=\ensuremath{i_{m,k}},
        description={Morphism $B^{m,k} \lra P^{m,k}$,}
}
\newglossaryentry{gamma-m-k-L}
{
        name=\ensuremath{\gamma_{m,k}},
        description={Composition of homomorphisms $\mathcal{C}_{B^{m,k}/P^{m,k}}\lra i_{m,k}^*\Omega_{P^{m,k}}^1$ and $i_{m,k}^*\Omega_{P^{m,k}}^1 \lra \pi_{m,k}^*Q_{m,k}$,}
}
\newglossaryentry{z-k}
{
        name=\ensuremath{Z_k},
        description={Exceptional divisor of $\alpha_k$,}
}
\newglossaryentry{bl-j-k}
{
        name=\ensuremath{\operatorname{bl}_k ^j},
        description={Morphism $X(j,k)\lra X^j \times X^{[k]}$ blowing up the union of the universal families,}
}
\newglossaryentry{x-j-k}
{
        name=\ensuremath{X(j,k)},
        description={Blow up variety of $X^j\times X^{[k]}$ along the union of the universal families,}
}
\newglossaryentry{pi-m-k}
{
        name=\ensuremath{\pi_{m,k}},
        description={Morphism $B^{m,k}\lra X^{[m,k]}$,}
}
\newglossaryentry{dim-k}
{
        name=\ensuremath{d_k},
        description={dimension of $B^k$,}
}
\newglossaryentry{larger-relative-secant-bundle}
{
        name=\ensuremath{P^{m,k}},
        description={Relative secant bundle of $k$-secant planes over $X^{[m,k]}$,}
}
\newglossaryentry{p-m-k}
{
        name=\ensuremath{p_{m,k}},
        description={Structure morphism of $P^{m,k}$ as a projective bundle on $X^{[m,k]}$,}
}
\newglossaryentry{r-k}
{
        name=\ensuremath{r_k},
        description={Rank of $M_{k,L}$,}
}

\newglossaryentry{h-k}
{
        name=\ensuremath{H_k},
        description={Tautological divisor on $B^k$,}
}

\setglossarystyle{tree}

\makeglossaries

\begin{document}

\title[Singularities and syzygies of secant varieties]{Singularities and syzygies of secant varieties of\\ smooth projective varieties}

\begin{abstract}
We study the higher secant varieties of a smooth projective variety embedded in projective space. We prove that when the variety is a surface and the embedding line bundle is sufficiently positive, these varieties are normal with Du Bois singularities and the syzygies of their defining ideals are linear to the expected order. 
We show that the cohomology of the structure sheaf of the surface completely determines whether the singularities of its secant varieties are Cohen--Macaulay or rational.
We also prove analogous results when the dimension of the original variety is higher and the secant order is low, and by contrast we prove a result that strongly implies these statements do not generalize to higher dimensional varieties when the secant order is high. 
Finally, we deduce a complementary result characterizing the ideal of secant varieties of a surface in terms of the symbolic powers of the ideal of the surface itself, and we include a theorem concerning the weight one syzygies of an embedded surface --- analogous to the gonality conjecture for curves --- which we discovered as a natural application of our techniques. 
\end{abstract}

\maketitle

\vspace{-30pt}\section*{Introduction}

Let $X \subseteq \mathbb{P}^r$ be a linearly normal, non-degenerate embedding of a smooth projective variety of dimension $n$. We study both the singularities and the syzygies of the higher secant varieties of $X$. We mainly consider the case when $L := \OO_X(1)$ is sufficiently positive (see item (7) in the section ``Notation and Conventions" for the precise notion). Throughout the paper, we work over the field $\mathbb{C}$ of complex numbers.

Classically, the secant variety $\Sigma(X)$ of $X \subseteq \mathbb{P}^r$ is defined to be the closure of the union of all lines in $\mathbb{P}^r$ joining a distinct pair of points on $X$.  More generally the $k^{\text{th}}$ secant variety $\Sigma_k = \Sigma_k(X,L)$ of $X \subseteq \mathbb{P}^r$ is defined to be the closure of the union of all $k$-planes in $\mathbb{P}^r$ joining $k+1$ distinct points on $X$, so $X = \Sigma_0$ and $\Sigma(X) = \Sigma_1$. 
These higher secant varieties are easily seen to be irreducible and under relatively weak positivity assumptions their dimensions are as expected (see \cite{Griffiths.Harris.79} and Zak's Theorems \cite{Zak.93}), but a fundamental question is whether they have mild singularities. Ullery \cite{Ullery.16} proved that $\Sigma_1$ is normal when $L \cong \omega_X\otimes A^m \otimes P$ for $A$ very ample, $P$ nef, and $m \geq 2n+2$. With the same assumptions on $L$, Chou and Song \cite{Chou.Song.18} proved that $\Sigma_1$ has Du Bois singularities (a notion generalizing rational singularities) which are in fact rational precisely when $\OO_X$ has trivial higher cohomology. More recently Ein, Niu, and Park, the third author of the present paper, showed in \cite{Ein.Niu.Park.20} that if $X = C$ is a curve of genus $g$ and $\operatorname{deg}(L) \geq 2g + 2k + 1$, then $\Sigma_k$ has normal Du Bois singularities.

In this paper, we develop a new and robust framework to study higher secant varieties of higher dimensional varieties. Our results include asymptotic analogues of Ullery, Chou--Song and Ein--Niu--Park: that $\Sigma_k$ is normal with mild singularities when either $n$ or $k$ is small and $L$ is sufficiently positive. However our results point strongly towards a non-normality result when both $n$ and $k$ are large.

\begin{theoremalpha}\label{theoremA}\textnormal{(Singularity Theorem)}
Fix a positive integer $k$ and assume that either $n \leq 2$ or $k \leq 2$ and that $L$ is sufficiently positive. Then $\Sigma_k$ has normal Du Bois singularities which are
\begin{itemize}[topsep=0pt]
    \item Cohen--Macaulay if and only if $H^i(X,\OO_X) = 0$ for $1 \leq i \leq n-1$, and
    \item rational if and only if $H^i(X,\OO_X) = 0$ for $1\leq i \leq n$.
\end{itemize}
Moreover, $\Sigma_k\subseteq \mathbb{P}^r$ is projectively normal (in fact it is arithmetically Cohen--Macaulay if $\Sigma_k$ itself is Cohen--Macaulay) with regularity $\operatorname{reg}(\OO_{\Sigma_k})\leq (n+1)(k+1)$.
\end{theoremalpha}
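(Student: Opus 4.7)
The plan is to use the secant bundle $\alpha_k \colon B^k = \PPP{E_{k,L}} \lra \Sigma_k$ as a resolution of singularities and to read off the desired properties from the complex $R\alpha_{k,*}\O_{B^k}$. Under the hypothesis $n \leq 2$ or $k \leq 2$, the Hilbert scheme $X^{[k]}$ is smooth, so the projective bundle $B^k$ is smooth as well. Sufficient positivity of $L$ ensures that the evaluation map $H^0(X,L) \otimes \O_{X^{[k]}} \twoheadrightarrow E_{k,L}$ is surjective and that the induced morphism $\alpha_k$ is birational onto $\Sigma_k$, so $\alpha_k$ is a bona fide resolution. Normality of $\Sigma_k$ is then equivalent to $\alpha_{k,*}\O_{B^k} = \O_{\Sigma_k}$, which will emerge as a byproduct of the derived computation described below.

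The central step is to compute the higher direct images $R^i\alpha_{k,*}\O_{B^k}$ and identify them with the cohomology groups $H^i(X,\O_X)$. The strategy is twofold: first, the projective bundle structure $\pi_k \colon B^k \lra X^{[k]}$ together with the projection formula reduces the problem to sheaves on $X^{[k]}$; second, one transfers the computation to the symmetric product $\operatorname{Sym}^k(X)$ via the Hilbert--Chow morphism, using the isospectral Hilbert scheme, the blow-up $\widetilde{X}_*^k$ of $X^k$ along the diagonals, and the nested-Hilbert-scheme machinery recorded in the paper's glossary. Sufficient positivity of $L$ kills all error terms in the resulting spectral sequences and produces a clean identification: $R^i\alpha_{k,*}\O_{B^k} = 0$ for $0 < i < n$ if and only if $H^i(X,\O_X) = 0$ for $1 \leq i \leq n-1$, and $R^i\alpha_{k,*}\O_{B^k} = 0$ for all $i > 0$ if and only if in addition $H^n(X,\O_X) = 0$. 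Combined with Kempf's criterion (plus Grauert--Riemenschneider on the smooth $B^k$), this yields the Cohen--Macaulay and rational characterizations; the Du Bois property is then obtained from Kov\'acs' splitting criterion applied to the natural map $\O_{\Sigma_k} \lra R\alpha_{k,*}\O_{B^k}$, whose splitting comes from the projective bundle structure of $\pi_k$.

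Projective normality, arithmetic Cohen--Macaulayness, and the regularity bound $\operatorname{reg}(\O_{\Sigma_k}) \leq (n+1)(k+1)$ are obtained by running the same push-forward argument with twists $\O_{\Sigma_k}(d)$: since $\alpha_k^*\O_{\Sigma_k}(d)$ is the $d$-th power of the relative tautological line bundle on $B^k$, the cohomology of $\O_{\Sigma_k}(d)$ reduces via $\pi_k$ to the cohomology of $\operatorname{Sym}^d E_{k,L}$ on $X^{[k]}$, and sufficient positivity of $L$ forces the requisite vanishing for $d$ in the range determined by $(n+1)(k+1)$. The main obstacle throughout will be the precise identification of $R^i\alpha_{k,*}\O_{B^k}$ with $H^i(X,\O_X)$, which demands a careful analysis of the fibers of $\alpha_k$ over the lower secant strata $\Sigma_j$ ($j < k$), the geometry of the auxiliary blow-ups and nested Hilbert schemes, and explicit cohomology computations on twists of the secant sheaf $E_{k,L}$ over $X^{[k]}$.
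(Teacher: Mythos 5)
The proposal gets the basic cast of characters right --- the resolution $\alpha_k \colon B^k \to \sigma_k$, the Hilbert scheme $X^{[k]}$, and the Hilbert--Chow morphism to $\operatorname{Sym}^k(X)$ --- but it critically misidentifies which sheaf to push forward, and as a consequence the argument as outlined would not go through.

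Your central claim is that one should directly compute $R^i\alpha_{k,*}\O_{B^k}$ and identify it with $H^i(X,\O_X)$. That is not what happens, and it cannot be made to work as stated. The paper deliberately avoids $\O_{B^k}$ and instead works with $\O_{B^k}(-Z_k)$, where $Z_k$ is the boundary divisor (the exceptional locus of $\alpha_k$, which coincides with the relative secant variety $\sigma_{k-1,k}$). The whole engine of the argument is the identification $\O_{B^k}(kH_k - Z_k) \cong \pi_k^* A_{k,L}$ together with the \emph{Du Bois-type condition} $R^i\alpha_{k,*}\O_{B^k}(-Z_k) = \mathcal{I}_{\sigma_{k-1}/\sigma_k}$ for $i=0$ and $0$ for $i > 0$. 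This is what makes the induction on $k$ run (comparing $\sigma_k$ to $\sigma_{k-1}$), what drives the projective normality argument via Danila's theorem, and what yields the Du Bois, CM and rationality characterizations. The higher direct images $R^i\alpha_{k,*}\O_{B^k}$, by contrast, are genuinely nonzero in general precisely because $\sigma_k$ need not have rational singularities; they are supported on $\sigma_{k-1}$ and are not cleanly related to $H^i(X,\O_X)$ without this machinery.

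Two further substantive gaps. First, you assert that ``sufficient positivity of $L$ kills all error terms in the resulting spectral sequences.'' The paper explicitly documents that this is false at the level of $X^{[k]}$: for a surface, $H^1(X^{[2]}, M_{2,L}^{\otimes j}\otimes A_{2,L}) \neq 0$ for $j \geq 4$ no matter how positive $L$ is. The Main Vanishing Theorem only holds because one first proves a relative vanishing over the Hilbert--Chow morphism (which requires a delicate induction through the nested Hilbert schemes $X^{[k-1,k]}$ and a direct-summand trick for $E_{k,L}^{\otimes j}$) and only then applies Fujita vanishing on $\operatorname{Sym}^k(X)$. Second, your Du Bois argument --- invoking a splitting of $\O_{\sigma_k}\to R\alpha_{k,*}\O_{B^k}$ ``coming from the projective bundle structure'' --- is incorrect: such a splitting would force $\sigma_k$ to have rational singularities, which Theorem A itself tells you is false in general. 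The actual argument constructs a left inverse of the map $\cI_{\sigma_{k-1}/\sigma_k}\to R\alpha_{k,*}\underline{\Omega}^0_{Z_k,B^k}$ via an explicit analysis of complexes, using the Du Bois-type condition and the resolution $B^{k-1,k}\to Z_k$; this establishes that $(\sigma_k,\sigma_{k-1})$ is a Du Bois pair, which together with the inductive hypothesis on $\sigma_{k-1}$ gives the result. Finally, the CM characterization rests on Koll\'ar's injectivity theorem applied to twists $\omega_{B^k}(Z_k)$, not on ``Kempf's criterion,'' and the regularity bound is extracted from the Du Bois-type condition plus $H^i(\sigma_k,\O_{\sigma_k}(\ell)) = 0$ for $i,\ell \geq 1$, not directly from $\operatorname{Sym}^d E_{k,L}$.
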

The importance of understanding higher secant varieties derives both from algebra and from geometry. On the algebraic side, secant varieties (particularly of Grassmann, Segre and Veronese varieties) parametrize tensors of bounded rank and thus feature prominently in the tensor decomposition problem as well as algebraic statistics and algebraic complexity theory more broadly --- see \cite{Landsberg.Tensors,Landsberg.Complexity}. On the geometric side, secant varieties are simply defined yet fine invariants of embeddings and thus can be naturally expected to reflect much of the geometry of the original embedding $X \subseteq \mathbb{P}^r$. Among many other applications, secant varieties have been studied in relation to moduli problems (see \cite{Bertram.92, Thaddeus.94}) and flips (see \cite{Vermeire.01, Vermeire.02}).

The results of Theorem \ref{theoremA} in the case $n = 2$ warrant specific mention. When $X=S$ is a surface (and $L$ is sufficiently positive), $\Sigma_k$ is Cohen--Macaulay if and only if $q(X) = 0$, and $\Sigma_k$ has rational singularities if and only if $q(X)=p_g(X)= 0$ (here $q(X)$ and $p_g(X)$ denote the irregularity and geometric genus of $X$, respectively). In particular if $X$ is a rational surface, then $\Sigma_k$ has rational singularities. Yet $\Sigma_k$ may still have rational singularities when $X$ has nonnegative, or even positive, Kodaira dimension. On the other hand, as the question raised by Chou--Song \cite[Question 1.6]{Chou.Song.18} asks for $\Sigma_1$, it is natural to ask when $\Sigma_k$ has singularities of the sort studied in the minimal model program --- i.e. log terminal or log canonical. We show that if $n=2$ and $k \geq 9$ (respectively, $k \geq 10$), then $\Sigma_k$ does not have log terminal (respectively, log canonical) singularities --- see Proposition \ref{not-klt-lc} for details.

There has also been a great deal of interest in the equations defining secant varieties (see for example \cite{BB,BBF24,BK24,BGL}). Let $I_X \subseteq S := \mathbb{C}[x_0,\ldots,x_r]$ denote the homogeneous ideal of $X \subseteq \mathbb{P}^r$. Mumford \cite{Mumford.70} proved that $X \subseteq \mathbb{P}^r$ is projectively normal and the homogeneous ideal $I_X$ is generated by quadrics whenever $L$ is sufficiently positive. Green \cite{Green1, Green2} realized that these and similar classical results on equations defining algebraic varieties should be seen as the first cases of a much more general picture involving higher syzygies of $I_X$. The guiding principle is that as the positivity of the embedding line bundle $L$ grows, the first few steps of the minimal graded free resolution
\begin{center}
    \begin{tikzcd}
        \cdots \ar[r] & E_1 \ar[r] & E_0 \ar[r] & S/I_X \ar[r] & 0
    \end{tikzcd}
\end{center}
should become as simple as possible. Specifically, if $E_0 = S$ and $E_i$ is generated in degree $\leq i+1$ for $1\leq i \leq p$ then (the coordinate ring of) $X$ is said to have \emph{linear syzygies to order $p$}, or to have \emph{property $N_p$}. In this way, having property $N_0$ means $X \subseteq \mathbb{P}^r$ is projectively normal, $N_1$ means we have $N_0$ and $I_X$ is generated by quadrics, $N_2$ means we have $N_1$ and the relations between the quadrics are linear, and so on.\\
\\
\indent We expect $X$ to have property $N_p$ for higher values of $p$ as the positivity of $L$ grows. However, it is well-known that forms of degree $\leq k+1$ cannot cut out the $k^{\text{th}}$ secant variety $\Sigma_k$ in $\mathbb{P}^r$ whenever $\Sigma_k \neq \mathbb{P}^r$. The best we can hope for the resolution of $I_{\Sigma_k}$ is that it is generated by degree $k+2$ forms (that is, $E_1$ is generated in degree $\leq k+2$) and that it has linear syzygies for a few steps thereafter (that is, $E_i$ is generated in degree $\leq (i-1)+k+2$). In general we say that a variety whose homogeneous ideal has a resolution like this up to the $p^{\text{th}}$ step satisfies property $N_{k+2,p}$ (see \cite{eisenbud.green.hulek.popescu.05}). Along these lines, Sidman--Vermeire \cite{Sidman.Vermeire.09} conjectured that $\Sigma_k$ satisfies property $N_{k+2,p}$ when $X=C$ is a curve and $L$ has sufficiently high degree, and Ein--Niu--Park \cite[Theorem 1.2]{Ein.Niu.Park.20} verified this conjecture. Eisenbud also speculated \cite[Question 1.5]{BGL} that $\Sigma_k$ satisfies property $N_{k+2,p}$ in general.

Our second main result gives an asymptotic analogue of that of Ein--Niu--Park mentioned above, concerning syzygies of secant varieties. Together with Theorem \ref{theoremA}, this gives a partial answer to both \cite[Question 1.5]{BGL} and \cite[Problem 6.3]{Ein.Niu.Park.20}. 

\begin{theoremalpha}\label{thm:N_{k+2,p}}\textnormal{(Syzygy Theorem)}
Fix integers $k \geq 1$ and $p\geq 0$ and assume either $n \leq 2$ or $k \leq 2$. If $L$ is sufficiently positive relative to $p$, then $\Sigma_k \subseteq \mathbb{P}^r$ satisfies property $N_{k+2,p}$.
\end{theoremalpha}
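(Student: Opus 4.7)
The plan is to express property $N_{k+2,p}$ for $\Sigma_k \subseteq \mathbb{P}^r$ as the vanishing of certain Koszul cohomology groups and then reduce these, via the resolution $\alpha_k : B^k \to \Sigma_k$ and the projective bundle structure $\pi_k : B^k \to X^{[k]}$, to cohomological vanishing on the Hilbert scheme $X^{[k]}$. This follows the general philosophy used by Ein--Niu--Park in the curve case, but makes essential use of the framework developed in the paper for the higher-dimensional secant sheaves $E_{k,L}$, $M_{k,L}$, $N_{k,L}$ and their relative analogues over the nested Hilbert schemes $X^{[m,k]}$.

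First I would invoke Theorem \ref{theoremA}: the projective normality of $\Sigma_k$ handles the weight-zero part of the resolution, and the control on $R\alpha_{k,*}\OO_{B^k}$ obtained in the course of its proof (in particular the equality $\alpha_{k,*}\OO_{B^k} = \OO_{\Sigma_k}$ and the boundedness of the higher direct images in terms of $H^i(X,\OO_X)$) allows the computation of Koszul cohomology of $\OO_{\Sigma_k}$ in terms of cohomology on $B^k$. Standard Koszul manipulations then convert the required vanishing $K_{i,j}(\Sigma_k; L) = 0$ (for $1 \leq i \leq p$ and $j \geq k+2$) into vanishing of groups of the form $H^s\bigl(B^k, \wedge^a M_{B^k} \otimes \OO_{B^k}(b H_k)\bigr)$, where $M_{B^k}$ is the kernel of the evaluation $H^0(X,L) \otimes \OO_{B^k} \twoheadrightarrow \OO_{B^k}(H_k)$ and $H_k$ is the tautological divisor on $B^k$.

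Next I would push these groups down along $\pi_k$. Using $R\pi_{k,*}\OO_{B^k}(m H_k) = \Sym^m E_{k,L}$ in the appropriate range and the fact that $M_{B^k}$ fits into a short exact sequence whose direct image involves the syzygy bundle $M_{k,L}$, one rewrites the target cohomology as $H^{\bullet}\bigl(X^{[k]},\, \wedge^a M_{k,L} \otimes \Sym^b E_{k,L}\bigr)$ and related tensor combinations twisted by powers of $N_{k,L}$. When $L$ is sufficiently positive (in the sense made precise in item~(7) of Notation and Conventions, with threshold depending on $p$ and $k$), each of $E_{k,L}$, $N_{k,L}$ and the resulting tensor bundles inherits correspondingly strong positivity, so Kodaira/Kawamata--Viehweg vanishing on $X^{[k]}$ kills the higher cohomology in the relevant range.

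The main obstacle will be verifying this vanishing on $X^{[k]}$ in the surface case $n = 2$ with $k$ arbitrary, where $X^{[k]}$ is a smooth $2k$-fold with non-trivial topology and where $M_{k,L}$ is not itself a simple tautological bundle but is constructed inductively via the residual exact sequences on the nested Hilbert schemes $X^{[k-1,k]}$. The strategy is to unwind this induction along the tower $X^{[m-1,m]}$, at each stage peeling off one point of the length-$k$ scheme via the residual morphism $\operatorname{res}$ and the blow-up $\operatorname{bl}$, thereby reducing the question to vanishing statements on $X^{[m]}$ and ultimately to vanishing on $X$ twisted by high powers of $L$. The positivity threshold demanded at each step of the induction is finite in $k$ and $p$, and is absorbed into the hypothesis that $L$ is sufficiently positive relative to $p$; the case $k \leq 2$ with $n$ arbitrary is handled by the same machinery, since the tower of nested Hilbert schemes then has bounded length independent of $n$.
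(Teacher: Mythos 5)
Your high-level plan — reduce property $N_{k+2,p}$ for $\Sigma_k$ to Koszul vanishing, then push through the secant bundle $B^{k+1}$ to the Hilbert scheme $X^{[k+1]}$ — is pointed in the right direction, but the two central steps are precisely the ones the paper identifies as genuinely problematic, and your proposal does not contain the ideas needed to overcome them.

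First, the descent from $\sigma_{k+1}$ to $B^{k+1}$ is \emph{not} a matter of "standard Koszul manipulations" plus control of $R\alpha_{*}\OO_{B^{k+1}}$. As the paper emphasizes (see the discussion surrounding Theorem~\ref{thm:rational-sing} and the paragraph before Lemma~\ref{lem:vanishing-higher-direct-wedge-K}), $\sigma_{k+1}$ typically does \emph{not} have rational singularities, so $R^i\alpha_{*}\wedge^a M_{H_{k+1}}(bH_{k+1})\neq 0$ for $i\geq 1$. This kills the straightforward Leray argument you propose. The paper's workaround is structural: one does not compute $K_{i,j}(\sigma_{k+1};\OO(1))$ on $B^{k+1}$ directly, but instead inducts on the secant order via the ideal sequence $0\to\cI_{\sigma_k/\sigma_{k+1}}\to\OO_{\sigma_{k+1}}\to\OO_{\sigma_k}\to 0$, handling the $\OO_{\sigma_k}$ piece by induction and the ideal piece via the \emph{Du Bois-type condition} (Theorem~\ref{Du Bois-type condition}(1)), which asserts $R^i\alpha_{*}\OO_{B^{k+1}}(-Z_{k+1})=0$ for $i\geq 1$ and $\alpha_*\OO_{B^{k+1}}(-Z_{k+1})\cong\cI_{\sigma_k/\sigma_{k+1}}$. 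Only after twisting by $\OO_{B^{k+1}}(-Z_{k+1})$ (which by Proposition~\ref{A&Z} gives the line bundle $\pi^*A_{k+1,L}$) do the higher direct images disappear, and only then can one descend to $X^{[k+1]}$. Nothing in your proposal recovers this; "boundedness" of the higher direct images is not enough.

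Second, once on $X^{[k+1]}$ one must prove $H^i(X^{[k+1]},\wedge^j M_{k+1,L}\otimes A_{k+1,L})=0$, and this does \emph{not} follow from Kodaira or Kawamata--Viehweg vanishing for sufficiently positive $L$. The paper devotes Remark~\ref{remark:hard-problem} to explaining why: the natural blow-up model is singular, the relevant sheaf is not globally generated even for $L\gg 0$, and --- most importantly --- the closely related vanishing $H^1(X^{[k]},M_{k,L}^{\otimes j}\otimes A_{k,L})=0$ genuinely fails for $j\geq 4$ already on a surface with $k=2$, no matter how positive $L$ is. Your proposed induction along the residual tower $X^{[m-1,m]}$ captures part of the machinery (Corollary~\ref{direct-image-E-k-L}, Proposition~\ref{direct-summand-E-k-L}), but the crucial direct-summand statement holds for $E_{k,L}^{\otimes j}$ and fails for $\wedge^j M_{k,L}$ (see Remark~\ref{remark-not-compatible}), so a na\"ive peel-one-point induction on $M_{k,L}$ does not close. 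The paper's actual proof of the Main Vanishing Theorem~\ref{main-vanishing-serre} instead proves the \emph{relative} vanishing $R^ih_*(\wedge^j M_{k,L}\otimes\delta_k^{-\ell})=0$ over the Hilbert--Chow morphism $h:X^{[k]}\to\Sym^k(X)$ by a separate delicate induction (Theorem~\ref{main-direct-image-vanishing}), and only then applies Serre--Fujita vanishing on the symmetric product. You would need to supply something equivalent to this mechanism; the proposal as written does not.
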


As a direct consequence of Theorem \ref{thm:N_{k+2,p}} and a result of Sidman--Sullivant \cite[Theorem 1.2]{Sidman.Sullivant}, we obtain a description of the generators of the homogeneous ideal $I_{\Sigma_k}$ in terms of the symbolic powers of the homogeneous ideal $I_X$. Recall that the $t^{\text{th}}$ symbolic power $I_X^{(t)}$ coincides, by a well-known theorem of Nagata and Zariski (\cite[(38.3) Theorem]{Nagata.62} and \cite[Main Lemma, p.189]{Zariski.49}, respectively), with the ideal of all homogeneous polynomials on $\mathbb{P}^r$ vanishing to order at least $t$ at a general point of $X$.

\begin{corollaryalpha}\label{cor:idealofsecants}\textnormal{(Ideal Generators)}
Fix a positive integer $k$ and assume that either $n\leq 2$ or $k\leq 2$ and that $L$ is sufficiently positive. Then $I_{\Sigma_k}$ is generated by the $(k+2)^{\text{th}}$ graded piece of $I_X^{(k+1)}$.
\end{corollaryalpha}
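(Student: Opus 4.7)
The plan is to deduce the corollary as a direct concatenation of Theorem~\ref{thm:N_{k+2,p}} with the theorem of Sidman--Sullivant \cite[Theorem 1.2]{Sidman.Sullivant} cited in the statement, together with the classical theorems of Mumford and Nagata--Zariski.

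First I would apply Theorem~\ref{thm:N_{k+2,p}} with $p = 0$. Under the standing hypotheses (either $n \leq 2$ or $k \leq 2$, with $L$ sufficiently positive) this shows that $\Sigma_k \subseteq \mathbb{P}^r$ satisfies property $N_{k+2,0}$, and hence the homogeneous ideal $I_{\Sigma_k}$ is generated in degree $k+2$. Consequently, it suffices to identify the graded piece $(I_{\Sigma_k})_{k+2}$ with $(I_X^{(k+1)})_{k+2}$, and the generation statement in the full ideal will follow at once.

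The second and main step is to invoke Sidman--Sullivant's theorem, which asserts that for $X \subseteq \mathbb{P}^r$ satisfying suitable hypotheses (scheme-theoretic cut-out by quadrics and projective normality, both provided by Mumford's theorem once $L$ is sufficiently positive) the degree-$(k+2)$ piece of $I_{\Sigma_k}$ coincides with the degree-$(k+2)$ piece of the symbolic power $I_X^{(k+1)}$. The characterisation of $I_X^{(k+1)}$ as the ideal of homogeneous polynomials vanishing to order at least $k+1$ at a general point of $X$ is then the content of the Nagata--Zariski theorem (\cite[(38.3) Theorem]{Nagata.62}, \cite[Main Lemma, p.~189]{Zariski.49}), so combining the two steps yields the corollary in the stated form.

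Since the argument is a direct concatenation of cited results, there is no genuine mathematical obstacle; the only thing to verify is bookkeeping, namely that the ``sufficiently positive'' threshold on $L$ required for Theorem~\ref{thm:N_{k+2,p}} is also strong enough to trigger both Sidman--Sullivant and Mumford. This is automatic, since all three positivity hypotheses are asymptotic in $L$ and may be folded into a single condition by taking the maximum of the required thresholds.
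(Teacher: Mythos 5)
Your overall architecture (syzygy theorem $\Rightarrow$ degree bound on generators, then Sidman--Sullivant to identify the degree-$(k+2)$ piece with the symbolic power) matches the paper's argument, but there are two genuine gaps.

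First, the choice $p=0$ is wrong. Property $N_{k+2,p}$ means $K_{i,j}(\Sigma_k;\OO_{\Sigma_k}(1))=0$ for $0 \leq i \leq p$ and $j \geq k+2$. The group $K_{0,j}$ records generators of the coordinate ring $R(\Sigma_k)$ as an $S$-module (the module $E_0$ in the resolution), whereas it is $K_{1,j}$ that records the generators of the ideal $I_{\Sigma_k}$ (the module $E_1$). So $N_{k+2,0}$ says nothing about ideal generation: you need $p=1$, i.e.\ property $N_{k+2,1}$, to conclude that $I_{\Sigma_k}$ is generated in degree $\leq k+2$. The paper's Proposition~\ref{prop:idealofsecants} makes exactly this step, starting from $N_{k+1,1}$ for $\sigma_k$.

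Second, even after fixing the first issue, $N_{k+2,1}$ only bounds the generating degree above; it does not rule out generators in degree $<k+2$. The corollary asserts $I_{\Sigma_k}$ is generated by $(I_X^{(k+1)})_{k+2}$, so one must also know $(I_{\Sigma_k})_m = 0$ for $m \leq k+1$. The paper obtains this from Danila's theorem (Theorem~\ref{Danila}): the restriction map $S^m H^0(X,L) = H^0(\mathbb{P}^r,\OO(m)) \to H^0(\Sigma_k,\OO_{\Sigma_k}(m))$ is an isomorphism in the range $m \leq k+1$, hence injective, which kills $(I_{\Sigma_k})_m$. Your sketch does not address this, and it does not follow from Sidman--Sullivant's identification in the single degree $k+2$. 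As a minor point, the paper invokes Ein--Lazarsfeld rather than Mumford for the quadric generation of $I_X$, but that substitution is harmless; and invoking Nagata--Zariski is only relevant if you want to rephrase $I_X^{(k+1)}$ as an order-of-vanishing condition, which the statement already does.
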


One of the fundamental ideas guiding our study of the higher secant varieties is the secant bundle construction. When $L$ is $k$-very ample (that is, the restriction map $H^0(X, L) \lra H^0(\xi, L|_{\xi})$ is surjective for every length $k+1$ zero-dimensional subscheme of $X$), the $k^{\text{th}}$ secant variety $\Sigma_k$ of $X \subseteq \mathbb{P}^r$ is the union of the projective spans of smoothable length $k+1$ zero-dimensional subschemes of $X$. This indicates that $\Sigma_k$ is dominated by a projective bundle on the so-called \emph{smoothable} (or \emph{principal}) \emph{component} $X_{\operatorname{sm}}^{[k+1]}$ of the \emph{Hilbert scheme of $k+1$ points} $X^{[k+1]}$ of $X$. Because of the unfortunate index discrepancy between these basic objects, we modify our notation slightly --- we define
$$
\sigma_k \coloneqq \Sigma_{k-1}.
$$
Then the projective bundle mentioned above (for the case of $\sigma_k$ now) is a bundle on $X^{[k]}_{\operatorname{sm}}$ which we will denote by $B_{\operatorname{sm}}^k$ and refer to as the \emph{$k$-secant bundle}. 
We have the following diagram
\begin{center}
    \begin{tikzcd}
        B_{\operatorname{sm}}^k \ar[r,"\alpha"]\ar[d, "\pi"']& \sigma_k\\
        X_{\operatorname{sm}}^{[k]}&
    \end{tikzcd}
\end{center}
where $\pi \colon B_{\operatorname{sm}}^k \lra X^{[k]}_{\operatorname{sm}}$ is the structure morphism of the bundle and $\alpha \colon B_{\operatorname{sm}}^k \lra \sigma_k \subseteq \mathbb{P}^r$ is the surjective morphism given by $\lvert \mathcal{O}_{B_{\operatorname{sm}}^k}(1) \rvert$. With this perspective, we have now reduced the study of $\sigma_k$ to the study of $X^{[k]}_{\operatorname{sm}}$, a vector bundle $E_{k,L}$ on it (the \emph{tautological bundle associated to $L$}, for which $B_{\operatorname{sm}}^k = \mathbb{P}_{X^{[k]}_{\operatorname{sm}}}(E_{k,L})$), and the map $\alpha$. When $n\leq 2$ or $k\leq 3$ the scheme $X^{[k]}_{\operatorname{sm}}$ is smooth (and coincides with $X^{[k]}$), hence in these cases $\alpha$ is a resolution of singularities for $\sigma_k$.\\
\\
\indent In the case that $X = C$ is a curve, Bertram \cite[Sections 1 and 2]{Bertram.92} undertook an in-depth study of the map $\alpha$ and its behavior relative to the natural stratification
\begin{equation}\label{eq:stratificationintro}
X = \sigma_1 \subseteq \sigma_2 \subseteq \cdots \subseteq \sigma_{k-1} \subseteq \sigma_k \subseteq \mathbb{P}^r.\tag{$\star$}
\end{equation}
In particular, he sharpened the classical Terracini lemma (concerning general tangent planes of $\sigma_k$) and introduced a relative version of the same, useful for comparing different strata in ($\star$). Ein--Niu--Park's work \cite{Ein.Niu.Park.20} is based on Bertram's results. 

Generalizing Bertram's work to higher dimensions was the first obstacle in the study of higher secant varieties of higher dimensional smooth projective varieties, as was pointed out in \cite[page 664]{Ein.Niu.Park.20}. Here we successfully generalize the precise Terracini lemma \cite[Lemma 1.4]{Bertram.92} via a careful study of the cotangent sheaf of $X^{[k]}$, which then leads us to interpret the differential of $\alpha$ in terms of a natural pairing on Artinian Gorenstein algebras. 
One of the consequences is the following embedding theorem for the open subset $U^k \coloneqq B^k\setminus \alpha^{-1}(\kappa_{k-1} \cap \sigma_k)$, where $\kappa_k \subseteq \mathbb{P}^r$ is the \emph{$k$-cactus variety}  --- that is, the union of all $k$-secant $(k-1)$-planes (see \cite{BB} and \S \ref{subsec:secant bundles}). Notice that we do not impose any conditions on $n$ and $k$ and we only assume that $L$ is $(2k-1)$-very ample.

\begin{theoremalpha}\label{embedding-theorem}\textnormal{(Embedding Theorem)}
Fix a positive integer $k$ and suppose that $L$ is $(2k-1)$-very ample on $X$. Then the morphism
$$
\alpha\vert_{U^k} \colon U^k \lra \mathbb{P}^r \setminus \kappa_{k-1}
$$
is a closed embedding, i.e., $U^k \cong \sigma_k\setminus \kappa_{k-1}$. 
\end{theoremalpha}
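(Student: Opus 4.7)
The plan is to show that $\alpha|_{U^k}$ is a proper monomorphism of finite-type $\mathbb{C}$-schemes, which then yields a closed embedding; concretely I would verify properness, injectivity on closed points, and unramifiedness. Properness is immediate: $B^k = \mathbb{P}(E_{k,L})$ is projective over the Hilbert scheme $X^{[k]}$, so $\alpha$ is proper, and since $U^k = \alpha^{-1}(\mathbb{P}^r \setminus \kappa_{k-1})$, the restriction $\alpha|_{U^k}$ is proper by base change.

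For injectivity on closed points, the $(2k-1)$-very ampleness of $L$ is the decisive input. If $(\xi_1, v)$ and $(\xi_2, v)$ are two closed points of $U^k$ sharing the same image $v \in \sigma_k \setminus \kappa_{k-1}$, then the scheme-theoretic union $\xi_1 \cup \xi_2$ has length at most $2k$, so the restriction $H^0(X,L) \twoheadrightarrow H^0(L|_{\xi_1 \cup \xi_2})$ is surjective. A standard Mayer--Vietoris argument then yields the span identity
\[
\langle \xi_1 \rangle \cap \langle \xi_2 \rangle \;=\; \langle \xi_1 \cap \xi_2 \rangle \quad \text{in } \mathbb{P}^r.
\]
If $\xi_1 \neq \xi_2$, then $\xi_1 \cap \xi_2$ has length at most $k-1$, so $v \in \langle \xi_1 \cap \xi_2 \rangle \subseteq \kappa_{k-1}$, contradicting $v \notin \kappa_{k-1}$. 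Hence $\xi_1 = \xi_2$, and injectivity follows because $\alpha$ restricts to a linear isomorphism from the fibre $\pi_k^{-1}(\xi)$ onto $\langle \xi \rangle \subseteq \mathbb{P}^r$.

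For unramifiedness, I would analyze the differential of $\alpha$ via the tautological embedding $i_k \colon B^k \hookrightarrow \mathbb{P}^r \times X^{[k]}$ and the conormal sequence
\[
0 \lra \mathcal{C}_{B^k/\mathbb{P}^r \times X^{[k]}} \lra i_k^{\,*}\Omega^1_{\mathbb{P}^r \times X^{[k]}} \lra \Omega^1_{B^k} \lra 0.
\]
Composing the conormal inclusion with the projection to $\pi_k^{\,*}\Omega^1_{X^{[k]}}$ produces the homomorphism $\gamma_k$ from the glossary, and fibrewise injectivity of $\gamma_k$ at a point of $U^k$ is equivalent to unramifiedness of $\alpha$ there. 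The fibre direction of $\pi_k$ always injects under $d\alpha$, so the content lies in the base direction: a base tangent vector $u$ in $\ker d\alpha$ at $(\xi, v)$ corresponds, via the generalization of Bertram's precise Terracini lemma that the paper develops, to a length-at-most-$2k$ thickening $\widetilde{\xi} \supseteq \xi$ whose span still contains $v$. Under $(2k-1)$-very ampleness such spans have the expected dimension, and the hypothesis $v \notin \kappa_{k-1}$ then forces $\langle \widetilde{\xi} \rangle = \langle \xi \rangle$, so $u = 0$.

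The main obstacle is this last step: while injectivity reduces transparently to the span identity above, controlling $d\alpha$ genuinely requires the refined description of $\Omega^1_{X^{[k]}}$ and the explicit form of $\gamma_k$ developed earlier in the paper. It is also here that the $(2k-1)$-very ampleness is used in full strength, since thickening a length-$k$ scheme by one tangent direction produces a subscheme of length up to $2k$, and expected-dimensional spans for such subschemes are exactly what this positivity hypothesis provides.
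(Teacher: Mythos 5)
Your overall strategy---show $\alpha|_{U^k}$ is a proper injective unramified morphism of finite-type $\mathbb{C}$-schemes, then conclude closed immersion---matches the paper's, and your injectivity argument is exactly right: the paper records the identity $\operatorname{Span}(\xi_1)\cap\operatorname{Span}(\xi_2)=\operatorname{Span}(\xi_1\cap\xi_2)$ under separation of $2k$-schemes (end of \S\ref{subsec:secant bundles}), and then $\xi_1\neq\xi_2$ of length $k$ gives $\operatorname{length}(\xi_1\cap\xi_2)\le k-1$, hence $v\in\kappa_{k-1}$, a contradiction. You also correctly identify the conormal sequence and the map $\gamma_k$ as the vehicle for unramifiedness, and the reduction that $\operatorname{coker}(\gamma_k)\cong\operatorname{coker}(d\alpha_k)$ (Lemma \ref{diagram-differential-secant}).

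However, your explanation of \emph{why} $\gamma_k$ is surjective on $U^k$ is wrong, and this is the genuine content of the theorem. You claim a base tangent vector in $\ker(d\alpha)$ corresponds to a thickening $\widetilde{\xi}\supseteq\xi$ of length at most $2k$, and that $(2k-1)$-very ampleness and $v\notin\kappa_{k-1}$ then force $\langle\widetilde{\xi}\rangle=\langle\xi\rangle$. This does not work for $\dim X\ge 2$: first-order information at $\xi$ is governed by things like $\mathcal{I}_\xi/\mathcal{I}_\xi^2$, and the subscheme cut out by $\mathcal{I}_\xi^2$ has length far exceeding $2k$ in general (already for a fat point $\mathfrak{m}_p^2$ on a surface, which has length $3$ while $k=1$ gives $2k=2$). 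The paper makes exactly this point in Remark \ref{terracini-remark}: the conormal space is $H^0(X,L\otimes\mathcal{I}_\eta^2)$ only on the lci locus, and in general is the kernel of a pairing with the $\operatorname{Hom}_{\mathcal{O}_\eta}$-dual of $L\otimes\mathcal{I}_\eta/\mathcal{I}_\eta^2$. Moreover, as stated your last inference is internally inconsistent: if $\widetilde{\xi}$ were a genuine thickening, its span would be strictly larger than $\langle\xi\rangle$, and $v\notin\kappa_{k-1}$ gives no mechanism to force equality.

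The mechanism the paper actually uses for surjectivity of $\gamma_k$ on $U^k$ is local-algebraic and does \emph{not} draw on the positivity of $L$ at all (beyond $L$ separating $k$-schemes, needed to set up the bundles). It goes through the Artinian Gorenstein formalism of \S\ref{subsection-universal-pairing}: a point of $U^k$ lies over the Gorenstein locus (Lemma \ref{lem:pi^{-1}(Gorensteinlocus)}) and the fibre coordinate $v$ corresponds to a dual generator $\varphi_v\in U(\mathcal{O}_\xi)$ (Lemma \ref{U-empty}); the non-degeneracy of the resulting pairing (Lemma \ref{pairing-free}) extends to arbitrary finitely generated modules (Lemma \ref{pairing-general}), and this surjectivity is exactly what $e_k$ and hence $\gamma_k$ encode on $U^k$ (Lemma \ref{pairing-secant}). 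In other words, the $(2k-1)$-very ampleness does its work entirely in the set-theoretic injectivity step; the differential step is carried by Gorenstein duality, not by span-dimension counts. Without this replacement, your proof has a gap at its crux.
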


A striking consequence of this theorem is that it gives a description of the singular locus of the secant variety $\sigma_k$ when $L$ is $(2k-1)$-very ample. It has long been clear that $\sigma_{k-1} \subseteq \operatorname{Sing} (\sigma_k)$, so the key issue is whether there are ``extra" singularities beyond those in $\sigma_{k-1}$. The extra singularities would live in the open stratum $\sigma_k \setminus \sigma_{k-1}$ of (\ref{eq:stratificationintro}), which contains $U^k$ by the Embedding Theorem. But $U^k$ lies over the Gorenstein locus in $X^{[k]}_{\operatorname{sm}}$ (see \S\ref{distinguished-subschemes}), that is,
$$
U^k \subseteq \pi^{-1}(X^{[k]}_{\operatorname{sm}, \operatorname{Gor}}).
$$
Thus we see that $U^k$ has the same type of singularities as $X^{[k]}_{\operatorname{sm}, \operatorname{Gor}}$. In particular, if $X^{[k]}_{\operatorname{sm}, \operatorname{Gor}}$ has non-normal singularities, then so does $\sigma_k$.\footnote{We are not aware of any such example. It is even unknown whether $X^{[k]}_{\sm}$ may have non-normal singularities. The best results currently known in this direction are Kyungyong Lee's result \cite{Lee.10} that $X^{[k]}_{\sm}$ may not be Cohen--Macaulay and Joachim Jelisiejew's result \cite{Jelisiejew.20} that $X^{[k]}$ satisfies Murphy's law, i.e., $X^{[k]}$ may have arbitrarily bad singularities.} On the other hand, by the series of work \cite{Casnati.Notari.07, Casnati.Notari.09, Casnati.Jelisiejew.Notari.15}, 
if $k \leq 13$ or $k=14$ and $\dim(X) \leq 5$, then $X^{[k]}_{\Gor}$ is irreducible, and $X^{[k]}_{\textnormal{sm,Gor}}$ is smooth if and only if $n \leq 3$ or $k \leq 5$ (see \S\ref{subsubsection-smoothness}). Taking these observations and results into account, we arrive at the following consequence of the Embedding Theorem.\footnote{While we were writing the present paper, we learned from Jaros{\l}aw Buczy\'{n}ski that he had obtained an asymptotic analogue of this result with different methods from a series of joint work with Hanieh Keneshlou, Weronika Buczy\'{n}ska, and {\L}ucja Farnik (cf. \cite{BBF24}, \cite{BK24}).}

\begin{corollaryalpha}\label{cor:singularlocusofSigma}\textnormal{(Singular Locus)}
Fix a positive integer $k$ and suppose that $L$ is $(2k-1)$-very ample on $X$. Then we have inclusions
$$
 \sigma_{k-1}\cup \alpha(\pi^{-1}(\operatorname{Sing}(X_{\operatorname{sm},\operatorname{Gor}}^{[k]}))) \subseteq \operatorname{Sing}(\sigma_k) \subseteq  (\kappa_{k-1} \cap \sigma_k)\cup \alpha(\pi^{-1}(\operatorname{Sing}(X_{\operatorname{sm},\operatorname{Gor}}^{[k]}))).
$$
In particular, the following hold:
\begin{enumerate}[topsep=0pt]
    \item If $k \leq 14$ or $k=15$ and $\dim(X) \leq 5$, then $\operatorname{Sing}(\sigma_k)= \sigma_{k-1}\cup \alpha(\pi^{-1}(\operatorname{Sing}(X_{\operatorname{sm},\operatorname{Gor}}^{[k]})))$.
    \item $\operatorname{Sing}(\sigma_k) = \sigma_{k-1}$ if and only if $n \leq 3$ or $k \leq 5$.
\end{enumerate}
\end{corollaryalpha}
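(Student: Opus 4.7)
The plan is to derive the corollary from Theorem \ref{embedding-theorem} together with the structural observations about $U^k$ and $X^{[k]}_{\operatorname{sm},\operatorname{Gor}}$ immediately following its statement. The $(2k-1)$-very ample hypothesis guarantees that $E_{k,L}$ is locally free of rank $k$, so $\pi\colon B^k\to X^{[k]}_{\operatorname{sm}}$ is a Zariski-locally trivial $\mathbb{P}^{k-1}$-bundle; in particular the restriction $\pi\vert_{U^k}\colon U^k \to X^{[k]}_{\operatorname{sm},\operatorname{Gor}}$ is a smooth morphism (since $U^k\subseteq \pi^{-1}(X^{[k]}_{\operatorname{sm},\operatorname{Gor}})$ by the discussion preceding the corollary), and hence a point $b\in U^k$ is singular in $U^k$ precisely when $\pi(b)$ is singular in $X^{[k]}_{\operatorname{sm},\operatorname{Gor}}$. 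For the upper containment, fix $p\in \operatorname{Sing}(\sigma_k)\setminus (\kappa_{k-1}\cap \sigma_k)$; via the isomorphism $\alpha\vert_{U^k}\colon U^k \to \sigma_k\setminus\kappa_{k-1}$ from Theorem \ref{embedding-theorem}, $p$ corresponds to a unique $b\in U^k$ at which $U^k$ is singular, whence $\pi(b)\in \operatorname{Sing}(X^{[k]}_{\operatorname{sm},\operatorname{Gor}})$ and $p\in \alpha(\pi^{-1}(\operatorname{Sing}(X^{[k]}_{\operatorname{sm},\operatorname{Gor}})))$.

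For the lower containment, $\sigma_{k-1}\subseteq \operatorname{Sing}(\sigma_k)$ is classical. Set $T\coloneqq \pi^{-1}(\operatorname{Sing}(X^{[k]}_{\operatorname{sm},\operatorname{Gor}}))$, so that $\alpha(T)$ is closed in $\sigma_k$ by properness of $\alpha$. For $b$ a generic point of an irreducible component of $T$, the $(2k-1)$-very ampleness of $L$ guarantees that the fiber $\pi^{-1}(\pi(b))\cong \mathbb{P}^{k-1}$ is not entirely contained in $\alpha^{-1}(\kappa_{k-1})$, since the generic point of the span of a Gorenstein length-$k$ subscheme does not lie on any $(k-1)$-secant plane. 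So we may take $b\in U^k$; then $b$ is singular in $U^k$ by the observation above, and correspondingly $\alpha(b)\in \sigma_k\setminus \kappa_{k-1}$ is singular in $\sigma_k$. Since $\operatorname{Sing}(\sigma_k)$ is closed and contains a dense subset of every irreducible component of $\alpha(T)$, we conclude $\alpha(T)\subseteq \operatorname{Sing}(\sigma_k)$.

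For the specific assertions: Under the hypotheses of $(1)$, the cited results of Casnati--Jelisiejew--Notari (applied in length $k-1$) give that the relevant Gorenstein Hilbert scheme is irreducible, so $\kappa_{k-1}\cap \sigma_k = \sigma_{k-1}$, and the two bracketing containments collapse to the claimed equality. For $(2)$, the smoothness criterion stated in the excerpt gives $X^{[k]}_{\operatorname{sm},\operatorname{Gor}}$ smooth if and only if $n\leq 3$ or $k\leq 5$; in these ranges $\operatorname{Sing}(X^{[k]}_{\operatorname{sm},\operatorname{Gor}}) = \emptyset$ and the cactus-equals-secant equality continues to hold, giving $\operatorname{Sing}(\sigma_k) = \sigma_{k-1}$. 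Conversely, when $n\geq 4$ and $k\geq 6$, $X^{[k]}_{\operatorname{sm},\operatorname{Gor}}$ has non-empty singular locus and the lower-containment argument produces a singular point of $\sigma_k$ lying outside $\sigma_{k-1}$, hence the strict inclusion.

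The main obstacle is the generic-fiber claim used in the lower containment: verifying that for a generic singular $\xi\in X^{[k]}_{\operatorname{sm},\operatorname{Gor}}$, the $(k-1)$-plane $\pi^{-1}(\xi)$ is not swallowed by $\alpha^{-1}(\kappa_{k-1})$. This hinges on a careful geometric analysis of how spans of Gorenstein length-$k$ subschemes intersect spans of smaller subschemes, where the $(2k-1)$-very ample hypothesis plays a decisive role.
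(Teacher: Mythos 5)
Your argument runs along the same lines as the paper's: both reduce to Theorem~\ref{terracini} (the Embedding Theorem), the smoothness of $\pi_k$, and the classification of when $X^{[k]}_{\operatorname{sm},\operatorname{Gor}}$ is smooth/irreducible. The paper routes through Corollary~\ref{corollary-singularities-cactus} (singular locus of the cactus scheme) rather than working directly on $\sigma_k$, but that is a cosmetic difference.

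However, there is a genuine gap, which you yourself flag as ``the main obstacle'': the claim that for a singular $[\xi]\in X^{[k]}_{\operatorname{sm},\operatorname{Gor}}$ the fiber $\pi_k^{-1}([\xi])$ is not contained in $\alpha_k^{-1}(\kappa_{k-1})=\kappa_{k-1,k}$. This is exactly Lemma~\ref{lem:pi^{-1}(Gorensteinlocus)} of the paper, and it is not a delicate secant-plane incidence computation at all; it drops out of Lemma~\ref{U-empty}: for an Artinian $\mathbb{C}$-algebra $A$, the set $U(A)$ of linear functionals $\varphi\in\operatorname{Hom}_{\mathbb{C}}(A,\mathbb{C})$ whose kernel contains no nonzero ideal is exactly the set of dual generators, and it is nonempty if and only if $A$ is Gorenstein. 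Translated, a point of $\mathbb{P}(H^0(\xi,L|_\xi))=\pi_k^{-1}([\xi])$ lies outside the spans of all proper subschemes of $\xi$ if and only if $\mathcal{O}_\xi$ is Gorenstein. No further positivity or geometric analysis is needed. Leaving this step marked as an open ``obstacle'' makes the lower containment unestablished.

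Two smaller points worth tightening. First, you need to consistently work with $B^k_{\operatorname{sm}}$ (so that $\alpha$ images into $\sigma_k$, not into the full cactus $\kappa_k$), whence $U^k\cap B^k_{\operatorname{sm}}$ lies over $X^{[k]}_{\operatorname{sm},\operatorname{Gor}}$, not merely over $X^{[k]}_{\operatorname{Gor}}$; this is why $X^{[k]}_{\operatorname{sm},\operatorname{Gor}}$ appears in the statement. Second, in the ``if'' direction of $(2)$ for the range $n\leq 3$ with $k$ large, smoothness of $X^{[k]}_{\operatorname{sm},\operatorname{Gor}}$ alone only gives $\operatorname{Sing}(\sigma_k)\subseteq \kappa_{k-1}\cap\sigma_k$; you also need $\kappa_{k-1}=\sigma_{k-1}$, i.e.\ irreducibility of $X^{[k-1]}_{\operatorname{Gor}}$, which in that range is a separate input from \S\ref{subsubsection-smoothness} (the Buchsbaum--Eisenbud structure for Gorenstein Artin algebras of embedding dimension $\leq 3$), not a consequence of the smoothness criterion you cite.
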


\noindent There are two points to be made here. First, the $(2k-1)$-very ampleness condition for $L$ in Theorem \ref{embedding-theorem} and Corollary \ref{cor:singularlocusofSigma} is sharp --- for any $n \geq 1$ and $k \geq 2$, there are choices of $X$ and $L$ for which $L$ is $(2k-2)$-very ample but $\sigma_k\setminus\sigma_{k-1}$ has singularities, indeed non-normal ones (see \S\ref{subsection-not-normal}). We also refer to \cite[Theorem 2.1]{Han18} and \cite[Example 6]{FH21} for other examples. Second, in the setting of Theorem \ref{theoremA}, we have $\operatorname{Sing}(\sigma_k) = \sigma_{k-1}$ --- hence in this case all points of $\sigma_k\setminus \sigma_{k-1}$ are determined locally analytically by $n = \dim(X)$ only, regardless of the geometry of $X$, while by contrast the singularities of $\sigma_k$ along $\sigma_{k-1}$ actually reflect the geometry of $X$. For instance, if $X$ is an Enriques surface, a K3 surface, or an abelian surface, then $\sigma_k$ has rational singularities, Cohen-Macaulay singularities, or non-Cohen-Macaulay singularities, respectively.

Now, we briefly outline how Theorems \ref{theoremA} and \ref{thm:N_{k+2,p}} are proven. We assume $n \leq 2$ or $k \leq 3$ (note the change in $k$ bound because we switched to $\sigma_k$ notation). When $n=1$, based on Bertram's careful analysis of the fibers of $\alpha$ \cite{Bertram.92}, Ein--Niu--Park \cite{Ein.Niu.Park.20} apply the theorem on formal functions to reduce the normality of $\sigma_k$ to the projective normality of $\sigma_m$ for $1 \leq m \leq k-1$. The theorem on formal functions is also a key tool for the case of $k=2$ as well in \cite{Chou.Song.18}, \cite{Ullery.16}. While we have been directly motivated by these approaches, our results in higher dimensions require a significantly more robust framework as well as a new strategy for the following important reason. The relative Terracini lemma (Theorem \ref{relative-terracini} -- see also Proposition \ref{fiber-product}) is needed to compare strata $\sigma_m \subseteq \sigma_k$ and for this one needs to introduce the nested Hilbert scheme $X^{[m,k]}$, together with its forgetful maps $X^{[m,k]} \lra X^{[m]}$ and $X^{[m,k]} \lra X^{[k]}$ (see \S \ref{nestedHilb}). In the case $n=1$ or $k=2$, these forgetful maps are, respectively, smooth and finite. However neither property continues to hold when $n \geq 2$ and $k\geq 3$. 
Therefore, the fibers of $\alpha$ along lower strata of ($\star$) are not smooth.
Moreover, apart from when $X$ is a curve, the nested Hilbert scheme $X^{[m,k]}$ itself is smooth essentially only when $n = 2$ and $m = k-1$ (see \S\ref{hilbert-key-facts} for the remaining isolated cases). When $X^{[m,k]}$ is not smooth, the preimages of lower strata of ($\star$) under the resolution $\alpha$ are not smooth.
Taken together, these facts suggest that one cannot hope to apply the theorem on formal functions in a useful way.

Instead, we take a more cohomological approach --- what follows is an outline of the technique we use to transform problems concerning a higher secant variety into problems concerning the corresponding Hilbert scheme of points, as well as quick alternative proofs of the main results of \cite{Ein.Niu.Park.20} and \cite{Ullery.16}.
At the heart of the issue is the normality of $\sigma_k$ --- to establish this, it suffices to prove that the natural injective map
\begin{equation*}
\begin{tikzcd}[row sep = tiny]
\mathcal{O}_{\sigma_k} \ar[hookrightarrow,r] & \alpha_* \mathcal{O}_{B^k}
\end{tikzcd}
\end{equation*}
is an isomorphism. We derive both the normality of $\sigma_k$ and the projective normality of $\sigma_k \subseteq \mathbb{P}^r$ at once by demonstrating surjectivity of the natural maps
$$
S^t H^0(X, L) \lra H^0(B^k, \mathcal{O}_{B^k}(t))
$$ for all $t$. Here $H^0(X, L) \cong H^0(\sigma_k, \mathcal{O}_{\sigma_k}(1)) \cong H^0(B^k, \mathcal{O}_{B^k}(1))$.
This being the case, we let $Z_k := B^k\setminus U^k$ and consider the short exact sequence
$$
0 \lra \mathcal{O}_{B^k}(-Z_k) \lra \mathcal{O}_{B^k} \lra \mathcal{O}_{Z_k} \lra 0.
$$
By a careful study of the divisor $Z_k$ (see \S\ref{subsection-boundary-divisor}) we find that
$$
\OO_{B^k}(-Z_k) \cong \OO_{B^k}(-k)\otimes \pi^*A_{k,L},
$$
where $A_{k,L}$ is a specific line bundle on $X^{[k]}$ (see \S\ref{subsubsec-line-bundles}). Then via an induction on $k$ and our generalization of Bertram's study of the map $\alpha$, we find that
$$
\alpha_*\OO_{Z_k} \cong \OO_{\sigma_{k-1}}
$$
which, together with the previous isomorphism and a theorem of Danila (Theorem \ref{Danila}), allows us to reduce the problem to surjectivity of the maps
$$
S^\ell H^0(X, L) \otimes H^0(X^{[k]}, A_{k, L}) \lra H^0(X^{[k]}, S^\ell E_{k, L} \otimes A_{k, L})
$$
for all $\ell$. This can be deduced from the $i=j$ cases of our main vanishing theorem by considering the terms in the Eagon--Northcott complex associated to the evaluation map $H^0(X, L) \otimes \mathcal{O}_{X^{[k]}} \to E_{k, L}$, whose kernel bundle is denoted by $M_{k,L}$.

\begin{theoremalpha}\label{intro-vanishing}\textnormal{(Main Vanishing Theorem)}
Let $n$ and $k$ be as in Theorem \ref{theoremA} and fix integers $i \geq 1$ and $j \geq 0$. If $L$ is sufficiently positive relative to $i$ and $j$, then
$$
H^i(X^{[k]}, \wedge^jM_{k,L}\otimes A_{k,L}) = 0.
$$
\end{theoremalpha}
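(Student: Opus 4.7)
My plan is to reduce the vanishing for wedge powers of $M_{k,L}$ on the Hilbert scheme to cohomological computations on the ordinary product $X^k$, which can then be handled by Künneth and Kodaira vanishing on $X$.

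First I would use the Koszul-type exact complex resolving $\wedge^j M_{k,L}$ that is associated to the evaluation surjection $V \otimes \mathcal{O}_{X^{[k]}} \twoheadrightarrow E_{k,L}$ with $V := H^0(X,L)$:
\begin{equation*}
0 \to \wedge^j M_{k,L} \to \wedge^j V \otimes \mathcal{O}_{X^{[k]}} \to \wedge^{j-1} V \otimes E_{k,L} \to \cdots \to V \otimes S^{j-1} E_{k,L} \to S^j E_{k,L} \to 0.
\end{equation*}
Twisting by $A_{k,L}$ and running the associated hypercohomology spectral sequence, the target vanishing $H^i(X^{[k]}, \wedge^j M_{k,L} \otimes A_{k,L}) = 0$ will follow from appropriately strong statements about the groups $H^q(X^{[k]}, S^p E_{k,L} \otimes A_{k,L})$ for $0 \leq p \leq j$, which are more geometric.

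Next I would exploit the explicit geometry of $X^{[k]}$ available under the hypothesis $n \leq 2$ or $k \leq 3$ (when $X^{[k]}$ is smooth): for $n = 1$ it is $\operatorname{Sym}^k X$; for $n = 2$ one has Haiman's isospectral Hilbert scheme $X_k$; for $k \leq 3$ with $n$ arbitrary, one has explicit blow-up models. In each case, combined with Danila's theorem, the cohomology $H^q(X^{[k]}, S^p E_{k,L} \otimes A_{k,L})$ can be identified with the $S_k$-invariant part of a cohomology group on a suitable $S_k$-equivariant blow-up $\widetilde{X}^k$ of $X^k$, where the pullback of $S^p E_{k,L} \otimes A_{k,L}$ becomes a positive twist of $L^{\boxtimes k}$ up to effective exceptional corrections --- the $\delta_k^{-1}$ factor in $A_{k,L} = N_{k,L} \otimes \delta_k^{-1}$ being designed precisely to cancel the Vandermonde contribution coming from the exceptional divisor of the Hilbert--Chow morphism.

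Finally, once $L$ is sufficiently positive relative to $i$ and $j$, the pulled-back line bundle on $\widetilde{X}^k$ is very positive, so its higher cohomology vanishes by Künneth together with Kodaira vanishing on $X$; taking $S_k$-invariants then yields the desired vanishing on $X^{[k]}$. The hardest part will be the middle step: carefully tracking the various exceptional divisor contributions through the Hilbert--Chow, isospectral, and diagonal-blow-up constructions, and verifying that the twist by $\delta_k^{-1}$ combined with $N_{k,L}$ leaves something with enough positivity on each $X$-factor --- uniformly in the surface-with-arbitrary-$k$ case (where Haiman's machinery is essential) and the higher-dimensional-with-$k \leq 3$ case (where an explicit blow-up description of $X^{[k]}$ takes its place).
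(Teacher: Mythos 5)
The Koszul-type reduction from $\wedge^j M_{k,L}$ to symmetric powers of $E_{k,L}$ is a reasonable starting idea, and a dual version --- expressing $\wedge^{r_k-j}M_{k,L}^{\vee} \cong \wedge^j M_{k,L} \otimes N_{k,L}$ via symmetric powers of $E_{k,L}^{\vee}$ --- does appear in the paper's proof. The gap lies in the middle step. You assert that on an $S_k$-equivariant blow-up $\widetilde{X}^k$ of $X^k$, the pullback of $S^p E_{k,L} \otimes A_{k,L}$ becomes a positive twist of $L^{\boxtimes k}$ up to effective exceptional corrections, so that K\"unneth plus Kodaira vanishing finishes. This does not go through. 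First, $E_{k,L}$ pulls back to a rank-$k$ bundle admitting a map to $\widetilde{h}^*L^{\boxplus k}$ which \emph{drops rank} along the exceptional divisor (the rank drop is exactly what underlies Lemma \ref{N&S}), so $S^p E_{k,L}$ does not pull back to a sum of positive line bundles, and the $\delta_k^{-1}$ twist is calibrated to match $N_{k,L}$, not to supply extra ampleness on the blow-up. Second, Haiman's $X_k$ is normal, Cohen--Macaulay and Gorenstein but \emph{singular}, so Kodaira vanishing is unavailable there, and its smooth open piece $\widetilde{X}_*^k$ is not proper. The paper explicitly flags this trap in Remark \ref{remark:hard-problem}: the ``blow up and apply Kawamata--Viehweg'' strategy that works for $k=1$ fails for $k \geq 2$ because smoothness of the blow-up and global generation of the relevant ideal twist both break, and indeed $H^1(X^{[2]}, M_{2,L}^{\otimes j} \otimes A_{2,L}) \neq 0$ for $j \geq 4$ no matter how positive $L$ is.

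What actually carries the argument, and what is missing from your plan, is a relative vanishing over the Hilbert--Chow morphism $h \colon X^{[k]} \to \operatorname{Sym}^k(X)$: one first proves $R^i h_*(\wedge^j M_{k,L} \otimes \delta_k^{-\ell}) = 0$ for $i \geq 1$ and $\ell \in \{1,2\}$ (Theorem \ref{main-direct-image-vanishing}), and only then passes to the singular but projective $\operatorname{Sym}^k(X)$ (via $X^j \times \operatorname{Sym}^k(X)$) where Fujita vanishing --- not Kodaira on a smooth blow-up --- supplies the asymptotic vanishing from the ampleness of $S_{k,L}$. Establishing the relative vanishing is the real technical heart; it uses an induction on $k$ through the nested Hilbert scheme $X^{[k-1,k]}$, the direct-summand property that $E_{k,L}^{\otimes j}$ is a direct summand of $\rho_*(\tau^* E_{k-1,L}^{\otimes j})$ (Proposition \ref{direct-summand-E-k-L}), and Serre duality. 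Nothing in your outline replaces this machinery.
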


To prove Theorem \ref{thm:N_{k+2,p}}, we need to convert the cohomology computation on $\sigma_k$ into a cohomology computation on $X^{[k]}$. However, note that $\sigma_k$ may not have rational singularities, as shown in Theorem \ref{theoremA}; that is, $R^i \alpha_* \mathcal{O}_{B^k} \neq 0$ in general. This causes serious difficulties, which can be deflected by proving the \emph{Du Bois-type condition}:
$$
R^i\alpha_* \mathcal{O}_{B^k}(-Z_k) = \begin{cases} \mathcal{I}_{\sigma_{k-1}/\sigma_k} & \text{if $i=0$} \\
0 & \text{if $i > 0$}
\end{cases}.
$$
This key condition allows us to deduce property $N_{k+1,p}$ for $\sigma_k \subseteq \mathbb{P}^r$ fairly quickly from the main vanishing theorem. In the cases $n=1$ or $k=2$, the Du Bois-type condition was also shown by the formal function theorem in \cite{Chou.Song.18} and \cite{Ein.Niu.Park.20}. However, we directly derive it from the main vanishing theorem. As the name suggests, this condition also implies that $\sigma_k$ has Du Bois singularities.

The central idea to establish the main vanishing theorem is to reduce it to an analogous statement on the symmetric product $\operatorname{Sym}^k(X)$ and then apply Serre--Fujita vanishing. As we are working in characteristic zero, it is sufficient to show that  
\begin{equation}\label{eq:desiredvanintro}\tag{$\sharp$}
H^i(X^{[k]}, M_{k, L}^{\otimes j} \otimes A_{k, L})=0~~\text{ for $i>0$ and $j \geq 0$}.
\end{equation}
Let $\mathcal{Z}_{k}$ be the universal family over $X^{[k]}$ with ideal sheaf $\mathcal{I}_{\mathcal{Z}_{k}}:=\mathcal{I}_{\mathcal{Z}_{k}/X \times X^{[k]}}$. On $X^j\times X^{[k]}$ we let $\operatorname{pr}_i$ denote the $i^{\text{th}}$ projection to $X\times X^{[k]}$ and $\operatorname{pr}_{X^{[k]}}$ the projection to $X^{[k]}$. We then define a sheaf
$$
\mathcal{F} := \operatorname{pr}_1^*\mathcal{I}_{\mathcal{Z}_k}\otimes \cdots \otimes \operatorname{pr}_j^*\mathcal{I}_{\mathcal{Z}_k}\otimes (L^{\boxtimes j}\boxtimes A_{k,L})
$$
and find that
$$
R^i \operatorname{pr}_{X^{[k]},*}\mathcal{F}=\begin{cases} M_{k,L}^{\otimes j} \otimes A_{k, L} & \text{if $i=0$} \\ 0 & \text{if $i>0$}\end{cases}.
$$
Then (\ref{eq:desiredvanintro}) is equivalent to
\begin{equation}\tag{$\flat$}
H^i(X^j \times X^{[k]}, \mathcal{F}) = 0~~\text{ for $i>0$ and $j \geq 0$}.
\end{equation}
When working with $X = C$ a curve, the Hilbert scheme of points coincides with the symmetric product. In this case, $A_{k, L}$ is a sufficiently positive line bundle, so the desired vanishing (\ref{eq:desiredvanintro}) holds by Serre--Fujita vanishing. When $n \geq 2$, however, this is no longer true, and therefore, a number of results that leverage the quotient structure of $X^{[k]}$ in the curve case are no longer available. Looking at the problem from another angle, there is a well-known method to prove ($\flat$) for $k=1$ as well: one may blow up the union of the universal families (in this case, diagonals) and simply use the Kawamata--Viehweg vanishing theorem. 
This method fails for $k\geq 2$ for two important reasons. First, the blow-up is singular in general. Second, $\mathcal{F}$ is not globally generated, even when $L$ is sufficiently positive. 
What is more, the desired vanishing (\ref{eq:desiredvanintro}) does not hold in general even though $L$ is sufficiently positive. This is perhaps the most crucial difficulty in the higher dimensional case -- see Remark \ref{remark:hard-problem} for a more detailed discussion of these issues. Instead, considering the \emph{Hilbert--Chow morphism} $h \colon X^{[k]} \to \operatorname{Sym}^k(X)$, we show that
$$
R^i h_* (\wedge^j M_{k, L} \otimes A_{k, L}) = 0~~\text{ for $i>0$},
$$
which in turn is obtained by considering the terms of the form $\bigoplus S^{\ell} E_{k, L}^{\vee} \otimes \delta_{k}^{-1}$ in the resolution of $\wedge^j M_{k, L} \otimes A_{k, L}$ and proving the relative vanishing statement for those terms via a rather intricate induction on $k$. This allows us to apply Serre--Fujita vanishing on the symmetric product $\operatorname{Sym}^k(X)$ and finishes the proof of the main vanishing theorem. Given the extent to which the geometry of the Hilbert scheme of points controls the geometry of secant varieties, we believe that the subtleties highlighted above are an inherent and fundamental aspect of higher dimensions, and mark a sharp contrast with the case of curves.

On a different note, we point out here that for $k=2$ we are able to prove an effective version of Theorem \ref{intro-vanishing} by leveraging the fact that the triple nested Hilbert scheme $X^{[1,2,3]}$ has canonical singularities. 

\begin{theoremalpha}\label{intro-effective}\textnormal{(Effective Projective Normality)}
Set $L:=\omega_X \otimes A^m \otimes B$, where $A$ is very ample and $B$ is nef. If $m \geq 4n$, then $\sigma_2 \subseteq \mathbb{P}^r$ is projectively normal and $H^i(\sigma_2, \OO_{\sigma_2}(\ell))=0$ for $i \geq 1$ and $\ell \geq 1$.
\end{theoremalpha}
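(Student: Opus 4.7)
The plan is to carry out the cohomological strategy of Theorems~\ref{theoremA} and~\ref{intro-vanishing} for $k=2$ while tracking positivity at each step in order to achieve the bound $m \geq 4n$. The central new input is an effective form of the Main Vanishing on $X^{[2]}$, obtained by exploiting the canonical singularities of the triple nested Hilbert scheme $X^{[1,2,3]}$ so that Kawamata--Viehweg vanishing may be applied on a suitable birational model with controlled discrepancies.

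First, following the framework explained in the introduction, the projective normality of $\sigma_2 \subseteq \mathbb{P}^r$ together with the higher cohomology vanishing $H^i(\sigma_2, \OO_{\sigma_2}(\ell))=0$ for $i,\ell \geq 1$ both reduce to surjectivity of $S^t H^0(X,L) \twoheadrightarrow H^0(B^2, \OO_{B^2}(t))$ together with $H^i(B^2, \OO_{B^2}(t))=0$ for $i \geq 1$, $t \geq 1$. Combining the short exact sequence
$$
0 \lra \OO_{B^2}(-Z_2) \lra \OO_{B^2} \lra \OO_{Z_2} \lra 0,
$$
the identification $\OO_{B^2}(-Z_2) \cong \OO_{B^2}(-2)\otimes \pi^*A_{2,L}$, the Du Bois--type condition for $R^{\bullet}\alpha_*\OO_{B^2}(-Z_2)$, and the Eagon--Northcott complex associated to $H^0(X,L)\otimes \OO_{X^{[2]}} \twoheadrightarrow E_{2,L}$, these requirements reduce to the effective Main Vanishing
$$
H^i(X^{[2]},\ \wedge^j M_{2,L}\otimes A_{2,L}) = 0 \quad \text{for } i \geq 1, \ 0 \leq j \leq \rk M_{2,L}
$$
under the hypothesis $m \geq 4n$.

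Second, I would establish this vanishing by transporting it to a birational model built from nested Hilbert schemes. In characteristic zero it suffices to prove $H^i(X^{[2]}, M_{2,L}^{\otimes j}\otimes A_{2,L})=0$, which, after pushing forward along $X^j \times X^{[2]} \lra X^{[2]}$, is equivalent to the vanishing of
$$
H^i\Bigl( X^j \times X^{[2]},\ \bigotimes_{a=1}^j \pr_a^*\mathcal{I}_{\mathcal{Z}_2} \otimes (L^{\boxtimes j} \boxtimes A_{2,L}) \Bigr).
$$
Blowing up the $j$ copies of the universal family in $X^j \times X^{[2]}$ can be described as a tower of fibre products of the nested Hilbert schemes $X^{[1,2]}$, $X^{[2,3]}$, and $X^{[1,2,3]}$ over $X^{[2]}$, producing a birational model $Y$ on which the tensor product of ideal sheaves pulls back to a twist of an effective exceptional divisor. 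The vanishing is then reduced to Kawamata--Viehweg for a single line bundle on $Y$.

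Third, since $X^{[1,2,3]}$ has canonical singularities, so does $Y$, and on a log resolution $\widetilde{Y}\lra Y$ the discrepancies along all exceptional divisors are non-negative and can be absorbed into the positivity of $L$. Writing $L=\omega_X\otimes A^m\otimes B$ and unwinding $A_{2,L}=N_{2,L}\otimes \delta_2^{-1}$ together with the relative canonical bundle of $Y \lra X^j \times X^{[2]}$, the positive part on $\widetilde{Y}$ becomes the pullback of $L^{\boxtimes j}\boxtimes A_{2,L}$ minus a controlled exceptional contribution; explicit computation then shows that $m \geq 4n$ is precisely the threshold at which this positive part is big and nef. The main obstacle will be the exact bookkeeping of discrepancies on $\widetilde{Y}$ needed to confirm the sharp constant $4n$ --- it is here that the canonical singularities of $X^{[1,2,3]}$ are essential, as a merely qualitative positivity assumption would force a larger numerical bound.
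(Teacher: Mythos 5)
Your high-level framing (reduce to an effective vanishing statement on $X^{[2]}$, exploit the canonical singularities of $X^{[1,2,3]}$, and close with Kawamata--Viehweg) correctly identifies the key inputs, but the specific reduction and the central technical step do not work, and they diverge from the paper in ways that matter.

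First, the reduction is overclaimed. You say the Eagon--Northcott argument requires $H^i(X^{[2]}, \wedge^j M_{2,L}\otimes A_{2,L})=0$ for $i \geq 1$ and all $0 \leq j \leq \rk M_{2,L}$. In fact (see Corollary~\ref{corollary-vanishing-serre} and Remark~\ref{rem:effectivefornormalsing}), the Koszul/Eagon--Northcott chase only needs the range $0 \leq j \leq i$; this is precisely why an effective bound like $m \geq 4n$ is attainable at all. Asking for the vanishing up to $j = \rk M_{2,L}$ is circular: the rank of $M_{2,L}$ grows with $L$, so no fixed bound on $m$ can dominate it. The paper's Theorem~\ref{main-effective-vanishing} deliberately only proves the case $i \geq j \geq 1$ with $m \geq 2n+j$, and since $i \leq \dim X^{[2]} = 2n$, the threshold $4n$ emerges from $j \leq i \leq 2n$.

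Second, and more seriously, the main engine you propose --- blowing up the $j$ universal families in $X^j \times X^{[2]}$ to get a birational model $Y$ and applying Kawamata--Viehweg once --- is exactly the approach the paper argues cannot work for $k \geq 2$ (Remark~\ref{remark:hard-problem}). The two obstructions named there are precisely yours to overcome: the blow-up is singular in a way not controlled by $X^{[1,2,3]}$ alone (your claimed description of the blow-up as a tower of fiber products of $X^{[1,2]}$, $X^{[2,3]}$, $X^{[1,2,3]}$ over $X^{[2]}$ is not an accurate identification of $\operatorname{Bl}_{\pr_1^{-1}\mathcal{Z}_2 \cup \cdots \cup \pr_j^{-1}\mathcal{Z}_2}(X^j \times X^{[2]})$), and the sheaf $\bigotimes \pr_a^*\mathcal{I}_{\mathcal{Z}_2}\otimes (L^{\boxtimes j}\boxtimes A_{2,L})$ is not globally generated even for $L$ very positive, so no amount of discrepancy bookkeeping on a log resolution of $Y$ will make the relevant line bundle nef and big. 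The paper's actual route (Lemmas~\ref{pullback-tau-j=1case}--\ref{pullback-r} and Proposition~\ref{effective-vanishing}) works throughout on the two-dimensional family $X^{[1,2]}$, inducting on $j$ via the short exact sequence $0 \to \rho^*\wedge^j M_{2,L} \to \tau^*\wedge^j M_L \to \rho^*\wedge^{j-1}M_{2,L}\otimes \operatorname{res}^*L(-F_1) \to 0$; it uses the \emph{smooth} blow-up $\operatorname{Bl}_{\mathcal{V}_{1,2}}(X\times X^{[1,2]})$ for the $\tau$-pullback terms and uses $X^{[1,2,3]} \cong X^{[1,2]}\times_{X^{[2]}}X^{[2,3]}$ with its canonical singularities only to handle the single pushforward $\rho^*M_{2,L}$, not a $j$-fold fibered product over $X^{[2]}$. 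You would need to replace your single Kawamata--Viehweg step on $Y$ with this induction to obtain the bound $m \geq 4n$.
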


The ideas from the proof of the main vanishing theorem lead to a surprisingly quick proof of the following theorem on weight-one syzygies of algebraic surfaces (see item (13) in the section ``Notation and Conventions" for the definition of the Koszul cohomology groups $K_{p,1}(S,B;L)$).

\begin{theoremalpha}\label{thm:weight-one-syzygies}\textnormal{(Weight One Syzygy Vanishing)}
Let $S$ be a smooth projective surface, let $B$ and $L$ be line bundles on $S$, and fix integers $k\geq 1$ and $p \geq 0$. If $L$ is sufficiently positive relative to $p$, then
$$
B \text{ is $p$-very ample } \iff K_{p,1}(S,B;L) = 0.
$$
\end{theoremalpha}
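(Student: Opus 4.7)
The plan is to adapt the Ein--Lazarsfeld Hilbert-scheme strategy for the gonality conjecture to the surface setting, using the vanishing machinery developed for Theorem~E.

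The first step is to interpret $K_{p,1}(S,B;L)$ as a single cohomology group on $S^{[p+1]}$. For $L$ sufficiently positive, routine Koszul-complex manipulations together with Serre-type vanishings identify
$$K_{p,1}(S,B;L) \;\cong\; H^1\!\bigl(S,\, \wedge^{p+1} M_L \otimes B\bigr),$$
where $M_L = \ker\!\bigl(H^0(S,L) \otimes \OO_S \twoheadrightarrow L\bigr)$. Next, using the universal subscheme $\mathcal{Z}_{p+1}\subset S \times S^{[p+1]}$ and Voisin's technique, the derived pushforward of $\wedge^{p+1} M_L \otimes B$ via $\operatorname{pr}_2$ can be expressed in terms of the tautological bundle $E_{p+1,B}$, the determinant $N_{p+1,L}$ and the Vandermonde line bundle $\delta_{p+1}$; the associated spectral sequence degenerates by Theorem~E applied to twists of $B$, yielding an identification
$$K_{p,1}(S,B;L) \;\cong\; H^1\!\bigl(S^{[p+1]},\, E_{p+1,B} \otimes A_{p+1,L}\bigr).$$

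With this identification in hand, the forward implication is direct: if $B$ is $p$-very ample, then $H^0(S,B) \otimes \OO_{S^{[p+1]}} \twoheadrightarrow E_{p+1,B}$ is surjective, giving a syzygy bundle $M_{p+1,B}$ fitting into
$$0 \to M_{p+1,B} \to H^0(S,B) \otimes \OO_{S^{[p+1]}} \to E_{p+1,B} \to 0.$$
Tensoring by $A_{p+1,L}$ and taking cohomology reduces the desired $H^1$-vanishing to $H^1(S^{[p+1]}, A_{p+1,L}) = 0$ and $H^2(S^{[p+1]}, M_{p+1,B} \otimes A_{p+1,L}) = 0$. The first is a special case of Theorem~E (with $j=0$), and the second follows by the same Hilbert--Chow pushforward and Serre--Fujita vanishing argument on $\Sym^{p+1}(S)$ used in the proof of that theorem. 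For the converse, if $B$ fails to be $p$-very ample, then there exists a length-$(p+1)$ subscheme $\xi \subset S$ with $H^0(S,B)\to H^0(\xi, B|_\xi)$ not surjective; restricting the identification above to the point $[\xi] \in S^{[p+1]}$ (smooth since $\dim S = 2$) and performing a direct local Koszul computation produces a nonzero class.

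The main obstacle will be establishing the Voisin-style identification of the first step in a form that leverages Theorem~E effectively. Unlike the curve case, where $C^{[p+1]} = \Sym^{p+1}(C)$ is smooth and the symmetric-product description of tautological bundles is transparent, for surfaces one must simultaneously control the higher direct images under the Hilbert--Chow morphism and the differentials of the attendant spectral sequence uniformly in $p$. This is precisely the kind of subtlety that forces the positivity requirement on $L$ to depend on $p$, and it is exactly what the Hilbert-scheme vanishing arguments developed in the body of the paper are designed to handle.
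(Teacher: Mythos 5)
Your first identification is where the argument breaks. The isomorphism $K_{p,1}(S,B;L) \cong H^1(S, \wedge^{p+1}M_L \otimes B)$ holds only when $H^1(S,B) = 0$, which is not assumed here --- $B$ is an arbitrary line bundle on $S$. In general, from the long exact sequence associated to $0 \to \wedge^{p+1}M_L \otimes B \to \wedge^{p+1}H^0(L)\otimes B \to \wedge^p M_L \otimes L \otimes B \to 0$ one only gets
$$
K_{p,1}(S,B;L) \cong \ker\!\left(H^1(S,\wedge^{p+1}M_L \otimes B) \lra \wedge^{p+1}H^0(S,L)\otimes H^1(S,B)\right),
$$
a proper subspace when $H^1(B)\neq 0$. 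This matters concretely: take $p=0$, $B=\OO_S$ on an irregular surface. Here $\OO_S$ is $0$-very ample and the theorem asserts $K_{0,1}(S;L)=0$, yet for $L$ sufficiently positive one computes $H^1(S, M_L) \cong H^0(S,L)\otimes H^1(S,\OO_S) \neq 0$. So the group you are trying to kill simply does not vanish, and the plan of proving $H^1(\wedge^{p+1}M_L\otimes B)=0$ cannot succeed.

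Your second identification compounds this. You pass to $H^1(S^{[p+1]}, E_{p+1,B}\otimes A_{p+1,L})$, but the line bundle should be $N_{p+1,L}$, not $A_{p+1,L} = N_{p+1,L}\otimes\delta_{p+1}^{-1}$; the extra negative Vandermonde twist changes the cohomology. The correct statement, which is Agostini's Lemma 5.1 (Proposition 5.4 in that paper, cited in the proof), realizes $K_{p,1}(S,B;L)$ as the cokernel of
$$
H^0(S,B)\otimes H^0(S^{[p+1]}, N_{p+1,L}) \lra H^0(S^{[p+1]}, E_{p+1,B}\otimes N_{p+1,L}),
$$
and when $B$ is $p$-very ample (so $M_{p+1,B}:=\ker(H^0(B)\otimes\OO_{S^{[p+1]}}\to E_{p+1,B})$ is a bundle) this cokernel vanishes once $H^1(S^{[p+1]}, M_{p+1,B}\otimes N_{p+1,L})=0$. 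Note the sheaf is $M_{p+1,B}$, the twist is $N$, and it is an $H^1$ of the syzygy bundle --- not an $H^2$. The paper establishes this vanishing by applying the relative vanishing theorem (Theorem D, i.e., Theorem~\ref{main-direct-image-vanishing}) with the line bundle $B$ playing the role of $L$ --- the hypothesis it needs is exactly that $B$ separates $(p+1)$-schemes, not sufficient positivity of $B$. This gives $R^1 h_*(M_{p+1,B}\otimes\delta_{p+1}^{-1})=0$; writing $N_{p+1,L}\cong h^*S_{p+1,L}\otimes\delta_{p+1}^{-1}$ and using the projection formula reduces the question to $H^1(\Sym^{p+1}(S), h_*(M_{p+1,B}\otimes\delta_{p+1}^{-1})\otimes S_{p+1,L})$, which is killed by Fujita vanishing since $L$ is sufficiently positive. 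This is the place where the $p$-very ampleness of $B$ is actually used; your sketch never invokes it in the relative vanishing.

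Finally, the converse implication (vanishing of $K_{p,1}$ implies $p$-very ampleness) is not established by restricting the identification to a point $[\xi]\in S^{[p+1]}$: a nonzero class in a fiber does not lift to a nonzero global class in $H^1$. The paper does not reprove this direction --- it cites Agostini's Theorem A for it, and that proof is a genuinely separate argument.
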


For example, the $p = 0$ case of this theorem says that $B$ is globally generated if and only if the multiplication map \[\mu_{B,L} \colon  H^0(S,B)\otimes H^0(S,L) \lra H^0(S,B\otimes L)\] is surjective, and the $p = 1$ case says that $B$ is very ample if and only if the natural map 
\[H^0(S,B)\otimes \wedge^2H^0(S,L) \lra \operatorname{ker}(\mu_{B\otimes L,L})\] 
is surjective. We note that Theorem \ref{thm:weight-one-syzygies} for the cases $0 \leq p \leq 3$ was previously shown by Agostini \cite[Theorem B]{Agostini}.

One can view this theorem as a natural generalization of the Green--Lazarsfeld gonality conjecture for algebraic curves \cite[\S3.7, Conjecture]{green.lazarsfeld.86} to algebraic surfaces --- here follows a short explanation of this perspective. The gonality conjecture predicts that if $L$ is a sufficiently positive line bundle on a smooth projective curve $C$ of genus $g$, then $K_{p,1}(C;L)=0$ for $p \geq h^0(C, L)-\operatorname{gon}(C)$. By the duality theorem for Koszul cohomology \cite[Theorem 2.c.6]{Green1} we have
$$
K_{p,1}(C;L) \cong K_{r-p,1}(C,\omega_C;L)^{\vee},
$$
where $r = h^0(C,L) - 1$. More generally, Ein and Lazarsfeld proved in \cite{Ein.Lazarsfeld.Gonality} that a line bundle $B$ on $C$ is $p$-very ample if and only if $K_{p,1}(C, B; L)=0$ (see also \cite{Rathmann}, \cite{Niu.Park}), hence in particular resolving the conjecture (since $\operatorname{gon}(C) \geq p + 2$ if and only if $\omega_C$ is $p$-very ample).

A vast body of work examining questions related to the asymptotic behavior of Koszul cohomology (particularly including \cite{Ein.Lazarsfeld.Asymptotic, Yang, Ein.Lazarsfeld.Yang, Ein.Lazarsfeld.survey,Agostini,Park1, Park2}) suggested that vanishing of $K_{p,1}(X,B;L)$ for higher dimensional $X$ would be a priori stronger than $p$-very ampleness of $B$. Most notably, Ein--Lazarsfeld--Yang showed in \cite{Ein.Lazarsfeld.Yang} that $p$-jet-very ampleness of $B$ implied the vanishing and they suspected that the two conditions would be equivalent. What we have shown though, perhaps surprisingly, is that at least in the surface case it is still merely $p$-very ampleness that is equivalent to the vanishing, and $p$-jet-very ampleness is a strictly stronger notion.
We refer to Subsection \ref{subsection-weight-one-syzygies}, and in particular Remark \ref{rmk:detailed-discussion-weight-one}, for a more comprehensive discussion.

The structure of the paper is as follows: in \S\ref{section-hilbert-scheme} we establish notation and key facts concerning the geometry of Hilbert schemes and nested Hilbert schemes. In \S\ref{section-secant-sheaves} we introduce the secant variety together with certain relevant vector bundles and projective bundles associated to it on the Hilbert scheme. In this section we also conduct an important study of the boundary divisor $Z_k \subseteq B^k$ to be used in the proof of our Singularity Theorem. In \S\ref{section-embedding-theorems} we outline some local algebraic results and use these to prove our Embedding Theorem (Theorem \ref{terracini}). Then in \S\ref{section-cohomology-vanishing} we establish some preliminary lemmas before proving the Main Vanishing Theorem (Theorem \ref{main-vanishing-serre}). In \S\ref{main-results-section} we proceed to prove our Singularity Theorem in the following order: we begin with an inductive proof of normality and projective normality of $\sigma_k$ (Theorem \ref{thm:normalsing}), then proceed to prove that the singularities are Du Bois (Theorem \ref{thm:dubois}), then that they are Cohen--Macaulay (Theorem \ref{thm:CM}) when there is sufficient vanishing (from which a bit more work shows that the embedding $\sigma_k\subseteq \mathbb{P}^r$ is arithmetically Cohen--Macaulay --- Theorem \ref{thm:arithmetically-CM}), then that they are rational (Theorem \ref{thm:rational-sing}) given a little more vanishing. In this section we also include some examples demonstrating the sharpness of these results. In Section \S\ref{section-syzygies} we prove our Syzygy and Weight One Vanishing Theorems (Theorems \ref{thm:syz} and \ref{thm:wt1syz}, respectively).  Finally in \S\ref{section-questions} we pose a number of natural questions that arose in the preparation of this work and which we hope will provide both further context for our results as well as an indication of important future research directions. For the sake of completeness, we have also included an appendix in which we give a detailed construction of the cotangent sheaf on nested Quot schemes which, though the result itself (Theorem \ref{cotan-nested}) is used in the paper (see \S\ref{hilbert-key-facts}), is otherwise entirely extraneous to our work. 

\noindent \textbf{Acknowledgements.}
We would like to thank Jaros\l{}aw Buczy\'{n}ski both
for sharing with us his manuscript \cite{BK24} and for providing detailed and valuable feedback on the present work. We have also benefitted from discussions with Joachim Jelisiejew concerning normality of secant varieties and we are grateful for those and for his comments on our manuscript. Finally, we thank Daniele Agostini, Lawrence Ein, Kangjin Han, J.M. Landsberg, Robert Lazarsfeld, Wenbo Niu, Claudiu Raicu, and Lei Song for valuable discussions. The first author has been supported by the Institute for Basic Science, grant no. IBS-R032-D1. The second author has been supported by the Simons Investigator in Mathematics grant, no. $10015267$.

\section*{Notation and Conventions}\label{notation-and-conventions}

\begin{enumerate}[topsep=0pt]
    \item Throughout the paper, we work over the field $\mathbb{C}$ of complex numbers, and $X$ denotes a smooth projective variety of dimension 
    $n$.
    \item By \emph{scheme} we shall mean a separated scheme of finite type. A reduced and irreducible scheme will be a \emph{variety}.
    \item When $V$ is a vector space or an algebraic coherent sheaf on a scheme $Y$, we denote by $S^kV$ the $k^{\text{th}}$ \textit{symmetric power} of $V$, and by $\operatorname{Sym }V := \bigoplus_k S^kV$ the symmetric algebra on $V$.
    \item When $V$ is a vector space or an algebraic coherent sheaf on a scheme $Y$, we denote by $\mathbb{P}V = \mathbb{P}_YV$ the projective space or bundle of one-dimensional \textit{quotients} of $V$. That is, $\mathbb{P}V := \textnormal{Proj}_Y(\operatorname{Sym }V)$.
    \item For a coherent sheaf $\mathcal{F}$ on a scheme $Y$, the \emph{support of $\mathcal{F}$} will mean the subscheme $W \subseteq Y$ defined by the annihilator ideal of $\mathcal{F}$
    $$
    \mathcal{I}_W := \operatorname{Ann}(\mathcal{F}).
    $$
    \item If $p_1,\ldots,p_k \in Y$ are distinct closed points of a scheme $Y$, then we will say that an effective $0$-cycle $Z := \sum_in_ip_i$ (for $n_i \in \mathbb{N}$) has \emph{support}
    $$
    \operatorname{Supp}(Z) := \bigcup_i\{p_i\}.
    $$
    In other words the support $W$ is the scheme cut out by the ideal
    $$
    \mathcal{I}_W := \mathfrak{m}_{p_1}\cap\cdots\cap \mathfrak{m}_{p_k}.
    $$
    \item If $Y$ is a projective scheme, $\mathcal{L}$ a line bundle on $Y$ and $\mathcal{P}$ a property of $Y$ depending on $\mathcal{L}$ (and possibly on a further parameter $t$), we will say that $\mathcal{P}$ holds \emph{when $\mathcal{L}$ is sufficiently positive (relative to $t$)} if there is a line bundle $\mathcal{L}_t$ such that $\mathcal{P}$ holds whenever $\mathcal{L}\otimes \mathcal{L}_t^{-1}$ is ample. This definition is adapted from \cite[Definition 3.1]{Green2}, where it is referred to as ``sufficiently ample''.
    
    \item On a quasi-projective scheme $Y$ we will denote by $\omega_Y^{\bullet}$ the \emph{dualizing complex} of $Y$. If $Y$ is Cohen-Macaulay (so that $\omega_Y^{\bullet}$ is quasi-isomorphic to a shift of a coherent sheaf) we will simply write $\omega_Y$ and call it the \emph{dualizing sheaf} (or the \emph{canonical bundle} if $Y$ is Gorenstein). If $Y$ is normal then we will denote by $K_Y$ any canonical divisor.

    \item Let $Y$ be a normal variety, and $f \colon Z \lra Y$ a resolution of singularities. We say that $Y$ has \emph{rational singularities} if $R^i f_* \mathcal{O}_{Z}=0$ for $i\geq 1$. It is equivalent that $Y$ is Cohen--Macaulay and $f_* \omega_{Z} = \omega_Y$ (see \cite[Theorem 5.10]{Kollar-Mori}).

    \item Let $Y$ be a variety (not necessarily normal), and $\underline{\Omega}_Y^{\bullet}$ the Deligne--Du Bois complex (which is a generalization of the de Rham complex for smooth varieties). We say that $Y$ has \emph{Du Bois singularities} if the natural map $\mathcal{O}_Y \lra \underline{\Omega}_Y^0$ is a quasi-isomorphism. We refer to \cite[Chapter 6]{Kollar} for details.

    \item Let $Y$ be a normal projective variety, and $\Gamma$ an effective $\mathbb{Q}$-divisor on $Y$ such that $K_Y + \Gamma$ is $\mathbb{Q}$-Cartier. We say that $(Y,\Gamma)$ has \emph{Kawamata log terminal} (or \emph{klt}) singularities if $\lfloor \Gamma\rfloor=0$ and for all birational morphisms $f \colon Z \lra Y$ 
    we have
    $$
    K_Z + f_*^{-1}\Gamma \sim_{\text{lin}} f^*(K_Y + \Gamma) + \sum_{E \subseteq Z \textnormal{ exceptional}}a(E; Y,\Gamma)E
    $$
    with rational numbers $a(E;Y,\Gamma) > -1$. We say that $(Y,\Gamma)$ has \emph{log canonical} (or \emph{lc}) singularities if for all $f$ as above we have $a(E; Y,\Gamma)\geq -1$. We refer to \cite{Kollar-Mori} and \cite{Kollar} for details.
    
    \item Let $Y$ be a projective variety, $\mathcal{F}$ a coherent sheaf on $Y$, and $\mathcal{L}$ a line bundle on $Y$. The \emph{section module of $\mathcal{F}$ relative to $\mathcal{L}$} is defined as $R(Y,\mathcal{F}; \mathcal{L}):=\bigoplus_{m \geq 0}H^0(Y,\mathcal{F} \otimes \mathcal{L}^m)$. When $\mathcal{L}$ is ample, then $R(Y,\mathcal{F}; \mathcal{L})$ is a finitely generated graded module over the symmetric algebra $S := \operatorname{Sym} H^0(Y,\mathcal{L})$. 
    
    \item By Hilbert's syzygy theorem, if $\mathcal{L}$ is very ample then $R(Y,\mathcal{F}; \mathcal{L})$ admits a finite minimal graded free resolution
    $$
    0 \lra E_r \lra \cdots \lra E_2 \lra E_1 \lra E_0 \lra R(Y, V; \mathcal{L}) \lra 0
    $$
    over $S$, where $r := h^0(Y,\mathcal{L})-1$ and
    $$
    E_p\cong\bigoplus_{q \in \mathbb{Z}} K_{p,q}(Y, \mathcal{F}; \mathcal{L}) \otimes_{\mathbb{C}} S(-p-q).
    $$
    Here the $K_{p,q}(Y, \mathcal{F}; \mathcal{L})$ terms are the \emph{Koszul cohomology groups} which can be regarded as the spaces of \emph{$p^{\text{th}}$ syzygies of weight $q$}. 
    
    \item Let $\mathcal{L}$ be a basepoint free line bundle and let $M_{\mathcal{L}}$ be the kernel bundle of the evaluation map $H^0(Y, \mathcal{L}) \otimes \mathcal{O}_Y \lra \mathcal{L}$. It is well-known that if $H^i(Y, \mathcal{F} \otimes \mathcal{L}^m) = 0$ for all $i\geq 1$ and $m\geq 1$, then
    $$
    K_{p,q}(Y, \mathcal{F}; \mathcal{L}) \cong H^1(Y, \wedge^{p+1} M_{\mathcal{L}} \otimes \mathcal{F} \otimes \mathcal{L}^{q-1})~~\text{ for $p \geq 0$ and $q \geq 2$}.
    $$
    We refer to \cite[\S 3]{Ein.Lazarsfeld.Asymptotic} and \cite[\S 2]{Park2} for more details. 

    \item The \emph{Castelnuovo--Mumford regularity (with respect to a very ample line bundle $\mathcal{L}$)}, denoted by $\operatorname{reg}(\mathcal{F})$, is the minimum integer $m$ such that $K_{i,j}(Y, \mathcal{F}; \mathcal{L})=0$ for all $i \geq 0$ and $j \geq m+1$. It is the same as the minimum integer $m$ such that $H^i(Y, \mathcal{F} \otimes \mathcal{L}^{m-i})=0$ for $i>0$. See \cite[\S 1.8]{pag1} for more details. 

    \item When $\mathcal{F}=\mathcal{O}_X$, we simply write $R(Y, \mathcal{L}) := R(Y, \mathcal{O}_Y; \mathcal{L})$ and $K_{p,q}(Y; \mathcal{L}):=K_{p,q}(Y, \mathcal{O}_Y; \mathcal{L})$. Following \cite{eisenbud.green.hulek.popescu.05}, we say that $Y \subseteq \mathbb{P}H^0(Y,\mathcal{L})$ satisfies \emph{property $N_{k+1,p}$} for an integer $k \geq 1$ if $K_{i,j}(Y; \mathcal{L})=0$ for $0 \leq i \leq p$ and $j \geq k+1$.
\end{enumerate}

\section{Preliminaries on Hilbert schemes}\label{section-hilbert-scheme}

In this section, we introduce our framework for studying (nested) Hilbert schemes of points, including setting notation and terminology. 

\subsection{Hilbert scheme of points} For $k \geq 1$ we denote by $\gls{Hilb-k}$ the \emph{Hilbert scheme of $0$-dimensional subschemes of $X$ of length $k$}. We will refer to $0$-dimensional subschemes of length $k$ more briefly as \textit{$k$-schemes}. Let 
$$
\gls{mathcalZk}:=\{(x, \xi) \mid x \in \operatorname{Supp}(\xi)\} \subseteq X\times X^{[k]}
$$
be the \emph{universal family} of $X^{[k]}$. The components of the following commutative diagram
\begin{center}
    \begin{tikzcd}
        & X\times X^{[k]}\ar[dl,"\pr_1"']\ar[dr,"\pr_2"]\\
        X & \mathcal{Z}_k \ar[r] \ar[l] \ar[u,symbol=\subseteq] & X^{[k]}
    \end{tikzcd}
\end{center}
will feature heavily throughout this paper.

\subsubsection{Distinguished subschemes}\label{distinguished-subschemes} Within the Hilbert scheme $X^{[k]}$ of points, we will be variously interested in the following distinguished subschemes:
\begin{enumerate}[topsep=0pt]
    \item $\gls{Hilb-k-sm}$, which denotes the \emph{smoothable component} (also often called the \emph{main component} or the \emph{principal component}) --- it is the closure of the locus consisting of reduced $k$-schemes,
    \item $\gls{Hilb-k-gor}$, which consists of the Gorenstein $k$-schemes,
    \item $\gls{Hilb-k-sm-gor}:=X_{\operatorname{sm}} ^{[k]}\cap 
    X_{\operatorname{Gor}}^{[k]}$, which consists of smoothable Gorenstein $k$-schemes,
    \item $\gls{Hilb-k-lci}$, which consists of the local complete intersection $k$-schemes, and
    \item $\gls{Hilb-k-*}$, which consists of $k$-schemes whose reduction has at least $k-1$ points.
\end{enumerate}

\noindent We of course have inclusions $X_* ^{[k]}\subseteq X_{\operatorname{lci}}^{[k]}\subseteq X_{\operatorname{sm},\operatorname{Gor}}^{[k]}$ (one sees that an lci scheme $\xi$ is smoothable by taking a general deformation of a regular sequence defining it). Note that $X_* ^{[k]}$, $X_{\operatorname{lci}}^{[k]}$ and $X_{\operatorname{Gor}}^{[k]}$ are open (the latter two are the loci of subschemes $\xi \subseteq X$ where the conormal sheaf and the dualizing sheaf, respectively, of $\xi$ are locally free) and in fact $X_{\operatorname{lci}}^{[k]}$ is smooth (see \cite[Theorem 3.10]{huneke.ulrich.88}). In general though, $X_{\operatorname{sm}}^{[k]}$
and $X_{\operatorname{Gor}}^{[k]}$ are not smooth. While $X^{[k]}$ may be non-reduced when $\operatorname{dim}(X)$ and $k$ are large (see \cite{Jelisiejew.20}), the scheme $X^{[k]}_{\operatorname{sm}}$ is always reduced since it is the closure of a smooth variety. We note that when $X^{[k]}$ is smooth we have $\operatorname{codim}(X^{[k]}\setminus X_* ^{[k]})\geq 2$.

\subsubsection{The Hilbert--Chow morphism}
A $k$-scheme $[\xi] \in X^{[k]}$ naturally determines an effective $0$-cycle of degree $k$ on $X$. This globalizes to a morphism from $X^{[k]}$ to the $k^{\text{th}}$ symmetric product $\operatorname{Sym}^k(X)$ called the \emph{Hilbert--Chow morphism}
\begin{center}
    \begin{tikzcd}[row sep = tiny]
        \gls{Hilbert--Chow} \colon X^{[k]} \ar[r] & \operatorname{Sym}^k(X)\\
        \xi \ar[r, mapsto] & \sum\textnormal{length}(\OO_{\xi,p})\cdot p\quad 
    \end{tikzcd}
\end{center}
\noindent (see for example \cite[Theorem 7.3.1]{Brion.Kumar}). 
The morphism $h$ is birational when $X^{[k]}$ is irreducible and it is an isomorphism when $\operatorname{dim}(X) = 1$. When $\operatorname{dim}(X) > 1$ its exceptional locus $\operatorname{Exc}(h)$ parametrizes the $k$-schemes in $X$ which are non-reduced --- we will denote the divisorial part of $\operatorname{Exc}(h)\cap X_{\operatorname{sm}}^{[k]}$ by $\gls{HC-exc}$. If $\operatorname{dim}(X)=2$, the fibers of $h$ have dimension at most $k-1$ by \cite{Briancon}.

\subsubsection{The symmetric product} Let $\gls{quotient-to-sym} \colon X^k\lra \operatorname{Sym}^{k} (X)$ be the quotient map.
Consider the incidence subscheme
\[
\gls{Theta-k}:=\{(x,\eta)|x\in\operatorname{Supp}(\eta)\}\subseteq X\times \operatorname{Sym}^k (X)
\]
with the reduced scheme structure.
We have an obvious identification $\Theta_k\cong X\times \operatorname{Sym}^{k-1}(X)$.
Furthermore, we have closed embeddings $\mathcal{Z}_k\lra(\operatorname{id}\times h)^{-1} \Theta_k$ and $\Delta_k ^0\lra(\operatorname{id}\times q)^{-1}\Theta_k$, where $\Delta_k ^0$ is the union of the diagonals $\Delta_{X\times X^k}^{0,1}\cup \cdots \cup \Delta_{X\times X^k} ^{0,k}$ (here $\Delta_{X\times X^k} ^{0,j}=\{(x,x_1,\cdots, x_k) \text{ such that } x=x_j\}$).
Define $\gls{Sym-X-*-k}$ to be the open subset of $\Sym^k (X)$ consisting of $0$-cycles $\eta$ such that $|\operatorname{Supp}(\eta)|\geq k-1$. Let $\gls{X-*-k} :=q^{-1}( \operatorname{Sym}_* ^k (X))$. Finally, let $\gls{tilde-X-*-k}$ be the blowing up of $X_* ^k$ along the union of all pairwise diagonals, denoted as \gls{Deltak}. By \cite[Lemma 4.4]{Fogarty.73}, there is a Cartesian diagram
\begin{center}
\begin{tikzcd}
\widetilde{X}_* ^k\ar[r, "\gls{tilde-h-k}"]\ar[d, "\gls{tilde-q-k}"]& X_* ^k\ar[d,"q"]\\
X_* ^{[k]}\ar[r, "h"] &\operatorname{Sym}_* ^k (X)
\end{tikzcd}.
\end{center}
Let $\Gamma_{i,j}$ be the $\widetilde{h}$-exceptional divisors mapping to $\Delta_{X^k} ^{i,j}$, and let $\gls{Gamma-k}$ be their sum. We may complete the above diagram as
\begin{center}
\begin{tikzcd}
\widetilde{\mathcal{Z}}_k\ar[r,"p"]\ar[d]& \widetilde{X}_* ^k\ar[r, "\widetilde{h}"]\ar[d, "\widetilde{q}"]& X_* ^k\ar[d,"q"]\\
\mathcal{Z}_k \cap X\times X_* ^{[k]}\ar[r,"\operatorname{pr}_2"]& X_* ^{[k]}\ar[r, "h"] &\operatorname{Sym}_* ^k (X)
\end{tikzcd}.
\end{center}
Note that there are $k$ morphisms $f_i \colon \widetilde{X}_* ^k \lra \mathcal{Z}_k$ coming from the $k$ projections $\widetilde{X}_* ^k \lra X$. Since $p$ is flat, we have that $\widetilde{\mathcal{Z}}_k = \bigcup_i \operatorname{Graph}(f_i)$.

\subsubsection{The isospectral Hilbert scheme}\label{isospectral} We define the \emph{isospectral Hilbert scheme} $X_k$ by taking the reduced fiber product
\begin{center}
\begin{tikzcd}
\gls{X-k} \ar[r, "H"]\ar[d,  "Q"]& X^k\ar[d,"q"]\\
X ^{[k]}\ar[r,  "h"] &\operatorname{Sym} ^k (X)
\end{tikzcd}.
\end{center}
By \cite[Theorem 3.1]{Haiman.01}, we have that $X_k$ is normal, Cohen--Macaulay and Gorenstein when $X$ is a smooth projective surface. 
Note that taking the reduction is essential, since the fiber product is not reduced for $\operatorname{dim}(X)\geq 2$ and $k\geq 2$. A subtle but fundamental consequence of this fact is that the closed embedding $\mathcal{Z}_k\lra (\operatorname{id}\times h_k)^{-1} \Theta_k$ is not an isomorphism in general.
By the above discussion, we have that $\widetilde{X}_* ^k$ is an open subset of $X_k$ and when $X^{[k]}$ is smooth then $\operatorname{codim}(X_k\setminus \widetilde{X}_* ^k)\geq 2$. 

\subsubsection{The discriminant and its root}\label{discriminant}
Define the \emph{Vandermonde line bundle} of the finite morphism $\pr_2\vert_{\mathcal{Z}_k}$ 
to be
$$
\gls{delta-k}:=\operatorname{det} (\operatorname{pr}_{2,*} \mathcal{O}_{\mathcal{Z}_k})^{-1}
$$
on $X^{[k]}$. 
This terminology is used by analogy with the discriminant of a finite extension of algebraic number fields --- the corresponding ideas for a finite morphism of smooth varieties $f \colon Y \lra Z$ are as follows. Firstly, the ramification locus of $f$ is scheme-theoretically the rank-drop scheme of the differential $f^*\omega_Z \lra \omega_Y$.  Equivalently, it is the support of the cokernel of the induced $\OO_Y$-homomorphism
$$
D \colon \OO_Y \lra \omega_{Y/Z}.
$$
Similarly, the branch locus is the rank-drop scheme of $f_*(D) \colon f_*\OO_Y \lra f_*\omega_{Y/Z}$, that is
$$
B := \textnormal{Supp}(\textnormal{coker}(f_*(D))).
$$
Notice that $f_*\omega_{Y/Z} \cong (f_*\OO_Y)^{\vee}$ by Grothendieck duality and therefore $f_*(D)$ equivalently comes from composing the multiplication $f_*\OO_Y\otimes_{\OO_Z}f_*\OO_Y \lra f_*\OO_Y$ of the $\OO_X$-algebra $f_*\OO_Y$ with the trace map $f_*\OO_Y \lra \OO_Z$. From the exact sequence
\begin{center}
\begin{tikzcd}
    0 \ar[r] & f_*\OO_Y \ar[r,"f_*(D)"] & (f_*\OO_Y)^{\vee} \ar[r] & \textnormal{coker}(f_*(D)) \ar[r] & 0
\end{tikzcd}
\end{center}
we see that $\textnormal{det}(f_*\OO_Y)^{-1} \cong \textnormal{det}(f_*\OO_Y)\otimes \OO_Z(B)$, or  in other words,
$$
\OO_Z(-B) \cong \textnormal{det}(f_*\OO_Y)^{\otimes 2}.
$$

\noindent When $X^{[k]}$ is smooth the branch locus $\operatorname{Br}_k$ of the finite morphism $\operatorname{pr}_2\vert_{\mathcal{Z}_k}:\mathcal{Z}_k\lra X^{[k]}$ is a Cartier divisor, the line bundle $\OO_{X^{[k]}}(-\operatorname{Br}_k)$ is the \emph{discriminant (ideal)} of the that morphism and therefore $\delta_k^{-1}$ is its square root:
$$
\delta_{k}^{\otimes 2}=\mathcal{O}_{X^{[k]}}(\operatorname{Br}_k).
$$
Moreover, when $\operatorname{dim}(X) \geq 2$, this branch locus coincides with the exceptional divisor $E_k$.

\begin{lemma}\label{pullback-delta}
We have $\widetilde{q} ^* \delta_k \cong \mathcal{O}_{\widetilde{X}_* ^k}(\Gamma_k)$.
\end{lemma}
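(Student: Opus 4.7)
The plan is to use flat base change to reduce the computation of $\widetilde{q}^*\delta_k$ to that of a determinant on $\widetilde{X}_*^k$, and then compute this determinant explicitly using the decomposition $\widetilde{\mathcal{Z}}_k=\bigcup_i \operatorname{Graph}(f_i)$.

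Since $\widetilde{\mathcal{Z}}_k$ is by definition the fibre product $\mathcal{Z}_k\times_{X_*^{[k]}}\widetilde{X}_*^k$ and $\pr_2\colon \mathcal{Z}_k\cap (X\times X_*^{[k]})\to X_*^{[k]}$ is finite and flat of degree $k$, flat base change along $\widetilde{q}$ yields $\widetilde{q}^*\pr_{2,*}\mathcal{O}_{\mathcal{Z}_k}\cong p_*\mathcal{O}_{\widetilde{\mathcal{Z}}_k}$, and hence $\widetilde{q}^*\delta_k^{-1}\cong \det p_*\mathcal{O}_{\widetilde{\mathcal{Z}}_k}$. The task therefore reduces to computing this latter determinant line bundle on $\widetilde{X}_*^k$.

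Next I would exploit the identification $\widetilde{\mathcal{Z}}_k=\bigcup_i\operatorname{Graph}(f_i)\subseteq X\times \widetilde{X}_*^k$. On the open locus $X_*^{[k]}$ every length-$k$ scheme has support of cardinality at least $k-1$, so at most one pair of graphs can collide at any given point of $\widetilde{X}_*^k$ and triple (or higher) intersections never occur. This yields a \v{C}ech-type short exact sequence
\[
0\lra \mathcal{O}_{\widetilde{\mathcal{Z}}_k}\lra \bigoplus_i\mathcal{O}_{\operatorname{Graph}(f_i)}\lra \bigoplus_{i<j}\mathcal{O}_{\operatorname{Graph}(f_i)\cap \operatorname{Graph}(f_j)}\lra 0.
\]
The key geometric input is the scheme-theoretic identification $\operatorname{Graph}(f_i)\cap \operatorname{Graph}(f_j)\cong \Gamma_{i,j}$ via $p$: this intersection is cut out by the $n$ components of $f_i-f_j\colon \widetilde{X}_*^k\to X$, and by the universal property of blowing up $\Delta_{X^k}^{i,j}$ the ideal they generate is identified with the invertible ideal $\mathcal{O}(-\Gamma_{i,j})$, so that the intersection maps isomorphically onto $\Gamma_{i,j}$.

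Pushing the sequence forward along the finite morphism $p$, and using that each $\operatorname{Graph}(f_i)$ maps isomorphically onto $\widetilde{X}_*^k$, we obtain a short exact sequence of sheaves on $\widetilde{X}_*^k$,
\[
0\lra p_*\mathcal{O}_{\widetilde{\mathcal{Z}}_k}\lra \mathcal{O}_{\widetilde{X}_*^k}^{\oplus k}\lra \bigoplus_{i<j}\mathcal{O}_{\Gamma_{i,j}}\lra 0.
\]
Taking determinants and using the identity $\det \mathcal{O}_{\Gamma_{i,j}}\cong \mathcal{O}(\Gamma_{i,j})$ (from the Koszul resolution $0\to \mathcal{O}(-\Gamma_{i,j})\to \mathcal{O}\to \mathcal{O}_{\Gamma_{i,j}}\to 0$) then gives $\det p_*\mathcal{O}_{\widetilde{\mathcal{Z}}_k}\cong \mathcal{O}(-\Gamma_k)$, which is the claimed isomorphism $\widetilde{q}^*\delta_k\cong \mathcal{O}(\Gamma_k)$. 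The main subtlety I anticipate is verifying the scheme-theoretic identification $\operatorname{Graph}(f_i)\cap \operatorname{Graph}(f_j)\cong \Gamma_{i,j}$, which relies essentially on the blowup structure of $\widetilde{X}_*^k$; a secondary point is the exactness at the right of the \v{C}ech sequence, which requires absence of triple collisions, and which is precisely why one restricts to the ``$*$'' locus throughout.
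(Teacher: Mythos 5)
Your proof is correct and follows the same strategy as the paper's: flat base change along $\widetilde{q}$ to identify $\widetilde{q}^*\delta_k^{-1}$ with $\det p_*\mathcal{O}_{\widetilde{\mathcal{Z}}_k}$, the Mayer--Vietoris sequence for the union $\widetilde{\mathcal{Z}}_k=\bigcup_i\operatorname{Graph}(f_i)$, push-forward along the finite map $p$, and then taking determinants. The paper's write-up is terser and omits several details you supply: it does not explicitly mention the flat base change step, it does not explain the scheme-theoretic identification $\operatorname{Graph}(f_i)\cap\operatorname{Graph}(f_j)\cong\Gamma_{i,j}$ (your argument via the universal property of the blowup is the right one), and it does not comment on why right-exactness of the \v{C}ech sequence holds (your observation that the absence of triple collisions on the $*$-locus is exactly what makes the cover nerve simple is the correct justification). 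These are all points worth having spelled out, and your identification of them as the main subtleties is apt.
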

\begin{proof}
There is a short exact sequence
\[
0\lra \mathcal{O}_{\widetilde{\mathcal{Z}}_k}\lra
\bigoplus_{i=1} ^k \mathcal{O}_{\operatorname{Graph}(f_i)}\lra
\bigoplus_{1\leq i<j\leq k}
\mathcal{O}_{\operatorname{Graph}(f_i)\cap \operatorname{Graph}(f_j)}\lra 0.
\]
Since $p$ is finite, we obtain a short exact sequence
\[
0\lra p_* \mathcal{O}_{\widetilde{\mathcal{Z}}_k}
\lra \mathcal{O}_{\widetilde{X}_* ^k} ^{\oplus k}
\lra \mathcal{O}_{\Gamma_k}\lra 0.
\]
The statement follows by taking the determinant of the left map. 
\end{proof} 

\subsubsection{Smoothness and irreducibility}\label{subsubsection-smoothness}
The Hilbert scheme of points on a connected projective scheme is itself connected (see \cite{Hartshorne.Connectedness} and \cite[Proposition 2.3]{Fogarty.68}), so smoothness will imply irreducibility. We have that
$$
\text{$X^{[k]}$ is smooth}~\Longleftrightarrow~\text{$\dim(X) \leq 2$ or $k \leq 3$.}
$$
The first case was originally proven in \cite[Theorem 2.4]{Fogarty.68} 
and both cases are encompassed by \cite[\S 0.2 and Theorem 3.2.2]{Cheah.98}.

The Gorenstein locus $X_{\textnormal{Gor}}^{[k]}$ is of particular interest in the literature ---  it is known that
$$
\text{$X_{\textnormal{Gor}}^{[k]}$ is irreducible if either $k \leq 13$ or $k=14$ and $\dim (X) \leq 5$.}
$$
Here, both results are \cite[Theorems A and B]{Casnati.Jelisiejew.Notari.15}. Keeping in mind the above statement about the irreducibility of the Gorenstein locus, we see by \cite[Proposition 2.1]{Casnati.Notari.07} and \cite[Theorem B]{Casnati.Notari.09} that 
$$
\text{$X_{\operatorname{Gor}} ^{[k]}$ is smooth}~\Longleftrightarrow~\text{$X_{\operatorname{sm,Gor}}^{[k]}$ is smooth}~\Longleftrightarrow~\text{$\dim(X) \leq 3$ or $k\leq 5$}.
$$

\subsubsection{The canonical bundle}\label{hilbert-canonical}
Assume that $X^{[k]}$ is smooth (that is, either $n\leq 2$ or $k\leq 3$, by the previous section). The canonical bundle of the Hilbert scheme can be expressed in terms of the canonical bundle of $X$ as follows. For any line bundle $L$ on $X$, the line bundle $L^{\boxtimes k}$ on $X^k$ descends equivariantly to a line bundle $\gls{S-k}$ on $\operatorname{Sym}^k(X)$. We denote by $\gls{T-k}$ the pullback $h^* S_{k,L}$ of this bundle to $X^{[k]}$ via the Hilbert--Chow morphism. Then we have 
$$
\omega_{X^{[k]}}\cong T_{k,\omega_X}\otimes \delta_k ^{n-2}.
$$

\noindent One may consult \cite[Theorem 7.4.6]{Brion.Kumar} for the case $n=2$ --- the same proof gives the result for any $n$ provided that $k\leq 3$. For the reader's convenience, we will give a short proof  in Lemma \ref{canonical-bundle}. 

\subsubsection{The tangent and cotangent sheaves} The description of the cotangent sheaf of Quot in Appendix \ref{Section-cotangent} gives
the following two theorems. 

\begin{theorem}\label{cotangent-sheaf-hilbert-scheme}
There are isomorphisms
\[
\Omega_{X^{[k]}} ^1 \cong \SExt_{\operatorname{pr}_2} ^n(\mathcal{O}_{\mathcal{Z}_k}, \mathcal{I}_{\mathcal{Z}_k}\otimes \operatorname{\omega}_{\operatorname{pr}_2})
~~\text{ and }~~
T_{X^{[k]}} \cong \operatorname{pr}_{2,*} \SHom (\mathcal{I}_{\mathcal{Z}_k}, \mathcal{O}_{\mathcal{Z}_k}).
\]
\end{theorem}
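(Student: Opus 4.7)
The plan is that both isomorphisms are instances of the general description of the cotangent sheaf of a (nested) Quot scheme proved in Appendix A, specialized to the case $X^{[k]} = \operatorname{Quot}(\mathcal{O}_X, k)$. An appendix-free route to each statement would proceed as follows.

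For the tangent-sheaf isomorphism, I would invoke the classical deformation-theoretic description of the Hilbert scheme. On closed points one has $T_{[\xi]}X^{[k]} = \Hom_{\mathcal{O}_X}(\mathcal{I}_\xi, \mathcal{O}_\xi)$, since first-order deformations of a closed subscheme $\xi \subseteq X$ of length $k$ correspond exactly to $\mathcal{O}_X$-homomorphisms from the ideal sheaf of $\xi$ to its structure sheaf. Carrying out this computation in families over $X^{[k]}$, using the universal family $\mathcal{Z}_k$ and its universal ideal sheaf $\mathcal{I}_{\mathcal{Z}_k}$, globalizes to the isomorphism
\[
T_{X^{[k]}} \cong \pr_{2,*}\SHom(\mathcal{I}_{\mathcal{Z}_k}, \mathcal{O}_{\mathcal{Z}_k}),
\]
which is exactly the special case of the Quot-scheme formula for a rank-one locally free input.

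For the cotangent-sheaf isomorphism, the appearance of $\SExt^n_{\pr_2}$ and of $\omega_{\pr_2}$ on the right-hand side points directly to relative Grothendieck--Serre duality for $\pr_2 : X \times X^{[k]} \to X^{[k]}$, which is smooth of relative dimension $n$ with relative dualizing sheaf $\omega_{\pr_2} \cong \pr_1^*\omega_X$. The motivating picture is fiberwise Serre duality on $X$: for any length-$k$ scheme $\xi \subseteq X$,
\[
\Hom_X(\mathcal{I}_\xi, \mathcal{O}_\xi)^{\vee} \cong \operatorname{Ext}^n_X(\mathcal{O}_\xi, \mathcal{I}_\xi \otimes \omega_X),
\]
which is precisely the statement on each closed fiber. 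To globalize it, I would apply the relative duality formula
\[
R\pr_{2,*}R\SHom(\mathcal{F}, \omega_{\pr_2}[n]) \cong R\SHom(R\pr_{2,*}\mathcal{F}, \mathcal{O}_{X^{[k]}})
\]
in tandem with the long exact sequence obtained by applying $\SExt^{\bullet}_{\pr_2}(\mathcal{O}_{\mathcal{Z}_k}, - \otimes \omega_{\pr_2})$ to the short exact sequence
\[
0 \lra \mathcal{I}_{\mathcal{Z}_k} \lra \mathcal{O}_{X \times X^{[k]}} \lra \mathcal{O}_{\mathcal{Z}_k} \lra 0,
\]
and identify $\Omega^1_{X^{[k]}}$ with the resulting top-degree relative Ext sheaf.

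The main obstacle in this program is the spectral-sequence bookkeeping inherent in the relative duality step. The key technical input is the vanishing $\SExt^j_{\pr_2}(\mathcal{O}_{\mathcal{Z}_k}, \omega_{\pr_2}) = 0$ for $j < n$, which should follow from the fact that $\mathcal{Z}_k$ is pure of codimension $n$ in $X \times X^{[k]}$ and finite over $X^{[k]}$, so that the relative $\SExt$-sheaves of $\mathcal{O}_{\mathcal{Z}_k}$ against $\omega_{\pr_2}$ concentrate in top relative degree by local duality. Once this vanishing is in place, the long exact sequence above collapses, extracting $\Omega^1_{X^{[k]}}$ in exactly relative degree $n$ and producing the claimed isomorphism; the connection to the tangent-sheaf formula is then the manifestation of the fiberwise Serre duality above at the level of sheaves on $X^{[k]}$.
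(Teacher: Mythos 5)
Your opening observation --- that both isomorphisms are nothing more than the specialization of the Quot-scheme cotangent/tangent formula of Appendix A (equivalently, Lehn's theorem) to $\mathcal{H}=\mathcal{O}_X$, $\mathcal{F}=\mathcal{O}_{\mathcal{Z}_k}$, $\mathcal{K}=\mathcal{I}_{\mathcal{Z}_k}$, $d=n$ --- is exactly how the paper obtains this statement, and on its own constitutes a complete proof given the appendix.

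The ``appendix-free route'' you sketch, however, has a genuine gap in the cotangent step. You establish (correctly) the vanishing of $\SExt^j_{\pr_2}(\mathcal{O}_{\mathcal{Z}_k},\omega_{\pr_2})$ for $j<n$ and observe that the long exact sequence coming from $0\to \mathcal{I}_{\mathcal{Z}_k}\to\mathcal{O}_{X\times X^{[k]}}\to \mathcal{O}_{\mathcal{Z}_k}\to 0$ together with relative duality can be used to isolate $\SExt^n_{\pr_2}(\mathcal{O}_{\mathcal{Z}_k},\mathcal{I}_{\mathcal{Z}_k}\otimes\omega_{\pr_2})$. But none of this machinery ever mentions $\Omega^1_{X^{[k]}}$; it only relates the various $\SExt$ sheaves to one another. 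The sentence ``identify $\Omega^1_{X^{[k]}}$ with the resulting top-degree relative Ext sheaf'' is the entire content of the theorem, not a step. Producing the actual comparison map --- and verifying it is an isomorphism --- is precisely what makes Lehn's proof nontrivial: it requires embedding $\operatorname{Quot}$ into a product of flag schemes, pinning down explicit equations for the image (the vanishing locus of the multiplication maps $\varphi\oplus\psi$), passing to the conormal sheaf of that embedding, and assembling the cotangent exact sequence into the $\SExt$ description. Your plan omits this step entirely, so it does not close.

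There is a second, subtler issue worth flagging. You build the tangent-sheaf formula first via fiberwise deformation theory and then want to pass to the cotangent formula by relative duality. That direction is dangerous: the tangent sheaf is always $\SHom(\Omega^1,\mathcal{O})$, but on a singular scheme $\Omega^1$ is not reflexive in general, so one cannot recover $\Omega^1$ by dualizing $T$. The paper's logic runs in the opposite (and logically safe) order: the cotangent sheaf is computed first (via Lehn), and the tangent-sheaf identity $T_{X^{[k]}}\cong \pr_{2,*}\SHom(\mathcal{I}_{\mathcal{Z}_k},\mathcal{O}_{\mathcal{Z}_k})$ is then a consequence via Grothendieck duality applied to $\SHom(\Omega^1_{X^{[k]}},\mathcal{O}_{X^{[k]}})$. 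Your deformation-theoretic identification of the tangent sheaf on closed points is classical and correct, but upgrading it to an isomorphism of coherent sheaves requires either a careful base-change argument in families or, as the paper does, deducing it from the cotangent computation --- the ``globalizes to'' in your write-up is carrying real weight that is not made explicit.
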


\begin{theorem}\label{conormal-cotangent}
The natural pairing
\[
(\mathcal{I}_{\mathcal{Z}_k}/\mathcal{I}_{\mathcal{Z}_k}^2)\otimes
\operatorname{pr}_2 ^*
\operatorname{pr}_{2,*} \SHom (\mathcal{I}_{\mathcal{Z}_k}, \mathcal{O}_{\mathcal{Z}_k})\lra
\mathcal{O}_{\mathcal{Z}_k}
\]
is compatible with the natural map
\[
\gls{D-k} \colon \mathcal{I}_{\mathcal{Z}_k}/\mathcal{I}_{\mathcal{Z}_k}^2\lra
\operatorname{pr}_2 ^* \Omega_{X^{[k]}} ^1 \vert_{\mathcal{Z}_k}.
\]
\end{theorem}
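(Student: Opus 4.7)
The plan is to verify that the natural pairing in the statement factors through $D_k \otimes \operatorname{id}$, with the resulting pairing on $\operatorname{pr}_2^*\Omega_{X^{[k]}}^1|_{\mathcal{Z}_k} \otimes \operatorname{pr}_2^*T_{X^{[k]}}|_{\mathcal{Z}_k}$ being the pullback of the canonical duality pairing --- where we use the identification $T_{X^{[k]}} \cong \operatorname{pr}_{2,*}\SHom(\mathcal{I}_{\mathcal{Z}_k}, \mathcal{O}_{\mathcal{Z}_k})$ of Theorem \ref{cotangent-sheaf-hilbert-scheme}. The strategy is to unwind both sides using the appendix constructions until the compatibility becomes a Kodaira--Spencer type identity arising from the universal short exact sequence $0 \lra \mathcal{I}_{\mathcal{Z}_k} \lra \mathcal{O}_{X \times X^{[k]}} \lra \mathcal{O}_{\mathcal{Z}_k} \lra 0$, which is the common input to the construction of both the pairing and the map $D_k$.

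Concretely, I would first recall that $D_k$ is the $\operatorname{pr}_2^*\Omega_{X^{[k]}}^1$-component of the restricted de Rham differential: using the splitting $\Omega_{X \times X^{[k]}}^1|_{\mathcal{Z}_k} = \operatorname{pr}_1^*\Omega_X^1|_{\mathcal{Z}_k} \oplus \operatorname{pr}_2^*\Omega_{X^{[k]}}^1|_{\mathcal{Z}_k}$, one has $D_k(\overline{f}) = d_{X^{[k]}} f|_{\mathcal{Z}_k}$ for any local section $f$ of $\mathcal{I}_{\mathcal{Z}_k}$. Next, I would unwind the isomorphism of Theorem \ref{cotangent-sheaf-hilbert-scheme} as a Kodaira--Spencer type map: it assigns to a tangent vector $v$ at $[\xi] \in X^{[k]}$ the unique homomorphism $\varphi_v \in \Hom(\mathcal{I}_\xi, \mathcal{O}_\xi)$ whose graph defines the first-order deformation of $\mathcal{Z}_k$ in the direction $v$, namely the ideal $\mathcal{I}_\xi + \varepsilon \cdot \varphi_v(\mathcal{I}_\xi)$ in $\mathcal{O}_X[\varepsilon]/(\varepsilon^2)$. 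With this identification, the natural pairing in the statement becomes, after applying the adjunction counit $\operatorname{pr}_2^*\operatorname{pr}_{2,*} \lra \operatorname{id}$, the evaluation $(\overline{f}, \varphi) \mapsto \overline{\varphi(f)} \in \mathcal{O}_{\mathcal{Z}_k}$.

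The compatibility then reduces, fiber by fiber at a closed point $(x, [\xi]) \in \mathcal{Z}_k$, to the identity $\langle D_k(\overline{f}), v \rangle_{(x,[\xi])} \equiv \varphi_v(f)(x) \pmod{\mathcal{I}_\xi}$ for $f \in \mathcal{I}_\xi$ and $v \in T_{[\xi]}X^{[k]}$. This is exactly the defining property of $\varphi_v$: expanding $f$ along the first-order deformation of $[\xi]$ associated to $v$, the requirement that $f$ continues to vanish on the deformed $\mathcal{Z}_k$ forces $d_{X^{[k]}} f(v) \equiv \varphi_v(f) \pmod{\mathcal{I}_\xi}$. Globalizing this pointwise check is routine because $\operatorname{pr}_2|_{\mathcal{Z}_k}$ is flat and all constructions are $\mathcal{O}_{\mathcal{Z}_k}$-linear and natural in the universal family. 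The only real obstacle is the bookkeeping with adjunctions and base changes --- in particular tracking how the evaluation pairing on $X \times X^{[k]}$ interacts with the counit $\operatorname{pr}_2^*\operatorname{pr}_{2,*} \lra \operatorname{id}$ and with the decomposition of $\Omega_{X \times X^{[k]}}^1|_{\mathcal{Z}_k}$ --- but no geometric input is needed beyond the appendix material already used to construct both sides.
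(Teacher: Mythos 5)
Your approach is correct but differs from the paper's. The paper's proof lives entirely in the appendix (\S A.3 and \S A.6): one fixes a suitable twist $n_0$ to obtain a closed embedding $\iota_{n_0}\colon X^{[k]} \hookrightarrow G(\mathcal{H}_{n_0}, P(n_0))$ into a Grassmannian scheme, then uses the commutative diagram involving the relative Euler sequence of $\mathbb{P}(F) \subseteq Y\times_S \mathbb{P}(\mathcal{H})$ in \S A.3 to show that, on the Grassmannian, the map $\pi^*K \to \pi^*\Omega^1_{G(\mathcal{H},r)}(1)$ induced by the conormal/cotangent sequence is precisely the natural pairing $K\otimes \SHom(K,F)\to F$; the statement for $X^{[k]}$ then follows by pulling back along $\iota_{n_0}$, which is the content of \S A.6. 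You instead bypass the Grassmannian embedding, invoke the Kodaira--Spencer description of $T_{X^{[k]}}\cong \operatorname{pr}_{2,*}\SHom(\mathcal{I}_{\mathcal{Z}_k},\mathcal{O}_{\mathcal{Z}_k})$, and verify the compatibility fiber-by-fiber via a dual-numbers computation. Both routes are legitimate, but they buy different things: the paper's structural argument works uniformly for arbitrary Quot schemes and never requires a pointwise check, while yours is more geometric and transparent. The one thing you should be careful about is that you implicitly assume the isomorphism of Theorem \ref{cotangent-sheaf-hilbert-scheme} \emph{is} the Kodaira--Spencer map. That is true, but it is not a formality: the appendix constructs that isomorphism via the Grassmannian closed embedding and Grothendieck duality, and the coincidence with Kodaira--Spencer is precisely what the diagram chase in \S A.3 (showing the map induced by the Euler sequence agrees with the differential of the universal sequence) establishes. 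So if you wanted to make your argument fully independent of the appendix, you would need to supply a separate verification of that coincidence; as stated, you are drawing on essentially the same computation that powers the paper's proof, just packaged differently. Also, a small inaccuracy in your setup: the first-order deformation of $\xi$ attached to $\varphi_v$ is not cut out by $\mathcal{I}_\xi + \varepsilon\,\varphi_v(\mathcal{I}_\xi)$ --- the deformed ideal in $\mathcal{O}_X[\varepsilon]/(\varepsilon^2)$ is $\{\,f+\varepsilon g : f\in\mathcal{I}_\xi,\ \overline{g}=\varphi_v(\overline f)\ \text{in}\ \mathcal{O}_\xi\,\}$, which also contains $\varepsilon\cdot\mathcal{O}_X$-multiples of elements whose reduction lands in $\mathcal{I}_\xi$ --- but the fiberwise identity you deduce from it is correct.
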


\subsection{The nested Hilbert scheme of points}\label{nestedHilb}
For $m\leq k$, the \emph{nested Hilbert scheme of $m$-subschemes of $k$-schemes} $\gls{Nested-Hilb}\subseteq X^{[m]}\times X^{[k]}$ consists of pairs $(\xi, \eta)$ such that $\xi\subseteq \eta$. 
There are natural maps $\gls{tau-m-k} \colon X^{[m,k]}\lra X^{[m]}$ and
$\gls{r-m-k} \colon X^{[m,k]}\lra X^{[k]}$ giving rise to the diagram
\begin{center}
    \begin{tikzcd}
    & X^{[m,k]} \ar[r,symbol=\subseteq]\ar[dl,"\tau"] \ar[dr,"\rho"'] & X^{[m]}\times X^{[k]}\\
    X^{[m]}& & X^{[k]}
    \end{tikzcd}.
\end{center}
The universal families 
$$
\gls{relative-family-small}:=\{(x, \xi \subseteq \eta) \mid x \in \xi \} \subseteq \gls{relative-family-big}:=\{(x, \xi \subseteq \eta) \mid x \in \eta\} \subseteq X\times X^{[m,k]}
$$
give rise to the commutative diagram 
\begin{center}
    \begin{tikzcd}
    & X\times X^{[m,k]} \ar[r,symbol=\subseteq]\ar[ddl,"\pr_1"'] \ar[ddr,"\pr_2"] &  X\times X^{[m]}\times X^{[k]}\\
    & \mathcal{W}_{m,k}\ar[u,symbol=\subseteq] \ar[dl]\ar[dr]\\
    X & \mathcal{V}_{m,k} \ar[u,symbol=\subseteq]\ar[l]\ar[r] & X^{[m,k]}
    \end{tikzcd}.
\end{center}

\subsubsection{The cotangent sheaves}
\noindent We have an analog to Theorem \ref{cotangent-sheaf-hilbert-scheme} (see Appendix \ref{Section-cotangent} for the proof).

\begin{theorem}\label{cotangent-sheaf-nested-hilbert-scheme}
The cotangent sheaf of the nested Hilbert scheme $X^{[m,k]}$
may be identified as 
\[
\Omega_{X^{[m,k]}}^1\cong
\operatorname{coker}\left(\SExt_{\operatorname{pr}_2} ^n(\mathcal{O}_{\mathcal{V}_{m,k}}, \mathcal{I}_{\mathcal{W}_{m,k}})
\lra \SExt_{\operatorname{pr}_1}^n (\mathcal{O}_{\mathcal{W}_{m,k}}, \mathcal{I}_{\mathcal{W}_{m,k}})\oplus 
\SExt_{\operatorname{pr}_2}^n (\mathcal{O}_{\mathcal{V}_{m,k}}, \mathcal{I}_{\mathcal{V}_{m,k}})\right)\otimes \omega_{\operatorname{pr}_2}.
\]
\end{theorem}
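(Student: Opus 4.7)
The plan is to realize $X^{[m,k]}$ as a closed subscheme of $X^{[m]}\times X^{[k]}$ cut out by the nesting condition $\xi \subseteq \eta$, and to derive the desired cokernel presentation by combining the conormal exact sequence for this embedding with Theorem \ref{cotangent-sheaf-hilbert-scheme} applied separately to $X^{[m]}$ and $X^{[k]}$. The statement will in fact be the Hilbert-scheme specialization of a more general result (Theorem \ref{cotan-nested}) for nested Quot schemes carried out in Appendix \ref{Section-cotangent}.

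First I would write down the conormal sequence for the embedding $\tau\times\rho \colon X^{[m,k]} \hookrightarrow X^{[m]}\times X^{[k]}$,
$$
\mathcal{C}_{X^{[m,k]}/X^{[m]}\times X^{[k]}} \lra (\tau\times\rho)^*\Omega^1_{X^{[m]}\times X^{[k]}} \lra \Omega^1_{X^{[m,k]}} \lra 0.
$$
The two cotangent sheaves on $X^{[m]}$ and $X^{[k]}$ pulled back to $X^{[m,k]}$ can be computed using Theorem \ref{cotangent-sheaf-hilbert-scheme} together with flat base change: since the nested universal families are compatible, namely $\mathcal{V}_{m,k} = (\operatorname{id}_X\times\tau)^{-1}\mathcal{Z}_m$ and $\mathcal{W}_{m,k} = (\operatorname{id}_X\times\rho)^{-1}\mathcal{Z}_k$, the two restricted cotangent sheaves take the form $\SExt^n_{\operatorname{pr}_2}(\mathcal{O}_{\mathcal{V}_{m,k}}, \mathcal{I}_{\mathcal{V}_{m,k}}\otimes\omega_{\operatorname{pr}_2})$ and $\SExt^n_{\operatorname{pr}_2}(\mathcal{O}_{\mathcal{W}_{m,k}}, \mathcal{I}_{\mathcal{W}_{m,k}}\otimes\omega_{\operatorname{pr}_2})$, matching the two rightmost Ext sheaves in the asserted presentation.

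The crucial remaining task is to identify the conormal sheaf itself with the leftmost Ext. For this I would first recall the Kleppe-style pointwise description of the tangent space at a nested pair $\xi \subseteq \eta$, namely
$$
0 \lra T_{(\xi,\eta)}X^{[m,k]} \lra \operatorname{Hom}(\mathcal{I}_\xi,\mathcal{O}_\xi)\oplus\operatorname{Hom}(\mathcal{I}_\eta,\mathcal{O}_\eta) \lra \operatorname{Hom}(\mathcal{I}_\eta,\mathcal{O}_\xi),
$$
the rightmost map sending $(\varphi,\psi)\mapsto \varphi|_{\mathcal{I}_\eta}-\operatorname{res}\circ\psi$ where $\operatorname{res}\colon \mathcal{O}_\eta\twoheadrightarrow \mathcal{O}_\xi$ is the canonical surjection. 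Globalizing this to the universal setting over $X^{[m,k]}$ identifies the normal sheaf to $X^{[m,k]}$ in $X^{[m]}\times X^{[k]}$ with (a quotient of) $\operatorname{pr}_{2,*}\SHom(\mathcal{I}_{\mathcal{W}_{m,k}},\mathcal{O}_{\mathcal{V}_{m,k}})$, and applying relative Grothendieck duality along $\operatorname{pr}_2$ (whose fibers are $n$-dimensional, while $\mathcal{V}_{m,k}\to X^{[m,k]}$ is finite) converts $\operatorname{pr}_{2,*}\SHom$ into $\SExt^n_{\operatorname{pr}_2}$ with arguments swapped and the $\omega_{\operatorname{pr}_2}$ twist, producing the conormal in the required form.

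The main obstacle will be ensuring the compatibility of all the maps: one must verify that the arrow obtained between Ext$^n$ sheaves in the conormal sequence is exactly the Grothendieck dual of the Kleppe map above, so that the cokernel really recovers $\Omega^1_{X^{[m,k]}}$ and not a larger quotient, and that the pointwise tangent computation globalizes across the entire (possibly singular) nested Hilbert scheme. This compatibility is handled in the appendix by a direct functorial construction at the level of resolutions for arbitrary nested Quot schemes; specializing the quotients to $\mathcal{O}_{\mathcal{V}_{m,k}}$ and $\mathcal{O}_{\mathcal{W}_{m,k}}$ and passing to sheaf Ext then yields the stated presentation.
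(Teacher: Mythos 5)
Your plan — embed $X^{[m,k]}$ into $X^{[m]}\times X^{[k]}$, take the conormal sequence, feed in Theorem \ref{cotangent-sheaf-hilbert-scheme} on the two factors, and identify the conormal sheaf — is a genuinely different route from the one the paper actually takes. The paper's Appendix~\ref{Section-cotangent} proves the more general Theorem~\ref{cotan-nested} by embedding the nested Quot scheme into a product $\prod_i\mathbf{F}_{n_i}$ of flag schemes $\mathbf{F}_{n_i}=\operatorname{Fl}(p_*(\mathcal{H}\otimes\mathcal{L}^{n_i}),P_1(n_i),P_2(n_i))$; these $\mathbf{F}_{n_i}$ are smooth over the base, their cotangent sheaves are explicit locally free objects (Proposition~\ref{flag-cotangent}), and the Ext presentation falls out after twisting up to the level of the universal curve and applying Grothendieck duality. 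No appeal to $X^{[m]}\times X^{[k]}$ is made.

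The gap in your argument is the ``flat base change'' step. You need $\tau^*\Omega^1_{X^{[m]}}\cong\SExt^n_{\pr_2}(\mathcal{O}_{\mathcal{V}_{m,k}},\mathcal{I}_{\mathcal{V}_{m,k}}\otimes\omega_{\pr_2})$ and the analogous identification for $\rho$, and you justify this by flat base change. But $\tau$ and $\rho$ are not flat in general: the paper notes explicitly (\S\ref{residue-map}) that $\tau\colon X^{[k-1,k]}\to X^{[k-1]}$ is flat if and only if $k\leq 3$, and the situation is no better for other $m<k$ or for $\rho$. Since $\Omega^1_{X^{[m]}}$ and $\Omega^1_{X^{[k]}}$ are in general not locally free (the Hilbert schemes may be singular), there is no reason for the relative Ext sheaf to commute with a non-flat pullback, and the middle term of your conormal sequence will generically not coincide with the direct sum of Ext sheaves appearing in the theorem. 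You would at best get a natural comparison map, and the argument would then need an additional base-change-vanishing input that you have not supplied. Your final paragraph defers the compatibility issue for the conormal sheaf to the appendix, but the appendix does not analyze the $X^{[m]}\times X^{[k]}$ embedding at all, so there is nothing there to transfer; that identification (surjecting $\SExt^n_{\pr_2}(\mathcal{O}_{\mathcal{V}_{m,k}},\mathcal{I}_{\mathcal{W}_{m,k}})\otimes\omega_{\pr_2}$ onto the conormal sheaf inside $(\tau\times\rho)^*\Omega^1_{X^{[m]}\times X^{[k]}}$) also remains unproved. The clean way around both gaps is precisely the flag-scheme embedding used in the paper, which trades the singular Hilbert schemes for smooth Grassmannian-type ambients on which the cotangent sheaf is explicit and base change is harmless.
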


\subsubsection{Smoothness}\label{hilbert-key-facts}
\noindent By \cite[\S 0.2]{Cheah.98}, the nested Hilbert scheme $X^{[m,k]}$ of points is smooth precisely in one of the cases:
$$
\text{(1) $\dim(X) = 1$, (2) $\dim(X) = 2$ and $m=k-1$, (3) $m=1$ and $k=2$, or (4) $m=2$ and $k=3$.}
$$
Note in particular that if $X^{[m,k]}$ is smooth, then both $X^{[m]}$ and $X^{[k]}$ are smooth.

\subsubsection{The residual morphism}\label{residue-map}
If $\xi \subseteq \eta$ is a nested pair and $m = k-1$ then the ideal quotient $(\mathcal{I}_{\eta} : \mathcal{I}_{\xi})$ always has colength $1$ and thus is the maximal ideal of a closed point $p \in \textnormal{Supp}(\eta)$ that is residual to $\xi$ in $\eta$. This globalizes to a morphism from $X^{[k-1,k]}$ to $X$ called the \textit{residual morphism}
\begin{center}
\begin{tikzcd}[row sep = tiny]
\gls{res} \colon  X^{[k-1,k]} \ar[r] & X\\
(\xi \subseteq \eta) \ar[r,mapsto] & (\mathcal{I}_{\eta}:\mathcal{I}_{\xi})
\end{tikzcd}.
\end{center}
We denote by $\Gamma_{\textnormal{res}} \subseteq X\times X^{[k-1,k]}$ the graph of $\textnormal{res}$.

When $\operatorname{dim}(X) = 1$ the colength of $(\mathcal{I}_{\eta}:\mathcal{I}_{\xi})$ is always $k-m$ for all $1 \leq m \leq k$ so the residual morphism is also defined for $m < k-1$ and its fibers are naturally identified with $X^{[k-m]}$ (in fact in that case $X^{[m,k]} \cong X^{[m]}\times X^{[k-m]}$ and the residual morphism is then just projection to the second factor).

When $\operatorname{dim}(X) = 2$ or $k \leq 3$ we have that $X^{[k-1,k]}$ is smooth by the above Subsection \ref{hilbert-key-facts}. Hence $\operatorname{res}$ is flat by miracle flatness. Moreover, the general fiber of $\operatorname{res}$ is smooth by generic smoothness. By comparing the singularities of the fibers, we see in fact that $\operatorname{res}$ is a smooth morphism. Recall that we have an isomorphism
$$
X^{[k-1,k]}\cong \operatorname{Bl}_{\mathcal{Z}_{k-1}} X\times X^{[k-1]},
$$
where the blowup map $\gls{blow-up-of-universal-family}:X^{[k-1,k]}\lra X\times X^{[k-1]}$ is identified with the product of the residue morphism and $\tau$:
$$
\textnormal{bl} \cong \textnormal{res}\times \tau
$$
(see for example \cite[Proposition 2.2]{Ellingsrud.Stromme} and \cite[Proposition 2.5.8]{goettsche.94}). 
Therefore, there is a
distinguished Cartier divisor $\gls{exceptional-divisor-F} \subseteq X^{[k-1,k]}$ which is the exceptional divisor of $\textnormal{bl}$.
This identification allows for a useful computation of the canonical bundle of $X^{[k-1,k]}$:
$$
\omega_{X^{[k-1,k]}}=\operatorname{bl}^* (\omega_X \boxtimes \omega_{X^{[k-1]}})((n-1)F_{k-1}).
$$
Recall the graph of the residual morphism $\Gamma_{\textnormal{res}} \cong X^{[k-1,k]}$. Notice that if $\xi \subseteq \eta$ is a nested pair (still with $m = k-1$) and $p \in \textnormal{Supp}(\eta)$, then the only possibilities for $p$ are that it is supported in $\xi$ or it is residual to $\xi$ in $\eta$ --- this observation globalizes to the following coincidence of subschemes in $X\times X^{[k-1,k]}$:
$$
\mathcal{W}_{k-1,k} = \mathcal{V}_{k-1,k}\cup \Gamma_{\textnormal{res}}.
$$

It is possible that $p$ is both residual to $\xi$ in $\eta$ and yet also contained in $\textnormal{Supp}(\xi)$ --- this of course means that $\textnormal{Supp}(\xi) = \textnormal{Supp}(\eta)$, which implies that the differential of $\textnormal{res}\times \tau$ drops rank at $(\xi \subseteq \eta) \in X^{[k-1,k]}$. Since $\textnormal{res}\times \tau$ coincides with $\textnormal{bl}$, and since $\mathcal{V}_{k-1,k}$ and $\mathcal{W}_{k-1,k}$ are flat over $X^{[k-1,k]}$, this observation globalizes to the following commutative diagram:
\begin{center}
\begin{tikzcd}
\mathcal{V}_{k-1,k}\cap \Gamma_{\textnormal{res}}\ar[r,symbol=\cong] \ar[d,symbol=\subseteq] & F_{k-1}\ar[d,symbol=\subseteq]\\
\Gamma_{\textnormal{res}} \ar[r,symbol=\cong] & X^{[k-1,k]}
\end{tikzcd}
\end{center}
(here the horizontal isomorphisms are given by the projection $X\times X^{[k-1,k]} \lra X^{[k-1,k]}$). 
Hence we have
$$
\mathcal{I}_{\mathcal{V}_{k-1,k}/\mathcal{W}_{k-1,k}} \cong \iota_*\mathcal{O}_{X^{[k-1,k]}}(-F_{k-1})
$$
for $\iota$ the inclusion map $X^{[k-1,k]}\cong \Gamma_{\textnormal{res}} \subseteq \mathcal{W}_{k-1,k}$.

We conclude by some remarks on the morphism $\tau$. 
The fiber of $\operatorname{bl}$ above a point $(p,[\xi])\in X\times X^{[k-1]}$ is a projective space of dimension one less than the minimal number of generators of $\mathcal{I}_{\xi}$ at $p$ by \cite[page 2550]{Ellingsrud.Stromme}. 
As a consequence,
the restriction to $\tau^{-1} ([\xi])\lra X$ of the residual morphism for $[\xi]\in X^{[k-1]}$ may be identified with the blowing up of $\operatorname{Bl}_\xi X\lra X$ if $\xi$ is a local complete intersection scheme. In general, however, there is only a closed embedding $\operatorname{Bl}_\xi X\subseteq \tau ^{-1} ([\xi])$ and the other irreducible components of $\tau^{-1}([\xi])$ are exceptional over $X$ with respect
to $\operatorname{res}$. In particular, $\tau$ is flat if and only if $k\leq 3$.
Another consequence is the following.

\begin{lemma}\label{rinverseXstar}
Let $X$ be a smooth projective variety. If $\operatorname{dim}(X)=2$ or $k\leq 3$ then we have $\operatorname{codim}(X^{[k-1,k]}\setminus \rho^{-1}(X_* ^{[k]}))\geq 2$.
\end{lemma}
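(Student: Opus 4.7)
The plan is to show $\dim W \leq nk - 2$ where $W := X^{[k-1,k]} \setminus \rho^{-1}(X_*^{[k]})$ and $n := \dim X$. Under the hypothesis $n = 2$ or $k \leq 3$, both $X^{[k-1,k]}$ and $X^{[k]}$ are smooth of dimension $nk$ by \S\ref{subsubsection-smoothness} and \S\ref{hilbert-key-facts}, so this bound gives the claim.

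The key computation is to describe the fibers of $\rho$ via socle data. For $\eta \in X^{[k]}$ with local decomposition $\mathcal{O}_\eta = \prod_{i=1}^j \mathcal{O}_{\eta,p_i}$ over the distinct support points, any ideal of the product $\mathcal{O}_\eta$ is a product of ideals of the local factors. Thus a length-$(k-1)$ subscheme $\xi \subset \eta$ corresponds to a length-one ideal $\mathcal{I}_\xi/\mathcal{I}_\eta \subset \mathcal{O}_\eta$ which must be concentrated in a single factor $\mathcal{O}_{\eta,p_i}$ as a one-dimensional sub-module of the socle. This yields
\[
 \rho^{-1}(\eta) \;\cong\; \bigsqcup_{i=1}^j \mathbb{P}\bigl(\operatorname{socle}(\mathcal{O}_{\eta,p_i})\bigr).
\]
In particular, for $\eta \in X_*^{[k]}$ all local factors have length at most $2$, hence are Gorenstein with one-dimensional socle, so the fiber is a finite reduced set.

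For $n = 2$, I would then stratify $W$ by the combinatorial type $(j,(k_i),(s_i))$ of $\eta$, where $j = |\operatorname{Supp}(\eta)| \leq k-2$, $\sum_i k_i = k$, and $s_i = \dim \operatorname{socle}(\mathcal{O}_{\eta,p_i})$. Let $H_p^{k,s}$ denote the locus of length-$k$ schemes at $p$ with socle dimension $s$. The classical dimension estimate $\dim H_p^{k_i,s_i} \leq k_i - s_i$ for the surface punctual Hilbert scheme stratified by socle dimension, provable by induction on $s$ since each additional socle generator imposes a codimension-one constraint on the Macaulay inverse system, combined with the fiber bound $\max_i(s_i-1) \leq \sum_i(s_i-1)$, gives
\[
 \dim(\text{stratum of }W) \;\leq\; 2j + \textstyle\sum_i(k_i-s_i) + \sum_i(s_i-1) \;=\; k + j \;\leq\; 2k - 2.
\]
For the remaining case $k \leq 3$ with $n \geq 3$, the cases $k = 1, 2$ are vacuous (since $X_*^{[k]} = X^{[k]}$), and $k = 3$ reduces to a direct count on punctual length-three schemes: using $\dim H_p^3 = 2(n-1)$, the curvilinear stratum contributes $3n - 2$ with reduced fibers while the $\mathfrak{m}_p^2$-type stratum contributes $3n - 3$ with $\mathbb{P}^1$-fibers, both at most $nk - 2$.

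The main technical obstacle is the socle-dimension bound $\dim H_p^{k,s} \leq k - s$ on the surface punctual Hilbert scheme. Although classical and implicit in Iarrobino's work on punctual Hilbert schemes, this estimate does not appear in a convenient form in the standard references; the cleanest remedy is a short inductive argument on the socle dimension via the Macaulay inverse system correspondence.
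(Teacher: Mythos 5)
Your proof is correct but takes a genuinely different route from the paper's. The paper projects $W := X^{[k-1,k]}\setminus\rho^{-1}(X_*^{[k]})$ via the \emph{other} forgetful map $\tau\colon X^{[k-1,k]}\lra X^{[k-1]}$: it observes $\tau(W)\subseteq X^{[k-1]}_{\leq k-2}$, invokes Brian\c{c}on's theorem for $\dim X^{[k-1]}_{\leq\ell}$, and bounds the $\tau$-fibers using their description in \S\ref{residue-map} as (components of) blowups, with a separate argument at $\ell=k-2$ where $\tau^{-1}([\xi])\cong\operatorname{Bl}_\xi X$ and only the exceptional divisor lands in $W$. You instead project via $\rho$, identify $\rho^{-1}(\eta)\cong\bigsqcup_i\mathbb{P}(\operatorname{socle}\OO_{\eta,p_i})$ (correct, and a nice observation), stratify by combinatorial type, and feed everything into the estimate $\dim H_p^{k,s}\leq k-s$. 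Both routes close with the same arithmetic $2j + \sum_i(k_i-s_i)+\sum_i(s_i-1)=k+j\leq 2k-2$ resp.\ $(k+\ell-1)+\max\{2,k-\ell-1\}\leq 2k-2$. The tradeoff: your approach is direct and arguably more transparent about \emph{why} the codimension is $2$ (each extra socle dimension costs a moduli dimension but buys a fiber dimension, for a net zero), but it relies on the socle-type bound on the punctual Hilbert scheme of a surface, which --- as you yourself flag --- is not conveniently citable and would need a page of inverse-system bookkeeping to make airtight. The paper's route trades this for two off-the-shelf facts (Brian\c{c}on's $\dim\operatorname{Hilb}^m_p = m-1$ and the Ellingsrud--Str\o{}mme blowup description of $\tau$-fibers), which is cleaner to reference. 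Your $k\leq 3$, $n\geq 3$ computation matches what the paper dismisses as ``obvious'' and is a reasonable way to make that case explicit. One small inaccuracy: over $X_*^{[k]}$ the fiber $\rho^{-1}(\eta)$ is a finite set of $k-1$ points but is not reduced (the point over the doubled support point has multiplicity two, as $\rho$ is finite flat of degree $k$ there); this is immaterial to the dimension count, so it doesn't affect your argument.
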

\begin{proof}
Let $X_{\leq \ell} ^{[k]}$ be the set of $k$-schemes such that the reduction has length at most $\ell$. We want to show
    \[
    \operatorname{codim}(\rho^{-1} (X_{\leq k-2} ^{[k]}))\geq 2.
    \]
If $k\leq 3$ this is obvious, so assume $\operatorname{dim}(X)=2$. Note $\tau(\rho^{-1} (X_{\leq k-2} ^{[k]}))=X_{\leq k-2} ^{[k-1]}$. Furthermore we have $\dim(X_{\leq \ell} ^{[k-1]})\leq 2(k-1) + \ell - k+1$ by \cite{Briancon}. Pick a point $[\xi]$ in $X_{\leq \ell} ^{[k-1]} \setminus X_{\leq \ell -1}^{[k-1]}$ for some $\ell\leq k-2$. By the above discussion on the fibers of $\tau$, we have
\[
\operatorname{dim}(\tau^{-1} ([\xi]))\leq \operatorname{max}_{p\in \xi}\{2, \operatorname{length}_p(\xi)-1\}.
\]
If $\ell=k-2$, then $\xi$ is local complete intersection and $\tau^{-1}([\xi])\cong \operatorname{Bl}_\xi X$. Therefore, we have
\[
\operatorname{dim}(\tau^{-1}([\xi])\cap \rho^{-1} (X_{\leq k-2} ^{[k]}))\leq 1.
\]
Since $\operatorname{length}_p(\xi)\leq k+1-\ell$, we have that $\tau^{-1}(X_{\leq \ell} ^{[k-1]} \setminus X_{\leq \ell-1} ^{[k-1]})\cap \rho^{-1}(X_{\leq k-2} ^{[k]})$ has codimension at least 2 for all $\ell$, hence we are done.
\end{proof}

\subsubsection{Properties of the universal families}\label{subsection-uni-family}
By \cite{Fogarty.73} we have that $\mathcal{Z}_k$ is Cohen--Macaulay and satisfies property $R_3$ (hence it is normal).
There is a canonical isomorphism $\tau\times \rho \colon X^{[1,k]}\lra \mathcal{Z}_k$. Similarly, there is a canonical isomorphism $X^{[1,k-1,k]}\cong \mathcal{V}_{k-1,k}$. If $\operatorname{dim}(X)\leq 2$ or $k\leq 3$, there is a resolution of singularities $\operatorname{res}\times \rho \colon X^{[k-1,k]}\lra \mathcal{Z}_k$.
If $\operatorname{dim}(X)\leq 2$, Song \cite{Song.16} has shown that $\mathcal{Z}_k$ is has rational singularities. The same proof works for $k \leq 3$, so $\mathcal{Z}_k$ has rational singularities whenever $\operatorname{dim}(X) \leq 2$ or $k \leq 3$.

\section{Secant varieties, secant sheaves, and secant bundles}\label{section-secant-sheaves}

The main purpose of this section is to introduce absolute and relative secant varieties. In order to resolve singularities, we study secant sheaves and their projectivization, secant bundles. 
Along the way, we establish basic properties of some useful line bundles on the Hilbert scheme of points.

\subsection{Secant varieties}\label{section-geometry}
For an integer $k \geq 1$, we rely on the embedding $X \subseteq \mathbb{P} H^0(X, L) = \mathbb{P}^r$ by $L$ to define the 
\emph{$k^{\text{th}}$ secant variety} 
$$
\gls{Sigma-k}:= \overline{\bigcup\text{Span}_{\mathbb{P}^r}(p_0,\ldots,p_k)},
$$
\noindent where the union is taken over $(k+1)$-tuples of distinct points of $X$. The secant variety $\Sigma_k$ is equipped with the reduced scheme structure. 

While use of the notation $\Sigma_k$ seems to be classical for the secant variety, it is only one of at least two common conventions within the literature. On its own, it poses no problem, but when studying the secant varieties as they relate to corresponding Hilbert schemes of points (as we shall do) we find it greatly clarifies notation to adopt the other common convention --- to this end, we therefore define (and from now on use) the notation:
$$
\gls{sigma-k} := \Sigma_{k-1}.
$$
We say that $\sigma_k$ is the \emph{$k$-secant variety}.
In this way, $\sigma_k$ will be the image of a projective bundle over $X^{[k]}$, hence the convention. One might think of the $k^{\text{th}}$ secant variety $\Sigma_k$ as denoting the \textit{variety of $k$-dimensional secant planes}, and of the $k$-secant variety $\sigma_k$ as denoting the \textit{variety of $k$-secant planes}.

We have a natural stratification
\begin{equation*}
X=\sigma_1\subseteq \sigma_2 \subseteq \cdots \subseteq \sigma_{k-1} \subseteq \sigma_k.
\end{equation*}
Understanding the geometry of the natural stratification is key for our study of secant varieties. 

\subsection{Secant sheaves}
In studying $k$-secant planes, it is natural to study the cases where the span in $\mathbb{P}^r$ of every
length-$k$ subscheme $\xi\subseteq X$ has the maximal dimension, $k-1$. In this spirit, we recall the following well-known definition.

\begin{definition}
For a line bundle $\mathcal{L}$ on a smooth projective variety $Y$ and an integer $p \geq 0$, we say that $\mathcal{L}$ \emph{separates $(p+1)$-schemes} (or is \emph{$p$-very-ample}) if for any length $p+1$ subscheme $\xi \subseteq Y$, the restriction map \[H^0(Y,\mathcal{L}) \lra H^0(Y,\mathcal{L}\otimes \OO_{\xi})\] is surjective.
\end{definition}

Take the short exact sequence
\vspace{-10pt}
\begin{center}
\begin{tikzcd}
0\ar[r] & \pr_1^* L \otimes \mathcal{I}_{\mathcal{Z}_k}\ar[r] & \pr_1^* L \ar[r] &
\pr_1^*L \otimes \mathcal{O}_{\mathcal{Z}_k}\ar[r] & 0
\end{tikzcd}
\end{center}
\vspace{-10pt}
on $X\times X^{[k]}$, and define the \textit{sheaf of $k$-secant planes $E_{k,L}$} (also known as the \emph{tautological bundle} of $L$ on $X^{[k]}$) and its corresponding \textit{syzygy bundle $M_{k,L}$} via the pushed-forward sequence on $X^{[k]}$, as follows:
\begin{equation}\label{secSES}
\begin{tikzcd}
0 \ar[r] & \pr_{2,*} (\pr_1^* L \otimes \mathcal{I}_{\mathcal{Z}_k}) \ar[r] & \pr_{2,*} (\pr_1^* L) \ar[r] &
\pr_{2,*} (\pr_1^*L \otimes \mathcal{O}_{\mathcal{Z}_k})\ar[r] & 
0.\\
& \gls{M-k-L} \ar[u,symbol=\coloneq] & & \gls{E-k-L} \ar[u,symbol=\coloneq]
\end{tikzcd}
\end{equation}
\vspace{-20pt}

\noindent The fiber of $E_{k,L}$ over $\xi \in X^{[k]}$ is isomorphic to $H^0(\xi, L|_{\xi})$, and therefore $E_{k,L}$ is locally free of rank $k$.
If $L$ separates $k$-schemes, Sequence (\ref{secSES}) is exact, and therefore $E_{k,L}$, $M_{k,L}$ and $\gls{N-k-L} :=\det E_{k,L}$ are all globally generated. 
Note that for $k=1$ we have $E_{1,L}=N_{1,L}=L$ and $M_{1,L}=M_L$. 

The sheaves $M_{k,L}$ and $E_{k,L}$ have useful analogues in the relative setting --- take the short exact sequence
\begin{center}
\begin{tikzcd}
0\ar[r] & \pr_1^*L \otimes \mathcal{I}_{\mathcal{V}_ {m,k}/\mathcal{W}_{m,k}}\ar[r]& \pr_1^* L
\otimes \mathcal{O}_{\mathcal{W}_{m,k}} \ar[r]&
\pr_1^*L \otimes \mathcal{O}_{\mathcal{V}_{m,k}}\ar[r]& 0
\end{tikzcd}
\end{center}
\noindent on $X\times X^{[m,k]}$ and define the \emph{relative secant sheaf of $m$-secant planes in $k$-secant planes} $E_{m,k,L}$ and its corresponding \emph{relative syzygy bundle} $M_{m,k,L}$ via the pushed-forward sequence on $X^{[m,k]}$, as follows:
\begin{equation}\label{relsecSES}
\begin{tikzcd}[scale cd=0.94]
0 \ar[r] & \pr_{2,*} (\pr_1^*L \otimes \mathcal{I}_{\mathcal{V}_{m,k}/\mathcal{W}_{m,k}})\ar[r] & \pr_{2,*} (\pr_1^* L
\otimes \mathcal{O}_{\mathcal{W}_{m,k}}) \ar[r] &
\pr_{2,*}(\pr_1^*L \otimes \mathcal{O}_{\mathcal{V}_{m,k}})\ar[r] & 0.\\
& \gls{M-m-k-L} \ar[u,symbol=\coloneq] & & \gls{E-m-k-L}\ar[u,symbol=\coloneq]
\end{tikzcd}
\end{equation}
Note that $E_{m,k,L}=\tau^* E_{m,L}$, and (2) above may be rewritten as
\vspace{-10pt}
\begin{center}
\begin{tikzcd}
0\ar[r]& M_{m,k,L}\ar[r]&
\rho^* E_{k,L}\ar[r]& \tau^* E_{m,L}\ar[r]& 0.
\end{tikzcd}
\end{center}
\vspace{-10pt}

\noindent If $L$ separates $k$-schemes, the snake lemma gives also a dual sequence
\vspace{-10pt}
\begin{center}
\begin{tikzcd}
0\ar[r]& \rho^* M_{k,L}\ar[r]& \tau^* M_{m,L}\ar[r]& M_{m,k,L}\ar[r]& 0.
\end{tikzcd}
\end{center}
\vspace{-10pt}

\subsection{Secant bundles}\label{subsec:secant bundles}
Here we establish the connection between secant sheaves and secant varieties.
We let
$$
\gls{secant-bundle} := \mathbb{P}(E_{k,L}) \overset{\gls{pi-k}} \lra X^{[k]}
$$
denote the \textit{bundle of $k$-secant planes over $X^{[k]}$} and we let $\gls{dim-k}:=k\cdot \dim(X)+k-1$ denote its dimension.
If $L$ separates $k$-schemes, then Sequence (\ref{secSES}) determines a morphism
\[
\gls{alpha-k} \colon B^k\overset{\gls{i-k}}{\lra} \mathbb{P}^r\times X^{[k]}\lra  \mathbb{P}^r,
\]
where $i_k$ is a closed embedding. For $\xi \in X^{[k]}$, we have 
$$
i_k(\pi_k^{-1}(\xi)) = \{(x, \xi) \in \mathbb{P}^r \times X^{[k]} \mid x \in \operatorname{Span}_{\mathbb{P}^r}(\xi)\},
$$
so $\alpha_k(\pi_k^{-1}(\xi)) = \operatorname{Span}_{\mathbb{P}^r}(\xi)$. 
Let $B_{\operatorname{sm}} ^k\subseteq B^k$ be the subscheme induced by the embedding $X_{\operatorname{sm}} ^{[k]}\subseteq X^{[k]}$. The point of the construction is of course
the equality
\[
\alpha_k (B_{\operatorname{sm}}^k)=\sigma_k \subseteq \mathbb{P}^r.
\]
On the other hand, we have $\alpha_k(B^k)=\kappa_k$, where $\gls{cactus-k}$ is the \emph{$k$-cactus scheme} --- that is, the union of all $k$-secant $(k-1)$-planes (see \cite{BB}). Note that while $X_{\operatorname{sm}} ^{[k]}$, and hence $\sigma_k$, is always reduced and irreducible, $X^{[k]}$ may in general be both reducible and non-reduced (see \cite{Jelisiejew.20}). Therefore, the natural scheme structure on $\kappa_k$ is not the reduced structure, but the one induced by the morphism $\alpha_k$. 

The above construction may be generalized to the relative case. The \emph{relative secant bundle of
$m$-secant planes contained in $k$-secant planes over $X^{[m,k]}$} is 
\[
\gls{relative-secant-bundle}:= \mathbb{P}(E_{m,k,L})
\overset{\gls{pi-m-k}}{\lra} X^{[m,k]}
\]
and the \emph{relative secant bundle of $k$-secant planes over $X^{[m,k]}$} is
\[
\gls{larger-relative-secant-bundle}:= \mathbb{P}(r^* E_{k,L})\overset{\gls{p-m-k}}{\lra} X^{[m,k]}.
\]
The short exact Sequence (\ref{relsecSES}) determines a morphism
\[
\gls{alpha-m-k} \colon B^{m,k}\overset{\gls{i-m-k}}{\lra}P^{m,k}\overset{\gls{r-tilde}}{\lra} B^k,
\]
where $i_{m,k}$ is a closed embedding and $\widetilde{\rho}$ is the base-change of $\rho$ by $\pi_m$. The fiber of $\pi_{m,k}$ over $(\xi, \eta) \in X^{[m,k]}$ is isomorphic to $H^0(\xi, L|_{\xi})$. 
With this in mind, the inclusion $\alpha_{m,k}(\pi_{m,k}^{-1}(\xi, \eta)) \subseteq \pi_k^{-1}(\eta)$ corresponds to the inclusion $H^0(\xi, L|_{\xi}) \subseteq H^0(\eta, L|_{\eta})$.
By analogy with the absolute setting, we define the \emph{relative cactus scheme}
as
$$
\gls{kappa-m-k}:=\alpha_{m,k}(B^{m,k}) \subseteq B^k.
$$

\noindent When $X^{[m,k]}$ is irreducible (which will be the most important case for us), we denote $\kappa_{m,k}$ as $\gls{sigma-m-k}$ and call it the \emph{relative secant variety}.

The morphisms $\alpha_k$ and $\alpha_{m,k}$ are compatible, that is, there is a commutative diagram
\begin{center}
\begin{tikzcd}
B^{m,k}\ar[r, "\alpha_{m,k}"]\ar[d, "\tilde{\tau}"]& B^k\ar[d,"\alpha_k"]\\
B^m\ar[r, "\alpha_m"] & \mathbb{P}^r
\end{tikzcd},
\end{center}
where $\gls{tautilde}$ is the base-change of $\tau$ via $\pi_m$. This diagram is not Cartesian. It turns out, however, that it is Cartesian when restricted to an appropriate open set.
To this end, define 
\[
\gls{u-k}:=B^k\setminus \kappa_{k-1,k}
~~\text{ and }~~
\gls{u-m-k}:=U^m\times_{X^{[m]}} X^{[m,k]}.
\]
We will show in Proposition \ref{fiber-product} that if we replace $\widetilde{\tau} \colon B^{m,k} \to B^m$ by $\widetilde{\tau}|_{U^{m,k}} \colon U^{m,k} \to U^m$, then the above diagram is in fact Cartesian.

We conclude with a couple of remarks. First, if $L$ separates $2k$-schemes, then 
\[
\alpha_k(\pi_k^{-1}(\xi)) \cap \alpha_k(\pi_k^{-1}(\xi')) = 
\operatorname{Span}_{\mathbb{P}^r}(\xi)\cap 
\operatorname{Span}_{\mathbb{P}^r}(\xi')=
\operatorname{Span}_{\mathbb{P}^r}(\xi \cap \xi')
\]
for any two $k$-schemes $\xi$ and $\xi'$.
As a consequence, we obtain $\alpha_k^{-1}(\kappa_m) = \kappa_{m,k}$. Similarly, if $L$ separates $k$-schemes, then as sets we have $\alpha_{m,k}^{-1} (\kappa_{\ell,k})=\widetilde{\tau}^{-1} (\kappa_{\ell, m})$ for $\ell < m < k$.

\subsubsection{Global generation and the universal family}\label{global-generation-universal-family} 
Assume that $L$ separates $k$-schemes. On $X\times X^{[k]}$ we have a surjective homomorphism
\[
L^{-1}\boxtimes M_{k,L}\lra \mathcal{I}_{\mathcal{Z}_k}.
\]
We can obtain this by pulling back Sequence (\ref{secSES}) to $X\times X^{[k]}$ and observing that the composite diagonal morphism in the following diagram drops rank along $\mathcal{Z}_k$ because of the assumption that $L$ separates $k$-schemes:
\vspace{-10pt}
\begin{center}
\begin{tikzcd}
    0 \ar[r] & \pr_2^*M_{k,L} \ar[r]\ar[dr] & H^0(X,L)\otimes \OO_{X\times X^{[k]}} \ar[r] \ar[d] & \pr_2^*E_{k,L} \ar[r] & 0\\
    & & \pr_1^*L
\end{tikzcd}.
\end{center}
\vspace{-10pt}
Note the canonical isomorphism 
$\wedge^j M_{k,L}\otimes N_{k,L}\cong \wedge^{r_k-j} M_{k,L}^{\vee}$ for $j\geq 1$, where $\gls{r-k}:=\operatorname{rank}(M_{k,L})$.
Since our assumptions on $L$ imply that $M_{k,L}^{\vee}$ is globally generated, the case $j=1$ of this isomorphism shows that $M_{k,L}\otimes N_{k,L}$ is globally generated. This discussion indicates that
$\mathcal{I}_{\mathcal{Z}_k}\otimes (L\boxtimes N_{k,L})$ is globally generated on $X\times X^{[k]}$.
 In particular, we also obtain that $\operatorname{res}^* L \otimes \tau^* N_{k-1,L}(-F_{k-1})$ is globally generated on $X^{[k-1,k]}$.

 \begin{remark}\label{no-global-generation}
     Note that $N_{k,L}$ cannot be replaced by $T_{k,L}$. In fact, even though $\mathcal{I}_{\Theta_k}\otimes (L\boxtimes S_{k,L})$ is globally generated, scheme-theoretically we have $\mathcal{Z}_k \neq (\operatorname{id}\times h)^{-1} \Theta_k$
     by the discussion in Subsection \ref{isospectral}.
 \end{remark}

\subsubsection{Line bundles on Hilbert schemes}\label{subsubsec-line-bundles}
Recall that $N_{k,L}:=\det E_{k,L}$. The following formula is well-known but we give a short proof. Here $\delta_k$ and $T_{k,L}$ were defined in \S\ref{discriminant} and \S\ref{hilbert-canonical}, respectively.

\begin{lemma}\label{N&S} If $X^{[k]}$ is smooth, then $N_{k,L}\cong T_{k,L}\otimes \delta_k ^{-1}$.
\end{lemma}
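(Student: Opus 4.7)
The plan is to mimic the proof of Lemma~\ref{pullback-delta}: establish the identity after pullback along $\widetilde{q}\colon \widetilde{X}_*^k \to X_*^{[k]}$ and then descend. Since $X^{[k]}$ is smooth and $X^{[k]}\setminus X_*^{[k]}$ has codimension at least $2$, it suffices to prove the isomorphism on $X_*^{[k]}$. On that open set, the Cartesian identity $h \circ \widetilde{q} = q \circ \widetilde{h}$ combined with $q^* S_{k,L} = L^{\boxtimes k}$ gives $\widetilde{q}^* T_{k,L} = \widetilde{h}^* L^{\boxtimes k}$, and Lemma~\ref{pullback-delta} gives $\widetilde{q}^* \delta_k^{-1} \cong \mathcal{O}_{\widetilde{X}_*^k}(-\Gamma_k)$; so it suffices to check $\widetilde{q}^* N_{k,L} \cong \widetilde{h}^* L^{\boxtimes k} \otimes \mathcal{O}(-\Gamma_k)$ as $S_k$-equivariant line bundles.

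To verify this, first I would tensor the short exact sequence
\[
0 \to \mathcal{O}_{\widetilde{\mathcal{Z}}_k} \to \bigoplus_{i=1}^k \mathcal{O}_{\operatorname{Graph}(f_i)} \to \bigoplus_{i<j} \mathcal{O}_{\operatorname{Graph}(f_i)\cap \operatorname{Graph}(f_j)} \to 0
\]
used in the proof of Lemma~\ref{pullback-delta} by the pullback of $\operatorname{pr}_1^* L$ to $\widetilde{\mathcal{Z}}_k$. Using that $\operatorname{Graph}(f_i) \cong \widetilde{X}_*^k$ via $p$ with $\operatorname{pr}_1 \circ f_i = \operatorname{pr}_i \circ \widetilde{h}$, and that $\operatorname{Graph}(f_i) \cap \operatorname{Graph}(f_j)$ projects isomorphically onto $\Gamma_{ij}$, pushing forward along the affine morphism $p$ and applying flat base change (valid since $\operatorname{pr}_2|_{\mathcal{Z}_k}$ is finite flat) yields an exact sequence
\[
0 \to \widetilde{q}^* E_{k,L} \to \bigoplus_{i=1}^k \widetilde{h}^* \operatorname{pr}_i^* L \to \bigoplus_{i<j} \iota_{ij,*}\bigl(\widetilde{h}^* \operatorname{pr}_i^* L \vert_{\Gamma_{ij}}\bigr) \to 0
\]
on $\widetilde{X}_*^k$, where $\iota_{ij}\colon \Gamma_{ij} \hookrightarrow \widetilde{X}_*^k$ is the inclusion. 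Taking determinants and using that $\det \iota_{ij,*}\mathcal{M} = \mathcal{O}(\Gamma_{ij})$ for any line bundle $\mathcal{M}$ on the smooth divisor $\Gamma_{ij}$ (a consequence of Grothendieck--Riemann--Roch) then yields the desired $\widetilde{q}^* N_{k,L} \cong \widetilde{h}^* L^{\boxtimes k} \otimes \mathcal{O}(-\Gamma_k)$.

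The main anticipated obstacle is the descent step. The isomorphism constructed above is canonical and arises from an $S_k$-equivariant short exact sequence on $\widetilde{\mathcal{Z}}_k$ (where $S_k$ permutes the $f_i$'s). Both line bundles being compared carry the trivial $S_k$-linearization---the left as a pullback from $X_*^{[k]}$, the right because $\Gamma_k$ is $S_k$-invariant and $L^{\boxtimes k}$ descends to $S_{k,L}$ on $\operatorname{Sym}^k(X)$. Hence the isomorphism is $S_k$-equivariant and descends by Galois descent for $\widetilde{q}$, giving the desired identity on $X_*^{[k]}$, which extends uniquely to $X^{[k]}$.
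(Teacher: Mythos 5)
Your proof is correct and takes essentially the same approach as the paper: reduce to $X_*^{[k]}$ by normality/codimension, invoke Lemma~\ref{pullback-delta} to recast the claim as $\widetilde{q}^*N_{k,L}\cong \widetilde{h}^*L^{\boxtimes k}\otimes\mathcal{O}(-\Gamma_k)$, and establish this by noting that the natural rank-$k$ map $\widetilde{q}^*E_{k,L}\to\bigoplus_i\widetilde{h}^*\operatorname{pr}_i^*L$ drops rank exactly along $\Gamma_k$ (your $L$-twisted version of the exact sequence from the proof of Lemma~\ref{pullback-delta}) and taking determinants. You supply two details the paper treats tersely — the explicit short exact sequence pinning down the cokernel (hence the multiplicity-one rank drop along each $\Gamma_{ij}$), and the $S_k$-equivariance/descent step — but the underlying argument is the same.
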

\begin{proof}
Since $X^{[k]}$ is normal, it suffices to argue on $X_* ^{[k]}$. 
By Lemma \ref{pullback-delta}, it suffices to show that $\widetilde{q} ^* N_{k,L}\cong \widetilde{h} ^* L^{\boxtimes k} \otimes \mathcal{O}_{\widetilde{X}_* ^k}(-\Gamma_k)$. To see this, notice that the map
\vspace{-10pt}
\[
\widetilde{q}\hspace{2pt}^* E_{k,L}\lra \widetilde{h} ^* L^{\boxplus k}
\]
\noindent drops rank along $\Gamma_k$. 
\end{proof}

The symmetric group $S_k$ acts on the sheaf $L^{\boxtimes k}$ on $X^k$ via permutations. We denote by $L ^{\boxtimes k, \operatorname{sym}}$ the sheaf $L ^{\boxtimes k}$ equipped with this action. If we multiply the above action via the sign of the permutation, we obtain another action, which we denote by $L ^{\boxtimes k, \operatorname{alt}}$ the sheaf $L^{\boxtimes k}$ equipped with this another action.
Then we have 
$h_* T_{k,L}\cong q_* ^{S_k} L ^{\boxtimes k, \operatorname{sym}}$ and $h_* N_{L,k} \cong q_* ^{S_k} L ^{\boxtimes k, \operatorname{alt}}$, where $q_* ^{S_k}$ denotes the equivariant pushforward. One can obtain both isomorphisms using the argument of \cite[Proposition 6.9]{Sheridan.2020} on $\widetilde{X}_* ^k$ (see also \cite[\S 2]{Ein.Lazarsfeld.Yang}). We have
$$
H^0(X^{[k]}, T_{k,L}) \cong S^k H^0(X, L)~~\text{ and }~~
H^0(X^{[k]}, N_{k,L}) \cong \wedge^k H^0(X, L).
$$
Now, we define 
$$
\gls{a-k}:=N_{k,L} \otimes \delta_k^{-1} = T_{k,L} \otimes \delta_k^{-2}.
$$
When $L$ is sufficiently positive, $H^0(X^{[k]}, A_{k,L})$ has a deeper meaning related to the algebraic properties of the embedded secant variety $\sigma_{k-1} \subseteq \mathbb{P}^r$ --- see Proposition \ref{prop:idealofsecants}.

The following lemma indicates how the line bundles introduced in this section, as well as $\delta_k$ from \S\ref{discriminant} and $F_{k-1}$ from \S\ref{residue-map}, interact after appropriate pullbacks to $X^{[k-1,k]}$. 

\begin{lemma}\label{identities} Assume that $X^{[k]}$ is normal. We have the following relations:
\begin{enumerate}[topsep=0pt]
    \item $\rho^* N_{k,L} \otimes \tau^* N_{k-1,L}^{-1}\cong \operatorname{res}^* L (-F_{k-1})$.
    \item $\rho^* \delta_k \otimes \tau^* \delta_{k-1} ^{-1} \cong \mathcal{O}_{X^{[k-1,k]}}(F_{k-1})$.
    \item $\rho^* A_{k,L}\otimes \tau^* A_{k-1,L}^{-1} \cong \operatorname{res}^* L (-2F_{k-1})$. 
    \item $\rho^* T_{k,L} \otimes \tau^* T_{k-1,L}^{-1} \cong \operatorname{res}^* L$.
\end{enumerate}
\end{lemma}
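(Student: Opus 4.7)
The plan is to derive all four identities from a single short exact sequence on $X\times X^{[k-1,k]}$ by pushing forward (with or without first twisting by $\operatorname{pr}_1^* L$) and taking determinants. The key ingredient is the sequence
$$0 \longrightarrow \iota_*\mathcal{O}_{X^{[k-1,k]}}(-F_{k-1}) \longrightarrow \mathcal{O}_{\mathcal{W}_{k-1,k}} \longrightarrow \mathcal{O}_{\mathcal{V}_{k-1,k}} \longrightarrow 0,$$
obtained from the identification $\mathcal{I}_{\mathcal{V}_{k-1,k}/\mathcal{W}_{k-1,k}}\cong\iota_*\mathcal{O}_{X^{[k-1,k]}}(-F_{k-1})$ recalled in \S\ref{residue-map}, where $\iota$ denotes the closed embedding $X^{[k-1,k]}\cong \Gamma_{\operatorname{res}}\hookrightarrow\mathcal{W}_{k-1,k}$.

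To obtain relation (2), I would push this sequence forward by $\operatorname{pr}_2$. Since $\operatorname{pr}_2|_{\Gamma_{\operatorname{res}}}$ is an isomorphism onto $X^{[k-1,k]}$, the leftmost pushforward is simply $\mathcal{O}_{X^{[k-1,k]}}(-F_{k-1})$. Flat base change, applied to the finite flat universal families $\operatorname{pr}_2\vert_{\mathcal{Z}_k}$ and $\operatorname{pr}_2\vert_{\mathcal{Z}_{k-1}}$, identifies the remaining two pushforwards as $\rho^*\operatorname{pr}_{2,*}\mathcal{O}_{\mathcal{Z}_k}$ and $\tau^*\operatorname{pr}_{2,*}\mathcal{O}_{\mathcal{Z}_{k-1}}$ respectively. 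Taking determinants of the resulting short exact sequence of locally free sheaves then yields $\rho^*\delta_k^{-1}\cong\mathcal{O}_{X^{[k-1,k]}}(-F_{k-1})\otimes\tau^*\delta_{k-1}^{-1}$, which rearranges to (2).

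For relation (1), I would first twist the same sequence by $\operatorname{pr}_1^* L$ and then push forward. Using $\operatorname{pr}_1\circ\iota=\operatorname{res}$, the projection formula identifies the leftmost pushforward as $\operatorname{res}^* L\otimes\mathcal{O}_{X^{[k-1,k]}}(-F_{k-1})$, while flat base change identifies the middle and right pushforwards as $\rho^* E_{k,L}$ and $\tau^* E_{k-1,L}$. Taking determinants and rearranging gives (1). Then (3) follows by combining (1) and (2) with the definition $A_{k,L}=N_{k,L}\otimes\delta_k^{-1}$, and (4) follows from (1) and (2) together with the relation $T_{k,L}\cong N_{k,L}\otimes\delta_k$ coming from Lemma \ref{N&S}. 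The substantive content is concentrated in the flat base change step and the identification of the conormal sheaf of $\mathcal{V}_{k-1,k}$ in $\mathcal{W}_{k-1,k}$ with $\mathcal{O}(-F_{k-1})$ on the graph of $\operatorname{res}$; everything else is formal manipulation.
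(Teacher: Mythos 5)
Your argument is correct and follows essentially the same route as the paper. The paper invokes the already-established relative secant sequence (\ref{relsecSES}) together with the identification $M_{k-1,k,L}\cong\operatorname{res}^*L(-F_{k-1})$, which in turn comes from $\operatorname{pr}_{2,*}\mathcal{I}_{\mathcal{V}_{k-1,k}/\mathcal{W}_{k-1,k}}\cong\mathcal{O}_{X^{[k-1,k]}}(-F_{k-1})$; you re-derive precisely these facts by starting from the single ideal sequence on $X\times X^{[k-1,k]}$ and pushing forward, so the substance is identical. One small stylistic note: what you call ``flat base change'' is more precisely cohomology and base change for the finite flat families $\mathcal{Z}_k\to X^{[k]}$ and $\mathcal{Z}_{k-1}\to X^{[k-1]}$ (the maps $\rho$ and $\tau$ need not be flat; it is the flatness of the universal families that ensures $\operatorname{pr}_{2,*}\mathcal{O}_{\mathcal{W}_{k-1,k}}\cong\rho^*\operatorname{pr}_{2,*}\mathcal{O}_{\mathcal{Z}_k}$ etc.), but you already gesture at this by saying ``applied to the finite flat universal families,'' so the intent is clear and the step is sound.
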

\begin{proof}
Recall from above the short exact sequence
\begin{center}
\begin{tikzcd}
0\ar[r]& M_{m,k,L}\ar[r]& \rho^* E_{k,L}\ar[r]&
\tau^* E_{m,L}\ar[r]& 0
\end{tikzcd}
\end{center}
on $X^{[m,k]}$. When $m = k-1$ we have $\operatorname{pr}_{2,*} \mathcal{I}_{\mathcal{V}_{k-1,k}/\mathcal{W}_{k-1,k}}=\mathcal{O}_{X^{[k-1,k]}}(-F_{k-1})$ by \S\ref{residue-map}. Hence $M_{k-1,k,L}\cong\operatorname{res}^* L (-F_{k-1})$ and therefore (1) follows by taking determinants.

Consider now the short exact sequence
\begin{center}
\begin{tikzcd}
0\ar[r]& \operatorname{pr}_{2,*} \mathcal{I}_{\mathcal{V}_{k-1,k}/\mathcal{W}_{k-1,k}} \ar[r]&
\operatorname{pr}_{2,*} \mathcal{O}_{\mathcal{W}_{k-1,k}}\ar[r]& \operatorname{pr}_{2,*} \mathcal{O}_{\mathcal{V}_{k-1,k}}\ar[r]& 0.
\end{tikzcd}
\end{center}
\noindent Taking determinants in this case yields $(2)$. Then $(3)$ and $(4)$ follow immediately from $(1)$ and $(2)$. 
\end{proof}

\begin{remark}
    Lemma \ref{identities} (1) recovers that $\operatorname{res}^* L \otimes \tau^*N_{k-1,L}(-F_{k-1})$ is globally generated on $X^{[k-1,k]}$, provided that $L$ separates $k$-schemes.
    In fact, it coincides with $\rho^* N_{k,L}$.  
\end{remark}

\begin{lemma}\label{canonical-bundle}
Assume that $X^{[k-1,k]}$ is smooth. We have $\omega_{X^{[k-1,k]}}\cong \rho^* \omega_{X^{[k]}}(F_{k-1})$ and $\omega_{X^{[k]}}\cong T_{k,\omega_X}\otimes \delta_{k}^{n-2}$.
\end{lemma}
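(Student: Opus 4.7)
The strategy is to first establish formula (2) via a Hurwitz-style computation on the finite $S_k$-quotient $\widetilde{q}\colon \widetilde{X}_*^k \lra X_*^{[k]}$, and then to derive (1) by substituting (2) for $X^{[k-1]}$ into the blowup formula for $\operatorname{bl} \colon X^{[k-1,k]} \lra X \times X^{[k-1]}$ and simplifying via Lemma \ref{identities}.

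For (2), note that smoothness of $X^{[k]}$ implies $X^{[k]} \setminus X_*^{[k]}$ has codimension at least two (\S\ref{distinguished-subschemes}), so it suffices to verify the formula on $X_*^{[k]}$. There we exploit the Cartesian diagram of \S\ref{isospectral}: $\widetilde{h}$ realizes $\widetilde{X}_*^k$ as the blowup of the smooth variety $X_*^k$ along the smooth codimension-$n$ center $\Delta_k$ (whose components are disjoint in $X_*^k$), and $\widetilde{q}$ is a finite surjective $S_k$-quotient between smooth varieties. The blowup canonical formula combined with the identification $\widetilde{h}^*(\omega_X^{\boxtimes k}) \cong \widetilde{q}^* T_{k,\omega_X}$ (coming from $q \circ \widetilde{h} = h \circ \widetilde{q}$) and Lemma \ref{pullback-delta} gives
\[
\omega_{\widetilde{X}_*^k} \cong \widetilde{q}^*\bigl(T_{k,\omega_X}\otimes \delta_k^{n-1}\bigr).
\]
On the other hand, a local analysis at the generic point of each $\Gamma_{i,j}$ shows that the transposition $(ij)$ pointwise fixes $\Gamma_{i,j}$ and acts by $-1$ on the normal direction, so the ramification divisor of $\widetilde{q}$ is exactly $\Gamma_k$ with multiplicity one. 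Hurwitz together with Lemma \ref{pullback-delta} then yields $\omega_{\widetilde{X}_*^k} \cong \widetilde{q}^*(\omega_{X_*^{[k]}}\otimes \delta_k)$. Comparing and using faithful flatness of $\widetilde{q}$ (so $\widetilde{q}^*$ is injective on $\operatorname{Pic}$) delivers (2).

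For (1), smoothness of $X^{[k-1,k]}$ forces smoothness of $X^{[k-1]}$ (\S\ref{hilbert-key-facts}), so (2) applies equally to $X^{[k-1]}$. The blowup presentation $X^{[k-1,k]} \cong \operatorname{Bl}_{\mathcal{Z}_{k-1}}(X\times X^{[k-1]})$ from \S\ref{residue-map} gives
\[
\omega_{X^{[k-1,k]}} \cong \operatorname{res}^* \omega_X \otimes \tau^* \omega_{X^{[k-1]}} \otimes \OO((n-1)F_{k-1}),
\]
and substituting (2) for $X^{[k-1]}$ and rewriting $\tau^*T_{k-1,\omega_X}$ and $\tau^*\delta_{k-1}$ via parts (4) and (2) of Lemma \ref{identities} produces a cancellation of the $\operatorname{res}^* \omega_X$ factor and reduces the coefficient of $F_{k-1}$ from $n-1$ to $1$, yielding $\rho^*(T_{k,\omega_X}\otimes \delta_k^{n-2}) \otimes \OO(F_{k-1}) \cong \rho^*\omega_{X^{[k]}} \otimes \OO(F_{k-1})$.

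The main obstacle is the ramification computation for $\widetilde{q}$, and the crucial input that makes it tractable is that one works on $X_*^k$ (where the pairwise diagonals $\Delta_{X^k}^{i,j}$ are genuinely disjoint) rather than on $X^k$. Consequently the divisors $\Gamma_{i,j}$ do not meet in the blowup, no higher-order stabilizers arise along the boundary, and the Hurwitz contribution along each $\Gamma_{i,j}$ is cleanly that of a single transposition acting on a one-dimensional normal space.
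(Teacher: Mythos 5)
Your proof is correct but reaches the two statements in reverse order and by a different mechanism from the paper. The paper proves (1) first and directly: on the open set $\rho^{-1}(X_*^{[k]})$, whose complement has codimension at least two by Lemma~\ref{rinverseXstar}, $\rho$ is finite flat of degree $k$ and ramified precisely along $F_{k-1}$ with index $2$, so $\omega_{X^{[k-1,k]}/X^{[k]}}\cong\mathcal{O}(F_{k-1})$ there and hence globally. It then deduces (2) by induction on $k$, equating the blowup formula for $\omega_{X^{[k-1,k]}}$ with the expression $\rho^*\omega_{X^{[k]}}(F_{k-1})$ from (1) and simplifying via Lemma~\ref{identities}. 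You instead establish (2) first and without induction, by a Hurwitz computation on the $S_k$-quotient $\widetilde{q}$: the blowup formula for $\widetilde{h}$ gives $\omega_{\widetilde{X}_*^k}\cong\widetilde{q}^*(T_{k,\omega_X}\otimes\delta_k^{\,n-1})$, while Hurwitz for $\widetilde{q}$ (with ramification divisor $\Gamma_k$ and Lemma~\ref{pullback-delta}) gives $\omega_{\widetilde{X}_*^k}\cong\widetilde{q}^*(\omega_{X_*^{[k]}}\otimes\delta_k)$, and comparing yields (2); you then run the paper's simplification in the opposite direction to get (1). Your route buys a non-inductive proof of (2), essentially the argument of \cite[Theorem 7.4.6]{Brion.Kumar} cited in \S\ref{hilbert-canonical}; the paper's route avoids the local ramification analysis of the $S_k$-action at the cost of an induction. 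One imprecision worth flagging: the conclusion that $\widetilde{q}^*$ is injective on $\operatorname{Pic}$ does not follow from faithful flatness alone (finite flat covers can kill torsion classes). The correct justification is that every isomorphism you build is $S_k$-equivariant with the trivial linearization on $\widetilde{q}^*$-pullbacks, so one descends by applying $(\widetilde{q}_*(-))^{S_k}$ and using $(\widetilde{q}_*\widetilde{q}^*L)^{S_k}\cong L$; the paper makes an analogous implicit move when it passes from an equality of $\rho$-pullbacks to an equality on $X^{[k]}$.
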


\begin{proof}
Note that $\rho^{-1}(X_* ^{[k]})$ has codimension at least 2 by Lemma \ref{rinverseXstar} and that $\rho$ is finite and flat when restricted to this open subset.
Therefore, on this open subset $\rho$ is ramified precisely along $F_{k-1}$ with order 2. Hence we get the first assertion.
For the second, we proceed by induction on $k$, with the case $k=1$ being clear. It suffices to show the isomorphism 
    \[
    \rho^* \omega_{X^{[k]}}\cong \rho^* (T_{k,\omega_X}\otimes \delta_{k}^{n-2}).
    \]
By the description of $X^{[k-1,k]}$ as the blowing up of $X\times X^{[k-1]}$ along $\mathcal{Z}_{k-1}$, we obtain
\[
\omega_{X^{[k-1,k]}}\cong \operatorname{res} ^* \omega_X \otimes \tau^* \omega_{X^{[k-1]}}((n-1)F_{k-1}).
\]
We may therefore conclude by the inductive hypothesis and Lemma \ref{identities}. 
\end{proof}

The following is a generalization of \cite[Lemma 3.5]{Ein.Niu.Park.20}.

\begin{lemma}\label{lem:O_X^[k]directsummand}
Assume that $X^{[k-1,k]}$ is smooth. Then
$\mathcal{O}_{X^{[k]}}$ is a direct summand of $\rho_* \mathcal{O}_{X^{[k-1,k]}}(F_{k-1})$. 
\end{lemma}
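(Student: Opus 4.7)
The plan is to realize $\mathcal{O}_{X^{[k-1,k]}}(F_{k-1})$ as the relative dualizing sheaf of $\rho$ and then invoke Grothendieck duality together with the trace map in characteristic zero. By Lemma \ref{canonical-bundle}, we have $\omega_{X^{[k-1,k]}} \cong \rho^* \omega_{X^{[k]}}(F_{k-1})$, so the relative dualizing sheaf satisfies
$$
\omega_{\rho} \;=\; \omega_{X^{[k-1,k]}} \otimes \rho^* \omega_{X^{[k]}}^{-1} \;\cong\; \mathcal{O}_{X^{[k-1,k]}}(F_{k-1}).
$$
Thus the target sheaf $\rho_* \mathcal{O}_{X^{[k-1,k]}}(F_{k-1})$ is precisely $\rho_* \omega_\rho$.

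Next I would establish flatness of $\rho$. By the smoothness hypothesis $X^{[k-1,k]}$ is Cohen--Macaulay, and by the observation in \S\ref{hilbert-key-facts} the smoothness of $X^{[k-1,k]}$ forces $X^{[k]}$ to be smooth as well. The morphism $\rho$ is finite (the fiber over $[\eta]$ parametrizes ideals in $\mathcal{O}_\eta$ of colength one, a finite set), with zero-dimensional fibers, so miracle flatness applies and $\rho$ is finite flat. In particular $\rho_*\mathcal{O}_{X^{[k-1,k]}}$ is locally free of rank equal to the generic degree of $\rho$, which is $k$ since a general length-$k$ scheme with distinct support admits exactly $k$ length-$(k-1)$ subschemes.

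Because we work in characteristic zero, the trace map $\operatorname{Tr}_\rho \colon \rho_* \mathcal{O}_{X^{[k-1,k]}} \lra \mathcal{O}_{X^{[k]}}$ composed with the natural inclusion $\mathcal{O}_{X^{[k]}} \hookrightarrow \rho_* \mathcal{O}_{X^{[k-1,k]}}$ equals multiplication by $k$. Rescaling by $1/k$ therefore splits the inclusion and exhibits $\mathcal{O}_{X^{[k]}}$ as a direct summand of $\rho_* \mathcal{O}_{X^{[k-1,k]}}$. Applying $\mathcal{H}om_{\mathcal{O}_{X^{[k]}}}(-,\mathcal{O}_{X^{[k]}})$ preserves direct summands, so $\mathcal{O}_{X^{[k]}}$ is also a direct summand of $(\rho_* \mathcal{O}_{X^{[k-1,k]}})^{\vee}$. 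Finally, Grothendieck duality for the finite flat morphism $\rho$ identifies
$$
(\rho_* \mathcal{O}_{X^{[k-1,k]}})^\vee \;\cong\; \rho_* \omega_\rho \;\cong\; \rho_* \mathcal{O}_{X^{[k-1,k]}}(F_{k-1}),
$$
yielding the claim.

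There is no serious obstacle; the only subtle point is verifying that $\rho$ is flat so that Grothendieck duality applies in its clean finite-flat form and so that the trace map has the standard behavior. Once miracle flatness is in place, the trace-splitting argument combined with duality gives the result immediately.
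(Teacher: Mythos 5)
The claim that $\rho$ is finite is false, and this gap sinks the proof as written. The fiber of $\rho \colon X^{[k-1,k]} \lra X^{[k]}$ over $[\eta]$ parametrizes length-$(k-1)$ subschemes $\xi \subseteq \eta$, i.e.\ ideals of $\mathcal{O}_\eta$ of colength $k-1$ (equivalently, of length $1$). Any such ideal is necessarily contained in the socle $\operatorname{Soc}(\mathcal{O}_\eta)$, so the fiber is $\mathbb{P}(\operatorname{Soc}\mathcal{O}_\eta)$, which is positive-dimensional exactly when $\eta$ fails to be Gorenstein at some point. For instance, when $n=2$ and $k=3$ --- a case where $X^{[k-1,k]}$ is smooth, so the lemma applies --- the fiber over the fat point $[\eta]$ with $\mathcal{I}_\eta = \mathfrak{m}_p^2$ is a $\mathbb{P}^1$. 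Thus $\rho$ is not finite, miracle flatness does not apply, $\rho_*\mathcal{O}_{X^{[k-1,k]}}$ is not locally free of rank $k$, and the finite-flat duality isomorphism $(\rho_*\mathcal{O}_{X^{[k-1,k]}})^\vee \cong \rho_*\omega_\rho$ does not hold globally.

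Your underlying strategy is nevertheless the intended one, and it can be repaired in two ways. First, one can use the Grothendieck trace for the \emph{proper} morphism $\rho$ rather than the finite-flat trace: as you observe, Lemma \ref{canonical-bundle} gives $\mathcal{O}_{X^{[k-1,k]}}(F_{k-1}) \cong \omega_\rho$, and the counit of duality for proper morphisms furnishes a trace map $\rho_*\omega_\rho \lra \mathcal{O}_{X^{[k]}}$ with no finiteness or flatness hypothesis; the composite
$$
\mathcal{O}_{X^{[k]}} \lra \rho_*\mathcal{O}_{X^{[k-1,k]}} \lra \rho_*\omega_\rho \overset{\operatorname{Tr}}{\lra} \mathcal{O}_{X^{[k]}}
$$
is multiplication by $k$ on the dense open locus where $\rho$ is \'etale of degree $k$, hence is a nonzero scalar, and rescaling gives the retraction --- this is what the paper's one-line proof has in mind. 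Alternatively, your finite-flat computation is literally valid over $X_*^{[k]}$, where $\rho$ is indeed finite and flat; since $\operatorname{codim}(X^{[k]}\setminus X_*^{[k]})\geq 2$ (see \S\ref{distinguished-subschemes}) and the candidate retraction is a section of the reflexive sheaf $\SHom(\rho_*\mathcal{O}_{X^{[k-1,k]}}(F_{k-1}), \mathcal{O}_{X^{[k]}})$, it extends uniquely over the smooth variety $X^{[k]}$, and the global composite $\mathcal{O}_{X^{[k]}} \lra \mathcal{O}_{X^{[k]}}$ equals the identity because it does so on a dense open.
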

\begin{proof}
Immediate from the identification $\omega_{X^{[k-1,k]}/X^{[k]}}\cong \mathcal{O}_{X^{[k-1,k]}}(F_{k-1})$.
\end{proof}

\subsubsection{Sections of secant sheaves} 

The following result of Danila \cite{Dan} will be important for us.
It is stated only for surfaces in \cite{Dan}, but the proof works verbatim if $X^{[k]}$ is smooth.

\begin{theorem}\label{Danila}
 If $X^{[k]}$ is smooth, there is an equivariant isomorphism
    $H^0(X^{[k]}, E_{k,L}^{\otimes \ell})\cong H^0(X,L)^{\otimes \ell}$
for all $\ell\leq k$. In particular, $H^0(X^{[k]}, S^{\ell} E_{k,L})\cong S^{\ell} H^0(X,L)$
for all $\ell\leq k$.
\end{theorem}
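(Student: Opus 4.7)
The strategy is to use the Galois cover $\widetilde{q} \colon \widetilde{X}_*^k \to X_*^{[k]}$ (of degree $k!$ with group $S_k$) to trade global sections on $X^{[k]}$ for $S_k$-invariants of sections on $\widetilde{X}_*^k$, where $\widetilde{q}^* E_{k,L}$ admits a concrete description as a subsheaf of $\bigoplus_i f_i^* L$. It suffices to prove that the natural evaluation-induced map
$$
\alpha \colon H^0(X,L)^{\otimes \ell} \lra H^0(X^{[k]}, E_{k,L}^{\otimes \ell})
$$
is an $S_\ell$-equivariant isomorphism; the symmetric power case then follows by taking $S_\ell$-invariants. Since $X^{[k]}$ is smooth and hence normal, with $X_*^{[k]} \subseteq X^{[k]}$ having complement of codimension at least two (or equal to $X^{[k]}$ when $n=1$), sections extend uniquely from $X_*^{[k]}$; moreover $H^0(X_*^{[k]}, E_{k,L}^{\otimes \ell}) = H^0(\widetilde{X}_*^k, \widetilde{q}^* E_{k,L}^{\otimes \ell})^{S_k}$.

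Next, I would describe $\widetilde{q}^* E_{k,L}$ explicitly. The pullback of the universal family is $\widetilde{\mathcal{Z}}_k = \bigcup_i \operatorname{Graph}(f_i)$, and on $\widetilde{X}_*^k$ the divisors $\Gamma_{i,j}$ are pairwise disjoint (triple-or-deeper diagonals being excluded). Pushing forward the Mayer--Vietoris sequence for this union twisted by $\pr_1^* L$ yields
$$
0 \lra \widetilde{q}^* E_{k,L} \lra \bigoplus_{i=1}^k f_i^* L \lra \bigoplus_{i<j} f_i^* L|_{\Gamma_{i,j}} \lra 0.
$$
Since both $\widetilde{q}^* E_{k,L}$ and $\bigoplus_i f_i^* L$ are locally free of rank $k$, taking the $\ell$-th tensor power of the inclusion gives an embedding
$\widetilde{q}^* E_{k,L}^{\otimes \ell} \hookrightarrow \bigoplus_{\vec{i}} f_{i_1}^* L \otimes \cdots \otimes f_{i_\ell}^* L$ indexed by $\vec{i} \in \{1,\ldots,k\}^\ell$.

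Finally, I would compute sections of the target sum. By blow-up invariance of $H^0$ (for $\widetilde{X}_*^k \to X_*^k$), the codimension-two principle (for $X_*^k \subseteq X^k$), and K\"unneth, the $\vec{i}$-summand contributes $H^0(X, L^{a_1}) \otimes \cdots \otimes H^0(X, L^{a_k})$ where $a_j := \#\{t : i_t = j\}$. For tuples with pairwise distinct entries (which exist since $\ell \leq k$) this is $H^0(X,L)^{\otimes \ell}$, and the $S_k$-orbit of such a tuple has stabilizer $S_{k-\ell}$ acting trivially on the tensor factor, so it contributes exactly one copy of $H^0(X,L)^{\otimes \ell}$ to the $S_k$-invariants of the ambient sum. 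Injectivity of $\alpha$ is visible from its distinct-index component at a generic tuple of distinct points.

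The main obstacle will be verifying that $S_k$-invariant sections of the subsheaf $\widetilde{q}^* E_{k,L}^{\otimes \ell}$ receive no contribution from orbits with repeated indices: the Mayer--Vietoris constraint (requiring $(s_i) \in \bigoplus_i f_i^* L$ to satisfy $s_i|_{\Gamma_{i,j}} = s_j|_{\Gamma_{i,j}}$, suitably extended to tensor powers) must kill all $S_k$-invariant classes coming from those repeated-index summands, so that only the distinct-index contribution survives and matches the image of $\alpha$, yielding the desired isomorphism.
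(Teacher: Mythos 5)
The paper does not give its own argument for this statement; it refers to Danila's paper [Dan07], observing only that ``the proof works verbatim if $X^{[k]}$ is smooth.'' Your framework --- descend along the $S_k$-cover $\widetilde{q}\colon \widetilde{X}_*^k \to X_*^{[k]}$, extend sections across codimension $\geq 2$, describe $\widetilde{q}^*E_{k,L}$ via the Mayer--Vietoris sequence for $\widetilde{\mathcal{Z}}_k = \bigcup_i \operatorname{Graph}(f_i)$, and compute $S_k$-invariants of sections of $\bigoplus_{\vec{i}} f_{i_1}^*L\otimes\cdots\otimes f_{i_\ell}^*L$ by orbit type --- is exactly the shape of Danila's argument, and your preliminary reductions (the $S_k$-quotient identification of $X_*^{[k]}$, flat base change for $\widetilde{q}^*E_{k,L}$, blow-up invariance and K\"unneth) are all correct.

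However, the step you flag as the ``main obstacle'' is not a verification to be filled in later; it is the entire content of the theorem, and you have not supplied it. One must show that the projection from $H^0(\widetilde{X}_*^k, \widetilde{q}^* E_{k,L}^{\otimes\ell})^{S_k}$ to the distinct-index orbit summand $H^0(X,L)^{\otimes\ell}$ is injective. This is precisely where the hypothesis $\ell \le k$ must enter in an essential way (for $\ell > k$ the statement is false), yet your sketch only uses $\ell\le k$ to guarantee that distinct tuples exist. Moreover, the constraint cutting $\widetilde{q}^*E_{k,L}^{\otimes\ell}$ out of $F^{\otimes\ell}$, where $F = \bigoplus_i f_i^*L$, is not simply ``Mayer--Vietoris extended to tensor powers.'' Locally near a generic point of $\Gamma_{12}$, writing $u=e_1+e_2$, $v=e_1-e_2$ and $t$ for a local equation of $\Gamma_{12}$, one has $E = \langle u,\; tv,\; e_3,\ldots, e_k\rangle$. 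Already for $\ell=2$ the subsheaf $E\otimes E = \langle u\otimes u,\; tu\otimes v,\; tv\otimes u,\; t^2\, v\otimes v, \ldots\rangle$ imposes a \emph{second-order} vanishing of the $v\otimes v$-component along $\Gamma_{12}$, which is strictly stronger than the componentwise Mayer--Vietoris condition giving $\langle u\otimes u,\; tu\otimes v,\; tv\otimes u,\; t\,v\otimes v,\ldots\rangle$. These higher-order conditions along $\Gamma_k$ are exactly what kill the repeated-index contributions, and they cannot be elided or described as a linear matching condition. As it stands, the proposal is a correct outline of Danila's strategy with the decisive lemma left open.
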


\section{The embedding theorems}\label{section-embedding-theorems}

In this section we will establish a precise version of Terracini's lemma, as well as a relative one --- this will require first a brief algebraic digression. After that, we will study what we call the \emph{boundary divisor} on the secant bundle, to be used for inductive purposes in the proofs of our main theorems. We will also be able to conduct a study of the fibers of the morphism from the secant bundle to the secant variety. The geometry here is significantly more complicated than the curve case treated in \cite{Bertram.92} and \cite{Ein.Niu.Park.20}, and for the study of the conormal bundle we will restrict ourselves to the locus of local complete intersection schemes. 

\subsection{Artinian Gorenstein algebras} In this subsection we establish a result (Lemma \ref{pairing-general}) for Noetherian $\mathbb{C}$-algebras which are both Artinian and Gorenstein. Such algebras have received substantial attention in the literature, however we seek only to provide a self-contained argument for a basic fact that may be well-known to experts. See \cite[\S21.1, 21.2]{Eisenbud.95} for further details on the basic theory.\\
\\
Recalling the notation and terminology of the previous chapters, the relevance of Artinian Gorenstein algebras in our context is due to the fact that the differential $d\alpha_k$ may be naturally
expressed in terms of a universal pairing, as will be seen in Subsection \ref{subsection-universal-pairing}.

\begin{definition}
Let $A$ be an Artinian $\mathbb{C}$-algebra. Define
\[
U(A):=\{\varphi\in \operatorname{Hom}_\mathbb{C}(A, \mathbb{C})\mid 
\operatorname{ker}(\varphi) \text{ does not contain any non-zero ideal}\}.
\]
\end{definition}

\noindent Clearly, $U(A)\neq \operatorname{Hom}_\mathbb{C}(A,\mathbb{C})$. To understand $U(A)$ geometrically, note that an element $\varphi\in \operatorname{Hom}_\mathbb{C}(A,\mathbb{C})$ does not belong to $U(A)$ if and only it factors through a surjection $A\lra B$ of $\mathbb{C}$-algebras (which is not an isomorphism). 
If we think of $\operatorname{Hom}_\mathbb{C}(A,\mathbb{C})$ as the linear space spanned by $\eta=\operatorname{Spec}A$, then $\varphi\in \operatorname{Hom}_\mathbb{C}(A,\mathbb{C})\setminus U$ corresponds to a point that is spanned by a proper subscheme $\xi=\operatorname{Spec}B$ of $\eta$.
In particular, we may think of $\operatorname{Hom}_\mathbb{C}(A,\mathbb{C})\setminus U(A)$ as the set of points in the union of spans of smaller schemes $\xi\subset \eta$.

\begin{lemma}\label{pairing-free}
Let $A$ be an Artinian $\mathbb{C}$-algebra. The pairing
\[
A\otimes_\mathbb{C} A \overset{e}{\lra} A \overset{\varphi}{\lra} \mathbb{C}
\]
is non-degenerate if and only if $\varphi\in U(A)$.
\end{lemma}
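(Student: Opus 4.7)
The plan is to identify the left (equivalently right, as $A$ is commutative) radical of the bilinear form $(a,b) \mapsto \varphi(e(a \otimes b)) = \varphi(ab)$ with the largest two-sided ideal of $A$ contained in $\ker(\varphi)$. Non-degeneracy then translates directly into the statement that no nonzero ideal lies in $\ker(\varphi)$, which is precisely the condition $\varphi \in U(A)$.

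Concretely, I would define
\[
L := \{a \in A \mid \varphi(ab) = 0 \text{ for all } b \in A\} = \{a \in A \mid aA \subseteq \ker(\varphi)\}
\]
and verify in one line that $L$ is an ideal of $A$: additivity is clear, and if $a \in L$ and $c \in A$ then $(ca)b = a(cb) \in \ker(\varphi)$ for every $b$, so $ca \in L$. Taking $b=1$ gives $L \subseteq \ker(\varphi)$.

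Next I would observe that $L$ is in fact the largest ideal contained in $\ker(\varphi)$: if $I \subseteq \ker(\varphi)$ is any ideal, then for each $a \in I$ and every $b \in A$ we have $ab \in I \subseteq \ker(\varphi)$, so $a \in L$; hence $I \subseteq L$.

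With this identification in hand, the equivalence is immediate. The pairing is non-degenerate if and only if $L = 0$, i.e., if and only if $\ker(\varphi)$ contains no nonzero ideal, which is exactly the definition of $\varphi \in U(A)$. (Since $A$ is commutative the left and right radicals coincide, so checking one side suffices.) There is no real obstacle here --- the only point requiring care is the two-line verification that $L$ is an ideal, which uses associativity of multiplication in $A$; the Artinian hypothesis and finite-dimensionality over $\mathbb{C}$ are not needed for this equivalence, though they underlie the geometric interpretation mentioned just before the lemma.
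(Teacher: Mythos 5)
Your proof is correct and follows essentially the same route as the paper's: both reduce non-degeneracy to the statement that the radical $L = \{a : \varphi(aA) = 0\}$ vanishes, and identify $L$ (or, in the paper's terser version, the principal ideal $(a)$ for a degeneracy witness $a$) as an ideal contained in $\ker(\varphi)$. Your write-up is a slightly more explicit version of the same observation, additionally noting that $L$ is the largest such ideal, which is a harmless elaboration.
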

\begin{proof}
We have that $\varphi\circ e$ is degenerate if and only if there is an element $a\in A$ such that for every $b\in A$ we have
$\varphi(ab)=0$. This happens if and only if $(a)\subseteq \operatorname{ker}(\varphi)$.
\end{proof}

There is an algebraic counterpart to the above geometric interpretation of $U(A)$. To that end, we now recall some ideas concerning the canonical module of $A$.

\begin{remark}
When $A$ is an Artinian $\mathbb{C}$-algebra, its canonical module $\omega_A$ in fact coincides with the vector space $\operatorname{Hom}_\mathbb{C} (A,\mathbb{C})$ equipped with the following $A$-module structure:
\[
a\varphi(b):=\varphi(ab)\text{ for all $a$, $b\in A$ and $\varphi\in\operatorname{Hom}_{\mathbb{C}}(A,\mathbb{C})$}
\]
(this follows, for example, from the case when $A$ is local, which is discussed in \cite[\S21.1]{Eisenbud.95}).
\end{remark}

\begin{definition}
  Let $A$ be an Artinian $\mathbb{C}$-algebra. The algebra $A$ is \emph{Gorenstein} if $\omega_A$ is isomorphic to $A$ as an $A$-module. If $A$ is Gorenstein, then any generator $\varphi\in\omega_A$ is called a \emph{dual generator} --- a choice of such is equivalent to a choice of isomorphism $\omega_A \cong A$.   
\end{definition}

\begin{lemma}\label{U-empty}
    Let $A$ be an Artinian $\mathbb{C}$-algebra. 
    Then $U(A)$ is the set of dual generators.
    In particular, $A$ is Gorenstein if and only if $U(A)\neq \emptyset$.
\end{lemma}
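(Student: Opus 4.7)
The plan is to analyze the $A$-module homomorphism
\[
\mu_\varphi \colon A \lra \omega_A,~~ a \longmapsto a\varphi
\]
associated to any $\varphi \in \omega_A = \operatorname{Hom}_{\mathbb{C}}(A,\mathbb{C})$. By the definition of a dual generator (a generator of $\omega_A$ as an $A$-module), the assertion ``$\varphi$ is a dual generator'' is literally the assertion that $\mu_\varphi$ is surjective. Hence the lemma amounts to characterizing surjectivity of $\mu_\varphi$, and I will show that this is equivalent to $\varphi \in U(A)$.

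First I would compute the kernel of $\mu_\varphi$. Unwinding the $A$-module structure on $\omega_A$, we have $a \in \ker(\mu_\varphi)$ if and only if $\varphi(ab)=0$ for every $b \in A$, which is equivalent to the containment $(a) \subseteq \ker(\varphi)$. Consequently
\[
\ker(\mu_\varphi) = \sum_{\substack{I \subseteq A \text{ ideal}\\ I \subseteq \ker(\varphi)}} I
\]
is the largest ideal of $A$ contained in $\ker(\varphi)$. In particular, $\mu_\varphi$ is injective precisely when $\ker(\varphi)$ contains no non-zero ideal, i.e.\ precisely when $\varphi \in U(A)$.

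Next I would observe that $\dim_{\mathbb{C}} A = \dim_{\mathbb{C}}\operatorname{Hom}_{\mathbb{C}}(A,\mathbb{C}) = \dim_{\mathbb{C}} \omega_A$, both being finite since $A$ is Artinian. Thus the $\mathbb{C}$-linear map $\mu_\varphi$ is injective if and only if it is surjective, and hence if and only if it is an isomorphism of $A$-modules.

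Combining the two steps: $\varphi \in U(A)$ if and only if $\mu_\varphi$ is an isomorphism $A \xrightarrow{\sim} \omega_A$, if and only if $\varphi$ is a generator of $\omega_A$. This shows $U(A)$ coincides with the set of dual generators. For the final statement, if $U(A) \neq \emptyset$ then any $\varphi \in U(A)$ furnishes an isomorphism $A \cong \omega_A$, so $A$ is Gorenstein; conversely, if $A$ is Gorenstein, any isomorphism $A \xrightarrow{\sim} \omega_A$ is of the form $\mu_\varphi$ where $\varphi$ is the image of $1$, and the injectivity of $\mu_\varphi$ forces $\varphi \in U(A)$. No step presents a serious obstacle --- the whole argument rests on identifying $\ker(\mu_\varphi)$ with the largest ideal contained in $\ker(\varphi)$, after which the equivalence follows from the equality of $\mathbb{C}$-dimensions.
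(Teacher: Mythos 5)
Your proof is correct and follows essentially the same argument as the paper: the map $\mu_\varphi$ you consider is exactly the map $f_\varphi$ in the paper's proof, and your characterization of its kernel as the largest ideal inside $\ker(\varphi)$ together with the dimension count re-derives what the paper cites from Lemma~\ref{pairing-free}.
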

\begin{proof}
    Fix $\varphi\in \operatorname{Hom}_\mathbb{C}(A,\mathbb{C})$ and consider the map
    \[
    f_\varphi \colon A\lra \operatorname{Hom}_\mathbb{C}(A,\mathbb{C})
    \]
    defined by $(f_\varphi (a))(b) :=\varphi(ab)$. Now $\varphi$ is a dual generator if and only if $f_\varphi$ is an isomorphism, and this happens if and only if $\varphi\in U(A)$ by Lemma \ref{pairing-free}.
\end{proof}

\begin{lemma}\label{pairing-general}
Let $A$ be an Artinian $\mathbb{C}$-algebra and let $M$ be a finitely generated $A$-module. Consider the pairing
\[
M\otimes_{\mathbb{C}} \operatorname{Hom}_A(M,A) \overset{e}{\lra} A \overset{\varphi}{\lra} \mathbb{C}.
\]
Then the induced map
\[
M\lra \operatorname{Hom}_\mathbb{C}(\operatorname{Hom}_A(M,A),\mathbb{C})
\]
is surjective for every $\varphi\in U(A)$.
\end{lemma}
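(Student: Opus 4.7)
The plan is to prove the stronger statement that the map
$$
M \lra \operatorname{Hom}_{\mathbb{C}}(\operatorname{Hom}_A(M,A), \mathbb{C})
$$
induced by the pairing is in fact a $\mathbb{C}$-linear \emph{isomorphism}. I will obtain this by presenting it as the composition of two canonical isomorphisms: a Matlis-type identification coming from the fact that $\varphi$ is a dual generator, and the ordinary $\mathbb{C}$-linear double-dual isomorphism for a finite-dimensional vector space. The only real work is bookkeeping, but it must be done carefully to confirm that the composite is precisely the pairing-induced map.

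First, by Lemma \ref{U-empty}, the hypothesis $\varphi \in U(A)$ says that $A$ is Gorenstein and that $\varphi$ is a dual generator, so the $A$-module map
$$
f_\varphi \colon A \xrightarrow{\ \sim\ } \omega_A = \operatorname{Hom}_{\mathbb{C}}(A,\mathbb{C}), \qquad a \longmapsto \big( b \mapsto \varphi(ab) \big),
$$
is an isomorphism. Combining this with the Hom--tensor adjunction
$$
\operatorname{Hom}_A\big(M,\operatorname{Hom}_{\mathbb{C}}(A,\mathbb{C})\big) \;\cong\; \operatorname{Hom}_{\mathbb{C}}(M\otimes_A A, \mathbb{C}) \;=\; \operatorname{Hom}_{\mathbb{C}}(M,\mathbb{C}),
$$
given by $g \mapsto (m \mapsto g(m)(1))$, I obtain a natural $\mathbb{C}$-linear isomorphism
$$
\Phi \colon \operatorname{Hom}_A(M,A) \xrightarrow{\ \sim\ } \operatorname{Hom}_{\mathbb{C}}(M,\mathbb{C}), \qquad f \longmapsto \big( m \mapsto \varphi(f(m)) \big).
$$

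Next, I apply $\operatorname{Hom}_{\mathbb{C}}(-,\mathbb{C})$ to $\Phi$ and precompose with the canonical double-dual evaluation map $\operatorname{ev}_M \colon M \lra \operatorname{Hom}_{\mathbb{C}}(\operatorname{Hom}_{\mathbb{C}}(M,\mathbb{C}),\mathbb{C})$. Since $M$ is finitely generated over the Artinian $\mathbb{C}$-algebra $A$, it has finite length over $A$ and hence is finite-dimensional as a $\mathbb{C}$-vector space, so $\operatorname{ev}_M$ is an isomorphism. The composition
$$
M \xrightarrow{\ \operatorname{ev}_M\ } \operatorname{Hom}_{\mathbb{C}}\big(\operatorname{Hom}_{\mathbb{C}}(M,\mathbb{C}),\mathbb{C}\big) \xrightarrow{\ \Phi^{\vee}\ } \operatorname{Hom}_{\mathbb{C}}\big(\operatorname{Hom}_A(M,A),\mathbb{C}\big)
$$
is therefore a $\mathbb{C}$-linear isomorphism.

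Finally, I would unwind definitions to check that this composite sends $m \in M$ to the functional $f \mapsto \varphi(f(m))$: for $f \in \operatorname{Hom}_A(M,A)$, $\operatorname{ev}_M(m)$ evaluated at $\Phi(f) = (m \mapsto \varphi(f(m)))$ is simply $\varphi(f(m))$. This is exactly the map in the statement of the lemma, completing the proof with bijectivity (and in particular surjectivity). The main (and really only) subtlety is this identification of the composite with the pairing-induced map; once this compatibility is verified, the conclusion is immediate.
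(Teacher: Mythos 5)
Your proof is correct, and in fact it establishes a strictly stronger conclusion: that the map $M \to \operatorname{Hom}_{\mathbb{C}}(\operatorname{Hom}_A(M,A),\mathbb{C})$ is a $\mathbb{C}$-linear \emph{isomorphism}, not merely a surjection. Your route is genuinely different from the paper's. The paper chooses a free presentation $A^s \twoheadrightarrow M$, forms the evident commutative square with $\operatorname{Hom}_{\mathbb{C}}(\operatorname{Hom}_A(-,A),\mathbb{C})$ applied to it, notes that the top map $A^s \to \operatorname{Hom}_{\mathbb{C}}(\operatorname{Hom}_A(A^s,A),\mathbb{C})$ is surjective (by Lemma \ref{pairing-free} applied factor-by-factor, using that a non-degenerate pairing on a finite-dimensional space gives a bijection onto the dual), and that the right vertical map is surjective by exactness of $\operatorname{Hom}_{\mathbb{C}}(-,\mathbb{C})$ applied to the left-exact sequence $0 \to \operatorname{Hom}_A(M,A) \to \operatorname{Hom}_A(A^s,A)$. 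You instead observe that $\varphi \in U(A)$ produces an $A$-module isomorphism $f_\varphi \colon A \xrightarrow{\sim} \operatorname{Hom}_{\mathbb{C}}(A,\mathbb{C})$, and then combine $\operatorname{Hom}_A(M,f_\varphi)$ with the tensor–hom adjunction $\operatorname{Hom}_A(M,\operatorname{Hom}_{\mathbb{C}}(A,\mathbb{C})) \cong \operatorname{Hom}_{\mathbb{C}}(M,\mathbb{C})$ to get $\Phi \colon \operatorname{Hom}_A(M,A) \xrightarrow{\sim} \operatorname{Hom}_{\mathbb{C}}(M,\mathbb{C})$, after which finite-dimensional biduality closes the argument. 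The unwinding you sketch is correct: $\Phi^\vee(\operatorname{ev}_M(m))(f) = \operatorname{ev}_M(m)(\Phi(f)) = \Phi(f)(m) = \varphi(f(m))$, which is exactly the pairing-induced map. What each approach buys: the paper's argument is more elementary (it never invokes the $\omega_A \cong A$ identification beyond the free case already secured by Lemma \ref{pairing-free}, and it yields exactly what is subsequently needed, namely surjectivity so that Nakayama can be applied in Lemma \ref{pairing-secant}); yours is essentially Matlis duality in the $0$-dimensional Gorenstein setting and gives the sharper bijectivity statement for free, at the cost of a slightly heavier toolkit.
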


\begin{proof}
Take a surjective map $A^s\lra M$. The natural pairing
\[
A^s \otimes_\mathbb{C} \operatorname{Hom}_A(A^s, A)\lra A
\]
induces the commutative diagram
\begin{center}
\begin{tikzcd}
A^s \ar[r]\ar[d] & \operatorname{Hom}_\mathbb{C}(\operatorname{Hom}_A (A^s, A),\mathbb{C})\ar[d]\\
M\ar[r] & \operatorname{Hom}_\mathbb{C}(\operatorname{Hom}_A(M,A),\mathbb{C})
\end{tikzcd}.
\end{center}
The top row is surjective by Lemma \ref{pairing-free} and the right column is surjective by flatness of $A$ over $\mathbb{C}$.
\end{proof}

\subsection{The universal pairing}\label{subsection-universal-pairing}
Here is the connection of the above results with the global geometric setting.

\begin{lemma}\label{pairing-secant}
On $B^{k}$ there is a natural pairing
\[
\pi_{k} ^* \operatorname{pr}_{2,*} (\operatorname{pr}_1 ^* L \otimes \mathcal{I}_{\mathcal{Z}_k}/\mathcal{I}_{\mathcal{Z}_k}^2)
\otimes
\pi_{k}^* \operatorname{pr}_{2,*}\SHom (\mathcal{I}_{\mathcal{Z}_{k}}, \mathcal{O}_{\mathcal{Z}_{k}})
\lra
\mathcal{O}_{B^{k}}(1).
\]
It induces a morphism
\[
\pi_{k} ^* \operatorname{pr}_{2,*} (\operatorname{pr}_1^*L \otimes \mathcal{I}_{\mathcal{Z}_k}/\mathcal{I}_{\mathcal{Z}_k}^2)\otimes \mathcal{O}_{B^{k}}(-1)
\overset{\gls{e-k}}{\lra} \pi_{k}^* \Omega_{X^{[k]}} ^1.
\]
Then the map $e_{k}$ is
surjective when restricted to $U^{k}$.
\end{lemma}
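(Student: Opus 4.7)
The plan is to first build the pairing by a standard push-pull construction, then reduce the surjectivity of $e_k$ on $U^k$ to the algebraic Lemma~\ref{pairing-general}.

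To construct the pairing, I would start from the tautological evaluation $\mathcal{I}_{\mathcal{Z}_k}/\mathcal{I}_{\mathcal{Z}_k}^2 \otimes_{\mathcal{O}_{\mathcal{Z}_k}} \SHom(\mathcal{I}_{\mathcal{Z}_k}, \mathcal{O}_{\mathcal{Z}_k}) \lra \mathcal{O}_{\mathcal{Z}_k}$ on $\mathcal{Z}_k$. Tensor with $\operatorname{pr}_1^* L$ and push forward by the finite morphism $\operatorname{pr}_2 \colon \mathcal{Z}_k \lra X^{[k]}$, using the natural lax-monoidal map $\operatorname{pr}_{2,*}(-) \otimes \operatorname{pr}_{2,*}(-) \lra \operatorname{pr}_{2,*}(- \otimes -)$, to land in $\operatorname{pr}_{2,*}(\operatorname{pr}_1^* L \otimes \mathcal{O}_{\mathcal{Z}_k}) = E_{k,L}$. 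Pulling back to $B^k$ and post-composing with the tautological quotient $\pi_k^* E_{k,L} \twoheadrightarrow \mathcal{O}_{B^k}(1)$ then yields the pairing. The morphism $e_k$ is obtained by twisting this pairing with $\mathcal{O}_{B^k}(-1)$ and taking the adjoint, using Theorem~\ref{cotangent-sheaf-hilbert-scheme} to identify $\operatorname{pr}_{2,*} \SHom(\mathcal{I}_{\mathcal{Z}_k}, \mathcal{O}_{\mathcal{Z}_k}) \cong T_{X^{[k]}}$; equivalently, $e_k$ may be produced by pushing forward and twisting the map $D_k$ of Theorem~\ref{conormal-cotangent}.

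For surjectivity on $U^k$, by Nakayama it suffices to check surjectivity at each fiber. Fix $(\xi, \varphi) \in U^k$ and set $A := \mathcal{O}_\xi$. Since $\xi$ is zero-dimensional, $L|_\xi$ is a free $A$-module of rank one; pick a trivialization $L|_\xi \cong A$, which identifies $\varphi$ with an element of $\operatorname{Hom}_{\mathbb{C}}(A, \mathbb{C})$. Under these identifications the fiber of $e_k$ at $(\xi, \varphi)$ becomes the $\mathbb{C}$-linear map
\begin{equation*}
\mathcal{I}_\xi / \mathcal{I}_\xi^2 \lra \operatorname{Hom}_{\mathbb{C}}\bigl(\operatorname{Hom}_A(\mathcal{I}_\xi/\mathcal{I}_\xi^2, A),\ \mathbb{C}\bigr), \qquad s \longmapsto \bigl(t \mapsto \varphi(t(s))\bigr).
\end{equation*}

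The description of $\kappa_{k-1,k}$ in \S\ref{subsec:secant bundles} says that $(\xi, \varphi) \in U^k$ precisely when $\varphi$ does not factor through any restriction $H^0(\xi, L|_\xi) \twoheadrightarrow H^0(\xi', L|_{\xi'})$ for a proper subscheme $\xi' \subsetneq \xi$; and since every proper subscheme of $\xi$ is contained in a length-$(k-1)$ one, this is equivalent, after the trivialization, to $\varphi$ vanishing on no non-zero ideal of $A$, i.e., $\varphi \in U(A)$. Applying Lemma~\ref{pairing-general} with $M := \mathcal{I}_\xi/\mathcal{I}_\xi^2$ then produces precisely the required surjectivity. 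The main technical obstacle is the careful bookkeeping to match the fiber of $e_k$ with the abstract algebraic pairing of Lemma~\ref{pairing-general}, tracking the tautological line $\mathcal{O}_{B^k}(-1)|_{(\xi,\varphi)}$, the trivialization of $L|_\xi$, and the equivalence of the geometric and algebraic non-factoring conditions.
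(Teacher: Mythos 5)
Your proposal is correct and follows essentially the same route as the paper: the paper also constructs the pairing and $e_k$ by pushing forward the natural pairing and the differential $D_k$ from Theorem~\ref{conormal-cotangent} along $\operatorname{pr}_2$ (landing in $E_{k,L}$, then pulling back to $B^k$ and composing with $\pi_k^*E_{k,L}\twoheadrightarrow \mathcal{O}_{B^k}(1)$), and likewise cites Lemma~\ref{pairing-general} together with Nakayama for the surjectivity on $U^k$. The only difference is one of exposition --- the paper's four-sentence proof leaves the fiberwise identification of the pairing with the abstract pairing of Lemma~\ref{pairing-general}, and the matching of the condition $\varphi\in U(A)$ with membership in $U^k$, as implicit, whereas you spell these out; the underlying argument is the same.
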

\begin{proof}
The pairing
of Theorem \ref{conormal-cotangent}
on $\mathcal{Z}_k$ induces a pairing
\[
 \operatorname{pr}_{2,*} (\operatorname{pr}_1 ^* L \otimes \mathcal{I}_{\mathcal{Z}_k}/\mathcal{I}_{\mathcal{Z}_k}^2)\otimes \operatorname{pr}_{2,*}\SHom (\mathcal{I}_{\mathcal{Z}_{k}}, \mathcal{O}_{\mathcal{Z}_{k}})\lra E_{k,L}
\]
and a map
\[
D_{k,L} \colon  \operatorname{pr}_{2,*} (\operatorname{pr}_1 ^* L \otimes \mathcal{I}_{\mathcal{Z}_k}/\mathcal{I}_{\mathcal{Z}_k}^2)\lra \Omega_{X^{[k]}}^1 \otimes E_{k,L}
\]
on $X^{[k]}$. These induce the pairing of the statement and the map $e_{k}$ on $B^{k}$ respectively. 
Surjectivity of $e_{k}$ on $U^k$ then follows from Lemma \ref{pairing-general} and Nakayama's Lemma.
\end{proof}

\subsection{Terracini's lemma}
We want to show that, under appropriate conditions, the morphism $\alpha_k \colon B^k\lra \mathbb{P}^r$ restricts to an isomorphism from the open subset $U^k$ onto its image. 
Recall that we also have morphisms $\pi_k \colon B^k \lra X^{[k]}$ and $i_k \colon B^k \lra \mathbb{P}^r\times X^{[k]}$. 

At this point, we introduce the following notation: for schemes $Y\subseteq Z$ we will denote by
$$
\gls{con-x-y} := \mathcal{I}_{Y/Z}/\mathcal{I}_{Y/Z}^2
$$
the \emph{conormal sheaf (of $Y$ in $Z$)}.

\begin{lemma}\label{diagram-differential-secant}
There is a natural commutative diagram
\begin{center}
\begin{tikzcd}
 & & 0\ar[d]& &\\
 & & \mathcal{C}_{B^k/\mathbb{P}^r\times X^{[k]}}\ar[d]\ar[rd,"\gls{gamma-k-L}"]& &\\
 0\ar[r]& \alpha_{k}^* \Omega_{\mathbb{P}^r} ^1\ar[r]\ar[rd, "d\alpha_{k}"']& i_{k} ^* \Omega_{\mathbb{P}^r\times X^{[k]}} ^1\ar[r]\ar[d] & \pi_k ^* \Omega_{X^{[k]}} ^1\ar[r]&0\\
 & & \Omega_{B^{k}} ^1\ar[d]& &\\
 & & ~0& &
\end{tikzcd}.
\end{center}
Furthermore,  $\operatorname{ker}(\gamma_{k}) \cong \operatorname{ker}(d\alpha_{k})$ and  $\operatorname{coker}(\gamma_{k}) \cong \operatorname{coker}(d\alpha_{k})$.
\end{lemma}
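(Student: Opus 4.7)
The plan is to realize the displayed diagram as the superposition of two standard short exact sequences and then deduce the kernel and cokernel identifications by a direct diagram chase.

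First, I would produce the horizontal row by pulling back along $i_k$ the canonical splitting
\[
\Omega_{\mathbb{P}^r \times X^{[k]}}^1 \;\cong\; \operatorname{pr}_1^* \Omega_{\mathbb{P}^r}^1 \oplus \operatorname{pr}_2^* \Omega_{X^{[k]}}^1
\]
coming from the product structure, using $\operatorname{pr}_1 \circ i_k = \alpha_k$ and $\operatorname{pr}_2 \circ i_k = \pi_k$; this yields a split short exact sequence. For the vertical column, I would take the conormal exact sequence for the closed embedding $i_k$. The only non-formal point is its left-exactness, which I would justify by observing that $i_k$ presents $B^k = \mathbb{P}(E_{k,L})$ as a relative projective subbundle of $\mathbb{P}^r \times X^{[k]} = \mathbb{P}(H^0(X,L) \otimes \mathcal{O}_{X^{[k]}})$ arising from the surjection $H^0(X,L) \otimes \mathcal{O}_{X^{[k]}} \twoheadrightarrow E_{k,L}$ whose kernel $M_{k,L}$ is locally free. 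Consequently $i_k$ is a regular embedding, its conormal sheaf is canonically $\pi_k^* M_{k,L} \otimes \mathcal{O}_{B^k}(-1)$, and the vertical conormal sequence is short exact.

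With both sequences in place, commutativity of the two triangles is automatic. By functoriality of the differential applied to $\alpha_k = \operatorname{pr}_1 \circ i_k$, the map $d\alpha_k$ is precisely the composition $\alpha_k^* \Omega_{\mathbb{P}^r}^1 \hookrightarrow i_k^* \Omega_{\mathbb{P}^r \times X^{[k]}}^1 \twoheadrightarrow \Omega_{B^k}^1$, while $\gamma_k$ is by definition the composition $\mathcal{C}_{B^k/\mathbb{P}^r \times X^{[k]}} \hookrightarrow i_k^* \Omega_{\mathbb{P}^r \times X^{[k]}}^1 \twoheadrightarrow \pi_k^* \Omega_{X^{[k]}}^1$. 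Writing $f, g$ for the horizontal and $h, p$ for the vertical maps, we have $d\alpha_k = p \circ f$ and $\gamma_k = g \circ h$ with $f, h$ injective, $g, p$ surjective, and $\im(f) = \ker(g)$, $\im(h) = \ker(p)$. A routine chase then identifies both $\ker(\gamma_k)$ and $\ker(d\alpha_k)$ with the subsheaf $\im(f) \cap \im(h)$ of $i_k^* \Omega_{\mathbb{P}^r \times X^{[k]}}^1$, and dually identifies both $\operatorname{coker}(\gamma_k)$ and $\operatorname{coker}(d\alpha_k)$ with the quotient $i_k^* \Omega_{\mathbb{P}^r \times X^{[k]}}^1 / (\im(f) + \im(h))$. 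I do not foresee substantive obstacles here; the only step that requires attention beyond bookkeeping is the left-exactness of the conormal sequence, which is controlled by the regularity of the embedding $i_k$.
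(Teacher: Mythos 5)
Your proof is correct and takes essentially the same approach the paper implicitly relies on: assemble the two crossing short exact sequences (the pulled-back product splitting and the conormal sequence of $i_k$) and chase the diagram. One caution on a logical connective in your justification of the vertical column's left exactness: regularity of an embedding alone does \emph{not} guarantee that the conormal sequence is short exact --- for instance, $V(x^2) \subset \mathbb{A}^2$ is a regular embedding whose conormal map $\mathcal{C}_{Z/Y} \to i^*\Omega^1_Y$ fails to be injective. What actually does the work is the observation you state just before that clause: $i_k$ exhibits $B^k = \mathbb{P}(E_{k,L})$ as a linear projective subbundle of $\mathbb{P}(H^0(X,L)\otimes\OO_{X^{[k]}})$ (with locally free kernel $M_{k,L}$), so locally over $X^{[k]}$ the embedding is a constant linear $\mathbb{P}^{k-1}$-subbundle inside a $\mathbb{P}^r$-bundle, whence the conormal sequence decomposes into visibly exact pieces. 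This is exactly what the paper records in Appendix \S\ref{conormal-sheaf-of-sequence} via the diagram of Euler sequences, and it is the result cited in the proof of Theorem \ref{terracini} for the identification $\mathcal{C}_{B^k/\mathbb{P}^r\times X^{[k]}} \cong \pi_k^*M_{k,L}\otimes\OO_{B^k}(-1)$.
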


This Lemma will now allow us to study the $d\alpha_k$ by studying $\gamma_{k}$ appropriately. The embedding theorem (Theorem \ref{embedding-theorem}) is a special case of the following theorem. 

\begin{theorem}\label{terracini}
Assume that $L$ separates $2k$-schemes.
Then the restricted morphism 
$$
U^k\xrightarrow{~\alpha_k~} \mathbb{P}^r\setminus \kappa_{k-1}
$$
is a closed embedding, i.e., $U^k \cong \kappa_k \setminus \kappa_{k-1}$. Furthermore, if $X^{[k]}$ is smooth, then $\mathcal{C}_{U^k/\mathbb{P}^r}\cong \operatorname{ker}(\gamma_{k})$.
\end{theorem}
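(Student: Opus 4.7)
The plan is to verify that $\alpha_k|_{U^k}\colon U^k \to \mathbb{P}^r \setminus \kappa_{k-1}$ is proper, injective on closed points, and unramified, so that it is a closed immersion by standard criteria; the conormal description will then follow at once from Lemma \ref{diagram-differential-secant}. First I would establish \emph{properness}: $\alpha_k$ is proper because $B^k$ is projective, and the formula $\operatorname{Span}_{\mathbb{P}^r}(\xi) \cap \operatorname{Span}_{\mathbb{P}^r}(\xi') = \operatorname{Span}_{\mathbb{P}^r}(\xi \cap \xi')$ noted at the end of \S\ref{subsec:secant bundles} (which uses that $L$ separates $2k$-schemes) yields $\alpha_k^{-1}(\kappa_{k-1}) = \kappa_{k-1,k}$, so the restriction remains proper.

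For \emph{injectivity on closed points}, given $b, b' \in U^k$ with $\alpha_k(b) = \alpha_k(b') = y \notin \kappa_{k-1}$, writing $\pi_k(b) = [\xi]$ and $\pi_k(b') = [\xi']$, the same formula places $y \in \operatorname{Span}(\xi \cap \xi')$; since $y \notin \kappa_{k-1}$, $\xi \cap \xi'$ must have length at least $k$, forcing $\xi = \xi'$, and then $b = b'$ because $\alpha_k$ embeds each fiber $\pi_k^{-1}([\xi]) \cong \mathbb{P}^{k-1}$ isomorphically onto $\operatorname{Span}(\xi)$. For \emph{unramifiedness}, Lemma \ref{diagram-differential-secant} gives $\operatorname{coker}(d\alpha_k) \cong \operatorname{coker}(\gamma_k)$, so it suffices to show $\gamma_k$ is fiberwise surjective on $U^k$. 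The map $\gamma_k \colon \pi_k^* M_{k,L} \otimes \mathcal{O}_{B^k}(-1) \to \pi_k^* \Omega^1_{X^{[k]}}$ factors through the map induced by the quotient $\mathcal{I}_{\mathcal{Z}_k} \to \mathcal{I}_{\mathcal{Z}_k}/\mathcal{I}_{\mathcal{Z}_k}^2$, and the resulting map from $\pi_k^* \operatorname{pr}_{2,*}(\operatorname{pr}_1^* L \otimes \mathcal{I}_{\mathcal{Z}_k}/\mathcal{I}_{\mathcal{Z}_k}^2) \otimes \mathcal{O}_{B^k}(-1)$ coincides with $e_k$ by the compatibility asserted in Theorem \ref{conormal-cotangent}. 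Since $U^k \subseteq \pi_k^{-1}(X^{[k]}_{\operatorname{sm},\operatorname{Gor}})$ (cf.~\S\ref{distinguished-subschemes}), at each point of $U^k$ the algebra $\mathcal{O}_\xi$ is Artinian Gorenstein and $b$ corresponds to a dual generator by Lemma \ref{U-empty}; applying the non-degeneracy of Lemma \ref{pairing-general} exactly as in the proof of Lemma \ref{pairing-secant} then yields the fiberwise surjectivity of $\gamma_k$ I need.

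Combining the three items yields the closed immersion. For the conormal description, once $X^{[k]}$ is smooth both $U^k$ and $\mathbb{P}^r \setminus \kappa_{k-1}$ are smooth, so the conormal exact sequence identifies $\mathcal{C}_{U^k/\mathbb{P}^r}$ with $\ker(d\alpha_k|_{U^k})$, which by Lemma \ref{diagram-differential-secant} is isomorphic to $\ker(\gamma_k)|_{U^k}$. The hard part will be the unramifiedness step: a naive diagram chase deducing surjectivity of $\gamma_k$ from that of $e_k$ would require the ideal quotient map $M_{k,L} \to \operatorname{pr}_{2,*}(\operatorname{pr}_1^* L \otimes \mathcal{I}_{\mathcal{Z}_k}/\mathcal{I}_{\mathcal{Z}_k}^2)$ to be surjective, which would in turn demand vanishing of $R^1 \operatorname{pr}_{2,*}(\operatorname{pr}_1^* L \otimes \mathcal{I}_{\mathcal{Z}_k}^2)$---a condition strictly stronger than $2k$-separation when $\dim X \geq 2$---so the argument must make genuine use of the Gorenstein pairing structure rather than only the sheaf surjectivity of $e_k$.
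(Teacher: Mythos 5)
Your proposal traces the same route as the paper's own proof: properness (which the paper leaves implicit, relying on $\alpha_k^{-1}(\kappa_{k-1})=\kappa_{k-1,k}$ from the end of \S\ref{subsec:secant bundles}), set-theoretic injectivity from $2k$-separation, and unramifiedness via Lemma \ref{diagram-differential-secant} reducing to surjectivity of $\gamma_k$, with the Gorenstein pairing machinery (Theorem \ref{conormal-cotangent}, Lemmas \ref{U-empty}, \ref{pairing-general}, \ref{pairing-secant}) doing the work. The identification of $\mathcal{C}_{U^k/\mathbb{P}^r}$ with $\ker(\gamma_k)$ under smoothness of $X^{[k]}$ is handled the same way. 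Your added care in spelling out properness and the injectivity argument is a modest improvement in exposition over the paper's single sentence.

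The one issue worth flagging is exactly the one you yourself flagged in your last paragraph, and you have not actually closed it. The paper writes ``$\gamma_k$ factors through $e_k$'' and then ``by Lemma \ref{pairing-secant} we have that $\gamma_k$ is surjective on $U^k$'', but Lemma \ref{pairing-secant} as stated establishes surjectivity of $e_k$, not of $\gamma_k$. Since $\gamma_k = e_k \circ q$ where $q$ is the sheaf-level map $\pi_k^*M_{k,L}(-1) \to \pi_k^*\operatorname{pr}_{2,*}(\operatorname{pr}_1^*L\otimes\mathcal{I}_{\mathcal{Z}_k}/\mathcal{I}_{\mathcal{Z}_k}^2)(-1)$ induced by $\mathcal{I}_{\mathcal{Z}_k}\twoheadrightarrow\mathcal{I}_{\mathcal{Z}_k}/\mathcal{I}_{\mathcal{Z}_k}^2$, surjectivity of $e_k$ does not formally transfer to $\gamma_k$ without knowing something about the image of $q$. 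You correctly note that asking $q$ itself to be sheaf-surjective would overreach (it would need $R^1\operatorname{pr}_{2,*}(\operatorname{pr}_1^*L\otimes\mathcal{I}_{\mathcal{Z}_k}^2)=0$), and that a genuine use of the Gorenstein pairing is needed. But your write-up then simply asserts ``applying the non-degeneracy of Lemma \ref{pairing-general} exactly as in the proof of Lemma \ref{pairing-secant} yields the fiberwise surjectivity of $\gamma_k$'', which is a restatement of the claim rather than a proof of it. What is actually available, and is the missing ingredient, is the observation from \S\ref{global-generation-universal-family} that when $L$ separates $k$-schemes the homomorphism $L^{-1}\boxtimes M_{k,L}\to\mathcal{I}_{\mathcal{Z}_k}$ is surjective; fiberwise at $[\xi]$ this says the image of $H^0(X,L\otimes\mathcal{I}_\xi)$ generates $L|_\xi\otimes\mathcal{I}_\xi/\mathcal{I}_\xi^2$ as an $\mathcal{O}_\xi$-module, so one can run the free-presentation argument of Lemma \ref{pairing-general} with $A^s=H^0(X,L\otimes\mathcal{I}_\xi)\otimes_{\mathbb{C}}\mathcal{O}_\xi$ and track what happens to the subspace $H^0(X,L\otimes\mathcal{I}_\xi)\otimes 1$. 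Without making that step explicit, the fiberwise surjectivity of $\gamma_k$ --- and hence unramifiedness --- is not established; you have identified the crux but left it as an assertion, which is the same level of detail the paper itself provides.
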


\begin{proof}
The morphism $\alpha_k |_{U^k}$ is injective since $L$ separates $2k$-schemes.
By \S \ref{conormal-sheaf-of-sequence}, we have
\[
\mathcal{C}_{B^k/\mathbb{P}^r\times X^{[k]}} \cong
\pi_k^* M_{k,L}\otimes\mathcal{O}_{B^k}(-1).
\]
The induced morphism $\gamma_{k}$
factors through $e_{k}$
by Theorem \ref{conormal-cotangent}.
By Lemma \ref{pairing-secant} we have that $\gamma_{k}$ is surjective on $U^k$, and therefore $d\alpha_k$ is surjective on $U^k$ by Lemma \ref{diagram-differential-secant}. If $X^{[k]}$ is smooth, then so is $U^k$, and therefore $\mathcal{C}_{U^k/\mathbb{P}^r}\cong \operatorname{ker}(d\alpha_k\vert_{U^k})$.
\end{proof}

\begin{remark}\label{terracini-remark}
The above result can be understood as a ``precise Terracini Lemma'' for the following reason. Recall that the classical statement \cite[Lemma 3.4.28]{pag1} says that for points $x_1,\ldots,x_k \in X \subseteq \mathbb{P}(V)$ and $y \in \textnormal{Span}_{\mathbb{P}(V)}(x_1,\ldots,x_k)$, the projective tangent space of the secant variety at $y\in\sigma_k(X)$ has a simple linear algebraic description:
$$
\mathbb{T}_y(\sigma_k) \supseteq \textnormal{Span}_{\mathbb{P}(V)}(\mathbb{T}_{x_1}X,\ldots,\mathbb{T}_{x_k}X)
$$
as subspaces of $\mathbb{P}(V)$, with equality when $y \in \sigma_k(X)$ is general.
Noting that for subspaces $\mathbb{P}(W),\mathbb{P}(U) \subseteq \mathbb{P}(V)$ the span is identified as
$$
\textnormal{Span}_{\mathbb{P}(V)}(\mathbb{P}(U),\mathbb{P}(W)) = \mathbb{P}(V/(\textnormal{ker}(V\lra U)\cap\textnormal{ker}(V\lra W)))
$$
and that for any $x \in X$ we have
$$
\mathbb{T}_xX = \mathbb{P}(\textnormal{im}(V \lra H^0(\OO_X(1)\otimes \OO_X/\mathfrak{m}_x^2))).
$$
Therefore, for general $y \in \sigma_k$, we have
\begin{align*}
\mathbb{T}_y(\sigma_k) & = \mathbb{P}(V/\cap_{i=1}^kH^0(\mathbb{P}V,\mathfrak{m}^2_{x_i/\mathbb{P}V})).
\end{align*}
When $L$ separates $2k$-schemes, this means
$$
\mathbb{T}_y(\sigma_k) = \mathbb{P}(H^0(X,L\otimes \OO_X/\mathcal{I}_{\eta}^2))
$$
for $\eta = x_1+\cdots +x_k$ and $\mathcal{I}_{\eta} = \mathcal{I}_{\eta/X}$, noting that $\mathcal{I}_{\eta}^j \cong \otimes_{i=1}^k\mathfrak{m}_i^j$ since the $x_i$ are distinct.
In our case, if $y \in \sigma_k\setminus \sigma_{k-1}$ is arbitrary and $X^{[k]}$ is assumed to be smooth, then for any $[\eta] \in X^{[k]}$ for which $y \in \textnormal{Span}_{\mathbb{P}(V)}(\eta)$, our Theorem \ref{terracini} shows that the conormal space of $\sigma_k(L)$ at $y$ is 
\begin{equation}\label{terracini-kernel}
\operatorname{ker}\left( H^0(X, L\otimes \mathcal{I}_\eta) \lra \operatorname{Hom}_{\mathbb{C}}(\operatorname{Hom}_{\mathcal{O}_\eta}(L\otimes \mathcal{I}_\eta /\mathcal{I}_\eta^2, \mathcal{O}_\eta), \mathbb{C})\right),
\end{equation}
which contains $H^0(X,L\otimes \mathcal{I}_{\eta}^2)$ but is only equal to it for $[\eta] \in X_{\operatorname{lci}}^{[k]}$. Hence for general $y \in \sigma_k$ the two descriptions coincide.
\end{remark}

\begin{corollary}\label{corollary-singularities-cactus}
Assume that $L$ separates $2k$-schemes. Then we have
$$
\operatorname{Sing}(\kappa_k) = \kappa_{k-1} \cup \alpha_k(\pi_k^{-1}(\operatorname{Sing}(X^{[k]}_{\operatorname{Gor}}))).
$$
\end{corollary}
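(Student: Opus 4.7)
The plan is to split $\operatorname{Sing}(\kappa_k)$ at the closed subset $\kappa_{k-1}$. Outside $\kappa_{k-1}$ the Embedding Theorem (Theorem \ref{terracini}) identifies $\kappa_k \setminus \kappa_{k-1}$ with the open set $U^k \subseteq B^k$, and since $B^k$ is a projective bundle over $X^{[k]}$, its singular locus is pulled back cleanly from the base --- with the Gorenstein locus entering because $U^k$ lies over it. Along $\kappa_{k-1}$ itself, a short Terracini-style tangent-space calculation forces singularity. Given Theorem \ref{terracini} and the algebraic setup of \S\ref{subsection-universal-pairing}, neither half poses a serious obstacle; the only real care will be the second step, together with bookkeeping to match $\alpha_k(\pi_k^{-1}(\operatorname{Sing}(X^{[k]}_{\operatorname{Gor}})))$ with $\operatorname{Sing}(U^k)$ via the Embedding Theorem.

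For the first step, the hypothesis that $L$ separates $2k$-schemes gives $\alpha_k^{-1}(\kappa_{k-1}) = \kappa_{k-1,k}$ (see \S\ref{subsec:secant bundles}), so $U^k = B^k \setminus \alpha_k^{-1}(\kappa_{k-1})$ and Theorem \ref{terracini} yields an isomorphism $\alpha_k|_{U^k} \colon U^k \xrightarrow{\sim} \kappa_k \setminus \kappa_{k-1}$. Under this isomorphism, $\operatorname{Sing}(\kappa_k) \setminus \kappa_{k-1}$ corresponds to $\operatorname{Sing}(U^k)$. The sheaf $E_{k,L}$ is locally free of rank $k$ (\S\ref{subsec:secant bundles}), so $\pi_k \colon B^k \to X^{[k]}$ is a smooth projective bundle and $\operatorname{Sing}(B^k) = \pi_k^{-1}(\operatorname{Sing}(X^{[k]}))$. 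By Lemma \ref{U-empty} applied fiberwise, as indicated in \S\ref{distinguished-subschemes}, $U^k$ lies in $\pi_k^{-1}(X^{[k]}_{\operatorname{Gor}})$; since $X^{[k]}_{\operatorname{Gor}}$ is open in $X^{[k]}$ its singular locus is $\operatorname{Sing}(X^{[k]}) \cap X^{[k]}_{\operatorname{Gor}}$. Chaining these identifications produces
\[
\operatorname{Sing}(\kappa_k) \setminus \kappa_{k-1} = \alpha_k\bigl(\pi_k^{-1}(\operatorname{Sing}(X^{[k]}_{\operatorname{Gor}}))\bigr) \setminus \kappa_{k-1}.
\]

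For the second step, I would show $\kappa_{k-1} \subseteq \operatorname{Sing}(\kappa_k)$ as follows. Fix $y \in \kappa_{k-1}$ and pick a $(k-1)$-scheme $\xi \subseteq X$ with $y \in \operatorname{Span}_{\mathbb{P}^r}(\xi)$. For each $x \in X \setminus \operatorname{Supp}(\xi)$, the $k$-scheme $\xi \cup \{x\}$ has projective span $\mathbb{P}_x \subseteq \kappa_k$, which is a linear subspace passing through both $y$ and $x$; hence $\mathbb{P}_x \subseteq \mathbb{T}_y \kappa_k$. Taking the union over $x$ and spanning, and using that $X$ is non-degenerate in $\mathbb{P}^r$, we find
\[
\mathbb{T}_y \kappa_k \supseteq \operatorname{Span}_{\mathbb{P}^r}\bigl(X \setminus \operatorname{Supp}(\xi)\bigr) = \mathbb{P}^r.
\]
Since $\dim \kappa_k = d_k < r$ (as $\kappa_k \subsetneq \mathbb{P}^r$), we conclude $y \in \operatorname{Sing}(\kappa_k)$. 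Combining with the first step gives the claimed equality.
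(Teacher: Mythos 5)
The proposal is correct and takes essentially the same approach as the paper's own proof. The paper's proof compresses the first half into the phrase ``By Theorem \ref{terracini}, we only need to show that $\kappa_{k-1}\subseteq\operatorname{Sing}(\kappa_k)$''; you unpack exactly what that means (the smooth $\mathbb{P}^{k-1}$-bundle $\pi_k$ pulling back $\operatorname{Sing}(X^{[k]})$, the containment $U^k\subseteq\pi_k^{-1}(X^{[k]}_{\operatorname{Gor}})$ from Lemma \ref{lem:pi^{-1}(Gorensteinlocus)}, and the set-theoretic bookkeeping through the embedding $U^k\cong\kappa_k\setminus\kappa_{k-1}$), which is valid. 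Your second step is the same Terracini-style tangent-space computation as the paper's: for $y\in\kappa_{k-1}$ with $y\in\operatorname{Span}(\xi)$ for a $(k-1)$-scheme $\xi$, the $(k-1)$-planes $\operatorname{Span}(\xi\cup\{x\})\subseteq\kappa_k$ through $y$ sweep out all of $X$ as $x$ varies, so non-degeneracy of $X$ forces $\mathbb{T}_y(\kappa_k)=\mathbb{P}^r$.
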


\begin{proof}
By Theorem \ref{terracini}, we only need to show that $\kappa_{k-1} \subseteq \operatorname{Sing}(\kappa_k)$. We prove that $\mathbb{T}_y(\kappa_k) = \mathbb{P}^r$ for any $y \in \kappa_{k-1}$. By definition, there is $[\eta] \in X^{[k-1]}$ with $y \in \alpha_k (\pi_k ^{-1}(\eta))$. For general $x \in X$, the subspace $\operatorname{Span}_{\mathbb{P}^r}(\alpha_k (\pi_k ^{-1}(\eta)), x)$ is contained in $\kappa_k$. As $X \subseteq \mathbb{P}^r$ is non-degenerate, this yields $\mathbb{T}_y(\kappa_k) = \mathbb{P}^r$.
\end{proof}

The following was observed in \cite[Lemma 3.5]{BGL} and \cite[Lemma 2.4]{BB}.

\begin{lemma}\label{lem:pi^{-1}(Gorensteinlocus)}
For $[\xi] \in X^{[k]}$, we have that $\pi_k^{-1}([\xi]) \subseteq \kappa_{k-1,k}$ if and only if $[\xi] \not\in X^{[k]}_{\text{Gor}}$. In particular, 
$\pi_k^{-1}(X^{[k]}\setminus X^{[k]}_{\text{Gor}}) \subseteq \kappa_{k-1,k}$ and $U^k \subseteq \pi^{-1}(X^{[k]}_{\operatorname{Gor}})$.
\end{lemma}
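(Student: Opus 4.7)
The core idea is to translate the statement into a purely algebraic one about the Artinian $\mathbb{C}$-algebra $A := \mathcal{O}_\xi$, and then invoke Lemma \ref{U-empty}. First, I would identify $\pi_k^{-1}([\xi]) = \mathbb{P}(H^0(\xi, L|_\xi))$ explicitly: since $\xi$ is $0$-dimensional, $L|_\xi$ is an invertible $\mathcal{O}_\xi$-module, and any trivialization identifies $H^0(\xi, L|_\xi)$ with $A$ as an $A$-module (hence as a $\mathbb{C}$-vector space). Under our conventions for $\mathbb{P}$, the fiber $\pi_k^{-1}([\xi])$ is thus the set of nonzero linear functionals $\varphi \in \operatorname{Hom}_{\mathbb{C}}(A,\mathbb{C})$ modulo scaling.

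Next, I would describe $\pi_k^{-1}([\xi]) \cap \kappa_{k-1,k}$ in the same terms. By the construction of $\alpha_{k-1,k}$ in Subsection \ref{subsec:secant bundles}, the restriction of $\alpha_{k-1,k}$ to a fiber $\pi_{k-1,k}^{-1}(\xi',\xi)$ for $(\xi',\xi) \in X^{[k-1,k]}$ coincides with the inclusion $\mathbb{P}(H^0(\xi', L|_{\xi'})) \hookrightarrow \mathbb{P}(H^0(\xi, L|_\xi)) = \pi_k^{-1}([\xi])$ induced by the surjection $H^0(\xi, L|_\xi) \twoheadrightarrow H^0(\xi', L|_{\xi'})$. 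Under our trivialization, subschemes $\xi' \subseteq \xi$ of length $k-1$ correspond bijectively to one-dimensional ideals $I' := \mathcal{I}_{\xi'/\xi} \subseteq A$, and a one-dimensional quotient $\varphi : A \twoheadrightarrow \mathbb{C}$ factors through $A \twoheadrightarrow A/I'$ if and only if $\ker(\varphi) \supseteq I'$. Hence $\varphi \in \pi_k^{-1}([\xi])$ lies in $\kappa_{k-1,k}$ exactly when $\ker(\varphi)$ contains some one-dimensional ideal of $A$. Since $A$ is Artinian, every nonzero ideal contains a minimal (and therefore one-dimensional) ideal, so this is equivalent to $\ker(\varphi)$ containing \emph{some} nonzero ideal --- i.e., to $\varphi \notin U(A)$.

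Combining these two identifications, $\pi_k^{-1}([\xi]) \subseteq \kappa_{k-1,k}$ holds if and only if $U(A) = \emptyset$, and by Lemma \ref{U-empty} this is equivalent to $A$ not being Gorenstein, i.e. $[\xi] \notin X^{[k]}_{\operatorname{Gor}}$. The two ``in particular'' assertions are then immediate: the first is a direct rephrasing, and for the second, since $U^k = B^k \setminus \kappa_{k-1,k}$, no point of $U^k$ can lie over a non-Gorenstein point, so $U^k \subseteq \pi_k^{-1}(X^{[k]}_{\operatorname{Gor}})$.

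The only mildly delicate step is the intrinsic identification of fibers of $\alpha_{k-1,k}$ over $[\xi]$ with those $\varphi \in \operatorname{Hom}_\mathbb{C}(A,\mathbb{C})$ that factor through length-$(k-1)$ algebra quotients of $A$; once this dictionary is in place, the statement follows directly from the algebraic characterization of Gorenstein-ness encoded in Lemma \ref{U-empty}.
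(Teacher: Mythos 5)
Your proof is correct and follows exactly the route the paper has in mind: the paper's own proof is the single sentence ``This is an immediate consequence of Lemma \ref{U-empty},'' relying on the geometric discussion of $U(A)$ given just before that lemma, and your argument is precisely the fleshing-out of that one-liner (identifying the fiber with $\mathbb{P}(\operatorname{Hom}_{\mathbb{C}}(A,\mathbb{C}))$, identifying $\kappa_{k-1,k}$ in the fiber with the complement of $U(A)$ via the observation that every nonzero ideal of the Artinian algebra $A$ contains a one-dimensional ideal, and then citing Lemma \ref{U-empty}).
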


\begin{proof}
This is an immediate consequence of Lemma \ref{U-empty}.
\end{proof}

By Lemma \ref{lem:pi^{-1}(Gorensteinlocus)}, we have $\sigma_k=\kappa_k$ as a set if and only if $X^{[k]}_{\textnormal{Gor}}$ is irreducible --- this is the case when $k \leq 13$ or $k=14$ and $\dim(X) \leq 5$ (see \S \ref{subsubsection-smoothness}).
The following is Corollary \ref{cor:singularlocusofSigma}.

\begin{corollary}\label{corollary-singularities-secant}
Assume that $L$ separates $2k$-schemes. Then we have
\begin{equation}\label{eq:singular-locus}
\sigma_{k-1} \cup \alpha_k(\pi_k^{-1}(\operatorname{Sing}(X^{[k]}_{\operatorname{sm}, \operatorname{Gor}}))) \subseteq \operatorname{Sing}(\sigma_k) \subseteq (\kappa_{k-1} \cap \sigma_k) \cup \alpha_k(\pi_k^{-1}(\operatorname{Sing}(X^{[k]}_{\operatorname{sm}, \operatorname{Gor}}))).
\end{equation}
In particular, the following hold:
\begin{enumerate}[topsep=0pt]
 \item If $k \leq 14$ or $k=15$ and $\dim(X) \leq 5$, then $\operatorname{Sing}(\sigma_k) = \sigma_{k-1} \cup \alpha_k(\pi_k^{-1}(\operatorname{Sing}(X^{[k]}_{\operatorname{sm}, \operatorname{Gor}})))$.
 \item $\Sing (\sigma_k) = \sigma_{k-1}$ if and only if $\dim (X) \leq 3$ or $k \leq 5$.
\end{enumerate}
\end{corollary}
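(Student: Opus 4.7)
The plan is to extend the analysis used for the cactus variety in Corollary \ref{corollary-singularities-cactus} by combining the Embedding Theorem (Theorem \ref{terracini}) with Lemma \ref{lem:pi^{-1}(Gorensteinlocus)}. The key starting point is that restricting Theorem \ref{terracini} to $U^k \cap B^k_{\operatorname{sm}}$ gives a closed embedding onto $\sigma_k \setminus \kappa_{k-1}$, and by Lemma \ref{lem:pi^{-1}(Gorensteinlocus)} the domain $U^k \cap B^k_{\operatorname{sm}}$ is an open subset of $\pi_k^{-1}(X^{[k]}_{\operatorname{sm},\operatorname{Gor}})$. Since $\pi_k$ is a smooth morphism (being a projective bundle), this transports the singularity question for $\sigma_k$ at points outside $\kappa_{k-1}$ to the singularity question for $X^{[k]}_{\operatorname{sm},\operatorname{Gor}}$ at the corresponding base points.

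For the right-hand inclusion of (\ref{eq:singular-locus}), I would take $y \in \operatorname{Sing}(\sigma_k) \setminus \kappa_{k-1}$, find the unique preimage $v \in U^k \cap B^k_{\operatorname{sm}}$, and conclude via the isomorphism and smoothness of $\pi_k$ that $\pi_k(v) \in \operatorname{Sing}(X^{[k]}_{\operatorname{sm},\operatorname{Gor}})$. For the left-hand inclusion, the containment $\sigma_{k-1} \subseteq \operatorname{Sing}(\sigma_k)$ follows by the non-degeneracy argument from the proof of Corollary \ref{corollary-singularities-cactus}, adapted by taking the auxiliary $(k-1)$-scheme $\eta$ spanning $y$ in the smoothable component $X^{[k-1]}_{\operatorname{sm}}$ (so that the enlarged scheme obtained by adding a general point of $X$ remains smoothable and its span lies in $\sigma_k$). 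For the remaining piece, given $[\xi] \in \operatorname{Sing}(X^{[k]}_{\operatorname{sm},\operatorname{Gor}})$, Lemma \ref{lem:pi^{-1}(Gorensteinlocus)} guarantees that $\pi_k^{-1}([\xi]) \cap U^k$ is a non-empty open subset of the irreducible fiber $\pi_k^{-1}([\xi])$, whose image in $\sigma_k \setminus \kappa_{k-1}$ consists of singular points of $\sigma_k$ by the previous step; closedness of $\operatorname{Sing}(\sigma_k)$ combined with irreducibility of $\alpha_k(\pi_k^{-1}([\xi]))$ then propagates singularity to the entire image.

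For the particular consequences, the plan is as follows. Under the hypothesis of (1), the irreducibility theorems collected in \S\ref{subsubsection-smoothness} give $X^{[j]}_{\operatorname{Gor}} = X^{[j]}_{\operatorname{sm},\operatorname{Gor}}$ for all $j \leq k-1$ (an irreducible $X^{[j]}_{\operatorname{Gor}}$ must have its generic point in the dense open smoothable locus, hence every Gorenstein $j$-scheme is smoothable); a short induction on $j$ applying Lemma \ref{lem:pi^{-1}(Gorensteinlocus)} then shows $\kappa_j = \sigma_j$ for all such $j$, so the two sides of (\ref{eq:singular-locus}) collapse to equality. For (2), the forward direction follows from (1) combined with the smoothness characterization of $X^{[k]}_{\operatorname{sm},\operatorname{Gor}}$ in \S\ref{subsubsection-smoothness}, which makes the second term on the right-hand side of (\ref{eq:singular-locus}) empty. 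The converse uses failure of that characterization to produce $[\xi] \in \operatorname{Sing}(X^{[k]}_{\operatorname{sm},\operatorname{Gor}})$ and then Lemma \ref{lem:pi^{-1}(Gorensteinlocus)} to exhibit $v \in \pi_k^{-1}([\xi]) \cap U^k$ whose image $\alpha_k(v)$ lies in $\operatorname{Sing}(\sigma_k)$ by the left-hand inclusion but outside $\kappa_{k-1} \supseteq \sigma_{k-1}$, yielding the strict inclusion $\sigma_{k-1} \subsetneq \operatorname{Sing}(\sigma_k)$. The main subtlety throughout is keeping careful track of the gap between $\kappa_{k-1}$ and $\sigma_{k-1}$ when converting the intrinsic inclusion (\ref{eq:singular-locus}) into the clean equalities of (1) and (2); this is precisely what the irreducibility of the lower Gorenstein loci controls.
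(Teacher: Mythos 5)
Your proposal is essentially correct and takes the same route as the paper: on $\sigma_k \setminus \kappa_{k-1}$ the Embedding Theorem (Theorem~\ref{terracini}) makes $\alpha_k$ an isomorphism onto $U^k \cap B^k_{\operatorname{sm}}$, which by Lemma~\ref{lem:pi^{-1}(Gorensteinlocus)} sits inside $\pi_k^{-1}(X^{[k]}_{\operatorname{sm},\operatorname{Gor}})$, so smoothness of the bundle projection $\pi_k$ transports the singularity question to $X^{[k]}_{\operatorname{sm},\operatorname{Gor}}$; the Terracini non-degeneracy argument supplies $\sigma_{k-1}\subseteq \operatorname{Sing}(\sigma_k)$, and closedness of the singular locus together with density of $\pi_k^{-1}([\xi])\cap U^k$ in $\pi_k^{-1}([\xi])$ completes the left-hand inclusion. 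This is exactly what the paper intends when it says the corollary is ``immediate by Corollary~\ref{corollary-singularities-cactus}, Lemma~\ref{lem:pi^{-1}(Gorensteinlocus)}, and \S\ref{subsubsection-smoothness}.''

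There is one genuine (if small) gap in your deduction of part~(2). You claim the direction ``$\dim(X)\leq 3$ or $k\leq 5$ $\Rightarrow$ $\operatorname{Sing}(\sigma_k)=\sigma_{k-1}$'' follows from (1) together with the smoothness characterization. But the hypothesis of (1) is $k\leq 14$, or $k=15$ and $\dim(X)\leq 5$; it does not cover the case $\dim(X)\leq 3$ with $k\geq 16$, which part~(2) must also treat. Citing (1) therefore does not suffice in that range. The correct move is to bypass (1) and instead use that for $\dim(X)\leq 3$ the Gorenstein locus $X^{[j]}_{\operatorname{Gor}}$ is irreducible (indeed smooth) for \emph{all} $j$ --- this is what the Casnati--Notari results underlying the smoothness equivalence of \S\ref{subsubsection-smoothness} actually provide --- and then run the same induction showing $\kappa_j=\sigma_j$ for every $j\leq k-1$. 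With that substitution for the appeal to (1), the direction $\Leftarrow$ of (2) holds in full; the direction $\Rightarrow$ is handled correctly in your proposal.
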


\begin{proof}
Immediate by Corollary \ref{corollary-singularities-cactus}, Lemma \ref{lem:pi^{-1}(Gorensteinlocus)}, and the discussion in Subsection \ref{subsubsection-smoothness}.
\end{proof}

\begin{remark}\label{rem:singular-locus}
When $k$ and $\dim(X)$ are large enough, we have a strict inclusion $\sigma_{k-1} \subsetneq \kappa_{k-1} \cap \sigma_k$. This happens because there exist non-smoothable schemes $[\eta] \in X^{[k-1]}$ and smoothable schemes $[\xi] \in X^{[k]}_{\sm}$ with $\eta \subseteq \xi$. In this case, both inclusions in (\ref{eq:singular-locus}) may be strict. We do not know which points of $(\kappa_{k-1} \cap \sigma_k) \setminus \sigma_{k-1}$ belong to the singular locus of $\sigma_k$.
\end{remark}

\subsection{Relative Terracini lemma}

Here we want to generalize Theorem \ref{terracini} to the relative setting. Our aim is to show that the morphism $\alpha_{m,k} \colon B^{m,k}\lra B^k$ restricts to an isomorphism from the open subset $U^{m,k}$ onto its image.

\begin{lemma}\label{ideal-relative-secant-bundle}
The conormal sheaf
\[
\mathcal{C}_{B^{m,k}/P^{m,k}}=\pi_{m, k}^* M_{m,k,L} \otimes \mathcal{O}_{B^{m,k}}(-1).
\]
is a locally free sheaf of rank $k-m$. Furthermore, there is a short exact sequence
\begin{multline*}
0 \lra   \pi_{m, k}^* (\pr_{2,*} (\pr_1^*L \otimes \mathcal{O}_{\mathcal{V}_{m,k}}\otimes \mathcal{I}_{\mathcal{W}_{m,k}}))\otimes \mathcal{O}_{B^{m,k}}(-1) \\
\lra 
\pi_{m,k}^* (\pr_{2,*} (\pr_1^*L \otimes \mathcal{O}_{\mathcal{W}_{m,k}} \otimes \mathcal{I}_{\mathcal{V}_{m,k}}))\otimes \mathcal{O}_{B^{m,k}}(-1) \lra \mathcal{C}_{B^{m,k}/P^{m,k}} \lra 0.
\end{multline*}
\end{lemma}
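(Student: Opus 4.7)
The statement has two parts: an identification of the conormal sheaf $\mathcal{C}_{B^{m,k}/P^{m,k}}$, and a short exact sequence presenting it via pushforwards from the universal families on $X\times X^{[m,k]}$. The plan is to derive the conormal identification from the general theory of subbundles of projective bundles, and then to extract the short exact sequence by pushing forward a natural short exact sequence of coherent sheaves on $X \times X^{[m,k]}$ twisted by $\pr_1^* L$.

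For the conormal identification, I will use that the closed embedding $i_{m,k}\colon B^{m,k} = \mathbb{P}(E_{m,k,L}) \hookrightarrow \mathbb{P}(\rho^* E_{k,L}) = P^{m,k}$ is induced by the surjection $\rho^* E_{k,L} \twoheadrightarrow E_{m,k,L}$, whose kernel is $M_{m,k,L}$ by definition. Pulling back the relative Euler sequence of $p_{m,k}$ to $B^{m,k}$ via $i_{m,k}$, comparing it with the relative Euler sequence of $\pi_{m,k}$, and applying the snake lemma (the rightmost map between them being the identity on $\mathcal{O}_{B^{m,k}}(1)$) will yield
\[
0 \lra \pi_{m,k}^* M_{m,k,L} \lra i_{m,k}^* \Omega_{P^{m,k}/X^{[m,k]}}^1(1) \lra \Omega_{B^{m,k}/X^{[m,k]}}^1(1) \lra 0.
\]
Twisting by $\mathcal{O}_{B^{m,k}}(-1)$ and comparing with the relative conormal sequence of $i_{m,k}$ will then identify $\mathcal{C}_{B^{m,k}/P^{m,k}} \cong \pi_{m,k}^* M_{m,k,L} \otimes \mathcal{O}_{B^{m,k}}(-1)$, which is locally free of rank $\operatorname{rank}(M_{m,k,L}) = k-m$.

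For the short exact sequence, I will begin from the inclusion $\mathcal{V}_{m,k} \subseteq \mathcal{W}_{m,k}$ of closed subschemes of $X \times X^{[m,k]}$, which yields the ideal sequence $0 \lra \mathcal{I}_{\mathcal{W}_{m,k}} \lra \mathcal{I}_{\mathcal{V}_{m,k}} \lra \mathcal{I}_{\mathcal{V}_{m,k}/\mathcal{W}_{m,k}} \lra 0$. The key observation is that the natural map $\mathcal{I}_{\mathcal{V}_{m,k}} \otimes \mathcal{O}_{\mathcal{W}_{m,k}} \lra \mathcal{O}_{\mathcal{W}_{m,k}}$ induced by $\mathcal{I}_{\mathcal{V}_{m,k}} \hookrightarrow \mathcal{O}_{X\times X^{[m,k]}}$ has image $\mathcal{I}_{\mathcal{V}_{m,k}/\mathcal{W}_{m,k}}$ and kernel $\mathcal{I}_{\mathcal{W}_{m,k}} \otimes \mathcal{O}_{\mathcal{V}_{m,k}}$; this is a direct local calculation using the identifications $\mathcal{I}_{\mathcal{V}_{m,k}} \otimes \mathcal{O}_{\mathcal{W}_{m,k}} = \mathcal{I}_{\mathcal{V}_{m,k}}/(\mathcal{I}_{\mathcal{V}_{m,k}} \cdot \mathcal{I}_{\mathcal{W}_{m,k}})$ and $\mathcal{I}_{\mathcal{W}_{m,k}} \otimes \mathcal{O}_{\mathcal{V}_{m,k}} = \mathcal{I}_{\mathcal{W}_{m,k}}/(\mathcal{I}_{\mathcal{V}_{m,k}} \cdot \mathcal{I}_{\mathcal{W}_{m,k}})$. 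This will produce the short exact sequence
\[
0 \lra \mathcal{I}_{\mathcal{W}_{m,k}} \otimes \mathcal{O}_{\mathcal{V}_{m,k}} \lra \mathcal{I}_{\mathcal{V}_{m,k}} \otimes \mathcal{O}_{\mathcal{W}_{m,k}} \lra \mathcal{I}_{\mathcal{V}_{m,k}/\mathcal{W}_{m,k}} \lra 0
\]
on $X\times X^{[m,k]}$.

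Finally, I will tensor this sequence with the line bundle $\pr_1^* L$ (which preserves exactness) and apply $\pr_{2,*}$. Since all three sheaves are supported on subschemes of $\mathcal{W}_{m,k}$, which is finite over $X^{[m,k]}$ via $\pr_2$, the higher direct images vanish and the pushforward remains exact; the rightmost term becomes $M_{m,k,L}$ by definition. Pulling back along the flat morphism $\pi_{m,k}$ and twisting by $\mathcal{O}_{B^{m,k}}(-1)$ then preserves exactness and, combined with the conormal identification from the previous paragraph, yields the claimed sequence. The only step that is not formal is the local verification of the middle short exact sequence on $X \times X^{[m,k]}$, which will be the main technical input; everything else is a standard combination of the Euler sequence, the conormal sequence, and exactness of pushforward along a finite morphism.
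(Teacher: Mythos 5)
Your argument is correct and follows essentially the same route as the paper: part (1) is the Euler-sequence snake-lemma identification of the conormal of a sub-projective-bundle (this is precisely what the paper's \S\ref{conormal-sheaf-of-sequence} records), and part (2) is obtained by applying $\pi_{m,k}^*(\pr_{2,*}(\pr_1^*L\otimes -))\otimes\mathcal{O}_{B^{m,k}}(-1)$ to the same short exact sequence on $X\times X^{[m,k]}$ that the paper uses --- your $0\to\mathcal{I}_{\mathcal{W}_{m,k}}\otimes\mathcal{O}_{\mathcal{V}_{m,k}}\to\mathcal{I}_{\mathcal{V}_{m,k}}\otimes\mathcal{O}_{\mathcal{W}_{m,k}}\to\mathcal{I}_{\mathcal{V}_{m,k}/\mathcal{W}_{m,k}}\to 0$ is literally the sequence $0\to\mathcal{I}_{\mathcal{W}}/\mathcal{I}_{\mathcal{V}}\mathcal{I}_{\mathcal{W}}\to\mathcal{I}_{\mathcal{V}}/\mathcal{I}_{\mathcal{V}}\mathcal{I}_{\mathcal{W}}\to\mathcal{I}_{\mathcal{V}}/\mathcal{I}_{\mathcal{W}}\to 0$ that the paper displays, which you derive via the tensor-product map rather than simply writing it down. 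The only difference is expository: you spell out the snake-lemma step and the verification that $\pr_2\vert_{\mathcal{W}_{m,k}}$ is finite (hence the pushforward is exact), which the paper leaves implicit.
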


\begin{proof}
The first part of the statement follows from \S \ref{conormal-sheaf-of-sequence}. The second part of the statement follows from applying $\pi_{m,k}^*(\pr_{2,*}(\pr_1^*L\otimes -))$ to the short exact sequence
\[
0\lra \frac{\mathcal{I}_{\mathcal{W}_{m,k}}}{\mathcal{I}_{\mathcal{V}_{m,k}}\cdot \mathcal{I}_{\mathcal{W}_{m,k}}}
    \lra \frac{\mathcal{I}_{\mathcal{V}_{m,k}}}{\mathcal{I}_{\mathcal{V}_{m,k}}\cdot \mathcal{I}_{\mathcal{W}_{m,k}}}
    \lra \frac{\mathcal{I}_{\mathcal{V}_{m,k}}}{\mathcal{I}_{\mathcal{W}_{m,k}}}\lra 0. \qedhere
\]
\end{proof}

\begin{lemma}\label{relative-secant-row}
Let $\gls{Q-m-k}:=\textnormal{coker}(\rho^* \Omega_{X^{[k]}} ^1\lra \Omega_{X^{[m,k]}} ^1)$. Then there is an exact sequence
\[
\alpha_{m,k} ^* \Omega_{B^k} ^1\lra
i_{m,k} ^* \Omega_{P^{m,k}} ^1
\lra \pi_{m, k} ^* Q_{m,k}\lra 0.
\]
If furthermore $X^{[m,k]}$ is smooth, then the above sequence is also exact on the left.
\end{lemma}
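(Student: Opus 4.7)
The plan is to derive the sequence by applying $i_{m,k}^*$ to the first fundamental exact sequence for $\widetilde{\rho} \colon P^{m,k} \lra B^k$. The key observation is that since $P^{m,k} = \mathbb{P}(\rho^* E_{k,L})$ and projectivization commutes with pullback, the square
\[
\begin{tikzcd}
P^{m,k} \ar[r, "p_{m,k}"] \ar[d, "\widetilde{\rho}"'] & X^{[m,k]} \ar[d, "\rho"] \\
B^k \ar[r, "\pi_k"] & X^{[k]}
\end{tikzcd}
\]
is Cartesian, so the relative cotangent sheaf is computed by base change as $\Omega^1_{P^{m,k}/B^k} \cong p_{m,k}^* \Omega^1_{X^{[m,k]}/X^{[k]}} = p_{m,k}^* Q_{m,k}$. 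Applying $i_{m,k}^*$ to the right-exact sequence $\widetilde{\rho}^* \Omega^1_{B^k} \lra \Omega^1_{P^{m,k}} \lra p_{m,k}^* Q_{m,k} \lra 0$ and using $\widetilde{\rho} \circ i_{m,k} = \alpha_{m,k}$ and $p_{m,k} \circ i_{m,k} = \pi_{m,k}$ immediately yields the stated right-exact sequence.

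For left exactness when $X^{[m,k]}$ is smooth, this hypothesis forces $X^{[k]}$, $X^{[m]}$, $B^k$, $P^{m,k}$, and $B^{m,k}$ to all be smooth and irreducible. A direct dimension calculation in each of the four cases listed in \S\ref{hilbert-key-facts} gives $\dim X^{[m,k]} = \dim X^{[k]}$, and hence $\operatorname{rk}(\alpha_{m,k}^* \Omega^1_{B^k}) = \operatorname{rk}(i_{m,k}^* \Omega^1_{P^{m,k}})$ as vector bundles on $B^{m,k}$. Moreover, $\rho$ is surjective (every length-$k$ subscheme contains length-$m$ subschemes), so equality of dimensions makes $\rho$ generically finite, and by generic smoothness in characteristic zero it is \'etale at the generic point of $X^{[m,k]}$. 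Since $\pi_{m,k}$ is a projective bundle, its generic point maps to the generic point of $X^{[m,k]}$, at which $\widetilde{\rho}$ is \'etale. Consequently the map $\alpha_{m,k}^* \Omega^1_{B^k} \lra i_{m,k}^* \Omega^1_{P^{m,k}}$ is an isomorphism at the generic point of $B^{m,k}$, and its kernel, being a torsion-free subsheaf of a vector bundle on the integral scheme $B^{m,k}$ with generic rank zero, must vanish.

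The main technical subtlety is that $i_{m,k}^*$ is not left-exact in general, so pulling back a short exact sequence on $P^{m,k}$ to $B^{m,k}$ could a priori introduce a nonzero $\operatorname{Tor}_1^{\mathcal{O}_{P^{m,k}}}(p_{m,k}^* Q_{m,k}, \mathcal{O}_{B^{m,k}})$ contribution --- indeed $Q_{m,k}$ is generally torsion (supported on the ramification locus of $\rho$), so $p_{m,k}^* Q_{m,k}$ fails to be locally free. The generic-rank/torsion-free argument on $B^{m,k}$ sidesteps this: once generic injectivity on $B^{m,k}$ is established via the \'etale locus of $\rho$, injectivity everywhere follows for free, without any need to track the $\operatorname{Tor}$ explicitly.
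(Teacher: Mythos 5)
Your proof is correct; it reaches the same conclusion as the paper but by a somewhat different route in the right-exact part. You obtain $\Omega^1_{P^{m,k}/B^k} \cong p_{m,k}^* Q_{m,k}$ directly from base change of K\"ahler differentials along the Cartesian square (a fact requiring no flatness) and plug it into the cotangent sequence for $\widetilde{\rho}$, whereas the paper invokes the direct-sum decomposition $\Omega^1_{P^{m,k}/X^{[k]}} \cong \widetilde{\rho}^*\Omega^1_{B^k/X^{[k]}} \oplus \pr_2^*Q_{m,k}$ for differentials of a fiber product and runs a Snake-Lemma chase across a $3\times 3$ diagram; both identify $\pr_2^*Q_{m,k}$ with $\textnormal{coker}(\widetilde{\rho}^*\Omega^1_{B^k}\to \Omega^1_{P^{m,k}})$ on $P^{m,k}$ and then pull back along $i_{m,k}$, and yours is the shorter derivation. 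For left-exactness, the paper's justification (``clear because $P^{m,k}$ is smooth'') establishes injectivity of $\widetilde{\rho}^*\Omega^1_{B^k}\to \Omega^1_{P^{m,k}}$ on $P^{m,k}$ but, as you correctly flag, does not automatically survive the non-exact pullback along the closed immersion $i_{m,k}$. Your supplementary argument --- the dimension count giving $\dim X^{[m,k]}=\dim X^{[k]}$ in all four smooth cases, so that $\rho$ is dominant and generically finite between smooth varieties and hence generically \'etale over $\mathbb{C}$, so that $\alpha_{m,k}^*\Omega^1_{B^k}\to i_{m,k}^*\Omega^1_{P^{m,k}}$ is an isomorphism at the generic point of the integral scheme $B^{m,k}$, and hence its kernel, a generically-zero subsheaf of a locally free sheaf, vanishes --- is exactly what is needed to make the paper's one-line justification rigorous, and your closing observation about the potential $\Tor_1$ contribution shows you diagnosed the subtlety correctly.
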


\begin{proof}
Consider the following commutative diagram:
\begin{center}
    \begin{tikzcd}
        B^{m,k}\ar[d,"i_{m,k}"] \ar[dr,"\pi_{m,k}"]\\
        P^{m,k} \ar[r,"\pr_2"'] \ar[d,"\widetilde{\rho}"] & X^{[m,k]} \ar[d,"\rho"]\\
        B^k \ar[r,"\pi_k"] & X^{[k]}
    \end{tikzcd}.
\end{center}
Using the notation from this diagram, since the square is Cartesian and we know that
$$
\Omega_{P^{m,k}/X^{[k]}}^1 \cong \widetilde{\rho}^*\Omega_{B^k/X^{[k]}}^1 \oplus \pr_2^*Q_{m,k},
$$
we apply the Snake Lemma to the following commutative diagram (with $\mathscr{K}$ denoting the kernel of the right vertical column)
\begin{center}
    \begin{tikzcd}
        & 0\ar[d] & 0\ar[d]\\& (\rho \circ\pr_2)^*\Omega_{X^{[k]}}^1 \ar[r,twoheadrightarrow] \ar[d] & \mathscr{K}\ar[d]\\
        & \widetilde{\rho}^*\Omega_{B^k}^1 \ar[r]\ar[d] & \Omega_{P^{m,k}}^1\ar[d]\\
        0 \ar[r] & \widetilde{\rho}^*\Omega_{B^k/X^{[k]}}^1 \ar[r]\ar[d] & \Omega_{P^{m,k}/X^{[k]}}^1 \ar[r] \ar[d] & \pr_2^*Q_{m,k} \ar[r] & 0\\
        & 0 & 0
    \end{tikzcd}
\end{center}
and thus obtain that
\begin{align*}
\pr_2^*Q_{m,k} & \cong \textnormal{coker}(\widetilde{\rho}^*\Omega_{B^k}^1 \lra \Omega_{P^{m,k}}^1).
\end{align*}
The first statement of the lemma then follows from the commutativity of the diagram. The second is clear because when $X^{[m,k]}$ is smooth then $P^{m,k}$ is also smooth.
\end{proof}

\begin{lemma}\label{relative-secant-column}
For $Q_{m,k}$ as in Lemma \ref{relative-secant-row}, there is an exact sequence on $X^{[m,k]}$:
\[
\SExt ^n _{\operatorname{pr}_2} (\mathcal{O}_{\mathcal{V}_{m,k}},\mathcal{I}_{\mathcal{W}_{m,k}} \otimes \operatorname{\omega}_{\operatorname{pr}_2})
\lra \tau^* \Omega_{X^{[m]}} ^1 \lra
 Q_{m,k} \lra 0.
\]
\end{lemma}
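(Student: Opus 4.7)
The plan is to deduce the exact sequence by combining the explicit descriptions of $\Omega^1_{X^{[m]}}$ and $\Omega^1_{X^{[m,k]}}$ given by Theorems \ref{cotangent-sheaf-hilbert-scheme} and \ref{cotangent-sheaf-nested-hilbert-scheme} with a short diagram chase. Concretely, I would set
\begin{align*}
\mathscr{A} &:= \SExt_{\pr_2}^n(\OO_{\mathcal{V}_{m,k}}, \mathcal{I}_{\mathcal{W}_{m,k}} \otimes \omega_{\pr_2}),\\
\mathscr{B} &:= \SExt_{\pr_2}^n(\OO_{\mathcal{W}_{m,k}}, \mathcal{I}_{\mathcal{W}_{m,k}} \otimes \omega_{\pr_2}),\\
\mathscr{C} &:= \SExt_{\pr_2}^n(\OO_{\mathcal{V}_{m,k}}, \mathcal{I}_{\mathcal{V}_{m,k}} \otimes \omega_{\pr_2}),
\end{align*}
so that Theorem \ref{cotangent-sheaf-nested-hilbert-scheme} provides an isomorphism $\Omega^1_{X^{[m,k]}} \cong \operatorname{coker}(\mathscr{A} \to \mathscr{B} \oplus \mathscr{C})$. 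The two components of the map $\mathscr{A} \to \mathscr{B} \oplus \mathscr{C}$ arise, respectively, from the inclusions $\OO_{\mathcal{W}_{m,k}} \twoheadrightarrow \OO_{\mathcal{V}_{m,k}}$ (more precisely the short exact sequence $0 \to \mathcal{I}_{\mathcal{V}_{m,k}}/\mathcal{I}_{\mathcal{W}_{m,k}} \to \OO_{\mathcal{W}_{m,k}} \to \OO_{\mathcal{V}_{m,k}} \to 0$) and $\mathcal{I}_{\mathcal{W}_{m,k}} \hookrightarrow \mathcal{I}_{\mathcal{V}_{m,k}}$.

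Next I would identify $\mathscr{B}$ and $\mathscr{C}$ with $\rho^*\Omega^1_{X^{[k]}}$ and $\tau^*\Omega^1_{X^{[m]}}$, respectively. This follows from Theorem \ref{cotangent-sheaf-hilbert-scheme} together with flat base change: since the restrictions of $\pr_2$ to $\mathcal{Z}_k$ and $\mathcal{Z}_m$ are finite and flat, the $\SExt^n$-sheaves commute with the base changes along $\rho\colon X^{[m,k]} \to X^{[k]}$ and $\tau\colon X^{[m,k]} \to X^{[m]}$ (using $\mathcal{W}_{m,k} \cong \rho^*\mathcal{Z}_k$ and $\mathcal{V}_{m,k} \cong \tau^*\mathcal{Z}_m$). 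Under these identifications, the canonical map $\mathscr{B} \to \Omega^1_{X^{[m,k]}}$ is exactly the pullback of the natural map $\rho^*\Omega^1_{X^{[k]}} \to \Omega^1_{X^{[m,k]}}$.

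With all of this in place, the conclusion is essentially formal. For any morphism $f = (f_1, f_2)\colon \mathscr{A} \to \mathscr{B} \oplus \mathscr{C}$ in an abelian category, one has a canonical identification
\[
\operatorname{coker}\bigl(\mathscr{B} \lra \operatorname{coker}(f)\bigr) \;\cong\; \operatorname{coker}\bigl(\mathscr{A} \xrightarrow{\,f_2\,} \mathscr{C}\bigr),
\]
by either a direct diagram chase or by quotienting the ambient $\mathscr{B} \oplus \mathscr{C}$ by $\operatorname{image}(f) + \mathscr{B} \oplus 0$. Applying this with $f_2\colon \mathscr{A} \to \mathscr{C} = \tau^*\Omega^1_{X^{[m]}}$ gives
\[
Q_{m,k} \;=\; \operatorname{coker}\bigl(\rho^*\Omega^1_{X^{[k]}} \lra \Omega^1_{X^{[m,k]}}\bigr) \;\cong\; \operatorname{coker}\bigl(\mathscr{A} \lra \tau^*\Omega^1_{X^{[m]}}\bigr),
\]
which is exactly the claimed exact sequence.

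The only subtle step is the identification of $\mathscr{B}$ and $\mathscr{C}$ with the pullbacks via $\rho$ and $\tau$, i.e.\ the flat base-change compatibility of $\SExt^n_{\pr_2}$; once this is in place the rest of the argument is purely formal manipulation of cokernels. For this reason I expect this to be the main (though not particularly deep) obstacle, and I would treat it by appealing to the cohomology-and-base-change machinery used already in the appendix construction underpinning Theorems \ref{cotangent-sheaf-hilbert-scheme} and \ref{cotangent-sheaf-nested-hilbert-scheme}.
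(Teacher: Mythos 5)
Your proof is correct and takes essentially the same approach as the paper: both start from the cokernel presentation of $\Omega^1_{X^{[m,k]}}$ in Theorem \ref{cotangent-sheaf-nested-hilbert-scheme}, identify the two summands with $\rho^*\Omega^1_{X^{[k]}}$ and $\tau^*\Omega^1_{X^{[m]}}$, and conclude by a diagram chase — the paper phrases it via the Snake Lemma applied to the $3\times 3$ grid built from that presentation and the split sequence for $\Omega^1_{X^{[m]}}\boxplus\Omega^1_{X^{[k]}}$, while you use the equivalent ``kill the $\mathscr{B}$-summand'' cokernel identity. (For what it's worth, the $\SExt_{\pr_1}$ appearing in the paper's statement of Theorem \ref{cotangent-sheaf-nested-hilbert-scheme} is a typo for $\SExt_{\pr_2}$, which your reading correctly assumes.)
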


\begin{proof}
Theorem \ref{cotangent-sheaf-nested-hilbert-scheme} gives the middle row of the following diagram, which is commutative:
\begin{center}
\begin{tikzcd}
& 0\ar[d] \\
& \rho^*\Omega_{X^{[k]}}^1 \ar[d]\\
\SExt^n_{\pr_2}(\mathcal{O}_{\mathcal{V}_{m,k}},\mathcal{I}_{\mathcal{W}_{m,k}}) \otimes \operatorname{\omega}_{\operatorname{pr}_2} \ar[r] & \Omega_{X^{[m]}}^1\boxplus\Omega_{X^{[k]}}^1 \ar[r]\ar[d] & \Omega_{X^{[m,k]}}^1 \ar[r]\ar[d] & 0\\
 & \tau^* \Omega_{X^{[m]}} ^1 \ar[r]\ar[d] &
Q_{m,k} \ar[r] \ar[d]& 0\\
& 0 & 0
\end{tikzcd}.
\end{center}
By the definition of $Q_{m,k}$, there is a surjection from $\rho^*\Omega_{X^{[k]}}^1$ to the kernel of the right vertical map. Hence, by the Snake Lemma, $\SExt^n_{\pr_2}(\OO_{\mathcal{V}_{m,k}},\mathcal{I}_{\mathcal{W}_{m,k}}) \otimes \operatorname{\omega}_{\operatorname{pr}_2}$ surjects to the kernel of the lower horizontal map. 
\end{proof}

\begin{lemma}\label{diagram-differential-relative-secant}
There is a natural commutative diagram
\begin{center}
\begin{tikzcd}
  & 0\ar[d]& &\\
 & \mathcal{C}_{B^{m,k}/P^{m,k}}\ar[d]\ar[rd,"\gls{gamma-m-k-L}"]& &\\
  \alpha_{m,k}^* \Omega_{B^k} ^1\ar[r]\ar[rd, "d\alpha_{m,k}"']& i_{m,k} ^* \Omega_{P^{m,k}} ^1\ar[r]\ar[d] & \pi_{m, k}^* Q_{m,k}\ar[r]&0\\
  & \Omega_{B^{k,m}} ^1\ar[d]& &\\
  & 0& &
\end{tikzcd}.
\end{center}
We have that $\operatorname{ker}(d\alpha_{m,k})$ surjects onto $\operatorname{ker}(\gamma_{m,k})$ and $\operatorname{coker}(\gamma_{m,k})$ surjects onto $\operatorname{coker}(d\alpha_{m,k})$.
If furthermore $X^{[m,k]}$ is smooth, then the horizontal sequence is also exact on the left,
$\operatorname{ker}(\gamma_{m,k}) \cong \operatorname{ker}(d\alpha_{m,k})$, and
$\operatorname{coker}(\gamma_{m,k}) \cong \operatorname{coker}(d\alpha_{m,k})$.
\end{lemma}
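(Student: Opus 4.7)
The plan is to mirror the proof of Lemma \ref{diagram-differential-secant} in the relative setting, with additional care since $X^{[m,k]}$ may be singular. The commutativity of the diagram is tautological: $\gamma_{m,k}$ and $d\alpha_{m,k}$ are defined precisely as the indicated compositions, and the functoriality of the differential applied to the factorization $\alpha_{m,k} = \widetilde{\rho} \circ i_{m,k}$ makes the relevant square commute. Exactness of the horizontal row at $i_{m,k}^* \Omega^1_{P^{m,k}}$ and $\pi_{m,k}^* Q_{m,k}$, together with left exactness in the smooth case, is already provided by Lemma \ref{relative-secant-row}.

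The main technical input -- and the principal obstacle -- is to establish that the vertical column is a short exact sequence regardless of the singularities of $X^{[m,k]}$ (the conormal sequence of a general closed embedding is only right exact). For this I would exploit the fact that $i_{m,k} \colon B^{m,k} \hookrightarrow P^{m,k}$ is a sub-projective-bundle inclusion over $X^{[m,k]}$. Indeed $B^{m,k} = \mathbb{P}(\tau^* E_{m,L})$, $P^{m,k} = \mathbb{P}(\rho^* E_{k,L})$, and the quotient $\rho^* E_{k,L} \twoheadrightarrow \tau^* E_{m,L}$ has locally free kernel $M_{m,k,L}$ by Lemma \ref{ideal-relative-secant-bundle}. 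Trivializing both bundles locally over an open $U \subseteq X^{[m,k]}$, the embedding takes the product form $U \times \mathbb{P}^{m-1} \hookrightarrow U \times \mathbb{P}^{k-1}$ with $\mathbb{P}^{m-1} \subseteq \mathbb{P}^{k-1}$ a fixed linear subspace in each fiber. The conormal sequence then splits into a ``base'' piece (the identity on $\Omega^1_U|_{B^{m,k}}$) and a ``fiber'' piece, which is the classical short exact sequence $0 \to \mathcal{O}(-1)^{\oplus(k-m)} \to \Omega^1_{\mathbb{P}^{k-1}}|_{\mathbb{P}^{m-1}} \to \Omega^1_{\mathbb{P}^{m-1}} \to 0$. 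Exactness being local, the vertical column is short exact globally.

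With both sequences in hand, the remaining claims follow from a direct diagram chase. Abbreviating the horizontal maps as $f \colon \alpha_{m,k}^* \Omega^1_{B^k} \to i_{m,k}^* \Omega^1_{P^{m,k}}$ and $g \colon i_{m,k}^* \Omega^1_{P^{m,k}} \to \pi_{m,k}^* Q_{m,k}$, and the vertical maps as $p \colon \mathcal{C}_{B^{m,k}/P^{m,k}} \to i_{m,k}^* \Omega^1_{P^{m,k}}$ and $q \colon i_{m,k}^* \Omega^1_{P^{m,k}} \to \Omega^1_{B^{m,k}}$, we have $\operatorname{im}(f) = \ker(g)$, $\operatorname{im}(p) = \ker(q)$, $p$ injective, and $g, q$ surjective; moreover $d\alpha_{m,k} = q \circ f$ and $\gamma_{m,k} = g \circ p$. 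Unwinding, $\ker(d\alpha_{m,k}) = f^{-1}(\operatorname{im}(p))$, and $f$ induces a canonical surjection from it onto $\operatorname{im}(f) \cap \operatorname{im}(p) \cong p^{-1}(\operatorname{im}(f)) = \ker(\gamma_{m,k})$ (the first isomorphism by injectivity of $p$), fitting into a canonical short exact sequence
\[
0 \lra \ker(f) \lra \ker(d\alpha_{m,k}) \lra \ker(\gamma_{m,k}) \lra 0.
\]
This yields the stated surjection, and collapses to an isomorphism in the smooth case, where Lemma \ref{relative-secant-row} guarantees $\ker(f) = 0$. A parallel computation identifies both $\operatorname{coker}(\gamma_{m,k})$ and $\operatorname{coker}(d\alpha_{m,k})$ canonically with the common quotient $i_{m,k}^* \Omega^1_{P^{m,k}} / (\operatorname{im}(f) + \operatorname{im}(p))$, giving the cokernel surjection (in fact an isomorphism) without any smoothness assumption.
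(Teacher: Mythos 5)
Your proof is correct and follows essentially the same route as the paper's, which is a one-liner: the paper identifies the column as the conormal sequence, the row as Lemma \ref{relative-secant-row}, and defers the diagram chase to the (unwritten) proof of Lemma \ref{diagram-differential-secant}. What you add is a genuinely useful detail that the paper leaves implicit: the left-exactness of the conormal column. Since $P^{m,k}$ may be singular (it is a projective bundle over the possibly-singular $X^{[m,k]}$), the conormal sequence is only right-exact in general, and the $0$ at the top of the column in the lemma's diagram is not free. Your local-trivialization argument — reducing the sub-projective-bundle embedding $B^{m,k} = \mathbb{P}(\tau^* E_{m,L}) \hookrightarrow P^{m,k} = \mathbb{P}(\rho^* E_{k,L})$ to the fiberwise linear embedding $U \times \mathbb{P}^{m-1} \hookrightarrow U \times \mathbb{P}^{k-1}$, exploiting that $M_{m,k,L}$ is locally free by Lemma \ref{ideal-relative-secant-bundle} — is clean and correct, and it is exactly the point worth spelling out. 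Your observation that the cokernel comparison is in fact an isomorphism without the smoothness hypothesis is also correct: it uses only that $q$ is surjective with $\ker(q) = \operatorname{im}(p)$ (right-exactness of the conormal sequence, no left-exactness needed) and that $g$ is surjective with $\ker(g) = \operatorname{im}(f)$ (Lemma \ref{relative-secant-row}), so both cokernels compute the same quotient $i_{m,k}^*\Omega^1_{P^{m,k}}/(\operatorname{im}(f)+\operatorname{im}(p))$. This is a mild sharpening of the stated conclusion, which only claims a surjection in general; the paper's weaker statement remains true and suffices for its applications.
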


\begin{proof}
The middle column is the natural sequence of cotangent sheaves, whereas the middle row is Lemma \ref{relative-secant-row}. Otherwise, the reasoning is exactly as in Lemma \ref{diagram-differential-secant}.
\end{proof}

\begin{lemma}\label{pairing-relative-secant}
On $B^{m,k}$ there is a natural pairing
\[
\pi_{m,k} ^* (\operatorname{pr}_{2,*}(\operatorname{pr}_1 ^* L \otimes \mathcal{O}_{\mathcal{V}_{m,k}} \otimes \mathcal{I}_{\mathcal{W}_{m,k}}))
\otimes
\pi_{m,k}^* (\operatorname{pr}_{2,*}\SHom (\mathcal{I}_{\mathcal{W}_{m,k}}, \mathcal{O}_{\mathcal{V}_{m,k}}))
\lra
\mathcal{O}_{B^{m,k}}(1).
\]
It induces a map
\[
\pi_{m,k} ^* (\operatorname{pr}_{2,*}(\operatorname{pr}_1 ^* L \otimes \mathcal{O}_{\mathcal{V}_{m,k}} \otimes \mathcal{I}_{\mathcal{W}_{m,k}}))\otimes \mathcal{O}_{B^{m,k}(L)}(-1)
\overset{e_{m,k}}{\lra} \pi_{m,k}^* ( \SExt _{\operatorname{pr}_2} ^n (\mathcal{O}_{\mathcal{V}_{m,k}}, \mathcal{I}_{\mathcal{W}_{m,k}}) \otimes \operatorname{\omega}_{\operatorname{pr}_2}).
\]
The map $e_{m,k}$ is surjective when restricted to $U^{m,k}$.
\end{lemma}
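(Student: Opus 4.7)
The plan is to mimic the structure of the proof of Lemma \ref{pairing-secant}, now working in the relative setting over the nested Hilbert scheme. First I would exhibit the evaluation pairing
\[
\bigl(\operatorname{pr}_1^* L \otimes \mathcal{O}_{\mathcal{V}_{m,k}}\otimes \mathcal{I}_{\mathcal{W}_{m,k}}\bigr) \otimes \SHom(\mathcal{I}_{\mathcal{W}_{m,k}},\mathcal{O}_{\mathcal{V}_{m,k}}) \lra \operatorname{pr}_1^* L \otimes \mathcal{O}_{\mathcal{V}_{m,k}}
\]
of $\mathcal{O}_{\mathcal{V}_{m,k}}$-modules on $X\times X^{[m,k]}$, which is well-defined because $\mathcal{I}_{\mathcal{V}_{m,k}}$ annihilates $\SHom(\mathcal{I}_{\mathcal{W}_{m,k}},\mathcal{O}_{\mathcal{V}_{m,k}})$ in $\mathcal{O}_{\mathcal{V}_{m,k}}$. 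Pushing forward along $\operatorname{pr}_2$, and using that both inputs are supported on $\mathcal{V}_{m,k}$ which is finite over $X^{[m,k]}$, yields a pairing valued in $E_{m,k,L}$. Pulling back along $\pi_{m,k}$ and composing with the tautological surjection $\pi_{m,k}^* E_{m,k,L}\twoheadrightarrow \mathcal{O}_{B^{m,k}}(1)$ then produces the pairing of the statement.

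Next I would identify the target of the induced map $e_{m,k}$ via relative Grothendieck duality. For the codimension-$n$ closed embedding $\mathcal{V}_{m,k}\hookrightarrow X\times X^{[m,k]}$ (whose composition with $\operatorname{pr}_2$ is finite and flat), duality converts $\SHom$ into $\SExt^n$ at the cost of twisting by $\omega_{\operatorname{pr}_2}$. More precisely, there is a natural isomorphism
\[
\operatorname{pr}_{2,*}\SHom(\mathcal{I}_{\mathcal{W}_{m,k}},\mathcal{O}_{\mathcal{V}_{m,k}}) \cong \SHom_{\mathcal{O}_{X^{[m,k]}}}\bigl(\SExt^n_{\operatorname{pr}_2}(\mathcal{O}_{\mathcal{V}_{m,k}},\mathcal{I}_{\mathcal{W}_{m,k}})\otimes \omega_{\operatorname{pr}_2},\,\mathcal{O}_{X^{[m,k]}}\bigr),
\]
in parallel with the identification of $\Omega^1_{X^{[k]}}$ used in Theorem \ref{cotangent-sheaf-hilbert-scheme}. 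Dualizing the pairing via this identification yields $e_{m,k}$.

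Finally, I would show surjectivity of $e_{m,k}$ on $U^{m,k}$ fiberwise and conclude by Nakayama's Lemma. A closed point $(b,(\xi_m\subseteq \xi_k))\in U^{m,k}$ determines a functional $\varphi\in \operatorname{Hom}_{\mathbb{C}}(\mathcal{O}_{\xi_m},\mathbb{C})$, and the condition $b\in U^m$ forces $\varphi\in U(\mathcal{O}_{\xi_m})$ by Lemma \ref{U-empty}; in particular $\mathcal{O}_{\xi_m}$ is Gorenstein. Setting $A:=\mathcal{O}_{\xi_m}$ and $M:=\mathcal{I}_{\xi_k}\otimes_{\mathcal{O}_X}\mathcal{O}_{\xi_m}$, the map $e_{m,k}$ restricts in this fiber to the natural map
\[
M \lra \operatorname{Hom}_{\mathbb{C}}\bigl(\operatorname{Hom}_A(M,A),\,\mathbb{C}\bigr)
\]
coming from $\varphi$ and the evaluation pairing, which is surjective by Lemma \ref{pairing-general}. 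Nakayama then propagates this to surjectivity on an open neighborhood of each fiber in $U^{m,k}$, hence on all of $U^{m,k}$.

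The main obstacle I anticipate is making the Grothendieck-duality identification in the second step precise and verifying that it is compatible with the pairing constructed in the first; beyond this, everything is a direct relative transcription of the absolute argument, with Lemma \ref{pairing-general} playing the role that Lemma \ref{pairing-free} played in the proof of Lemma \ref{pairing-secant}.
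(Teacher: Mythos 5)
Your proof is correct and follows essentially the same route as the paper's (brief) argument: exhibit the evaluation pairing on $X\times X^{[m,k]}$, push forward along $\operatorname{pr}_2$ and compose with the tautological surjection, identify the target via relative duality in the style of Theorem \ref{cotangent-sheaf-hilbert-scheme}, and invoke Lemma \ref{pairing-general} plus Nakayama for surjectivity on $U^{m,k}$. One small correction to your closing remark: in the proof of Lemma \ref{pairing-secant}, the paper already appeals to Lemma \ref{pairing-general} (not to Lemma \ref{pairing-free}, which is only used indirectly through Lemma \ref{pairing-general}), so the relative case is not substituting one lemma for the other but reusing the same one.
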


\begin{proof}
There is a natural pairing
\[
(\operatorname{pr}_1 ^* L \otimes \mathcal{O}_{\mathcal{V}_{m,k}} \otimes \mathcal{I}_{\mathcal{W}_{m,k}})
\otimes
\SHom (\mathcal{I}_{\mathcal{W}_{m,k}}, \mathcal{O}_{\mathcal{V}_{m,k}})
\lra
\operatorname{pr}_1 ^* L \otimes  \mathcal{O}_{\mathcal{V}_{m,k}}
\]
on $X\times X^{[m,k]}$. It induces the desired pairing on $B^{m,k}$. Surjectivity of $e_{m,k}$ follows from Lemma \ref{pairing-general}.
\end{proof}

\begin{lemma}\label{diagram-relative-evaluation}
On $B^{m,k}$ there is a natural commutative diagram
\begin{center}
\begin{tikzcd}
0\ar[d] & \\
\pi_{m,k} ^* (\operatorname{pr}_{2,*}(\operatorname{pr}_1 ^* L \otimes \mathcal{O}_{\mathcal{V}_{m,k}} \otimes \mathcal{I}_{\mathcal{W}_{m,k}}))\otimes \mathcal{O}(-1)\ar[d]\ar[r, "e_{m,k}"]&\pi_{m,k}^* (\SExt _{\operatorname{pr}_2} ^n (\mathcal{O}_{\mathcal{V}_{m,k}}, \mathcal{I}_{\mathcal{W}_{m,k}}) \otimes \operatorname{\omega}_{\operatorname{pr}_2})
\ar[d]\\ 
\pi_{m, k} ^* (\operatorname{pr}_{2, *} (\operatorname{pr}_1 ^* L \otimes \mathcal{O}_{\mathcal{W}_{m,k}} \otimes \mathcal{I}_{\mathcal{V}_{m,k}}))\otimes \mathcal{O}(-1)\ar[d]\ar[r, "\widetilde{e}_{m,m}"]& \pi_{m,k} ^* \tau ^*  \Omega_{X^{[m]}} ^1
\ar[d]\\
\mathcal{C}_{B^{m,k}/P^{m,k}}\ar[d]\ar[r, "\gamma_{m,k}"]& 
\pi_{m,k}^* Q_{m,k} \ar[d]\\
0 & 0\\
\end{tikzcd}.
\end{center}
\end{lemma}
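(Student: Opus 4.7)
The plan is to assemble the diagram from components already in hand. The left column is the short exact sequence of Lemma~\ref{ideal-relative-secant-bundle}; the right column is the pullback along $\pi_{m,k}$ of the right-exact sequence of Lemma~\ref{relative-secant-column}; and the top and bottom horizontals are $e_{m,k}$ and $\gamma_{m,k}$ from Lemmas~\ref{pairing-relative-secant} and~\ref{diagram-differential-relative-secant}, respectively. The substantive task is to define the middle horizontal $\widetilde{e}_{m,m}$ and to verify that the two resulting squares commute.

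For the construction of $\widetilde{e}_{m,m}$, I would use that $\widetilde{\tau}\colon B^{m,k}\lra B^m$ is the base change of $\tau$ by $\pi_m$, whence $\widetilde{\tau}^*\mathcal{O}_{B^m}(-1)\cong \mathcal{O}_{B^{m,k}}(-1)$ and $\pi_{m,k}^*\tau^*\cong \widetilde{\tau}^*\pi_m^*$. The target of $\widetilde{\tau}^*e_m$ (with $e_m$ from Lemma~\ref{pairing-secant}) then immediately matches the target $\pi_{m,k}^*\tau^*\Omega^1_{X^{[m]}}$ of the desired $\widetilde{e}_{m,m}$. For the sources, I would exploit two canonical maps on $X\times X^{[m,k]}$: first, the surjection
\[
\mathcal{O}_{\mathcal{W}_{m,k}}\otimes \mathcal{I}_{\mathcal{V}_{m,k}}\;=\;\mathcal{I}_{\mathcal{V}_{m,k}}/(\mathcal{I}_{\mathcal{V}_{m,k}}\cdot \mathcal{I}_{\mathcal{W}_{m,k}})\twoheadrightarrow \mathcal{I}_{\mathcal{V}_{m,k}}/\mathcal{I}_{\mathcal{V}_{m,k}}^2
\]
(well-defined because $\mathcal{I}_{\mathcal{W}_{m,k}}\subseteq \mathcal{I}_{\mathcal{V}_{m,k}}$ gives $\mathcal{I}_{\mathcal{V}_{m,k}}\cdot\mathcal{I}_{\mathcal{W}_{m,k}}\subseteq \mathcal{I}_{\mathcal{V}_{m,k}}^2$), and second, the base-change isomorphism $\mathcal{I}_{\mathcal{V}_{m,k}}/\mathcal{I}_{\mathcal{V}_{m,k}}^2\cong (\mathrm{id}_X\times\tau)^*(\mathcal{I}_{\mathcal{Z}_m}/\mathcal{I}_{\mathcal{Z}_m}^2)$ arising from $\mathcal{V}_{m,k}$ being the scheme-theoretic pullback of $\mathcal{Z}_m$ along $\mathrm{id}_X\times \tau$, combined with the finite flatness of $\pr_2\colon \mathcal{Z}_m\lra X^{[m]}$. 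After applying $\pr_{2,*}(\pr_1^*L\otimes -)$ and pulling back to $B^{m,k}$, we define $\widetilde{e}_{m,m}$ as the composition of these two identifications with $\widetilde{\tau}^*e_m$.

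Commutativity of the upper square should follow by tracing the pairings of Lemmas~\ref{pairing-secant} and~\ref{pairing-relative-secant}: both descend from a single natural multiplication pairing on $\mathcal{V}_{m,k}$, and they are interleaved on the source side by the inclusion $\mathcal{I}_{\mathcal{W}_{m,k}}\hookrightarrow \mathcal{I}_{\mathcal{V}_{m,k}}$. Applying $\SExt^n_{\pr_2}(\mathcal{O}_{\mathcal{V}_{m,k}},-)\otimes\omega_{\pr_2}$ to this same inclusion yields precisely the right-vertical map between the top and middle targets (via Lemma~\ref{relative-secant-column}), so the square commutes by functoriality. Commutativity of the lower square is essentially tautological: by Lemma~\ref{diagram-differential-relative-secant}, $\gamma_{m,k}$ factors as $\mathcal{C}_{B^{m,k}/P^{m,k}}\lra i_{m,k}^*\Omega^1_{P^{m,k}}\lra \pi_{m,k}^*Q_{m,k}$, and by Lemma~\ref{relative-secant-column} the map $\tau^*\Omega^1_{X^{[m]}}\lra Q_{m,k}$ is the natural cotangent quotient, so the required commutativity reduces to naturality of cotangent formation.

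The main obstacle I anticipate is the base-change identification $\mathcal{I}_{\mathcal{V}_{m,k}}/\mathcal{I}_{\mathcal{V}_{m,k}}^2\cong (\mathrm{id}_X\times\tau)^*(\mathcal{I}_{\mathcal{Z}_m}/\mathcal{I}_{\mathcal{Z}_m}^2)$, because $\tau$ itself need not be flat. The route I would take is the Tor-vanishing $\operatorname{Tor}_1^{\mathcal{O}_{X\times X^{[m]}}}(\mathcal{O}_{X\times X^{[m]}}/\mathcal{I}_{\mathcal{Z}_m}^2,\mathcal{O}_{X\times X^{[m,k]}})=0$, which ensures that pullback by $\mathrm{id}_X\times\tau$ commutes with the quotient by $\mathcal{I}_{\mathcal{Z}_m}^2$. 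This Tor-vanishing must be extracted from the finite flatness of $\pr_2\colon \mathcal{Z}_m\lra X^{[m]}$ propagated to the second-order infinitesimal neighborhood of $\mathcal{Z}_m$ in $X\times X^{[m]}$, and is the only place in the argument where a nontrivial geometric input about the universal family is needed.
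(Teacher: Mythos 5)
Your overall assembly of the diagram matches the paper's proof: same left column (Lemma~\ref{ideal-relative-secant-bundle}), same right column (pullback of Lemma~\ref{relative-secant-column}), same identification of $e_{m,k}$, $\gamma_{m,k}$, and the same two-step construction of $\widetilde{e}_{m,m}$ as the composition of the natural surjection $\mu_{m,k}$ with (a suitably base-changed) $e_m$. The paper states all of this very tersely and you are right to surface the base-change identification as the substantive point that needs justification.

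Where you go wrong is in the justification itself. You propose the Tor-vanishing $\operatorname{Tor}_1^{\mathcal{O}_{X\times X^{[m]}}}(\mathcal{O}_{X\times X^{[m]}}/\mathcal{I}_{\mathcal{Z}_m}^2,\mathcal{O}_{X\times X^{[m,k]}})=0$ and suggest extracting it from flatness ``propagated to the second-order infinitesimal neighborhood.'' This cannot work: the second-order neighborhood $\mathcal{O}_{X\times X^{[m]}}/\mathcal{I}_{\mathcal{Z}_m}^2$ is generally \emph{not} flat over $X^{[m]}$, precisely because $\mathcal{Z}_m$ fails to be a local complete intersection in $X\times X^{[m]}$ once $\dim X\geq 2$ and $m$ is large enough (so $\mathcal{I}_{\mathcal{Z}_m}/\mathcal{I}_{\mathcal{Z}_m}^2$ is not locally free over $\mathcal{Z}_m$). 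There is no flatness to propagate, and the Tor-vanishing you assert need not even hold.

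The fix is that you do not need that Tor-vanishing at all; you need something strictly weaker, and the paper already supplies it. Write $\mathcal{I}_{\mathcal{Z}_m}/\mathcal{I}_{\mathcal{Z}_m}^2\cong \mathcal{I}_{\mathcal{Z}_m}\otimes_{\mathcal{O}_{X\times X^{[m]}}}\mathcal{O}_{\mathcal{Z}_m}$. Since pullback is monoidal, $(\mathrm{id}_X\times\tau)^*$ distributes over this tensor product. The factor $\mathcal{O}_{\mathcal{Z}_m}$ pulls back to $\mathcal{O}_{\mathcal{V}_{m,k}}$ by definition, and the factor $\mathcal{I}_{\mathcal{Z}_m}$ pulls back to $\mathcal{I}_{\mathcal{V}_{m,k}}$ once you know $\operatorname{Tor}_1^{\mathcal{O}_{X\times X^{[m]}}}(\mathcal{O}_{\mathcal{Z}_m},\mathcal{O}_{X\times X^{[m,k]}})=0$ --- and that is exactly Lemma~\ref{sheaf-transversality} applied with $S=X^{[m]}$, $X_1=X\times X^{[m]}$, $X_2=X^{[m,k]}$, $\mathcal{F}_1=\mathcal{O}_{\mathcal{Z}_m}$ and $\mathcal{F}_2=\mathcal{O}_{X^{[m,k]}}$, using only flatness of $\mathcal{Z}_m$ itself (not of any thickening) over $X^{[m]}$. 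The pushforward $\pr_{2,*}$ then commutes with base change because it is being applied to a sheaf supported on $\mathcal{Z}_m$, which is finite (hence affine) over $X^{[m]}$. So the nontrivial geometric input is just flatness of the universal family, packaged as Lemma~\ref{sheaf-transversality}, not any flatness of the second infinitesimal neighborhood.
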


\begin{proof}
The left column is Lemma \ref{ideal-relative-secant-bundle}, whereas the right column is the pullback of the sequence in Lemma \ref{relative-secant-column} along $\pi_{m,k}$.
The maps $e_{m,k}$ and $\gamma_{m,k}$ are given by Lemma \ref{pairing-relative-secant} and Lemma \ref{diagram-differential-relative-secant} respectively. 
The map $\widetilde{e}_{m,m}$ is induced by the composition
of
\[
\pi_{m, k} ^* (\operatorname{pr}_{2, *} (\operatorname{pr}_1 ^* L \otimes \mathcal{O}_{\mathcal{W}_{m,k}}\otimes \mathcal{I}_{\mathcal{V}_{m,k}}))\otimes \mathcal{O}(-1)
\overset{\mu_{m,k}}{\lra}
\pi_{m, k} ^* (\operatorname{pr}_{2, *} (\operatorname{pr}_1 ^* L \otimes \mathcal{O}_{\mathcal{V}_{m,k}}\otimes \mathcal{I}_{\mathcal{V}_{m,k}}))\otimes \mathcal{O}(-1)
\]
and the pullback of
\[
\pi_{m} ^* (\operatorname{pr}_{2, *} (\operatorname{pr}_1 ^* L \otimes \mathcal{O}_{\mathcal{V}_{m}}\otimes \mathcal{I}_{\mathcal{V}_{m}}))\otimes \mathcal{O}(-1)
\overset{e_{m}}{\lra}
\pi_{m}^* \Omega_{X^{[m]}}^1 . \qedhere
\]
\end{proof}

\begin{proposition}\label{fiber-product}
The homomorphism $d\alpha_{m,k} \colon \alpha_{m,k} ^* \Omega_{B^k} ^1\lra \Omega_{B^{k,m}}^1$ is surjective when restricted to $U^{m,k}$. Moreover, if $L$ separates $2k$-schemes, then the commutative diagram
\begin{center}
\begin{tikzcd}
U^{m,k}\ar[r, "\alpha_{m,k}"]\ar[d, "\widetilde{\tau}"]& B^k\ar[d,"\alpha_k"]\\
U^m\ar[r, "\alpha_m"]& \mathbb{P}^r
\end{tikzcd}
\end{center}
is Cartesian.
\end{proposition}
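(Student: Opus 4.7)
The plan is to prove the two assertions separately, deducing the first from a diagram chase in the diagrams of Lemmas~\ref{diagram-relative-evaluation} and~\ref{diagram-differential-relative-secant}, and the second from the universal property of the nested Hilbert scheme.

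For the first assertion, by Lemma~\ref{diagram-differential-relative-secant} it suffices to show that $\gamma_{m,k}$ is surjective on $U^{m,k}$. In the diagram of Lemma~\ref{diagram-relative-evaluation}, the left vertical arrow is surjective by Lemma~\ref{ideal-relative-secant-bundle}, so a diagram chase reduces the problem to showing that the middle horizontal arrow $\widetilde{e}_{m,m}$ is surjective on $U^{m,k}$. By the construction in Lemma~\ref{diagram-relative-evaluation}, $\widetilde{e}_{m,m}$ is the composition of $\mu_{m,k}$ with the pullback of $e_m$. The map $\mu_{m,k}$ is induced by the natural surjection $\mathcal{O}_{\mathcal{W}_{m,k}} \twoheadrightarrow \mathcal{O}_{\mathcal{V}_{m,k}}$ and is itself surjective, since $\operatorname{pr}_2$ is finite. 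Moreover, because $B^{m,k} = X^{[m,k]} \times_{X^{[m]}} B^m$, one has $U^{m,k} = \widetilde{\tau}^{-1}(U^m)$, so the pullback of $e_m$ along $\widetilde{\tau}$ is surjective on $U^{m,k}$ by Lemma~\ref{pairing-secant} applied at level $m$. This completes part (1).

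For the second assertion, the universal property of the fiber product yields a canonical morphism $\phi \colon U^{m,k} \to W := U^m \times_{\mathbb{P}^r} B^k$, and the plan is to show that $\phi$ is an isomorphism by producing an inverse via the universal property of $X^{[m,k]}$. Pulling back $\mathcal{Z}_m$ and $\mathcal{Z}_k$ along $W \to U^m \to X^{[m]}$ and $W \to B^k \to X^{[k]}$ respectively yields two closed subschemes $\Xi, H \subseteq X \times W$, finite and flat over $W$ of lengths $m$ and $k$. The essential claim is that $\Xi \subseteq H$ scheme-theoretically. At a geometric point $w \in W$ corresponding to a pair $((\xi, \ell), (\eta, \ell)) \in U^m \times B^k$ (with coincident $\ell$ due to $\alpha_m = \alpha_k$ on $W$), the hypothesis that $L$ separates $2k$-schemes provides the span intersection formula $\operatorname{Span}_{\mathbb{P}^r}(\xi) \cap \operatorname{Span}_{\mathbb{P}^r}(\eta) = \operatorname{Span}_{\mathbb{P}^r}(\xi \cap \eta)$, and combined with the $U^m$-condition prohibiting $\ell$ from the span of any proper subscheme of $\xi$, this forces $\xi \cap \eta = \xi$, i.e., $\xi \subseteq \eta$. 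Once the global scheme-theoretic containment $\Xi \subseteq H$ is established, the universal property of $X^{[m,k]}$ produces a morphism $W \to X^{[m,k]}$ which, combined with the projection $W \to U^m$, gives an inverse $W \to X^{[m,k]} \times_{X^{[m]}} U^m = U^{m,k}$ to $\phi$.

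The main obstacle is promoting the fiberwise scheme-theoretic containment $\Xi_w \subseteq H_w$ to the global statement $\Xi \subseteq H$. Working locally, we write $W = \operatorname{Spec}(R)$ with $R$ Noetherian and $\Xi = \operatorname{Spec}(A)$ with $A$ a locally free $R$-algebra of rank $m$, and consider $N := \mathcal{I}_H \cdot A$, the ideal of $A$ generated by the images of a finite set of generators of $\mathcal{I}_H$. Then $N$ is finitely generated as an $A$-module, and hence as an $R$-module. Flatness of $H$ over $W$ identifies $\mathcal{I}_H \otimes_R k(w)$ with $\mathcal{I}_{H_w}$, so $N \otimes_R k(w)$ coincides with the image of $\mathcal{I}_{H_w}$ in $\mathcal{O}_{\Xi_w}$, which vanishes by the fiberwise inclusion. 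A Nakayama argument at each closed point of $W$ then forces $N = 0$, i.e., $\mathcal{I}_H \subseteq \mathcal{I}_\Xi$, as required.
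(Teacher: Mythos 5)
Your argument for the first assertion is essentially the paper's: reduce surjectivity of $d\alpha_{m,k}$ to surjectivity of $\gamma_{m,k}$ via Lemma~\ref{diagram-differential-relative-secant}, then chase the diagram of Lemma~\ref{diagram-relative-evaluation} using surjectivity of $\mu_{m,k}$ (finiteness of $\operatorname{pr}_2$) and of the pulled-back $e_m$ on $U^{m,k}=\widetilde{\tau}^{-1}(U^m)$. Making the surjectivity of $\mu_{m,k}$ explicit is a useful clarification, but the route is the same.

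For the Cartesian-square assertion your strategy genuinely differs from the paper's. The paper compares ideal sheaves directly, showing that the conormal map $\Psi \colon \widetilde\tau^*\mathcal{C}_{U^m/\mathbb{P}^r}\to\mathcal{C}_{U^{m,k}/B^k}$ is surjective (via the description of $\widetilde\tau^*\mathcal{C}_{U^m/\mathbb{P}^r}$ as $\ker(\widetilde e_{m,m}\circ P)$ and a snake-lemma argument in Lemma~\ref{diagram-relative-evaluation}), and then applies Nakayama to the ideal of $U^{m,k}$ inside $U^m\times_{\mathbb{P}^r}B^k$. You instead attempt to build the inverse morphism $W := U^m\times_{\mathbb{P}^r}B^k\to U^{m,k}$ directly, using the moduli interpretation of $X^{[m,k]}$. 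That route is attractive because it keeps the argument close to first principles, but it puts all the weight on the claim that the family $\Xi\subseteq X\times W$ is scheme-theoretically contained in $H$, and that is exactly where the proof has a gap.

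Specifically, the assertion that $N\otimes_R k(w)$ ``coincides with the image of $\mathcal{I}_{H_w}$ in $\mathcal{O}_{\Xi_w}$'' is false in general. From $\mathcal{I}_H\twoheadrightarrow N\hookrightarrow\mathcal{O}_\Xi$, tensoring with $k(w)$ gives a surjection $\mathcal{I}_{H_w}\cong\mathcal{I}_H\otimes_R k(w)\twoheadrightarrow N\otimes_R k(w)$ followed by a map $N\otimes_R k(w)\to\mathcal{O}_{\Xi_w}$ which need \emph{not} be injective: its kernel is controlled by $\operatorname{Tor}_1^R(\mathcal{O}_\Xi/N,k(w))$, which need not vanish since you do not know $\mathcal{O}_\Xi/N$ is flat over $W$. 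What the fiberwise containment actually gives you is only that the \emph{image} of $N\otimes_R k(w)$ in $\mathcal{O}_{\Xi_w}$ vanishes, which is strictly weaker than $N\otimes_R k(w)=0$. (A toy example of this failure: $R=k[\epsilon]/(\epsilon^2)$, $A=R$, $N=(\epsilon)$; then $N\otimes_R k\cong k\neq 0$ although the image of $N$ in $A\otimes_R k$ is zero.) Since $W=U^m\times_{\mathbb{P}^r}B^k$ is not known to be reduced under the hypotheses of the proposition, you cannot side-step this by passing to minimal primes. To close the gap you would need to establish flatness of $\mathcal{O}_{\Xi\cap H}$ over $W$ (or reducedness of $W$, or some other input), and as it stands your Nakayama step does not go through. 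The paper's conormal-sheaf/Nakayama argument avoids this problem entirely because it works with the ideal of $U^{m,k}$ in $B^k$ rather than trying to manufacture a classifying morphism.
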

\begin{proof}
Consider the diagram given by Lemma \ref{diagram-relative-evaluation}.
By Lemma \ref{pairing-secant} the map $e_m$ (and thus $\widetilde{e}_{m,m}$) is surjective. So then the first part of the Proposition follows from Lemma \ref{diagram-differential-relative-secant} together with the fact that $\textnormal{coker}(\widetilde{e}_{m,m}) \lra \textnormal{coker}(\gamma_{m,k})$ is surjective.
For the second part, we need to show that the inclusion $\mathcal{I}_{U^m\times_{\mathbb{P}^r} B^k/B^k}\subseteq \mathcal{I}_{U^{m,k}/B^k}$ is an isomorphism. By Theorem \ref{terracini} and Nakayama's lemma, it suffices to show that the natural map
\[
\Psi \colon \widetilde{\tau} ^* \mathcal{C}_{U^m/\mathbb{P}^r}
\lra 
\mathcal{C}_{U^{m,k}/B^k}
\] 
is surjective. 
To see this, note first that (by part (1) of Lemma \ref{diagram-differential-secant}), we have that $\widetilde{\tau}^*\mathcal{C}_{U^m/\mathbb{P}^r}$ may be identified with
$$
\textnormal{ker}\left(\pi_{m,k}^*(\pr_{2,*}(\pr_1^*L\otimes \mathcal{I}_{\mathcal{V}_{m,k}})) \otimes \OO(-1) \overset{P}{\lra} \pi_{m,k}^*(\pr_{2,*}(\pr_1^*L\otimes\mathcal{O}_{\mathcal{W}_{m,k}}\otimes \mathcal{I}_{\mathcal{V}_{m,k}})) \otimes \OO(-1) \overset{\widetilde{e}_{m,m}}\lra \pi_{m,k}^*\tau^*\Omega_{X^{[m]}}^1\right).
$$ 
Moreover, in the above, the homomorphism $P$ is surjective by the positivity assumptions on $L$. Given that $\mathcal{C}_{U^{m,k}/B^k} \cong \textnormal{ker}(\gamma_{m,k})$, this means that the map $\Psi$ factors through a surjection onto $\textnormal{ker}(\widetilde{e}_{m,m})$. But the latter surjects to $\textnormal{ker}(\gamma_{m,k})$ by the Snake lemma applied to the diagram in Lemma \ref{diagram-relative-evaluation}.
\end{proof}

In the following proposition, we prove the relative version of Theorem \ref{terracini}.

\begin{theorem}\label{relative-terracini}
    Assume $\dim (X)\leq 2$ or $k\leq 3$. Then, recalling that $F_{k-1}$ is the exceptional divisor of $X^{[k-1,k]} \cong \textnormal{Bl}_{\mathcal{Z}_{k-1}}X\times X^{[k-1]}$, we have
    \[
    \mathcal{C}_{U^{k-1,k}/B^k} \cong \pi_{k-1,k}^*(
    \operatorname{res}^* L(-2F_{k-1}))\otimes \mathcal{O}_{B^{k-1,k}}(-1)\vert_{U^{k-1,k}}.
    \]
\end{theorem}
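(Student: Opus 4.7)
The plan is to reduce the computation of $\mathcal{C}_{U^{k-1,k}/B^k}$ to a careful analysis of the map $\gamma_{k-1,k}$, using the universal-pairing framework developed earlier in the section. Under the assumption $\dim(X) \leq 2$ or $k \leq 3$, the nested Hilbert scheme $X^{[k-1,k]}$ is smooth by \S\ref{hilbert-key-facts}, so both $B^{k-1,k}$ and its open subset $U^{k-1,k}$ are smooth. By Lemma \ref{diagram-differential-relative-secant} applied on $U^{k-1,k}$, we have $\mathcal{C}_{U^{k-1,k}/B^k} \cong \ker(\gamma_{k-1,k})|_{U^{k-1,k}}$. Invoking Lemma \ref{ideal-relative-secant-bundle} combined with the identification $M_{k-1,k,L}\cong \operatorname{res}^*L(-F_{k-1})$ from \S\ref{residue-map} (equivalently, Lemma \ref{identities}(1)), the source of $\gamma_{k-1,k}$ is the line bundle
\[
\mathcal{C}_{B^{k-1,k}/P^{k-1,k}} \cong \pi_{k-1,k}^*(\operatorname{res}^*L(-F_{k-1})) \otimes \mathcal{O}_{B^{k-1,k}}(-1).
\]
Thus the theorem reduces to showing that on $U^{k-1,k}$ the map $\gamma_{k-1,k}$ vanishes precisely to first order along the Cartier divisor $\pi_{k-1,k}^{-1}(F_{k-1})$.

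Next, I will analyze the target $\pi_{k-1,k}^* Q_{k-1,k}$. Since $\rho\colon X^{[k-1,k]} \lra X^{[k]}$ is finite and flat over $X_*^{[k]}$ (by Lemma \ref{rinverseXstar}, together with the smoothness hypothesis and miracle flatness), and since $\omega_{X^{[k-1,k]}}\cong \rho^*\omega_{X^{[k]}}(F_{k-1})$ by Lemma \ref{canonical-bundle}, the map $\rho^*\Omega^1_{X^{[k]}} \hookrightarrow \Omega^1_{X^{[k-1,k]}}$ is an injection of vector bundles of the same rank whose determinant cuts out $F_{k-1}$ as a Cartier divisor. Consequently $Q_{k-1,k}$ is a torsion sheaf scheme-theoretically supported on $F_{k-1}$; an étale-local computation at a general point of $F_{k-1}$, where $\rho$ is modeled by $u=z^2$, shows that it is in fact a line bundle on $F_{k-1}$ there.

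The heart of the proof is to show that $\gamma_{k-1,k}|_{U^{k-1,k}}$ is surjective. I plan to use the commutative diagram in Lemma \ref{diagram-relative-evaluation}: its left column is the right-exact sequence of Lemma \ref{ideal-relative-secant-bundle}, and its right column is the right-exact sequence of Lemma \ref{relative-secant-column}. The top horizontal map $e_{k-1,k}$ is surjective on $U^{k-1,k}$ by Lemma \ref{pairing-relative-secant}, and the middle horizontal map $\widetilde{e}_{k-1,k-1}$ is surjective on $U^{k-1,k}$ since it is the pullback by $\widetilde{\tau}$ of the map $e_{k-1}$, which is surjective on $U^{k-1}$ by Lemma \ref{pairing-secant}. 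A snake-lemma argument applied to the two columns then forces $\gamma_{k-1,k}|_{U^{k-1,k}}$ to be surjective onto $\pi_{k-1,k}^*Q_{k-1,k}|_{U^{k-1,k}}$.

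Granting the surjectivity, we have a surjection from the line bundle $\mathcal{C}_{B^{k-1,k}/P^{k-1,k}}|_{U^{k-1,k}}$ onto a cyclic sheaf scheme-theoretically supported on the Cartier divisor $\pi_{k-1,k}^{-1}(F_{k-1})$; its kernel is therefore the twist of the source by $\mathcal{O}(-\pi_{k-1,k}^{-1}(F_{k-1}))$, which gives exactly
\[
\pi_{k-1,k}^*(\operatorname{res}^*L(-2F_{k-1})) \otimes \mathcal{O}_{B^{k-1,k}}(-1)|_{U^{k-1,k}},
\]
as desired. The main obstacle in carrying this out is verifying that the image of $\gamma_{k-1,k}$ on $U^{k-1,k}$ is annihilated precisely by $\mathcal{I}_{\pi_{k-1,k}^{-1}(F_{k-1})}$ and not by any smaller ideal: this amounts to showing that, although $U^{k-1,k}$ need not lie entirely over $\rho^{-1}(X_*^{[k]})$, the local description of $Q_{k-1,k}$ as a line bundle on $F_{k-1}$ propagates to the relevant open subset via the nondegeneracy of the pairing in Lemma \ref{pairing-relative-secant}.
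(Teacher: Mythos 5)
Your proposal takes a genuinely different route from the paper's, and the route you chose contains a gap that you yourself flag but do not close.

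The paper's argument runs as follows: the map $\widetilde{e}_{k-1,k-1}$ factors through $\mu_{k-1,k}$, and by the explicit identification $\mathcal{I}_{\mathcal{V}_{k-1,k}/\mathcal{W}_{k-1,k}}\cong \iota_*\mathcal{O}_{X^{[k-1,k]}}(-F_{k-1})$ (with $\iota\colon\Gamma_{\operatorname{res}}\hookrightarrow\mathcal{W}_{k-1,k}$ meeting $\mathcal{V}_{k-1,k}$ exactly along $F_{k-1}$), one computes $\ker(\mu_{k-1,k}) = \pi_{k-1,k}^*(\operatorname{res}^*L(-2F_{k-1}))\otimes\mathcal{O}_{B^{k-1,k}}(-1)$, which is precisely the right-hand side of the statement. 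Then, instead of analyzing $Q_{k-1,k}$ or the annihilator of the image of $\gamma_{k-1,k}$, the paper observes that on the open set $\rho^{-1}(X_{\operatorname{lci}}^{[k]})$ (whose complement has codimension $\geq 2$) the pullback of $e_{k-1}$ along $\pi_{k-1,k}$ is an \emph{isomorphism} --- not merely a surjection --- so that $\ker(\widetilde{e}_{k-1,k-1})=\ker(\mu_{k-1,k})$ there, and the snake-lemma surjection $\ker(\widetilde{e}_{k-1,k-1})\twoheadrightarrow\ker(\gamma_{k-1,k})$ is then a surjection of line bundles, hence an isomorphism on that open set. Since both sides are line bundles on the smooth (hence normal) $U^{k-1,k}$ and agree outside codimension two, they are isomorphic.

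Your argument instead tries to compute $\ker(\gamma_{k-1,k})$ directly, by establishing surjectivity of $\gamma_{k-1,k}$ (which is fine and does follow by a diagram chase in Lemma \ref{diagram-relative-evaluation}) and then identifying the annihilator of $\pi_{k-1,k}^*Q_{k-1,k}$ with $\mathcal{I}_{\pi_{k-1,k}^{-1}(F_{k-1})}$. But this identification is exactly what you leave open: your étale-local model $u=z^2$ applies only at general points of $F_{k-1}$, and you explicitly concede that showing the annihilator is no smaller on all of $U^{k-1,k}$ ``is the main obstacle.'' Appealing to ``propagation via the nondegeneracy of the pairing'' does not fill this; nondegeneracy of the Artinian--Gorenstein pairing gives surjectivity of $e_{k-1,k}$ and hence of $\gamma_{k-1,k}$, but says nothing directly about the scheme-theoretic structure of $\operatorname{Ann}(Q_{k-1,k})$ at non-general points of $F_{k-1}$. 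To repair your argument you would need either (a) the paper's observation that both $\ker(\gamma_{k-1,k})$ and the candidate are line bundles agreeing outside a codimension-two set (which is available to you, since the conormal bundle of the divisor $U^{k-1,k}\subseteq B^k$ is automatically a line bundle, and your local analysis on the open dense subset of $F_{k-1}$ plus triviality off $F_{k-1}$ gives agreement away from a closed subset of $F_{k-1}$), or (b) a direct proof that $\operatorname{Ann}(Q_{k-1,k})=\mathcal{I}_{F_{k-1}}$ over $\rho^{-1}(X_{\operatorname{lci}}^{[k]})$. As written, the proposal stops short of either.

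A secondary imprecision: you assert that the determinant of $\rho^*\Omega^1_{X^{[k]}}\hookrightarrow\Omega^1_{X^{[k-1,k]}}$ cuts out $F_{k-1}$ as a Cartier divisor. What follows formally from $\omega_{X^{[k-1,k]}}\cong\rho^*\omega_{X^{[k]}}(F_{k-1})$ is only that it cuts out \emph{some} effective divisor in the class of $F_{k-1}$; identifying it with $F_{k-1}$ itself requires the same codimension-two extension argument (which Lemma \ref{canonical-bundle} invokes on $\rho^{-1}(X_*^{[k]})$). This does not affect the numerics of your final answer but is worth noting since it is a second place where the implicit extension step does real work.
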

\begin{proof}
Under the stated conditions $X^{[k-1,k]}$ is smooth.
By reasoning analogous to that of the previous Proposition, we have that $\operatorname{ker}(\tilde{e}_{k-1,k-1})\lra \operatorname{ker}(\gamma_{k-1,k})$
is surjective.
Now note that the right-hand side of the statement of the present Proposition is 
$\operatorname{ker}(\mu_{k-1,k})$. On the open subscheme  $\rho^{-1}(X_{\operatorname{lci}}^{[k]}) \subseteq X^{[k-1,k]}$, the pullback of the homomorphism $e_{k-1}$ along $\pi_{k-1,k}$ is an isomorphism, and thus $\textnormal{ker}(\widetilde{e}_{k-1,k-1}) = \textnormal{ker}(\mu_{k-1,k})$. But $\textnormal{ker}(\mu_{k-1,k})$ is a line bundle which surjects to $\textnormal{ker}(\gamma_{k-1,k})$ which is also a line bundle --- thus these two are isomorphic on this open subscheme $\rho^{-1}(X_{\operatorname{lci}}^{[k]}) \subseteq X^{[k-1,k]}$. Since the complement of this open subscheme has codimension 2 and $U^{k-1,k}$ is smooth (hence in particular normal), this means that
$$
\textnormal{ker}(\widetilde{e}_{k-1,k-1}) \cong \textnormal{ker}(\gamma_{k-1,k})
$$
over the whole scheme $X^{[k-1,k]}$, which is the desired result.
\end{proof}

\subsection{The conormal bundle of the fibers of $\alpha_k$}
Here we assume that $L$ separates $2k$-schemes and we study the fibers of $\alpha_k$ along the lines of \cite[Proposition 3.13 (2.d)]{Ein.Niu.Park.20} (see also \cite{Bertram.92} and \cite{Ullery.16}). In higher dimensions, the situation is much more subtle, and we are only able to obtain a partial description. Although this subsection won't be needed for the main results of the paper, we present it here for completeness and for the application in Proposition \ref{not-klt-lc}.

We assume that $\operatorname{dim}(X) \leq 2$ or $k\leq 3$, so that $X^{[k]}$ and $X^{[k-1,k]}$ are smooth. 
Consider the following commutative diagram
\begin{center}
\begin{tikzcd}
F_{\eta} \ar[r,symbol=\subseteq] \ar[dr] & F_{k-1} \ar[r,symbol=\subseteq] & X^{[k-1,k]} \ar[d,"\tau"] & U^{k-1,k} \ar[l,"\pi_{k-1,k}"'] \ar[d,"\widetilde{\tau}"] \ar[r,symbol=\subseteq] & B^k\ar[d,"\alpha_k"]\\
& \left[\eta\right] \ar[r,symbol=\in] & X^{[k-1]} & U^{k-1} \ar[l,"\pi_{k-1}"] \ar[r,symbol=\subseteq] & ~\sigma_k
\end{tikzcd},
\end{center}
where $F_{k-1}$ is the exceptional divisor of $\operatorname{bl}$, and for some $k$-scheme $\eta \subseteq X$ we define
\begin{align*}
F_{\eta} & := F_{k-1}\cap\tau^{-1}([\eta]).
\end{align*}
Note that the square in the middle of the above diagram is Cartesian (by definition), and therefore for any choice of $x \in U^{k-1}$ we have $\tau^{-1}(\pi_{k-1}(x)) \cong \widetilde{\tau}^{-1}(x)$. 

\begin{lemma}\label{fiber-conormal-decomposition}
Pick $x\in \pi_{k-1}^{-1}(X^{[k-1]}_{\textnormal{lci}}) \subseteq U^{k-1}$ and set $[\eta] := \pi_{k-1}(x)$. Then
    \[
    \mathcal{C}_{\widetilde{\tau}^{-1}(x)/ B^k}
    \cong
    \left(\mathcal{O}_{\widetilde{\tau}^{-1}(x)} ^{\oplus (k-1)\operatorname{dim}(X) + k - 2}\right)
    \oplus \left(\operatorname{res}^* L \otimes \mathcal{O}_{\tau ^{-1}(\eta)}(-2F_{\eta})\right).
    \]
\end{lemma}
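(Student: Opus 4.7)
The plan is to exhibit the conormal sheaf as an extension of a trivial bundle by a line bundle, identify each piece, and then show the extension splits. First, I would factor the closed embedding $\widetilde{\tau}^{-1}(x)\hookrightarrow B^k$ through $U^{k-1,k}$ (which is a closed subscheme of $B^k$ by Proposition \ref{fiber-product} combined with Theorem \ref{terracini}) to obtain the conormal short exact sequence
\[
0 \lra \mathcal{C}_{U^{k-1,k}/B^k}\vert_{\widetilde{\tau}^{-1}(x)} \lra \mathcal{C}_{\widetilde{\tau}^{-1}(x)/B^k} \lra \mathcal{C}_{\widetilde{\tau}^{-1}(x)/U^{k-1,k}} \lra 0.
\]

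I would identify the leftmost term via Theorem \ref{relative-terracini}. Under the Cartesian identification $B^{k-1,k}\cong X^{[k-1,k]}\times_{X^{[k-1]}} B^{k-1}$, the fiber $\widetilde{\tau}^{-1}(x)$ lies above the single point $x\in B^{k-1}$, so $\mathcal{O}_{B^{k-1,k}}(-1)$ restricts trivially, while $\pi_{k-1,k}$ restricts to an isomorphism $\widetilde{\tau}^{-1}(x)\xrightarrow{\sim}\tau^{-1}([\eta])$. Hence the left term becomes $\operatorname{res}^*L\otimes\mathcal{O}_{\tau^{-1}(\eta)}(-2F_\eta)$. For the rightmost term, the lci assumption on $[\eta]$ makes the fiber $\tau^{-1}([\eta])\cong\operatorname{Bl}_\eta X$ smooth of the expected dimension $n$, so by miracle flatness $\tau$ is smooth at $[\eta]$, and consequently $\widetilde{\tau}$ is smooth at $x$. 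The right term is therefore the pullback of $\Omega_{U^{k-1},x}^1$ to the fiber, a trivial bundle of rank $\dim U^{k-1}=(k-1)n+k-2$.

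The main obstacle is splitting the extension. Its class lies in $\operatorname{Ext}^1(\mathcal{O}^{\oplus(k-1)n+k-2}, \operatorname{res}^*L(-2F_\eta))$, which is a direct sum of copies of $H^1(\operatorname{Bl}_\eta X, \pi^*L(-2F_\eta))$, where $\pi$ denotes the blowup map. Using the standard vanishing $R^i\pi_*\mathcal{O}(-2F_\eta)=0$ for $i\geq 1$ (a Koszul-type computation valid for blowups of $0$-dimensional lci subschemes of smooth varieties, carried out via the relative sequences $0\to\mathcal{O}(-jF_\eta)\to\mathcal{O}(-(j-1)F_\eta)\to\mathcal{O}_{F_\eta}(-(j-1)F_\eta)\to 0$) together with the projection formula, this obstruction reduces to $H^1(X, L\otimes \mathcal{I}_\eta^2)$, which vanishes for $L$ sufficiently positive (the implicit hypothesis in the intended application to Proposition \ref{not-klt-lc}). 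Combining the three identifications yields the asserted decomposition.
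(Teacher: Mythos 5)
Your proposal correctly identifies the short exact sequence the paper uses and correctly computes both outer terms, but the splitting step diverges from the paper's argument and has gaps.

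First, a factual error: you claim the lci hypothesis makes $\tau^{-1}([\eta]) \cong \operatorname{Bl}_\eta X$ smooth, and hence (via miracle flatness) that $\tau$, and $\widetilde\tau$, are smooth morphisms there. This is false. The blow-up of a $0$-dimensional lci subscheme of a smooth variety need not be smooth: for $\eta = V(x^2, y) \subseteq \mathbb{A}^2$, the blow-up is $\{ys = x^2 t\} \subseteq \mathbb{A}^2 \times \mathbb{P}^1$, which has an $A_1$-singularity on the chart $t=1$. Miracle flatness gives only that $\tau$ is \emph{flat} over the lci locus (source CM, target regular, fiber dimension constant), which is exactly what the paper uses. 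Fortunately flatness alone suffices to identify $\mathcal{C}_{\widetilde\tau^{-1}(x)/U^{k-1,k}}$ as the trivial bundle $\mathfrak{m}_x/\mathfrak{m}_x^2 \otimes \mathcal{O}_{\widetilde\tau^{-1}(x)}$, since the fiber is then cut out by a regular sequence pulled back from a regular system of parameters at $x$; so the conclusion of that step survives, but the reasoning needs correcting.

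Second, the splitting argument. You identify the extension class in $\operatorname{Ext}^1\bigl(\mathcal{O}^{\oplus(k-1)n+k-2},\, \operatorname{res}^*L(-2F_\eta)\bigr) \cong H^1(\operatorname{Bl}_\eta X, \pi^*L(-2F_\eta))^{\oplus(k-1)n+k-2}$ and then try to kill it via $R^i\pi_*\mathcal{O}(-2F_\eta) = 0$, the projection formula, and $H^1(X, L\otimes\mathcal{I}_\eta^2) = 0$. Two problems. (a) The relative vanishing $R^i\pi_*\mathcal{O}(-2F_\eta)=0$ for $i>0$ requires an argument: the blow-up is singular in general, so Grauert--Riemenschneider is not immediately available, and the Koszul-type filtration you sketch needs the infinitesimal neighborhoods of the (non-reduced) exceptional divisor to be controlled; this is more delicate than you indicate. (b) More seriously, the vanishing $H^1(X, L\otimes\mathcal{I}_\eta^2)=0$ requires $L$ to be \emph{sufficiently positive}, but the lemma sits in a subsection whose standing assumption is only that $L$ separates $2k$-schemes. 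Importing ``sufficient positivity'' because of the intended application in Proposition \ref{not-klt-lc} changes the hypotheses of the lemma.

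The paper avoids both issues entirely: to split the sequence, it observes that the composition
\[
\Omega^1_{\mathbb{P}^r}\big\vert_x \lra H^0\bigl(\widetilde\tau^{-1}(x),\, \mathcal{C}_{\widetilde\tau^{-1}(x)/B^k}\bigr) \lra H^0\bigl(\widetilde\tau^{-1}(x),\, \mathcal{C}_{\widetilde\tau^{-1}(x)/U^{k-1,k}}\bigr) = \Omega^1_{\sigma_{k-1}}\big\vert_x
\]
is surjective (the last map is the conormal surjection of the closed embedding $U^{k-1}\hookrightarrow \mathbb{P}^r$ from Theorem \ref{terracini}). Since the quotient sheaf is trivial, lifting a basis of $\Omega^1_{\sigma_{k-1}}\vert_x$ to global sections of the middle term produces an explicit splitting, with no need for cohomology vanishing or positivity beyond $2k$-separation. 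You should replace the Ext-vanishing argument with this section-lifting argument, and drop the smoothness claim in favor of flatness.
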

\begin{proof}
By Proposition \ref{fiber-product} and the fact that $\tau$ is flat on $\tau^{-1}(X^{[k-1]}_{\textnormal{lci}})$, we have
\[
\mathcal{C}_{\widetilde{\tau}^{-1}(x)/U^{k-1,k}} \cong \Omega_{\sigma_{k-1}}^1\vert_x
\otimes \mathcal{O}_{\widetilde{\tau}^{-1}(x)}
\cong \mathcal{O}_{\widetilde{\tau}^{-1}(x)} ^{\oplus (k-1)\operatorname{dim} X + k - 2}
\]
since $U^{k-1,k}$ is smooth at $x$ and has dimension $k\cdot \operatorname{dim} (X) + k - 2$.
Consider the short exact sequence
\[
0\lra \mathcal{C}_{U^{k-1,k}/B^k}|_{\widetilde{\tau}^{-1}(x)}\lra
\mathcal{C}_{\widetilde{\tau}^{-1}(x)/ B^k}\lra
\mathcal{C}_{\widetilde{\tau}^{-1}(x)/U^{k-1,k}}\lra 0.
\]
Since the composition
\[
\Omega^1_{\mathbb{P}^r}\vert_x\lra H^0(\widetilde{\tau}^{-1}(x), \mathcal{C}_{\widetilde{\tau}^{-1}(x)/ B^k})
\lra H^0(\widetilde{\tau}^{-1}(x), \mathcal{C}_{\widetilde{\tau}^{-1}(x)/U^{k-1,k}})=\Omega^1_{\sigma_{k-1}}\vert_x
\]
is surjective, we have that the above sequence splits. Hence we are done by Proposition \ref{relative-terracini}.
\end{proof}

\begin{remark}
Pick $x \in U^{k-1}$ and set $[\eta] := \pi_{k-1}(x) \in X^{[k]}$. One may prove that if $\eta$ is local complete intersection, then the natural morphism
   \[
   \Omega^1_{\mathbb{P}^r}\vert_x\lra 
   H^0(\widetilde{\tau}^{-1}(x), \mathcal{C}_{\widetilde{\tau}^{-1}(x)/ B^k})
   \]
is an isomorphism (see \cite[Proposition 3.13 (2.e)]{Ein.Niu.Park.20} and \cite[Lemma 2.4]{Ullery.16}). This identification was used in \cite{Ein.Niu.Park.20} and \cite{Ullery.16} to study the local structure of $\sigma_k$ by applying the formal function theorem.
\end{remark}

\subsection{The boundary divisor $Z_k$ on $B^k$}\label{subsection-boundary-divisor}
Assume that $X^{[k]}$ and $X^{[k-1,k]}$ are smooth. Recall that the secant bundle $B^k$ has a natural boundary divisor, the exceptional divisor $\gls{z-k}$ of $\alpha_k$, which coincides with the relative secant variety $\sigma_{k-1,k}=\alpha_{k-1,k}(B^{k-1,k})\subseteq B^k$ by Theorem \ref{terracini}. The purpose of this subsection is to study $Z_k$. Given that $\alpha_k(Z_{k})=\sigma_{k-1}$, this study will be useful later on for inductive arguments. 
From now on, $\gls{h-k} \in \lvert \mathcal{O}_{B^k}(1) \rvert$ is a tautological divisor on $B^k$. The following is a counterpart to \cite[Proposition 3.15]{Ein.Niu.Park.20}.

\begin{proposition}\label{A&Z}
Suppose that $X^{[k]}$ is normal and Cohen--Macaulay, and suppose that $X_{\operatorname{Gor}}^{[k]}$ is smooth. Then the following hold:
\begin{enumerate}[topsep=0pt]
    \item $Z_{k}$ is Cohen--Macaulay; it is furthermore flat over $X_{\operatorname{Gor}}^{[k]}$.
    \item $\pi_{k,*}\mathcal{O}_{B^k} (j H_k - Z_{k}) = 0$ for $j<k$.
    \item $\mathcal{O}_{B^k} (k H_k - Z_{k})\cong\pi_k ^* A_{k,L}$.
\end{enumerate}
\end{proposition}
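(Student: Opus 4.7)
Parts (1) and (2) follow quickly from the structure of $B^k$ as a projective bundle over $X^{[k]}$, while (3) is the substantive statement and will be proved by induction on $k$. For (1), I would first note that $B^k$ is Cohen--Macaulay as a projective bundle over a Cohen--Macaulay base, so the effective Cartier divisor $Z_k\subseteq B^k$ is itself Cohen--Macaulay. For flatness over $X^{[k]}_{\operatorname{Gor}}$, I would appeal to miracle flatness: each fiber of $\pi_k|_{Z_k}$ over a Gorenstein point is a hypersurface in $\pi_k^{-1}(\xi)\cong \mathbb{P}^{k-1}$ and hence has constant dimension $k-2$, so source Cohen--Macaulay, target regular, and equidimensional fibers together yield flatness.

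For (2), I would push the standard sequence
\[
0 \to \mathcal{O}_{B^k}(jH_k-Z_k) \to \mathcal{O}_{B^k}(jH_k) \to \mathcal{O}_{Z_k}(jH_k) \to 0
\]
forward along $\pi_k$, embedding $\pi_{k,*}\mathcal{O}_{B^k}(jH_k-Z_k)$ into the torsion-free sheaf $S^j E_{k,L}$ (which is zero for $j<0$). For $0\leq j<k$, the restriction $\mathcal{O}(jH_k-Z_k)|_{\pi_k^{-1}(\xi)}\cong \mathcal{O}_{\mathbb{P}^{k-1}}(j-k)$ has trivial $H^0$ on every Gorenstein fiber, so cohomology and base change gives generic vanishing; torsion-freeness then propagates this vanishing to all of $X^{[k]}$.

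For (3), the same fiber analysis at $j=k$ yields $\mathcal{O}_{B^k}(kH_k-Z_k)|_{\pi_k^{-1}(\xi)}\cong \mathcal{O}_{\mathbb{P}^{k-1}}$ over $X^{[k]}_{\operatorname{Gor}}$, so cohomology and base change gives $\mathcal{O}_{B^k}(kH_k-Z_k)\cong \pi_k^*\mathcal{L}$ for some line bundle $\mathcal{L}$ on $X^{[k]}_{\operatorname{Gor}}$; normality of $X^{[k]}$ and codimension-$\geq 2$ of the non-Gorenstein locus extend this to all of $B^k$. To identify $\mathcal{L}$ with $A_{k,L}$, I would induct on $k$: the base case $k=1$ is immediate since $B^1=X$, $Z_1=\emptyset$, and $A_{1,L}=L$. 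For the inductive step, pulling back via $\alpha_{k-1,k}$ and using $\pi_k\circ\alpha_{k-1,k}=\rho\circ\pi_{k-1,k}$ gives $\alpha_{k-1,k}^*\mathcal{O}_{B^k}(kH_k-Z_k)\cong \pi_{k-1,k}^*\rho^*\mathcal{L}$. On $U^{k-1,k}$, Theorem \ref{relative-terracini} together with the inductive hypothesis $\mathcal{O}_{B^{k-1}}((k-1)H_{k-1}-Z_{k-1})\cong \pi_{k-1}^*A_{k-1,L}$ identifies the boundary divisor class on $B^{k-1,k}$ (via $\widetilde{\tau}^*[Z_{k-1}]$); the canonical trivialization of $\mathcal{O}(\partial)|_{U^{k-1,k}}$, where $\partial=B^{k-1,k}\setminus U^{k-1,k}=\widetilde{\tau}^{-1}(Z_{k-1})$, then gives $\mathcal{O}_{B^{k-1,k}}(k-1)|_{U^{k-1,k}}\cong \pi_{k-1,k}^*\tau^*A_{k-1,L}|_{U^{k-1,k}}$. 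Applying Lemma \ref{identities}(3) yields
\[
\alpha_{k-1,k}^*\mathcal{O}_{B^k}(kH_k-Z_k)\big|_{U^{k-1,k}} \cong \pi_{k-1,k}^*\rho^*A_{k,L}\big|_{U^{k-1,k}}.
\]
Since both sides are pullbacks from $X^{[k-1,k]}$ and agree on the dense open $U^{k-1,k}$, they agree on $B^{k-1,k}$, giving $\rho^*\mathcal{L}\cong \rho^*A_{k,L}$.

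The final step is to descend this equality to $X^{[k]}$: setting $\mathcal{N}:=\mathcal{L}\otimes A_{k,L}^{-1}$, we have $\rho^*\mathcal{N}\cong \mathcal{O}_{X^{[k-1,k]}}$, and then the projection formula combined with Lemma \ref{lem:O_X^[k]directsummand} (which exhibits $\mathcal{O}_{X^{[k]}}$ as a distinguished direct summand of $\rho_*\mathcal{O}_{X^{[k-1,k]}}(F_{k-1})$) forces $\mathcal{N}\cong \mathcal{O}_{X^{[k]}}$. The principal obstacle will be this final descent, which requires tracking the direct summand structure of $\rho_*\mathcal{O}_{X^{[k-1,k]}}(F_{k-1})$ carefully under tensor with $\mathcal{N}$ to extract the trivial line bundle factor.
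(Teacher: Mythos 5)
Parts (1) and (2) of your proposal coincide with the paper's argument (miracle flatness for (1), the degree computation on Gorenstein fibers plus the codimension-$\geq 2$ bound for (2)).

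For part (3) you take a genuinely different route from the paper. The paper never invokes the relative Terracini lemma here; instead it computes the canonical bundle $K_{B^{k-1,k}}$ in two independent ways --- first as $\alpha_{k-1,k}^*K_{Z_k}+\widetilde{\tau}^*(cZ_{k-1})$ using adjunction on $Z_k\subseteq B^k$ together with Proposition \ref{fiber-product}, and second via the projective-bundle formula for $B^{k-1,k}\to X^{[k-1,k]}$ together with Lemma \ref{canonical-bundle} --- then compares the two expressions. Equating the coefficients of $\widetilde{\tau}^*H_{k-1}$ determines the discrepancy $c=-1$, after which Lemma \ref{identities}(1) turns the residual equation into $\rho^*A_k\cong\rho^*A_{k,L}$. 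Your route instead reads off the conormal bundle of $Z_k$ on the open stratum directly from Theorem \ref{relative-terracini} and combines it with the inductive hypothesis and Lemma \ref{identities}(3). That is conceptually cleaner --- it uses the Terracini description for exactly the purpose it was built --- and it arrives at the same intermediate equation $\rho^*\mathcal{L}\cong\rho^*A_{k,L}$ (restricted to $U^{k-1,k}$) and the same final descent problem.

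However, your extension step has a real gap. You assert that two pullback line bundles from $X^{[k-1,k]}$ that agree on $U^{k-1,k}$ must agree on $B^{k-1,k}$. The complement $B^{k-1,k}\setminus U^{k-1,k}=\widetilde{\tau}^{-1}(Z_{k-1})$ is a \emph{divisor}, not a codimension-$\geq 2$ set, so density alone is not enough. Writing the difference as $\pi_{k-1,k}^*N\sim\sum a_i D_i$ with $D_i$ the components of $\widetilde{\tau}^{-1}(Z_{k-1})$, restricting to a general fiber of $\pi_{k-1,k}$ kills the coefficient of the unique component that dominates $X^{[k-1,k]}$ (since $\widetilde{\tau}^*Z_{k-1}$ has degree $k-1>0$ on the general fiber); but $\widetilde{\tau}$ need not be flat (it is the base change of the non-flat $\tau$), so $\widetilde{\tau}^{-1}(Z_{k-1})$ can contain additional components that do not dominate --- these are themselves pullbacks of divisors in $X^{[k-1,k]}$ lying over the non-Gorenstein locus of $X^{[k-1]}$, and your argument does not rule them out. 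The paper's canonical-bundle comparison sidesteps this exposure by producing an honest identity in $\operatorname{Pic}(B^{k-1,k})$ and solving for $c$ from the $\mathcal{O}(1)$-coefficient, but your formulation leaves the non-dominating components unaccounted for, and this needs to be closed. Your final descent from $\rho^*\mathcal{L}\cong\rho^*A_{k,L}$ to $\mathcal{L}\cong A_{k,L}$ (which the paper also uses, silently) is not yet a proof as written --- two line bundles both occurring as direct summands of $\rho_*\mathcal{O}_{X^{[k-1,k]}}(F_{k-1})$ need not be isomorphic --- though you rightly flag it as the delicate point; the cleanest fix is to restrict to $X_*^{[k]}$, where $\rho$ is finite flat and the $S_k$-quotient structure on $\widetilde{X}_*^k$ is available, and use that the complement has codimension $\geq 2$.
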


\begin{proof}
$(1)$ The morphism $\alpha_{k-1,k} \colon B^{k-1,k}\lra Z_{k}$ is birational. Therefore $Z_{k}$ is an irreducible divisor in $B^k$, and thus Cohen--Macaulay. Since the fibers of $Z_{k}$ have constant dimension $k-2$ over $X_{\operatorname{Gor}}^{[k]}$, we are done by \cite[Theorem 18.16]{Eisenbud.95}.

$(2)$ Take a general $k$-scheme $\eta\subseteq X$. Since $\eta$ is general, it is reduced. The divisor $Z_{k,\eta}\subseteq B^k_\eta\cong \mathbb{P}^{k-1}$ 
consists of the union of $k$ hyperplanes, hence has degree $k$. Since $Z_{k}$ is flat over $X_{\operatorname{Gor}}^{[k]}$, this holds true for all $[\eta] \in X_{\operatorname{Gor}}^{[k]}$. 
Therefore we have
\[
h^0(B^k, \mathcal{O}_{B^k}(jH_k -Z_{k})\vert_\eta)=0
\]
for all $[\eta] \in X_{\operatorname{Gor}}^{[k]}$ and $j<k$. Since $\operatorname{codim}(X^{[k]}\setminus X_{\operatorname{Gor}}^{[k]})\geq 2$, we have that
\[
\pi_{k,*} \mathcal{O}_{B^k} (j H_k - Z_{k})=0.
\]

$(3)$ By the same reasoning as $(2)$, we see that 
\[
A_k:=\pi_{k,*} \mathcal{O}_{B^k} (k H_k - Z_{k})
\]
is a line bundle. 
We want to show $A_k\cong A_{k,L}$. Recall that $\alpha_{k-1,k}^{-1} (\sigma_{k-2,k}) = \widetilde{\tau}^{-1} (Z_{k-1})$.
By Proposition \ref{fiber-product} we may therefore write
\[
K_{B^{k-1,k}}=\alpha_{k-1,k}^* K_{Z_{k}}
+\widetilde{\tau} ^*(c\cdot Z_{k-1})
\]
for some integer $c$. 
By adjunction, we have 
$$
K_{Z_{k}} = (K_{B^k} + Z_{k})|_{Z_{k}} = \pi_k^*(K_{X^{[k]}}+N_{k,L} - A_{k}).
$$ 
Therefore we get
\begin{equation}\label{eq:computeZ_k1}
K_{B^{k-1,k}}=\pi_{k-1,k}^* \rho^* (K_{X^{[k]}}+N_{k,L} - A_{k})
+\widetilde{\tau} ^*(c\cdot Z_{k-1}).
\end{equation}
As $B^{k-1,k}$ is a projective bundle over $X^{[k-1,k]}$, we have
\[
K_{B^{k-1,k}}=\pi_{k-1,k}^*(K_{X^{[k-1,k]}}+\tau^* N_{k-1,L})-(k-1)\widetilde{\tau}^* H_{k-1}.
\]
By induction, we may assume that $Z_{k-1} = (k-1)H_{k-1} - \pi_{k-1}^*A_{k-1,L}$. Recalling from Lemma \ref{canonical-bundle} that $K_{X^{[k-1,k]}} = \rho ^* K_{X^{[k]}} + F_{k-1}$,
we have
\begin{equation}\label{eq:computeZ_k2}
K_{B^{k-1,k}}=\pi_{k-1,k}^* (\rho^* K_{X^{[k]}}+F_{k-1}+\tau^* N_{k-1,L})-(k-1)\widetilde{\tau}^* H_{k-1}.
\end{equation}
Comparing $(\ref{eq:computeZ_k1})$ and $(\ref{eq:computeZ_k2})$ above, we 
obtain $c=-1$ and
\[
\rho^*(N_{k,L}-A_k)=\tau^* (N_{k-1,L}-A_{k-1,L})+F_{k-1}
\]
By Lemma \ref{identities} we get $A_{k} = A_{k,L}$ as desired.
\end{proof}

Note that in particular, if $X^{[k]}$ is smooth, then by adjunction we have
$
\omega_{Z_{k}}\cong \pi_k ^* (T_{k,\omega_X}\otimes \delta_k ^{n-1}).
$

\section{Vanishing theorems on Hilbert schemes}\label{section-cohomology-vanishing}

In this section, we prove the main vanishing theorem (Theorem \ref{intro-vanishing}), which will be the technical heart of our main results in the next two sections. We let $L$ be a line bundle on $X$, and we assume that $n=\dim(X) \leq 2$ or $k \leq 3$, so that $X^{[k]}$ and $X^{[k-1,k]}$ are smooth.

\subsection{Standard vanishing results for higher direct images}
For the vanishing of higher direct images, we frequently use the following standard facts.

\begin{lemma}[{\cite[Lemma 4.3.10]{pag1}}]\label{lem:standfactforhdi} Let $f\colon Y\lra Z$ be a morphism of irreducible projective varieties. Suppose that $\mathcal{F}$ is a coherent sheaf on $Y$ with the property that $H^i(X, \mathcal{F}\otimes f^* A)=0$
for some sufficiently positive line bundle $A$ and all $i\geq 1$. Then $R^i f_* \mathcal{F}=0$ for all $i\geq 1$. 
\end{lemma}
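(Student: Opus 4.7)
The plan is to use the Leray spectral sequence together with Serre vanishing on the base $Z$, exploiting the flexibility built into the notion of ``sufficiently positive'' from item (7) of Notation and Conventions.

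First, I would fix an ample line bundle $H$ on $Z$. By Serre's vanishing and global generation theorems, applied to each of the finitely many nonzero coherent sheaves $R^q f_* \mathcal{F}$ on $Z$, there is an integer $m_0$ such that for all $m \geq m_0$ and all $q \geq 0$,
\[
H^p(Z, R^q f_* \mathcal{F} \otimes H^m) = 0 \quad \text{for } p \geq 1,
\]
and moreover $R^q f_* \mathcal{F} \otimes H^m$ is globally generated. Enlarging $m_0$ if necessary, the convention on ``sufficient positivity'' guarantees that $A := H^m$ satisfies the hypothesis of the lemma for all $m \geq m_0$, that is,
\[
H^i(Y, \mathcal{F} \otimes f^* H^m) = 0 \quad \text{for } i \geq 1.
\]

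Next, by the projection formula $R^q f_*(\mathcal{F} \otimes f^* H^m) \cong R^q f_* \mathcal{F} \otimes H^m$, the Leray spectral sequence
\[
E_2^{p,q} = H^p(Z, R^q f_* \mathcal{F} \otimes H^m) \Longrightarrow H^{p+q}(Y, \mathcal{F} \otimes f^* H^m)
\]
collapses to the row $p = 0$ once $m \geq m_0$, yielding an isomorphism
\[
H^i(Y, \mathcal{F} \otimes f^* H^m) \cong H^0(Z, R^i f_* \mathcal{F} \otimes H^m)
\]
for every $i \geq 0$.

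Combining these two facts, for $m \geq m_0$ and $i \geq 1$ we obtain $H^0(Z, R^i f_* \mathcal{F} \otimes H^m) = 0$; but $R^i f_* \mathcal{F} \otimes H^m$ is globally generated, so it must vanish as a coherent sheaf. Twisting back by $H^{-m}$ gives $R^i f_* \mathcal{F} = 0$ for every $i \geq 1$, as desired. There is no genuine obstacle here, since the argument is completely formal once one observes that ``sufficiently positive'' is flexible enough to include arbitrarily high powers of any fixed ample line bundle on $Z$; the only thing to verify carefully is that a single $m_0$ works simultaneously for all (finitely many nonzero) higher direct images.
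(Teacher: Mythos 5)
Your proof is correct and is exactly the standard argument given in the cited reference (Lazarsfeld, \emph{Positivity I}, Lemma 4.3.10), which the paper does not reproduce: Serre vanishing and Serre global generation on $Z$, applied to the finitely many nonzero coherent sheaves $R^q f_* \mathcal{F}$, combined with the projection formula and the collapse of the Leray spectral sequence. The only point worth being explicit about — and you do flag it at the end — is that the paper's notion of ``sufficiently positive'' (item (7) in Notation and Conventions) lets you take $A = H^{\otimes m}$ for all $m \gg 0$ at once, so a single $m$ can be chosen making the hypothesis, the Serre vanishings $H^p(Z, R^q f_*\mathcal{F} \otimes H^{\otimes m}) = 0$ for $p \ge 1$, and the global generation of $R^i f_*\mathcal{F} \otimes H^{\otimes m}$ hold simultaneously.
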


\begin{lemma}\label{standard-grauert-riemenschneider}
    Let $Z$ be a smooth projective variety, $f\colon Z \lra Y$ be a birational morphism, and $G$ be a divisor on $Z$ such that $\omega_Z^{-1}(G)$ is relatively nef over $Y$.
    Then $R^i f_* \mathcal{O}_Z(G)=0$ for all $i\geq 1$.
\end{lemma}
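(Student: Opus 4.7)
The plan is to deduce this directly from the relative Kawamata--Viehweg vanishing theorem. Writing $\mathcal{O}_Z(G) \cong \omega_Z \otimes \omega_Z^{-1}(G)$, the hypothesis says precisely that the twist factor $\omega_Z^{-1}(G)$ is $f$-nef. Since $f$ is birational, its generic fiber is a single point, so \emph{any} line bundle on $Z$ is automatically $f$-big; in particular $\omega_Z^{-1}(G)$ is both $f$-nef and $f$-big. Relative Kawamata--Viehweg vanishing (applicable because $Z$ is smooth; see for example \cite[Theorem 1.2.5]{Kollar-Mori}) then gives $R^i f_* \mathcal{O}_Z(G) = 0$ for $i \geq 1$, as required.

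As a more hands-on alternative, one could invoke Lemma \ref{lem:standfactforhdi}: pick a sufficiently positive line bundle $A$ on $Y$ and prove that $H^i(Z, \mathcal{O}_Z(G) \otimes f^*A) = 0$ for $i \geq 1$ via the absolute Kawamata--Viehweg theorem. The line bundle $(G + f^*A) - K_Z = f^*A + (G - K_Z)$ is nef, being the sum of the pullback of an ample divisor and an $f$-nef one, and big, since $f^*A$ is big as $f$ is birational; so the absolute vanishing gives exactly what is needed, and Lemma \ref{lem:standfactforhdi} upgrades it to the desired higher direct image statement.

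I expect no serious obstacle here; the statement is essentially folklore, a mild extension of Grauert--Riemenschneider (which is the special case $G = 0$). The one point worth flagging is the automatic $f$-bigness of line bundles under a birational $f$, which is what promotes the $f$-nef hypothesis to the $f$-nef-and-big input needed by relative Kawamata--Viehweg.
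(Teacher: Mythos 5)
Your ``hands-on alternative'' is essentially identical to the paper's own proof: choose a sufficiently positive line bundle $H$ on $Y$ so that $f^*H \otimes \omega_Z^{-1}(G)$ is nef and big, apply absolute Kawamata--Viehweg, and upgrade to the higher direct image statement via Lemma~\ref{lem:standfactforhdi}. Your primary argument, invoking relative Kawamata--Viehweg directly, is a valid and slightly cleaner shortcut: the one nontrivial observation, that any line bundle is automatically $f$-big when $f$ is birational (the generic fiber is a point), is exactly right, and it lets you sidestep the auxiliary twisting-by-$H$ step entirely. The only point to tighten in the second route is the phrase ``the sum of the pullback of an ample divisor and an $f$-nef one''---this sum is not nef for an arbitrary ample $A$, but only once $A$ is taken sufficiently positive relative to $G - K_Z$; since you did open with ``sufficiently positive,'' this is clearly what you meant, and it matches what the paper itself leaves implicit.
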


\begin{proof}
We may pick a sufficiently positive line bundle $H$ on $Y$ such that $f^* H \otimes \omega_Z^{-1}(G)$ is big and nef. Then we conclude by the Kawamata--Viehweg vanishing theorem and Lemma \ref{lem:standfactforhdi}.
\end{proof}

\subsection{Main relative vanishing}
The aim of this subsection is to prove the following crucial ingredient of the proof of the main vanishing theorem. Recall that $h:X^{[k]}\lra \operatorname{Sym}^k (X)$ is the Hilbert--Chow morphism, $M_{k,L}$ is the syzygy bundle and $\delta_k$ is the determinant of the dual of the secant sheaf $E_{k,\mathcal{O}_X}$.

\begin{theorem}\label{main-direct-image-vanishing}
Assume that $L$ separates $k$-schemes. Then
$R^i h_{*} (\wedge^j M_{k,L}\otimes \delta_k ^{-\ell}) =0$ for $i \geq 1$, $j \geq 0$ and $\ell\in\{1,2\}$.
\end{theorem}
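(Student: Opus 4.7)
The plan is to induct on $k$; for $k=1$ the Hilbert--Chow morphism is an isomorphism and the statement is vacuous. For the inductive step, I would first reduce to a vanishing statement purely about symmetric powers of $E_{k,L}^\vee$. Dualizing the defining sequence $0 \to M_{k,L} \to H^0(X,L)\otimes \mathcal{O}_{X^{[k]}} \to E_{k,L} \to 0$ and taking the Koszul complex of the resulting surjection $H^0(X,L)^\vee \otimes \mathcal{O} \twoheadrightarrow M_{k,L}^\vee$ (whose kernel is $E_{k,L}^\vee$) produces a finite locally free resolution of $\wedge^j M_{k,L}^\vee$ with terms $S^a E_{k,L}^\vee \otimes \wedge^{j-a} H^0(X,L)^\vee \otimes \mathcal{O}_{X^{[k]}}$. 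Twisting by $\det M_{k,L} = N_{k,L}^{-1}$ converts this into a resolution of the complementary wedge power of $M_{k,L}$; after further tensoring with $\delta_k^{-\ell}$, a hyperdirect image spectral sequence reduces the theorem to the core relative vanishing
\[
R^i h_{*}\bigl(S^a E_{k,L}^\vee \otimes N_{k,L}^{-1} \otimes \delta_k^{-\ell}\bigr) = 0 \qquad \text{for all } i\geq 1,\ a\geq 0,\ \ell\in\{1,2\}.
\]

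To establish this core vanishing, I would pull back along $\rho\colon X^{[k-1,k]}\to X^{[k]}$. By Lemma \ref{lem:O_X^[k]directsummand}, $\mathcal{O}_{X^{[k]}}$ is a direct summand of $\rho_{*}\mathcal{O}_{X^{[k-1,k]}}(F_{k-1})$, so it suffices to establish the analogous vanishing for the $\rho$-pullback twisted by $\mathcal{O}(F_{k-1})$. The short exact sequence $0 \to M_{k-1,k,L} \to \rho^* E_{k,L} \to \tau^* E_{k-1,L} \to 0$ has line-bundle kernel $M_{k-1,k,L}\cong \operatorname{res}^{*}L(-F_{k-1})$, so dualizing and taking symmetric powers equips $\rho^{*}S^a E_{k,L}^\vee$ with a filtration whose graded pieces have the form $\tau^{*}S^b E_{k-1,L}^\vee \otimes (\operatorname{res}^{*}L)^{\otimes(b-a)} \otimes \mathcal{O}\bigl((a-b)F_{k-1}\bigr)$. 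Using the identities of Lemma \ref{identities} to compute $\rho^{*}(N_{k,L}^{-1}\otimes \delta_k^{-\ell})$, each graded piece is rewritten as a $\tau^{*}$-pullback of a sheaf of the same shape at level $k-1$ (namely $S^b E_{k-1,L}^\vee \otimes N_{k-1,L}^{-1} \otimes \delta_{k-1}^{-\ell}$), tensored with a twist of the form $(\operatorname{res}^{*}L)^{\otimes c}\otimes \mathcal{O}(dF_{k-1})$. Via the blowup identification $\operatorname{res}\times\tau\colon X^{[k-1,k]}\cong \operatorname{Bl}_{\mathcal{Z}_{k-1}}(X\times X^{[k-1]})$, I would apply relative Kawamata--Viehweg vanishing on the blowup to push each filtration piece down to $X\times X^{[k-1]}$, and then use the Leray spectral sequence along $X\times X^{[k-1]}\xrightarrow{\operatorname{id}\times h_{k-1}} X\times \operatorname{Sym}^{k-1}(X) \xrightarrow{\textnormal{add}} \operatorname{Sym}^k(X)$, together with the inductive hypothesis applied to $h_{k-1}$, to finish.

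The main obstacle is the careful bookkeeping of $F_{k-1}$-divisor contributions during the induction: the effective twists $-\ell F_{k-1}$ (for $\ell \in \{1,2\}$) must combine with the relative canonical $(n-1)F_{k-1}$ on the blowup and with the $\operatorname{res}^{*}L$-factors from the filtration in such a way that relative positivity over $X\times X^{[k-1]}$ is preserved for every graded piece. This positivity check is precisely what restricts the statement to $\ell \in \{1,2\}$ and constitutes the ``intricate induction'' hinted at in the introduction.
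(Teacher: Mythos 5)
Your first reduction — using the Buchsbaum--Eisenbud/Koszul resolution to reduce $R^i h_*(\wedge^j M_{k,L}\otimes\delta_k^{-\ell})$ to vanishing of $R^q h_*(S^a E_{k,L}^\vee \otimes N_{k,L}^{-1}\otimes\delta_k^{-\ell})$ — is essentially the paper's strategy for the $n=2$ case (and note that after writing $N_{k,L}^{-1}\otimes\delta_k^{-\ell} = T_{k,L}^{-1}\otimes\delta_k^{1-\ell}$ and factoring the pulled-back $T_{k,L}^{-1}$ out of $R^q h_*$, this becomes exactly the paper's Proposition \ref{main-direct-image-vanishing-S^jE}). The paper treats $n\geq 3$ (so $k\leq 3$) by a genuinely different route via $h_{k-1,k}$ and the addition map $a_k$, because the fiber-dimension bound $\leq k-1$ for $h$ (Brian\c{c}on) is surface-specific and the argument for Proposition \ref{main-direct-image-vanishing-S^jE} depends on it.

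Where your proposal has a real gap is in the proof of the core vanishing. You choose Lemma \ref{lem:O_X^[k]directsummand} (the trace splitting of $\mathcal{O}_{X^{[k]}}\hookrightarrow\rho_*\mathcal{O}_{X^{[k-1,k]}}(F_{k-1})$) and then filter $\rho^*S^a E_{k,L}^\vee$ via $0 \to \tau^*E_{k-1,L}^\vee \to \rho^*E_{k,L}^\vee \to M_{k-1,k,L}^\vee \to 0$. The graded pieces are $\tau^*S^b E_{k-1,L}^\vee \otimes (M_{k-1,k,L}^\vee)^{a-b}$, and since $M_{k-1,k,L}^\vee \cong \operatorname{res}^*L^{-1}(F_{k-1})$, after also tensoring with $\rho^*(N_{k,L}^{-1}\otimes\delta_k^{-\ell})$ and the extra $\mathcal{O}(F_{k-1})$ from the trace splitting, the $F_{k-1}$-coefficient in the $b$-th graded piece is $a-b+2-\ell$. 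This is unbounded as $b$ ranges from $a$ down to $0$; already for $n=2$, $\ell=1$, $a\geq 1$, $b=0$ you get a coefficient $\geq 2 > n-1$, so $R^1\operatorname{bl}_*\mathcal{O}((a-b+2-\ell)F_{k-1})\neq 0$ and relative Kawamata--Viehweg fails (the twist is anti-nef on the $\mathbb{P}^{n-1}$-fibers of the blow-up). The $\operatorname{res}^*L^{-(a-b+1)}$ factors do not help, as they are pulled back from $X\times X^{[k-1]}$ and contribute nothing to relative positivity. There is no way to salvage the KV step for the pieces with $b$ small.

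The paper sidesteps this in two ways you do not: (i) it first applies Serre duality on $X^{[k]}$, converting $S^j E_{k,L}^\vee\otimes\delta_k^{-\ell}$ into $S^j E_{k,L}\otimes\delta_k^{\ell}$, so that the subsequent twists of $F_{k-1}$ come with the \emph{favorable} sign; and (ii) rather than filtering a pullback, it invokes Proposition \ref{direct-summand-E-k-L}, which supplies $S^j E_{k,L}$ as a direct summand of $\rho_*\tau^*S^j E_{k-1,L}$ (for $1\leq j\leq k-1$) all at once. After these two moves the relevant line bundle identity (Lemma \ref{identities}) produces $F_{k-1}$-coefficient exactly $\ell \in \{0,1\}\leq n-1$, which is precisely what $\operatorname{bl}_*$-vanishing demands. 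Proposition \ref{direct-summand-E-k-L} — and the observation that the relevant splitting is for $E$, not $E^\vee$, and only in the range $j\leq k-1$ — is the key ingredient your proposal is missing, and it is nontrivial (it is proved by a separate argument involving the scheme $\mathcal{V}$, $\mathcal{W}$, $\mathcal{Z}$ construction and a trace map).
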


This is trivial for $n=1$, since the Hilbert--Chow morphism $h$ is the identity map. In the remainder of this subsection we assume $n \geq 2$. To prove Theorem \ref{main-direct-image-vanishing}, we first need some preliminary results.

\begin{lemma}\label{direct-image-res-r-k-1-k}
Let $B$ be a vector bundle on $X$. Then 
$R^i \rho_{*} \operatorname{res}^* B =0$ and $R^i \rho_{*} (\operatorname{res}^* B(-F_{k-1}))=0$  for all $i\geq 1$.
\end{lemma}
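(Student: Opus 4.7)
The plan is to factor $\rho$ through the resolution of singularities of the universal family $\mathcal{Z}_k$. Recall from \S\ref{subsection-uni-family} that under the standing smoothness assumption, the morphism $\operatorname{res}\times \rho \colon X^{[k-1,k]} \lra \mathcal{Z}_k$ is a birational resolution of singularities, while the second projection $\operatorname{pr}_2 \colon \mathcal{Z}_k \lra X^{[k]}$ is finite. Hence $\rho = \operatorname{pr}_2 \circ (\operatorname{res}\times\rho)$, and since $\operatorname{pr}_2|_{\mathcal{Z}_k}$ is finite, its higher direct images all vanish; the Leray spectral sequence then degenerates to $R^i\rho_*\mathcal{F} \cong \operatorname{pr}_{2,*}R^i(\operatorname{res}\times\rho)_*\mathcal{F}$ for every coherent sheaf $\mathcal{F}$ on $X^{[k-1,k]}$.

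Observing that $\operatorname{res}^* B = (\operatorname{res}\times\rho)^*(\operatorname{pr}_1^*B|_{\mathcal{Z}_k})$, the projection formula reduces both desired statements to the structure-sheaf vanishings
\[
R^i(\operatorname{res}\times\rho)_*\mathcal{O}_{X^{[k-1,k]}} = 0 \quad \text{and} \quad R^i(\operatorname{res}\times\rho)_*\mathcal{O}_{X^{[k-1,k]}}(-F_{k-1}) = 0, \quad i \geq 1.
\]
The first vanishing is immediate: $\mathcal{Z}_k$ has rational singularities (\S\ref{subsection-uni-family}) and $\operatorname{res}\times\rho$ is a resolution. For the second, I would apply Lemma \ref{standard-grauert-riemenschneider} with $f = \operatorname{res}\times\rho$ and $G = -F_{k-1}$. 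Using $\omega_{X^{[k-1,k]}} \cong \rho^*\omega_{X^{[k]}}(F_{k-1})$ from Lemma \ref{canonical-bundle}, one computes
\[
\omega_{X^{[k-1,k]}}^{-1}(-F_{k-1}) \cong \rho^*\omega_{X^{[k]}}^{-1}(-2F_{k-1}).
\]
The first factor is pulled back from $X^{[k]}$ via $\rho$, and since $\rho$ is constant on fibers of $\operatorname{res}\times\rho$, this factor is trivial on those fibers. The relative nefness condition of Lemma \ref{standard-grauert-riemenschneider} thus reduces to the claim that $\mathcal{O}(-F_{k-1})$ is nef on every fiber of $\operatorname{res}\times\rho$.

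The hard part will be verifying this last nefness. Although $F_{k-1}$ is the exceptional divisor of $\operatorname{bl} = \operatorname{res}\times\tau$ (so that $-F_{k-1}$ is $\operatorname{bl}$-ample), the morphisms $\operatorname{bl}$ and $\operatorname{res}\times\rho$ are distinct birational contractions of $X^{[k-1,k]}$, so the $\operatorname{bl}$-positivity of $-F_{k-1}$ does not transfer formally. To handle this, I would analyze the fibers of $\operatorname{res}\times\rho$ explicitly, particularly over the locus $\{(x,\xi) \in \mathcal{Z}_k : \operatorname{mult}_x(\xi) \geq 2\}$, where the fibers become positive dimensional and are contained in $F_{k-1}$. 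Using the description of such fibers as parameter spaces for length-$(k-1)$ subschemes of $\xi$ residuing to $x$, one should be able to conclude that $\mathcal{O}(-F_{k-1})$ restricts to a nef line bundle on each such fiber, completing the argument.
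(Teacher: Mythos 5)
Your factorization $\rho = \operatorname{pr}_2 \circ (\operatorname{res}\times\rho)$, the reduction (via finiteness of $\operatorname{pr}_2|_{\mathcal{Z}_k}$ and the projection formula) to the two structure-sheaf vanishings on $\mathcal{Z}_k$, and the use of rational singularities of $\mathcal{Z}_k$ for the first one all match the paper's proof exactly.

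However, your proposed route to the second vanishing has a fatal gap: the relative nefness you need is false. You reduce to showing $\mathcal{O}_{X^{[k-1,k]}}(-F_{k-1})$ is nef on fibers of $\operatorname{res}\times\rho$, but the paper's own proof computes (in the case $n=2$, citing \cite{Song.16}) that over a point $\zeta=(x,[\xi])\in\mathcal{Z}_k$ with positive-dimensional fiber, $G_\zeta \cong \mathbb{P}^{b_2-1}$ and
$$
\mathcal{O}_{X^{[k-1,k]}}(-F_{k-1})\big|_{G_\zeta} \cong \mathcal{O}_{\mathbb{P}^{b_2-1}}(-1),
$$
which is \emph{anti}-nef. (The isomorphism comes from identifying $\mathcal{O}(F_{k-1})|_{G_\zeta} \cong \omega_{X^{[k-1,k]}/X^{[k]}}|_{G_\zeta}$ with $\mathcal{O}_{\mathbb{P}^{b_2-1}}(1)$ via adjunction and the short exact sequence for $N_{G_\zeta/X^{[k-1,k]}}^\vee$.) So Lemma \ref{standard-grauert-riemenschneider} cannot apply, and no relative Kawamata--Viehweg-type argument can. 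The vanishing $R^i(\operatorname{res}\times\rho)_*\mathcal{O}(-F_{k-1}) = 0$ is genuinely more delicate: the paper proves it by explicitly controlling $H^i(G_\zeta, S^\ell N_{G_\zeta/X^{[k-1,k]}}^\vee(-1))$ for all $\ell\geq 0$ using Song's resolution of the conormal sheaf, and then invoking the theorem on formal functions over the infinitesimal thickenings $G_{\zeta,\ell}$ of the fiber. The analogous analysis in the case $n\geq 3$, $k=3$ (fibers $\cong \mathbb{P}^1$ or a point, $\mathcal{O}(-F_{k-1})|_{G_\zeta} \cong \mathcal{O}_{\mathbb{P}^1}(-1)$) works the same way. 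You will need to replace your nefness claim and the appeal to Lemma \ref{standard-grauert-riemenschneider} with this formal-functions computation.
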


\begin{proof}
Note that by \S\ref{section-hilbert-scheme} we have the following commutative diagram
\begin{center}
    \begin{tikzcd}
        & X^{[k-1,k]}\ar[dl,"\operatorname{res}"]\ar[dr,"\operatorname{res}\times\rho"']\ar[bend left=30,"\rho"]{ddrr}\\
        X & & \mathcal{Z}_k\ar[ll,"\operatorname{pr}_1\vert_{\mathcal{Z}_k}"] \ar[dr,"\operatorname{pr}_2\vert_{\mathcal{Z}_k}"']\\
        & & & X^{[k]}
    \end{tikzcd}.
\end{center}
This shows that for a vector bundle $E$ on $X$,
$$
R^i\rho_*\operatorname{res}^*E = R^i(\operatorname{pr}_2\vert_{\mathcal{Z}_k}\circ(\operatorname{res}\times\rho))_*(\operatorname{res}\times\rho)^*\operatorname{pr}_1\vert_{\mathcal{Z}_k}^*E.
$$
Applying the Leray spectral sequence to the composition, and noting the vanishing resulting from finiteness of $\operatorname{pr}_2$, it suffices to show that $R^i(\res \times \rho)_* \mathcal{O}_{X^{[k-1,k]}}=0$ and $R^i (\res \times \rho)_* \mathcal{O}_{X^{[k-1,k]}}(-F_{k-1})=0$ for $i \geq 1$. The first vanishing follows from the fact that $\mathcal{Z}_k$ has rational singularities (see the discussion in Subsection \ref{subsection-uni-family}).

For the second vanishing, we go by cases. 
Fix a point $\zeta=(x,[\xi])\in\mathcal{Z}_k$, and let $G_\zeta := (\operatorname{res}\times \rho)^{-1} (\zeta)$ be the fiber of $\res \times \rho$ over $\zeta$. Assume first that $n=2$. By \cite[Proposition 2.3]{Song.16} (see also \cite{Ellingsrud.Stromme}), we have $G_\zeta\cong \mathbb{P}^{b_2-1}$, where $b_2$ is the dimension of the socle of $\mathcal{O}_{\mathcal{Z}_k,\zeta}$. By \cite[Page 353]{Song.16}, we have a short exact sequence
\[
0\lra \mathcal{O}_{\mathbb{P}^{b_2-1}} ^{\oplus b_2+1}(-1) \lra \mathcal{O}_{\mathbb{P}^{b_2-1}} ^{\oplus 2k+2}\lra N_{G_\zeta / X^{[k-1,k]}} ^{\vee}\lra 0.
\]
Therefore we have
\[
H^i(\mathbb{P}^{b_2-1}, S^j N_{G_\zeta / X^{[k-1,k]}} ^{\vee} (-1))=0~~\text{ for $i\geq 1$ and $j\geq 0$}.
\]
Note that $\omega_{G_\zeta}\cong\omega_{X^{[k-1,k]}}|_{G_\zeta} \otimes \operatorname{det} N_{G_\zeta/X^{[k-1,k]}}$, so that $\omega_{X^{[k-1,k]}}|_{G_\zeta}\cong \mathcal{O}_{\mathbb{P}^{b_2-1}}(1)$. As a consequence, we have the chain of isomorphisms $\mathcal{O}_{X^{[k-1,k]}}(F_{k-1})|_{G_\zeta} \cong \omega_{X^{[k-1,k]}/X^{[k]}}|_{G_{\zeta}} \cong \mathcal{O}_{\mathbb{P}^{b_2-1}}(1)$.
Let $G_{\zeta,\ell}$ be the $\ell$-th thickening of $G_\zeta$ in $X^{[k-1,k]}$. Consider the short exact sequence
\[
0\lra S^\ell N_{G_\zeta/X^{[k-1,k]}} ^\vee \lra
\mathcal{O}_{G_{\zeta,\ell+1}}\lra \mathcal{O}_{G_{\zeta,\ell}}\lra 0.
\]
Twisting by $\mathcal{O}_{X^{[k-1,k]}}(-F_{k-1})$ and using the above cohomology vanishing for $S^\ell N_{G_\zeta/X^{[k-1,k]}} ^\vee (-1)$, we get $R^i (\operatorname{res}\times \rho)_* \mathcal{O}_{X^{[k-1,k]}}(-F_{k-1})=0$ for $i\geq 1$ by the formal function theorem (\cite[Theorem 11.1, Chapter III]{Hartshorne.77}).

Next, assume that $n \geq 3$ so that $k = 2$ or $3$. If $k=2$, then $\operatorname{res}\times \rho$ is the identity map, and hence, the statement is trivial. Assume therefore that $k=3$. Using the same notation as above, it is immediate to see that 
$G_\zeta\cong \mathbb{P}^1$ or a point. Suppose that $G_{\zeta} \cong \mathbb{P}^1$. Since in any case the embedding dimension of $\xi \subseteq X$ is at most 2, we see by arguing locally as above that $\mathcal{O}_{X^{[k-1,k]}}(-F_{k-1})|_{G_\zeta}\cong \mathcal{O}_{\mathbb{P}^1}(-1)$ and $N_{G_\zeta/X^{[k-1,k]}}^\vee$ is globally generated. Then the same argument using the formal function theorem gives the desired vanishing $R^i (\operatorname{res}\times \rho)_* \mathcal{O}_{X^{[k-1,k]}}(-F_{k-1})=0$ for $i\geq 1$.
\end{proof}

\begin{corollary}\label{direct-image-E-k-L}
Pick two integers $i\geq 1$ and $k\geq 2$. Then:
   \begin{enumerate}[topsep=0pt]
       \item $R^i \rho_{*} \tau^* S^j E_{k-1,L}=0$ and $R^i \rho_* \tau^* \wedge^j E_{k-1,L}=0$.
       \item $R^i \rho_{*} \tau^* S^j M_{k-1,L}=0$ and $R^i \rho_* \tau^* \wedge^j M_{k-1,L}=0$, provided that $L$ separates $k$-schemes.
   \end{enumerate}
\end{corollary}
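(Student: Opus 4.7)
The plan rests on three ingredients: (i) Lemma \ref{direct-image-res-r-k-1-k}, which gives vanishing of $R^{\geq 1}\rho_*$ for sheaves of the form $\res^* B$ and $\res^* B(-F_{k-1})$ with $B$ a vector bundle on $X$; (ii) the two short exact sequences from Section \ref{section-secant-sheaves} relating $\rho^* E_{k,L}$, $\tau^* E_{k-1,L}$, $\rho^* M_{k,L}$, $\tau^* M_{k-1,L}$, and the line bundle $M_{k-1,k,L}$; and (iii) the projection formula together with the identifications in Lemma \ref{identities}. First, taking $B = \mathcal{O}_X$ in Lemma \ref{direct-image-res-r-k-1-k} yields $R^i \rho_* \mathcal{O}_{X^{[k-1,k]}} = 0$ for $i \geq 1$, hence by the projection formula $R^i \rho_* \rho^* \mathcal{F} = 0$ for any locally free sheaf $\mathcal{F}$ on $X^{[k]}$; and taking $B = L$, together with $M_{k-1,k,L} \cong \res^* L(-F_{k-1})$, gives $R^i \rho_* M_{k-1,k,L} = 0$ for $i \geq 1$.

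The cleanest case is Part (2) for $\wedge^j$. Applying $\wedge^j$ to the short exact sequence
\[
0 \to \rho^* M_{k,L} \to \tau^* M_{k-1,L} \to M_{k-1,k,L} \to 0,
\]
and using that $M_{k-1,k,L}$ is a line bundle so that the filtration collapses to two pieces, produces
\[
0 \to \wedge^j \rho^* M_{k,L} \to \wedge^j \tau^* M_{k-1,L} \to \wedge^{j-1}\rho^* M_{k,L} \otimes M_{k-1,k,L} \to 0.
\]
Upon taking $R\rho_*$, both outer terms have vanishing $R^{\geq 1}$ by the projection formula and the two basic vanishings, so the middle does too.

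For Part (1) I proceed by induction on $j$ using the SES $0 \to M_{k-1,k,L} \to \rho^* E_{k,L} \to \tau^* E_{k-1,L} \to 0$. The base case $j = 1$ follows directly from the long exact sequence, since both $\rho^* E_{k,L}$ and $M_{k-1,k,L}$ have vanishing $R^{\geq 1}\rho_*$. For $j \geq 2$ and $\wedge^j$, the two-step filtration yields a short exact sequence whose middle term $\wedge^j \rho^* E_{k,L}$ has vanishing $R^{\geq 1}\rho_*$, and whose subsheaf $M_{k-1,k,L} \otimes \tau^* \wedge^{j-1}E_{k-1,L}$ is rewritten via Lemma \ref{identities}(1) as $\rho^* N_{k,L} \otimes \tau^*(N_{k-1,L}^{-1} \otimes \wedge^{j-1}E_{k-1,L})$. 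The wedge-duality isomorphism $\wedge^{j-1}E_{k-1,L} \otimes N_{k-1,L}^{-1} \cong \wedge^{k-j} E_{k-1,L}^\vee$ then couples the statement for $\wedge^j E_{k-1,L}$ to that for $\wedge^{k-j} E_{k-1,L}^\vee$; a parallel analysis on the dualized SES $0 \to \tau^* E_{k-1,L}^\vee \to \rho^* E_{k,L}^\vee \to M_{k-1,k,L}^\vee \to 0$ closes the joint induction. The symmetric case is handled analogously via the $(j+1)$-step filtration arising from $S^j$. Finally, Part (2) for $S^j$ is deduced from Part (1) by resolving $S^j M_{k-1,L}$ through the Eagon--Northcott complex associated to the evaluation $H^0(X, L) \otimes \mathcal{O}_{X^{[k-1]}} \twoheadrightarrow E_{k-1,L}$ (which exists since $L$ separates $k$-schemes, hence also $(k-1)$-schemes), whose terms after pullback by $\tau$ are controlled by Part (1) and by the vanishing of $R^{\geq 1}\rho_*$ of pullbacks from $X^{[k]}$. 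The main obstacle is the induction for Part (1): the sub-piece of the filtration is not a $\rho$-pullback, so a direct projection-formula argument is blocked, and the essential trick is to track $\wedge^\bullet E_{k-1,L}$ and $\wedge^\bullet E_{k-1,L}^\vee$ in tandem, exploiting Lemma \ref{identities} to relate the two.
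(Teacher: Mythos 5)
Your arguments for Part (2), $\wedge^j$ (the two-step filtration from the second sequence) and for Part (1), $j=1$ are correct. For Part (1), $S^j$, however, the ``$(j+1)$-step filtration'' you invoke is the wrong tool and faces exactly the obstacle you identify for $\wedge^j$; what works is the Koszul/Eagon--Northcott resolution of $S^j$ of the \emph{quotient} of the first sequence. Since the subsheaf $M_{k-1,k,L}$ is a line bundle, that resolution collapses to the two terms
\[
0 \lra M_{k-1,k,L} \otimes \rho^* S^{j-1} E_{k,L} \lra \rho^* S^j E_{k,L} \lra \tau^* S^j E_{k-1,L} \lra 0,
\]
and both left terms have vanishing $R^{\geq 1}\rho_*$ by the projection formula and Lemma \ref{direct-image-res-r-k-1-k}. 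This is the mirror image of your two-step $\wedge^j M_{k-1,L}$ argument (there the rank-one piece is the quotient of the second sequence, so the \emph{filtration} is two-step; here it is the subsheaf of the first sequence, so the \emph{resolution} of the quotient is two-step). These two cases, $S^j E_{k-1,L}$ and $\wedge^j M_{k-1,L}$, are the ones that really are immediate from the Lemma and the only ones the paper invokes downstream.

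The serious gap is the joint induction for $\wedge^j E_{k-1,L}$, $j\geq 2$ (and, implicitly, the sketchy Eagon--Northcott claim for $S^j M_{k-1,L}$): it is circular. Tensoring the two-step filtration of $\wedge^m \rho^* E_{k,L}^\vee$ from the dualized sequence by $\rho^* N_{k,L}$, using $N_{k,L}\otimes\wedge^m E_{k,L}^\vee \cong \wedge^{k-m}E_{k,L}$ and $N_{k-1,L}\otimes\wedge^{m-1}E_{k-1,L}^\vee\cong\wedge^{k-m}E_{k-1,L}$, and substituting $M_{k-1,k,L}\cong\rho^* N_{k,L}\otimes\tau^* N_{k-1,L}^{-1}$ from Lemma \ref{identities}(1), yields \emph{exactly} the original filtration sequence $0\to M_{k-1,k,L}\otimes\tau^*\wedge^{j'-1}E_{k-1,L}\to\rho^*\wedge^{j'}E_{k,L}\to\tau^*\wedge^{j'}E_{k-1,L}\to 0$ with $j'=k-m$. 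The ``dual'' sequence is the original one re-indexed, so the induction never closes. These two remaining cases actually require the full $(j+1)$-term Koszul resolution $\cdots\to M_{k-1,k,L}^{\otimes a}\otimes\rho^*\wedge^{j-a}E_{k,L}\to\cdots$ together with the sharper vanishing $R^q\rho_*(M_{k-1,k,L}^{\otimes a})=0$ for $q\geq a+1$, which is \emph{not} what Lemma \ref{direct-image-res-r-k-1-k} asserts (it covers $a\leq 1$). That sharper statement is true, and follows from the same fiberwise formal-functions computation in the Lemma's proof by checking $H^q(\mathbb{P}^{b_2-1}, S^{\ell}N_{G_\zeta/X^{[k-1,k]}}^\vee(-a))=0$ for $q\geq a+1$, but it is an input you would need to supply and which neither your duality trick nor your Eagon--Northcott remark provides.
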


\begin{proof}
Consider the symmetric or wedge powers of the following short exact sequences     
    \begin{align*}
    &0\lra \operatorname{res}^* L(-F_{k-1})\lra
    \rho^* E_{k,L}\lra
    \tau^* E_{k-1,L}\lra 0,\\
    &0\lra \rho^* M_{k,L}\lra \tau^* M_{k-1,L}\lra \operatorname{res}^*L (-F_{k-1})\lra 0.
    \end{align*}
Then the assertions are immediate from  Lemma \ref{direct-image-res-r-k-1-k}.
\end{proof}

The following result is important for inductive purposes.

\begin{proposition}\label{direct-summand-E-k-L}
The vector bundle $E_{k,L}^{\otimes j}$ is a direct summand of $\rho_{*} (\tau^* E_{k-1,L}^{\otimes j})$ for all $1\leq j\leq k-1$.
In particular, in this range $S^j E_{k,L}$ is a direct summand of $\rho_* \tau^* S^j E_{k-1,L}$.
\end{proposition}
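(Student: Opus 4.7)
The plan is to realize $E_{k,L}^{\otimes j}$ as a direct summand of $\rho_*\bigl(\tau^* E_{k-1,L}^{\otimes j}\bigr)$ by constructing two canonical morphisms $\phi \colon E_{k,L}^{\otimes j} \to \rho_*\bigl(\tau^* E_{k-1,L}^{\otimes j}\bigr)$ and $\psi \colon \rho_*\bigl(\tau^* E_{k-1,L}^{\otimes j}\bigr) \to E_{k,L}^{\otimes j}$ whose composition $\psi \circ \phi$ is a global isomorphism of $E_{k,L}^{\otimes j}$. First I would define $\phi$ by adjunction from the natural surjection $\rho^* E_{k,L}^{\otimes j} \twoheadrightarrow \tau^* E_{k-1,L}^{\otimes j}$, obtained as the $j$-th tensor power of the surjection in the short exact sequence $0 \to \operatorname{res}^* L(-F_{k-1}) \to \rho^* E_{k,L} \to \tau^* E_{k-1,L} \to 0$ from Section 3.2.

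The map $\psi$ I would construct via a trace argument. Since $X^{[k-1,k]}$ and $X^{[k]}$ are both smooth of the same dimension under our standing assumption that $n \leq 2$ or $k \leq 3$, and $\rho$ is finite surjective, by miracle flatness $\rho$ is in fact finite flat and admits a trace map $\rho_* \mathcal{O}_{X^{[k-1,k]}} \to \mathcal{O}_{X^{[k]}}$. Using finite flat base change and Künneth, I would identify both $E_{k,L}^{\otimes j}$ and $\rho_*\tau^* E_{k-1,L}^{\otimes j}$ as pushforwards to $X^{[k]}$ of $L^{\boxtimes j}$-twists of the structure sheaves of, respectively, the $j$-fold fiber product of the universal family $\mathcal{Z}_k$ over $X^{[k]}$ and the $j$-fold fiber product of $\mathcal{V}_{k-1,k}$ over $X^{[k-1,k]}$ (then pushed down via $\rho$). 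The canonical finite surjective morphism between these fiber products then has a reduced trace, and I would define $\psi$ by pushing this trace forward.

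To verify that $\psi \circ \phi$ is an isomorphism, I would restrict to a fiber over a general reduced point $[\eta] \in X^{[k]}$ with $\eta = \{p_1, \ldots, p_k\}$. Under the identification $(E_{k,L}^{\otimes j})_{\eta} \cong \bigl(\bigoplus_{a=1}^{k} L_{p_a}\bigr)^{\otimes j}$ with basis of pure tensors indexed by $I = (i_1, \ldots, i_j) \in \{1, \ldots, k\}^j$, the fiber of the finite surjective morphism between the two fiber products above the point $(\eta, p_{i_1}, \ldots, p_{i_j})$ consists of the $k - |\{i_1, \ldots, i_j\}|$ choices of length-$(k-1)$ subscheme $\xi \subset \eta$ containing $\{p_{i_m}\}$. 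A direct bookkeeping then shows that $\psi \circ \phi$ acts on the basis vector $e_I$ by multiplication by $k - |\{i_1, \ldots, i_j\}|$. For $j \leq k-1$ we have $|\{i_1, \ldots, i_j\}| \leq j \leq k-1$, so the eigenvalues of $\psi \circ \phi$ lie in $\{1, 2, \ldots, k-1\}$; as these fixed nonzero integers form a spectrum independent of $\eta$, the endomorphism $\psi \circ \phi$ is diagonalizable with constant spectrum, and its inverse is realized as a polynomial in $\psi \circ \phi$ via Lagrange interpolation. Hence $\phi$ is split injective, giving the desired direct summand.

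For the ``in particular'' statement, both $\phi$ and $\psi$ are $S_j$-equivariant with respect to the natural permutation of tensor factors. In characteristic zero the invariant subsheaf $S^j E_{k,L} \subseteq E_{k,L}^{\otimes j}$ is a canonical direct summand via the averaging projector $\frac{1}{j!} \sum_{\sigma \in S_j} \sigma$, so restricting $\phi$ and $\psi$ to $S_j$-invariants yields a direct summand inclusion $S^j E_{k,L} \hookrightarrow \rho_*\bigl(\tau^* S^j E_{k-1,L}\bigr)$. I expect the main obstacle to be the rigorous construction of $\psi$ in the second paragraph: the finite surjective morphism between the fiber products need not be flat when $j \geq 2$, so some care is required with the reduced trace, but everything else reduces to a fiberwise linear algebra calculation which globalizes because the spectrum is constant.
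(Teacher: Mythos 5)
Your overall strategy is the right one and matches the paper's: produce $\phi$ and a retraction $\psi$, and verify $\psi\circ\phi$ is an isomorphism via a fiber computation. Your $\phi$ and the fiber-wise eigenvalue count $k-|I|$ for $\psi\circ\phi$ are both correct. However, the construction of $\psi$ in your second paragraph contains a genuine gap, and it is precisely the point where the paper does all the work.

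Two factual errors underlie the gap. First, $\rho\colon X^{[k-1,k]}\to X^{[k]}$ is \emph{not} a finite morphism: over a length-$k$ subscheme $\eta$ whose support has fewer than $k-1$ points, the fiber $\rho^{-1}([\eta])$ has positive dimension (for $n=2$ and $\eta=\operatorname{Spec}(\mathcal{O}_{X,p}/\mathfrak{m}_p^2)$ of length $3$, $\rho^{-1}([\eta])\cong\mathbb{P}^1$). So miracle flatness does not apply, $\rho$ is not flat, and $\rho_*\mathcal{O}_{X^{[k-1,k]}}$ is not locally free; the paper only asserts finiteness and flatness of $\rho$ \emph{after restricting to} $\rho^{-1}(X_*^{[k]})$, whose complement has codimension at least $2$ (see the proof of Lemma~\ref{canonical-bundle}). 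Second, and more seriously, the map $g\circ\iota\colon\mathcal{V}\to\mathcal{Z}$ between your $j$-fold fiber products is then likewise not finite, and --- as the Remark immediately following Proposition~\ref{direct-summand-E-k-L} observes --- even over $\operatorname{pr}_2^{-1}(X_*^{[k]})$, where it \emph{is} finite, $g\circ\iota$ fails to be flat for $j\geq 2$. So the trace you want to take ``with some care'' is the heart of the matter: there is no finite-flat trace, and a putative extension of the generic trace does not obviously exist because $g\circ\iota$ has positive-dimensional fibers over the deep strata (and one would in any case need normality of $\mathcal{Z}$, which is not established).

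What the paper does instead, and what your ``care'' must become, is to avoid tracing $g\circ\iota$ directly and to route through $\mathcal{W}$: (i) exploit the scheme-theoretic containment $\mathcal{V}\cap\overline{(\mathcal{W}\setminus\mathcal{V})}\subseteq(\operatorname{pr}_\mathcal{W}^{-1}F_{k-1})|_\mathcal{V}$ to produce an injection $\iota_*\mathcal{O}_\mathcal{V}\hookrightarrow\mathcal{O}_\mathcal{W}(\operatorname{pr}_\mathcal{W}^{-1}F_{k-1})$; (ii) apply flat base change along $\operatorname{pr}_\mathcal{Z}$ (which \emph{is} finite flat) --- not along $g$ or $\rho$ --- to identify $g_*\mathcal{O}_\mathcal{W}(\operatorname{pr}_\mathcal{W}^*F_{k-1})$ with $\operatorname{pr}_\mathcal{Z}^*\rho_*\mathcal{O}_{X^{[k-1,k]}}(F_{k-1})$; (iii) pull back the trace $\rho_*\mathcal{O}_{X^{[k-1,k]}}(F_{k-1})=\rho_*\omega_{X^{[k-1,k]}/X^{[k]}}\to\mathcal{O}_{X^{[k]}}$ of Lemma~\ref{lem:O_X^[k]directsummand}, which is available because $\rho$ is proper, even though it is not finite. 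The splitting of $\phi$ is the composition of (i)--(iii). Once you have any such $\psi$ with the fiber-wise eigenvalues you computed, your Lagrange interpolation step is correct but unnecessary: $\det(\psi\circ\phi)$ is a global regular function on the proper connected smooth $X^{[k]}$, hence constant, and it is nonzero because over a general reduced $\eta$ it is a product of the integers $k-|I|\in\{1,\dots,k-1\}$; so $\psi\circ\phi$ is already an isomorphism.
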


\begin{proof}
Recall from \S\ref{nestedHilb} that the universal family 
$$
\mathcal{W}_{k-1,k} = \{(x, \xi \subseteq \eta) \mid x \in \eta\} \subseteq X \times X^{[k-1,k]}
$$
has two components, namely
$$
\mathcal{V}_{k-1,k} = \{(x, \xi \subseteq \eta) \mid x \in \xi\}~~\text{ and }~~
\Gamma_{\operatorname{res}} = \{(x, \xi \subseteq \eta) \mid x \in \xi \setminus \eta \}. 
$$
Note that $\Gamma_{\operatorname{res}}$ is isomorphic to $X^{[k-1,k]}$ via $\operatorname{pr}_2\colon\mathcal{W}_{k-1,k}\lra X^{[k-1,k]}$. From the equality
\[
\mathcal{V}_{k-1,k}\cap \Gamma_{\operatorname{res}} = (\pr_2^{-1} F_{k-1})|_{\Gamma_{\operatorname{res}}}, 
\] 

\noindent we obtain a scheme-theoretic inclusion
$$
\mathcal{V}_{k-1,k}\cap \Gamma_{\operatorname{res}} \subseteq (\pr_2^{-1} F_{k-1} )|_{\mathcal{V}_{k-1,k}}.
$$
Now, consider
$$
\begin{array}{l}
\mathcal{W}:=\mathcal{W}_{k-1,k}\times_{X^{[k-1,k]}} \mathcal{W}_{k-1,k}\times_{X^{[k-1,k]}} \cdots \times_{X^{[k-1,k]}} \mathcal{W}_{k-1,k},\\
\mathcal{V}:=\mathcal{V}_{k-1,k}\times_{X^{[k-1,k]}} \mathcal{V}_{k-1,k}\times_{X^{[k-1,k]}} \cdots \times_{X^{[k-1,k]}} \mathcal{V}_{k-1,k},\text{ and}\\
\mathcal{Z}:=\mathcal{Z}_k\times_{X^{[k]}} \mathcal{Z}_k\times_{X^{[k]}} \cdots \times_{X^{[k]}} \mathcal{Z}_k,
\end{array}
$$
where the products are taken $j$ times. These schemes fit a commutative diagram
\begin{center}
    \begin{tikzcd}
     \mathcal{V} \ar[r, hookrightarrow, "\iota"]& \mathcal{W} \ar[r, "\operatorname{pr}_{\mathcal{W}}"]\ar[d, "g"]& X^{[k-1,k]}\ar[d, "\rho"]\\
       & \mathcal{Z} \ar[r,"\operatorname{pr}_{\mathcal{Z}}"]& X^{[k]}
    \end{tikzcd},
    \end{center}
where $\iota$ is a closed embedding and the right square is Cartesian. Notice that $\pr_{\mathcal{W}}$, $\pr_{\mathcal{W}} \circ \iota$, and $\pr_{\mathcal{Z}}$ are finite flat and therefore $\mathcal{W}$, $\mathcal{V}$, and $\mathcal{Z}$ are Cohen--Macaulay by the miracle flatness theorem. The schemes $\mathcal{W}$, $\mathcal{V}$, and $\mathcal{Z}$ are generically reduced, and hence, they are reduced. 
We have the identifications
$$
\begin{array}{l}
\mathcal{W}\cong\{(x_1, \ldots, x_j, \xi \subseteq \eta) \mid x_1, \ldots, x_j \in \eta\} \subseteq X^j \times X^{[k-1,k]},\\
\mathcal{V}\cong\{(x_1, \ldots, x_j, \xi \subseteq \eta) \mid x_1, \ldots, x_j \in \xi\} \subseteq X^j \times X^{[k-1,k]},\text{ and}\\
\mathcal{Z}\cong\{(x_1, \ldots, x_j , \eta) \mid x_1, \ldots, x_j \in \eta) \subseteq X^j \times X^{[k]}.
\end{array}
$$
In light of the above, we have
$$
\mathcal{V} \cap \overline{(\mathcal{W} \setminus \mathcal{V})} \subseteq (\pr_{\mathcal{W}}^*F_{k-1})|_{\mathcal{V}}.
$$
From the sequence of inclusions
$$
\iota_* \mathcal{O}_{\mathcal{V}} (-(\pr_{\mathcal{W}}^{-1}F_{k-1})|_{\mathcal{V}}) \lra \mathcal{I}_{\mathcal{V} \cap \overline{(\mathcal{W} \setminus \mathcal{V})}/\mathcal{W} } \lra \mathcal{O}_{\mathcal{W}},
$$
we get an injective map
$$
\iota_* \mathcal{O}_{\mathcal{V}} \lra \mathcal{O}_{\mathcal{W}}(\pr_{\mathcal{W}}^{-1} F_{k-1}).
$$
When $j \leq k-1$, the map $g \circ \iota \colon \mathcal{V} \lra \mathcal{Z}$ is surjective. Then we obtain a sequence of injective maps
\begin{equation}\label{eq:seqinjfromZtoW}
\mathcal{O}_{\mathcal{Z}} \lra g_* \iota_* \mathcal{O}_{\mathcal{V}} \lra g_* \mathcal{O}_{\mathcal{W}}(\pr_{\mathcal{W}}^* F_{k-1}).
\end{equation}
From the above Cartesian diagram, we find that
$$
g_* \mathcal{O}_{\mathcal{W}}(\pr_{\mathcal{W}}^* F_{k-1,k}) \cong \pr_{\mathcal{Z}}^* \rho_* \mathcal{O}_{X^{[k-1,k]}}(F_{k-1}).
$$
As $\rho_* \mathcal{O}_{X^{[k-1,k-]}}(F_{k-1}) \cong \rho_* \omega_{X^{[k-1,k]}/X^{[k]}}$, there is a trace map $\rho_* \mathcal{O}_{X^{[k-1,k]}}(F_{k-1}) \lra \mathcal{O}_{X^{[k]}}$ (see Lemma \ref{lem:O_X^[k]directsummand}). Pulling back this trace map to $\mathcal{Z}$ and composing with (\ref{eq:seqinjfromZtoW}), we get a sequence of maps
$$
\mathcal{O}_{\mathcal{Z}} \lra g_* \iota_* \mathcal{O}_{\mathcal{V}} \lra g_* \mathcal{O}_{\mathcal{W}}(\pr_{\mathcal{W}}^* F_{k-1,k}) \lra \mathcal{O}_{\mathcal{Z}}.
$$
The whole composition is the identity, thus the injective map $\mathcal{O}_{\mathcal{Z}} \lra g_* \iota_* \mathcal{O}_{\mathcal{V}}$ splits. In particular, $\mathcal{O}_{\mathcal{Z}}$ is a direct summand of $g_* j_* \mathcal{O}_{\mathcal{V}}$. The  proposition now follows by twisting this splitting injective map with $\pr_1^* (L^{\boxtimes j})$, where $\pr_1 \colon \mathcal{Z}^j \lra X^j$ is the projection map, and pushing forward to $X^{[k]}$ via $\pr_{\mathcal{Z}}$. In fact, there are natural isomorphisms
$$
\pr_{\mathcal{Z},*} \pr_1^* (L^{\boxtimes j}) \cong E_{k,L}^{\otimes j}~~\text{ and }~~\pr_{\mathcal{W},*} \iota_*  g^* \pr_1^* (L^{\boxtimes j}) \cong \tau^* E_{k-1,L}^{\otimes j}
$$
by the K\"{u}nneth formula. Since the splitting map $E_{k,L}^{\otimes j} \lra \rho_* (\tau^* E_{k-1,L}^{\otimes j})$ is equivariant under the permuting action of the symmetric group $S_j$, the last assertion also holds. 
\end{proof}

\begin{remark}
    The case $j=1$ of Proposition \ref{direct-summand-E-k-L} can be also obtained by considering the trace map relative to the morphism $g\circ \iota\colon \mathcal{V}_{k-1,k}\lra \mathcal{Z}_k$, which expresses $\mathcal{O}_{\mathcal{Z}_k}$ as a direct summand of $g_* \iota_* \mathcal{O}_{\mathcal{V}_{k-1,k}}$ right away. In order to construct this trace map, however, it is important that $g\circ \iota$ is flat above $\operatorname{pr}_2 ^{-1} (X_* ^{[k]})$, so that one may apply \cite[Definition 5.6]{Kollar-Mori}. This approach fails for $j\geq 2$, since $g\circ \iota\colon \mathcal{V}\lra \mathcal{Z}$ is no longer flat on $\operatorname{pr}_2 ^{-1}(X_* ^{[k]})$.
\end{remark}

\begin{remark}\label{remark-not-compatible}
Consider the commutative diagram
\begin{equation}\label{eq:splitting-noncompatible-diagram}
\hbox{
    \begin{tikzcd}
        H^0(X,L)\otimes \mathcal{O}_{X^{[k]}}\ar[r]\ar[d]& E_{k,L}\ar[d]\\
        H^0(X,L)\otimes \rho_* \mathcal{O}_{X^{[k-1,k]}}\ar[r]& \rho_* \tau^* E_{k-1,L}
    \end{tikzcd}.
    }
\end{equation}
Proposition \ref{direct-summand-E-k-L} shows that $E_{k,L}\lra \rho_* \tau^* E_{k-1,L}$ splits. Of course, the map $\mathcal{O}_{X^{[k]}}\lra \rho_* \mathcal{O}_{X^{[k-1,k]}}$ also splits via the trace. These two splitting, however, are not compatible. For example, if $k=2$ and $L$ is a globally generated line bundle that is not very ample, then the bottom horizontal map is surjective but the top horizontal map is not. This happens because the trace splittings of the diagram
\begin{center}
    \begin{tikzcd}
        \mathcal{O}_{\mathcal{Z}_k}\ar[r,equals]\ar[d]& \mathcal{O}_{\mathcal{Z}_k}\ar[d]\\
        g_* \mathcal{O}_{\mathcal{W}_{k-1,k}}\ar[r] &g_* \mathcal{O}_{\mathcal{V}_{k-1,k}}
    \end{tikzcd}
\end{center}
are not compatible either. That is, the restriction of the trace map $g_* \mathcal{O}_{\mathcal{W}_{k-1,k}}\lra \mathcal{O}_{\mathcal{Z}_k}$ to $g_* \mathcal{I}_{\mathcal{V}_{k-1,k}/\mathcal{W}_{k-1,k}}$ is not zero.
As a consequence, one may further argue that the vector bundle $\wedge^j M_{k,L}$ may not be a direct summand of $\rho_* \tau^* \wedge^j M_{k-1,L}$ for $j \geq 1$. For instance, take $X$ to be a smooth projective curve of genus $g \geq 1$ and $L$ a very ample line bundle on $X$, and consider again the case $k=2$ and $j=1$. The right vertical map in (\ref{eq:splitting-noncompatible-diagram}) is an isomorphism. Taking kernels of the two horizontal maps of (\ref{eq:splitting-noncompatible-diagram}), we get a short exact sequence
$$
0 \lra M_{2,L} \lra \rho_* \tau^* M_{1,L} \lra H^0(X, L) \otimes \delta_2^{-1} \lra 0.
$$
Pick any point $x$ in $X$. Regarding $x$ as a divisor of degree 1, we have $H^0(X, M_{1,L}(x))=0$, so $H^0(X^{[1,2]}, \tau^* M_{1,L}(F_1))=0$. Taking pushforward via $\rho$, we obtain $H^0(X^{[2]}, \rho_* \tau^* M_{1,L} \otimes \delta_2)=0$. If the left vertical short exact sequence were to split, then $H^0(X^{[2]}, \rho_* \tau^* M_{1,L} \otimes \delta_2) \lra X^0(X, L)$ would be surjective, but this is a contradiction.
\end{remark}

\begin{proposition}\label{main-direct-image-vanishing-S^jE}
Assume that $\dim (X)= 2$. Then   $R^i h_{*} (S^j E_{k,L} ^\vee \otimes \delta_k ^{-\ell})=0$ for $i \geq 1, \;0 \leq j \leq i$ and $\ell \in \{ 0,1\}$. 
\end{proposition}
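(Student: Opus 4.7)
I would proceed by induction on $k$. The base case $k = 1$ is immediate, since $X^{[1]} = X = \operatorname{Sym}^1(X)$ and $h$ is the identity, so all higher direct images vanish.

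For the inductive step $k \geq 2$, the idea is to reduce the vanishing for $h \colon X^{[k]} \to \operatorname{Sym}^k(X)$ to the analogous one for $h_{k-1} \colon X^{[k-1]} \to \operatorname{Sym}^{k-1}(X)$ by passing through the nested Hilbert scheme $X^{[k-1,k]}$. The key geometric input is the factorization
\[
h \circ \rho \;=\; a \circ (\operatorname{id}_X \times h_{k-1}) \circ \operatorname{bl},
\]
where $\operatorname{bl} \colon X^{[k-1,k]} \to X \times X^{[k-1]}$ is the blow-up along $\mathcal{Z}_{k-1}$ and $a \colon X \times \operatorname{Sym}^{k-1}(X) \to \operatorname{Sym}^k(X)$ is the (finite) addition morphism. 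I would first dualize Proposition \ref{direct-summand-E-k-L} via Grothendieck duality applied to $\rho$ (noting that, in the surface case, Lemma \ref{canonical-bundle} and Lemma \ref{identities} give $\omega_{\rho} \cong \mathcal{O}_{X^{[k-1,k]}}(F_{k-1})$) to express $S^j E_{k,L}^{\vee}$ as a direct summand of $\rho_*(\tau^* S^j E_{k-1,L}^{\vee} \otimes \mathcal{O}(F_{k-1}))$. Applying $R h_*$ and using the analogue of Corollary \ref{direct-image-E-k-L} for the dual-twisted sheaf, the Leray spectral sequence for $h \circ \rho$ collapses, reducing matters to computing $R^i(h \circ \rho)_*$.

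Next I would use the above factorization: since $a$ is finite, $R^{\geq 1} a_* = 0$, and by the K\"unneth / projection formula the higher direct images along $\operatorname{id}_X \times h_{k-1}$ reduce to those of $h_{k-1}$ on the second factor. Combined with Lemma \ref{direct-image-res-r-k-1-k}, the computation thus reduces to the inductive hypothesis applied to $S^a E_{k-1,L}^{\vee} \otimes \delta_{k-1}^{-\ell}$ for integers $a$ with $0 \leq a \leq j \leq i$.

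For the range $j \geq k$ (where the direct-summand statement of Proposition \ref{direct-summand-E-k-L} no longer applies), I would instead use the dualized short exact sequence
\[
0 \to \tau^* E_{k-1,L}^{\vee} \to \rho^* E_{k,L}^{\vee} \to \operatorname{res}^* L^{-1}(F_{k-1}) \to 0
\]
to equip $\rho^* S^j E_{k,L}^{\vee}$ with a filtration whose graded pieces are $\tau^* S^a E_{k-1,L}^{\vee} \otimes \operatorname{res}^* L^{-(j-a)}((j-a) F_{k-1})$, and combine with Lemma \ref{identities}(2) expressing $\rho^* \delta_k^{-\ell} \otimes \mathcal{O}(F_{k-1})$ in terms of $\tau^* \delta_{k-1}^{-\ell}$ and an appropriate power of $F_{k-1}$. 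The main obstacle is the bookkeeping for this filtration: one must control the spectral sequence so that the hypothesis $j \leq i$ ensures that every term contributing to $R^i h_*$ lies within the admissible range of the inductive hypothesis. The delicate step is handling higher powers of $F_{k-1}$ appearing after the twist, which should be controlled via the Grauert--Riemenschneider-type vanishing of Lemma \ref{standard-grauert-riemenschneider} together with the relative-canonical identification $\omega_\rho \cong \mathcal{O}(F_{k-1})$.
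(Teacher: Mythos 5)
Your proposal has the right skeleton --- induction on $k$, the reduction through $X^{[k-1,k]}$, and the factorization of $h\circ\rho$ --- but it diverges from the paper's argument in two places that matter, and both create genuine gaps.

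The first gap is an omission. You never invoke Brian\c{c}on's bound: for a surface, the fibers of the Hilbert--Chow morphism $h\colon X^{[k]}\to\operatorname{Sym}^k(X)$ have dimension at most $k-1$, so $R^ih_*$ vanishes automatically for $i\geq k$. Combined with the standing hypothesis $j\leq i$, this forces $j\leq k-1$, which is precisely the range in which Proposition \ref{direct-summand-E-k-L} applies. The paper flags this explicitly (``Notice that $0\leq j\leq k-1$''). Consequently the entire second paragraph of your proposal --- the filtration argument for ``the range $j\geq k$'' --- is addressing a vacuous case; and you acknowledge in any event that you could not push it through.

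The second gap is in the first paragraph, and is the more serious one. The Grothendieck-duality maneuver you propose requires, in order for Leray to collapse, a vanishing of the form $R^q\rho_*\bigl(\tau^*S^jE_{k-1,L}^{\vee}\otimes\mathcal{O}((1-\ell)F_{k-1})\bigr)=0$ for $q\geq1$. This is not supplied by Lemma \ref{direct-image-res-r-k-1-k} (which covers $\operatorname{res}^*B$ and $\operatorname{res}^*B(-F_{k-1})$ only) nor by Corollary \ref{direct-image-E-k-L} (which covers $\tau^*$ of undualized symmetric and wedge powers); the positive twist by $F_{k-1}$ is exactly what these lemmas avoid, and there is no reason to expect it to behave well under $\rho_*$. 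The paper sidesteps this completely: it first applies Lemma \ref{lem:standfactforhdi} to convert the statement about $R^ih_*$ into a global vanishing $H^i(X^{[k]}, S^jE_{k,L}^{\vee}\otimes T_{k,H}\otimes\delta_k^{-\ell})=0$ for $H$ sufficiently positive, and then applies Serre duality on $X^{[k]}$. Because $n=2$ gives $\omega_{X^{[k]}}\cong T_{k,\omega_X}$ with no $\delta_k$-factor (Lemma \ref{canonical-bundle}), Serre duality removes the dual on $S^jE_{k,L}^{\vee}$ and flips $\delta_k^{-\ell}$ to $\delta_k^{\ell}$ simultaneously. After that, Proposition \ref{direct-summand-E-k-L}, Lemma \ref{identities}, the bound $R^q\operatorname{bl}_*\mathcal{O}(\ell F_{k-1})=0$ for $\ell\in\{0,1\}$ (Lemma \ref{standard-grauert-riemenschneider}), K\"unneth, and a second Serre duality on $X^{[k-1]}$ land exactly on the inductive hypothesis --- all while only ever invoking $R^q\rho_*$-vanishing in the forms already established in Corollary \ref{direct-image-E-k-L}. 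Your route would need new vanishing lemmas that the paper neither proves nor needs.
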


\begin{proof}
We proceed by induction on $k$. The case $k=1$ is immediate since $h$ is the identity map. Assume $k \geq 2$. Recall that the dimension of the fibers of $h_k$ is bounded above by $k-1$  (see \cite{Briancon}). By Lemma \ref{lem:standfactforhdi}, it suffices to check
\[
H^i(X^{[k]}, S^j E_{k,L} ^\vee \otimes T_{k,H} \otimes \delta_k^{-\ell})=0~~\text{ for $1 \leq i \leq k-1$ and $0 \leq j \leq i$},
\]
where $H$ is a sufficiently positive line bundle on $X$. Notice that $0 \leq j \leq k-1$. 
By Serre duality, we have
\[
H^i(X^{[k]}, S^j E_{k,L} ^\vee \otimes T_{k,H}\otimes \delta_{k}^{-\ell})\cong
H^{2k-i}(X^{[k]}, S^j E_{k,L}\otimes T_{k,\omega_X\otimes H^{-1}} \otimes \delta_k^{\ell})^\vee.
\]
Since $S^j E_{k,L}$ is a direct summand of $\rho_* \tau^* S^j E_{k-1,L}$ by Proposition \ref{direct-summand-E-k-L}, it is enough to show
\[
H^{2k-i}(X^{[k]}, \rho_{*} \tau^* S^j E_{k-1,L}\otimes T_{k,\omega_X\otimes H^{-1}}\otimes \delta_k^{\ell})=0~~\text{ for $1\leq i\leq k-1$ and $0\leq j\leq i$}.
\]
By Lemma \ref{identities}, we have
\[
\rho^* (T_{k,\omega_X\otimes H^{-1}}\otimes \delta_k^{\ell}) \cong \tau^* (T_{k-1,\omega_X\otimes H^{-1}}\otimes \delta_{k-1}^{\ell}) \otimes \operatorname{res}^* (\omega_X\otimes H^{-1}) (\ell F_{k-1}).
\]
Using Corollary \ref{direct-image-E-k-L}, the above vanishes if
\[
H^{2k-i}(X^{[k-1,k]}, \tau^* (S^j E_{k-1,L}\otimes T_{k-1,\omega_X\otimes H^{-1}}\otimes \delta_{k-1}^{\ell}) \otimes \operatorname{res}^* (\omega_X\otimes H^{-1}) ( \ell F_{k-1}))=0.
\]
Since $R^q \operatorname{bl}_{*} \mathcal{O}_{X^{[k-1,k]}}(\ell F_{k-1})=0$ for all $q\geq 1$ by Lemma \ref{standard-grauert-riemenschneider}, it is enough to show 
\[
H^{2k-i}(X\times X^{[k-1]}, (\omega_X \otimes H^{-1}) \boxtimes
(S^j E_{k-1,L}\otimes T_{k-1,\omega_X\otimes H^{-1}}\otimes \delta_{k-1}^{\ell}))=0.
\]
By K\"unneth formula, this reduces to 
\[
H^2(X,\omega_X \otimes H^{-1})\otimes H^{2k-i-2}(X^{[k-1]}, S^j E_{k-1,L}\otimes T_{k-1,\omega_X\otimes H^{-1}}\otimes \delta_{k-1}^{\ell})=0.
\]
Serre duality gives 
\[
H^{2k-i-2}(X^{[k-1]}, S^j E_{k-1,L}\otimes T_{k-1,\omega_X\otimes H^{-1}}\otimes \delta_{k-1}^{\ell})\cong
H^i(X^{[k-1]}, S^j E_{k-1,L} ^\vee \otimes T_{k-1,H}\otimes \delta_k ^{-\ell})^{\vee},
\]
but the latter cohomology vanishes for $i \geq 1$ and $0 \leq j \leq i$ by inductive hypothesis.
\end{proof}

\begin{remark}
Note that we do not impose any positivity condition on $L$ in Proposition \ref{direct-summand-E-k-L} and Proposition \ref{main-direct-image-vanishing-S^jE}. A similar phenomenon was also observed in \cite[Proposition 6.1]{Agostini}. Furthermore, Proposition \ref{main-direct-image-vanishing-S^jE} for $0 \leq j \leq 2$ and $\ell=0$ was obtained in \cite[Lemma 6.3]{Agostini}.
\end{remark}

\begin{proof}[Proof of Theorem \ref{main-direct-image-vanishing}]
When $n=1$ or $k=1$, the Hilbert--Chow morphism $h$ is the identity map. Thus we only need to consider the cases $n \geq 2$ or $k \geq 2$.
First, assume that $n=2$. The short exact sequence
$$
0 \lra E_{k,L}^{\vee} \lra H^0(X, L)^{\vee} \otimes \mathcal{O}_{X^{[k]}} \lra M_{k,L}^{\vee} \lra 0
$$
gives a long exact sequence
$$
\cdots \lra S^2 E_{k,L}^\vee \otimes V_2 \lra E_{k,L}^\vee \otimes V_1\lra V_0 \lra \wedge^{r_k-j} M_{k,L}^{\vee}\lra 0,
$$
where the $V_m$ are trivial vector bundles for all $m \geq 0$ and $r_k=\operatorname{rank}(M_{k,L})$. Note that $\wedge^{r_k-j} M_{k,L}^{\vee} \cong \wedge^j M_{k,L}\otimes N_{k,L}$. We may then conclude  by a diagram chase and Proposition \ref{main-direct-image-vanishing-S^jE}.

Next, assume that $n \geq 3$. Then $k=2$ or $k=3$. Note that the natural addition morphism $a_k \colon X\times \operatorname{Sym}^{k-1} (X)\lra \operatorname{Sym}^k X$ is finite. Let $h_{k-1,k} \colon X^{[k-1,k]} \lra X \times \operatorname{Sym}^{k-1}(X)$ be the composition
$$
X^{[k-1,k]} \xrightarrow{~\bl~} X \times X^{[k-1]} \xrightarrow{\operatorname{id}_X \times h} X \times \operatorname{Sym}^{k-1}(X).
$$
We have a commutative diagram 
\begin{center}
\begin{tikzcd}
X^{[k-1,k]}\ar[r,"h_{k-1,k}"] \ar[d,"\rho"] & X\times \operatorname{Sym}^{k-1}(X)\ar[d,"a_k"]\\
X^{[k]} \ar[r,"h"] & \operatorname{Sym}^k(X)
\end{tikzcd}.
\end{center}
Recall that $\rho^* \delta_k = \bl^*(\mathcal{O}_X \boxtimes \delta_{k-1})(F_{k-1})$. 
By Lemma \ref{lem:O_X^[k]directsummand}, the vector bundle $\wedge^j M_{k,L} \otimes \delta_k^{-\ell}$ is a direct summand of $\rho_*(\rho^* \wedge^j M_{k,L} \otimes \bl^*(\mathcal{O}_X \boxtimes \delta_{k-1}^{-\ell})(-F_{k-1}))$. 
Projection formula, the Grauert--Riemenschneider vanishing theorem and Lemma \ref{direct-image-res-r-k-1-k} show
$$
R^i \rho_*(\rho^* \wedge^j M_{k,L} \otimes \bl^*(\mathcal{O}_X \boxtimes \delta_{k-1}^{-\ell})(-F_{k-1})) = 0~~\text{ for $i \geq 1$}. 
$$
In view of this vanishing, the above commutative diagram and the fact that $a_k$ is finite, it suffices to prove
\[
R^i h_{k-1,k,*} (\rho^* \wedge^j M_{k,L} \otimes \operatorname{bl}^*(\mathcal{O}_X\boxtimes \delta_{k-1}^{-\ell})(-F_{k-1}))=0~~\text{ for $i\geq 1$}.
\]
We proceed by induction on $k$. As the case of $k=1$ is trivial, we assume $k \geq 2$. 
Form the short exact sequence
$$
0 \lra \rho^* M_{k,L} \lra \bl^*(\mathcal{O}_X \boxtimes M_{k-1,L}) \lra \bl^*(L \boxtimes \mathcal{O}_{X^{[k-1]}})(-F_{k-1}) \lra 0,
$$
we get a long exact sequence
$$
\cdots \lra \bl^*(L^{-2} \boxtimes \wedge^{j+2} M_{k-1,L})(F_{k-1}) \lra \bl^*(L^{-1} \boxtimes \wedge^{j+1} M_{k-1,L}) \lra \rho^* \wedge^j M_{k,L}(-F_{k-1}) \lra 0.
$$
Note that if $0 \leq m \leq n-1$, then $\bl_* \mathcal{O}_{X^{[k-1,k]}}(mF_{k-1}) = \mathcal{O}_{X \times X^{[k-1]}}$ and $R^i \bl_* \mathcal{O}_{X^{[k-1,k]}}(mF_{k-1}) = 0$ for $i\geq 1$. As every fiber of $\bl$ has dimension at most $n-1$, the diagram chase shows 
$$
R^i \bl_* \rho^* \wedge^j M_{k,L}(-F_{k-1})=0~~\text{ for $i\geq 1$.}
$$
Thus we have reduced the problem to
$$
R^i (\operatorname{id}_X \times h)_* (\bl_* \rho^* \wedge^j M_{k,L}(-F_{k-1}) \otimes (\mathcal{O}_X \boxtimes \delta_{k-1}^{-\ell})) = 0~~\text{ for $i \geq 1$.}
$$
We have a long exact sequence
$$
\cdots \lra L^{-2} \boxtimes \wedge^{j+2} M_{k-1,L} \lra L^{-1} \boxtimes \wedge^{j+1} M_{k-1,L} \lra \bl_* \rho^* \wedge^j M_{k,L}(-F_{k-1}) \lra 0.
$$
By inductive hypothesis and K\"unneth formula, we have
\[
R^i (\operatorname{id}\times h)_* (L^{-s}\boxtimes (\wedge^j M_{k-1,L} \otimes \delta_{k-1}^{-\ell})) = 0~~\text{ for $1 \leq i \leq n-1$ and $s \geq 1$}.
\]
As every fiber of $\operatorname{id}_X \times h$ has dimension at most $n-1$, a diagram chase shows the desired vanishing of higher direct images.
\end{proof}

\subsection{Main global vanishing}
Here we prove a global counterpart to 
Theorem \ref{main-direct-image-vanishing}. The first step in this direction is to relate the cohomology of the universal families to the cohomology of the syzygy bundle $M_{k,L}$. This relation is well-known for $k=1$ 
(see \cite[Theorem 3.6, Remark 3.8]{Bangere.Lacini.24} and \cite[Proposition 1.1]{Ein.Lazarsfeld.Yang}).
We briefly set up the following notation. Let $\pr_{i,j+1} \colon X^j \times X^{[k]} \lra X \times X^{[k]}$ be the map given by $(x_1, \ldots, x_j, \xi) \longmapsto (x_i, \xi)$ for $1 \leq i \leq j$, and let $\mathcal{I}_{\mathcal{Z}_k}$ be the ideal sheaf of $\mathcal{Z}_k$ in $X \times X^{[k]}$. 

\begin{lemma}\label{diagonal-syzygy-bundle}
Suppose that $L$ separates $k$-schemes and $H^i(X, L)=0$ for $i \geq 0$. Then we have
$$
R^i \operatorname{pr}_{j+1,*}(\operatorname{pr}_{1,j+1}^* \mathcal{I}_{\mathcal{Z}_k} \otimes \cdots \otimes \operatorname{pr}_{j,j+1}^* \mathcal{I}_{\mathcal{Z}_k} \otimes
(L^{\boxtimes j}\boxtimes \mathcal{O}_{X^{[k]}}))= \begin{cases} M_{k,L}^{\otimes j} & \text{if $i=0$} \\ 0 & \text{if $i\geq 1$}. \end{cases}
$$
\end{lemma}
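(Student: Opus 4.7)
The plan is to prove this by induction on $j$, reducing at each step to the computation of $Rq_*(\mathcal{I}_{\mathcal{Z}_k}\otimes \pr_1^* L)$ on $X \times X^{[k]}$ and then applying flat base change together with the projection formula. (I will assume the hypothesis should read $H^i(X,L) = 0$ for $i \geq 1$, since the stated $i \geq 0$ would contradict $L$ separating $k$-schemes.)

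For the base case $j=0$, the sheaf in question is $\OO_{X^{[k]}}$ and $\pr_1 \colon X^{[k]} \to X^{[k]}$ is the identity, so the statement is trivial. For $j=1$, I would use the defining short exact sequence
$$
0\lra \mathcal{I}_{\mathcal{Z}_k}\otimes \pr_1^*L \lra \pr_1^*L \lra \pr_1^*L\otimes \OO_{\mathcal{Z}_k}\lra 0
$$
on $X\times X^{[k]}$ and apply $R\pr_{2,*}$. The middle term has $R\pr_{2,*}\pr_1^*L = H^\bullet(X,L)\otimes \OO_{X^{[k]}}$ by K\"unneth/flat base change, which concentrates in degree $0$ by hypothesis on $L$. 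The right term has $R^{\geq 1}\pr_{2,*} = 0$ by finiteness of $\pr_2\vert_{\mathcal{Z}_k}$, with $\pr_{2,*}(\pr_1^*L \otimes \OO_{\mathcal{Z}_k}) = E_{k,L}$ by definition, and the evaluation map $H^0(X,L)\otimes \OO_{X^{[k]}}\twoheadrightarrow E_{k,L}$ is surjective because $L$ separates $k$-schemes. The associated long exact sequence then yields $\pr_{2,*}(\mathcal{I}_{\mathcal{Z}_k}\otimes \pr_1^*L) = M_{k,L}$ and vanishing of higher direct images.

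For the inductive step, let $\pi \colon X^j \times X^{[k]} \to X^{j-1}\times X^{[k]}$ be the projection forgetting the $j$-th copy of $X$, so that $\pr_{j+1} = \pr_j \circ \pi$ (where now $\pr_j$ denotes the projection from $X^{j-1}\times X^{[k]}$ to $X^{[k]}$). Consider the Cartesian diagram
$$
\begin{tikzcd}
X^j\times X^{[k]} \ar[r,"\pr_{j,j+1}"] \ar[d,"\pi"'] & X\times X^{[k]} \ar[d,"\pr_2"]\\
X^{j-1}\times X^{[k]} \ar[r,"\pr_j"] & X^{[k]}
\end{tikzcd}
$$
Writing $\mathcal{F}_j := \bigotimes_{i=1}^j \pr_{i,j+1}^* \mathcal{I}_{\mathcal{Z}_k}\otimes (L^{\boxtimes j}\boxtimes \OO_{X^{[k]}})$, the projection formula gives
$$
R\pi_*\mathcal{F}_j \;\cong\; \pi_*\bigl(\bigotimes_{i=1}^{j-1}\pr_{i,j+1}^*\mathcal{I}_{\mathcal{Z}_k}\otimes(L^{\boxtimes(j-1)}\boxtimes\OO_{X^{[k]}})\bigr)\;\otimes\; R\pi_*\bigl(\pr_{j,j+1}^*(\mathcal{I}_{\mathcal{Z}_k}\otimes \pr_1^*L)\bigr),
$$
and by flat base change along the Cartesian square together with the $j=1$ case, $R\pi_*(\pr_{j,j+1}^*(\mathcal{I}_{\mathcal{Z}_k}\otimes \pr_1^*L)) = \pr_j^* M_{k,L}$ concentrated in degree $0$. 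Since $\pi^*$ commutes with the tensor product of pullbacks, the first factor is canonically identified with $\pi^*\mathcal{F}_{j-1}$, and therefore $R\pi_*\mathcal{F}_j \cong \mathcal{F}_{j-1}\otimes \pr_j^*M_{k,L}$ in degree $0$. Applying $R\pr_{j,*}$, another use of the projection formula together with the inductive hypothesis $R^i\pr_{j,*}\mathcal{F}_{j-1} = 0$ for $i \geq 1$ and $\pr_{j,*}\mathcal{F}_{j-1} = M_{k,L}^{\otimes(j-1)}$ yields $R\pr_{j,*}(\mathcal{F}_{j-1}\otimes \pr_j^*M_{k,L}) = M_{k,L}^{\otimes j}$ concentrated in degree $0$. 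Finally, the Leray spectral sequence for $\pr_{j+1} = \pr_j\circ \pi$ degenerates (both $R\pi_*\mathcal{F}_j$ and $R\pr_{j,*}$ of it are concentrated in degree $0$), giving the claim.

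The computation is essentially routine once it is organized this way; the only point requiring care is the verification that the square above is Cartesian (so that flat base change applies cleanly), and then that the projection formula can be applied to separate the pullback factor $\pr_j^*M_{k,L}$ from the rest so as to feed it to the inductive hypothesis. No genuine obstacle arises beyond bookkeeping.
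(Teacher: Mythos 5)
Your proof is correct and takes essentially the same route as the paper: you handle $j=1$ via the defining short exact sequence, and for $j\geq 2$ you unwind the relative K\"unneth formula over $X^{[k]}$ by hand via flat base change and the projection formula along the successive projections $X^j\times X^{[k]}\to X^{j-1}\times X^{[k]}$, where the paper simply invokes the K\"unneth formula relative to the $j$-fold fiber product of $X\times X^{[k]}$ over $X^{[k]}$. You also correctly spotted the typo in the hypothesis, which should read $H^i(X,L)=0$ for $i\geq 1$.
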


\begin{proof}
The statement for $j=1$ is immediate by definition.
Note furthermore that $M_{k,L}$ is locally free since $L$ separates $k$-schemes.
For $j\geq 2$, we may therefore conclude by K\"unneth formula 
relative to the morphism
\[
(X\times X^{[k]})\times_{X^{[k]}} (X\times X^{[k]})
\times_{X_{[k]}}\cdots \times_{X^{[k]}} (X\times X^{[k]})\lra X^{[k]}
\]

\noindent that the following equality holds:
\begin{align*}
\operatorname{pr}_{j+1,*}(\operatorname{pr}_{1,j+1}^* \mathcal{I}_{\mathcal{Z}_k} \otimes \cdots &\otimes \operatorname{pr}_{j,j+1}^* \mathcal{I}_{\mathcal{Z}_k} \otimes
(L^{\boxtimes j}\boxtimes \mathcal{O}_{X^{[k]}}))=\\
&\operatorname{pr}_{j+1,*}(\operatorname{pr}_{1,j+1}^* \mathcal{I}_{\mathcal{Z}_k} \otimes \operatorname{pr}_1 ^* L) \otimes \cdots \otimes 
\operatorname{pr}_{j+1,*}(
\operatorname{pr}_{j,j+1}^* \mathcal{I}_{\mathcal{Z}_k} \otimes \operatorname{pr}_j ^* L). \qedhere
\end{align*}
\end{proof}

\noindent We will see below in Lemma \ref{subscheme-transversality}
that there is an isomorphism
$$
\operatorname{pr}_{1,j+1}^* \mathcal{I}_{\mathcal{Z}_k} \otimes \cdots \otimes \operatorname{pr}_{j,j+1}^* \mathcal{I}_{\mathcal{Z}_k} 
\cong \mathcal{I}_{\operatorname{pr}_{1,j+1}^{-1}(\mathcal{Z}_k)}  \cdots \mathcal{I}_{\operatorname{pr}_{j,j+1}^{-1}(\mathcal{Z}_k)} 
\cong \mathcal{I}_{\operatorname{pr}_{1,j+1}^{-1}(\mathcal{Z}_k) \cup \cdots \cup \operatorname{pr}_{j,j+1}^{-1}(\mathcal{Z}_k)},
$$

\noindent but we do not need this fact at this point.

The following is the main vanishing theorem (Theorem \ref{intro-vanishing}), which is the culmination of all the above discussions and a key ingredient of our main results.

\begin{theorem}\label{main-vanishing-serre}
Fix an integer $j_0$, and assume that $L$ is sufficiently positive. Then 
$$
H^i(X^{[k]}, \wedge^j M_{k,L}\otimes A_{k,L})=0~~\text{ for $i \geq 1$ and $0 \leq j \leq j_0$.}
$$
\end{theorem}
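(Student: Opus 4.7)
The proof will proceed in two steps: first reduce the cohomology computation from $X^{[k]}$ to $\operatorname{Sym}^k(X)$ via the Hilbert--Chow morphism $h$, and then conclude by Serre--Fujita vanishing on the symmetric product, leveraging the positivity of $S_{k,L}$.

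\textbf{Step 1: Descent to $\operatorname{Sym}^k(X)$.} By Lemma \ref{N&S} we have $A_{k,L}=T_{k,L}\otimes \delta_k^{-2}=h^*S_{k,L}\otimes \delta_k^{-2}$, so the projection formula and Theorem \ref{main-direct-image-vanishing} give
$$
R^i h_*\bigl(\wedge^j M_{k,L}\otimes A_{k,L}\bigr)\ \cong\ R^i h_*\bigl(\wedge^j M_{k,L}\otimes \delta_k^{-2}\bigr)\otimes S_{k,L}\ =\ 0 \quad\text{for } i\geq 1.
$$
The Leray spectral sequence therefore collapses to
$$
H^i\bigl(X^{[k]},\wedge^j M_{k,L}\otimes A_{k,L}\bigr)\ \cong\ H^i\bigl(\operatorname{Sym}^k(X),\, h_*(\wedge^j M_{k,L}\otimes \delta_k^{-2})\otimes S_{k,L}\bigr)
$$
for every $i\geq 0$, and it suffices to show that this right-hand side vanishes for $i\geq 1$ whenever $L$ is sufficiently positive.

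\textbf{Step 2: Serre--Fujita on $\operatorname{Sym}^k(X)$.} The line bundle $S_{k,L}$ is ample on $\operatorname{Sym}^k(X)$ whenever $L$ is ample on $X$, and it becomes arbitrarily positive as $L$ does. I would then invoke Serre--Fujita vanishing to conclude. To handle the fact that the coherent sheaf factor $h_*(\wedge^j M_{k,L}\otimes \delta_k^{-2})$ also varies with $L$, the plan is to write $L=L_0\otimes A^{\otimes N}$ with $L_0$ a fixed reference line bundle and $A$ a fixed very ample line bundle on $X$, so that $S_{k,L}=S_{k,L_0}\otimes S_{k,A}^{\otimes N}$. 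One then unwinds the $L$-dependence of $\wedge^j M_{k,L}\otimes A_{k,L}$ by means of the Koszul-type resolution
$$
\cdots \to \wedge^{r-j-\ell}V^{\vee}\otimes S^\ell E_{k,L}^{\vee}\otimes \delta_k^{-1}\to \cdots \to \wedge^{r-j}V^{\vee}\otimes \delta_k^{-1}\to \wedge^j M_{k,L}\otimes A_{k,L}\to 0
$$
arising from the dual short exact sequence $0\to E_{k,L}^{\vee}\to V^{\vee}\otimes \mathcal{O}_{X^{[k]}}\to M_{k,L}^{\vee}\to 0$ (here $V=H^0(X,L)$ and $r=\operatorname{rank}(M_{k,L})$), and reduces via a hypercohomology spectral sequence to vanishing statements for the simpler sheaves $S^\ell E_{k,L}^{\vee}\otimes \delta_k^{-1}$ tensored with $S_{k,L}$. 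The relative part of this vanishing is already handled by Proposition \ref{main-direct-image-vanishing-S^jE}, and the remaining global vanishing on $\operatorname{Sym}^k(X)$ follows from Serre--Fujita for $S_{k,A}^{\otimes N}$.

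\textbf{Main obstacle.} The crucial technical difficulty is ensuring the uniformity of the Serre--Fujita threshold in $N$: the rank of $\wedge^j M_{k,L}$ and the length of the Koszul resolution both grow with $L$, so $h_*(\wedge^j M_{k,L}\otimes \delta_k^{-2})$ cannot be treated as a fixed coherent sheaf as $L$ varies. The crux will be to extract from the inductive framework developed in Section \ref{section-cohomology-vanishing} a bound on the Castelnuovo--Mumford regularity of this sheaf with respect to $S_{k,A}$ that is independent of $N$, so that a single threshold line bundle $\mathcal{L}_{j_0}$ on $X$ suffices, as demanded by the definition of ``sufficiently positive''.
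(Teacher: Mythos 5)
Your Step 1 is exactly the paper's first move: combine Lemma \ref{N&S}, the projection formula, and Theorem \ref{main-direct-image-vanishing} to collapse the Leray spectral sequence and reduce to the vanishing of $H^i(\operatorname{Sym}^k(X),\,h_*(\wedge^j M_{k,L}\otimes\delta_k^{-2})\otimes S_{k,L})$. Your Step 2 also correctly identifies Fujita vanishing on $\operatorname{Sym}^k(X)$ as the tool, and you correctly identify the central difficulty: $h_*(\wedge^j M_{k,L}\otimes\delta_k^{-2})$ itself varies with $L$, so you cannot naively apply Fujita, which requires a \emph{fixed} coherent sheaf twisted by an increasingly ample line bundle.

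However, your proposal does not actually close this gap; it only names it. Your plan is to run a Koszul-type resolution by the terms $S^{\ell}E_{k,L}^{\vee}\otimes\delta_k^{-1}$ and then, in the ``Main obstacle'' paragraph, you defer to a hypothetical Castelnuovo--Mumford regularity bound for $h_*(\wedge^jM_{k,L}\otimes\delta_k^{-2})$ with respect to $S_{k,A}$ that is independent of $N$. Such a bound is neither proved nor sketched, and it is not clear it can be extracted from the material in Section \ref{section-cohomology-vanishing}: the resolution itself has length growing with $h^0(X,L)$ and its terms still depend on $L$, so the $L$-dependence has not been disentangled.

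The paper resolves the uniformity issue differently and more decisively. It first reduces from $\wedge^j M_{k,L}$ to the direct summand statement for $M_{k,L}^{\otimes j}$, and then invokes Lemma \ref{diagonal-syzygy-bundle}: setting $\mathcal{J}:=\operatorname{pr}_{1,j+1}^*\mathcal{I}_{\mathcal{Z}_k}\otimes\cdots\otimes\operatorname{pr}_{j,j+1}^*\mathcal{I}_{\mathcal{Z}_k}\otimes(\mathcal{O}_{X^j}\boxtimes\delta_k^{-2})$ on $X^j\times X^{[k]}$, one has $M_{k,L}^{\otimes j}\otimes\delta_k^{-2}\cong\operatorname{pr}_{j+1,*}\bigl(\mathcal{J}\otimes(L^{\boxtimes j}\boxtimes\mathcal{O}_{X^{[k]}})\bigr)$ with no higher direct images. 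The point is that $\mathcal{J}$ is a \emph{fixed} coherent sheaf, independent of $L$. Pushing forward to $X^j\times\operatorname{Sym}^k(X)$, the required vanishing becomes $H^i\bigl(X^j\times\operatorname{Sym}^k(X),\,(\operatorname{id}_{X^j}\times h)_*\mathcal{J}\otimes(L^{\boxtimes j}\boxtimes S_{k,L})\bigr)=0$ for $i\geq 1$ and $0\leq j\leq j_0$, and now Fujita's theorem applies directly to the finitely many fixed sheaves $(\operatorname{id}_{X^j}\times h)_*\mathcal{J}$ ($0\leq j\leq j_0$) twisted by the increasingly positive $L^{\boxtimes j}\boxtimes S_{k,L}$, giving a threshold independent of $L$. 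This pullback to the product $X^j\times X^{[k]}$ and the identification via Lemma \ref{diagonal-syzygy-bundle} is the key idea your proposal is missing, and without it (or a genuine substitute such as the regularity bound you allude to, actually proved) the argument does not go through.
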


\begin{proof}
By Theorem \ref{main-direct-image-vanishing}, we have
    \[
    H^i(X^{[k]}, \wedge^j M_{k,L}\otimes A_{k,L})\cong H^i(\operatorname{Sym}^k(X), h_{*}(\wedge^j M_{k,L}\otimes \delta_{k}^{-2})\otimes S_{k,L}).
    \]
Since $\wedge^j M_{k,L}$ is a direct summand of $M_{k,L}^{\otimes j}$, it is enough to show that
\begin{equation}\label{eq:mvt_proof}
H^i(\operatorname{Sym}^k (X), h_{*}(M_{k,L}^{\otimes j} \otimes \delta_k ^{-2})\otimes S_{k,L})=0~~\text{ for $i\geq 1$ and $0 \leq j \leq j_0$}.
\end{equation}
Consider the commutative diagram
\begin{center}
\begin{tikzcd}
    X^j \times X^{[k]}\ar[r, "\operatorname{id}_{X^j}\times h"]\ar[d,  "\operatorname{pr}_{j+1}"]& X^j \times \operatorname{Sym}^k (X) \ar[d,"\operatorname{pr}_2"]\\
    X^{[k]}\ar[r, "h"]& \operatorname{Sym}^k (X)
\end{tikzcd}.
\end{center}
Set $\mathcal{J}:=\operatorname{pr}_{1,j+1}^* \mathcal{I}_{\mathcal{Z}_k} \otimes \cdots \otimes \operatorname{pr}_{j,j+1}^* \mathcal{I}_{\mathcal{Z}_k} \otimes
(\mathcal{O}_{X^j} \boxtimes \delta_{k}^{-2})$. By Lemma \ref{diagonal-syzygy-bundle}, we have
$$
M_{k,L}^{\otimes j}\otimes \delta_k ^{-2} \cong \pr_{j+1,*} \mathcal{J} \otimes (L^{\boxtimes j} \boxtimes \mathcal{O}_{X^{[k]}}).
$$
Therefore, we obtain
$$
h_{*}(M_{k,L}^{\otimes j}\otimes \delta_k ^{-2}) \otimes S_{k,L}  \cong \pr_{2,*} \big( (\operatorname{id}_{X^{j}} \times h)_*  \mathcal{J} \otimes ( L^{\boxtimes j} \boxtimes S_{k,L} ) \big).
$$
If is $L$ sufficiently positive relative with respect to $j_0$, then by Fujita's vanishing theorem (\cite[Theorem 1.4.35]{pag1}) we have
$$
H^i(X^j \times \operatorname{Sym}^k(X), (\operatorname{id}_{X^{j}} \times h)_*  \mathcal{J} \otimes ( L^{\boxtimes j} \boxtimes S_{k,L} ))=0~~\text{ for $i \geq 1$ and $0 \leq j \leq j_0$.}
$$
Similarly, by Lemma \ref{lem:standfactforhdi}, we also have
$$
R^i \pr_{2,*} \big( (\operatorname{id}_{X^{j}} \times h)_*  \mathcal{J} \otimes ( L^{\boxtimes j} \boxtimes S_{k,L} ) \big) = 0~~\text{ for $i \geq 1$}.
$$
Therefore we obtain (\ref{eq:mvt_proof}), which concludes the proof.
\end{proof}

\begin{remark}\label{rem:meaningofsuffpos}
One may rephrase Theorem \ref{main-vanishing-serre} as follows. Let $L:=A^m \otimes B \otimes P$ be a line bundle on $X$, where $A$ is ample, $B$ is a nef, and $P$ is an arbitrary. For an integer $j_0 \geq 0$, there is an integer $m_0 \geq 1$ depending on $j_0$ but independent of $B$ such that the conclusion of  Theorem \ref{main-vanishing-serre} holds whenever $m \geq m_0$.
\end{remark}

\begin{corollary}\label{corollary-vanishing-serre}
Assume that $L$ is sufficiently positive. Then the multiplication map
    \[
    H^0(X^{[k]}, A_{k,L})\otimes S^j H^0(X^{[k]}, E_{k,L})\lra H^0(X^{[k]}, A_{k,L}\otimes S^j E_{k,L})
    \]
is surjective for all $j \geq 0$. Furthermore we have $H^i(X^{[k]}, A_{k,L}\otimes S^j E_{k,L})=0$ for all $i \geq 1$ and $j \geq 0$. 
\end{corollary}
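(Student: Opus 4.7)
The plan is to deduce this corollary from the Main Vanishing Theorem (Theorem \ref{main-vanishing-serre}) via the symmetric Koszul (a.k.a.\ Eagon--Northcott) resolution associated to the evaluation map, exactly as indicated in the introduction. Set $V := H^0(X,L)$. For $L$ sufficiently positive, $L$ separates $k$-schemes, so the evaluation map $V \otimes \mathcal{O}_{X^{[k]}} \twoheadrightarrow E_{k,L}$ is surjective with locally free kernel $M_{k,L}$. The short exact sequence
$$
0 \lra M_{k,L} \lra V \otimes \mathcal{O}_{X^{[k]}} \lra E_{k,L} \lra 0
$$
gives rise to the natural exact complex
$$
0 \lra \wedge^j M_{k,L} \lra \wedge^{j-1} M_{k,L} \otimes S^1 V \lra \cdots \lra M_{k,L} \otimes S^{j-1}V \lra S^j V \lra S^j E_{k,L} \lra 0,
$$
which resolves $S^j E_{k,L}$ (adopting the convention that the terms $\wedge^p M_{k,L}$ for $p > \operatorname{rank}(M_{k,L})$ vanish).

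Next, I would tensor this resolution with $A_{k,L}$, so that every intermediate term becomes a direct sum of copies of $\wedge^p M_{k,L} \otimes A_{k,L}$ for some $0 \leq p \leq j$. By Theorem \ref{main-vanishing-serre}, for $L$ sufficiently positive (relative to $j$), each such sheaf has vanishing $H^i$ for $i \geq 1$. Breaking the resolution into short exact sequences and iterating the long exact sequence of cohomology then yields simultaneously the vanishing $H^i(X^{[k]}, S^j E_{k,L} \otimes A_{k,L}) = 0$ for all $i \geq 1$, together with the surjectivity of the edge map
$$
S^j V \otimes H^0(X^{[k]}, A_{k,L}) \;\cong\; H^0(X^{[k]}, S^j V \otimes A_{k,L}) \;\twoheadrightarrow\; H^0(X^{[k]}, S^j E_{k,L} \otimes A_{k,L}).
$$
Identifying $V = H^0(X,L) \cong H^0(X^{[k]}, E_{k,L})$ (which holds by the very definition of $E_{k,L}$, and also appears in Theorem \ref{Danila}), this surjectivity is precisely the surjectivity of the multiplication map in the statement.

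The main conceptual step is simply the reduction to Theorem \ref{main-vanishing-serre} via the Koszul resolution; everything else is standard diagram-chasing. The only subtlety worth flagging is a bookkeeping issue regarding ``sufficient positivity'': the MVT asks $L$ to be positive relative to a fixed $j_0$, whereas the present corollary is to hold for all $j \geq 0$ simultaneously. This is resolved by the observation that $\wedge^p M_{k,L}$ vanishes for $p$ larger than $r_k := \operatorname{rank}(M_{k,L}) = h^0(X,L) - k$, so only $p \leq r_k$ contribute to the Koszul resolution; choosing $L$ positive enough (in the sense of Remark \ref{rem:meaningofsuffpos}) so that the MVT applies with $j_0 = r_k$ makes the conclusion uniform in $j$, and I expect no genuine difficulty in threading this quantifier through.
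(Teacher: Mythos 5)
Your overall strategy is the same one the paper uses: take the Koszul-type resolution of $S^j E_{k,L}$ coming from the evaluation sequence $0 \to M_{k,L} \to H^0(X,L)\otimes\mathcal{O}_{X^{[k]}} \to E_{k,L}\to 0$, tensor with $A_{k,L}$, invoke Theorem \ref{main-vanishing-serre} for the terms $\wedge^p M_{k,L}\otimes A_{k,L}$, and diagram-chase. Up to that point your argument matches the paper's proof.

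The flaw is in your final paragraph on bookkeeping. You propose to resolve the ``uniformity in $j$'' issue by taking $j_0 = r_k = h^0(X,L) - k$ in the Main Vanishing Theorem, on the grounds that $\wedge^p M_{k,L}=0$ for $p>r_k$. But $r_k$ grows with the positivity of $L$, while Remark \ref{rem:meaningofsuffpos} produces a positivity threshold $m_0$ that \emph{depends on} $j_0$. Asking that $L$ be positive enough for the MVT to hold with $j_0 = r_k(L)$ is therefore a circular condition (as you raise $m$, both sides of the inequality $m\geq m_0(j_0)$ move, and nothing in the paper controls the growth rate of $m_0$). As written, the quantifier does not thread through, and the assertion that you ``expect no genuine difficulty'' is not justified.

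The correct observation --- the one the paper relies on when it records the vanishing as $H^i(X^{[k]}, \wedge^j M_{k,L}\otimes A_{k,L})=0$ for $i\geq 1$ and $0\leq j\leq i$ --- is different and does not involve $r_k$ at all. In the diagram chase, $\wedge^p M_{k,L}\otimes A_{k,L}$ only ever contributes in cohomological degree $\geq p+1$ (one has $H^i(S^jE_{k,L}\otimes A_{k,L}) \hookrightarrow H^{i+1}(Z_0)\hookrightarrow\cdots\hookrightarrow H^{i+p+1}(Z_p)$ with the injections controlled by $H^{i+p}$ of the $p$-th term, and $i\geq 1$). Since $\dim X^{[k]} = kn$, these groups vanish for free once $i+p>kn$, so the chase never uses the MVT for $p$ beyond $kn-1$. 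Taking $j_0 = kn$ in Theorem \ref{main-vanishing-serre} therefore suffices, and $kn$ is a fixed integer independent of $L$; this makes the conclusion genuinely uniform in $j$. With this replacement your proof is correct and coincides with the paper's.
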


\begin{proof}
Consider the short exact sequence
$$
0 \longrightarrow M_{k, L} \longrightarrow H^0(X, L) \otimes \OO_{X^{[k]}} \longrightarrow E_{k, L} \longrightarrow 0.
$$
It induces a long exact sequence
\[
\cdots \longrightarrow S^{j-2} H^0(L)  \otimes \wedge^2 M_{k, L} \longrightarrow S^{j-1} H^0(L)  \otimes M_{k, L} \longrightarrow S^j H^0(L) \otimes \OO_{X^{[k]}} \longrightarrow S^j E_{k, L} \longrightarrow 0.
\]
By Theorem \ref{main-vanishing-serre}, we have the vanishing
$$
H^i(X^{[k]}, \wedge^j M_{k,L} \otimes A_{k,L})=0~~\text{ for $i \geq 1$ and $0 \leq j \leq i$}.
$$
The statement then follows from diagram chasing.
\end{proof}

\begin{remark}\label{remark:hard-problem}
We note here a subtle but important issue, which is the reason for our somewhat involved approach to Theorem \ref{main-vanishing-serre}. If $k=1$, then the blow up of $\mathcal{J}$ is smooth. Furthermore, if $L$ is sufficiently positive, then $\mathcal{J}\otimes (L^{\boxtimes j}\otimes T_{k,L})$ is globally generated. Combining these two fact with the Kawamata--Viehweg vanishing theorem, one easily obtains $H^1(X, M_L^{\otimes j} \otimes L)=0$. If $k\geq 2$, however,
both facts may fail (see for example Remark \ref{no-global-generation} regarding global generation). 
In fact, it turns out that $H^1(X^{[k]}, M_{k,L}^{\otimes j} \otimes A_{k,L})$ is not zero in general, even when $L$ is sufficiently positive. We show below that, if $X$ is a surface, then $H^1(X^{[2]}, M_{2,L}^{\otimes j} \otimes A_{k,L}) \neq 0$ for $j \geq 4$. Let $G$ be a general fiber of the Hilbert--Chow morphism $h \colon X^{[2]} \lra \operatorname{Sym}^2(X)$ over $h(E_2)$. Then $G \cong \mathbb{P}^1$. Note that $M_{2,L}|_{G} \cong \mathcal{O}_{\mathbb{P}^1}(-1) \oplus \mathcal{O}_{\mathbb{P}^1}^{\operatorname{rank}M_{2,L}-1}$ and $A_{2,L}|_{G} \cong \mathcal{O}_{\mathbb{P}^1}(2)$. Thus $R^1 h_* (M_{2,L}^{\otimes j} \otimes A_{2,L}) \neq 0$ for $j \geq 4$. This implies that $R^1 (\operatorname{id}_{X^4} \times h)_* (\mathcal{J}  \otimes (L^{\boxtimes 4} \boxtimes T_{2,L})) \neq 0$. Therefore, we obtain
$$
H^1(X^{[2]}, M_{2,L}^{\otimes j} \otimes A_{2,L}) \cong H^1(X^4 \times X^{[2]}, \mathcal{J} \otimes (L^{\boxtimes 4} \boxtimes T_{2,L})) \neq 0.
$$
\end{remark}

In the remaining of this section, we show an effective version of Theorem \ref{main-vanishing-serre} for $k=2$ and $i \geq j$. This won't be needed for the main results of the paper, but it gives an effective result for the projective normality of $2$-secant varieties (see Theorem \ref{effective-projective-normal}). For this purpose, we also need some preparation.

\subsection{Transversality}
We give a brief discussion on transversality.

\begin{lemma}\label{sheaf-transversality}
    Let $X_1$ and $X_2$ be arbitrary projective schemes over a scheme $S$. Let $\mathcal{F}_1$ be 
    a coherent sheaf on $X_1$ and let $\mathcal{F}_2$ be a coherent sheaf on $X_2$. Assume that $X_2$ and $\mathcal{F}_2$ are flat over $S$. Then
    \[
    \STor_i ^{\mathcal{O}_{X_1\times_S X_2}}(\operatorname{pr}_1^* \mathcal{F}_1, \operatorname{pr}_2 ^* \mathcal{F}_2)=0~~\text{ for $i>0$.}
    \]
\end{lemma}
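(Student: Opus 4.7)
The plan is to reduce this transversality statement to the classical algebraic computation of $\Tor$ via a flat base change. The statement is local on $X_1 \times_S X_2$ and compatible with localization on $S$, so after covering by affine opens we may assume $S = \Spec R$, $X_1 = \Spec A_1$, $X_2 = \Spec A_2$ (this requires only that the pushforwards of the sheaves of interest to $S$ be well-behaved, which follows from coherence in the projective/noetherian setting and can in any case be verified locally). Then $X_1 \times_S X_2 = \Spec(A_1 \otimes_R A_2)$, and writing $\mathcal{F}_i$ as an $A_i$-module $M_i$, we have $\pr_1^* \mathcal{F}_1 \leftrightarrow M_1 \otimes_R A_2$ and $\pr_2^* \mathcal{F}_2 \leftrightarrow A_1 \otimes_R M_2$. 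Thus it suffices to prove the algebraic statement
\[
\Tor_i^{A_1 \otimes_R A_2}\!\left(M_1 \otimes_R A_2,\; A_1 \otimes_R M_2\right) = 0 \quad \text{for } i > 0,
\]
under the hypotheses that $A_2$ and $M_2$ are flat over $R$.

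The key step is to produce a free resolution of $M_1 \otimes_R A_2$ over $A_1 \otimes_R A_2$ by base-changing a free resolution of $M_1$ over $A_1$. First I would choose a free resolution $F_\bullet \twoheadrightarrow M_1$ by free $A_1$-modules. Since $A_2$ is $R$-flat, the functor $(-) \otimes_R A_2$ is exact on $R$-modules, so $F_\bullet \otimes_R A_2 \twoheadrightarrow M_1 \otimes_R A_2$ remains acyclic; and because free $A_1$-modules base change to free $A_1 \otimes_R A_2$-modules, this is a free resolution over $A_1 \otimes_R A_2$. Using this resolution to compute $\Tor$, and observing that
\[
(F_\bullet \otimes_R A_2) \otimes_{A_1 \otimes_R A_2} (A_1 \otimes_R M_2) \;\cong\; F_\bullet \otimes_R M_2,
\]
we are reduced to computing the homology of $F_\bullet \otimes_R M_2$.

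Finally, I would invoke the flatness of $M_2$ over $R$: since $F_\bullet \twoheadrightarrow M_1$ is acyclic as a complex of $R$-modules (forgetting the $A_1$-action), tensoring over $R$ with the flat module $M_2$ preserves acyclicity. Hence $H_i(F_\bullet \otimes_R M_2) = 0$ for $i > 0$ (and equals $M_1 \otimes_R M_2$ for $i=0$), which gives the desired vanishing. I do not expect a serious obstacle here; the only point to be a bit careful about is the passage from the sheaf-theoretic formulation to the module-theoretic one, which requires that $\pr_1^{-1}\mathcal{O}_{X_1} \otimes \pr_2^{-1}\mathcal{O}_{X_2} \lra \mathcal{O}_{X_1 \times_S X_2}$ is the structure map of the fiber product and that pullback of a quasi-coherent sheaf along an affine morphism is computed by the expected tensor product — both standard facts.
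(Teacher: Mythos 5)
Your proof is correct and is essentially the same as the paper's: reduce to the affine case, base-change a projective resolution of $\mathcal{F}_1$ along the flat projection $\pr_1$ (flat because $X_2/S$ is flat), and then use flatness of $\mathcal{F}_2$ over $S$ to see the resulting complex stays acyclic after tensoring with $\pr_2^*\mathcal{F}_2$. The only difference is that you spell this out in module language with the explicit identification $(F_\bullet \otimes_R A_2) \otimes_{A_1 \otimes_R A_2} (A_1 \otimes_R M_2) \cong F_\bullet \otimes_R M_2$, while the paper phrases it sheaf-theoretically and leaves that cancellation implicit.
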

\begin{proof} 
Since the statement is local, we may assume that $X_1$, $X_2$ and $S$ are affine.
Let $E_{\bullet}$ be a projective $\mathcal{O}_{X_1}$-resolution of $\mathcal{F}_1$.
Since $\operatorname{pr}_1 \colon X_1 \times_S X_2\lra X_1$ is flat, $\operatorname{pr}_1 ^* E_{\bullet}$ is a projective $\mathcal{O}_{X_1 \times_S X_2}$-resolution of 
$\operatorname{pr}_1 ^*\mathcal{F}_1$. 
Since $\mathcal{F}_2$ is flat over $S$, we have that $\operatorname{pr}_1 ^* E_\bullet \otimes_{\mathcal{O}_{X_1 \times_S X_2}} \operatorname{pr}_2^* \mathcal{F}_2$ is exact, hence the result follows. 
\end{proof}

\begin{lemma}\label{subscheme-transversality}
    Let $X_1$ and $X_2$ be arbitrary irreducible projective varieties flat over an irreducible projective variety $S$. Let $Z_i\subseteq X_i$ be subschemes for $1\leq i\leq 2$, flat over $S$. Then on $X_1\times_S X_2$ we have
    \[
    \operatorname{pr}_{1}^* \mathcal{I}_{Z_1}\otimes \operatorname{pr}_{2}^* \mathcal{I}_{Z_2}\cong 
    \mathcal{I}_{\operatorname{pr}_{1}^{-1} Z_1}\cdot \mathcal{I}_{\operatorname{pr}_{2}^{-1} Z_2}=
    \mathcal{I}_{\operatorname{pr}_{1}^{-1} Z_1 \cup \operatorname{pr}_{2}^{-1} Z_2}.
    \]
\noindent If furthermore $\operatorname{Bl}_{Z_i} X_i \lra S$ is flat for some $i$, then we have
\[
\operatorname{Bl}_{\operatorname{pr}_1 ^{-1} Z_1 \cup \operatorname{pr}_2 ^{-1} Z_2} X_1\times_S X_2\cong \operatorname{Bl}_{Z_1} X_1 \times_S \operatorname{Bl}_{Z_2} X_2.
\]
\end{lemma}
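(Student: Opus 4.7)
The plan is to deduce the statement from two applications of Lemma \ref{sheaf-transversality}, followed by a blow-up universal property argument. Set $A := \operatorname{pr}_1^{-1}(Z_1)$ and $B := \operatorname{pr}_2^{-1}(Z_2)$. First, since $X_2 \to S$ is flat, so is $\operatorname{pr}_1 \colon X_1\times_S X_2 \to X_1$, and hence $\operatorname{pr}_1^*\mathcal{I}_{Z_1}$ is the ideal sheaf $\mathcal{I}_A$; similarly $\operatorname{pr}_2^*\mathcal{I}_{Z_2} = \mathcal{I}_B$. The flatness of $X_2$ and $Z_2$ over $S$ (hence also of $\mathcal{I}_{Z_2}$, via the defining sequence) makes Lemma \ref{sheaf-transversality} applicable with either $\mathcal{F}_2 = \mathcal{O}_{Z_2}$ or $\mathcal{F}_2 = \mathcal{I}_{Z_2}$.

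Taking $\mathcal{F}_1 = \mathcal{I}_{Z_1}$ and $\mathcal{F}_2 = \mathcal{O}_{Z_2}$ in Lemma \ref{sheaf-transversality} gives $\operatorname{Tor}_1(\mathcal{I}_A,\mathcal{O}_B)=0$. Tensoring the sequence $0 \to \mathcal{I}_B \to \mathcal{O} \to \mathcal{O}_B \to 0$ with $\mathcal{I}_A$, this Tor-vanishing forces the multiplication map $\mathcal{I}_A\otimes\mathcal{I}_B \to \mathcal{O}$ to be injective with image $\mathcal{I}_A\cdot\mathcal{I}_B$, yielding the first isomorphism $\operatorname{pr}_1^*\mathcal{I}_{Z_1}\otimes \operatorname{pr}_2^*\mathcal{I}_{Z_2} \cong \mathcal{I}_A\cdot\mathcal{I}_B$. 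Next, taking instead $\mathcal{F}_1 = \mathcal{O}_{Z_1}$ and $\mathcal{F}_2 = \mathcal{O}_{Z_2}$ gives $\operatorname{Tor}_1(\mathcal{O}_A,\mathcal{O}_B)=0$; tensoring $0\to \mathcal{I}_A\to \mathcal{O}\to \mathcal{O}_A\to 0$ with $\mathcal{O}_B$ identifies $\mathcal{I}_A\otimes \mathcal{O}_B$ simultaneously with $\mathcal{I}_A/(\mathcal{I}_A\cdot\mathcal{I}_B)$ (from the tensor) and with $\mathcal{I}_A/(\mathcal{I}_A\cap\mathcal{I}_B)$ (as a subsheaf of $\mathcal{O}_B$), which forces $\mathcal{I}_A\cdot\mathcal{I}_B = \mathcal{I}_A\cap\mathcal{I}_B = \mathcal{I}_{A\cup B}$.

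For the blow-up statement, set $Y_i := \operatorname{Bl}_{Z_i}X_i$ with exceptional divisors $E_i$, and let $\sigma \colon Y_1\times_S Y_2 \to X_1\times_S X_2$. Assume WLOG that $Y_2\to S$ is flat, so that $\operatorname{pr}_{Y_1}\colon Y_1\times_S Y_2 \to Y_1$ is flat; then $\sigma^{-1}\mathcal{I}_A\cdot\mathcal{O} = \operatorname{pr}_{Y_1}^*\mathcal{O}_{Y_1}(-E_1)$ is invertible. By the first part applied on $Y_1\times_S Y_2$ (the flatness hypotheses persist), $\sigma^{-1}\mathcal{I}_B\cdot\mathcal{O} \cong \operatorname{pr}_{Y_2}^*(\mathcal{I}_{Z_2}\cdot\mathcal{O}_{Y_2})$ is also invertible, being $\mathcal{O}_{Y_2}(-E_2)$ pulled back along the flat map $\operatorname{pr}_{Y_2}$ (flat by base change of $Y_1\to S$ — or, when $Y_1\to S$ is not known to be flat, by a direct local computation exhibiting $\mathcal{I}_B$ as generated by pullbacks of local generators of $\mathcal{I}_{Z_2}$). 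Their product $\sigma^{-1}\mathcal{I}_{A\cup B}\cdot\mathcal{O}$ is therefore invertible, and by the universal property of blow-ups $\sigma$ factors through a morphism $\phi \colon Y_1\times_S Y_2 \to \operatorname{Bl}_{A\cup B}(X_1\times_S X_2)$.

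The main obstacle is producing the inverse to $\phi$. The natural strategy is to show that on $\operatorname{Bl}_{A\cup B}(X_1\times_S X_2)$, the ideals $\mathcal{I}_A$ and $\mathcal{I}_B$ separately become invertible: given this, the composed morphisms to $X_i$ will each factor through $Y_i$, producing the inverse. To establish the separate invertibility, one promotes the Tor-vanishing of the first part to higher powers, i.e.\ iterating Lemma \ref{sheaf-transversality} to obtain $(\mathcal{I}_A\cdot\mathcal{I}_B)^n = \mathcal{I}_A^n\otimes \mathcal{I}_B^n$ compatibly in $n$; this exhibits the Rees algebra $\bigoplus_n \mathcal{I}_{A\cup B}^n$ as a ``Segre-like'' product of the Rees algebras of $\mathcal{I}_{Z_1}$ and $\mathcal{I}_{Z_2}$ pulled back, and the relevant $\operatorname{Proj}$ identifies $\operatorname{Bl}_{A\cup B}(X_1\times_S X_2)$ with $Y_1\times_S Y_2$. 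The flatness of one of the $Y_i\to S$ is exactly what is needed to make the fibre product on the Proj side represent the correct functor; without it, the iterated Tor-vanishing could fail and the two sides need not agree.
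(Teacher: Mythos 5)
Your proof of the first isomorphism matches the paper's: two tensor products of the ideal sequence against, respectively, $\mathcal{I}_B$ and $\mathcal{O}_B$, with Lemma~\ref{sheaf-transversality} providing the $\operatorname{Tor}_1$-vanishing that makes everything exact (you choose $\mathcal{F}_1=\mathcal{I}_{Z_1}$ where the paper in effect takes $\mathcal{F}_1=\mathcal{O}_{Z_1}$, $\mathcal{F}_2=\mathcal{I}_{Z_2}$, but both are valid applications). That part is fine.

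The blow-up statement is where your proposal has a genuine gap. You correctly construct $\phi\colon Y_1\times_S Y_2\to \operatorname{Bl}_{A\cup B}(X_1\times_S X_2)$ via the universal property, but the inverse direction is never actually established. Your first suggested route requires that $\mathcal{I}_A\cdot\mathcal{O}_W$ and $\mathcal{I}_B\cdot\mathcal{O}_W$ \emph{separately} become invertible on $W=\operatorname{Bl}_{A\cup B}(X_1\times_S X_2)$, but this does not follow from the product $\mathcal{I}_A\mathcal{I}_B\cdot\mathcal{O}_W$ being invertible, and you offer no argument. Your second route — exhibiting $\bigoplus_n(\mathcal{I}_A\mathcal{I}_B)^n$ as a Segre-like product of Rees algebras — hinges on the claim that $\operatorname{Tor}_1(\mathcal{I}_A^n,\mathcal{I}_B^m)$ vanishes for all $n,m$, which would require $\mathcal{I}_{Z_2}^m$ to be flat over $S$ for all $m$; that is strictly stronger than the stated hypothesis ($\mathcal{O}_{Z_2}$ flat over $S$), and your appeal to ``flatness of $Y_i\to S$'' to supply it is not an argument.

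The paper's proof avoids both obstacles entirely by decomposing the single blow-up into two successive ones. By \cite[Tag 0805]{stacks-project}, blowing up $\mathcal{I}_A\mathcal{I}_B$ is the same as first blowing up $\mathcal{I}_A$ and then blowing up the total transform of $\mathcal{I}_B$. By \cite[Tag 080A]{stacks-project} (flat base change for blow-ups) applied to the flat projection $X_1\times_S X_2\to X_1$, the first step yields $Y_1\times_S X_2$. The hypothesis that $Y_1\to S$ is flat then makes $Y_1\times_S X_2\to X_2$ flat, so the total transform of $\mathcal{I}_B$ is the ideal of the preimage of $Z_2$, and \cite[Tag 080A]{stacks-project} applied once more gives $Y_1\times_S Y_2$. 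This uses the flatness hypothesis exactly where it is needed and produces an honest isomorphism, not just a morphism in one direction. You should replace your inverse-construction sketch with this successive-blow-up argument.
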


\begin{proof}
Since $\operatorname{pr}_{i}$ is flat, we have
$\operatorname{pr}_{i}^* \mathcal{I}_{Z_i}\cong \mathcal{I}_{\operatorname{pr}_{1}^{-1} Z_i}$
for $1\leq i\leq 2$. 
Consider the short exact sequence
\[
0\lra \mathcal{I}_{\operatorname{pr}_{1}^{-1} Z_1}
\lra \mathcal{O}_{X_1\times_S X_2}\lra
\mathcal{O}_{\operatorname{pr}_{1}^{-1} Z_1}
\lra 0.
\]
\noindent Taking the tensor product with $\mathcal{I}_{\operatorname{pr}_{2}^{-1} Z_2}$ and using 
Lemma \ref{sheaf-transversality}, we get
\[
\mathcal{I}_{\operatorname{pr}_{1}^{-1} Z_1}
\otimes
\mathcal{I}_{\operatorname{pr}_{2}^{-1} Z_2}
\cong 
\mathcal{I}_{\operatorname{pr}_{1}^{-1} Z_1}
\cdot
\mathcal{I}_{\operatorname{pr}_{2}^{-1} Z_2}.
\]

\noindent Taking the tensor product with 
$\mathcal{O}_{\operatorname{pr}_{2}^{-1} Z_2}$ and using 
Lemma \ref{sheaf-transversality}, we get
\[
\mathcal{I}_{\operatorname{pr}_{1}^{-1} Z_1}
\cdot
\mathcal{I}_{\operatorname{pr}_{2}^{-1} Z_2}
=
\mathcal{I}_{\operatorname{pr}_{1}^{-1} Z_1}
\cap
\mathcal{I}_{\operatorname{pr}_{2}^{-1} Z_2}.
\]

\noindent The second part follows from 
\cite[\href{https://stacks.math.columbia.edu/tag/080A}{Tag 080A}]{stacks-project} and
\cite[\href{https://stacks.math.columbia.edu/tag/0805}{Tag 0805}]{stacks-project}.
\end{proof}

\begin{lemma}\label{fiber-product-lci}
    Let $X_1$, $X_2$ and $S$ be smooth varieties. Let $f_1 \colon X_1\lra S$ and $f_2 \colon X_2\lra S$ be two flat surjective morphisms. Then $X_1\times_S X_2$ is a local complete intersection variety. 
\end{lemma}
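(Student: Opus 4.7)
The plan is to exhibit $X_1\times_S X_2$ as a regularly embedded subscheme of the smooth variety $X_1\times X_2$, from which the local complete intersection property is immediate.

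First I would set up the Cartesian square
\[
\begin{tikzcd}
X_1\times_S X_2 \ar[r]\ar[d] & X_1\times X_2 \ar[d, "f_1\times f_2"] \\
S \ar[r, "\Delta_S"] & S\times S
\end{tikzcd}
\]
expressing $X_1\times_S X_2$ as the scheme-theoretic preimage of the diagonal $\Delta_S\subseteq S\times S$ under the morphism $f_1\times f_2$. Since $S$ is smooth, the diagonal embedding $\Delta_S\hookrightarrow S\times S$ is a regular embedding of codimension $\dim(S)$; this is a standard local calculation (locally the diagonal is cut out by the regular sequence $y_i-x_i$, where $x_i$ and $y_i$ are local coordinates on the two factors).

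Next, since $f_1$ and $f_2$ are flat, the product map $f_1\times f_2 \colon X_1\times X_2\lra S\times S$ is flat. It is a standard fact (see for instance \cite[\href{https://stacks.math.columbia.edu/tag/067R}{Tag 067R}]{stacks-project}) that the pullback of a regular embedding along a flat morphism is again a regular embedding of the same codimension. Hence $X_1\times_S X_2\hookrightarrow X_1\times X_2$ is a regular embedding of codimension $\dim(S)$, so locally it is cut out by a regular sequence of length $\dim(S)$ inside the smooth variety $X_1\times X_2$.

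Finally, since $X_1\times X_2$ is smooth and therefore in particular a local complete intersection, and since $X_1\times_S X_2$ is regularly embedded in it, we conclude that $X_1\times_S X_2$ is a local complete intersection scheme. The only delicate point is the standard flatness-plus-regular-embedding fact invoked above; everything else is routine. (Irreducibility, if desired to justify the word ``variety'', can be separately deduced from flatness and surjectivity of $f_1,f_2$ together with irreducibility of $X_1,X_2,S$ under mild additional hypotheses, but is not needed for the lci conclusion.)
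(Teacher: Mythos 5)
Your proof is correct and follows essentially the same route as the paper: both set up the Cartesian square realizing $X_1\times_S X_2$ as the flat pullback of the diagonal $\Delta_S\subseteq S\times S$ along $f_1\times f_2$, and then invoke that $\Delta_S$ is regularly embedded (since $S$ is smooth) together with flatness of $f_1\times f_2$ to conclude that $X_1\times_S X_2$ is regularly embedded in the smooth variety $X_1\times X_2$, hence lci. Your write-up merely spells out the steps the paper leaves implicit.
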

\begin{proof}
 Consider the following Cartesian diagram
 \begin{center}
\begin{tikzcd}
    X_1\times_S X_2 \ar[r]\ar[d]& X_1\times X_2\ar[d]\\
    \Delta_S \ar[r]& S\times S
\end{tikzcd}.
\end{center}

\noindent Since $\Delta_S$ is smooth and $f_1\times f_2$ is flat, we have that $X_1\times_S X_2$ is a local complete intersection variety. 
\end{proof}

\begin{remark}
A simple inductive argument shows that the conclusions of Lemma \ref{subscheme-transversality} and Lemma \ref{fiber-product-lci}
hold for any number of varieties. 
\end{remark}

\subsection{Blow-ups of $X \times X^{[1,2]}$}
We consider the blow-ups $\operatorname{Bl}_{\mathcal{V}_{1,2}} X\times X^{[1,2]}$ and $\operatorname{Bl}_{\mathcal{W}_{1,2}} X\times X^{[1,2]}$ of $X \times X^{[1,2]}$ along $\mathcal{V}_{1,2}$ and $\mathcal{W}_{1,2}$, respectively. Note that ${\mathcal{V}_{1,2}} \cong X^{[1,2]}$ is smooth but $\mathcal{W}_{1,2} = {\mathcal{V}_{1,2}} \cup \Gamma_{\res}$ is reducible  (see \S \ref{nestedHilb}). In particular, $\operatorname{Bl}_{\mathcal{V}_{1,2}} X\times X^{[1,2]}$ is smooth. On the other hand, $\operatorname{Bl}_{\mathcal{W}_{1,2}} X\times X^{[1,2]}$ is the triple nested Hilbert scheme $X^{[1,2,3]} \subseteq X \times X^{[2]} \times X^{[3]}$ consisting of triples $(x,\xi,\eta)$ such that $x \in \xi \subseteq \eta$. 

\begin{lemma}\label{123}
We have that 
\[
X^{[1,2,3]} \cong \operatorname{Bl}_{\mathcal{W}_{1,2}} X\times X^{[1,2]}\cong X^{[1,2]}\times_{X^{[2]}} X^{[2,3]}
\]
is normal with canonical local complete intersection singularities.
\end{lemma}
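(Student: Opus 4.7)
The plan is to establish the two isomorphisms and then deduce the singularity-theoretic conclusions. The identification $X^{[1,2,3]} \cong X^{[1,2]}\times_{X^{[2]}} X^{[2,3]}$ is immediate from the universal property, since both sides parametrize triples $(x,\xi,\eta)$ with $x \in \xi \subseteq \eta$ of lengths $1,2,3$. For the blow-up identification, I would use $X^{[2,3]} \cong \operatorname{Bl}_{\mathcal{Z}_2}(X \times X^{[2]})$ from \S\ref{residue-map}. Since $\mathcal{W}_{1,2}$ is by definition the scheme-theoretic preimage of $\mathcal{Z}_2$ under $\operatorname{id}_X \times \rho : X \times X^{[1,2]} \to X \times X^{[2]}$, and $\rho : X^{[1,2]} \to X^{[2]}$ is flat (by miracle flatness: both source and target are smooth of the same dimension, with constant fiber length $2$), flat base change for blow-ups yields
\[
\operatorname{Bl}_{\mathcal{W}_{1,2}}(X \times X^{[1,2]}) \cong (X \times X^{[1,2]}) \times_{X \times X^{[2]}} X^{[2,3]} \cong X^{[1,2]} \times_{X^{[2]}} X^{[2,3]}.
\]

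Both $X^{[1,2]}$ and $X^{[2,3]}$ are smooth by \S\ref{hilbert-key-facts}, and the two natural projections to $X^{[2]}$ are flat and surjective. Hence Lemma \ref{fiber-product-lci} yields that $X^{[1,2,3]}$ is a local complete intersection, in particular Cohen--Macaulay and Gorenstein. Normality then reduces, by Serre's criterion, to verifying $R_1$: since the fiber product of smooth varieties along flat maps is smooth wherever at least one of the maps is smooth, the singular locus of $X^{[1,2,3]}$ must project into the non-reduced locus $E_2 \subseteq X^{[2]}$. A local model at a generic point of $E_2$---combining the equation $y^2 = a$ for the double cover $X^{[1,2]} \to X^{[2]}$ with a local $A_1$-degeneration of $\operatorname{Bl}_\xi X$ appearing as the fiber of $\tau : X^{[2,3]} \to X^{[2]}$---realizes $X^{[1,2,3]}$ locally as the hypersurface $uv = y^2$ in a smooth seven-fold. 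The singular locus of this model has codimension $2$, and the singularity is a compound Du Val $cA_1$.

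For canonical singularities, the plan is to use that a normal Gorenstein variety has canonical singularities if and only if it has rational singularities (by Elkik's theorem). The generic $cA_1$ singularities identified above are rational (being hypersurface compound Du Val), hence canonical. To conclude at all points, I would extend the local analysis beyond the generic stratum of $E_2$ to the degeneration loci where the fibers of $\tau$ acquire worse singularities; these loci have codimension at least $3$ in $X^{[1,2,3]}$. Alternatively, one can construct an explicit resolution---via an iterated blow-up or a finer nested-Hilbert construction---and deduce canonicity from Grauert--Riemenschneider vanishing applied to the resolution. \emph{The main obstacle} is precisely this last step: whereas lci-ness and normality follow fairly directly from the fiber-product description together with Lemma \ref{fiber-product-lci}, the canonical-singularities assertion demands a uniform control over the singularities of $X^{[1,2,3]}$ along the degeneration strata of $E_2$, where the local geometry of $\operatorname{Bl}_\xi X$ is more intricate.
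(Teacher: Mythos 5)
Your two isomorphisms and the lci conclusion are correct and essentially match the paper's route: the universal property (or flat base change of the blow-up along $\operatorname{id}_X\times\rho$ via Stacks 0805) gives $X^{[1,2,3]}\cong X^{[1,2]}\times_{X^{[2]}}X^{[2,3]}$, and Lemma \ref{fiber-product-lci} applied to the flat surjections $\rho$ and $\tau$ gives lci. Your route to normality via Serre's criterion and $R_1$ is also sound in outline, though the codimension bookkeeping is off: the singular fibers of $X^{[1,2,3]}\to X^{[1,2]}$ sit over the codimension-$1$ locus $\rho^{-1}(E_2)$, and each such fiber $\operatorname{Bl}_\xi X$ has a $cA_1$ singular locus of codimension $2$ in the fiber, giving a total codimension of $3$, not $2$; your stated local model $uv=y^2$ also has singular locus of codimension $2$ in the hypersurface, not matching what you'd get from the fibration count. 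Neither slip harms the $R_1$ conclusion.

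The genuine gap is exactly where you flag it: the canonical-singularities assertion. You propose either extending the local model beyond the generic stratum of $E_2$ or constructing an explicit resolution, and you describe this as the main obstacle. The paper closes this gap with a single application of Elkik's theorem on rational singularities in flat families (\cite[Th\'eor\`eme 5]{Elkik.78}): the projection $X^{[1,2,3]}\to X^{[1,2]}$ is flat (pull back flatness of $\tau$ along $\rho$), its base $X^{[1,2]}$ is smooth, and every fiber $\operatorname{Bl}_\xi X$ has rational singularities (it is smooth when $\xi$ is reduced and has $cA_1$ singularities when $\xi$ is non-reduced, since every length-$2$ scheme is lci of the same local type). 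Elkik's theorem then gives rational singularities of $X^{[1,2,3]}$ uniformly, without any stratified local analysis; combining with Gorenstein (from lci) and \cite[Corollary 5.24]{Kollar-Mori} gives canonical singularities, and normality follows as a byproduct. Note also that your worry about "worse singularities along degeneration strata of $E_2$" does not actually arise: the fibers of $\tau$ only take two local forms, so there are no further degenerations to control; the true difficulty is converting fiberwise rationality into rationality of the total space, and that is precisely what Elkik's theorem supplies.
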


\begin{proof}
Since $\rho \colon X^{[1,2]}\lra X^{[2]}$ is flat, the commutative diagram
\begin{center}
\begin{tikzcd}
    \operatorname{Bl}_{\mathcal{W}_{1,2}} X\times X^{[1,2]}\ar[r]\ar[d]& X^{[2,3]}\ar[d, "\bl"]\\
    X\times X^{[1,2]}\ar[r, "\operatorname{id}_X \times \rho"]& X\times X^{[2]}
\end{tikzcd}
\end{center}
is Cartesian by \cite[\href{https://stacks.math.columbia.edu/tag/0805}{Tag 0805}]{stacks-project}. Thus $\operatorname{Bl}_{\mathcal{W}_{1,2}} X\times X^{[1,2]}\cong X^{[1,2]}\times_{X^{[2]}} X^{[2,3]}$. Then $X^{[1,2,3]}$ has local complete intersection singularities by Lemma \ref{fiber-product-lci}. Now, consider the Cartesian diagram
\begin{center}
\begin{tikzcd}
    X^{[1,2,3]} \ar[r]\ar[d]& X^{[2,3]}\ar[d, "\tau"]\\
     X^{[1,2]}\ar[r, "\rho"]&  X^{[2]}
\end{tikzcd}.
\end{center}
Since $\tau \colon X^{[2,3]}\lra X^{[2]}$ is flat, we see that the map $X^{[1,2,3]} \lra X^{[1,2]}$ is also flat. Note that every fiber of $X^{[1,2,3]} \lra X^{[1,2]}$ has rational singularities and $X^{[1,2]}$ is smooth. Thus $X^{[1,2,3]}$ has rational singularities by \cite[Th\'eor\`eme 5]{Elkik.78}. Therefore $X^{[1,2,3]}$ has canonical singularities by \cite[Corollary 5.24]{Kollar-Mori}. 
\end{proof}

\subsection{Effective global vanishing}
In this subsection, we let $L:=\omega_X \otimes A^m \otimes B$, where $A$ is a very ample line bundle and $B$ is a nef line bundle on $X$, and $C$ be a nef line bundle on $X^{[1,2]}$.
It is well-known that $\omega_X \otimes A^{\ell} \otimes B$ is base point free as soon as $\ell \geq n+1$  (for example, see \cite[Example 1.8.23]{pag1}). By \cite[Theorem 1.1]{HTT}, the line bundle $\omega_X \otimes A^{\ell} \otimes B$ separates $k$-schemes when $\ell \geq n+k$. 
Our aim is to prove the following effective version of the main vanishing theorem. 

\begin{theorem}\label{main-effective-vanishing}
If $m\geq 2n+j$, then
\[
H^i(X^{[2]}, \wedge^j M_{2,L}\otimes A_{2,L})=0~~\text{ for all $i\geq j \geq 1$.}
\]
\end{theorem}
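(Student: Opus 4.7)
My plan is to reduce the vanishing to Kawamata--Viehweg on an iterated fiber product $Y_j$, whose canonical-singularity structure is controlled by Lemma \ref{123}. Since $\wedge^j M_{2,L}$ is a direct summand of $M_{2,L}^{\otimes j}$, it suffices to show $H^i(X^{[2]}, M_{2,L}^{\otimes j} \otimes A_{2,L}) = 0$. By Lemma \ref{diagonal-syzygy-bundle} combined with Lemma \ref{subscheme-transversality}, this cohomology equals
\[
H^i\bigl(X^j \times X^{[2]},\, \mathcal{I}_W \otimes (L^{\boxtimes j} \boxtimes A_{2,L})\bigr),
\]
where $W = \bigcup_{\ell=1}^j \operatorname{pr}_\ell^{-1}(\mathcal{Z}_2)$. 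Since $\tau \colon X^{[2,3]} \to X^{[2]}$ is flat (\S\ref{residue-map}, as $k\leq 3$), iterating the remark after Lemma \ref{subscheme-transversality} identifies the blow-up $\pi_j \colon Y_j \to X^j \times X^{[2]}$ along $W$ with the $j$-fold fiber product
\[
Y_j \;:=\; \underbrace{X^{[2,3]} \times_{X^{[2]}} X^{[2,3]} \times_{X^{[2]}} \cdots \times_{X^{[2]}} X^{[2,3]}}_{j \text{ factors}},
\]
and $\pi_j^* \mathcal{I}_W = \mathcal{O}_{Y_j}(-\sum_\ell F_\ell)$, where $F_\ell$ is the pullback of the exceptional divisor of the $\ell$th factor.

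By Lemma \ref{fiber-product-lci}, $Y_j$ is a local complete intersection variety. I would establish by induction on $j$ that $Y_j$ has rational (hence canonical) singularities: the base $Y_1 = X^{[2,3]}$ is smooth, and for the inductive step, Elkik's theorem (\cite{Elkik.78}) applies to the flat morphism $Y_{j+1} = Y_j \times_{X^{[2]}} X^{[2,3]} \to Y_j$, whose fibers coincide with those of $\tau$ --- these are blow-ups of $X$ along length-$2$ local-complete-intersection subschemes, with at worst $A_1$-type singularities, hence rational. Here Lemma \ref{123} serves as a consistency check: the canonical structure of $X^{[1,2,3]} = X^{[1,2]} \times_{X^{[2]}} X^{[2,3]}$ over the smooth base $X^{[1,2]}$ corroborates the rationality of the fibers of $\tau$. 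The standard blow-up vanishing gives $R^s \pi_{j,*}\mathcal{O}_{Y_j}(-\sum F_\ell) = 0$ for $s \geq 1$ (since each blow-up center has codimension $n \geq 2$), so the target cohomology reduces to $H^i(Y_j,\, \pi_j^*(L^{\boxtimes j} \boxtimes A_{2,L}) \otimes \mathcal{O}_{Y_j}(-\sum_\ell F_\ell))$.

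Using Lemma \ref{canonical-bundle} for each $X^{[2,3]}$ factor together with multiplicativity of the relative dualizing sheaf under flat fiber product gives $K_{Y_j} = \pi_j^*(\omega_X^{\boxtimes j} \boxtimes \omega_{X^{[2]}}) + (n-1)\sum_\ell F_\ell$. Substituting $L = \omega_X \otimes A^m \otimes B$ and applying Lemma \ref{N&S}, the discrepancy simplifies to
\[
\pi_j^*(L^{\boxtimes j} \boxtimes A_{2,L})\bigl(-\textstyle\sum F_\ell\bigr) - K_{Y_j} \;=\; \pi_j^*\bigl((A^m B)^{\boxtimes j} \boxtimes T_{2,A^m B}\delta_2^{-n}\bigr)\bigl(-n\textstyle\sum F_\ell\bigr).
\]
The main obstacle is to verify that this line bundle is big and nef when $m \geq 2n+j$. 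The key tool is the global generation of $\pi^*((A^m B) \boxtimes N_{2,A^m B})(-F_2)$ on $X^{[2,3]}$ (see \S\ref{global-generation-universal-family}), valid once $A^m B$ separates $2$-schemes. Pulling back along the $j$ projections $Y_j \to X^{[2,3]}$ and tensoring yields a globally generated line bundle; then combining Lemma \ref{N&S} (to exchange $T_{2,\bullet}$ with $N_{2,\bullet}$ at the cost of $\delta_2$-twists) and Lemma \ref{identities} (to convert $\delta_2$-pullbacks into $\rho$-pullbacks from $X^{[3]}$ plus $F_\ell$-contributions), the excess ampleness furnished by $m \geq 2n+j$ is just enough to absorb the residual $\delta_2^{-n}$ and $-(n-1)\sum F_\ell$ terms, producing a big and nef line bundle. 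Kawamata--Viehweg vanishing on the canonical LCI variety $Y_j$ then yields the required cohomology vanishing (in fact for all $i \geq 1$, which is stronger than the stated $i \geq j$).
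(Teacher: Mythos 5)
Your reduction from $\wedge^j M_{2,L}$ to $M_{2,L}^{\otimes j}$ via the direct-summand trick, followed by an attempt to prove $H^i(X^{[2]}, M_{2,L}^{\otimes j} \otimes A_{2,L}) = 0$ for all $i \geq 1$, cannot succeed: Remark \ref{remark:hard-problem} in the paper exhibits $H^1(X^{[2]}, M_{2,L}^{\otimes j} \otimes A_{2,L}) \neq 0$ for a surface and $j \geq 4$, no matter how positive $L$ is. So the ``in fact for all $i \geq 1$, which is stronger than the stated $i \geq j$'' you assert at the end is a genuine red flag rather than a bonus. Concretely, the failure is in your claim that the Kawamata--Viehweg discrepancy $\pi_j^*\bigl((A^mB)^{\boxtimes j} \boxtimes T_{2,A^mB}\delta_2^{-n}\bigr)(-n\sum F_\ell)$ is big and nef. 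Using $T_{2,A^mB} = N_{2,A^mB}\delta_2$, the discrepancy equals $\pi_j^*\bigl((A^mB)^{\boxtimes j} \boxtimes N_{2,A^mB}\delta_2^{1-n}\bigr)(-n\sum F_\ell)$; subtracting the globally generated pieces $\sum_\ell \rho_\ell^*N_{3,A^mB} = \pi_j^*\bigl((A^mB)^{\boxtimes j} \boxtimes N_{2,A^mB}^{j}\bigr)(-\sum F_\ell)$ --- which is the \emph{only} way to soak up the $(A^mB)^{\boxtimes j}$ factors --- leaves the residual $\pi_j^*\bigl(N_{2,A^mB}^{1-j}\delta_2^{1-n}\bigr)(-(n-1)\sum F_\ell)$. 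There is no ``excess ampleness'' hiding in $m \geq 2n+j$: the $(A^mB)$-factors on $X^j$ are already exhausted, and the residual term $N_{2,A^mB}^{1-j}$ actually becomes \emph{more} negative as $m$ grows whenever $j \geq 2$, while $\delta_2^{1-n}$ and $-(n-1)\sum F_\ell$ add further negativity. So the line bundle is not nef for $j \geq 2$ and the KV step breaks.

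Two further notes. First, your invocation of ``standard blow-up vanishing'' to get $R^s\pi_{j,*}\mathcal{O}_{Y_j}(-\sum F_\ell) = 0$ for $s \geq 1$ is not justified by codimension alone for a singular blow-up; here the correct (and valid) argument is that $\mathcal{I}_{\mathcal{Z}_2}$ is flat over $X^{[2]}$, so the Tor-independence of Lemma \ref{sheaf-transversality} together with K\"unneth for the fiber-product diagram $\operatorname{bl}_1 \times_{X^{[2]}} \cdots \times_{X^{[2]}} \operatorname{bl}_j$ gives $R\pi_{j,*}\mathcal{O}(-\sum F_\ell) = \mathcal{I}_W$ concentrated in degree $0$. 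Second, the shape of the statement ($i \geq j$ rather than $i \geq 1$) is not an artifact: the paper's actual proof works with $\wedge^j M_{2,L}$ directly, pulls back to $X^{[1,2]}$ (Lemma \ref{lem:O_X^[k]directsummand}, Lemma \ref{direct-image-res-r-k-1-k}), and then runs an induction on $j$ in Proposition \ref{effective-vanishing} using the exact sequence $0 \to \rho^*\wedge^j M_{2,L} \to \tau^*\wedge^j M_L \to \rho^*\wedge^{j-1}M_{2,L}\otimes\operatorname{res}^*L(-F_1) \to 0$, with the $X^{[1,2,3]}$-canonical-singularities input appearing only at the base case $j=1$ (Lemma \ref{pullback-r}). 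Each inductive step trades one wedge power for one cohomological degree, which is precisely where the constraint $i \geq j$ comes from; a single blow-up-plus-KV cannot reproduce that range.
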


\begin{lemma}\label{pullback-tau-j=1case}
If $m \geq 2n+1$, then
\[
H^i(X^{[1,2]}, \tau^* M_L \otimes \rho^* N_{2,\omega_X \otimes A^{2n+1}} \otimes C)=0~~\text{ for all $i \geq 1$.} 
\]    
\end{lemma}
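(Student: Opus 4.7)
The plan is to pull back the evaluation sequence on $X$ to $X^{[1,2]}$, reducing the desired vanishing to statements amenable to Kawamata--Viehweg. Setting $L' := \omega_X \otimes A^{2n+1}$ and $\mathcal{N} := \rho^* N_{2, L'} \otimes C$, I would pull back the sequence $0 \to M_L \to H^0(X, L) \otimes \mathcal{O}_X \to L \to 0$ along the flat morphism $\tau \colon X^{[1,2]} \to X$ (which is smooth, with fibers $\operatorname{Bl}_p X$) and twist by $\mathcal{N}$. The resulting long exact sequence then reduces the lemma to three statements: (a) $H^i(X^{[1,2]}, \mathcal{N}) = 0$ for $i \geq 1$, (b) $H^i(X^{[1,2]}, \tau^* L \otimes \mathcal{N}) = 0$ for $i \geq 1$, and (c) surjectivity of the multiplication map $H^0(X,L) \otimes H^0(X^{[1,2]}, \mathcal{N}) \to H^0(X^{[1,2]}, \tau^* L \otimes \mathcal{N})$.

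For (a) and (b), the plan is to apply Kawamata--Viehweg on $X^{[1,2]}$. Under the identification $X^{[1,2]} \cong \operatorname{Bl}_\Delta(X \times X)$ via $\operatorname{bl} = \operatorname{res} \times \tau$, Lemma \ref{identities}(1) gives $\rho^* N_{2, L'} \cong \operatorname{bl}^*(L' \boxtimes L')(-F_1)$, and Lemma \ref{canonical-bundle} together with the description of $X^{[1,2]}$ as a blow-up yields $\omega_{X^{[1,2]}} \cong \operatorname{bl}^*(\omega_X \boxtimes \omega_X)((n-1) F_1)$. Combining these identifications,
\[
\omega_{X^{[1,2]}}^{-1} \otimes \mathcal{N} \cong \operatorname{bl}^*(A^{2n+1} \boxtimes A^{2n+1})(-n F_1) \otimes C.
\]
Since $C$ is nef, it suffices to verify that $\operatorname{bl}^*(A^{2n+1} \boxtimes A^{2n+1})(-n F_1)$ is big and nef. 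The specific exponent $2n+1$ is exactly the threshold needed: by a standard Castelnuovo--Mumford regularity argument using the successive quotients $\operatorname{Sym}^j \Omega_X^1$ of $\mathcal{I}_\Delta^j / \mathcal{I}_\Delta^{j+1}$, the sheaf $(A^{2n+1} \boxtimes A^{2n+1}) \otimes \mathcal{I}_\Delta^n$ is globally generated on $X \times X$, which forces global generation (and hence nefness) of the pullback, while bigness follows from a direct top self-intersection computation on the blow-up. The argument for (b) is identical, with the more positive line bundle $L \otimes A^{2n+1}$ replacing $A^{2n+1}$ on the first factor.

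For (c), the same Kawamata--Viehweg analysis applied fiberwise to the fibers $\operatorname{Bl}_p X$ of $\tau$ yields $R^j \tau_* \mathcal{N} = 0$ for $j \geq 1$, since on each fiber one computes $\omega_{\operatorname{Bl}_p X}^{-1} \otimes \mathcal{N}|_{\operatorname{Bl}_p X} \cong \operatorname{bl}_p^*(A^{2n+1})(-n E_p) \otimes C|_{\operatorname{Bl}_p X}$, which is visibly big and nef. By projection formula, (c) then reduces to surjectivity of $H^0(X, L) \otimes H^0(X, \tau_* \mathcal{N}) \to H^0(X, L \otimes \tau_* \mathcal{N})$ on $X$, which follows from a Mumford-style regularity argument once one checks that $\tau_* \mathcal{N}$ is $0$-regular with respect to $A$ (and $L = \omega_X \otimes A^m \otimes B$ is sufficiently positive, which the hypothesis $m \geq 2n+1$ secures). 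The main technical obstacle will be verifying the big-and-nef-ness of $\operatorname{bl}^*(A^{2n+1} \boxtimes A^{2n+1})(-nF_1)$ with the sharp exponent $2n+1$; this is essentially an $n$-jet separation property of $A^{2n+1}$ along the diagonal, which is precisely what makes the effective threshold $m \geq 2n+1$ work out.
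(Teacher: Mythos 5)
Your approach is genuinely different from the paper's, and while parts of it are correct, there is a real gap in the last step. The paper works on the $3n$-dimensional blow-up $\operatorname{Bl}_{\mathcal{V}_{1,2}}(X\times X^{[1,2]})$, where $\tau^*M_L$ is realized directly as the pushforward $p_*b^*(L\boxtimes\OO_{X^{[1,2]}})(-E)$ (with higher $R^ip_*$ vanishing); the lemma then reduces to a single Kawamata--Viehweg application for the twist $b^*(L\boxtimes\rho^*N_{2,\omega_X\otimes A^{2n+1}}\otimes C)(-E)$, using that $b^*(A\boxtimes\rho^*N_{2,A})(-E)$ is globally generated. You stay on $X^{[1,2]}$ and pull back the evaluation sequence, reducing to (a) $H^i(\mathcal{N})=0$, (b) $H^i(\tau^*L\otimes\mathcal{N})=0$, and (c) surjectivity of $H^0(L)\otimes H^0(\mathcal{N})\to H^0(\tau^*L\otimes\mathcal{N})$. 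Your identifications of $\omega_{X^{[1,2]}}^{-1}\otimes\mathcal{N}$ and the nefness/bigness of $\operatorname{bl}^*(A^{2n+1}\boxtimes A^{2n+1})(-nF_1)$ are correct, so (a) and (b) go through by Kawamata--Viehweg.

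The gap is in (c). After pushing to $X$ via $R^j\tau_*\mathcal{N}=0$ (which you argue correctly), you need surjectivity of $H^0(X,L)\otimes H^0(X,\tau_*\mathcal{N})\to H^0(X,L\otimes\tau_*\mathcal{N})$, and you claim this follows from ``a Mumford-style regularity argument'' once $\tau_*\mathcal{N}$ is shown to be $0$-regular with respect to $A$. But Mumford's theorem only gives surjectivity of multiplication maps $H^0(A^k)\otimes H^0(\mathcal{F})\to H^0(\mathcal{F}\otimes A^k)$ for a $0$-regular $\mathcal{F}$; it does not give surjectivity of multiplication by an arbitrary line bundle $L=\omega_X\otimes A^m\otimes B$ that is not a power of $A$, even if $L$ itself is $0$-regular. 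Indeed the surjectivity you need is equivalent to $H^1(X,M_L\otimes\tau_*\mathcal{N})=0$, which is a statement of exactly the flavor you started with (the vanishing of $H^1$ against a syzygy bundle of $L$), not something that ``falls out'' of $0$-regularity of $\tau_*\mathcal{N}$ alone. The paper's maneuver of going up one dimension is precisely what avoids facing this multiplication-surjectivity problem on $X$: on the blow-up, $\tau^*M_L$ is not a kernel bundle but a pushforward of $b^*(L\boxtimes\OO)(-E)$, so the entire lemma becomes one vanishing statement amenable to Kawamata--Viehweg, with no residual $H^0$-level surjectivity to check. To salvage your approach, you would need to prove a genuine multiplication lemma (e.g.\ by going through $M_L\otimes\tau_*\mathcal{N}$ on $X\times X$, which essentially re-derives the paper's blow-up setup), and the one-sentence appeal to ``Mumford-style regularity'' does not do this.
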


\begin{proof}
Consider the blow-up $b \colon \operatorname{Bl}_{\mathcal{V}_{1,2}} X \times X^{[1,2]} \lra  X \times X^{[1,2]}$ with the exceptional divisor $E$, and recall that $\operatorname{Bl}_{\mathcal{V}_{1,2}} X \times X^{[1,2]}$ is smooth. Notice that the projection map $p \colon \operatorname{Bl}_{\mathcal{V}_{1,2}} X \times X^{[1,2]} \lra X^{[1,2]}$ is flat and 
$$
R^i p_* b^*(L \boxtimes \OO_{X^{[1,2]}})(-E) \cong \begin{cases} \tau^* M_L & \text{if $i=0$} \\ 0 & \text{if $i\geq 1$}. \end{cases}
$$
We need to show that
$$
H^i(\operatorname{Bl}_{\mathcal{V}_{1,2}} X \times X^{[1,2]}, b^*((L \boxtimes \rho^* N_{2, \omega_X \otimes A^{2n+1}}) \otimes C)(-E))=0~~\text{ for $i \geq 1$.}
$$

\noindent Note that $b^*(A\boxtimes \rho^* N_{2,A})(-E)$ is globally generated by Section \ref{global-generation-universal-family}.
We may then compute
\begin{multline*}
b^*(L \boxtimes \rho^* N_{2, \omega_X \otimes A^{2n+1}} \otimes C))(-E) \otimes \omega_{\operatorname{Bl}_{\mathcal{V}_{1,2}} X \times X^{[1,2]}}^{-1}\\
\cong b^* ( (A^{m-n} \otimes B) \boxtimes (\tau^* A \otimes \res^* A^{n+1} \otimes C)) \otimes b^*(A^n \boxtimes N_{2,A} ^n)(-nE).
\end{multline*}

\noindent The desired vanishing follows now from the Kawamata--Viehweg vanishing theorem.
\end{proof}

\begin{lemma}\label{pullback-tau}
If $m \geq 2n+1$, then
\[
H^i(X^{[1,2]}, \tau^* \wedge^j M_L \otimes \rho^* N_{2,\omega_X \otimes A^{2n+1}} \otimes C)=0~~\text{ for all $i \geq 1$ and $i \geq j$.} 
\]
\end{lemma}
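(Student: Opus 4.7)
My plan is to proceed by induction on $j \geq 1$, with the base case $j = 1$ being Lemma \ref{pullback-tau-j=1case}. For the inductive step with $j \geq 2$, I will start from the short exact sequence on $X$
\[
0 \lra \wedge^j M_L \lra \wedge^j H^0(X,L) \otimes \mathcal{O}_X \lra \wedge^{j-1} M_L \otimes L \lra 0,
\]
obtained by wedging the defining sequence $0 \to M_L \to H^0(X, L) \otimes \mathcal{O}_X \to L \to 0$ and using that $L$ is a line bundle. Pulling back via $\tau$ and twisting by $\mathcal{T} := \rho^* N_{2, \omega_X \otimes A^{2n+1}} \otimes C$, the associated long exact cohomology sequence will reduce the problem to two separate vanishings: that $H^i(X^{[1,2]}, \mathcal{T}) = 0$ for $i \geq 1$, and that $H^i(X^{[1,2]}, \tau^* \wedge^{j-1} M_L \otimes \tau^* L \otimes \mathcal{T}) = 0$ for $i \geq j - 1$.

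For the first, I will apply Kawamata--Viehweg vanishing on the smooth variety $X^{[1,2]}$. Combining Lemma \ref{identities} with the canonical bundle formula from Lemma \ref{canonical-bundle}, one computes
\[
\mathcal{T} \otimes \omega_{X^{[1,2]}}^{-1} \cong \operatorname{bl}^*\bigl( A^{2n+1} \boxtimes A^{2n+1} \bigr)(-n F_1) \otimes C,
\]
where $\operatorname{bl}$ identifies $X^{[1,2]}$ with $\operatorname{Bl}_{\mathcal{Z}_1}(X \times X)$. Since $\operatorname{bl}^*(A \boxtimes A)(-F_1)$ is globally generated by \S\ref{global-generation-universal-family} (hence nef), the factor $\operatorname{bl}^*(A^n \boxtimes A^n)(-n F_1)$ is nef, while $\operatorname{bl}^*(A^{n+1} \boxtimes A^{n+1}) \otimes C$ is big and nef; their tensor product is therefore big and nef, and Kawamata--Viehweg gives the desired vanishing for all $i \geq 1$.

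The second vanishing follows directly from the inductive hypothesis for $j - 1$, applied with the nef line bundle $\tau^* L \otimes C$ in place of $C$. This is legitimate because $L$ is ample (as $m \geq 2n + j \geq n + 1$) so that $\tau^* L$ is nef, and because the bound $m \geq 2n + j$ implies $m \geq 2n + (j - 1)$, which is precisely what the $(j - 1)$-case demands. Assembling the two vanishings in the long exact cohomology sequence then yields $H^i(X^{[1,2]}, \tau^* \wedge^j M_L \otimes \mathcal{T}) = 0$ for all $i \geq j$.

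Since the base Lemma \ref{pullback-tau-j=1case} is already stated for an arbitrary nef line bundle $C$ on $X^{[1,2]}$, the induction threads through uniformly, and I do not anticipate any serious obstacle; the only point requiring attention is the bookkeeping that keeps $m$ and $j$ correctly matched across inductive steps and that ensures the auxiliary twist $\tau^* L \otimes C$ is genuinely nef (not ample) so as to be a legitimate input to the previous case.
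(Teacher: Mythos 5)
Your proof is correct and follows the paper's argument essentially step for step: the same short exact sequence, the same twist by $\tau^*L$, the same inductive structure, and the same Kawamata--Viehweg computation for the $j=0$ piece (the paper writes the big-and-nef bundle as $\rho^*T_{2,A^{n+1}}\otimes C\otimes(\tau^*A^n\otimes\operatorname{res}^*A^n)(-nF_1)$, which is the same line bundle as your $\operatorname{bl}^*(A^{2n+1}\boxtimes A^{2n+1})(-nF_1)\otimes C$). The one piece of unneeded worry is your bookkeeping of $m$ against $j$: the lemma's hypothesis is the fixed bound $m\geq 2n+1$, not $m\geq 2n+j$, so a single $m$ serves the whole induction uniformly.
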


\begin{proof}
We proceed by induction on $j$. If $j=0$, then the lemma follows from  the Kawamata--Viehweg vanishing theorem since
$$
\rho^* N_{2, \omega_X \otimes A^{2n+1}} \otimes C \otimes \omega_{X^{[1,2]}}^{-1} \cong \rho^* T_{2, A^{n+1}} \otimes C \otimes (\tau^* A^n \otimes \res^* A^n)(-nF_1)
$$
is nef and big. 
The case $j=1$ is treated in Lemma \ref{pullback-tau-j=1case}. Assume $j \geq 2$. 
Consider the short exact sequence
$$
0 \lra \tau^* \wedge^j M_L  \lra \tau^* (\wedge^j H^0(X, L) \otimes \OO_X) \lra \tau^* (\wedge^{j-1} M_L \otimes L) \lra 0.
$$
Tensoring by $\rho^* N_{2,\omega_X \otimes A^{2n+1}} \otimes C$ and taking cohomology, we reduce the problem to
$$
\begin{array}{l}
H^{i-1}(X^{[1,2]},\tau^* \wedge^{j-1} M_L \otimes \rho^* N_{2, \omega_X \otimes A^{2n+1}} \otimes (\tau^* L \otimes C))=0, \text{ and}\\
H^i(X^{[1,2]}, \rho^* N_{2, \omega_{X} \otimes A^{2n+1}} \otimes C)=0
\end{array}
$$
for $i \geq j \geq 2$. These follow from the inductive hypothesis.
\end{proof}

\begin{lemma}\label{pullback-r}
If $m\geq 2n+1$,
then
\[
H^i(X^{[1,2]}, \rho^* (M_{2,L}  \otimes N_{2,\omega_X \otimes A^{2n+1}}) \otimes C)=0~~\text{ for all $i\geq 1$.}
\]
\end{lemma}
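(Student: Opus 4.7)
The plan is to realize the cohomology group on $X^{[1,2]}$ as the cohomology of a line bundle on the triple nested Hilbert scheme $X^{[1,2,3]}$ and apply a Kawamata--Viehweg-type vanishing there via Lemma \ref{123}. Specifically, I would take $b \colon X^{[1,2,3]} \cong \Bl_{\mathcal{W}_{1,2}}(X \times X^{[1,2]}) \to X \times X^{[1,2]}$ to be the blow-up identified in Lemma \ref{123}, with exceptional divisor $E$, and $p \colon X^{[1,2,3]} \to X^{[1,2]}$ the composition with projection to the second factor. Since $\mathcal{W}_{1,2} = (\operatorname{id}_X \times \rho)^{-1} \mathcal{Z}_2$ is Cohen--Macaulay by \S\ref{subsection-uni-family} and $b^* \mathcal{I}_{\mathcal{W}_{1,2}} = \mathcal{O}_{X^{[1,2,3]}}(-E)$, the projection formula together with the vanishing $R^i b_* \mathcal{O}(-E) = 0$ for $i \geq 1$ would give $p_*(b^*(L \boxtimes \mathcal{O})(-E)) = \rho^* M_{2,L}$ and $R^i p_*(b^*(L \boxtimes \mathcal{O})(-E)) = 0$ for $i \geq 1$. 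Via the Leray spectral sequence and projection formula, this reduces the problem to showing
\[
H^i\bigl(X^{[1,2,3]},\ b^*(L \boxtimes \mathcal{O})(-E) \otimes p^*(\rho^* N_{2,L'} \otimes C)\bigr) = 0 \quad \text{for } i \geq 1,
\]
where $L' := \omega_X \otimes A^{2n+1}$.

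Next, I would verify that the line bundle in question, twisted by $\omega_{X^{[1,2,3]}}^{-1}$, is big and nef. Here the key ingredient is global generation: since $L'$ separates $2$-schemes (by the cited result of HTT applied to $\ell = 2n+1 \geq n+2$), \S\ref{global-generation-universal-family} gives global generation of $\mathcal{I}_{\mathcal{Z}_2} \otimes (L' \boxtimes N_{2, L'})$ on $X \times X^{[2]}$, and pulling back and resolving via $b$ produces the globally generated line bundle $b^*(L' \boxtimes \rho^* N_{2, L'})(-E)$ on $X^{[1,2,3]}$. Using the fiber-product description $X^{[1,2,3]} \cong X^{[1,2]} \times_{X^{[2]}} X^{[2,3]}$ of Lemma \ref{123} together with Lemma \ref{canonical-bundle}, the canonical bundle $\omega_{X^{[1,2,3]}}$ can be expressed in terms of pullbacks of $\omega_X$ and exceptional divisors. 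The hypothesis $m \geq 2n+1$ should then ensure that the twist of interest decomposes as a suitable power of $b^*(L' \boxtimes \rho^* N_{2, L'})(-E)$ times an ample-plus-nef residue absorbing the pullback of $\omega_X \otimes A^{m+n} \otimes B$ (ample since $m + n \geq 3n+1 \geq n+2$) and the nef line bundle $C$.

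The hard part will be applying Kawamata--Viehweg vanishing on $X^{[1,2,3]}$, which by Lemma \ref{123} has only canonical lci --- not smooth --- singularities. Since canonical singularities are rational, a generalized form of Kawamata--Viehweg vanishing should apply: either directly for varieties with canonical singularities, or by resolving $X^{[1,2,3]}$ and observing that the canonical discrepancies leave the big-and-nef positivity of the pulled-back twist intact, so that standard Kawamata--Viehweg on the resolution suffices. Combined with the Leray isomorphism above, this would give the desired vanishing on $X^{[1,2]}$ and complete the proof.
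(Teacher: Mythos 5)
Your high-level plan --- realize $\rho^*M_{2,L}$ as a direct image from $X^{[1,2,3]}$, reduce via Leray to a line-bundle vanishing on $X^{[1,2,3]}$, and apply Kawamata--Viehweg vanishing there using the canonical lci singularities of Lemma~\ref{123} --- is exactly the paper's. Your line bundle $b^*(L\boxtimes\OO)(-E)$ coincides with the paper's $\pr_2^*(\res^*L(-F_2))$ under the fiber-product description $X^{[1,2,3]}\cong X^{[1,2]}\times_{X^{[2]}}X^{[2,3]}$, so the two reductions are the same. One correction on the Leray step: Cohen--Macaulayness of $\mathcal{W}_{1,2}$ does not by itself give $R^ib_*\OO(-E)=0$; that vanishing follows from flat base change along $\operatorname{id}_X\times\rho$ from $R^i\bl_*\OO_{X^{[2,3]}}(-F_2)=0$, which is essentially the content of the paper's appeal to \cite[Chapter III, Proposition 9.3]{Hartshorne.77}. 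Both your reduction and the paper's also implicitly use $H^i(X,L)=0$ for $i\geq 1$ (guaranteed by $m\geq 2n+1$) to kill the residual terms $R^i\pr_{2,*}$.

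The genuine gap is in the step you explicitly leave as ``should'': verifying that the adjoint twist is nef and big. Your proposed decomposition --- ``a suitable power of $b^*(L'\boxtimes\rho^*N_{2,L'})(-E)$ times an ample-plus-nef residue'' --- does not hold. Note that $b^*(L'\boxtimes\rho^*N_{2,L'})(-E)=\pr_2^*\rho^*N_{3,L'}$ by Lemma~\ref{identities}(1). Combining $\omega_{X^{[1,2,3]}}\cong\pr_1^*\OO(F_1)\otimes\pr_2^*\omega_{X^{[2,3]}}$ with $\pr_1^*\OO(F_1)\cong\pr_2^*\tau^*\delta_2$, $\omega_{X^{[2,3]}}\cong\rho^*(T_{3,\omega_X}\otimes\delta_3^{n-2})(F_2)$, and $\tau^*\delta_2^{-1}\cong\rho^*\delta_3^{-1}(F_2)$, the adjoint twist simplifies to
\[
\pr_1^*C\otimes\pr_2^*\bigl(\rho^*(T_{3,A}^{2n+1}\otimes\delta_3^{-n})\otimes\res^*(A^{m-2n-1}\otimes B)\bigr).
\]
Here $\omega_{X^{[3]}}^{-1}$ contributes a factor $T_{3,\omega_X}^{-1}$ that cancels exactly the single copy of $T_{3,\omega_X}$ contained in $N_{3,L'}$: taking $N_{3,L'}^c$ with $c\geq 2$ reintroduces $T_{3,\omega_X}$ factors that nothing can offset, while $c=1$ leaves a residue containing $T_{3,\omega_X}^{-1}\otimes\delta_3^{1-n}$, which is not nef. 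What the paper actually does is regroup $T_{3,A}^{2n+1}\otimes\delta_3^{-n}=N_{3,A^2}^n\otimes T_{3,A}$, so that $N_{3,A^2}$ (globally generated since $A^2$ separates $3$-schemes) replaces $N_{3,L'}$ as the positive piece, the nef and big $T_{3,A}$ supplies bigness, and $m\geq 2n+1$ enters exactly to make $\res^*A^{m-2n-1}$ nef. Without carrying out this computation you cannot confirm that the twist is nef and big, nor locate where the bound $m\geq 2n+1$ is actually used, so this is a genuine gap rather than a routine omission.
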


\begin{proof}
Recall from  Lemma \ref{123} that $\operatorname{Bl}_{\mathcal{W}_{1,2}} X\times X^{[1,2]} \cong X^{[1,2]}\times_{X^{[2]}} X^{[2,3]}$. There are two projections $\pr_1 \colon X^{[1,2]}\times_{X^{[2]}} X^{[2,3]} \to X^{[1,2]}$ and $\pr_2 \colon X^{[1,2]}\times_{X^{[2]}} X^{[2,3]} \to X^{[2,3]}$. By \cite[Chapter III, Proposition 9.3]{Hartshorne.77}, we have
\[
R^i \operatorname{pr}_{1,*} \operatorname{pr}_2 ^* (\operatorname{res}^* L (-F_2)) \cong \begin{cases} \rho^* M_{2,L} & \text{if $i=0$} \\ 0 & \text{if $i\geq 1$}. \end{cases}
\]
Then it is enough to show that
\[
H^i(X^{[1,2]}\times_{X^{[2]}} X^{[2,3]}, \operatorname{pr}_2 ^*( \operatorname{res}^* L (-F_2)) \otimes \operatorname{pr}_1^* (\rho^* N_{2,\omega_X\otimes A^{2n+1}}\otimes C))= 0~~\text{ for $i\geq 1$}.
\]
As $\omega_{X^{[1,2]}\times_{X^{[2]}} X^{[2,3]}}\cong \operatorname{pr}_2 ^* \omega_{X^{[2,3]}}(\operatorname{pr}_1 ^* F_1)$, we have that
\begin{multline*}
\operatorname{pr}_2 ^*( \operatorname{res}^* L (-F_2)) \otimes \operatorname{pr}_1^* (\rho^* N_{2,\omega_X\otimes A^{2n+1}}\otimes C) \otimes \omega_{X^{[1,2]}\times_{X^{[2]}} X^{[2,3]}}^{-1}\\
\cong \operatorname{pr}_1^* C  \otimes \operatorname{pr}_2^* (\rho^*(N_{3,A^2}^n \otimes T_{3,A})  \otimes \operatorname{res}^*(A^{m-2n-1} \otimes B))
\end{multline*}
is nef and big (note that $N_{3,A^2}$ is globally generated since $A^2$ separates $3$-schemes by \cite[Theorem 1.1]{HTT}). Since $X^{[1,2]}\times_{X^{[2]}} X^{[2,3]}$ has canonical singularities by Lemma \ref{123}, we may conclude by the Kawamata--Viehweg vanishing theorem.
\end{proof}

\begin{proposition}\label{effective-vanishing}
If $m\geq 2n+j$, then 
\[
H^i(X^{[1,2]}, \rho^* (\wedge^j M_{2,L}\otimes N_{2,\omega_X \otimes A^{2n+j}}) \otimes C)=0~~\text{ for all $i\geq j \geq 1$.}
\]
\end{proposition}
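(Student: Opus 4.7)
The plan is to prove Proposition \ref{effective-vanishing} by induction on $j \geq 1$, with the base case $j = 1$ being precisely Lemma \ref{pullback-r}. For the inductive step, the key tool is the short exact sequence on $X^{[1,2]}$
\[
0 \lra \rho^* M_{2,L} \lra \tau^* M_L \lra \operatorname{res}^* L(-F_1) \lra 0,
\]
which compares the syzygy bundles at levels $k = 1, 2$ via the identification $M_{1,2,L} \cong \operatorname{res}^* L(-F_1)$. Taking the $j$-th exterior power (valid as a short exact sequence because the quotient is a line bundle) and tensoring with $\rho^* N_{2, \omega_X \otimes A^{2n+j}} \otimes C$, the associated long exact sequence in cohomology reduces the statement to showing (a) $H^i(X^{[1,2]}, \tau^* \wedge^j M_L \otimes \rho^* N_{2, \omega_X \otimes A^{2n+j}} \otimes C) = 0$ for $i \geq j$, and (b) $H^{i-1}(X^{[1,2]}, \rho^*(\wedge^{j-1} M_{2,L} \otimes N_{2, \omega_X \otimes A^{2n+j}}) \otimes \operatorname{res}^* L(-F_1) \otimes C) = 0$ for $i \geq j$.

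For (a), I would factor $N_{2, \omega_X \otimes A^{2n+j}} \cong N_{2, \omega_X \otimes A^{2n+1}} \otimes N_{2, A^{j-1}}$ via the multiplicativity of $N_{2, \cdot}$, absorb the factor $\rho^* N_{2, A^{j-1}}$ (nef, given the sufficient positivity of $A^{j-1}$) into $C$, and invoke Lemma \ref{pullback-tau} directly. For (b), using instead the factorization $N_{2, \omega_X \otimes A^{2n+j}} \cong N_{2, \omega_X \otimes A^{2n+(j-1)}} \otimes N_{2, A}$ together with the identity $\rho^* N_{2, A} \cong \tau^* A \otimes \operatorname{res}^* A(-F_1)$ from Lemma \ref{identities}, the target rewrites as
\[
H^{i-1}\bigl(X^{[1,2]}, \rho^*(\wedge^{j-1} M_{2,L} \otimes N_{2, \omega_X \otimes A^{2n+(j-1)}}) \otimes C'\bigr) = 0,
\]
where $C' := \tau^* A \otimes \operatorname{res}^*(A \otimes L)(-2F_1) \otimes C$. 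Provided $C'$ is nef, the inductive hypothesis (with the same $L$ but with $C$ replaced by $C'$) finishes the argument.

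The main obstacle will be verifying that $C'$ is nef on $X^{[1,2]}$. The plan for this step is to exploit the global generation of $\rho^* N_{2, A \otimes L} \cong \tau^*(A \otimes L) \otimes \operatorname{res}^*(A \otimes L)(-F_1)$ on $X^{[1,2]}$, which holds because the positivity hypothesis $m \geq 2n+j$ ensures that $A \otimes L$ separates $2$-schemes (e.g., by the Fujita-type results cited in the paper), and to combine this with the very-ampleness of $A$ and the blow-up description $X^{[1,2]} \cong \operatorname{Bl}_{\mathcal{Z}_1}(X \times X)$ in order to control intersection numbers of $C'$ against test curves, particularly those meeting the exceptional divisor $F_1$.
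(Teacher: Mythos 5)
Your overall strategy matches the paper's: induction on $j$ via the short exact sequence $0 \to \rho^*M_{2,L} \to \tau^*M_L \to \operatorname{res}^*L(-F_1) \to 0$, its exterior powers, and reduction to Lemma \ref{pullback-tau} plus the inductive hypothesis. However, there is a genuine algebraic error in your factorizations that produces exactly the gap you identify at the end but cannot close. The claimed multiplicativity $N_{2,L_1\otimes L_2} \cong N_{2,L_1}\otimes N_{2,L_2}$ is false: since $N_{k,L}\cong T_{k,L}\otimes\delta_k^{-1}$ by Lemma \ref{N&S} and $T_{k,\cdot}$ is multiplicative (being $h^*S_{k,\cdot}$, the descent of $L^{\boxtimes k}$), the correct identity is $N_{2,L_1\otimes L_2}\cong N_{2,L_1}\otimes T_{2,L_2}$. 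The discrepancy is a factor of $\delta_2$, and $\rho^*\delta_2\cong\mathcal{O}_{X^{[1,2]}}(F_1)$ by Lemma \ref{identities}(2), so every one of your reduced targets for (a) and (b) is off by $\mathcal{O}(-F_1)$.

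Concretely, for (b) the paper uses $\rho^*T_{2,A}\cong\tau^*A\otimes\operatorname{res}^*A$ (Lemma \ref{identities}(4)), so the twist to be absorbed into $C$ is
\[
C''\ :=\ C\otimes\operatorname{res}^*L\otimes(\tau^*A\otimes\operatorname{res}^*A)(-F_1),
\]
a product of $C$, $\operatorname{res}^*L$, and $\rho^*N_{2,A}$ — all nef, the last even globally generated since $A$ is very ample — so nefness is immediate and the induction closes. Your $C'=C''\otimes\mathcal{O}(-F_1)$ carries the stray $-F_1$, and your proposed remedy through global generation of $\rho^*N_{2,A\otimes L}$ gives no leverage, because
\[
C'\otimes(\rho^*N_{2,A\otimes L})^{-1}\ \cong\ C\otimes\tau^*L^{-1}(-F_1),
\]
and $\tau^*L^{-1}(-F_1)$ restricts to $\operatorname{bl}_y^*L^{-1}(-E)$ on each fiber of $\operatorname{res}$ (a copy of $\operatorname{Bl}_yX$), which is strongly negative. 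The same off-by-$\delta_2$ issue appears in (a): the correct factor to absorb into $C$ when invoking Lemma \ref{pullback-tau} is $\rho^*T_{2,A^{j-1}}$ — nef as $\rho^*h^*S_{2,A^{j-1}}$ with $S_{2,A^{j-1}}$ nef on $\operatorname{Sym}^2X$ — not $\rho^*N_{2,A^{j-1}}$. Replacing $N_{2,\cdot}$ by $T_{2,\cdot}$ in both factorizations removes the extra $-F_1$'s; the positivity then falls out immediately, with no intersection-theoretic estimates required, and the argument becomes the paper's.
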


\begin{proof}
We proceed by induction on $j$. The case $j=1$ follows from Lemma \ref{pullback-r}. Assume $j \geq 2$. Consider the short exact sequence
\[
0\lra \rho^* \wedge^j M_{2,L}\lra \tau^* \wedge^j M_{L}\lra \rho^* \wedge^{j-1} M_{2,L}\otimes \operatorname{res}^* L (-F_1)\lra 0. 
\]
Twisting by $\rho^* N_{2,\omega_X \otimes A^{2n+j}} \otimes C$ and taking cohomology, we reduce the problem to
$$
\begin{array}{l}
H^{i-1}(X^{[1,2]}, \rho^* (\wedge^{j-1} M_{2,L} \otimes N_{2, \omega_X \otimes A^{2n+j-1}}) \otimes C \otimes \res^* L \otimes (\tau^* A \otimes \res^* A)(-F_1))=0, \text{ and}\\
H^i(X^{[1,2]}, \tau^* \wedge^j M_L \otimes \rho^* N_{2,\omega_X \otimes A^{2n+1}} \otimes C \otimes \rho^* T_{2, A^{j-1}})=0
\end{array}
$$
for $i \geq j \geq 2$. Note that $(\tau^* A \otimes \res^* A)(-F_1)$ is globally generated. Therefore, we are done from the inductive hypothesis and Lemma \ref{pullback-tau}.
\end{proof}

\begin{proof}[Proof of Theorem \ref{main-effective-vanishing}]
By Lemma \ref{lem:O_X^[k]directsummand}, the vector bundle $\wedge^j M_{2,L}\otimes A_{2,L}$ is a direct summand of the vector bundle $\rho_* \rho^* (\wedge^j M_{2,L} \otimes N_{2,L})$. The theorem is an immediate consequence of Lemma \ref{direct-image-res-r-k-1-k} and Proposition \ref{effective-vanishing}, applied with $C=\rho^* T_{2, A^{m-2n-j} \otimes B}$.
\end{proof}

\section{Singularities of secant varieties}\label{main-results-section}

This section is devoted to the proof of Theorem \ref{theoremA}. We assume that $L$ is a sufficiently positive line bundle on $X$ giving an embedding $X \subseteq \mathbb{P} H^0(X, L) = \mathbb{P}^r$, unless otherwise stated. We also assume that $n=\dim (X) \leq 2$ or $k \leq 3$. Under these assumptions, $X^{[k]}$ and $X^{[k-1,k]}$ are smooth, so the $k$-secant variety $\sigma_k$ (respectively, the relative secant variety $\sigma_{m,k}$) coincides with the $k$-cactus scheme $\kappa_k$ (respectively, the relative cactus scheme $\kappa_{m,k}$).

\subsection{Normality and projective normality}
We prove the normality and projective normality of secant varieties by induction on $k$.

\begin{lemma}\label{lem:alpha_*O_Z}
If $\sigma_{k-1}$ is normal, then $\alpha_{k,*}\mathcal{O}_{Z_{k}} \cong \mathcal{O}_{\sigma_{k-1}}$.
\end{lemma}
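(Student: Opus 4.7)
The plan is to sandwich $\alpha_{k,*}\mathcal{O}_{Z_{k}}$ between two copies of $\mathcal{O}_{\sigma_{k-1}}$ by exploiting the birational factorization of $\alpha_{k}\vert_{Z_{k}}$ through $B^{k-1,k}$. Concretely, the key object is the commutative square
\[
\begin{tikzcd}
B^{k-1,k} \ar[r,"\alpha_{k-1,k}"] \ar[d,"\widetilde{\tau}"'] & Z_{k} \ar[d,"\alpha_k|_{Z_k}"] \\
B^{k-1} \ar[r,"\alpha_{k-1}"'] & \sigma_{k-1}
\end{tikzcd}
\]
from \S\ref{subsection-boundary-divisor}, in which $\alpha_{k-1,k}\colon B^{k-1,k}\lra Z_{k}$ is surjective (in fact birational, since $Z_{k}=\sigma_{k-1,k}$) and $\alpha_{k-1}\colon B^{k-1}\lra \sigma_{k-1}$ is proper and birational.

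First, I would apply $(\alpha_{k}\vert_{Z_{k}})_{*}$ to the natural injection $\mathcal{O}_{Z_{k}}\hookrightarrow \alpha_{k-1,k,*}\mathcal{O}_{B^{k-1,k}}$ (injective because $\alpha_{k-1,k}$ is surjective and $Z_{k}$ is integral by Proposition \ref{A&Z}(1)). By commutativity of the square this yields an injection
\[
\alpha_{k,*}\mathcal{O}_{Z_{k}} \hookrightarrow \alpha_{k-1,*}\,\widetilde{\tau}_{*}\mathcal{O}_{B^{k-1,k}}.
\]
Next, I would compute $\widetilde{\tau}_{*}\mathcal{O}_{B^{k-1,k}}$. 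Since $\widetilde{\tau}$ is the base-change of $\tau\colon X^{[k-1,k]}\lra X^{[k-1]}$ along the flat morphism $\pi_{k-1}\colon B^{k-1}\lra X^{[k-1]}$, flat base change gives $\widetilde{\tau}_{*}\mathcal{O}_{B^{k-1,k}}\cong \pi_{k-1}^{*}\tau_{*}\mathcal{O}_{X^{[k-1,k]}}$, so matters reduce to showing $\tau_{*}\mathcal{O}_{X^{[k-1,k]}}\cong \mathcal{O}_{X^{[k-1]}}$.

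For this last identification, I would use Stein factorization: $\tau$ is proper, and over the open locus $X^{[k-1]}_{\mathrm{lci}}$ the fiber of $\tau$ is $\operatorname{Bl}_{\xi}X$ (hence connected), so the finite part of the Stein factorization is birational onto the normal scheme $X^{[k-1]}$ and therefore an isomorphism. This gives $\tau_{*}\mathcal{O}_{X^{[k-1,k]}}=\mathcal{O}_{X^{[k-1]}}$, hence $\widetilde{\tau}_{*}\mathcal{O}_{B^{k-1,k}}=\mathcal{O}_{B^{k-1}}$. Finally, the hypothesis that $\sigma_{k-1}$ is normal, together with Zariski's main theorem applied to the proper birational morphism $\alpha_{k-1}$, gives $\alpha_{k-1,*}\mathcal{O}_{B^{k-1}}=\mathcal{O}_{\sigma_{k-1}}$. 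Concatenating everything:
\[
\mathcal{O}_{\sigma_{k-1}} \lra \alpha_{k,*}\mathcal{O}_{Z_{k}} \hookrightarrow \alpha_{k-1,*}\,\widetilde{\tau}_{*}\mathcal{O}_{B^{k-1,k}} = \mathcal{O}_{\sigma_{k-1}},
\]
and the composite is the identity by functoriality, so both arrows are isomorphisms.

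The only step that is not essentially formal is the verification that $\tau_{*}\mathcal{O}_{X^{[k-1,k]}}=\mathcal{O}_{X^{[k-1]}}$; all other ingredients are either bookkeeping about the diagrams or standard consequences of normality. I expect this Stein-factorization step to be the main (minor) obstacle since the fibers of $\tau$ outside $X^{[k-1]}_{\mathrm{lci}}$ can be reducible (cf. \S\ref{residue-map}), but Stein factorization reduces connectedness of all fibers to connectedness of a single one.
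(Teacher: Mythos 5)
Your proof is correct and is essentially the same as the paper's: both sandwich $\alpha_{k,*}\mathcal{O}_{Z_k}$ between $\mathcal{O}_{\sigma_{k-1}}$ and $\alpha_{k,*}\alpha_{k-1,k,*}\mathcal{O}_{B^{k-1,k}} = \alpha_{k-1,*}\widetilde{\tau}_*\mathcal{O}_{B^{k-1,k}}$, identify the latter with $\mathcal{O}_{\sigma_{k-1}}$ using normality, and conclude the middle term is trapped. The only difference is one of exposition: the paper simply asserts $\widetilde{\tau}_*\mathcal{O}_{B^{k-1,k}} \cong \mathcal{O}_{B^{k-1}}$ in a one-line chain of isomorphisms, whereas you justify it via flat base change along $\pi_{k-1}$ and Stein factorization of $\tau$ (one could alternatively use that $\tau = \operatorname{pr}_2 \circ \operatorname{bl}$ factors through a blow-up of the smooth variety $X\times X^{[k-1]}$ followed by the projection, both of which push $\mathcal{O}$ to $\mathcal{O}$); this is a worthwhile detail to include but not a different route.
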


\begin{proof}
The morphism $\alpha_k$ fits in the following commutative diagram:
\begin{center}
    \begin{tikzcd}
        B^{k-1,k}\ar[r, "\alpha_{k-1,k}"]\ar[d, swap, "\tilde{\tau}"]& Z_{k}\ar[r,symbol=\subseteq]\ar[d]& B^k\ar[d, "\alpha_k"]\\
        B^{k-1}\ar[r, swap, "\alpha_{k-1}"]&
    \sigma_{k-1}\ar[r,symbol=\subseteq]&
        \sigma_k
    \end{tikzcd}.
\end{center}
Consider the composite map $\mathcal{O}_{\sigma_{k-1}}\hookrightarrow \alpha_{k,*} \mathcal{O}_{Z_k} \hookrightarrow \alpha_{k,*}  \alpha_{k-1,k,*} \mathcal{O}_{B^{k-1,k}}$. 
We have
\begin{equation}\label{eq:alpha_*O_B^{k-1,k}}
\alpha_{k,*}  \alpha_{k-1,k,*} \mathcal{O}_{B^{k-1,k}} \cong \alpha_{k-1,*}\widetilde{\tau}_* \mathcal{O}_{B^{k-1,k}} \cong \alpha_{k-1,*} \mathcal{O}_{B^{k-1}} \cong \mathcal{O}_{\sigma_{k-1}},
\end{equation}
so the composite map is an isomorphism.
\end{proof}

\begin{lemma}\label{lem:S^lH^0(O_B(1))->H^0(O_B(l))}
If $\sigma_{k-1} \subseteq \mathbb{P}^r$ is projectively normal, then the map $S^\ell H^0(B^k, \mathcal{O}_{B^k}(1)) \lra H^0(B^k, \mathcal{O}_{B^k}(\ell))$ is surjective for every $\ell \geq 0$.
\end{lemma}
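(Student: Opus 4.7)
The plan is to deduce the surjectivity of $S^\ell V \lra H^0(B^k, \mathcal{O}_{B^k}(\ell))$ (where $V := H^0(X,L)$) by exploiting the short exact sequence arising from the boundary divisor $Z_k$,
$$
0 \lra \mathcal{O}_{B^k}(-Z_k) \lra \mathcal{O}_{B^k} \lra \mathcal{O}_{Z_k} \lra 0,
$$
twisted by $\mathcal{O}_{B^k}(\ell)$, combined with the projective normality of $\sigma_{k-1}$. By the projection formula we have $H^0(B^k, \mathcal{O}_{B^k}(m)) \cong H^0(X^{[k]}, S^m E_{k,L})$, and Danila's theorem (Theorem \ref{Danila}) gives $V \cong H^0(X^{[k]}, E_{k,L})$ and $S^\ell V \cong H^0(X^{[k]}, S^\ell E_{k,L})$ for $\ell \leq k$, so these cases are immediate. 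We may therefore assume $\ell > k$.

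Using Proposition \ref{A&Z}(3) to identify $\mathcal{O}_{B^k}(-Z_k) \cong \mathcal{O}_{B^k}(-k) \otimes \pi_k^* A_{k,L}$, the twisted sequence gives an exact sequence
$$
0 \lra H^0(X^{[k]}, S^{\ell-k} E_{k,L} \otimes A_{k,L}) \lra H^0(B^k, \mathcal{O}_{B^k}(\ell)) \lra H^0(Z_k, \mathcal{O}_{Z_k}(\ell)).
$$
By Lemma \ref{lem:alpha_*O_Z} and projection formula, $H^0(Z_k, \mathcal{O}_{Z_k}(\ell)) = H^0(\sigma_{k-1}, \mathcal{O}_{\sigma_{k-1}}(\ell))$, and the composition $S^\ell V \lra H^0(B^k, \mathcal{O}_{B^k}(\ell)) \lra H^0(Z_k, \mathcal{O}_{Z_k}(\ell))$ is identified with the restriction $S^\ell V = H^0(\mathbb{P}^r, \mathcal{O}(\ell)) \lra H^0(\sigma_{k-1}, \mathcal{O}_{\sigma_{k-1}}(\ell))$, which is surjective by projective normality of $\sigma_{k-1}$. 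Therefore, given $s \in H^0(B^k, \mathcal{O}_{B^k}(\ell))$, I can find $s_0 \in S^\ell V$ whose image has the same restriction to $Z_k$ as $s$, so that $s - s_0$ lies in the image of $H^0(X^{[k]}, S^{\ell-k} E_{k,L} \otimes A_{k,L}) \lra H^0(B^k, \mathcal{O}_{B^k}(\ell))$.

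The hard part—where the vanishing results of the previous section enter—is to verify that this image is itself contained in the image of $S^\ell V$. By Corollary \ref{corollary-vanishing-serre}, every element of $H^0(X^{[k]}, S^{\ell-k} E_{k,L} \otimes A_{k,L})$ is a sum of products $t_i \cdot v_i$ with $t_i \in H^0(X^{[k]}, A_{k,L})$ and $v_i$ coming from $S^{\ell-k} V$. Now $H^0(X^{[k]}, A_{k,L}) = H^0(B^k, \pi_k^* A_{k,L})$ embeds into $H^0(B^k, \mathcal{O}_{B^k}(k))$ via the inclusion $\pi_k^* A_{k,L} \cong \mathcal{O}_{B^k}(k-Z_k) \hookrightarrow \mathcal{O}_{B^k}(k)$, and $H^0(B^k, \mathcal{O}_{B^k}(k)) = S^k V$ by Danila. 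So each $t_i$ lifts to some $\tau_i \in S^k V$, and compatibility of the multiplication in $\operatorname{Sym} V$ with the section ring of $\mathcal{O}_{B^k}(1)$ implies that $\tau_i \cdot v_i \in S^k V \cdot S^{\ell-k} V \subseteq S^\ell V$ maps to $t_i \cdot v_i$ in $H^0(B^k, \mathcal{O}_{B^k}(\ell))$. Summing, $s - s_0$ and therefore $s$ lies in the image of $S^\ell V$, completing the proof.
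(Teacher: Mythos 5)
Your proof is correct and follows essentially the same route as the paper: both arguments hinge on the boundary short exact sequence twisted into the relevant degree, Danila's theorem (Theorem \ref{Danila}) for the range $\ell \leq k$, Lemma \ref{lem:alpha_*O_Z} to identify $H^0(Z_k, \mathcal{O}_{Z_k}(\ell))$ with $H^0(\sigma_{k-1}, \mathcal{O}_{\sigma_{k-1}}(\ell))$, projective normality of $\sigma_{k-1}$ to handle the quotient piece, and Corollary \ref{corollary-vanishing-serre} to handle the ideal piece. The only difference is presentational: the paper first reduces to surjectivity of the multiplication $H^0(B^k, \mathcal{O}_{B^k}(k)) \otimes H^0(B^k, \mathcal{O}_{B^k}(\ell)) \to H^0(B^k, \mathcal{O}_{B^k}(k+\ell))$ and then runs a two-row commutative-diagram chase, whereas you unpack that chase directly at the level of individual sections.
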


\begin{proof}
This map is an isomorphism for $1 \leq \ell \leq k$ by Danila's theorem (see Theorem \ref{Danila}). It is therefore enough to show that the map 
\[
H^0(B^k, \mathcal{O}_{B^k}(k))\otimes 
H^0(B^k, \mathcal{O}_{B^k}(\ell))
\lra
H^0(B^k, \mathcal{O}_{B^k}(k+\ell))
\]
is surjective for all $\ell \geq 1$. Consider the short exact sequence
$$
0\lra \mathcal{O}_{B^k}(-Z_{k})\lra \mathcal{O}_{B^k}\lra \mathcal{O}_{Z_{k}}\lra 0.
$$
By Proposition \ref{A&Z}, we have
$$
\pi_{k,*}(\mathcal{O}_{B^k}((-Z_{k}+kH_k)+\ell H_k))\cong A_{k,L}\otimes S^\ell E_{k,L}.
$$
Lemma \ref{lem:alpha_*O_Z} implies that $H^0(Z_k, \mathcal{O}_{Z_k}(k+\ell)) \cong H^0(\sigma_{k-1}, \mathcal{O}_{\sigma_{k-1}}(k+\ell))$ for all $\ell \geq 0$.
Twisting the short exact sequence by $\mathcal{O}_{B^k}(k+\ell)$ and $H^0(B^k, \mathcal{O}_{B^k}(\ell))\otimes \mathcal{O}_{B^k}(k)$, and taking the global sections, we get a commutative diagram
\begin{center}
\begin{tikzcd}
0 \ar[r] & H^0(A_{k,L})\otimes H^0(S^\ell E_{k,L}) \ar[r] \ar[d] & H^0(kH_k)\otimes H^0(\ell H_k) \ar[r] \ar[d] & H^0(\OO_{\sigma_{k-1}}(k))\otimes H^0(\ell H_k) \ar[d] \\
0 \ar[r] & H^0(A_{k,L}\otimes S^\ell E_{k,L}) \ar[r] & H^0((k+\ell)H_k) \ar[r] & H^0(\OO_{\sigma_{k-1}}(k+\ell)) 
\end{tikzcd}.
\end{center}
By Corollary \ref{corollary-vanishing-serre}, the left vertical map is surjective. Since $\sigma_{k-1}$ is projectively normal, the two right horizontal maps are surjective as well (note that, alternatively, this can also be deduced by Corollary \ref{corollary-vanishing-serre}).
Since $\sigma_{k-1} \subseteq \mathbb{P}^r$ is projectively normal, the right-most vertical map is surjective. Thus the middle vertical map is surjective, as desired.
\end{proof}

\begin{theorem}\label{thm:normalsing}
The following hold:
\begin{enumerate}[topsep=0pt]
    \item $\sigma_k$ is normal.
    \item The restriction map $H^0(\mathbb{P}^r, \mathcal{O}_{\mathbb{P}^r}(\ell))\lra H^0(\sigma_k,\mathcal{O}_{\sigma_k}(\ell))$ is surjective for every $\ell\geq 0$.
\end{enumerate}
In particular, $\sigma_k \subseteq \mathbb{P}^r$ is projectively normal.
\end{theorem}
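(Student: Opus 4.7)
My plan is to prove (1) and (2) simultaneously by induction on $k$. The base case $k=1$ is immediate since $\sigma_1 = X$ is smooth and $X \subseteq \mathbb{P}^r$ is projectively normal for sufficiently positive $L$ by Mumford's theorem. For the inductive step, I assume both assertions hold for $\sigma_{k-1}$.

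The key input will be Lemma \ref{lem:S^lH^0(O_B(1))->H^0(O_B(l))}, whose hypothesis is furnished by the inductive projective normality (and whose proof internally invokes Lemma \ref{lem:alpha_*O_Z}, relying on the inductive normality of $\sigma_{k-1}$). It provides surjectivity of $S^\ell H^0(B^k, \mathcal{O}_{B^k}(1)) \lra H^0(B^k, \mathcal{O}_{B^k}(\ell))$ for every $\ell \geq 0$. Combining this with Danila's theorem (Theorem \ref{Danila}) to identify $H^0(B^k, \mathcal{O}_{B^k}(1))$ with $H^0(X, L)$, and factoring through the pullback $\alpha_k^*$ (which is injective since $\alpha_k$ is surjective onto the reduced scheme $\sigma_k$) in
\[
S^\ell H^0(X, L) \lra H^0(\sigma_k, \mathcal{O}_{\sigma_k}(\ell)) \xrightarrow{\alpha_k^*} H^0(B^k, \mathcal{O}_{B^k}(\ell)),
\]
I will conclude that both maps are surjective. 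The first immediately gives assertion (2), the projective normality of $\sigma_k \subseteq \mathbb{P}^r$; the second becomes an isomorphism for every $\ell \geq 0$, which is the crucial intermediate fact needed to deduce (1).

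To extract normality from this isomorphism, I will examine the cokernel $\mathcal{C}$ of the natural injection $\mathcal{O}_{\sigma_k} \hookrightarrow \alpha_{k,*}\mathcal{O}_{B^k}$. Twisting by the very ample $\mathcal{O}_{\sigma_k}(\ell)$ and using the projection formula, the section-level isomorphism combined with Serre vanishing of $H^1(\sigma_k, \mathcal{O}_{\sigma_k}(\ell))$ at large $\ell$ forces $H^0(\sigma_k, \mathcal{C}(\ell)) = 0$ for $\ell \gg 0$, hence $\mathcal{C} = 0$ and $\alpha_{k,*}\mathcal{O}_{B^k} \cong \mathcal{O}_{\sigma_k}$. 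Since $B^k$ is smooth and $\alpha_k$ is birational by Theorem \ref{terracini}, factoring $\alpha_k$ through the normalization $\nu \colon \widetilde{\sigma}_k \lra \sigma_k$ and tracing pushforwards yields $\mathcal{O}_{\sigma_k} = \nu_*\mathcal{O}_{\widetilde{\sigma}_k}$, so $\nu$ is an isomorphism and $\sigma_k$ is normal. The main technical obstacle has already been cleared upstream --- the surjectivity in Lemma \ref{lem:S^lH^0(O_B(1))->H^0(O_B(l))} rests on the Main Vanishing Theorem (Theorem \ref{intro-vanishing}) and the boundary-divisor analysis of Proposition \ref{A&Z}; granting those, the induction here closes cleanly through a few lines of sheaf-theoretic bookkeeping.
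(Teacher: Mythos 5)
Your proposal is correct and follows essentially the same route as the paper: induction on $k$, feeding the inductive hypotheses into Lemma \ref{lem:S^lH^0(O_B(1))->H^0(O_B(l))}, factoring the resulting surjection through $H^0(\sigma_k, \mathcal{O}_{\sigma_k}(\ell))$ to get (2), and then killing the cokernel of $\mathcal{O}_{\sigma_k} \hookrightarrow \alpha_{k,*}\mathcal{O}_{B^k}$ via Serre vanishing to get (1). The only cosmetic differences are the base-case citation (Mumford vs.\ Ein--Lazarsfeld) and that you spell out the normalization step, which the paper leaves implicit.
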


\begin{proof}
We proceed by induction on $k$. When $k=1$, part (1) is obvious and (2) follows from \cite[Theorem 1]{Ein.Lazarsfeld}. We assume $k \geq 2$. Letting $\mathcal{Q}:=\alpha_{k,*} \mathcal{O}_{B^k}/\mathcal{O}_{\sigma_k}$, we have a short exact sequence
\[
0\lra \mathcal{O}_{\sigma_k}\lra \alpha_{k,*}\mathcal{O}_{B^k}\lra \mathcal{Q}\lra 0.
\]
By inductive hypothesis and Lemma \ref{lem:S^lH^0(O_B(1))->H^0(O_B(l))}, the map $S^{\ell} H^0(\sigma_k, \mathcal{O}_{\sigma_k}(1)) \lra H^0(\sigma_k,\alpha_{k,*}\mathcal{O}_{B^k}(\ell))$ is surjective for each $\ell \geq 0$. But this surjection factors through $H^0(\sigma_k, \mathcal{O}_{\sigma_k}(\ell))$, so (2) holds. Then Serre's vanishing theorem yields $H^0(\sigma_k, \mathcal{Q}(\ell))=0$ for $\ell \gg 0$. This implies $\mathcal{Q}=0$, so (1) holds.
\end{proof}

The first statement of the following theorem is the Du Bois-type condition, which is key for the remaining assertions of Theorem \ref{theoremA} and Theorem \ref{thm:N_{k+2,p}}. 

\begin{theorem}\label{Du Bois-type condition}
The following hold:
\begin{enumerate}[topsep=0pt]
    \item $R^i\alpha_{k,*} \mathcal{O}_{B^k}(-Z_k) \cong \begin{cases} \mathcal{I}_{\sigma_{k-1}/\sigma_k} & \text{if $i=0$} \\
0 & \text{if $i \geq 1$}.\end{cases}$
    \item $H^i(\sigma_k, \OO_{\sigma_k}(\ell))=0$ for $i\geq 1$ and $\ell\geq 1$. In particular, we have $\operatorname{reg} (\mathcal{O}_{\sigma_k}) \leq d_k+1 = nk+k$.
\end{enumerate}
\end{theorem}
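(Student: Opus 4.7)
The heart of (1) lies in showing $R^i\alpha_{k,*}\mathcal{O}_{B^k}(-Z_k)=0$ for all $i\geq 1$; once this vanishing is established, applying $\alpha_{k,*}$ to the short exact sequence
\[
0\to \mathcal{O}_{B^k}(-Z_k)\to \mathcal{O}_{B^k}\to \mathcal{O}_{Z_k}\to 0
\]
together with $\alpha_{k,*}\mathcal{O}_{B^k}\cong \mathcal{O}_{\sigma_k}$ (Theorem \ref{thm:normalsing}) and $\alpha_{k,*}\mathcal{O}_{Z_k}\cong \mathcal{O}_{\sigma_{k-1}}$ (Lemma \ref{lem:alpha_*O_Z}) will identify the remaining $i=0$ term with $\mathcal{I}_{\sigma_{k-1}/\sigma_k}$, since the resulting surjection $\mathcal{O}_{\sigma_k}\twoheadrightarrow \mathcal{O}_{\sigma_{k-1}}$ is the natural restriction coming from the inclusion $\sigma_{k-1}\subseteq \sigma_k$.

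To prove the higher direct image vanishing, I will invoke Lemma \ref{lem:standfactforhdi}, which reduces the statement to showing $H^i(B^k, \mathcal{O}_{B^k}(\ell H_k - Z_k))=0$ for all $i\geq 1$ and $\ell$ sufficiently large. By Proposition \ref{A&Z}(3), the twisted line bundle equals $\mathcal{O}_{B^k}((\ell-k)H_k)\otimes \pi_k^*A_{k,L}$; for $\ell\geq k$, the projection formula applied to the projective bundle $\pi_k$ (together with the standard vanishing of higher direct images of $\mathcal{O}_{B^k}(m H_k)$ for $m\geq 0$) yields the isomorphism
\[
H^i(B^k, \mathcal{O}_{B^k}(\ell H_k - Z_k))\cong H^i(X^{[k]}, S^{\ell-k}E_{k,L}\otimes A_{k,L}),
\]
which vanishes for $i\geq 1$ by Corollary \ref{corollary-vanishing-serre}. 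This is the technical hinge of the whole argument, and it is precisely the statement that the Main Vanishing Theorem has been developed to supply.

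For (2) I proceed by induction on $k$. The base case $\sigma_1=X$ follows directly from the $k=1$ instance of Corollary \ref{corollary-vanishing-serre}. For the inductive step, twisting the exact sequence from (1) by $\mathcal{O}(\ell)$ and using the hypothesis for $\sigma_{k-1}$ reduces the problem to the vanishing of $H^i(\sigma_k, \mathcal{I}_{\sigma_{k-1}/\sigma_k}(\ell))$ for $i\geq 1$, $\ell\geq 1$, which by (1) and the Leray spectral sequence coincides with $H^i(B^k, \mathcal{O}_{B^k}(\ell H_k - Z_k))$. The case $\ell\geq k$ has already been treated above; for $1\leq \ell\leq k-1$, the line bundle restricts to $\mathcal{O}_{\mathbb{P}^{k-1}}(\ell-k)$ on each fiber of $\pi_k$, and since $-(k-1)\leq \ell - k\leq -1$ all fiberwise cohomology vanishes, so $R^j\pi_{k,*}\mathcal{O}_{B^k}(\ell H_k - Z_k)=0$ for every $j$ and the desired global vanishing follows. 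Finally the regularity bound $\operatorname{reg}(\mathcal{O}_{\sigma_k})\leq d_k+1=nk+k$ is immediate from the fact that $\dim\sigma_k\leq d_k$: the requisite vanishing $H^i(\sigma_k,\mathcal{O}_{\sigma_k}(d_k+1-i))=0$ for $i\geq 1$ holds by (2) when $i\leq d_k$ (since the twist is $\geq 1$) and for dimension reasons when $i>d_k$.
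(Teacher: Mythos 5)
Your proposal is correct and follows essentially the same route as the paper: reduce the higher direct image vanishing via Lemma \ref{lem:standfactforhdi} to $H^i(B^k,\mathcal{O}_{B^k}(\ell H_k - Z_k))=0$, use the identification $\mathcal{O}_{B^k}(\ell H_k - Z_k)\cong\mathcal{O}_{B^k}((\ell-k)H_k)\otimes\pi_k^*A_{k,L}$ together with Corollary \ref{corollary-vanishing-serre}, then feed the resulting long exact sequence and the identifications $\alpha_{k,*}\mathcal{O}_{B^k}\cong\mathcal{O}_{\sigma_k}$, $\alpha_{k,*}\mathcal{O}_{Z_k}\cong\mathcal{O}_{\sigma_{k-1}}$ into the inductive proof of (2). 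Your explicit handling of the range $1\leq\ell\leq k-1$ via the vanishing of all $R^j\pi_{k,*}\mathcal{O}_{B^k}((\ell-k)H_k)$ is a careful articulation of a point the paper passes over implicitly, but it is the same argument.
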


\begin{proof}
Recall the isomorphism $\mathcal{O}_{B^k}(\ell H_k - Z_k) \cong \mathcal{O}_{B^k}(\ell-k) \otimes \pi_k^* A_{k,L}$. By projection formula and Corollary \ref{corollary-vanishing-serre}, we have
\begin{equation}\label{eq:H^i=0projnormalproof}
H^i(B^k, \mathcal{O}_{B^k}(\ell H_k - Z_k) )=0~~\text{ for $i \geq 1$ and $\ell \geq 1$}.
\end{equation}
This in turn implies
$R^i \alpha_{k,*} \mathcal{O}_{B^k}(-Z_k) = 0$ for $i \geq 1$, by Lemma \ref{lem:standfactforhdi}.
Applying $\alpha_{k,*}$ to the short exact sequence
$$
0\lra \mathcal{O}_{B^k}(-Z_{k})\lra \mathcal{O}_{B^k}\lra \mathcal{O}_{Z_{k}}\lra 0,
$$
we get $\alpha_{k,*} \mathcal{O}_{B^k}(-Z_k) = \mathcal{I}_{\sigma_{k-1}/\sigma_k}$ by Lemma \ref{lem:alpha_*O_Z} and Theorem \ref{thm:normalsing}. Therefore (1) holds. Next, we prove (2) by induction on $k$. The case $k=1$ is obvious, hence we may assume $k\geq 2$. Consider the short exact sequence
$$
0 \longrightarrow \cI_{\sigma_{k-1}/\sigma_k} \longrightarrow \OO_{\sigma_k} \longrightarrow \OO_{\sigma_{k-1}} \longrightarrow 0.
$$
By (1) and (\ref{eq:H^i=0projnormalproof}), we have
$$
H^i(\sigma_k, \cI_{\sigma_{k-1}/\sigma_k}(\ell)) \cong H^i(B^k, \OO_{B^k}(\ell H_k-Z_{k}))=0~~\text{ for $i\geq 1$ and $\ell\geq 1$}.
$$
As $H^i(\sigma_{k-1}, \mathcal{O}_{\sigma_{k-1}}(\ell))=0$ for $i \geq 1$ and $\ell \geq 1$ by inductive hypothesis, we get (2).
\end{proof}

\begin{corollary}\label{serre-on-secant}
$H^i(B^k, \omega_{B^k}(\ell H_k + Z_k))\cong H^{d_k-i}(\sigma_k , \mathcal{I}_{\sigma_{k-1}/\sigma_k}(-\ell))^{\vee}$ for all $i$ and $\ell$. 
\end{corollary}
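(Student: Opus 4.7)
The plan is to combine the Du Bois-type condition from Theorem \ref{Du Bois-type condition}(1) with ordinary Serre duality on the smooth projective variety $B^k$. Note that under our standing hypothesis ($n \leq 2$ or $k \leq 3$), $X^{[k]}$ is smooth, so $B^k = \mathbb{P}(E_{k,L})$ is smooth of dimension $d_k = nk+k-1$.

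First I would untwist by $\alpha_k^* \OO_{\sigma_k}(-\ell)$. Since $\OO_{B^k}(H_k) \cong \alpha_k^* \OO_{\sigma_k}(1)$ and $\OO_{\sigma_k}(-\ell)$ is an invertible sheaf on the target, the projection formula gives
\[
R^i \alpha_{k,*} \OO_{B^k}(-\ell H_k - Z_k) \cong \OO_{\sigma_k}(-\ell) \otimes R^i \alpha_{k,*} \OO_{B^k}(-Z_k).
\]
By Theorem \ref{Du Bois-type condition}(1), the right-hand side vanishes for $i \geq 1$ and equals $\mathcal{I}_{\sigma_{k-1}/\sigma_k}(-\ell)$ for $i=0$. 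The Leray spectral sequence for $\alpha_k$ therefore degenerates and yields
\[
H^i\bigl(B^k, \OO_{B^k}(-\ell H_k - Z_k)\bigr) \cong H^i\bigl(\sigma_k, \mathcal{I}_{\sigma_{k-1}/\sigma_k}(-\ell)\bigr) \quad \text{for all } i.
\]

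Next I would apply Serre duality on the smooth projective variety $B^k$ of dimension $d_k$ to the line bundle $\OO_{B^k}(-\ell H_k - Z_k)$:
\[
H^{d_k - i}\bigl(B^k, \omega_{B^k}(\ell H_k + Z_k)\bigr) \cong H^i\bigl(B^k, \OO_{B^k}(-\ell H_k - Z_k)\bigr)^{\vee}.
\]
Combining these two identifications and replacing $i$ by $d_k - i$ gives the claimed isomorphism. There is no real obstacle here: the whole content of the statement is absorbed by Theorem \ref{Du Bois-type condition}(1), and the rest is bookkeeping with Serre duality and the projection formula.
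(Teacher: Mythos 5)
Your proof is correct and fills in exactly the steps that the paper leaves implicit: the paper's own proof is the one-line remark that the corollary follows from Theorem \ref{Du Bois-type condition} and Serre duality, and your chain --- projection formula, Leray degeneration from the vanishing of $R^{>0}\alpha_{k,*}\OO_{B^k}(-Z_k)$, then Serre duality on the smooth projective $B^k$ --- is precisely that argument spelled out.
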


\begin{proof}
It is an immediate consequence of Theorem \ref{Du Bois-type condition} and Serre duality.
\end{proof}

\begin{remark}\label{rem:effectivefornormalsing}
The above two theorems hold if $L$ separates $2k$-schemes and
$$
H^i(X^{[m]}, \wedge^j M_{m,L} \otimes A_{m,L})=0~~\text{ for $i \geq 1$, $0 \leq j \leq i$ and $1 \leq m \leq k$.}
$$ 
Furthermore, it is easy to deduce $H^i(X, L^\ell)=0$ for $i \geq 1$ and $\ell \geq 1$ from this vanishing, applied with $m=1$.
For the remaining of this section, it is sufficient to assume that $L$ separates $2k$-schemes, $H^i(X, L^\ell)=0$ for $i \geq 1$ and $\ell \geq 1$ and the conclusions of the above two theorems hold.
\end{remark}

Since we have an effective version of the main vanishing theorem for $k=2$ (that is, Theorem \ref{main-effective-vanishing}), Remark \ref{rem:effectivefornormalsing} implies Theorem \ref{intro-effective}.

\begin{theorem}\label{effective-projective-normal}
Set $L:=\omega_X \otimes A^m \otimes B$, where $A$ is very ample and $B$ is nef. If $m \geq 4n$, then $\sigma_2 \subseteq \mathbb{P}^r$ is projectively normal and $H^i(\sigma_2, \OO_{\sigma_2}(\ell))=0$ for $i \geq 1$ and $\ell \geq 1$.
\end{theorem}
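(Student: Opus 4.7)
The plan is to deduce this from Remark \ref{rem:effectivefornormalsing} specialized to $k=2$, which reduces the whole theorem to verifying the two effective hypotheses (a) that $L$ separates $4$-schemes and (b) that
\[
H^i(X^{[m]}, \wedge^j M_{m,L}\otimes A_{m,L})=0\quad\text{for }i\geq 1,\ 0\leq j\leq i,\ m\in\{1,2\}.
\]
Once these are in place, the remark delivers both the projective normality $S^\ell H^0(X,L)\twoheadrightarrow H^0(\sigma_2,\mathcal{O}_{\sigma_2}(\ell))$ of Theorem \ref{thm:normalsing} and the higher cohomology vanishing of Theorem \ref{Du Bois-type condition}(2); also the auxiliary vanishing $H^i(X,L^\ell)=0$ is automatic from (b) with $m=1$. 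Assume throughout that $n\geq 2$; the case $n=1$ follows directly from the curve results of \cite{Ein.Niu.Park.20}, and so need not be treated here.

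For (a), since $L=\omega_X\otimes A^m\otimes B$ with $A$ very ample, $B$ nef and $m\geq 4n\geq n+4$ (using $n\geq 2$), the separation of $4$-schemes is immediate from \cite[Theorem~1.1]{HTT}. For the $m=1$ case of (b), I would apply the Eagon--Northcott (Koszul-type) resolution of $\wedge^j M_L$ coming from the evaluation surjection $H^0(X,L)\otimes\mathcal{O}_X\twoheadrightarrow L$, namely
\[
0\lra \wedge^j M_L\lra \wedge^j V\otimes \mathcal{O}_X\lra\wedge^{j-1}V\otimes L\lra\cdots\lra V\otimes L^{j-1}\lra L^j\lra 0,
\]
tensor with $L$, and chase the induced spectral sequence: it collapses to the vanishings $H^{i+s}(X,L^{s+1})=0$ for $s\geq 0$, each of which follows from Kawamata--Viehweg since $L^{s+1}\otimes\omega_X^{-1}$ is big and nef in the range $m\geq 4n$.

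The main step is (b) for $m=2$. For $j\geq 1$ this is exactly the content of Theorem~\ref{main-effective-vanishing}: as $\dim X^{[2]}=2n$, we have $i\leq 2n$ and therefore $j\leq 2n$, so the hypothesis $m\geq 2n+j$ is met precisely because $m\geq 4n$. This is also where the bound $4n$ arises: it is the value of $2n+j$ at the worst case $j=2n$. It remains to handle $j=0$, i.e.\ $H^i(X^{[2]},A_{2,L})=0$ for $i\geq 1$. For this I would use Lemma~\ref{lem:O_X^[k]directsummand} to realize $A_{2,L}$ as a direct summand of $\rho_*(\rho^*A_{2,L}(F_1))$, then use Lemma~\ref{identities}(3) together with $A_{1,L}=L$ to rewrite
\[
\rho^*A_{2,L}(F_1)\cong \tau^*L\otimes\res^*L(-F_1),
\]
so via the Leray spectral sequence the problem reduces to $H^i(X^{[1,2]},\tau^*L\otimes\res^*L(-F_1))=0$ on the smooth variety $X^{[1,2]}=\Bl_{\Delta}(X\times X)$. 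Using the canonical bundle formula $\omega_{X^{[1,2]}}\cong \bl^*(\omega_X\boxtimes\omega_X)((n-1)F_1)$ and that $\bl^*(A\boxtimes A)(-F_1)$ is globally generated (Subsection~\ref{global-generation-universal-family}), I would factor
\[
\tau^*L\otimes\res^*L(-F_1)\otimes\omega_{X^{[1,2]}}^{-1}\cong \bl^*((A^{m-n}\otimes B)\boxtimes(A^{m-n}\otimes B))\otimes\bigl(\bl^*(A\boxtimes A)(-F_1)\bigr)^n
\]
and observe this is nef and big for $m\geq n+1$, so Kawamata--Viehweg finishes the case.

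The principal obstacle is really the $j\geq 1$, $m=2$ portion, which is carried by Theorem~\ref{main-effective-vanishing} and its use of the canonical local complete intersection singularities of $X^{[1,2,3]}$ (Lemma~\ref{123}); everything else is essentially bookkeeping with standard positivity and vanishing techniques, and the bound $m\geq 4n$ is exactly what is forced by this step.
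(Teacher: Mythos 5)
Your proposal is correct and follows precisely the route the paper intends: reduce to Remark \ref{rem:effectivefornormalsing} with $k=2$, check separation of $4$-schemes via \cite{HTT}, handle $m=1$ by the Koszul complex of $M_L$, and use Theorem \ref{main-effective-vanishing} for $m=2$, $j\geq 1$, with the bound $4n$ coming from $j\leq \dim X^{[2]} = 2n$. The paper's own proof is a terse one-liner that omits the $j=0$, $m=2$ case (not covered by Theorem \ref{main-effective-vanishing}, which requires $j\geq 1$); your argument for it — pushing $A_{2,L}$ up to $X^{[1,2]}$ as a direct summand of $\rho_*(\tau^*L\otimes\res^*L(-F_1))$ and applying Kawamata--Viehweg on $\Bl_\Delta(X\times X)$ — is exactly the sort of verification the authors leave to the reader, and it is correct.
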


We remind the reader that $\sigma_2$ has normal singularities and the Du Bois-type condition (that is, Theorem \ref{Du Bois-type condition} (1)) holds for $k=2$ as soon as $m \geq 2n+2$ by Ullery \cite{Ullery.16} and Chou--Song \cite{Chou.Song.18}.

\subsection{Du Bois singularities} 
The Du Bois-type condition implies the following:

\begin{theorem}\label{thm:dubois}
$\sigma_k$ has Du Bois singularities.
\end{theorem}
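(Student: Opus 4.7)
The plan is to prove the theorem by induction on $k$, applying at each step the Kov\'acs--Schwede gluing criterion for Du Bois singularities. Recall that this criterion says: given a proper morphism $f\colon Y \lra X$ which is an isomorphism outside a closed subscheme $\Sigma \subseteq X$, setting $E := f^{-1}(\Sigma)_{\mathrm{red}}$, if $Y$, $E$, and $\Sigma$ all have Du Bois singularities and the natural map $\cI_{\Sigma/X} \lra Rf_* \cI_{E/Y}$ is a quasi-isomorphism, then $X$ has Du Bois singularities. I would apply this to the square
$$
\begin{tikzcd}
Z_k \ar[r,hook] \ar[d] & B^k \ar[d,"\alpha_k"]\\
\sigma_{k-1} \ar[r,hook] & \sigma_k
\end{tikzcd}.
$$

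The base case $k=1$ is immediate since $\sigma_1 = X$ is smooth. For the inductive step, three of the four hypotheses are readily checked. First, $B^k$ is a projective bundle over $X^{[k]}$, which is smooth under our standing assumption, so $B^k$ is smooth and hence Du Bois. Second, $\sigma_{k-1}$ is Du Bois by the inductive hypothesis. Third, in our setting $\sigma_{k-1} = \kappa_{k-1}$, so the Embedding Theorem \ref{terracini} yields $U^k \cong \sigma_k \setminus \sigma_{k-1}$; this shows $\alpha_k$ is an isomorphism over $\sigma_k \setminus \sigma_{k-1}$ and identifies $Z_k$ with $\alpha_k^{-1}(\sigma_{k-1})_{\mathrm{red}}$. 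The required quasi-isomorphism $R\alpha_{k,*}\cI_{Z_k/B^k} \cong \cI_{\sigma_{k-1}/\sigma_k}$ (noting $\cI_{Z_k/B^k} \cong \OO_{B^k}(-Z_k)$) is precisely the content of Theorem \ref{Du Bois-type condition}(1).

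The principal obstacle is the fourth hypothesis, that $Z_k$ itself has Du Bois singularities. I would handle this by a parallel induction on $k$, applying the very same gluing criterion to the birational morphism $\alpha_{k-1,k}\colon B^{k-1,k} \lra Z_k = \sigma_{k-1,k}$ constructed in Subsection \ref{subsec:secant bundles}. Since $X^{[k-1,k]}$ is smooth under our hypotheses, $B^{k-1,k}$ is a projective bundle over a smooth variety and hence itself smooth and Du Bois. The non-isomorphism locus of $\alpha_{k-1,k}$ lies over the ``relative boundary'' $Z_k \cap \sigma_{k-2,k}$ of $Z_k$. What is needed is a relative analogue of Theorem \ref{Du Bois-type condition}(1), namely an identification
$$
R\alpha_{k-1,k,*}\OO_{B^{k-1,k}}(-\widetilde{Z}_{k-1,k}) \;\cong\; \cI_{(Z_k \cap \sigma_{k-2,k})/Z_k}
$$
for the appropriate boundary divisor $\widetilde{Z}_{k-1,k} \subseteq B^{k-1,k}$ (the relative analogue of $Z_k$). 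I expect this to follow from the same circle of ideas used to prove Theorem \ref{Du Bois-type condition}, in particular the Main Vanishing Theorem \ref{main-vanishing-serre} and the careful analysis of line bundles on nested Hilbert schemes from Sections \ref{subsection-boundary-divisor} and \ref{section-cohomology-vanishing}. The hard part is to orchestrate the two inductions so that they feed each other: at the $k$-th stage one must have the Du Bois property both for the absolute pair $(B^k, \sigma_k)$ at level $k-1$ and for the relative pair $(B^{k-1,k}, Z_k)$ at the same level, along with the associated nested boundary strata, all handled simultaneously.
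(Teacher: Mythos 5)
Your strategy has a genuine gap: the Kov\'acs--Schwede gluing criterion you invoke requires the reduced exceptional fiber $Z_k$ to have Du Bois singularities, and this is \emph{false} in general. The paper notes this explicitly in the remark immediately following Theorem \ref{thm:dubois}: when $n=2$ and $k \geq 9$, the isospectral Hilbert scheme $X_k$ does not have log canonical singularities (Scala), and one can deduce that $Z_k$ fails to be Du Bois. Your proposed ``parallel induction'' establishing $Z_k$ Du Bois via a relative version of the Du Bois-type condition applied to $\alpha_{k-1,k}\colon B^{k-1,k}\lra Z_k$ would therefore prove something untrue, and cannot close the argument.

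The paper's actual proof circumvents exactly this obstacle. Rather than asking for $Z_k$ to be Du Bois, it works at the chain level in $\mathbf{D}^b(\sigma_k)$ and shows directly that the natural map $\psi\colon \mathcal{R}\alpha_{k,*}\OO_{B^k}(-Z_k)\lra \mathcal{R}\alpha_{k,*}\underline{\Omega}^0_{Z_k,B^k}$ admits a left inverse. This is done by realizing $\mathcal{R}\alpha_{k,*}\OO_{Z_k}\lra \mathcal{R}\alpha_{k,*}\underline{\Omega}^0_{Z_k}$ as a map of complexes that is injective in degree zero (using $\alpha_{k,*}\OO_{Z_k}\cong\OO_{\sigma_{k-1}}$, Lemma \ref{lem:alpha_*O_Z}, and the resolution $\alpha_{k-1,k}\colon B^{k-1,k}\lra Z_k$), then building explicit mapping cones so that $\psi$ becomes the inclusion of a direct summand in degree zero. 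From the left inverse of $\psi$, the Du Bois-type condition (Theorem \ref{Du Bois-type condition}(1)) yields a left inverse of $\varphi\colon \cI_{\sigma_{k-1}/\sigma_k}\lra \underline{\Omega}^0_{\sigma_{k-1},\sigma_k}$, which by \cite[Corollary 6.24]{Kollar} makes $(\sigma_k,\sigma_{k-1})$ a Du Bois pair, and then \cite[Proposition 6.15]{Kollar} (not Theorem 6.27) finishes using the inductive hypothesis on $\sigma_{k-1}$. The essential point is that this version of the criterion needs $\Sigma=\sigma_{k-1}$ to be Du Bois and $\varphi$ to split, but does not require $E=Z_k$ to be Du Bois.

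Your identification of the base case, the smoothness of $B^k$, the role of the Embedding Theorem in identifying $Z_k=\alpha_k^{-1}(\sigma_{k-1})_{\mathrm{red}}$, and the role of Theorem \ref{Du Bois-type condition}(1) as the quasi-isomorphism $R\alpha_{k,*}\OO_{B^k}(-Z_k)\cong \cI_{\sigma_{k-1}/\sigma_k}$ are all correct and align with the paper. You also correctly flagged the Du Bois property of $Z_k$ as ``the principal obstacle'' --- the issue is that it is not merely an obstacle to be cleared but an actual falsehood, and the argument must be routed around it rather than through it.
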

\begin{proof}
We proceed by induction on $k$ with the case $k=1$ being trivial. We assume that $k \geq 2$ and $\sigma_{k-1}$ has Du Bois singularities.
As in \cite[Proof of Theorem 6.27]{Kollar}, we have a commutative diagram
$$
\xymatrix{
\cI_{\sigma_{k-1}/\sigma_k} \ar[r]^-{\simeq_\text{qis}} \ar[d]_-{\varphi} & \mathcal{R} \alpha_{k,*} \OO_{B^k}(-Z_{k}) \ar[d]^-{\psi} \\
\underline{\Omega}^0_{\sigma_{k-1}, \sigma_k} \ar[r] & \mathcal{R} \alpha_{k,*} \underline{\Omega}^0_{Z_{k}, B^k}
}
$$
in the derived category $\mathbf{D}^b(\sigma_k)$, where the upper horizontal map is a quasi-isomorphism by the Du Bois-type condition (Theorem \ref{Du Bois-type condition} (1)). If $\psi$ has a left inverse, then so does $\varphi$. By \cite[Corollary 6.24]{Kollar}, this implies that $(\sigma_k, \sigma_{k-1})$ is a Du Bois pair, so by \cite[Proposition 6.15]{Kollar}, $\sigma_k$ has Du Bois singularities since $\sigma_{k-1}$ has Du Bois singularities. Thus it is enough to show that $\psi$ has a left inverse. The map $\psi$ fits in the following commutative diagram
\begin{equation}\label{eq:commdiagproofofDuBois}
\begin{gathered}
\xymatrix{
\mathcal{R} \alpha_{k,*} \OO_{B^k}(-Z_{k}) \ar[r] \ar[d]^-{\psi} & \mathcal{R} \alpha_{k,*} \OO_{B^k} \ar[r] \ar[d]^-{\simeq_\text{qis}} & \mathcal{R} \alpha_{k,*} \OO_{Z_{k}} \ar[r]^-{+1} \ar[d]^-{\lambda} & \\
\mathcal{R} \alpha_{k,*} \underline{\Omega}_{Z_{k}, B^k}^0 \ar[r] & \mathcal{R} \alpha_{k,*} \underline{\Omega}_{B^k}^0 \ar[r] & \mathcal{R} \alpha_{k,*} \underline{\Omega}_{Z_{k}}^0 \ar[r]^-{+1}&
}
\end{gathered}
\end{equation}
in the derived category $\mathbf{D}^b(\sigma_k)$. 
Recall that $\alpha_{k-1,k} \colon B^{k-1,k} \lra Z_{k}$ is a resolution of singularities and $\alpha_{k,*} \alpha_{k-1,k,*} \OO_{B^{k-1,k}} \cong \OO_{\sigma_{k-1}}$
by (\ref{eq:alpha_*O_B^{k-1,k}}). The map $\lambda$ fits in the following commutative diagram
$$
\xymatrix{
\mathcal{R} \alpha_{k,*} \OO_{Z_{k}} \ar[r]^-{\lambda}  \ar[d] &  \mathcal{R} \alpha_{k,*}  \underline{\Omega}_{Z_{k}}^0 \ar[d] \\
  \mathcal{R} \alpha_{k,*}  \mathcal{R} \alpha_{k-1,k,*} \OO_{B^{k-1,k}} \ar[r]^-{\simeq_\text{qis}}  & \mathcal{R} \alpha_{k,*}  \mathcal{R} \alpha_{k-1,k,*} \underline{\Omega}_{B^{k-1,k}}^0  
}
$$
in the derived category $\mathbf{D}^b(\sigma_k)$. Recall from Lemma \ref{lem:alpha_*O_Z} and Theorem \ref{thm:normalsing} that $\alpha_{k,*} \mathcal{O}_{Z_k} \cong \mathcal{O}_{\sigma_{k-1}}$. Taking cohomology, we get a commutative diagram
\begin{center}
\begin{tikzcd}
\OO_{\sigma_{k-1}} \ar[r] \ar[d, equal] & h^0(\mathcal{R}\alpha_{k,*} \underline{\Omega}_{Z_{k}}^0 )\ar[d] \\
\OO_{\sigma_{k-1}} \ar[r, equal] & \OO_{\sigma_{k-1}}
\end{tikzcd}.
\end{center}
Thus the upper horizontal map $\OO_{\sigma_{k-1}} \lra h^0(\mathcal{R}\alpha_{k,*} \underline{\Omega}_{Z_{k}}^0 )$ is injective. Then $\lambda \colon \mathcal{R} \alpha_{k,*} \OO_{Z_k} \lra \mathcal{R} \alpha_{k,*} \underline{\Omega}^0_{Z_k}$ can be realized as a morphism of complexes 
$$
(0 \to A^0 \to A^1 \to \cdots ) \xrightarrow{~\lambda~} (0 \to B^0 \to B^1 \to \cdots)
$$
in such a way that $\lambda^0 \colon A^0 \lra B^0$ is injective. 
Now, the exact triangle
$$
\mathcal{R} \alpha_{k,*} \OO_{B^k}(-Z_{k}) \longrightarrow \mathcal{R} \alpha_{k,*} \OO_{B^k} \longrightarrow \mathcal{R} \alpha_{k,*} \OO_{Z_{k}} \xrightarrow{+1}
$$
in the upper side of (\ref{eq:commdiagproofofDuBois}) can be realized as a complex of complexes
$$
(0 \to \cI_{\sigma_{k-1}/\sigma_{k}} \to 0 \to \cdots) \longrightarrow (0 \to \cI_{\sigma_{k-1}/\sigma_{k}} \oplus A^0 \to A^1 \to \cdots) \longrightarrow (0 \to A^0 \to A^1 \to \cdots),
$$
where the middle complex is the mapping cone of the map $\mathcal{R} \alpha_{k,*} \OO_{Z_{k}}[-1] \lra \mathcal{R} \alpha_{k,*} \OO_{B^k}(-Z_{k})$ and the differential map $\cI_{\sigma_{k-1}/\sigma_{k}} \oplus A^0 \lra A^1$ is given by $(u,v)\longmapsto d_A(v)$,
such that the degree $0$ part of the complex of complexes
$$
0 \longrightarrow \cI_{\sigma_{k-1}/\sigma_{k}} \longrightarrow \cI_{\sigma_{k-1}/\sigma_{k}} \oplus A^0 \longrightarrow A^0 \longrightarrow 0
$$
is a splitting short exact sequence. 
Then the exact triangle
$$
\mathcal{R} \alpha_{k,*} \underline{\Omega}_{Z_{k}, B^k}^0 \longrightarrow \mathcal{R} \alpha_{k,*} \underline{\Omega}_{B^k}^0 \longrightarrow \mathcal{R} \alpha_{k,*} \underline{\Omega}_{Z_{k}}^0 \xrightarrow{+1}
$$
in the lower side of (\ref{eq:commdiagproofofDuBois}) can be realized as a complex of complexes
$$
(0 \to \cI_{\sigma_{k-1}/\sigma_{k}} \oplus A^0 \to B^0 \oplus A^1 \to \cdots) \longrightarrow (0 \to \cI_{\sigma_{k-1}/\sigma_{k}} \oplus A^0 \to A^1 \to \cdots) 
\longrightarrow (0 \to B^0 \to B^1 \to \cdots),
$$
where the first complex is $-1$ shifting of the mapping cone of the map $\mathcal{R} \alpha_{k,*} \underline{\Omega}_{B^k}^0 \longrightarrow \mathcal{R} \alpha_{k,*} \underline{\Omega}_{Z_{k}}^0$ and the differential map $\cI_{\sigma_{k-1}/\sigma_{k}} \oplus A^0 \lra B^0 \oplus A^1$ is given by $(u,v) \longmapsto (\lambda^0(v), d_A(v))$. Since $\psi$ is now the natural inclusion map of complexes
$$
(0 \to \cI_{\sigma_{k-1}/\sigma_{k}} \to 0 \to \cdots) \xrightarrow{~\psi~} (0 \to \cI_{\sigma_{k-1}/\sigma_{k}} \oplus A^0 \to B^0 \oplus A^1 \to \cdots),
$$
it follows that $\psi$ has a left inverse in the derived category $\mathbf{D}^b(\sigma_k)$.
\end{proof}

\begin{remark}
If $Z_k$ has Du Bois singularities, then the Du Bois-type condition (Theorem \ref{Du Bois-type condition} (1)) implies that $\sigma_k$ has Du Bois singularities by \cite[Theorem 6.27]{Kollar}. This is the case when $k=2$. However, when $X$ is a surface and $k \geq 9$, using Scala's result \cite[Theorem 3.13]{Scala.19} that the isospectral Hilbert scheme $X_k$ does not have log canonical singularities, one may see that $Z_k$ does not have Du Bois singularities. 
\end{remark}

\subsection{Cohen--Macaulay singularities}
The aim of this subsection is to prove the following theorem, which follows from Proposition \ref{not-CM} and Theorem \ref{thm:CM}. 

\begin{theorem}\label{thm:CM<=>H^i(O_X)=0}
Assume that $k \geq 2$. Then 
$\sigma_k$ is Cohen--Macaulay if and only if $H^i(X, \mathcal{O}_X)=0$ for $1 \leq i \leq n-1$.
\end{theorem}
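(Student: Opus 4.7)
The plan is to prove both implications by induction on $k$. Since $\sigma_k \subseteq \mathbb{P}^r$ is projectively normal by Theorem \ref{thm:normalsing}, Cohen--Macaulayness is equivalent to arithmetic Cohen--Macaulayness, so it suffices to show $H^i(\sigma_k, \OO_{\sigma_k}(\ell)) = 0$ for $0 < i < d_k$ and all $\ell \in \Z$. For $\ell \geq 1$ this is already Theorem \ref{Du Bois-type condition}(2), so the whole analysis reduces to the range $\ell \leq 0$.

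The central computation combines the Du Bois-type isomorphism $R\alpha_{k,*}\OO_{B^k}(-Z_k) \cong \mathcal{I}_{\sigma_{k-1}/\sigma_k}$ of Theorem \ref{Du Bois-type condition}(1) with the identity $\OO_{B^k}(\ell H_k - Z_k) \cong \OO_{B^k}((\ell-k)H_k) \otimes \pi_k^* A_{k,L}$ from Proposition \ref{A&Z}. Pushing forward along the projective bundle $\pi_k \colon B^k \to X^{[k]}$ via the Leray spectral sequence, for $\ell \leq 0$ one obtains
\[
H^i(\sigma_k, \mathcal{I}_{\sigma_{k-1}/\sigma_k}(\ell)) \cong H^{i-k+1}\big(X^{[k]}, S^{-\ell} E_{k,L}^{\vee} \otimes \delta_k^{-1}\big),
\]
while for $1 \leq \ell \leq k-1$ these groups vanish outright. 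Via the Hilbert--Chow morphism $h$, the identification $h_*\delta_k^{-1} \cong q_*^{S_k}\OO_{X^k}^{\textnormal{alt}}$, and the main relative vanishing results (Theorem \ref{main-direct-image-vanishing} and Proposition \ref{main-direct-image-vanishing-S^jE}), the right hand side descends to $\Sym^k(X)$ and the K\"unneth formula identifies it with the super-alternating part $\Lambda^k V$ of $V^{\otimes k}$, where $V := H^*(X, \OO_X)$ is super-graded by cohomological parity.

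Under the hypothesis $H^i(X, \OO_X) = 0$ for $1 \leq i \leq n-1$, one has $V = H^0(X,\OO_X) \oplus H^n(X,\OO_X)$, so the super-exterior power $\Lambda^k V$ lives only in total degrees $n(k-1)$ and $nk$; consequently $H^i(\sigma_k, \mathcal{I}_{\sigma_{k-1}/\sigma_k}(\ell))$ is nonzero only at isolated extremal values $(i,\ell)$. Feeding this into the long exact sequence of
\[
0 \to \mathcal{I}_{\sigma_{k-1}/\sigma_k} \to \OO_{\sigma_k} \to \OO_{\sigma_{k-1}} \to 0,
\]
using the inductive aCM of $\sigma_{k-1}$, one checks that the only potentially dangerous term (at $i = d_{k-1} + 1$) is killed by the connecting map from $H^{d_{k-1}}(\OO_{\sigma_{k-1}})$, a surjectivity obtained by Serre duality on $\sigma_{k-1}$ together with a dimension count matching $H^0(X,\omega_X)$ with the corresponding piece of $\Lambda^k V$. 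This gives the backward direction. Conversely, if some intermediate $H^j(X,\OO_X) \neq 0$ with $1 \leq j \leq n-1$, extra classes appear in $\Lambda^k V$ at intermediate degrees and survive in the long exact sequence, producing nonzero $H^i(\sigma_k, \OO_{\sigma_k}(\ell))$ in the forbidden range $0 < i < d_k$ and hence violating aCM. The main obstacle is the delicate bookkeeping in these exact sequences: one must track not only which super-alternating pieces of $\Lambda^k V$ appear, but also which correspond to classes on $\sigma_{k-1}$ via the connecting homomorphism, and verify both the cancellations under the vanishing hypothesis and their failure without it.
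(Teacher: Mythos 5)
There are two genuine gaps. First, the opening reduction is wrong: for a projectively normal embedding, Cohen--Macaulayness of $\sigma_k$ is \emph{not} equivalent to arithmetic Cohen--Macaulayness of the cone. An abelian surface embedded by a sufficiently positive line bundle is smooth and projectively normal but not aCM (since $H^1(A,\OO_A)\neq 0$). The correct local criterion, and the one the paper actually uses (\cite[Corollary 5.72]{Kollar-Mori}), is that $\sigma_k$ is CM iff $H^i(\sigma_k,\OO_{\sigma_k}(-\ell))=0$ for $1\leq i\leq d_k-1$ and $\ell\gg 0$. This is strictly weaker than aCM. Your backward direction would be fine if you actually proved aCM, but your forward direction (CM fails $\Rightarrow$ some $H^i(X,\OO_X)\neq 0$) would need CM-failure rather than aCM-failure as input, and that is not what the failure of an isolated degree of $\Lambda^k V$ gives you.

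Second, and more seriously, the central computation only works at $\ell=0$. Your Leray pushforward along $\pi_k$ gives, for $\ell\leq 0$, $H^i(\sigma_k,\cI_{\sigma_{k-1}/\sigma_k}(\ell))\cong H^{i-k+1}(X^{[k]}, S^{-\ell}E_{k,L}^\vee\otimes\delta_k^{-1})$. The CM criterion forces $\ell\ll 0$, i.e.\ $m:=-\ell$ large. The descent to $\Sym^k(X)$ you invoke and the identification with super-exterior powers $\Lambda^k V$ of $V=H^*(X,\OO_X)$ is the content of Lemma~\ref{cohomology-structure-sheaf}, but that lemma computes $H^\bullet(X^{[k]},\delta_k^{-1})$ only --- there is no $S^m E_{k,L}^\vee$ factor, i.e.\ it is the $m=0$ case. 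The relative vanishing in Proposition~\ref{main-direct-image-vanishing-S^jE} handles $R^ih_*(S^j E_{k,L}^\vee\otimes\delta_k^{-\ell})$ only in the range $0\leq j\leq i$ with $i\leq k-1$, which does not cover the large symmetric powers you need; in fact the inductive mechanism of that proposition breaks down when $j>i$. So the key reduction to a Künneth/super-algebra computation is unavailable in the regime that matters. The paper avoids this entirely by dualizing first (Corollary~\ref{serre-on-secant}) and then computing the higher direct images $R^i\alpha_{k,*}\omega_{B^k}(Z_k)$ directly: Lemma~\ref{appearance-Hi-CM} is where $H^i(X,\omega_X)$ appears, Lemma~\ref{CM-support} localizes the non-vanishing of the pushforward to $\sigma_{k-2}$, and the converse direction uses Koll\'ar's injectivity theorem (applicable because $\sigma_k$ is Du Bois) to kill the extraneous higher direct images rather than computing them. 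Both of these ingredients --- the analysis of $\alpha_{k-1,k}^*\omega_{Z_k}$ via the nested Hilbert scheme structure, and Koll\'ar injectivity --- are entirely absent from your proposal and seem indispensable.
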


It is well-known that $\sigma_k$ is Cohen--Macaulay if and only if $H^i(\sigma_k, \mathcal{O}_{\sigma_k}(-\ell))=0$ for $1 \leq i \leq d_k-1$ and $\ell \gg 0$ (see \cite[Corollary 5.72]{Kollar-Mori}). In order to take an inductive approach on $k$, Corollary \ref{serre-on-secant} and Lemma \ref{dualizing-sheaf-secant-variety} below suggest to study $R^i \alpha_{k,*} \omega_{B^k}(Z_{k})$. 
For Lemma \ref{dualizing-sheaf-secant-variety}, we closely follow the proof of \cite[Claim 5.9]{Chou.Song.18}, where the case $k=2$ is treated. 

\begin{lemma}\label{dualizing-sheaf-secant-variety}
We have $\alpha_{k,*}\omega_{B^k}(Z_{k}) \cong \omega_{\sigma_k}$.
\end{lemma}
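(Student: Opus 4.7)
The plan is to apply Grothendieck duality to the birational morphism $\alpha_k \colon B^k \lra \sigma_k$ and then exploit the Du Bois-type condition of Theorem \ref{Du Bois-type condition}(1) to reduce everything to an $S_2$-type reflexivity statement for $\omega_{\sigma_k}$ on the normal variety $\sigma_k$.

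First, I would observe that since $B^k$ is smooth of dimension $d_k$ and $\sigma_k$ is normal of the same dimension (Theorem \ref{thm:normalsing}), the twisted inverse image satisfies $\alpha_k^! \omega_{\sigma_k}^{\bullet} \simeq \omega_{B^k}[d_k]$, and the lowest nonvanishing cohomology sheaf of the dualizing complex $\omega_{\sigma_k}^{\bullet}$ is $\omega_{\sigma_k}$, placed in degree $-d_k$. Applying Grothendieck duality with the coherent sheaf $\mathcal{O}_{B^k}(-Z_k)$ yields
\[
R\alpha_{k,*}\omega_{B^k}(Z_k)[d_k] \;\simeq\; R\mathcal{H}om_{\sigma_k}\big(R\alpha_{k,*}\mathcal{O}_{B^k}(-Z_k),\, \omega_{\sigma_k}^{\bullet}\big).
\]
By Theorem \ref{Du Bois-type condition}(1), the object inside the inner $R\mathcal{H}om$ is quasi-isomorphic to $\mathcal{I}_{\sigma_{k-1}/\sigma_k}$, a sheaf in degree zero. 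A degeneracy check on the associated $\mathcal{E}xt$ spectral sequence, using $h^q(\omega_{\sigma_k}^{\bullet}) = 0$ for $q<-d_k$ and $h^{-d_k}(\omega_{\sigma_k}^{\bullet}) = \omega_{\sigma_k}$, shows that taking $(-d_k)$-th cohomology on both sides produces
\[
\alpha_{k,*}\omega_{B^k}(Z_k) \;\cong\; \mathcal{H}om_{\sigma_k}\big(\mathcal{I}_{\sigma_{k-1}/\sigma_k},\, \omega_{\sigma_k}\big).
\]

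Second, I would verify that the natural morphism $\omega_{\sigma_k} \lra \mathcal{H}om(\mathcal{I}_{\sigma_{k-1}/\sigma_k}, \omega_{\sigma_k})$ induced by the inclusion $\mathcal{I}_{\sigma_{k-1}/\sigma_k} \hookrightarrow \mathcal{O}_{\sigma_k}$ is an isomorphism. A dimension count gives $\operatorname{codim}_{\sigma_k}(\sigma_{k-1}) = d_k - d_{k-1} = n+1 \geq 2$. Letting $j \colon \sigma_k \setminus \sigma_{k-1} \hookrightarrow \sigma_k$, the reflexivity of $\omega_{\sigma_k}$ on the normal variety $\sigma_k$ (equivalently, its $S_2$ property) implies that the canonical map $\omega_{\sigma_k} \lra j_*j^*\omega_{\sigma_k}$ is an isomorphism. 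A direct local computation then identifies $\mathcal{H}om(\mathcal{I}_{\sigma_{k-1}/\sigma_k}, \omega_{\sigma_k})$ with $j_*j^*\omega_{\sigma_k}$: any homomorphism $\mathcal{I}_{\sigma_{k-1}/\sigma_k}|_V \lra \omega_{\sigma_k}|_V$ over an affine open $V$ is determined by its restriction to $V \setminus \sigma_{k-1}$, where it becomes multiplication by a section of $\omega_{\sigma_k}$, and conversely any section of $\omega_{\sigma_k}$ over $V \setminus \sigma_{k-1}$ extends uniquely across $\sigma_{k-1}$ to such a homomorphism.

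Combining the two steps proves the lemma. The genuinely non-trivial input is the Du Bois-type condition (Theorem \ref{Du Bois-type condition}(1)); once that is in hand, the argument is a standard application of Grothendieck duality together with the $S_2$ property of $\omega_{\sigma_k}$ on a normal variety, and I do not foresee a substantial obstacle.
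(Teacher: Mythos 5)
Your proposal is correct, and it splits cleanly into two halves. The first half — applying Grothendieck duality to $\alpha_k$, substituting in the Du Bois–type condition $R\alpha_{k,*}\mathcal{O}_{B^k}(-Z_k) \simeq_{\mathrm{qis}} \mathcal{I}_{\sigma_{k-1}/\sigma_k}$, and extracting the $(-d_k)^{\mathrm{th}}$ cohomology via the hyperext spectral sequence — is essentially identical in substance to what the paper does (the paper runs it stalkwise, you run it sheaf-theoretically, but the content is the same). Where you genuinely diverge is in the second half. The paper finishes via local duality: it invokes \cite[Ch.\ V, Cor.\ 6.3]{Hartshorne.66} to trade $\operatorname{Ext}^{-d_k}(-,\omega^{\bullet}_{\sigma_k,x})^{\widehat{}}$ for $\operatorname{Hom}(H^{d_k}_x(-),I_x)$ with $I_x$ an injective hull, computes the local cohomology through the ideal sequence of $\sigma_{k-1}\subseteq \sigma_k$ using $d_k-1 > d_{k-1}$, and then undoes the completion using exactness of $\widehat{(-)}$ on finitely generated modules. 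You instead identify $\SHom(\mathcal{I}_{\sigma_{k-1}/\sigma_k},\omega_{\sigma_k})$ with $j_*j^*\omega_{\sigma_k}$ directly and then use reflexivity of $\omega_{\sigma_k}$ on the normal variety $\sigma_k$ together with $\operatorname{codim}_{\sigma_k}(\sigma_{k-1}) = n+1 \geq 2$. This is arguably cleaner: it stays at the level of coherent sheaves, avoids injective hulls and Matlis duality altogether, and in particular sidesteps the back-and-forth through completion that the paper needs. The price you pay is invoking the $S_2$ property of $\omega_{\sigma_k}$ (standard for a normal variety, e.g.\ \cite[Prop.\ 5.75]{Kollar-Mori}), whereas the paper's argument is more self-contained in that it only uses duality formalism. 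Both versions use the same non-trivial input — Theorem \ref{Du Bois-type condition}(1) — and both are correct.

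One small point worth making explicit in your write-up: in the local computation identifying $\SHom(\mathcal{I}_{\sigma_{k-1}/\sigma_k},\omega_{\sigma_k})$ with $j_*j^*\omega_{\sigma_k}$, the injectivity of restriction to $V\setminus\sigma_{k-1}$ uses torsion-freeness of $\omega_{\sigma_k}$, and the extension step is exactly the $S_2$/reflexivity property; stated this way the reflexivity of $\SHom(\mathcal{I},\omega_{\sigma_k})$ follows as a byproduct rather than being a separate input.
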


\begin{proof}
Let $j \colon \sigma_k \setminus \sigma_{k-1} \hookrightarrow \sigma_k$ be the inclusion map. We obtain a homomorphism between our sheaves of interest via the following composition
 \[
 \alpha_{k,*}\omega_{B^k}(Z_{k}) \longrightarrow j_*j^*\alpha_{k,*}\omega_{B^k}(Z_{k}) \longrightarrow j_*j^*\omega_{\sigma_k} \cong \omega_{\sigma_k}.
 \]
It suffices to show that this homomorphism gives isomorphisms of the stalk of every closed point --- i.e., we need to show that
\[
\alpha_{k,*}\omega_{B^k}(Z_{k})\otimes  \mathcal{O}_{\sigma_k, x}\longrightarrow \omega_{\sigma_k}\otimes \mathcal{O}_{\sigma_k, x}
\]
is an isomorphism for any closed point $x \in \sigma_k$, where $\mathcal{O}_{\sigma_k, x}$ is the local ring at $x$. At the level of complexes, we observe the following quasi-isomorphisms:
\begin{align*}
    \mathcal{R}\alpha_{k,*}\omega_{B^k}(Z_{k}) \otimes \mathcal{O}_{\sigma_k, x} & \simeq_\text{qis} \mathcal{R}\SHom_{\sigma_k}(R\alpha_{k,*}\mathcal{O}_{B^k}(-Z_{k}), \omega_{\sigma_k}^{\bullet}) \otimes \mathcal{O}_{\sigma_k, x} \\
    & \quad\text{(by coherent duality \cite[Chapter III, Theorem 11.1]{Hartshorne.66})}\\
    & \simeq_\text{qis} \mathcal{R}\SHom_{\sigma_k}(\alpha_{k,*}\mathcal{O}_{B^k}(-Z_{k}), \omega_{\sigma_k}^{\bullet}) \otimes \mathcal{O}_{\sigma_k, x}\\
    & \quad\text{(by Part (1) of Theorem \ref{Du Bois-type condition})}\\
    & \simeq_\text{qis}\operatorname{RHom}(\alpha_{k,*}\mathcal{O}_{B^k}(-Z_{k})_x, \omega_{\sigma_k, x}^{\bullet})\\
    & \quad\text{(by \cite[Lemma 3.1]{Kovacs.2011})}.
\end{align*}
(Note here that the dualizing complex of $\sigma_k$ can be obtained as $\omega_{\sigma_k}^{\bullet} = \mathcal{R}\SHom_{\mathbb{P}^r}(\mathcal{O}_{\sigma_k}, \omega_{\mathbb{P}^r}[r])$. However, we will not need this.) Taking $(-d_k)$-th cohomology on both sides, this gives
 \begin{equation*} 
\alpha_{k,*}\omega_{B^k}(Z_{k})_x \cong \operatorname{Ext}^{-d_k}(\alpha_{k,*}\mathcal{O}_{B^k}(-Z_{k})_x, \omega_{\sigma_{k},x}^{\bullet}).
 \end{equation*}

Now, let $I_x$ be the injective hull of the residue field $\mathbb{C}(x)$ of $\mathcal{O}_{\sigma_{k},x}$ and denote by $(-)^{\widehat{}}$ the operation of completing an $\OO_{\sigma_k,x}$-module at the maximal ideal $\mathfrak{m}_x \subseteq \OO_{\sigma_{k,x}}$. By \cite[Chapter V, Corollary 6.3]{Hartshorne.66}, for any finitely generated $\OO_{\sigma_k,x}$-module $M$, we have
\begin{align}\label{eq:omega_sigma}
    \operatorname{Ext}^i(M,\omega_{\sigma_{k},x}^{\bullet})^{\widehat{}} & \cong \operatorname{Hom}(H_x^{-i}(M),I_x).
\end{align}
Note that $H^{d_k-1}_x(\OO_{\sigma_{k-1,x}}) = H^{d_k}_x(\OO_{\sigma_{k-1,x}}) = 0$ because $d_k-1 > d_{k-1}$.
Since $\mathcal{I}_{\sigma_{k-1}/\sigma_k} \cong \alpha_{k,*}\OO_{B^k}(-Z_k)$ by Theorem \ref{Du Bois-type condition} (1), localizing the ideal sequence of $\sigma_{k-1}$ in $\sigma_k$ at $x$ and taking local cohomology gives
\[
H^{d_k}_x(\alpha_{k,*}\mathcal{O}_{B^k}(-Z_{k})_x) \cong H^{d_k}_x(\mathcal{O}_{\sigma_k,x}).
\]
Applying $\operatorname{Hom}(-,I_x)$ to both sides of this isomorphism, (\ref{eq:omega_sigma}) above then gives
\[
\operatorname{Ext}^{-d_k}(\alpha_{k,*}\OO_{B^k}(-Z_k)_x,\omega_{\sigma_k,x}^{\bullet})_x^{\widehat{}} \cong \operatorname{Ext}^{-d_k}(\OO_{\sigma_k,x},\omega_{\sigma_k,x}^{\bullet})_x^{\widehat{}}.
\]
But $\operatorname{Ext}^{-d_k}(\OO_{\sigma_k,x},\omega_{\sigma_k,x}^{\bullet})_x^{\widehat{}}$ is simply $(\omega_{\sigma_k,x})_x^{\widehat{}}$, so we can conclude that $\operatorname{Ext}^{-d_k}(\alpha_{k,*}\mathcal{O}_{B^k}(-Z_{k})_x, \omega_{\sigma_k,x}^{\bullet})^{\widehat{}}$ and $(\omega_{\sigma_k,x})_x^{\widehat{}}$ are isomorphic. Since the completion functor is exact on finitely generated modules, this means the $\OO_{\sigma_{k,x}}$-modules themselves are isomorphic
$$
\operatorname{Ext}^{-d_k}(\alpha_{k,*}\OO_{B^k}(-Z_k)_x,\omega_{\sigma_k,x}^{\bullet}) \cong \omega_{\sigma_k,x}.
$$
Hence $\alpha_{k,*}\omega_{B^k}(Z_{k})_x \rightarrow \omega_{\sigma_k,x}$ is an isomorphism for each closed point $x$ of $\sigma_k$.
\end{proof}

Next, we focus on $R^i \alpha_{k,*} \omega_{B^k}(Z_{k})$ for $i\geq 1$. We start with a few simple observations.

\begin{lemma}\label{higher-dualizing-Z}
$R^i \alpha_{k,*} \omega_{B^k}(Z_{k})\cong R^i \alpha_{k,*} \omega_{Z_{k}}$ for $i\geq 1$.
\end{lemma}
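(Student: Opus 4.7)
The plan is to compare $\omega_{B^k}(Z_k)$ and $\omega_{Z_k}$ via the adjunction short exact sequence
\begin{equation*}
0 \lra \omega_{B^k} \lra \omega_{B^k}(Z_k) \lra \omega_{Z_k} \lra 0,
\end{equation*}
where the quotient is identified with $\omega_{Z_k}$ via the adjunction formula (valid because $B^k$ is smooth and $Z_k$ is an effective Cartier divisor in $B^k$, as we saw in Subsection~\ref{subsection-boundary-divisor}). Applying $\mathcal{R}\alpha_{k,*}$ yields a long exact sequence
\begin{equation*}
\cdots \lra R^i \alpha_{k,*} \omega_{B^k} \lra R^i \alpha_{k,*} \omega_{B^k}(Z_k) \lra R^i \alpha_{k,*} \omega_{Z_k} \lra R^{i+1} \alpha_{k,*} \omega_{B^k} \lra \cdots
\end{equation*}
so it suffices to prove $R^i \alpha_{k,*} \omega_{B^k} = 0$ for all $i \geq 1$, since then both the outer terms bracketing $R^i\alpha_{k,*}\omega_{B^k}(Z_k) \to R^i\alpha_{k,*}\omega_{Z_k}$ vanish for each $i \geq 1$.

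For the required vanishing I would invoke the Grauert--Riemenschneider theorem. The scheme $B^k$ is smooth (it is a projective bundle over the smooth scheme $X^{[k]}$, using our standing assumption that $n \leq 2$ or $k \leq 3$), and $\alpha_k \colon B^k \lra \sigma_k$ is a proper surjective morphism which is birational when $L$ is sufficiently positive (indeed, by Theorem~\ref{terracini}, $\alpha_k$ restricts to a closed embedding on the open subset $U^k$, and by Theorem~\ref{thm:normalsing} the image $\sigma_k$ has the expected dimension $d_k$). Hence Grauert--Riemenschneider gives $R^i\alpha_{k,*}\omega_{B^k} = 0$ for $i \geq 1$, which is the desired conclusion.

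The one step worth double-checking is the identification $\omega_{B^k}(Z_k)\vert_{Z_k} \cong \omega_{Z_k}$: although $Z_k$ may fail to be smooth, it is Cohen--Macaulay by Proposition~\ref{A&Z}, and being an effective Cartier divisor in the smooth scheme $B^k$, the usual adjunction formula for dualizing sheaves applies. No subtle obstacle is expected --- this lemma is essentially a bookkeeping result setting up the Serre duality computations for $H^i(\sigma_k,\mathcal{O}_{\sigma_k}(-\ell))$ in the proof of Theorem~\ref{thm:CM<=>H^i(O_X)=0}.
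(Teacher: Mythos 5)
Your proof is correct and takes essentially the same approach as the paper: the paper's argument is the single line "Immediate by applying the Grauert--Riemenschneider vanishing theorem to the short exact sequence $0 \to \omega_{B^k} \to \omega_{B^k}(Z_k) \to \omega_{Z_k} \to 0$," which is precisely the long-exact-sequence-plus-Grauert--Riemenschneider argument you spell out. Your side remarks on adjunction and birationality of $\alpha_k$ are reasonable sanity checks but are not required to be stated explicitly.
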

\begin{proof}
Immediate by applying the Grauert--Riemenschneider vanishing theorem to the short exact sequence
    \[
    0\lra \omega_{B^k}\lra \omega_{B^k}(Z_{k})\lra \omega_{Z_{k}}\lra 0. \qedhere
    \]
\end{proof}

\begin{lemma}\label{higher-tau-CM}
    $R^i \tau_{*}\left((\operatorname{res} ^* \omega_X)((n-1)F_{k-1})\right)\cong H^i(X,\omega_X)\otimes \mathcal{O}_{X^{[k-1]}}$ for $i \geq 0$.
\end{lemma}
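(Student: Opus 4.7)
\medskip
\noindent\textbf{Proof plan.} The plan is to factor $\tau$ through the blow-up identification $X^{[k-1,k]}\cong \operatorname{Bl}_{\mathcal{Z}_{k-1}}(X\times X^{[k-1]})$ and reduce to a K\"unneth-type computation on $X\times X^{[k-1]}$. Recall from \S\ref{residue-map} that $\operatorname{bl}\cong \operatorname{res}\times \tau$, so $\tau=\operatorname{pr}_2\circ \operatorname{bl}$ and $\operatorname{res}=\operatorname{pr}_1\circ \operatorname{bl}$, where $\operatorname{pr}_1,\operatorname{pr}_2$ are the projections of $X\times X^{[k-1]}$. In particular $\operatorname{res}^*\omega_X\cong \operatorname{bl}^*(\omega_X\boxtimes \mathcal{O}_{X^{[k-1]}})$, so by the projection formula
\[
R^i\operatorname{bl}_*\big((\operatorname{res}^*\omega_X)((n-1)F_{k-1})\big)\cong (\omega_X\boxtimes \mathcal{O}_{X^{[k-1]}})\otimes R^i\operatorname{bl}_*\mathcal{O}_{X^{[k-1,k]}}((n-1)F_{k-1}).
\]

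\medskip
\noindent The next step is to invoke the two facts already recorded in the proof of Theorem \ref{main-direct-image-vanishing}: for every integer $m$ with $0\leq m\leq n-1$ one has
\[
\operatorname{bl}_*\mathcal{O}_{X^{[k-1,k]}}(mF_{k-1})\cong \mathcal{O}_{X\times X^{[k-1]}} \quad\text{and}\quad R^i\operatorname{bl}_*\mathcal{O}_{X^{[k-1,k]}}(mF_{k-1})=0 \text{ for } i\geq 1.
\]
Applying this to $m=n-1$ and combining with the projection formula above yields
\[
\operatorname{bl}_*\big((\operatorname{res}^*\omega_X)((n-1)F_{k-1})\big)\cong \omega_X\boxtimes \mathcal{O}_{X^{[k-1]}},\qquad R^i\operatorname{bl}_*\big((\operatorname{res}^*\omega_X)((n-1)F_{k-1})\big)=0 \text{ for } i\geq 1.
\]

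\medskip
\noindent Because the higher direct images under $\operatorname{bl}$ vanish, the Leray spectral sequence for $\tau=\operatorname{pr}_2\circ \operatorname{bl}$ collapses and gives
\[
R^i\tau_*\big((\operatorname{res}^*\omega_X)((n-1)F_{k-1})\big)\cong R^i\operatorname{pr}_{2,*}(\omega_X\boxtimes \mathcal{O}_{X^{[k-1]}}).
\]
By the K\"unneth formula (or flat base change along $\operatorname{pr}_2$), the right-hand side equals $H^i(X,\omega_X)\otimes \mathcal{O}_{X^{[k-1]}}$, which is the desired identification. The only non-formal ingredient is the computation of $R^i\operatorname{bl}_*\mathcal{O}((n-1)F_{k-1})$; this is the same input used in the proof of Theorem \ref{main-direct-image-vanishing} and is the step where one uses that the fibers of $\operatorname{bl}$ have dimension at most $n-1$ (see the fiber analysis in the proof of Lemma \ref{direct-image-res-r-k-1-k}), so no further argument is needed here.
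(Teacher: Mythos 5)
Your proof is correct and follows essentially the same route as the paper: factor $\tau = \operatorname{pr}_2\circ \operatorname{bl}$, establish that $R^i\operatorname{bl}_*\mathcal{O}_{X^{[k-1,k]}}((n-1)F_{k-1})$ is $\mathcal{O}_{X\times X^{[k-1]}}$ in degree $0$ and vanishes in higher degrees, and then apply Leray together with the projection formula and K\"unneth. The only difference is that you invoke the facts already recorded in the proof of Theorem \ref{main-direct-image-vanishing}, whereas the paper cites Lemma \ref{standard-grauert-riemenschneider} directly; these are the same input. One small inaccuracy in your closing parenthetical: the vanishing of $R^i\operatorname{bl}_*\mathcal{O}((n-1)F_{k-1})$ for $1\leq i\leq n-1$ is \emph{not} a consequence of the fiber-dimension bound (that only kills $R^i$ for $i\geq n$), and the fiber analysis you point to in Lemma \ref{direct-image-res-r-k-1-k} concerns $\operatorname{res}\times\rho$, not $\operatorname{bl}$. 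The correct justification is the relative Grauert--Riehmenschneider-type argument of Lemma \ref{standard-grauert-riemenschneider}, using $\omega_{X^{[k-1,k]}}\cong\operatorname{bl}^*(\omega_X\boxtimes\omega_{X^{[k-1]}})((n-1)F_{k-1})$ so that $\omega_{X^{[k-1,k]}}^{-1}(mF_{k-1})$ is $\operatorname{bl}$-nef for $0\leq m\leq n-1$. Since you also cite the established facts, the proof is not actually defective, but the attribution should be fixed.
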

\begin{proof}
By Lemma \ref{standard-grauert-riemenschneider}, we have
\[
R^i \operatorname{bl}_{*} \mathcal{O}_{X^{[k-1,k]}}((n-1)F_{k-1})\cong
\begin{cases}
    \mathcal{O}_{X\times X^{[k-1]}} & \text{ for $i=0$}\\
    0 & \text{ for $i\geq 1$.}
\end{cases}
\] 
Recalling that $\operatorname{bl} = \operatorname{res}\times\tau$, the statement then follows by applying the Leray spectral sequence to the composition $\tau = \operatorname{bl}\circ \operatorname{pr}_2$, where $\operatorname{pr}_2$ denotes the projection $X\times X^{[k-1]} \lra X^{[k-1]}$.
\end{proof}

\begin{lemma}\label{appearance-Hi-CM}
$R^i \tilde{\tau}_{*} (\alpha_{k-1,k}^* \omega_{Z_{k}})\cong \omega_{B^{k-1}} (Z_{k-1})\otimes H^i(X,\omega_X)$ for $i \geq 0$.
\end{lemma}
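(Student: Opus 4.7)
The plan is to reduce the computation to previously established identities via flat base change and the projection formula. Concretely, I would proceed as follows.

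First, I would rewrite $\alpha_{k-1,k}^*\omega_{Z_k}$ as a pullback from $X^{[k-1,k]}$. Since $X^{[k]}$ is smooth, the adjunction computation in \S\ref{subsection-boundary-divisor} (the remark following Proposition \ref{A&Z}) gives $\omega_{Z_k}\cong\pi_k^*(T_{k,\omega_X}\otimes\delta_k^{n-1})$. The identity $\pi_k\circ\alpha_{k-1,k}=\rho\circ\pi_{k-1,k}$ then yields
\[
\alpha_{k-1,k}^*\omega_{Z_k}\cong \pi_{k-1,k}^*\rho^*(T_{k,\omega_X}\otimes\delta_k^{n-1}).
\]

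Next, I would exploit that $B^{k-1,k}\to B^{k-1}$ is, by definition, the base change of $\tau \colon X^{[k-1,k]}\to X^{[k-1]}$ along the (flat) projective bundle morphism $\pi_{k-1}\colon B^{k-1}\to X^{[k-1]}$. Flat base change gives
\[
R^i\tilde\tau_{*}\bigl(\pi_{k-1,k}^*\mathcal{F}\bigr)\cong \pi_{k-1}^* R^i\tau_{*}\mathcal{F}
\]
for any coherent sheaf $\mathcal{F}$ on $X^{[k-1,k]}$. Applied to $\mathcal{F}=\rho^*(T_{k,\omega_X}\otimes\delta_k^{n-1})$, and using Lemma \ref{identities}(2) and (4) to rewrite
\[
\rho^*(T_{k,\omega_X}\otimes\delta_k^{n-1})\cong \tau^*(T_{k-1,\omega_X}\otimes\delta_{k-1}^{n-1})\otimes\operatorname{res}^*\omega_X\bigl((n-1)F_{k-1}\bigr),
\]
the projection formula together with Lemma \ref{higher-tau-CM} gives
\[
R^i\tau_{*}\rho^*(T_{k,\omega_X}\otimes\delta_k^{n-1})\cong H^i(X,\omega_X)\otimes (T_{k-1,\omega_X}\otimes\delta_{k-1}^{n-1}).
\]

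Finally, I would identify $\pi_{k-1}^*(T_{k-1,\omega_X}\otimes\delta_{k-1}^{n-1})$ with $\omega_{B^{k-1}}(Z_{k-1})$ by a direct line bundle computation on the projective bundle $B^{k-1}=\mathbb{P}(E_{k-1,L})$: the relative canonical formula yields $\omega_{B^{k-1}}\cong \pi_{k-1}^*(\omega_{X^{[k-1]}}\otimes N_{k-1,L})\otimes\mathcal{O}_{B^{k-1}}(-(k-1))$, and Proposition \ref{A&Z}(3) supplies $\mathcal{O}_{B^{k-1}}(Z_{k-1})\cong \mathcal{O}_{B^{k-1}}(k-1)\otimes\pi_{k-1}^*A_{k-1,L}^{-1}$. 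Combining these with $\omega_{X^{[k-1]}}\cong T_{k-1,\omega_X}\otimes\delta_{k-1}^{n-2}$ and $A_{k-1,L}=N_{k-1,L}\otimes\delta_{k-1}^{-1}$, the factors of $N_{k-1,L}$ cancel and we obtain $\omega_{B^{k-1}}(Z_{k-1})\cong \pi_{k-1}^*(T_{k-1,\omega_X}\otimes\delta_{k-1}^{n-1})$. Assembling the three steps gives the claimed isomorphism.

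Since every ingredient is already available in the paper, no real obstacle is expected; the only care needed is the bookkeeping across the Cartesian square, the projective bundle formulas, and the identities from Lemma \ref{identities}, to confirm that the tautological and discriminant twists combine exactly to $\omega_{B^{k-1}}(Z_{k-1})$.
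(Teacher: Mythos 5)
Your proof is correct, and every step checks out. The route is genuinely parallel to the paper's but uses different bookkeeping. The paper starts from the discrepancy relation $\omega_{B^{k-1,k}}\cong(\alpha_{k-1,k}^*\omega_{Z_k})(-\tilde\tau^*Z_{k-1})$ (extracted from the $c=-1$ computation inside the proof of Proposition \ref{A&Z}), combines it with $\omega_{X^{[k-1,k]}}\cong\operatorname{res}^*\omega_X\otimes\tau^*\omega_{X^{[k-1]}}((n-1)F_{k-1})$, and immediately obtains $\alpha_{k-1,k}^*\omega_{Z_k}\cong\tilde\tau^*\omega_{B^{k-1}}(Z_{k-1})\otimes\pi_{k-1,k}^*(\operatorname{res}^*\omega_X((n-1)F_{k-1}))$; the factor $\omega_{B^{k-1}}(Z_{k-1})$ appears naturally at the front and can be pulled out of $R^i\tilde\tau_*$ by the projection formula. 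You instead start from the adjunction identity $\omega_{Z_k}\cong\pi_k^*(T_{k,\omega_X}\otimes\delta_k^{n-1})$ (the remark after Proposition \ref{A&Z}) together with the factorization $\pi_k\circ\alpha_{k-1,k}=\rho\circ\pi_{k-1,k}$, then decompose $\rho^*(T_{k,\omega_X}\otimes\delta_k^{n-1})$ via Lemma \ref{identities}(2),(4), and reconstruct $\omega_{B^{k-1}}(Z_{k-1})$ only at the end from $\pi_{k-1}^*(T_{k-1,\omega_X}\otimes\delta_{k-1}^{n-1})$. Both routes reach the same intermediate term $\operatorname{res}^*\omega_X((n-1)F_{k-1})$, and both then conclude via flat base change across the Cartesian square $B^{k-1,k}\to B^{k-1}$ over $X^{[k-1,k]}\to X^{[k-1]}$ together with Lemma \ref{higher-tau-CM}. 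The paper's version is slightly more economical because the target line bundle $\omega_{B^{k-1}}(Z_{k-1})$ is manifest throughout; yours is more explicit and self-contained in that it expresses everything in terms of the tautological line bundles $T_{\bullet,\omega_X}$ and $\delta_\bullet$ and verifies the final identification by direct computation with the relative canonical of a projective bundle and Proposition \ref{A&Z}(3).
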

\begin{proof}
Recall that
\[
\omega_{X^{[k-1,k]}}\cong\operatorname{res}^* \omega_X \otimes \tau^* \omega_{X^{[k-1]}}((n-1)F_{k-1}).
\]
By taking base-change, we obtain
\[
\omega_{B^{k-1,k}}\cong \tilde{\tau}^* \omega_{B^{k-1}}
\otimes \pi_{k-1,k}^* (\operatorname{res}^* \omega_X ((n-1)F_{k-1})).
\]
Recall also from the proof of Proposition \ref{A&Z} that
\[
\omega_{B^{k-1,k}}\cong (\alpha_{k-1,k}^* \omega_{Z_k})(-\tilde{\tau}^* Z_{k-1}).
\]
Combining the two, we get
\[
\alpha_{k-1,k}^* \omega_{Z_k}\cong \tilde{\tau}^* \omega_{B^{k-1}}(Z_{k-1})\otimes \pi_{k-1,k}^* (\operatorname{res}^* \omega_X ((n-1)F_{k-1})).
\]
Now using Lemma \ref{higher-tau-CM} and \cite[Chapter III, Proposition 9.3]{Hartshorne.77}, we compute
\begin{align*}
    R^i \tilde{\tau}_{*} (\alpha_{k-1,k}^* \omega_{Z_{k}})&\cong R^i \tilde{\tau}_{*} 
    \left(\tilde{\tau}^* \omega_{B^{k-1}}(Z_{k-1})\otimes \pi_{k-1,k}^* (\operatorname{res}^* \omega_X ((n-1)F_{k-1}))\right)\\
    &\cong \omega_{B^{k-1}} (Z_{k-1})\otimes R^i \tilde{\tau}_{*} \left(\pi_{k-1,k}^* (\operatorname{res}^* \omega_X ((n-1)F_{k-1}))\right)\\
    &\cong \omega_{B^{k-1}} (Z_{k-1})\otimes \pi_{k-1}^* R^i \tau_{*} (\operatorname{res}^* \omega_X ((n-1)F_{k-1}))\\
    &\cong \omega_{B^{k-1}} (Z_{k-1})\otimes H^i(X,\omega_X). \qedhere
\end{align*}
\end{proof}

 We spell out here the following easy observation.

\begin{lemma}\label{remark-support}
For any sheaves $\mathcal{F}$ and $\mathcal{Q}$ on $B^k$ and $B^{k-1,k}$ respectively, we have that $R^i \alpha_{k,*} \mathcal{F}$ and $R^i \alpha_{k-1,k,*} \mathcal{Q}$ are supported on $\sigma_{k-1}$ and $\sigma_{k-2,k}$ respectively for any $i\geq 1$.
\end{lemma}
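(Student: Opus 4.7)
The proof will be a direct corollary of the (absolute and relative) Embedding Theorems already established, together with the fact that a closed embedding has no higher direct images. The point is that $\alpha_k$ and $\alpha_{k-1,k}$ are closed embeddings after restriction to the open sets $U^k$ and $U^{k-1,k}$, and the images of these open sets are precisely the complements of the claimed support loci inside the image of $\alpha_k$ (respectively, $\alpha_{k-1,k}$).

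For the first assertion, I would observe that since $R^i\alpha_{k,*}\mathcal{F}$ is automatically supported on $\sigma_k = \alpha_k(B^k)$, it suffices to show that its restriction to $\sigma_k \setminus \sigma_{k-1}$ vanishes. By Theorem \ref{terracini}, together with our standing assumption that $X^{[k]}$ is smooth (so that $\sigma_k = \kappa_k$), the morphism
\[
\alpha_k\vert_{U^k} \colon U^k \lra \sigma_k \setminus \sigma_{k-1}
\]
is an isomorphism. Hence $R^i\alpha_{k,*}\mathcal{F}\vert_{\sigma_k\setminus\sigma_{k-1}} = 0$ for $i \geq 1$, which gives the desired statement.

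For the second assertion, the plan is to use the Cartesian square from Proposition \ref{fiber-product}:
\begin{center}
\begin{tikzcd}
U^{k-1,k}\ar[r, "\alpha_{k-1,k}"]\ar[d, "\widetilde{\tau}"]& B^k\ar[d,"\alpha_k"]\\
U^{k-1}\ar[r, "\alpha_{k-1}"]& \mathbb{P}^r
\end{tikzcd}.
\end{center}
Since $\alpha_{k-1}\vert_{U^{k-1}}$ is a closed embedding by Theorem \ref{terracini}, base change yields that $\alpha_{k-1,k}\vert_{U^{k-1,k}}$ is a closed embedding as well. Its image equals $\alpha_k^{-1}(\sigma_{k-1}\setminus\sigma_{k-2})$, which by the identity $\alpha_k^{-1}(\kappa_m) = \kappa_{m,k}$ (see the end of \S\ref{subsec:secant bundles}) coincides with $\sigma_{k-1,k}\setminus\sigma_{k-2,k}$ in the present smooth setting. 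Combining this with the automatic vanishing of $R^i\alpha_{k-1,k,*}\mathcal{Q}$ off $\alpha_{k-1,k}(B^{k-1,k}) = \sigma_{k-1,k}$, we conclude that $R^i\alpha_{k-1,k,*}\mathcal{Q}$ is supported on $\sigma_{k-2,k}$ for $i \geq 1$.

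There is no serious obstacle here: the only mild subtlety is the bookkeeping that identifies the image of $\alpha_{k-1,k}\vert_{U^{k-1,k}}$ with $\sigma_{k-1,k}\setminus\sigma_{k-2,k}$, but this follows immediately from chasing the Cartesian square and invoking the set-theoretic identity $\alpha_k^{-1}(\kappa_m) = \kappa_{m,k}$ recorded earlier.
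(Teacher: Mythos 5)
Your proof is correct and follows essentially the same route as the paper's (whose proof is simply ``Immediate from Theorem \ref{terracini} and Theorem \ref{relative-terracini}''). For the absolute statement you invoke Theorem \ref{terracini} exactly as intended, and for the relative statement you cite Proposition \ref{fiber-product} rather than Theorem \ref{relative-terracini}, but Proposition \ref{fiber-product} is precisely the input that makes $\alpha_{k-1,k}\vert_{U^{k-1,k}}$ a closed embedding (Theorem \ref{relative-terracini} simply records its conormal sheaf), so the logical content is the same. One small point worth making explicit, which you gesture at but do not spell out: the key fact is not just that the image of $\alpha_{k-1,k}\vert_{U^{k-1,k}}$ equals $\sigma_{k-1,k}\setminus\sigma_{k-2,k}$, but that the \emph{full} preimage $\alpha_{k-1,k}^{-1}(B^k\setminus\sigma_{k-2,k})$ coincides with $U^{k-1,k}$, which follows from the set-theoretic identity $\alpha_{k-1,k}^{-1}(\kappa_{k-2,k})=\widetilde{\tau}^{-1}(\kappa_{k-2,k-1})$ recorded at the end of \S\ref{subsec:secant bundles}. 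With that observation in hand, higher direct images vanish over the open set $B^k\setminus\sigma_{k-2,k}$ as you claim.
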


\begin{proof}
Immediate from Theorem \ref{terracini} and Theorem \ref{relative-terracini}.
\end{proof}

\begin{lemma}\label{direct-image-alpha-k-1-k}
$R^i \alpha_{k-1,k,*} \mathcal{O}_{B^{k-1,k}} = 0$ for $i\geq 1$. 
\end{lemma}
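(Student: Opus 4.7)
The plan is to apply Lemma \ref{lem:standfactforhdi}: it suffices to exhibit one sufficiently positive line bundle $A$ on $B^k$ for which $H^i(B^{k-1,k}, \alpha_{k-1,k}^*A) = 0$ for all $i \geq 1$. I will take $A = \mathcal{O}_{B^k}(m) \otimes \pi_k^* H$ for $m \geq 0$ and $H$ a sufficiently ample line bundle on $X^{[k]}$, chosen large enough that $A$ is sufficiently positive. Since $\alpha_{k-1,k}$ factors as $B^{k-1,k} \xrightarrow{i_{k-1,k}} P^{k-1,k} \xrightarrow{\widetilde{\rho}} B^k$, is compatible with the tautological line bundle, and satisfies $\pi_k\circ\alpha_{k-1,k} = \rho\circ \pi_{k-1,k}$, this pulls back to
\[
\alpha_{k-1,k}^*A \cong \mathcal{O}_{B^{k-1,k}}(m) \otimes \pi_{k-1,k}^*\rho^*H.
\]

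Next, I push forward along the projective bundle $\pi_{k-1,k} \colon B^{k-1,k} = \mathbb{P}(\tau^*E_{k-1,L}) \lra X^{[k-1,k]}$. For $m \geq 0$ the higher direct images vanish and the zeroth is $\tau^*S^m E_{k-1,L}\otimes \rho^*H$, so the Leray spectral sequence degenerates and yields
\[
H^i(B^{k-1,k}, \alpha_{k-1,k}^*A) \cong H^i(X^{[k-1,k]}, \tau^* S^m E_{k-1, L} \otimes \rho^*H).
\]
At this point, Corollary \ref{direct-image-E-k-L}(1) provides exactly the crucial relative vanishing $R^q\rho_*(\tau^* S^m E_{k-1,L}) = 0$ for $q \geq 1$ (with no positivity hypothesis on $L$). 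Combined with the projection formula and the Leray spectral sequence for $\rho$, this identifies the cohomology above with
\[
H^i(X^{[k]}, \rho_*(\tau^* S^m E_{k-1,L}) \otimes H),
\]
which vanishes for $i \geq 1$ by Serre vanishing provided $H$ is sufficiently ample.

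There is no serious obstacle here: the argument is a clean application of the projective bundle formula, two successive Leray spectral sequences, and Serre vanishing on $X^{[k]}$. The only real content is the invocation of Corollary \ref{direct-image-E-k-L}(1), which is exactly the relative vanishing needed to descend the computation from the nested Hilbert scheme $X^{[k-1,k]}$ down to $X^{[k]}$, where Serre vanishing takes over.
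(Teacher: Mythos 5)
Your proof is correct and follows essentially the same route as the paper: reduce to a cohomology vanishing statement via Lemma~\ref{lem:standfactforhdi}, pull the chosen line bundle back along $\alpha_{k-1,k}$, push forward along the projective bundle $\pi_{k-1,k}$ to land on $X^{[k-1,k]}$, use Corollary~\ref{direct-image-E-k-L}(1) to push forward further along $\rho$ to $X^{[k]}$, and finish with Serre vanishing. The only cosmetic difference is your choice of twist: you take $\mathcal{O}_{B^k}(m)\otimes\pi_k^*H$ with $H$ an arbitrary sufficiently ample line bundle on $X^{[k]}$, whereas the paper takes $\mathcal{O}_{B^k}(m_1)\otimes\pi_k^*A_{k,H}^{m_2}$ (restricted to $Z_k$) with $H$ sufficiently positive on $X$ and $m_2 \gg m_1$. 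Your choice is arguably the more transparent one, since it does not implicitly appeal to ampleness of $A_{k,H}$ on $X^{[k]}$. Two tiny points of precision: the target of $\alpha_{k-1,k}$ is $Z_k$, not $B^k$, so the ample bundle should really live on (or be restricted to) $Z_k$, though of course restricting an ample bundle from $B^k$ is fine; and you should fix $m\geq 1$ rather than $m\geq 0$, since $\pi_k^*H$ alone is trivial on the fibers of $\pi_k|_{Z_k}$ and so cannot give an ample bundle on $Z_k$.
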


\begin{proof}
By Lemma \ref{lem:standfactforhdi}, we only need to show that $H^i(B^{k-1,k}, \alpha_{k-1,k}^* (\mathcal{O}_{B^k}(m_1) \otimes \pi_k^* A_{k,H}^{m_2})|_{Z_k}) = 0$ for $i \geq 1$ and $m_2 \gg m_1 \gg 0$, where $H$ is a sufficiently positive line bundle on $X$. Note that
$$
H^i(B^{k-1,k}, \alpha_{k-1,k}^* (\mathcal{O}_{B^k}(m_1) \otimes \pi_k^* A_{k,H}^{m_2})|_{Z_k}) \cong H^i(X^{[k-1,k]}, \tau^* S^{m_1} E_{k-1,L} \otimes \rho^* A_{k,H}^{m_2})
$$
for $i \geq 1$ and $m_1$, $m_2 \geq 1$.
By Corollary \ref{direct-image-E-k-L} (1) and Serre vanishing, we have
$$
H^i(X^{[k-1,k]}, \tau^* S^{m_1} E_{k-1,L} \otimes \rho^* A_{k,H}^{m_2})
\cong H^i(X^{[k]}, \rho_* \tau^* S^{m_1} E_{k-1,L} \otimes A_{k,H}^{m_2})=0
$$
for $i \geq 1$ and $m_2 \gg m_1 \gg 0$.
\end{proof}

\begin{lemma}\label{support-alpha-alpha-alpha}
For $i\geq 0$, the sheaf $R^i \alpha_{k,*} (\alpha_{k-1,k,*} \alpha_{k-1,k}^* \omega_{Z_{k}})$ is supported on $\sigma_{k-2}$ if and only if $H^i(X,\omega_X)=0$.
\end{lemma}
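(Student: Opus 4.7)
The plan is to exploit the commutative square $\alpha_k \circ \alpha_{k-1,k} = \alpha_{k-1} \circ \widetilde{\tau}$ and apply Grothendieck's spectral sequence to each of the two factorizations. First I would use the projection formula to compute $R^q\alpha_{k-1,k,*}\alpha_{k-1,k}^*\omega_{Z_k}$. Since $\omega_{Z_k}$ is a line bundle (by adjunction following Proposition \ref{A&Z}, namely $\omega_{Z_k}\cong\pi_k^*(T_{k,\omega_X}\otimes\delta_k^{n-1})$) and since $R^q\alpha_{k-1,k,*}\mathcal{O}_{B^{k-1,k}}=0$ for $q\geq 1$ by Lemma \ref{direct-image-alpha-k-1-k}, we obtain $R^q\alpha_{k-1,k,*}\alpha_{k-1,k}^*\omega_{Z_k}=0$ for $q\geq 1$. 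The Grothendieck spectral sequence for $\alpha_k\circ\alpha_{k-1,k}$ therefore collapses and yields
\[
R^i\alpha_{k,*}(\alpha_{k-1,k,*}\alpha_{k-1,k}^*\omega_{Z_k})\cong R^i(\alpha_k\circ\alpha_{k-1,k})_*\alpha_{k-1,k}^*\omega_{Z_k}=R^i(\alpha_{k-1}\circ\widetilde{\tau})_*\alpha_{k-1,k}^*\omega_{Z_k}.
\]

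Next I would analyze this last object via the Grothendieck spectral sequence for $\alpha_{k-1}\circ\widetilde{\tau}$. By Lemma \ref{appearance-Hi-CM} it takes the form
\[
E_2^{p,q}=R^p\alpha_{k-1,*}\omega_{B^{k-1}}(Z_{k-1})\otimes H^q(X,\omega_X)\Longrightarrow R^{p+q}\alpha_{k,*}(\alpha_{k-1,k,*}\alpha_{k-1,k}^*\omega_{Z_k}).
\]
By Lemma \ref{dualizing-sheaf-secant-variety} we have $E_2^{0,q}\cong\omega_{\sigma_{k-1}}\otimes H^q(X,\omega_X)$, whereas Lemma \ref{higher-dualizing-Z} (applied at index $k-1$) identifies $E_2^{p,q}$ for $p\geq 1$ with $R^p\alpha_{k-1,*}\omega_{Z_{k-1}}\otimes H^q(X,\omega_X)$, which is supported in $\sigma_{k-2}$ by Lemma \ref{remark-support}.

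The equivalence in the statement will then follow by passing to stalks at an arbitrary point $\eta\in\sigma_{k-1}\setminus\sigma_{k-2}$. At such $\eta$ all terms $E_2^{p,q}$ with $p\geq 1$ have vanishing stalk, so the spectral sequence degenerates stalk-wise onto the $E_2^{0,i}$-line, yielding
\[
\bigl(R^i\alpha_{k,*}(\alpha_{k-1,k,*}\alpha_{k-1,k}^*\omega_{Z_k})\bigr)_\eta\cong (\omega_{\sigma_{k-1}})_\eta\otimes H^i(X,\omega_X).
\]
Since $(\omega_{\sigma_{k-1}})_\eta\neq 0$ for every point of the irreducible variety $\sigma_{k-1}$, and since the pushforward in question is already supported in $\sigma_{k-1}$, this stalk vanishes at every such $\eta$ precisely when $H^i(X,\omega_X)=0$, giving both directions of the claimed equivalence. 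The most delicate ingredient is the identification of $\omega_{Z_k}$ as a line bundle, which is essential both for the projection formula and for the collapsing of the first spectral sequence; this rests on the Gorenstein and Cohen--Macaulay structure of $Z_k$ established in Proposition \ref{A&Z}.
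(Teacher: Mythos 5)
Your proof is correct and follows essentially the same route as the paper: reduce to $R^i(\alpha_{k-1}\circ\widetilde{\tau})_*\alpha_{k-1,k}^*\omega_{Z_k}$ via Lemma \ref{direct-image-alpha-k-1-k} and the projection formula, then analyze the Grothendieck spectral sequence for $\alpha_{k-1}\circ\widetilde{\tau}$ using Lemmas \ref{remark-support}, \ref{appearance-Hi-CM}, and \ref{dualizing-sheaf-secant-variety}. The only inessential step is the detour through Lemma \ref{higher-dualizing-Z}: Lemma \ref{remark-support} already places $R^p\alpha_{k-1,*}\omega_{B^{k-1}}(Z_{k-1})$ in $\sigma_{k-2}$ for $p\geq 1$ without that identification.
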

\begin{proof}
By Lemma \ref{direct-image-alpha-k-1-k} and the equality $\alpha_k \circ \alpha_{k-1,k}=\alpha_{k-1}\circ \tilde{\tau}$, we have
\[
R^i \alpha_{k,*} (\alpha_{k-1,k,*} \alpha_{k-1,k}^* \omega_{Z_{k}}) \cong R^i (\alpha_k \circ \alpha_{k-1,k})_* (\alpha_{k-1,k}^* \omega_{Z_{k}})
\cong R^i (\alpha_{k-1}\circ \tilde{\tau})_* (\alpha_{k-1,k}^* \omega_{Z_{k}}).
\]
There is a spectral sequence
\begin{equation}\label{eq:spectralseqCMproof}
E_2 ^{p,q}=R^p \alpha_{k-1,*} R^q \tilde{\tau}_{*} \alpha_{k-1,k}^* \omega_{Z_{k-1}} \implies E^{p+q} =R^{p+q}(\alpha_{k-1}\circ \tilde{\tau})_* \alpha_{k-1,k}^* \omega_{Z_{k-1}} .
\end{equation}
Combining this and Lemma \ref{remark-support}, we see that $R^i (\alpha_{k-1}\circ \tilde{\tau})_* \alpha_{k-1,k}^* \omega_{Z_{k-1}}$ is supported on $\sigma_{k-2}$ if and only if $\alpha_{k-1,*} R^i \tilde{\tau}_{*} (\alpha_{k-1,k}^* \omega_{Z_{k}})$ is supported on $\sigma_{k-2}$. We conclude by Lemma \ref{appearance-Hi-CM}.
\end{proof}

\begin{lemma}\label{CM-support}
For $i\geq 1$, the sheaf $R^i \alpha_{k,*} \omega_{B^k}(Z_{k})$ is supported on $\sigma_{k-2}$ if and only if $H^i(X,\omega_X)=0$.
\end{lemma}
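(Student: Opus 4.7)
The plan is to reduce the statement to Lemma \ref{support-alpha-alpha-alpha} by comparing $R^i\alpha_{k,*}\omega_{Z_k}$ with $R^i\alpha_{k,*}(\alpha_{k-1,k,*}\alpha_{k-1,k}^*\omega_{Z_k})$. By Lemma \ref{higher-dualizing-Z}, we may replace $\omega_{B^k}(Z_k)$ with $\omega_{Z_k}$ for $i\geq 1$. Recall from Proposition \ref{A&Z} that $Z_k$ is an irreducible Cartier divisor in the smooth scheme $B^k$, so $Z_k$ is reduced and $\omega_{Z_k}$ is a line bundle. Since $\alpha_{k-1,k}\colon B^{k-1,k}\lra Z_k$ is a proper surjective birational morphism onto a reduced scheme, the adjunction morphism
\[
\omega_{Z_k} \lra \alpha_{k-1,k,*}\alpha_{k-1,k}^*\omega_{Z_k}
\]
is injective; let $\mathcal{C}$ denote its cokernel.

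The next (and key) step is to locate the support of $\mathcal{C}$. By Proposition \ref{fiber-product} (applied with $m=k-1$), the diagram for $\alpha_{k-1,k}$ is Cartesian over the open subset $U^{k-1}$, and since $U^{k-1}\cong \sigma_{k-1}\setminus\sigma_{k-2}$ via $\alpha_{k-1}$ by Theorem \ref{terracini}, we obtain an identification
\[
U^{k-1,k} \;\cong\; \alpha_k^{-1}(\sigma_{k-1}\setminus\sigma_{k-2}) \;=\; Z_k\setminus\alpha_k^{-1}(\sigma_{k-2}),
\]
with $\alpha_{k-1,k}$ restricting to an isomorphism from $U^{k-1,k}$ onto $Z_k\setminus\alpha_k^{-1}(\sigma_{k-2})$. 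Consequently the adjunction morphism is an isomorphism outside $\alpha_k^{-1}(\sigma_{k-2})$, so $\mathcal{C}$ is supported on $Z_k\cap\alpha_k^{-1}(\sigma_{k-2})$, and therefore each $R^i\alpha_{k,*}\mathcal{C}$ is supported on $\sigma_{k-2}$.

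Applying $\mathcal{R}\alpha_{k,*}$ to the short exact sequence
\[
0\lra \omega_{Z_k}\lra \alpha_{k-1,k,*}\alpha_{k-1,k}^*\omega_{Z_k}\lra \mathcal{C}\lra 0
\]
and using the resulting long exact sequence, the sheaf $R^i\alpha_{k,*}\omega_{Z_k}$ is supported on $\sigma_{k-2}$ if and only if $R^i\alpha_{k,*}(\alpha_{k-1,k,*}\alpha_{k-1,k}^*\omega_{Z_k})$ is (since the error terms $R^i\alpha_{k,*}\mathcal{C}$ all live on $\sigma_{k-2}$). Note that the projection formula together with Lemma \ref{direct-image-alpha-k-1-k} ensures $\mathcal{R}\alpha_{k-1,k,*}\alpha_{k-1,k}^*\omega_{Z_k}$ is concentrated in degree zero, so no additional error terms appear. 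The statement now follows immediately by invoking Lemma \ref{support-alpha-alpha-alpha}.

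The main obstacle is the support computation for $\mathcal{C}$ in the second paragraph, which rests on correctly identifying the locus over which $\alpha_{k-1,k}\colon B^{k-1,k}\lra Z_k$ fails to be an isomorphism; for this we rely essentially on the Cartesian structure provided by our relative Terracini lemma (Proposition \ref{fiber-product}) and the identification of $U^{k-1}$ as $\sigma_{k-1}\setminus\sigma_{k-2}$.
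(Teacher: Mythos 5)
Your proof is correct and takes essentially the same approach as the paper: reduce to $\omega_{Z_k}$ via Lemma \ref{higher-dualizing-Z}, use the short exact sequence with the cokernel $\mathcal{F}$ of the adjunction map, observe that $\mathcal{F}$ is supported on $\sigma_{k-2,k}$, then conclude via the long exact sequence and Lemma \ref{support-alpha-alpha-alpha}. The only cosmetic difference is that the paper cites Theorem \ref{relative-terracini} to place the support of $\mathcal{F}$, while you unwind the Cartesian diagram from Proposition \ref{fiber-product} together with Theorem \ref{terracini} directly---both routes establish the same fact that $\alpha_{k-1,k}$ is an isomorphism over $Z_k\setminus\sigma_{k-2,k}$; your final remark about projection formula and Lemma \ref{direct-image-alpha-k-1-k} is an ingredient already embedded in the proof of Lemma \ref{support-alpha-alpha-alpha} and is not separately needed here, though it does no harm.
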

\begin{proof}
    By Lemma \ref{higher-dualizing-Z}, it suffices to show the statement for $R^i \alpha_{k,*} \omega_{Z_{k}}$. Consider the short exact sequence
\[
0\lra \omega_{Z_{k}}\lra \alpha_{k-1,k,*} \alpha_{k-1,k}^* \omega_{Z_{k}}\lra \mathcal{F}\lra 0.
\]
The sheaf $\mathcal{F}$ is supported on $\sigma_{k-2,k}$ by Proposition \ref{relative-terracini}. We have an induced long exact sequence
\begin{equation}\label{eq:longexactalpha_*omega_Z}
\cdots \lra R^{i-1}\alpha_{k,*}\mathcal{F}\lra
R^i \alpha_{k,*} \omega_{Z_{k}}\lra R^i \alpha_{k,*} (\alpha_{k-1,k,*}\alpha_{k-1,k}^* \omega_{Z_{k}})\lra
R^i \alpha_{k,*} \mathcal{F}\lra \cdots
\end{equation}
We may conclude by Lemma \ref{support-alpha-alpha-alpha}.
\end{proof}

\begin{proposition}\label{not-CM}
If $k \geq 2$ and $H^i(X, \omega_X)\neq 0$ for some $1\leq i\leq n-1$, then $\sigma_k $ is not Cohen--Macaulay.
\end{proposition}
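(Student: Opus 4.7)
The plan is to derive a contradiction: assume $\sigma_k$ is Cohen--Macaulay, and show that this together with the Du Bois-type condition forces the vanishing of $R^i\alpha_{k,*}\omega_{B^k}(Z_k)$ for $1 \leq i \leq n-1$, which is incompatible with Lemma \ref{CM-support}. The key tool is Grothendieck duality for the projective birational morphism $\alpha_k \colon B^k \to \sigma_k$, combined with Theorem \ref{Du Bois-type condition} (1).

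First I would apply Grothendieck duality to $\alpha_k$ with the sheaf $\mathcal{O}_{B^k}(-Z_k)$ to obtain
$$R\alpha_{k,*}\omega_{B^k}(Z_k)[d_k] \;\cong\; R\mathcal{H}om_{\sigma_k}\!\bigl(R\alpha_{k,*}\mathcal{O}_{B^k}(-Z_k),\, \omega_{\sigma_k}^{\bullet}\bigr).$$
The Du Bois-type condition identifies $R\alpha_{k,*}\mathcal{O}_{B^k}(-Z_k)$ with the ideal sheaf $\mathcal{I}_{\sigma_{k-1}/\sigma_k}$ placed in degree zero, so the right-hand side becomes $R\mathcal{H}om_{\sigma_k}(\mathcal{I}_{\sigma_{k-1}/\sigma_k}, \omega_{\sigma_k}^{\bullet})$. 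Under the CM assumption one has $\omega_{\sigma_k}^{\bullet} \cong \omega_{\sigma_k}[d_k]$, and the ideal sequence $0\to\mathcal{I}_{\sigma_{k-1}/\sigma_k}\to\mathcal{O}_{\sigma_k}\to\mathcal{O}_{\sigma_{k-1}}\to 0$ yields, for $i \geq 1$,
$$R^i\alpha_{k,*}\omega_{B^k}(Z_k) \;\cong\; \mathcal{E}xt^{i}_{\sigma_k}(\mathcal{I}_{\sigma_{k-1}/\sigma_k},\omega_{\sigma_k}) \;\cong\; \mathcal{E}xt^{i+1}_{\sigma_k}(\mathcal{O}_{\sigma_{k-1}}, \omega_{\sigma_k}).$$

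Next, using $\operatorname{codim}_{\sigma_k}(\sigma_{k-1}) = d_k - d_{k-1} = n+1$ and the hypothesis that $\sigma_k$ is CM, the standard vanishing $\mathcal{E}xt^j_{\sigma_k}(\mathcal{O}_{\sigma_{k-1}}, \omega_{\sigma_k}) = 0$ for $j$ less than the codimension applies, forcing $R^i\alpha_{k,*}\omega_{B^k}(Z_k) = 0$ for $1 \leq i \leq n-1$. But Lemma \ref{CM-support} asserts that for each $1 \leq i \leq n-1$ with $H^i(X, \omega_X) \neq 0$, the sheaf $R^i\alpha_{k,*}\omega_{B^k}(Z_k)$ has support not contained in $\sigma_{k-2}$ and is therefore nonzero; this contradicts the vanishing, completing the proof.

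The main obstacle is the bookkeeping in Grothendieck duality and correctly combining it with the Du Bois-type identification to get a clean formula for $R\alpha_{k,*}\omega_{B^k}(Z_k)$; once this is done, the Ext vanishing from Cohen--Macaulayness is a routine consequence.
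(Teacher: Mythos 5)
Your proof is correct, but it takes a genuinely different route from the paper's. The paper does \emph{not} argue by contradiction: it notes that Lemma \ref{CM-support} directly gives $R^i\alpha_{k,*}\omega_{B^k}(Z_k)\neq 0$, then uses the Leray spectral sequence together with Corollary \ref{serre-on-secant} (Serre duality on the \emph{smooth} variety $B^k$, combined with the Du Bois-type condition) to translate this into $H^{d_k-i}(\sigma_k,\mathcal{I}_{\sigma_{k-1}/\sigma_k}(-\ell))\neq 0$ for $\ell\gg 0$; because $d_k-i>d_{k-1}+1$, the ideal-sheaf long exact sequence upgrades this to $H^{d_k-i}(\sigma_k,\OO_{\sigma_k}(-\ell))\neq 0$, and the cohomological criterion \cite[Corollary 5.72]{Kollar-Mori} concludes. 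You instead assume CM and invoke Grothendieck duality for the birational morphism $\alpha_k\colon B^k\to\sigma_k$ directly, getting $R^i\alpha_{k,*}\omega_{B^k}(Z_k)\cong \mathcal{E}xt^{i+1}_{\sigma_k}(\OO_{\sigma_{k-1}},\omega_{\sigma_k})$, and then apply the grade/depth vanishing ($\mathcal{E}xt^j=0$ for $j<\operatorname{codim}=n+1$) to force these sheaves to vanish for $1\leq i\leq n-1$, contradicting Lemma \ref{CM-support}. Both proofs ultimately rest on the Du Bois-type condition and Lemma \ref{CM-support}, and the two duality arguments are essentially dual to one another (your local $\mathcal{E}xt$ grade bound is a sheafified version of the cohomological criterion the paper applies globally). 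What the paper's route buys is that it only needs Serre duality on a smooth projective variety (via Corollary \ref{serre-on-secant}, already proved) and never needs the CM hypothesis to proceed; your route is more in the spirit of the proof of Lemma \ref{dualizing-sheaf-secant-variety}, which also invokes Grothendieck duality for $\alpha_k$, and arguably makes clearer \emph{why} CM fails (the obstruction lives in a small-codimension $\mathcal{E}xt$ group that ought to vanish). One small point worth flagging for a fully rigorous write-up: the step $\operatorname{depth}_{\mathcal{I}_{\sigma_{k-1}}}(\omega_{\sigma_k})=n+1$ uses that $\omega_{\sigma_k}$ is a maximal Cohen--Macaulay module over the CM ring $\OO_{\sigma_k}$ whose annihilator is zero (which holds since $\sigma_k$ is normal by Theorem \ref{thm:normalsing}), so that $\dim(\omega_{\sigma_k}/\mathcal{I}\omega_{\sigma_k})=\dim\sigma_{k-1}$; you should state this explicitly.
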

\begin{proof}
By Lemma \ref{CM-support},  we have $R^i \alpha_{k,*} \omega_{B^k}(Z_{k})\neq 0$. Considering the Leray spectral sequence for $\alpha_k$, we obtain 
\[
H^{d_k-i}(\sigma_k, \mathcal{I}_{\sigma_{k-1}/\sigma_k}(-\ell))\neq 0~~\text{ for $\ell \gg 0$}
\]
by Corollary \ref{serre-on-secant}.
Since $d_k-i>d_{k-1}+1$, we have
\[
H^{d_k-i}(\sigma_k, \mathcal{O}_{\sigma_k}(-\ell))\cong H^{d_k-i}(\sigma_k, \mathcal{I}_{\sigma_{k-1}/\sigma_k}(-\ell))\neq 0.
\]
Therefore, $\sigma_k$ is not Cohen--Macaulay by \cite[Corollary 5.72]{Kollar-Mori}.
\end{proof}

Finally, we show that if $H^i(X, \mathcal{O}_X)=0$ for $1 \leq i \leq n-1$, then $\sigma_k$ is instead Cohen--Macaulay. The key ingredient is Koll\'ar's injectivity theorem \cite[Theorem 9.12]{Kollar.Shafarevich}. We may apply this theorem to $\sigma_k$ since $\sigma_k$ has Du Bois singularities by Theorem \ref{thm:dubois}. We start with the following.

\begin{lemma}\label{injectivity-implies-vanishing}
Pick $k\geq 2$ and $H\in |\ell H_k|$. For an integer $i\geq 1$, suppose that the map
        \[
        H^i(B^k, \omega_{B^k}(sH_k+Z_k))\overset{\cdot H}{\lra} H^i(B^k, \omega_{B^k}((s+\ell) H_k+Z_k))
        \]
        \noindent is injective for all $s$ and $\ell$ sufficiently divisible. Then the only subsheaf of $R^i \alpha_{k,*} \omega_{B^k}(Z_k)$ supported in $\alpha_{k,*} (H)$ is the zero sheaf.
\end{lemma}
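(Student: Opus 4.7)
The plan is to transport the hypothesis cohomologically to $\sigma_k$ and then conclude by a standard Serre-vanishing/global-generation argument. Set $\mathcal{F} := R^i \alpha_{k,*} \omega_{B^k}(Z_k)$. Using $\mathcal{O}_{B^k}(H_k) \cong \alpha_k^*\mathcal{O}_{\sigma_k}(1)$ and the projection formula, one gets $R^q\alpha_{k,*}(\omega_{B^k}(sH_k + Z_k)) \cong R^q\alpha_{k,*}\omega_{B^k}(Z_k) \otimes \mathcal{O}_{\sigma_k}(s)$ for every $q$. For $s$ sufficiently large and divisible, Serre vanishing kills $H^p(\sigma_k, R^q\alpha_{k,*}\omega_{B^k}(Z_k)(s))$ for all $p \geq 1$ and all $q$, so the Leray spectral sequence collapses and yields
\[
H^i(B^k, \omega_{B^k}(sH_k + Z_k)) \cong H^0(\sigma_k, \mathcal{F}(s)).
\]
Since $\sigma_k$ is normal (Theorem \ref{thm:normalsing}) and $\alpha_k$ is surjective, $H \in |\ell H_k|$ corresponds to a section $\bar{s}_H \in H^0(\sigma_k, \mathcal{O}_{\sigma_k}(\ell))$ whose zero locus has underlying set $\alpha_k(H)$; by naturality of the projection formula, the map $\cdot H$ in the hypothesis is identified with multiplication by $\bar{s}_H$ on $H^0(\sigma_k, \mathcal{F}(\bullet))$.

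Now let $\mathcal{G} \subseteq \mathcal{F}$ be any coherent subsheaf set-theoretically supported in $\alpha_k(H) = \{\bar{s}_H = 0\}$. By coherence, some power $\bar{s}_H^N$ annihilates $\mathcal{G}$. The divisor $NH$ lies in $|N\ell H_k|$, and $N\ell$ remains sufficiently divisible, so the hypothesis still applies with $H$ replaced by $NH$. Hence one may assume outright that $\bar{s}_H \cdot \mathcal{G} = 0$.

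Finally, choose $s$ large and sufficiently divisible so that both $\mathcal{G}(s)$ is globally generated and the identifications of the previous paragraph hold. The commutative square
\begin{center}
\begin{tikzcd}
H^0(\sigma_k, \mathcal{G}(s)) \ar[r, "\cdot \bar{s}_H"] \ar[d, hook] & H^0(\sigma_k, \mathcal{G}(s+\ell)) \ar[d, hook] \\
H^0(\sigma_k, \mathcal{F}(s)) \ar[r, "\cdot \bar{s}_H", hook] & H^0(\sigma_k, \mathcal{F}(s+\ell))
\end{tikzcd}
\end{center}
has zero top row (since $\bar{s}_H \cdot \mathcal{G} = 0$), injective bottom row (the transported hypothesis), and injective verticals; a diagram chase forces $H^0(\sigma_k, \mathcal{G}(s)) = 0$, and global generation then gives $\mathcal{G}(s) = 0$, hence $\mathcal{G} = 0$. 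The one delicate point is verifying that the multiplication $\cdot H$ on $B^k$-cohomology transports to multiplication by $\bar{s}_H$ on $\sigma_k$-cohomology under the Leray identification; this is naturality of the projection formula and is the main technical item, though standard.
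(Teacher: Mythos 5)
Your proof is correct and takes essentially the same approach as the paper's (very terse) argument: use Serre vanishing to collapse the Leray spectral sequence so that the hypothesis becomes injectivity of $\cdot\bar s_H$ on $H^0(\sigma_k,\mathcal{F}(\bullet))$, then show any coherent subsheaf $\mathcal{G}$ of $\mathcal{F}$ supported on $\{\bar s_H=0\}$ must have $H^0(\mathcal{G}(s))=0$ for all large $s$, hence $\mathcal{G}=0$ by global generation. You have simply supplied the details that the paper compresses into ``The conclusion easily follows,'' and your handling of the reduction from ``some power of $\bar s_H$ annihilates $\mathcal{G}$'' to ``$\bar s_H$ annihilates $\mathcal{G}$'' (via replacing $H$ by $NH$, or equivalently iterating the injectivity) is a valid and necessary step.
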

\begin{proof}
By Serre vanishing, we have
    \[
    H^j\left(\sigma_k, (R^{i-j}\alpha_{k,*} \omega_{B^k}(Z_k))(s)\right)=0~~\text{ for all $1\leq j\leq i$ and $s$ sufficiently large}.
    \]
The map
\[
H^0(\sigma_k, R^i\alpha_{k,*} \omega_{B^k}(sH_k+Z_k))\overset{\cdot H}{\lra}
H^0(\sigma_k, R^i\alpha_{k,*} \omega_{B^k}((s+\ell)H_k+Z_k))
\]
is injective by the Leray spectral sequence and by hypothesis. The conclusion easily follows.
\end{proof}

\begin{lemma}\label{injectivity-preliminary}
Pick $k\geq 2$ and $H\in |\ell H_k|$ such that $Z_k$ is not contained in the support of $H$. Assume that $\sigma_{k-1}$ is Cohen--Macaulay. If $s$ and $\ell$ are sufficiently divisible, then the map
        \[
        H^i(B^k, \omega_{B^k}(sH_k+Z_k))\overset{\cdot H}{\lra} H^i(B^k, \omega_{B^k}((s+\ell) H_k+Z_k))
        \]
is injective for all $i\not\in\{0, n+1\}$.
\end{lemma}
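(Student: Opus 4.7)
The plan is to apply Kollár's injectivity theorem (\cite[Theorem 9.12]{Kollar.Shafarevich}) to the smooth projective variety $B^k$, equipped with the semi-ample line bundle $\mathcal{O}_{B^k}(H_k) = \alpha_k^* \mathcal{O}_{\mathbb{P}^r}(1)$. This line bundle is in fact big, since $\alpha_k$ is birational onto $\sigma_k$ by the Embedding Theorem \ref{terracini}. The divisor $Z_k$ is reduced, and the effective section $H \in |\ell H_k|$ has support avoiding $Z_k$ by hypothesis. These are precisely the inputs required by Kollár's theorem---after possibly replacing $(B^k, Z_k)$ by a suitable log resolution $\mu\colon Y \lra B^k$ of the pair to arrange the simple normal crossings condition---and will yield the desired injectivity, for all $i$, of the analogous cohomology map on $Y$ twisted by $\mu^*H$.

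To transfer this injectivity back to $B^k$, I would dualize via Corollary \ref{serre-on-secant} and rephrase the statement as surjectivity of the map
\begin{equation*}
H^{d_k - i}(\sigma_k, \mathcal{I}_{\sigma_{k-1}/\sigma_k}(-(s+\ell))) \lra H^{d_k - i}(\sigma_k, \mathcal{I}_{\sigma_{k-1}/\sigma_k}(-s)),
\end{equation*}
and then compare this against the ideal short exact sequence $0 \to \mathcal{I}_{\sigma_{k-1}/\sigma_k} \to \mathcal{O}_{\sigma_k} \to \mathcal{O}_{\sigma_{k-1}} \to 0$. The two excluded indices $i = 0$ and $i = n+1$ correspond precisely to $d_k - i$ being the top cohomological dimensions $d_k$ of $\sigma_k$ and $d_{k-1}$ of $\sigma_{k-1}$, respectively; for these the dual surjectivity can genuinely fail. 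For all other indices, the Cohen--Macaulay hypothesis on $\sigma_{k-1}$ provides the Serre vanishing $H^p(\sigma_{k-1}, \mathcal{O}_{\sigma_{k-1}}(-m)) = 0$ for $p < d_{k-1}$ and $m$ large, which combined with the Du Bois property of $\sigma_k$ (Theorem \ref{thm:dubois}) reduces the desired statement to a tractable question about multiplication by $H$ on the cohomology of $\mathcal{O}_{\sigma_k}(-\bullet)$.

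The main obstacle will be carefully tracking the higher direct images $R^q \alpha_{k,*} \omega_{B^k}(Z_k)$ when transferring the injectivity from the log resolution back to $B^k$. In particular, the non-trivial contribution of $R^n \alpha_{k,*} \omega_{B^k}(Z_k)$ on $\sigma_{k-1}$---which is unavoidable since $H^n(X, \omega_X) \cong \mathbb{C}$, by Lemma \ref{appearance-Hi-CM}---is exactly what produces the obstructions at the two excluded degrees $i = 0$ and $i = n+1$. The delicate point is to exploit the CM hypothesis on $\sigma_{k-1}$ (via Serre vanishing on $\sigma_{k-1}$) together with the inductive structure supplied by Lemmas \ref{higher-dualizing-Z}--\ref{CM-support} to show that these really are the only obstructions, so that injectivity survives in every other cohomological degree.
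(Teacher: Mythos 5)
The core misstep is in your first paragraph: you propose to run Koll\'ar's injectivity theorem on $B^k$ (or a log resolution of $(B^k, Z_k)$) against the semi-ample line bundle $\mathcal{O}_{B^k}(H_k)$. But the twisting sheaf $\omega_{B^k}(sH_k + Z_k)$ is not of the required shape $\omega_{B^k}\otimes M^{\otimes j}$ for $M$ semi-ample, because $Z_k$ is not a multiple of $H_k$; in fact $\mathcal{O}_{B^k}(Z_k) \cong \mathcal{O}_{B^k}(kH_k)\otimes\pi_k^*A_{k,L}^{-1}$, which involves a \emph{negative} power of $A_{k,L}$. Passing to a log resolution of $(B^k, Z_k)$ does not rescue this: the pair $(\sigma_k,\Gamma)$ fails to be log canonical when $n=2$ and $k\geq 9$ (Corollary \ref{cor:noklt/lc}, Proposition \ref{not-klt-lc}), so there is no reasonable boundary form of the injectivity theorem to invoke, and even if there were, injectivity on the resolution $Y$ does not simply descend to the displayed map on $B^k$ --- that descent is exactly where information is lost. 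Your claim that this step ``will yield the desired injectivity for all $i$'' is therefore not justified, and in fact that is not how the cases $i=0$, $n+1$ (which are true, but established elsewhere) are handled.

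What the paper actually does is dualize first, via Corollary \ref{serre-on-secant}, converting the statement into surjectivity of
\[
H^{d_k - i}(\sigma_k, \mathcal{I}_{\sigma_{k-1}/\sigma_k}(-s-\ell)) \lra H^{d_k - i}(\sigma_k, \mathcal{I}_{\sigma_{k-1}/\sigma_k}(-s)),
\]
and it is on $\sigma_k$ \emph{itself} --- not on $B^k$ or any resolution --- that Koll\'ar's injectivity theorem is applied. This is precisely where Theorem \ref{thm:dubois} is needed: \cite[Theorem 9.12]{Kollar.Shafarevich} holds for varieties with Du Bois singularities, so since $\sigma_k$ is Du Bois one gets surjectivity of $H^{d_k-i}(\mathcal{O}_{\sigma_k}(-s-\ell)) \to H^{d_k-i}(\mathcal{O}_{\sigma_k}(-s))$ for $i\geq 1$. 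One then runs a short diagram chase in the long exact sequence of $0\to\mathcal{I}_{\sigma_{k-1}/\sigma_k}\to\mathcal{O}_{\sigma_k}\to\mathcal{O}_{\sigma_{k-1}}\to 0$, using the Cohen--Macaulay hypothesis on $\sigma_{k-1}$ twice: once via Serre vanishing, $H^j(\sigma_{k-1},\mathcal{O}_{\sigma_{k-1}}(-m))=0$ for $j\neq d_{k-1}=d_k-n-1$ and $m\gg0$, and once to get surjectivity of multiplication by $H$ in degree $d_{k-1}$. The excluded case $i=n+1$ is exactly where $d_k-i = d_{k-1}$ and the quotient terms of the ideal sequence refuse to vanish. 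Your second paragraph does contain the right ingredients (Serre duality, ideal sequence, CM hypothesis, Du Bois), but misallocates the Du Bois input and Koll\'ar's theorem to the wrong variety; your third paragraph on $R^n\alpha_{k,*}\omega_{B^k}(Z_k)$ correctly identifies the conceptual obstruction at degree $n+1$ (and is relevant to Theorem \ref{thm:CM}), but no direct-image bookkeeping is used in the proof of this lemma --- the proof works entirely on the level of cohomology on $\sigma_k$ and $\sigma_{k-1}$.
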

\begin{proof}
By abuse of notation, we also use $H$ for $\alpha_k(H)$. By Corollary \ref{serre-on-secant}, the assertion is equivalent to the surjectivity of
\[
H^{d_k - i}(\sigma_k, \mathcal{I}_{\sigma_{k-1}/\sigma_k}(-s-\ell))\overset{\cdot H}{\lra} H^{d_k - i}(\sigma_k, \mathcal{I}_{\sigma_{k-1}/\sigma_k}(-s)).
\]
Consider the commutative diagram
\begin{center}
\begin{tikzcd}[column sep=small]
    H^{d_k-i-1}(\mathcal{O}_{\sigma_{k-1}}(-s-\ell))\ar[r]\ar[d, "\cdot H"]& H^{d_k-i}(\mathcal{I}_{\sigma_{k-1}/\sigma_k}(-s-\ell))\ar[r]\ar[d, "\cdot H"]& H^{d_k-i}(\mathcal{O}_{\sigma_k}(-s-\ell))\ar[r]\ar[d, "\cdot H"]& 
    H^{d_k-i}(\mathcal{O}_{\sigma_{k-1}}(-s-\ell))\ar[d, "\cdot H"]\\
    H^{d_k-i-1}(\mathcal{O}_{\sigma_{k-1}}(-s))\ar[r]& H^{d_k-i}(\mathcal{I}_{\sigma_{k-1}/\sigma_k}(-s))\ar[r]& H^{d_k-i}(\mathcal{O}_{\sigma_k}(-s))\ar[r]& 
    H^{d_k-i}(\mathcal{O}_{\sigma_{k-1}}(-s))
\end{tikzcd}.
\end{center}
Since $\sigma_k$ has Du Bois singularities (Theorem \ref{thm:dubois}), it follows by \cite[Theorem 9.12]{Kollar.Shafarevich} that the map
\[
H^{d_k-i}(\sigma_k, \mathcal{O}_{\sigma_k}(-s-\ell))\overset{\cdot H}{\lra} H^{d_k-i}(\sigma_k, \mathcal{O}_{\sigma_k}(-s))
\]
is surjective for each $i\geq 1$. Since $\sigma_{k-1}$ is Cohen--Macaulay, it follows from Serre's vanishing theorem that $H^j(\sigma_{k-1}, \mathcal{O}_{\sigma_{k-1}}(-\ell'))=0$ for $j\neq d_{k-1}=d_k - n - 1$ and $\ell'\gg 0$. Moreover, the map 
\[
H^{d_k-n-1}(\sigma_{k-1}, \mathcal{O}_{\sigma_{k-1}}(-s-\ell))\overset{\cdot H}{\lra} H^{d_k-n-1}(\sigma_{k-1}, \mathcal{O}_{\sigma_{k-1}}(-s)),
\]
is surjective. Then the statement easily follows.
\end{proof}

\begin{lemma}\label{CM-preliminary}
Assume that $H^i(X,\mathcal{O}_X)=0$ for all $1\leq i\leq n-1$. Then
        \[
        R^i \alpha_{k,*} \omega_{B^k}(Z_{k})\cong\begin{cases}
            \omega_{\sigma_k} & \text{if } i=0\\
            0 & \text{if $i\not\in\{0,n,n+1\}$}.
        \end{cases}
        \]
\end{lemma}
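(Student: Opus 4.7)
The plan is induction on $k$, the base case $k=1$ being trivial since $\alpha_1$ is an isomorphism and $Z_1 = \emptyset$. The induction is tightly coupled with Theorem \ref{thm:CM}: at stage $k$ we assume $\sigma_{k-1}$ is Cohen--Macaulay (which follows by applying Theorem \ref{thm:CM} to the previous step), and establish Lemma \ref{CM-preliminary} for $\sigma_k$.

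The $i=0$ assertion is exactly Lemma \ref{dualizing-sheaf-secant-variety}, so the focus is on $i \geq 1$; by Lemma \ref{higher-dualizing-Z} we may work with $R^i \alpha_{k,*}\omega_{Z_k}$ instead. The hypothesis $H^i(X, \mathcal{O}_X)=0$ for $1 \leq i \leq n-1$ translates via Serre duality into $H^i(X, \omega_X)=0$ for $1 \leq i \leq n-1$, and $H^i(X,\omega_X)=0$ holds automatically for $i > n$. Hence Lemma \ref{CM-support} places $R^i \alpha_{k,*}\omega_{B^k}(Z_k)$ set-theoretically inside $\sigma_{k-2}$ for every $i$ in the target range $\{1,\ldots,n-1\} \cup \{n+2,n+3,\ldots\}$.

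For $k=2$, $\sigma_{k-2}=\sigma_0=\emptyset$, so the sheaves vanish automatically and we are done. For $k \geq 3$ we invoke the injectivity machinery. Using that $\sigma_{k-1}$ is Cohen--Macaulay, Lemma \ref{injectivity-preliminary} applies to any divisor $H \in |\ell H_k|$ whose support does not contain $Z_k$. We pick such an $H$ so that additionally $\alpha_k(H) \supseteq \sigma_{k-2}$. This is possible because $\sigma_{k-2}$ has codimension $n+1 \geq 2$ in $\sigma_{k-1}$ and codimension $2(n+1)$ in $\sigma_k$: for $\ell$ sufficiently divisible the ideal sheaf $\mathcal{I}_{\sigma_{k-2}/\sigma_k}(\ell)$ is globally generated (using projective normality from Theorem \ref{thm:normalsing} together with Serre's theorem on $\sigma_k$), and since $\sigma_{k-1} \not\subseteq \sigma_{k-2}$, a generic member of this linear system contains $\sigma_{k-2}$ without containing all of $\sigma_{k-1}$; the latter property translates upstairs to $Z_k \not\subseteq \operatorname{Supp}(H)$.

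With this $H$ in hand, Lemma \ref{injectivity-preliminary} combined with Lemma \ref{injectivity-implies-vanishing} shows that $R^i\alpha_{k,*}\omega_{B^k}(Z_k)$ has no nonzero subsheaf supported inside $\alpha_k(H)$ for any $i \notin \{0,n+1\}$. But the sheaf itself is supported on $\sigma_{k-2} \subseteq \alpha_k(H)$ for each $i$ in our target range, so it must be zero. The cases $i=n$ and $i=n+1$ are allowed to be nonzero and require no work. The main subtlety---and the expected obstacle---is the selection of the cutting divisor $H$ with the two competing properties (containing $\sigma_{k-2}$ downstairs while not containing $Z_k$ upstairs); this is where the codimension estimate $\operatorname{codim}(\sigma_{k-2},\sigma_{k-1})=n+1 \geq 2$ enters, in conjunction with projective normality and the availability of sections of $\mathcal{I}_{\sigma_{k-2}/\sigma_k}(\ell)$ for $\ell$ large and sufficiently divisible.
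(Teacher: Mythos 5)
Your proof is correct and takes essentially the same route as the paper's: the $i=0$ case is Lemma \ref{dualizing-sheaf-secant-variety}, the support statement comes from Lemma \ref{CM-support} (after translating the cohomological hypothesis via Serre duality to $H^i(X,\omega_X)=0$ for all $i\neq 0,n$), and the vanishing is then forced by the injectivity machinery of Lemmas \ref{injectivity-implies-vanishing} and \ref{injectivity-preliminary} applied to a cutting divisor $H$ whose support covers the support of $R^i\alpha_{k,*}\omega_{B^k}(Z_k)$ but does not contain $Z_k$. The paper states the choice of $H$ in a single sentence; your elaboration — existence of such an $H$ via global generation of $\mathcal{I}_{\sigma_{k-2}/\sigma_k}(\ell)$ for $\ell\gg 0$, together with the strict inclusion $\sigma_{k-2}\subsetneq\sigma_{k-1}$, and the translation to $Z_k\not\subseteq\operatorname{Supp}(H)$ via $\alpha_k(Z_k)=\sigma_{k-1}$ and irreducibility of $Z_k$ — is a harmless and useful expansion. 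You are also right to flag the implicit reliance on $\sigma_{k-1}$ being Cohen--Macaulay (the hypothesis of Lemma \ref{injectivity-preliminary}), which the paper absorbs into the simultaneous induction running through Theorem \ref{thm:CM}.
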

\begin{proof}
Lemma \ref{dualizing-sheaf-secant-variety} shows the case $i=0$.
    By Lemma \ref{CM-support} we have that
    $R^i \alpha_{k,*} \omega_{B^k}(Z_{k})$ is supported in $\sigma_{k-2}$ if $i\not\in\{0,n\}$.
Since we may pick $H\in|\ell H_k|$ that vanishes on the support of
$R^i \alpha_{k,*} \omega_{B^k}(Z_{k})$ but not on $Z_k$, we are done by Lemma \ref{injectivity-implies-vanishing} and Lemma \ref{injectivity-preliminary}. 
\end{proof}

\begin{lemma}\label{preliminary-CM-2}
Assume that $\sigma_{k-1}$ is Cohen--Macaulay with $k \geq 2$ and $H^i(X,\mathcal{O}_X)=0$ for $1\leq i\leq n-1$. If $i\not\in\{0,d_k-n,d_k-n-1\}$, then 
$H^i(\sigma_k, \mathcal{O}_{\sigma_k}(-\ell))=0$ for any $\ell$ sufficiently large.
\end{lemma}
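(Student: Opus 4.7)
The plan is to route everything through Serre duality on $B^k$ (Corollary~\ref{serre-on-secant}), apply the higher direct image vanishing already supplied by Lemma~\ref{CM-preliminary}, and then compare $\mathcal{O}_{\sigma_k}$ with its ideal sheaf $\mathcal{I}_{\sigma_{k-1}/\sigma_k}$ using the Cohen--Macaulayness of $\sigma_{k-1}$.

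First I would note the identification $\mathcal{O}_{B^k}(H_k) \cong \alpha_k^*\mathcal{O}_{\sigma_k}(1)$, so the projection formula gives
\[
R^q\alpha_{k,*}\omega_{B^k}(\ell H_k + Z_k) \cong \bigl(R^q\alpha_{k,*}\omega_{B^k}(Z_k)\bigr) \otimes \mathcal{O}_{\sigma_k}(\ell).
\]
By Lemma~\ref{CM-preliminary} the higher direct images on the right vanish for $q \notin \{0,n,n+1\}$. Twisting by $\mathcal{O}_{\sigma_k}(\ell)$ and invoking Serre vanishing on the projective variety $\sigma_k$, the Leray spectral sequence degenerates for $\ell \gg 0$, yielding
\[
H^i(B^k,\ \omega_{B^k}(\ell H_k + Z_k)) \cong H^0\bigl(\sigma_k,\ R^i\alpha_{k,*}\omega_{B^k}(Z_k)\otimes \mathcal{O}_{\sigma_k}(\ell)\bigr) = 0
\]
whenever $i \notin \{0,n,n+1\}$. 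Dualizing via Corollary~\ref{serre-on-secant}, this translates into
\[
H^j(\sigma_k,\ \mathcal{I}_{\sigma_{k-1}/\sigma_k}(-\ell)) = 0 \quad \text{for all } j \notin \{d_k,\ d_k-n,\ d_k-n-1\} \text{ and } \ell \gg 0.
\]

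To finish, I would twist the ideal sequence
\[
0 \lra \mathcal{I}_{\sigma_{k-1}/\sigma_k} \lra \mathcal{O}_{\sigma_k} \lra \mathcal{O}_{\sigma_{k-1}} \lra 0
\]
by $\mathcal{O}(-\ell)$ and use that $\sigma_{k-1}$ is Cohen--Macaulay of dimension $d_{k-1} = d_k-n-1$. By the Cohen--Macaulay criterion \cite[Corollary 5.72]{Kollar-Mori}, $H^j(\sigma_{k-1}, \mathcal{O}_{\sigma_{k-1}}(-\ell)) = 0$ for every $j \neq d_k-n-1$ and $\ell \gg 0$. A diagram chase on the resulting long exact sequence then kills both the $H^{j-1}$ and the $H^j$ contribution of $\mathcal{O}_{\sigma_{k-1}}(-\ell)$ as soon as $j \notin \{d_k-n-1,\ d_k-n\}$, giving $H^j(\sigma_k, \mathcal{O}_{\sigma_k}(-\ell)) \cong H^j(\sigma_k, \mathcal{I}_{\sigma_{k-1}/\sigma_k}(-\ell))$; combined with the previous step this forces $H^j(\sigma_k, \mathcal{O}_{\sigma_k}(-\ell)) = 0$ outside the exceptional set, with the case $j=0$ being automatic for $\ell > 0$.

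Since the main technical input (the higher direct image vanishing of Lemma~\ref{CM-preliminary}) is already available, the only real obstacle is careful bookkeeping with the three ``boundary'' indices $d_k$, $d_k-n$, and $d_k-n-1$ arising from the three possibly nonzero summands $R^0$, $R^n$, and $R^{n+1}$ of $\alpha_{k,*}\omega_{B^k}(Z_k)$. In particular one must verify that the dimension $d_{k-1} = d_k-n-1$ is precisely what is needed for the CM hypothesis on $\sigma_{k-1}$ to annihilate two consecutive cohomologies of $\mathcal{O}_{\sigma_{k-1}}(-\ell)$ in the long exact sequence, leaving the ideal-sheaf vanishing to propagate directly to $\mathcal{O}_{\sigma_k}$.
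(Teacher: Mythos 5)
Your proposal is correct and follows essentially the same route as the paper's own (very terse) proof, which simply invokes the Leray spectral sequence for $\alpha_k$ together with Corollary~\ref{serre-on-secant} and Lemma~\ref{CM-preliminary}. You have correctly supplied the implicit bookkeeping: the projection formula via $\mathcal{O}_{B^k}(H_k)\cong \alpha_k^*\mathcal{O}_{\sigma_k}(1)$, the Serre-vanishing degeneration of the spectral sequence, the dualization to $H^{d_k-i}(\sigma_k,\mathcal{I}_{\sigma_{k-1}/\sigma_k}(-\ell))$, and finally the passage from the ideal sheaf to $\mathcal{O}_{\sigma_k}$ via the long exact sequence, where the Cohen--Macaulay hypothesis on $\sigma_{k-1}$ (of dimension $d_{k-1}=d_k-n-1$) kills the two adjacent cohomologies of $\mathcal{O}_{\sigma_{k-1}}(-\ell)$ by \cite[Corollary 5.72]{Kollar-Mori}.
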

\begin{proof}
Consider the Leray spectral sequence for $\alpha_k$ and use Corollary \ref{serre-on-secant} and Lemma \ref{CM-preliminary}.
\end{proof}

\begin{theorem}\label{thm:CM}
Assume $H^i(X,\mathcal{O}_X)=0$ for all $1\leq i\leq n-1$. Then the following hold:
    \begin{enumerate}[topsep=0pt]
        \item If $\ell, s$ are sufficiently divisible integers and $D \in |\ell H_k|$ is a member such that $Z_k$ is not contained in the support of $D$, then the map
        \[
        H^i(B^k, \omega_{B^k}(sH_k+Z_k))\overset{\cdot D}{\lra} H^i(B^k, \omega_{B^k}((s+\ell) H_k+Z_k))
        \]
        \noindent is injective for all $i\geq 1$.
        \item 
        $     
        R^i \alpha_{k,*} \omega_{B^k}(Z_{k})\cong\begin{cases}
            \omega_{\sigma_k} & \text{if } i=0\\
            \omega_{\sigma_{k-1}} & \text{if } i=n\\
            0 & \text{otherwise}.
        \end{cases}
        $
        \item $H^i(\sigma_k, \mathcal{O}_{\sigma_k}(-\ell))=0$ for $1 \leq i \leq d_k-1$ and $\ell \gg 0$. In particular, $\sigma_k$ is Cohen--Macaulay.
    \end{enumerate}
\end{theorem}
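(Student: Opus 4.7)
\emph{Proof proposal.} The plan is to proceed by induction on $k$, with base case $k=1$ trivial since $\sigma_1 = X$ is smooth. For the inductive step, we may assume both that $\sigma_{k-1}$ is Cohen--Macaulay (inductive hypothesis) and that $\sigma_{k-1}$ is Du Bois (by Theorem \ref{thm:dubois}). I will establish (1), use it to deduce (2), and derive (3) from (2).

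\emph{Step 1 -- Part (1).} Lemma \ref{injectivity-preliminary} combined with the inductive Cohen--Macaulay property of $\sigma_{k-1}$ handles all cases $i \notin \{0, n+1\}$. For the remaining case $i = n+1$, I would translate the desired injectivity via Corollary \ref{serre-on-secant} into surjectivity of the restriction map $H^{d_{k-1}}(\sigma_k, \mathcal{I}_{\sigma_{k-1}/\sigma_k}(-s-\ell)) \xrightarrow{\cdot D} H^{d_{k-1}}(\sigma_k, \mathcal{I}_{\sigma_{k-1}/\sigma_k}(-s))$. Chasing the ideal sequence of $\sigma_{k-1}$ in $\sigma_k$ and using the inductive vanishing $H^{d_{k-1}-1}(\sigma_{k-1}, \mathcal{O}_{\sigma_{k-1}}(-m))=0$ for $m\gg 0$, the problem reduces to the analogous surjectivity for $\mathcal{O}_{\sigma_k}(-m)$ and $\mathcal{O}_{\sigma_{k-1}}(-m)$; both follow from Koll\'ar's injectivity theorem \cite[Theorem 9.12]{Kollar.Shafarevich}, applicable since $\sigma_k$ and $\sigma_{k-1}$ are both Du Bois.

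\emph{Step 2 -- Part (2).} Lemmas \ref{dualizing-sheaf-secant-variety} and \ref{CM-preliminary} give $R^0\alpha_{k,*}\omega_{B^k}(Z_k) \cong \omega_{\sigma_k}$ and $R^i\alpha_{k,*}\omega_{B^k}(Z_k) = 0$ for $i \notin \{0,n,n+1\}$. Since $H^{n+1}(X,\omega_X)=0$ by Serre duality and the hypothesis, Lemma \ref{CM-support} places the support of $R^{n+1}\alpha_{k,*}\omega_{B^k}(Z_k)$ in $\sigma_{k-2}$; choosing $D \in |\ell H_k|$ containing $\sigma_{k-2}$ but not $\sigma_{k-1}$ (hence not $Z_k$), Lemma \ref{injectivity-implies-vanishing} combined with Step 1 forces $R^{n+1}\alpha_{k,*}\omega_{B^k}(Z_k)=0$. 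For the identification $R^n\alpha_{k,*}\omega_{B^k}(Z_k) \cong \omega_{\sigma_{k-1}}$, I would use Lemma \ref{higher-dualizing-Z} to replace by $R^n\alpha_{k,*}\omega_{Z_k}$ and analyze the spectral sequence (\ref{eq:spectralseqCMproof}) via Lemma \ref{appearance-Hi-CM} together with the inductive identification of $R^\bullet\alpha_{k-1,*}\omega_{B^{k-1}}(Z_{k-1})$. The hypothesis $H^q(X,\omega_X)=0$ for $1\le q\le n-1$ restricts the nonzero $E_2$-entries to the four bidegrees $(0,0)$, $(n,0)$, $(0,n)$, $(n,n)$, forcing degeneration. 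The long exact sequence (\ref{eq:longexactalpha_*omega_Z}) together with a further application of Lemma \ref{injectivity-implies-vanishing} (this time using the injectivity for $i=n$) removes the remaining $\sigma_{k-2}$-supported contributions and yields the claimed isomorphism. \emph{This is the step I expect to require the most care}, since the spectral sequence only gives the identification modulo coherent sheaves supported on $\sigma_{k-2}$, and eliminating these embedded contributions forces us to re-invoke Step 1.

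\emph{Step 3 -- Part (3).} Apply the Leray spectral sequence for $\alpha_k$ to $\omega_{B^k}(Z_k + \ell H_k)$ with $\ell\gg 0$: by Step 2 and Serre vanishing for $\omega_{\sigma_k}(\ell)$ and $\omega_{\sigma_{k-1}}(\ell)$, only $H^0(\sigma_k,\omega_{\sigma_k}(\ell))$ and, in degree $n$, $H^0(\sigma_{k-1},\omega_{\sigma_{k-1}}(\ell))$ survive. Dualizing via Serre duality on $B^k$ and Corollary \ref{serre-on-secant} gives $H^j(\sigma_k, \mathcal{I}_{\sigma_{k-1}/\sigma_k}(-\ell))=0$ for $j\notin\{d_k,d_{k-1}+1\}$ and $\ell\gg 0$. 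The ideal sequence of $\sigma_{k-1}$ in $\sigma_k$, the inductive vanishing $H^j(\sigma_{k-1},\mathcal{O}_{\sigma_{k-1}}(-\ell))=0$ for $j\neq d_{k-1}$, and the identification of the boundary map $H^{d_{k-1}}(\mathcal{O}_{\sigma_{k-1}}(-\ell))\to H^{d_{k-1}+1}(\mathcal{I}_{\sigma_{k-1}/\sigma_k}(-\ell))$ as an isomorphism (both sides identified via Serre duality with matching pieces of the Leray computation above) then yield $H^i(\sigma_k,\mathcal{O}_{\sigma_k}(-\ell))=0$ for $1\le i\le d_k-1$ and $\ell\gg 0$. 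The Cohen--Macaulay conclusion follows from \cite[Corollary 5.72]{Kollar-Mori}.
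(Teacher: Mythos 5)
Your overall strategy (induction on $k$, the map $\phi\colon R^n\alpha_{k,*}\omega_{Z_k}\to\omega_{\sigma_{k-1}}$, the spectral sequence \eqref{eq:spectralseqCMproof}, Koll\'ar's injectivity theorem) correctly identifies the main ingredients of the paper's argument, but the way you thread them together has two genuine gaps and, more importantly, the linear dependence you propose, namely (1)~$\Rightarrow$~(2)~$\Rightarrow$~(3), is backwards in the paper and cannot be straightened out.

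\emph{Gap in Step 1, case $i=n+1$.} You claim that, after using the inductive vanishing $H^{d_{k-1}-1}(\sigma_{k-1},\OO_{\sigma_{k-1}}(-m))=0$, the surjectivity of $\cdot D$ on $H^{d_{k-1}}(\mathcal{I}_{\sigma_{k-1}/\sigma_k}(-m))$ follows from Koll\'ar's surjectivity on $\OO_{\sigma_k}$ and $\OO_{\sigma_{k-1}}$. This diagram chase does not close. Writing $K_m\coloneqq H^{d_{k-1}}(\mathcal{I}(-m))=\ker\bigl(H^{d_{k-1}}(\OO_{\sigma_k}(-m))\to H^{d_{k-1}}(\OO_{\sigma_{k-1}}(-m))\bigr)$, surjectivity of the two vertical Koll\'ar maps does not imply surjectivity of the induced map $K_m\to K_{m'}$: in a commutative square with both vertical arrows surjective, the induced map on kernels of the horizontal arrows need not be surjective (a $2\times 2$ linear-algebra example already shows this). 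One needs extra input, and in fact the paper \emph{derives} part~(1) for $i=n+1$ as a consequence of $\operatorname{coker}(\phi)=0$, not the other way around.

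\emph{Gap in Step 2, removing $\operatorname{coker}(\phi)$.} Lemma \ref{injectivity-implies-vanishing} says that the only \emph{subsheaf} of $R^i\alpha_{k,*}\omega_{B^k}(Z_k)$ supported in $\alpha_k(H)$ is zero. This disposes of $\ker(\phi)$ (a subsheaf of $R^n\alpha_{k,*}\omega_{B^k}(Z_k)$), but $\operatorname{coker}(\phi)$ is a \emph{quotient} of $\omega_{\sigma_{k-1}}$, so the lemma gives you nothing about it. The paper handles $\operatorname{coker}(\phi)$ by a different mechanism: once $\ker(\phi)=0$ is known, one dualizes the resulting injection to get surjectivity of the connecting map $H^{d_k-n-1}(\OO_{\sigma_{k-1}}(-\ell'))\to H^{d_k-n}(\mathcal{I}(-\ell'))$, which first yields part~(3) for $i=d_k-n$; then the exact ideal sequence identifies $H^0(\operatorname{coker}(\phi)(s))^\vee$ with the kernel of that connecting map, and a commutative square whose left column is surjective by Koll\'ar's theorem for $\sigma_k$ forces the right column to be surjective; but the right column is the zero map because $H$ vanishes on $\operatorname{supp}(\operatorname{coker}(\phi))$, hence $\operatorname{coker}(\phi)=0$.

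Once you try to repair these two points you will find the logical order is necessarily intertwined: Lemma \ref{injectivity-preliminary} gives (1) for $i\neq 0,n+1$; this yields $\ker(\phi)=0$; dualizing then gives part~(3) for $i=d_k-n$; that vanishing is used in the Koll\'ar-plus-support argument to get $\operatorname{coker}(\phi)=0$; only then do (1) for $i=n+1$ and (2) for $i=n$ emerge; (2) for $i=n+1$ follows from Lemma \ref{support-alpha-alpha-alpha} and Lemma \ref{injectivity-implies-vanishing} applied once more; and (3) for $i=d_k-n-1$ is the last step. So it is not possible to first establish part~(1) in full, then part~(2), then part~(3); the three parts have to advance together.
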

\begin{proof}
We proceed by induction on $k$. The case $k=1$ is obvious using the convention $\omega_{\sigma_0}=0$. Assume that $k\geq 2$. 
We only need to prove (1) for $i=n+1$ by Lemma \ref{injectivity-preliminary}, (2) for  $i\in\{n,n+1\}$ by Lemma \ref{CM-preliminary}, and (3) for $i\in\{d_k-n-1,d_k-n\}$ by Lemma \ref{preliminary-CM-2}. Note that the second assertion of (3) follows from the first one by  \cite[Corollary 5.72]{Kollar-Mori}.

There is a map $R^n \alpha_{k,*} \omega_{Z_k}\lra R^n \alpha_{k,*} (\alpha_{k-1,k,*} \alpha_{k-1,k}^* \omega_{Z_k})$ coming from the exact sequence (\ref{eq:longexactalpha_*omega_Z}). Applying the inductive hypothesis and Lemma \ref{appearance-Hi-CM} to the spectral sequence (\ref{eq:spectralseqCMproof}), we get a surjective map $R^n \alpha_{k,*} (\alpha_{k-1,k,*} \alpha_{k-1,k}^* \omega_{Z_k}) \lra \omega_{\sigma_{k-1}}$. Composing these two maps, we have a map 
$$
\phi \colon R^n \alpha_{k,*} \omega_{Z_k} \lra \omega_{\sigma_{k-1}}.
$$
We want to show that $\phi$ is an isomorphism.
Notice that both $\operatorname{ker}(\phi)$ and $\operatorname{coker}(\phi)$ are supported on $\sigma_{k-2}$. Therefore we may pick $H\in|\ell H_k|$ vanishing along the supports of $\operatorname{ker}(\phi)$ and $\operatorname{coker}(\phi)$ but not on $\sigma_{k-1}$. 
By Lemma \ref{injectivity-implies-vanishing} and the claim of (1) for $i=n$, we have $\ker(\phi)=0$. 
Therefore the map
\[
H^0(\sigma_k, (R^n \alpha_{k,*} \omega_{B^k}(Z_k)) (\ell'))\lra H^0(\sigma_{k-1}, \omega_{\sigma_{k-1}}(\ell'))
\]
is injective for every $\ell' \geq 0$. 
By Theorem \ref{Du Bois-type condition} (1) and the inductive hypothesis for (3), this map is dual to the map
\begin{equation} \label{dual of map on H^0} 
H^{d_k-n-1}(\mathcal{O}_{\sigma_{k-1}}(-\ell')) \lra H^{d_k-n}(\mathcal{I}_{\sigma_{k-1}/\sigma_k}(-\ell')), 
\end{equation}
which is then surjective for every $\ell' \geq 0$. Thus we get an exact sequence
$$
0 \to H^{d_k-n-1}(\mathcal{I}_{\sigma_{k-1}/\sigma_k}(-\ell')) \to H^{d_k-n-1}(\mathcal{O}_{\sigma_k}(-\ell')) \lra H^{d_k-n-1}(\mathcal{O}_{\sigma_{k-1}}(-\ell')) \to H^{d_k-n}(\mathcal{I}_{\sigma_{k-1}/\sigma_k}(-\ell')) \to 0,
$$
and $H^{d_k-n}(\sigma_k, \mathcal{O}_{\sigma_k}(-\ell'))=0$ for $\ell' \gg 0$. In particular, (3) holds for $i=d_k-n$. Since $H^0(\operatorname{coker}(\phi)(s))^{\vee}$ is the kernel of the morphism (\ref{dual of map on H^0}) for $\ell' \gg 0$, we have a commutative diagram
\begin{center}
\begin{tikzcd}
H^{d_k-n-1}(\OO_{\sigma_k}(-s-\ell)) \ar[r,twoheadrightarrow] \ar[d,"\cdot H"] & H^0(\operatorname{coker}(\phi)(s+\ell))^{\vee}\ar[d,"\cdot H"]\\
H^{d_k-n-1}(\OO_{\sigma_k}(-s)) \ar[r,twoheadrightarrow] & H^0(\operatorname{coker}(\phi)(s))^{\vee}
\end{tikzcd},
\end{center}
where the two horizonal maps are surjective. Since the left vertical map is surjective by \cite[Theorem 9.12]{Kollar.Shafarevich}, so is the right vertical map. However, by the choice of $H$, the right vertical map is the zero map. Therefore, $\operatorname{coker}(\phi)=0$. This implies (1) for $i=n+1$ and (2) for $i=n$. Then Lemma \ref{support-alpha-alpha-alpha} and Lemma \ref{injectivity-implies-vanishing}, we get (2) for $i=n+1$. Now, by Corollary \ref{serre-on-secant} and (2) coupled with the Leray spectral sequence, we finally obtain
$$
H^{d_k-n-1}(\sigma_k, \mathcal{O}_{\sigma_k}(-\ell')) \cong H^{d_k-n-1}(\mathcal{I}_{\sigma_{k-1}/\sigma_k}(-\ell')) \cong H^{n+1}(\omega_{B^k}(\ell' H_k + Z_k))=0.
$$
Therefore, (3) holds for $i=d_k-n-1$. 
\end{proof}

\subsection{Arithmetically Cohen--Macaulay}
Our goal is to show that $\sigma_k \subseteq \mathbb{P}^r$ is arithmetically Cohen--Macaulay when $\sigma_k$ is Cohen--Macaulay (equivalently, $H^i(X, \mathcal{O}_X)=0$ for $1 \leq i \leq n-1$ by Theorem \ref{thm:CM<=>H^i(O_X)=0}). The starting point is some cohomology computations on $X^{[k]}$.

\begin{lemma}\label{cohomology-structure-sheaf}
    Assume $H^i(X,\mathcal{O}_X)=0$ for $1\leq i\leq n-1$. If $n$ is even, then
    \[
    H^i(X^{[k]}, \mathcal{O}_{X^{[k]}})\cong \begin{cases}
        S^i H^n(X,\mathcal{O}_X) & \text {if $n \mid i$ and $i \leq kn$}\\
        0 & \text{otherwise}
    \end{cases}
    \]
    \[
    H^i(X^{[k]}, \delta_k ^{-1})\cong \begin{cases}
        \wedge^k H^n(X,\mathcal{O}_X) & \text{if $i=kn$}\\
        \wedge^{k-1} H^n(X,\mathcal{O}_X) & \text{if $i=kn-n$}\\
        0 & \text{otherwise.}
    \end{cases}
    \]
    
\noindent If $n$ is odd, then
    \[
    H^i(X^{[k]}, \mathcal{O}_{X^{[k]}})\cong \begin{cases}
        \wedge^i H^n(X,\mathcal{O}_X) & \text{if $n \mid i$ and $i \leq kn$}\\
        0 & \text{otherwise}
    \end{cases}
    \]
    \[
    H^i(X^{[k]},\delta_k ^{-1})\cong \begin{cases}
        S^k H^n(X,\mathcal{O}_X) & \text{if $i=kn$}\\
        S^{k-1} H^n(X,\mathcal{O}_X) & \text{if $i=kn-n$}\\
        0 & \text{otherwise.}
    \end{cases}
    \]
\end{lemma}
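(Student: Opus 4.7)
The plan is to push everything from $X^{[k]}$ down to $\operatorname{Sym}^k(X)$ via the Hilbert--Chow morphism $h$, then further down to $X^k$ via the quotient $q$, so that the two cohomology groups become isotypic components of $H^*(X^k,\mathcal{O}_{X^k})$ under the natural $S_k$-action, which is accessible via the K\"unneth formula.

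First I would establish the two direct image computations $h_{*}\mathcal{O}_{X^{[k]}}\cong\mathcal{O}_{\operatorname{Sym}^k(X)}$ and $R^i h_{*}\mathcal{O}_{X^{[k]}}=0$ for $i\geq 1$ (using that $\operatorname{Sym}^k(X)$ has rational singularities as a finite quotient of a smooth variety, together with the fact that $h$ is a birational morphism from a smooth variety), and $R^i h_{*}\delta_k^{-1}=0$ for $i\geq 1$, which is the $j=0$, $\ell=1$ case of Theorem \ref{main-direct-image-vanishing}. Combined with the identification $h_{*}\delta_k^{-1}\cong h_{*}N_{k,\mathcal{O}_X}\cong q^{S_k}_{*}\mathcal{O}_{X^k}^{\operatorname{alt}}$ recorded in \S\ref{subsubsec-line-bundles} (applied with $L=\mathcal{O}_X$, in which case $T_{k,\mathcal{O}_X}=\mathcal{O}_{X^{[k]}}$ and $N_{k,\mathcal{O}_X}\cong\delta_k^{-1}$ by Lemma \ref{N&S}), the Leray spectral sequence together with the K\"unneth formula gives
\begin{align*}
H^i(X^{[k]},\mathcal{O}_{X^{[k]}}) &\cong \big(H^i(X^k,\mathcal{O}_{X^k})\big)^{S_k}\cong \big(H^*(X,\mathcal{O}_X)^{\otimes k}\big)^{S_k}_{i},\\
H^i(X^{[k]},\delta_k^{-1}) &\cong \big(H^i(X^k,\mathcal{O}_{X^k})\big)^{\operatorname{sgn}}\cong \big(H^*(X,\mathcal{O}_X)^{\otimes k}\big)^{\operatorname{sgn}}_{i},
\end{align*}
where the subscript denotes the $i$-th graded piece and $S_k$ acts with the Koszul sign rule.

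Next, I would invoke the hypothesis $H^j(X,\mathcal{O}_X)=0$ for $1\leq j\leq n-1$, so that $H^*(X,\mathcal{O}_X)=\mathbb{C}\oplus H^n(X,\mathcal{O}_X)[-n]$ as a graded vector space. Setting $V:=H^n(X,\mathcal{O}_X)$ placed in degree $n$, one has $H^*(X^k,\mathcal{O}_{X^k})\cong(\mathbb{C}\oplus V)^{\otimes k}$, whose degree $i$ piece vanishes unless $n\mid i$, and for $i=jn$ decomposes as $\operatorname{Ind}_{S_j\times S_{k-j}}^{S_k}V^{\otimes j}\otimes \mathbf{1}$, where $S_j$ acts on $V^{\otimes j}$ by the graded permutation action (a transposition acts as $(-1)^{n^2}=(-1)^n$ times ordinary transposition). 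Taking $S_k$-invariants then gives the $S_j$-invariants of $V^{\otimes j}$ under this signed action, which is $S^j V$ for $n$ even and $\wedge^j V$ for $n$ odd; this yields the first formula in each case. For the sign-isotypic part, Frobenius reciprocity reduces it to the $(S_j\times S_{k-j})$-sign-isotypic of $V^{\otimes j}$; since $S_{k-j}$ acts trivially, this is zero unless $k-j\in\{0,1\}$. For the surviving degrees $i=kn$ and $i=(k-1)n$, taking the sign part of the (signed) permutation action on $V^{\otimes k}$ or $V^{\otimes(k-1)}$ gives $\wedge^{\bullet}V$ for $n$ even and $S^{\bullet}V$ for $n$ odd, matching the stated formulas.

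The main technical obstacle, and the source of the parity dichotomy, is keeping track of the Koszul signs when restricting the $S_k$-action on $H^*(X^k,\mathcal{O}_{X^k})$ via K\"unneth and taking (signed-)invariants; the ``algebraic'' input from Theorem \ref{main-direct-image-vanishing} and the identification of $h_*\delta_k^{-1}$ with the alternating isotypic component of $q_*\mathcal{O}_{X^k}$ is genuinely essential, since $\delta_k^{-1}$ does not descend directly to $\operatorname{Sym}^k(X)$ in the way that $\mathcal{O}_{X^{[k]}}$ does.
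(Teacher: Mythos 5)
Your proposal is correct and follows essentially the same route as the paper: push down to $\operatorname{Sym}^k(X)$, identify $h_*\mathcal{O}_{X^{[k]}}$ and $h_*\delta_k^{-1}$ with the $S_k$-invariant (resp. sign-isotypic) parts of $q_*\mathcal{O}_{X^k}$, kill the higher direct images under $h$, and then do the K\"unneth/Koszul-sign bookkeeping (stated as Frobenius reciprocity in your write-up, as a direct cup-product computation in the paper). The one cosmetic difference is the source of the vanishing $R^i h_*\mathcal{O}_{X^{[k]}} = R^i h_*\delta_k^{-1}=0$: the paper deduces it from Lemma \ref{canonical-bundle} and Lemma \ref{standard-grauert-riemenschneider}, while you invoke rational singularities of $\operatorname{Sym}^k(X)$ for the structure sheaf and the $j=0$, $\ell=1$ case of Theorem \ref{main-direct-image-vanishing} for $\delta_k^{-1}$; both are valid and neither constitutes a genuinely different strategy.
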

\begin{proof}
The argument closely follows \cite[Lemma 3.7]{Ein.Niu.Park.20}.
Recall from \S \ref{subsubsec-line-bundles} that the sheaf $\mathcal{O}_X ^{\boxtimes k}$ on $X^k$ equipped with two different permutation actions of the symmetric group $S_k$ via symmetric and alternating ways. We have 
$h_* \mathcal{O}_{X^{[k]}} \cong q_* ^{S_k} \mathcal{O}_X ^{\boxtimes k, \operatorname{sym}}$ and $h_* \delta_k ^{-1} \cong q_* ^{S_k}\mathcal{O}_X ^{\boxtimes k, \operatorname{alt}}$, where $q_* ^{S_k}$ denotes the equivariant pushforward.
By Lemma \ref{canonical-bundle} and Lemma \ref{standard-grauert-riemenschneider}, we get $R^i h_*\mathcal{O}_{X^{[k]}}=R^i h_*\delta_k ^{-1}=0$ for $i>0$. Therefore, we have
\[
H^i(X^{[k]}, \mathcal{O}_{X^{[k]}})\cong H^i(\operatorname{Sym}^k (X), q_* ^{S_k} \mathcal{O}_X ^{\boxtimes k, \operatorname{sym}})\cong H^i(X^k, \mathcal{O}_X ^{\boxtimes k, \operatorname{sym}})^{S_k}
\]
\[
H^i(X^{[k]}, \delta_k ^{-1})\cong H^i(\operatorname{Sym}^k (X), q_{*} ^{S_k} \mathcal{O}_X ^{\boxtimes k, \operatorname{alt}})\cong H^i(X^k, \mathcal{O}_X ^{\boxtimes k, \operatorname{alt}})^{S_k}.
\]
By K\"unneth formula, $H^i(X^k,\mathcal{O}_X ^{\boxtimes k})=0$ if $n$ does not divide $i$. Suppose therefore that $n$ divides $i$ and set $j=i/n$. Again by K\"unneth formula, we have $H^i(X^k,\mathcal{O}_X ^{\boxtimes k})$ is a direct sum of spaces isomorphic to $H^0(X,\mathcal{O}_X) ^{\otimes (k-j)}\otimes H^n (X,\mathcal{O}_X)^j\cong H^n (X,\mathcal{O}_X)^j$. It is easy to see that
\[
H^i(X^k,\mathcal{O}_X ^{\boxtimes k})^{S_k} \cong \left( H^n (X,\mathcal{O}_X) ^{\otimes j}\right)^{S_j}.
\]
Since the inclusion $H^n (X,\mathcal{O}_X) ^{\otimes j}\lra H^i(X^k,\mathcal{O}_X ^{\boxtimes k})$ is given by the cup product, we have that the action induced by $\mathcal{O}_X ^{\boxtimes k, \operatorname{sym}}$ on $H^n (X,\mathcal{O}_X) ^{\otimes j}$ is symmetric if $n$ is even, and alternating if $n$ is odd. The opposite holds for the action induced by $\mathcal{O}_X ^{\boxtimes k, \operatorname{alt}}$. The lemma now follows immediately. (For further discussion of the details concerning the equivariance of the cup product inclusion above, see \cite[\S 4.2]{Sheridan.2020}).
\end{proof}

When $n=2$, this lemma also follows from \cite[Corollary 4.1]{Krug.18} and \cite[Theorem 5.2.1]{Scala.09}.

\begin{corollary}\label{cohomology-ideal-secant}
Assume $H^i(X,\mathcal{O}_X)=0$ for $1\leq i\leq n-1$. Then the following hold:
\begin{enumerate}[topsep=0pt]
\item $H^0(\sigma_k, \omega_{\sigma_k}) \cong \begin{cases} \wedge^k H^0(X, \omega_X) & \text{if $n$ is even} \\ S^k H^0(X, \omega_X) & \text{if $n$ is odd.}
\end{cases}$
\item $H^i(\sigma_k, \mathcal{I}_{\sigma_{k-1}/\sigma_k})\cong\begin{cases}
    H^0(\sigma_{k},\omega_{\sigma_{k}})^{\vee} & \text{if $i=d_k$}\\
        H^0(\sigma_{k-1},\omega_{\sigma_{k-1}})^{\vee} & \text{if $i=d_k-n$}\\
        0 & \text{otherwise}.
    \end{cases}$
\end{enumerate}
\end{corollary}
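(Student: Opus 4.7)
The plan is to reduce both parts of the corollary to the cohomology computations of Lemma \ref{cohomology-structure-sheaf}, using the projective bundle structure of $\pi_k \colon B^k \to X^{[k]}$, Serre duality, and Lemma \ref{dualizing-sheaf-secant-variety} plus Corollary \ref{serre-on-secant}.

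The first step is to identify $\omega_{B^k}(Z_k)$ as a pullback from $X^{[k]}$. Since $B^k = \mathbb{P}(E_{k,L})$ with $E_{k,L}$ of rank $k$, the relative canonical is $\omega_{B^k/X^{[k]}} \cong \mathcal{O}_{B^k}(-k)\otimes \pi_k^* N_{k,L}$. Combining with the identity $\mathcal{O}_{B^k}(Z_k) \cong \mathcal{O}_{B^k}(kH_k)\otimes \pi_k^* A_{k,L}^{-1}$ from Proposition \ref{A&Z} and the definition $A_{k,L} = N_{k,L}\otimes \delta_k^{-1}$, the $\mathcal{O}_{B^k}(\pm k)$ factors cancel and we obtain
\[
\omega_{B^k}(Z_k) \;\cong\; \pi_k^*\bigl(\omega_{X^{[k]}}\otimes \delta_k\bigr) \;\cong\; \pi_k^*\bigl(T_{k,\omega_X}\otimes \delta_k^{n-1}\bigr),
\]
where the second isomorphism uses the formula $\omega_{X^{[k]}}\cong T_{k,\omega_X}\otimes \delta_k^{n-2}$ of Lemma \ref{canonical-bundle}. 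Since $\pi_k$ is a projective bundle ($R^i\pi_{k,*}\mathcal{O}_{B^k}=0$ for $i>0$), the projection formula yields $H^i(B^k,\omega_{B^k}(Z_k)) \cong H^i(X^{[k]}, T_{k,\omega_X}\otimes \delta_k^{n-1})$ for all $i$. Serre duality on $X^{[k]}$ (smooth of dimension $kn$) then gives the crucial identification
\[
H^i(B^k,\omega_{B^k}(Z_k)) \;\cong\; H^{kn-i}(X^{[k]}, \delta_k^{-1})^\vee.
\]

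For part (1), Lemma \ref{dualizing-sheaf-secant-variety} gives $H^0(\sigma_k,\omega_{\sigma_k}) \cong H^0(B^k,\omega_{B^k}(Z_k)) \cong H^{kn}(X^{[k]},\delta_k^{-1})^\vee$. Lemma \ref{cohomology-structure-sheaf} identifies the right-hand side with $\wedge^k H^n(X,\mathcal{O}_X)^\vee$ when $n$ is even and with $S^k H^n(X,\mathcal{O}_X)^\vee$ when $n$ is odd, and Serre duality on $X$ (which gives $H^n(X,\mathcal{O}_X) \cong H^0(X,\omega_X)^\vee$) converts these into $\wedge^k H^0(X,\omega_X)$ and $S^k H^0(X,\omega_X)$, respectively.

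For part (2), Corollary \ref{serre-on-secant} with $\ell=0$ yields $H^{d_k-i}(\sigma_k,\mathcal{I}_{\sigma_{k-1}/\sigma_k}) \cong H^i(B^k,\omega_{B^k}(Z_k))^\vee$, so that setting $j=d_k-i$ we obtain
\[
H^j(\sigma_k,\mathcal{I}_{\sigma_{k-1}/\sigma_k}) \;\cong\; H^{j-k+1}(X^{[k]},\delta_k^{-1}).
\]
Lemma \ref{cohomology-structure-sheaf} forces the right-hand side to vanish unless $j-k+1\in\{kn,(k-1)n\}$, i.e. $j\in\{d_k,d_k-n\}$. In the two non-vanishing cases, applying Lemma \ref{cohomology-structure-sheaf} again and then Serre duality on $X$ produces $\wedge^k H^0(X,\omega_X)^\vee$ or $S^k H^0(X,\omega_X)^\vee$ for $j=d_k$, and $\wedge^{k-1}H^0(X,\omega_X)^\vee$ or $S^{k-1}H^0(X,\omega_X)^\vee$ for $j=d_k-n$; these are exactly $H^0(\sigma_k,\omega_{\sigma_k})^\vee$ and $H^0(\sigma_{k-1},\omega_{\sigma_{k-1}})^\vee$ by part (1) (applied to $k$ and $k-1$). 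The whole argument is an exercise in bookkeeping once the key isomorphism $\omega_{B^k}(Z_k)\cong \pi_k^*(\omega_{X^{[k]}}\otimes \delta_k)$ is in place, so I do not anticipate a serious obstacle.
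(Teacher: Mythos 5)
Your proof is correct and is essentially the paper's own argument, just written out more explicitly. The key isomorphism $\omega_{B^k}(Z_k)\cong\pi_k^*(\omega_{X^{[k]}}\otimes\delta_k)$ (which you derive from the relative canonical bundle formula, Proposition \ref{A&Z}(3), and the definition $A_{k,L}=N_{k,L}\otimes\delta_k^{-1}$) is exactly what the paper invokes, and both proofs then combine Lemma \ref{dualizing-sheaf-secant-variety}, Lemma \ref{cohomology-structure-sheaf}, and Serre duality in the same way; the only cosmetic difference is that for part (2) you cite Corollary \ref{serre-on-secant} while the paper unwinds it directly as Theorem \ref{Du Bois-type condition}(1) followed by Serre duality on $B^k$.
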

\begin{proof}
Part (1) is immediate by Lemma \ref{dualizing-sheaf-secant-variety}, Lemma \ref{cohomology-structure-sheaf}, and the isomorphism $\omega_{B^k}(Z_k)\cong \pi_k ^*(\omega_{X^{[k]}}\otimes \delta_k)$.
For (2), note that 
$$
H^i(\mathcal{I}_{\sigma_{k-1}/\sigma_k})\cong H^i(\mathcal{O}_{B^k}(-Z_k))\cong H^{d_k-i}(\omega_{B^k}(Z_k))^{\vee} \cong H^{d_k-i}(\omega_{X^{[k]}}\otimes \delta_k)^{\vee}
\cong H^{i+kn-d_k}(\delta_k ^{-1}).
$$
Then (2) follows from Lemma \ref{cohomology-structure-sheaf} and (1).
\end{proof}

\begin{theorem}\label{thm:arithmetically-CM}
Assume $H^i(X,\mathcal{O}_X)=0$ for $1\leq i\leq n-1$. Then $\sigma_k \subseteq \mathbb{P}^r$ is arithmetically Cohen--Macaulay.  
\end{theorem}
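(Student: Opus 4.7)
The plan is to verify that $\sigma_k \subseteq \mathbb{P}^r$ is arithmetically Cohen--Macaulay through the standard criterion: given that $\sigma_k \subseteq \mathbb{P}^r$ is projectively normal (Theorem~\ref{thm:normalsing}) and that $\sigma_k$ itself is Cohen--Macaulay (Theorem~\ref{thm:CM}), the embedding is arithmetically Cohen--Macaulay if and only if $H^i(\sigma_k, \mathcal{O}_{\sigma_k}(m)) = 0$ for $1 \leq i \leq d_k - 1$ and every $m \in \mathbb{Z}$. The vanishing is immediate for $m \geq 1$ from Theorem~\ref{Du Bois-type condition}~(2). For $m \leq 0$, I would apply Serre duality on the Cohen--Macaulay $\sigma_k$ to reduce to the equivalent statement that $H^j(\sigma_k, \omega_{\sigma_k}(\ell)) = 0$ for $1 \leq j \leq d_k - 1$ and $\ell \geq 0$, which I would treat by splitting into the cases $\ell \geq 1$ and $\ell = 0$.

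The case $\ell \geq 1$ I would handle via the Koll\'ar--Kov\'acs injectivity theorem (e.g.\ \cite[Theorem~9.12]{Kollar.Shafarevich}), which applies because $\sigma_k$ has Du Bois singularities by Theorem~\ref{thm:dubois}. For any $\ell \geq 1$ the line bundle $\mathcal{O}_{\sigma_k}(\ell)$ is ample, so choosing $N$ large enough that Serre's theorem gives $H^j(\sigma_k, \omega_{\sigma_k}(\ell + N)) = 0$ for $j \geq 1$, and a general section $s \in H^0(\sigma_k, \mathcal{O}_{\sigma_k}(N))$ whose zero locus meets $\sigma_{k-1}$ properly, the injectivity theorem yields injections $H^j(\sigma_k, \omega_{\sigma_k}(\ell)) \hookrightarrow H^j(\sigma_k, \omega_{\sigma_k}(\ell + N)) = 0$, giving the required vanishing for all $\ell \geq 1$.

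For $\ell = 0$ I would argue by induction on $k$; the base case $k = 1$ reduces via Serre duality on $\sigma_1 = X$ to the hypothesis $H^i(X, \mathcal{O}_X) = 0$ for $1 \leq i \leq n-1$. For the inductive step, Theorem~\ref{thm:CM}~(2) and the projection formula identify $R^0\alpha_{k,*}\omega_{B^k}(Z_k) = \omega_{\sigma_k}$ and $R^n\alpha_{k,*}\omega_{B^k}(Z_k) = \omega_{\sigma_{k-1}}$ as the only nonzero higher direct images, so the two-row Leray spectral sequence of $\alpha_k$ yields a long exact sequence
\[
\cdots \to H^{j-n-1}(\omega_{\sigma_{k-1}}) \to H^j(\omega_{\sigma_k}) \to H^j(B^k, \omega_{B^k}(Z_k)) \to H^{j-n}(\omega_{\sigma_{k-1}}) \to \cdots.
\]
Feeding in the inductive hypothesis together with the computations of Corollary~\ref{cohomology-ideal-secant} and Corollary~\ref{serre-on-secant}, which show $H^j(B^k, \omega_{B^k}(Z_k)) = 0$ for $j \notin \{0, n\}$ and identify $H^n(B^k, \omega_{B^k}(Z_k)) \cong H^0(\omega_{\sigma_{k-1}})$ as equidimensional vector spaces, the long exact sequence collapses to yield $H^j(\omega_{\sigma_k}) = 0$ for $j \in [1, d_k-1] \setminus \{n, n+1\}$ together with a four-term exact sequence relating $H^n(\omega_{\sigma_k})$ and $H^{n+1}(\omega_{\sigma_k})$ to the edge map of the spectral sequence. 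I expect the main obstacle to be identifying that this edge map $H^n(B^k, \omega_{B^k}(Z_k)) \to H^0(\omega_{\sigma_{k-1}})$ is an isomorphism, which I would establish by tracing through the explicit construction of the isomorphism $R^n\alpha_{k,*}\omega_{B^k}(Z_k) \cong \omega_{\sigma_{k-1}}$ from the proof of Theorem~\ref{thm:CM}~(2).
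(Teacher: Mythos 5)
Your proof is correct and is, up to Serre duality, the same argument as the paper's. You dualize once at the outset and work with $\omega_{\sigma_k}$ throughout; the paper keeps $\OO_{\sigma_k}$ and dualizes only where needed. For the negative twists (your $\ell\geq 1$, the paper's $\ell<0$) the paper simply cites the Kodaira-type vanishing theorem for Cohen--Macaulay Du Bois varieties \cite[Theorem 10.42]{Kollar}, whereas you re-derive the same vanishing from Koll\'ar's injectivity theorem plus Serre vanishing --- this is the standard proof of that theorem, so the arguments are interchangeable. For the untwisted case your Leray spectral sequence of $\alpha_k$ applied to $\omega_{B^k}(Z_k)$, fed with Theorem~\ref{thm:CM}(2), is exactly the Serre dual (via Corollary~\ref{serre-on-secant}) of the paper's long exact sequence of the ideal sheaf sequence of $\sigma_{k-1}\subseteq \sigma_k$ fed with Corollary~\ref{cohomology-ideal-secant}. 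In particular, your spectral sequence edge map $H^n(B^k,\omega_{B^k}(Z_k))\to H^0(\omega_{\sigma_{k-1}})$ is the Serre dual of the paper's connecting map $f\colon H^{d_k-n-1}(\OO_{\sigma_{k-1}})\to H^{d_k-n}(\mathcal{I}_{\sigma_{k-1}/\sigma_k})$, so showing one is an isomorphism is equivalent to showing the other is; you correctly flag this as the main remaining point, and the paper handles it with an unexpanded ``careful check.'' One caveat: noting that the two spaces are abstractly isomorphic (``equidimensional'') is not enough --- you really must identify the specific edge map, which your plan to trace through the construction of $R^n\alpha_{k,*}\omega_{B^k}(Z_k)\cong\omega_{\sigma_{k-1}}$ from Theorem~\ref{thm:CM}(2) would accomplish.
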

\begin{proof}
It is well-known that $\sigma_k \subseteq \mathbb{P}^r$ is arithmetically Cohen--Macaulay if and only if $\sigma_k \subseteq \mathbb{P}^r$ is projectively normal and $H^i(\sigma_k, \OO_{\sigma_k}(\ell))=0$ for $1 \leq i \leq d_k-1$ and $\ell \in \mathbb{Z}$. By Theorem \ref{thm:normalsing}, we only need to check that $H^i(\sigma_k, \OO_{\sigma_k}(\ell))=0$ for $1 \leq i \leq d_k-1$ and $\ell \leq 0$. 
Recall that $\sigma_k$ has Cohen--Macaulay and Du Bois singularities by Theorem \ref{thm:CM} and Theorem \ref{thm:dubois}. By \cite[Theorem 10.42]{Kollar}, we have $H^i(\sigma_k, \mathcal{O}_{\sigma_k}(\ell))=0$ for $1\leq i\leq d_k-1$ and $\ell<0$. Therefore, it suffices to show $H^i(\sigma_k, \mathcal{O}_{\sigma_k})=0$ for $1\leq i\leq d_k-1$. We do this by induction on $k$, with the case $k=1$ being obvious. Consider the long exact sequence
\[
\cdots \lra H^i(\sigma_k, \mathcal{I}_{\sigma_{k-1}/\sigma_k})\lra H^i(\sigma_k, \mathcal{O}_{\sigma_k})\lra H^i(\sigma_{k-1},\mathcal{O}_{\sigma_{k-1}})\lra H^{i+1}(\sigma_k, \mathcal{I}_{\sigma_{k-1}/\sigma_k})\lra \cdots.
\]
By inductive hypothesis and Corollary \ref{cohomology-ideal-secant}, we get $H^i(\sigma_k,\mathcal{O}_{\sigma_k})=0$ for all $1\leq i\leq d_k-1$ except possibly when $i\in\{d_k-n-1,d_k-n\}$. For the exceptional cases, consider the exact sequence
\[
0\lra H^{d_k-n-1}(\mathcal{O}_{\sigma_k})\lra 
H^{d_k-n-1}(\mathcal{O}_{\sigma_{k-1}})\overset{f}{\lra} H^{d_k-n}(\mathcal{I}_{\sigma_{k-1}/\sigma_k})\lra
H^{d_k-n}(\mathcal{O}_{\sigma_k})\lra 0.
\]
A careful check shows that $f$ coincides with the composed isomorphism 
\[
H^{d_k-n-1}(\sigma_{k-1},\mathcal{O}_{\sigma_{k-1}}) \cong H^{0}(\sigma_{k-1}, \omega_{\sigma_{k-1}})^{\vee} \cong H^{d_k-n}(\sigma_k, \mathcal{I}_{\sigma_{k-1}/\sigma_k})
\]
obtained by Serre duality and Corollary \ref{cohomology-ideal-secant}. Thus $H^i(\sigma_k,\mathcal{O}_{\sigma_k})=0$ for the remaining cases.
\end{proof}

\begin{remark}
If $\sigma_k \subseteq \mathbb{P}^r$ is arithmetically Cohen--Macaulay, then $\sigma_k$ is Cohen--Macaulay so that $H^i(X, \mathcal{O}_X)=0$ for $1 \leq i \leq n-1$. Thus the converse of Theorem \ref{thm:arithmetically-CM} also holds. As $L$ is sufficiently positive, $X \subseteq \mathbb{P}^r$ is arithmetically Cohen--Macaulay if and only if $H^i(X, \mathcal{O}_X)=0$ for $1 \leq i \leq n-1$. Hence we may conclude that $X \subseteq \mathbb{P}^r$ is arithmetically Cohen--Macaulay if and only if so is $\sigma_k \subseteq \mathbb{P}^r$.
\end{remark}

\subsection{Rational singularities} 
We investigate when $\sigma_k$ has rational, log terminal, or log canonical singularities.

\begin{theorem}\label{thm:rational-sing}
$\sigma_k$ has rational singularities if and only if $H^i(X, \mathcal{O}_X)=0$ for $1 \leq i \leq n$.
\end{theorem}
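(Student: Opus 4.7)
The plan is to handle the two implications separately (assuming $k \geq 2$; the case $k = 1$ is vacuous since $\sigma_1 = X$ is smooth), without induction on $k$.

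For the forward direction, rationality implies Cohen--Macaulayness, so Theorem \ref{thm:CM<=>H^i(O_X)=0} immediately gives $H^i(X, \mathcal{O}_X) = 0$ for $1 \leq i \leq n-1$. To obtain the remaining vanishing $H^n(X, \mathcal{O}_X) = 0$, I would assemble the chain of isomorphisms
\[
H^n(X, \mathcal{O}_X) \cong H^n(X^{[k]}, \mathcal{O}_{X^{[k]}}) \cong H^n(B^k, \mathcal{O}_{B^k}) \cong H^n(\sigma_k, \mathcal{O}_{\sigma_k}),
\]
where the first is Lemma \ref{cohomology-structure-sheaf} (applied with the Cohen--Macaulay vanishing just obtained), the second follows from $B^k$ being a projective bundle over $X^{[k]}$, and the third uses $R^i\alpha_{k,*}\mathcal{O}_{B^k} = 0$ for $i \geq 1$ (the rationality hypothesis) via the Leray spectral sequence. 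Theorem \ref{thm:arithmetically-CM} then gives $H^n(\sigma_k, \mathcal{O}_{\sigma_k}) = 0$ provided $n \leq d_k - 1 = nk + k - 2$, which holds for $k \geq 2$.

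For the backward direction, assume $H^i(X, \mathcal{O}_X) = 0$ for $1 \leq i \leq n$. Theorem \ref{thm:CM} gives Cohen--Macaulayness of $\sigma_k$, so by the characterization of rationality in item (9) of Notation and Conventions it suffices to verify that $\alpha_{k,*}\omega_{B^k} \cong \omega_{\sigma_k}$. Applying $\alpha_{k,*}$ to
\[
0 \to \omega_{B^k} \to \omega_{B^k}(Z_k) \to \omega_{Z_k} \to 0,
\]
combined with Lemma \ref{dualizing-sheaf-secant-variety} (namely $\alpha_{k,*}\omega_{B^k}(Z_k) \cong \omega_{\sigma_k}$) and the Grauert--Riemenschneider vanishing $R^1\alpha_{k,*}\omega_{B^k} = 0$, reduces the task to showing $\alpha_{k,*}\omega_{Z_k} = 0$. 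Since $Z_k$ is a Cartier divisor in the smooth variety $B^k$, the dualizing sheaf $\omega_{Z_k}$ is an honest line bundle and hence torsion-free, so the adjunction unit $\omega_{Z_k} \hookrightarrow \alpha_{k-1,k,*}\alpha_{k-1,k}^*\omega_{Z_k}$ associated to the birational morphism $\alpha_{k-1,k} \colon B^{k-1,k} \to Z_k$ is injective. Pushing forward via $\alpha_{k,*}$ and invoking the commutativity $\alpha_k \circ \alpha_{k-1,k} = \alpha_{k-1} \circ \widetilde{\tau}$ together with Lemma \ref{appearance-Hi-CM} yields
\[
\alpha_{k,*}\omega_{Z_k} \hookrightarrow \alpha_{k-1,*}\widetilde{\tau}_*\alpha_{k-1,k}^*\omega_{Z_k} \cong \omega_{\sigma_{k-1}} \otimes H^0(X, \omega_X),
\]
whose right-hand side vanishes by Serre duality since $H^n(X, \mathcal{O}_X) = 0$. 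The most delicate point is the injectivity of the adjunction unit, which hinges on $\omega_{Z_k}$ being a genuine line bundle rather than just a coherent sheaf; this is automatic since $Z_k$ is cut out from the smooth $B^k$ by a single equation. Otherwise, the argument is a clean repackaging of Lemma \ref{dualizing-sheaf-secant-variety}, Lemma \ref{appearance-Hi-CM}, and Theorem \ref{thm:arithmetically-CM} from the preceding subsections.
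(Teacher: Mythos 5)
Your proof is correct, but it takes a genuinely different route from the paper's, which handles both implications simultaneously by establishing the single equivalence $\alpha_{k,*}\omega_{Z_k}=0 \iff H^0(X,\omega_X)=0$ (under the standing assumption $H^i(X,\mathcal{O}_X)=0$ for $1\leq i\leq n-1$, justified via Theorem~\ref{thm:CM<=>H^i(O_X)=0}). The paper pushes forward the short exact sequence $0\to\omega_{Z_k}\to\alpha_{k-1,k,*}\alpha_{k-1,k}^*\omega_{Z_k}\to\mathcal{F}\to 0$, using $R^1\alpha_{k,*}\omega_{Z_k}=0$ (from Lemma~\ref{higher-dualizing-Z} and Theorem~\ref{thm:CM}) to obtain a short exact sequence $0\to\alpha_{k,*}\omega_{Z_k}\to\omega_{\sigma_{k-1}}\otimes H^0(X,\omega_X)\to\alpha_{k,*}\mathcal{F}\to 0$, then notes $\alpha_{k,*}\mathcal{F}$ is supported on $\sigma_{k-2}$ to read off nonvanishing of $\alpha_{k,*}\omega_{Z_k}$ at the generic point of $\sigma_{k-1}$ whenever $H^0(X,\omega_X)\neq 0$. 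Your backward direction is actually a little leaner: you only need injectivity of the adjunction unit $\omega_{Z_k}\hookrightarrow\alpha_{k-1,k,*}\alpha_{k-1,k}^*\omega_{Z_k}$ (which, as you note, is automatic since $\alpha_{k-1,k}$ is birational onto the integral scheme $Z_k$ and $\omega_{Z_k}$ is a line bundle, hence torsion-free), so you can bypass the $R^1$-vanishing and the sheaf $\mathcal{F}$ entirely. On the other hand, your forward direction is heavier than the paper's: tracing $H^n(X,\mathcal{O}_X)\cong H^n(X^{[k]},\mathcal{O}_{X^{[k]}})\cong H^n(B^k,\mathcal{O}_{B^k})\cong H^n(\sigma_k,\mathcal{O}_{\sigma_k})$ and then invoking Theorem~\ref{thm:arithmetically-CM} (whose proof rests on Koll\'ar's injectivity package) is correct and there is no circularity (Theorem~\ref{thm:arithmetically-CM} is established independently of rational singularities), but it imports substantially more machinery than the paper's direct rank computation on $\omega_{\sigma_{k-1}}\otimes H^0(X,\omega_X)$. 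One small caveat: you cite Theorem~\ref{thm:arithmetically-CM} for $H^n(\sigma_k,\mathcal{O}_{\sigma_k})=0$; this vanishing is the content of the proof of that theorem (the $\ell=0$ case of $H^i(\sigma_k,\OO_{\sigma_k}(\ell))=0$), but it follows immediately from the stated ACM conclusion, so the citation is fine. Both directions check out; the paper's approach is more self-contained and symmetric, while yours trades a simpler backward direction for a heavier forward one.
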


\begin{proof}
Recall that $\sigma_k$ has rational singularities if and only if $\sigma_k$ is Cohen--Macaulay and $\alpha_{k,*} \omega_{B^k} = \omega_{\sigma_k}$. 
By Theorem \ref{thm:CM<=>H^i(O_X)=0}, we only need to show that $\alpha_{k,*} \omega_{B^k} = \omega_{\sigma_k}$ if and only if $H^0(X,\omega_X)=0$  under the assumption that $H^i(X, \mathcal{O}_X)=0$ for $1 \leq i \leq n-1$.
Consider the short exact sequence
\[
0\lra \omega_{B^k}\lra \omega_{B^k}(Z_k)\lra \omega_{Z_k}\lra 0.
\]
By taking push-forward under $\alpha_k$ and using Lemma \ref{dualizing-sheaf-secant-variety} and the Grauert--Riemenschneider vanishing theorem, we obtain a short exact sequence
\[
0\lra \alpha_{k,*} \omega_{B^k}\lra \omega_{\sigma_k}\lra \alpha_{k,*}\omega_{Z_k}\lra 0.
\]
Therefore, it suffices to show that $\alpha_{k,*} \omega_{Z_k}=0$ if and only if $H^0(X,\omega_X)=0$. Consider the short exact sequence
\[
0\lra \omega_{Z_k}\lra \alpha_{k-1,k,*}\alpha_{k-1,k}^* \omega_{Z_k}\lra \mathcal{F}\lra 0,
\]
where $\mathcal{F}$ is a sheaf supported on $\sigma_{k-2,k}$. By Lemma \ref{higher-dualizing-Z} and Theorem \ref{thm:CM}, we have $R^1 \alpha_{k,*} \omega_{Z_k} = 0$. Hence we obtain a short exact sequence
\[
0\lra \alpha_{k,*} \omega_{Z_k}\lra \alpha_{k,*} \alpha_{k-1,k,*}\alpha_{k-1,k}^* \omega_{Z_k}\lra \alpha_{k,*} \mathcal{F}\lra 0.
\]
Note that $\alpha_{k,*} \mathcal{F}$ is supported on $\sigma_{k-2}$. By Lemma \ref{appearance-Hi-CM} and the proof of Lemma \ref{support-alpha-alpha-alpha}, we have
\[
\alpha_{k,*}\alpha_{k-1,k,*}\alpha_{k-1,k}^* \omega_{Z_k}\cong \omega_{\sigma_{k-1}}\otimes H^0(X,\omega_X).
\]
Therefore, $\alpha_{k,*} \omega_{Z_k}=0$ if and only if $H^0(X,\omega_X)=0$, as desired. 
\end{proof}

\begin{remark}
Recall that a variety $X$ is said to have \emph{weakly rational singularities} if there is a resolution of singularities $f\colon Y\lra X$ such that $f_* \omega_Y$ is reflexive.
Chou and Song \cite{Chou.Song.18} showed that the secant variety $\sigma_2$ has weakly rational singularities if and only if $H^n(X, \OO_X) = 0$. Using ideas from above, we briefly show that the same statement holds for $\sigma_k$ when $\dim(X) \leq 2$ or $k \leq 3$.  By Lemma \ref{dualizing-sheaf-secant-variety} and \ref{appearance-Hi-CM}, it we have an injection $\alpha_{k,*} \omega_{Z_k} \hookrightarrow \alpha_{k-1,*} \tilde{\tau}_*\alpha_{k-1,k}^*\omega_{Z_k} \cong H^0(X, \omega_X) \otimes \omega_{\sigma_{k-1}}$. It follows that $\alpha_{k,*} \omega_{Z_k} = 0$ if $H^n(X, \OO_X) = 0$. From \ref{dualizing-sheaf-secant-variety} and the Grauert--Riemenschneider vanishing theorem, we deduce that $\alpha_{k,*}\omega_{B^k} \cong \omega_{\sigma_k} := h^{-d_k}(\omega_{\sigma_k}^{\bullet})$. By \cite{Olano.Raychaudhury.Song}, this implies that $\sigma_k$ has weakly rational singularities.  Conversely, assume that $\alpha_{k,*}\omega_{Z_k} = 0$ holds. Let $W$ be the intersection of $Z_k \cap \pi_k^{-1}(X^{[k]} \setminus \partial X^{[k]})$ with $\alpha_k^{-1}(\sigma_{k-1} \setminus \sigma_{k-2})$. The restriction $\alpha_k|_W$ is flat because each fiber is isomorphic to the blow-up $X'$ of $X$ at $k-1$ distinct points. By Grauert's theorem, we have $H^0(X^{'}, \omega_{X^{'}}) = 0$. This implies that $H^n(X, \OO_X) = 0$.
\end{remark}

Recall that if $(Y, \Gamma)$ is a Kawamata log terminal pair, then $Y$ has rational singularities (\cite[Theorem 5.22]{Kollar-Mori}) and if  $(Y, \Gamma)$ is a log canonical pair, then $Y$ has Du Bois singularities (\cite[Corollary 6.32]{Kollar}). It is natural to ask when $\sigma_k$ has log terminal or log canonical singularities (see for example \cite[Question 1.6]{Chou.Song.18}). Here we give a necessary condition.

\begin{proposition}\label{not-klt-lc}
Suppose that there is a $\mathbb{Q}$-divisor $\Gamma$ on $\sigma_k$ such that $(\sigma_k, \Gamma)$ is Kawamata log terminal (respectively, log canonical). Then $-K_{\operatorname{Bl}_\eta X}$ is big (respectively, effective) for a general point $[\eta]\in X^{[k]}$. 
\end{proposition}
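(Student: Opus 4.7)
The plan is to exploit the birational resolution $\alpha_k\colon B^k \to \sigma_k$, whose only exceptional divisor is $Z_k$ by the Embedding Theorem (Theorem~\ref{terracini}), and translate the klt/lc hypothesis into a positivity statement on a general fiber of $\alpha_k$ over $\sigma_{k-1}$. Since $(\sigma_k,\Gamma)$ is klt (resp.\ lc), $K_{\sigma_k}+\Gamma$ is $\mathbb{Q}$-Cartier, and denoting by $\widetilde\Gamma$ the strict transform of $\Gamma$ on $B^k$, the discrepancy formula reads
\[
K_{B^k} + \widetilde\Gamma \;=\; \alpha_k^*(K_{\sigma_k}+\Gamma) + a\,Z_k
\]
with $a := a(Z_k;\sigma_k,\Gamma) > -1$ (resp.\ $\geq -1$). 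Picking a general $y\in\sigma_{k-1}$ lying outside $\operatorname{Supp}(\Gamma)$, the fiber $F := \alpha_k^{-1}(y)$ is smooth and, for the corresponding general $[\eta]$, is identified with $\operatorname{Bl}_\eta X$: indeed, by Proposition~\ref{fiber-product} $F$ coincides with a fiber of $\widetilde\tau\colon U^{k-1,k}\to U^{k-1}$, and such a fiber is $\tau^{-1}([\eta])\cong \operatorname{Bl}_\eta X$ over the lci locus by the residual morphism description (see \S\ref{residue-map}). Genericity of $y$ forces $\widetilde\Gamma|_F = 0$ and $\alpha_k^*(K_{\sigma_k}+\Gamma)|_F=0$, so restricting the discrepancy formula gives $K_{B^k}|_F = a\,Z_k|_F$.

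The heart of the argument is to compute both sides of this identity as divisor classes on $F\cong\operatorname{Bl}_\eta X$. The Fiber Conormal Decomposition (Lemma~\ref{fiber-conormal-decomposition}) provides
\[
\mathcal{C}_{F/B^k} \;\cong\; \mathcal{O}_F^{\oplus((k-1)n+k-2)} \oplus \operatorname{res}^*L(-2F_\eta),
\]
where $F_\eta$ is the exceptional divisor of $\operatorname{Bl}_\eta X\to X$. The conormal short exact sequence $0\to \mathcal{C}_{Z_k/B^k}|_F \to \mathcal{C}_{F/B^k} \to \mathcal{C}_{F/Z_k}\to 0$ will split compatibly with this decomposition: the trivial summand, pulled back from the cotangent space of the smooth locus of $\sigma_{k-1}$ at $y$, identifies with $\mathcal{C}_{F/Z_k}$, while the rank-one summand identifies with $\mathcal{C}_{Z_k/B^k}|_F$. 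Hence $Z_k|_F \equiv -\operatorname{res}^*L + 2F_\eta$ as divisor classes. Adjunction for the high-codimension embedding $F\hookrightarrow B^k$ then yields $K_F = K_{B^k}|_F - c_1(\mathcal{C}_{F/B^k}) = K_{B^k}|_F - (\operatorname{res}^*L - 2F_\eta)$, and assembling everything gives the central formula
\[
-K_F \;=\; (a+1)\bigl(\operatorname{res}^*L - 2F_\eta\bigr).
\]

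To conclude, since $L$ is sufficiently positive the class $\operatorname{res}^*L - 2F_\eta$ is big on $\operatorname{Bl}_\eta X$. In the klt case $a+1>0$ makes $-K_F$ big; in the lc case $a+1\geq 0$ makes $-K_F$ either big or identically zero, hence effective in both subcases. The main technical obstacle is the explicit matching of the summands in Lemma~\ref{fiber-conormal-decomposition} with the pieces of the conormal exact sequence --- identifying the trivial summand with $\mathcal{C}_{F/Z_k}$ and the rank-one summand with $\mathcal{C}_{Z_k/B^k}|_F$ --- but once this geometric identification is in hand, the remainder is a formal manipulation of discrepancy and adjunction formulas.
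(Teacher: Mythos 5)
Your proof is correct and follows essentially the same approach as the paper: both restrict the discrepancy equation of the resolution $\alpha_k\colon B^k\to\sigma_k$ (whose unique exceptional prime divisor is $Z_k$) to a general fiber $F\cong\operatorname{Bl}_\eta X$ over $\sigma_{k-1}$, invoke Lemma~\ref{fiber-conormal-decomposition} together with adjunction to compute $-K_F$, and conclude from the bigness of $\operatorname{bl}_\eta^*L-2E_\eta$. The only differences are cosmetic: you work with the standard discrepancy ($a>-1$ resp.\ $\geq -1$) while the paper uses the log discrepancy ($a>0$ resp.\ $\geq 0$), and you arrange $y\notin\operatorname{Supp}(\Gamma)$ so that $\widetilde\Gamma|_F=0$, whereas the paper simply keeps the effective contribution $(\alpha_{k,*}^{-1}\Gamma)|_F$ in the final formula. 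One remark: what you flag as ``the main technical obstacle'' --- matching the trivial and rank-one summands of $\mathcal{C}_{F/B^k}$ with $\mathcal{C}_{F/Z_k}$ and $\mathcal{C}_{Z_k/B^k}|_F$ --- is not actually an obstacle; that identification is precisely what the proof of Lemma~\ref{fiber-conormal-decomposition} establishes (the splitting constructed there is of the conormal exact sequence for $F\subset U^{k-1,k}\subset B^k$, with the subbundle $\mathcal{C}_{U^{k-1,k}/B^k}|_F$ computed via Theorem~\ref{relative-terracini}), so you may cite it directly rather than treating it as an open step.
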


\begin{proof}
    We may write
    \[
    K_{B^k} + \alpha_{k,*}^{-1} \Gamma +Z_k = \alpha_{k}^* (K_{\sigma_k}+\Gamma)+a Z_k,
    \]
where $a$ is the log discrepancy of $Z_k$ with respect to $(\sigma_k, \Gamma)$. In particular, $a>0$ if $(\sigma_k, \Gamma)$ is Kawamata log terminal, and $a\geq 0$ if $(\sigma_k, \Gamma)$ is log canonical. Pick $x\in \pi_{k-1}^{-1}(X^{[k-1]}_{\textnormal{lci}}) \subseteq U^{k-1}(L)$, and set $[\eta] = \pi_{k-1}(x)$. By Proposition \ref{fiber-product} and the fact that $\eta$ is local complete intersection, we have $\alpha_k ^{-1}(x) \cong \operatorname{Bl}_\eta X$. Denote by $\operatorname{bl}_\eta \colon \operatorname{Bl}_\eta X \lra X$ the blow-up of $X$ along $\eta$ with exceptional divisor $E_\eta$. By Lemma \ref{fiber-conormal-decomposition} and adjunction, we have
\[
K_{\operatorname{Bl}_\eta X}+(\alpha_{k,*}^{-1} \Gamma)|_{\alpha_k ^{-1} (x)} = -a(\operatorname{bl}_{\eta}^* L - 2E_\eta).
\]
As $L$ is sufficiently positive, $\operatorname{bl}_{\eta}^* L - 2E_\eta$ is big. 
\end{proof}

\begin{corollary}\label{cor:noklt/lc}
Assume that $X$ is a surface.
If $k \geq 8$ (respectively, $k \geq 9$), then there is no $\mathbb{Q}$-divisor $\Gamma$ such that $(\sigma_k, \Gamma)$ is Kawamata log terminal (respectively, log canonical).
\end{corollary}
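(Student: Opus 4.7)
The plan is to apply Proposition~\ref{not-klt-lc} to convert the corollary into a concrete statement about anti-canonical divisors of blow-ups of the surface $X$. Suppose, for contradiction, that $(\sigma_k,\Gamma)$ is klt (respectively, lc). Then Proposition~\ref{not-klt-lc} shows that $-K_{\widetilde X}$ is big (respectively, $\mathbb{Q}$-effective) on $\widetilde X:=\operatorname{Bl}_\eta X$ for $[\eta]$ a general point of $X^{[k-1]}_{\operatorname{lci}}$ --- as is clear from the proof of that proposition, where $[\eta]=\pi_{k-1}(x)$ for $x\in U^{k-1}$. For such a general $\eta$, the scheme is reduced and consists of $k-1$ distinct general points of the surface $X$, and $K_{\widetilde X}=\operatorname{bl}^*K_X+\sum_{i=1}^{k-1}E_i$ with each $E_i$ a $(-1)$-curve.

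First I would argue that pseudoeffectivity of $-K_{\widetilde X}$ forces $X$ to be rational. Indeed, if $-K_X$ itself is not pseudoeffective (as is the case when $X$ has positive Kodaira dimension), then neither is $-K_{\widetilde X}=\operatorname{bl}^*(-K_X)-\sum E_i$. If instead $K_X\equiv 0$ (K3, Enriques, abelian, or bielliptic surface), then $-K_{\widetilde X}\equiv -\sum_{i=1}^{k-1}E_i$, which pairs negatively with an ample class whenever $k\geq 2$ and so is not pseudoeffective either. Hence $X$ must be rational, and may be written as $X=\operatorname{Bl}_b(Y_0)$ with $Y_0$ a minimal rational surface ($Y_0\in\{\mathbb{P}^2\}\cup\{\mathbb{F}_n\}_{n\neq 1}$, with $K_{Y_0}^2\in\{8,9\}$); correspondingly $\widetilde X\cong\operatorname{Bl}_{b+k-1}(Y_0)$.

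Finally, from the classification of (weak) del Pezzo surfaces, together with the existence of the anticanonical elliptic fibration on $\operatorname{Bl}_9\mathbb{P}^2$, the divisor $-K_{\operatorname{Bl}_N(Y_0)}$ is big (respectively, $\mathbb{Q}$-effective) for a general configuration of $N$ points on $Y_0$ if and only if $N\leq K_{Y_0}^2-1$ (respectively, $N\leq K_{Y_0}^2$). Taking the most favourable case $b=0$, $Y_0=\mathbb{P}^2$ (so $K_{Y_0}^2=9$) and substituting $N=k-1$ yields quantitative upper bounds on $k$, producing the desired contradiction in each of the two regimes. The main obstacle will be making the classification argument fully uniform over all possible surfaces $X$ and all sufficiently positive $L$; in particular, the first paragraph's dichotomy (between non-pseudoeffective $-K_X$ and $K_X\equiv 0$) must be carefully checked to exclude any residual exotic cases, after which the count via del Pezzo theory is routine.
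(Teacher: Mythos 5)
Your approach is exactly the paper's, just with the blow--up analysis carried out rather than asserted: the paper's proof of this corollary is two sentences invoking Proposition~\ref{not-klt-lc} and simply claiming the corresponding nonvanishing/noneffectivity of $-K_{\operatorname{Bl}_\eta X}$. You are filling in the reasoning behind that claim, and your reduction via pseudoeffectivity and del Pezzo geometry is the right way to do it.

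However, there are two substantive points you need to resolve. First, the numerical output of your argument does not match the stated bounds, and this is a genuine tension in the source material that you have correctly begun to notice. You rightly point out that the \emph{proof} of Proposition~\ref{not-klt-lc} exhibits the fiber $\alpha_k^{-1}(x)\cong\operatorname{Bl}_\eta X$ with $[\eta]=\pi_{k-1}(x)\in X^{[k-1]}$, i.e.\ blowing up $k-1$ general points, whereas the proposition's \emph{statement} and the paper's proof of the corollary both write $[\eta]\in X^{[k]}$. With $k-1$ points on $\mathbb{P}^2$, the del Pezzo count gives $-K$ not big for $k\geq 10$ and not effective for $k\geq 11$; with $k$ points it would give $9$ and $10$; neither yields the corollary's advertised $8$ and $9$. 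Indeed for $X=\mathbb{P}^2$ and $k=8$, the surface $\operatorname{Bl}_8\mathbb{P}^2$ is a degree-one del Pezzo, so $-K$ \emph{is} big and the argument produces no contradiction at all. You should flag explicitly that the bounds in the corollary as stated cannot follow from the proposition (and note that the introduction's version --- $k\geq 9,\,10$ in $\Sigma_k$-notation, i.e.\ $10,\,11$ in $\sigma_k$-notation --- is what your computation actually supports), rather than leaving this as a ``routine'' final substitution.

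Second, your reduction to the rational case has a gap. The dichotomy ``either $-K_X$ is not pseudoeffective, or $K_X\equiv 0$'' does not cover ruled surfaces over curves of positive genus. For instance, for $X=\mathbb{P}^1\times E$ with $E$ elliptic, $-K_X=2\xi$ (twice a relative hyperplane) is effective, hence pseudoeffective, yet $K_X\not\equiv 0$ and $X$ is certainly not rational. Such surfaces do satisfy the desired nonbigness/noneffectivity after blowing up (indeed for far fewer points --- already $N\geq 3$ kills $h^0(-K)$ on $\operatorname{Bl}_N(\mathbb{P}^1\times E)$, since $h^0(\mathcal{O}_{\mathbb{P}^1}(2))=3$), so the conclusion survives, but as written your reduction quietly assumes a trichotomy of surfaces that is incomplete. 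Either handle $\mathbb{P}^1$-bundles over positive-genus curves separately, or appeal to the fact that the worst case (largest $k$ for which $-K_{\operatorname{Bl}_N X}$ remains pseudoeffective) is attained by $X=\mathbb{P}^2$ and justify why the ruled cases are never worse.
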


\begin{proof}
If $k \geq 8$ (respectively, $k \geq 9$) and $[\eta] \in X^{[k]}$ is general, then $-K_{\operatorname{Bl}_\eta X}$ is not big (respectively, not effective). Thus the corollary follows from Proposition \ref{not-klt-lc}.
\end{proof}

\subsection{Non-normal examples}\label{subsection-not-normal} 
Taking the opposite perspective of the above treatment, we exhibit non-normal secant varieties when the embedding line bundle is not positive enough. We no longer assume that $\dim (X) \leq 2$ or $k \leq 3$. Furthermore, $L$ is only a very ample line bundle on $X$, unless otherwise specified. Recall from Proposition \ref{terracini} that  $\sigma_k \setminus \sigma_{k-1}$ is smooth provided $X^{[k]}$ is smooth and $L$ separates $2k$-schemes. Here we want to show that this is sharp. For any given integers $n \geq 1$ and $k \geq 1$, we explicitly construct a smooth projective variety $X$ of dimension $n$ with a line bundle $L$ such that $L$ separates $(2k-1)$-schemes and $\sigma_k\setminus \sigma_{k-1}$ is not normal even when $X^{[k]}$ is smooth --- see Corollary \ref{cor:non-normal-examples}. The starting point is the following simple observation. 

\begin{lemma}\label{lemma-not-normal}
Let $L$ be a line bundle on $X$ that separates $(2k-1)$-schemes. Assume that there are finitely many reduced $2k$-subschemes $\eta_i$ for $1\leq i\leq \ell$ such that
    \begin{enumerate}[topsep=0pt]
    \item $H^0(X,L)\lra H^0(\eta_i,L|_{\eta_i})$ is not surjective for $1\leq i
        \leq \ell$, and
        \item $H^0(X,L)\lra H^0(\eta,L|_\eta)$ is surjective for all $2k$-subschemes $\eta\not\in\{\eta_i\}_{1\leq i\leq \ell}$.
    \end{enumerate}

\noindent Then $\sigma_k \setminus \sigma_{k-1}$ is not normal.
\end{lemma}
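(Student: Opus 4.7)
My plan is to extract, from each bad $\eta_i$, a specific point $y \in \sigma_k \setminus \sigma_{k-1}$ that admits two distinct isolated preimages in $B_{\operatorname{sm}}^k$ under $\alpha_k$, and then conclude non-normality via Zariski's connectedness theorem. Write $\eta_i = \{p_1, \ldots, p_{2k}\}$. A codimension argument using the $(2k-1)$-very ampleness of $L$ (project the $2k$-dimensional evaluation target onto each of its $(2k-1)$-dimensional coordinate quotients) shows that the cokernel of $H^0(X, L) \to H^0(\eta_i, L|_{\eta_i})$ has dimension exactly $1$, so there is a unique (up to scalar) linear relation $\sum_{j=1}^{2k} a_j p_j = 0$ in $\mathbb{P}^r$ with every $a_j \neq 0$ (otherwise some $(2k-1)$-subscheme of $\eta_i$ would also fail to be separated). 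Pick any partition $\eta_i = \xi \sqcup \xi'$ with $|\xi| = |\xi'| = k$; the $(k-1)$-planes $\operatorname{Span}(\xi)$ and $\operatorname{Span}(\xi')$ lie inside the $(2k-2)$-plane $\operatorname{Span}(\eta_i)$ and, by dimension count, meet in a single point $y := [\sum_{j \in \xi} a_j p_j]$.

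The next step is to verify $(\xi, y), (\xi', y) \in U^k \cap B_{\operatorname{sm}}^k$ and $y \in \sigma_k \setminus \sigma_{k-1}$. Since $\xi$ and $\xi'$ are reduced, they lie in $X_{\operatorname{sm}}^{[k]}$, so both preimages belong to $B_{\operatorname{sm}}^k$. The remaining claims rest on the following intersection principle: for any $(k-1)$-subscheme $\zeta$ of $X$, the union $\zeta \cup \xi'$ has length at most $2k-1$, so $L$ separates it, and therefore $\operatorname{Span}(\zeta) \cap \operatorname{Span}(\xi') = \operatorname{Span}(\zeta \cap \xi')$. Applying this with a proper $(k-1)$-subscheme $\zeta \subsetneq \xi$ (so $\zeta \cap \xi' = \emptyset$) gives $y \notin \operatorname{Span}(\zeta)$, hence $(\xi, y) \notin \kappa_{k-1, k}$; symmetrically $(\xi', y) \notin \kappa_{k-1, k}$. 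Applying the same principle with an arbitrary $(k-1)$-scheme $\zeta$ of $X$ satisfying $y \in \operatorname{Span}(\zeta)$ forces $y \in \operatorname{Span}(\zeta \cap \xi')$ with $\zeta \cap \xi' \subsetneq \xi'$, contradicting $(\xi', y) \in U^k$; hence $y \notin \sigma_{k-1}$.

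To conclude, Lemma \ref{pairing-secant} combined with Lemma \ref{diagram-differential-secant} implies that the cotangent map of $\alpha_k$ is surjective on $U^k$, so $\alpha_k \colon B^k \to \mathbb{P}^r$ is unramified there. In particular $(\xi, y)$ and $(\xi', y)$ are isolated points of the fiber $\alpha_k^{-1}(y)$, and they remain isolated after intersecting with the closed subscheme $B_{\operatorname{sm}}^k$. Now $\alpha_k|_{B_{\operatorname{sm}}^k} \colon B_{\operatorname{sm}}^k \to \sigma_k$ is proper (since $B_{\operatorname{sm}}^k$ is projective) and surjective, and by the same intersection principle its non-injective locus sits above the finitely many bad $\eta_i$'s, so the morphism is generically injective and therefore birational. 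If $\sigma_k$ were normal at $y$, then on a neighborhood of $y$ we would have $(\alpha_k|_{B_{\operatorname{sm}}^k})_*\mathcal{O}_{B_{\operatorname{sm}}^k} = \mathcal{O}_{\sigma_k}$, and Zariski's connectedness theorem would force the fiber $\alpha_k^{-1}(y) \cap B_{\operatorname{sm}}^k$ to be connected --- impossible in the presence of two distinct isolated points. Hence $\sigma_k$ is not normal at $y \in \sigma_k \setminus \sigma_{k-1}$, so $\sigma_k \setminus \sigma_{k-1}$ is not normal. The main obstacle in this proof is the delicate bookkeeping in the second paragraph: both $(\xi, y) \in U^k$ and $y \notin \sigma_{k-1}$ must be verified simultaneously by invoking the $(2k-1)$-very ampleness sharply, with the two verifications intertwined through the intersection principle.
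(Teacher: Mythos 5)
Your proof is correct and follows the same strategy as the paper's: partition a bad $\eta_i$ into two $k$-subschemes whose spans meet at a single point, observe that this gives a disconnected fiber of $\alpha_k$, and conclude non-normality. You flesh out several details the paper leaves implicit --- notably that the cokernel of restriction has dimension exactly one, that both preimages lie in $U^k$ and that the intersection point lies off $\sigma_{k-1}$ --- but the argument is the same.
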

\begin{proof}
    Let $\{\xi_{i,j}\}$ be the set of all $k$-schemes contained in $\eta_i$. 
    By assumption $(2)$, we have that $\alpha_k$ is injective on $U^k\setminus (\cup_{i,j} \pi_k ^{-1}(\xi_{i,j}))$. 
    After possibly relabelling the indexes, we may write $\eta_1 = \xi_{1,1}\cup \xi_{1,2}$. By the fact that $L$ separates $(2k-1)$-schemes and by assumption $(1)$, we have that the linear spaces $\alpha_k(\pi_k ^{-1} (\xi_{1,1}))$ and $\alpha_k(\pi_k ^{-1} (\xi_{1,2}))$ meet at exactly one point, say $p$. Therefore $\alpha_k ^{-1}(p)$ is not connected, and hence, $\sigma_k $ is not normal at $p$. 
\end{proof}

In what follows, we will apply Lemma \ref{lemma-not-normal} to $X=\operatorname{Sym}^{\ell} (C)$ for a smooth projective curve $C$ and $L=N_{\ell,M}$ for some line bundle $M$ on $C$. This example is inspired by \cite[Example 5.14]{Ein.Niu.Park.20}.

\begin{lemma}\label{M-to-N-separation}
    Let $C$ be a smooth projective curve, and $M$ be a line bundle on $C$. Suppose that $M$ separates $(\ell+j)$-schemes and $H^1(C,M)=0$ for $\ell \geq 1$ and $j \geq 0$. Then the line bundle $N_{\ell,M}$ on $\operatorname{Sym}^{\ell}(C)$ separates $(j+1)$-schemes. 
\end{lemma}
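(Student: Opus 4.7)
The plan is to proceed by induction on $\ell$. The base case $\ell=1$ is immediate: $\operatorname{Sym}^1(C) = C$ and $N_{1,M} = M$, so the hypothesis ``$M$ separates $(1+j)$-schemes'' is exactly the conclusion.

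For the inductive step, I would exploit the structure of nested Hilbert schemes on curves recalled in \S\ref{residue-map}: for a curve $C$, there is a canonical identification $X^{[\ell-1,\ell]} \cong C \times \operatorname{Sym}^{\ell-1}(C)$, under which $\rho$ becomes the addition map $a \colon C \times \operatorname{Sym}^{\ell-1}(C) \lra \operatorname{Sym}^\ell(C)$, $(p, D) \mapsto D + p$, which is finite and flat of degree $\ell$, and the exceptional divisor $F_{\ell-1}$ becomes the universal family divisor $\mathcal{Z}_{\ell-1}$. Lemma~\ref{identities}(1) then yields
\[
a^* N_{\ell, M} \cong \pr_1^* M \otimes \pr_2^* N_{\ell-1,M} \otimes \mathcal{O}(-\mathcal{Z}_{\ell-1}).
\]

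Given a length-$(j+1)$ subscheme $W \subseteq \operatorname{Sym}^{\ell}(C)$, I would set $W' := a^{-1}(W)$, which is finite flat of degree $\ell$ over $W$. Since $a$ is finite flat and we are in characteristic zero, the trace map $\tfrac{1}{\ell}\operatorname{Tr}_a$ splits the unit $\mathcal{O}_{\operatorname{Sym}^\ell(C)} \hookrightarrow a_*\mathcal{O}$, and by flat base change the same splitting descends to $a|_{W'} \colon W' \to W$. Consequently $N_{\ell,M}|_W$ is an $\mathcal{O}_W$-direct summand of $(a|_{W'})_*(a^*N_{\ell,M}|_{W'})$, so surjectivity of the restriction map $H^0(\operatorname{Sym}^\ell(C), N_{\ell, M}) \lra H^0(W, N_{\ell, M}|_W)$ follows from surjectivity of
\[
H^0(C \times \operatorname{Sym}^{\ell-1}(C), a^* N_{\ell, M}) \lra H^0(W', a^* N_{\ell, M}|_{W'}).
\]

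The remaining task is to establish this upstairs surjectivity by analyzing the structure of $W'$ in the product and performing a dévissage using the sequence $0 \to \mathcal{O}(-\mathcal{Z}_{\ell-1}) \to \mathcal{O} \to \mathcal{O}_{\mathcal{Z}_{\ell-1}} \to 0$, tensored with $\pr_1^* M \otimes \pr_2^* N_{\ell-1,M}$ and restricted to $W'$. The vanishing $H^1(C, M) = 0$ permits a K\"unneth decomposition of the cohomology of $\pr_1^* M \otimes \pr_2^* N_{\ell-1,M}$, which combined with the inductive hypothesis on $N_{\ell-1, M}$ (available since $M$ separates $(\ell+j) \geq (\ell-1)+j$-schemes) and the $(\ell+j)$-separation of $M$ on $C$ should yield the required surjectivity. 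The main obstacle will be in this last step: the subscheme $W'$ projects to subschemes of potentially large length in each factor (the image of $\pr_2$ can be as long as $\ell(j+1)$), so the inductive hypothesis cannot be applied naively; instead one must exploit the fact that $W'$ is a flat $\ell$-cover of $W$ together with the extra vanishing coming from the twist by $\mathcal{O}(-\mathcal{Z}_{\ell-1})$, matching these constraints precisely against the sharp $(\ell+j)$-separation hypothesis on $M$ and carefully bookkeeping cohomology along $\mathcal{Z}_{\ell-1}$ to ensure that all obstructions in the dévissage vanish.
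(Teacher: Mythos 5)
Your approach is genuinely different from the paper's, and it runs into a gap that you yourself correctly diagnose but do not resolve. The paper does not pull back along the finite flat addition map $a \colon C\times\operatorname{Sym}^{\ell-1}(C)\lra\operatorname{Sym}^\ell(C)$. Instead, given a $(j+1)$-scheme $\eta$, it picks a point $p\in C$ lying in the support of a $0$-cycle corresponding to a closed point of $\eta$, forms the divisor $C_{p,\ell}\subseteq\operatorname{Sym}^\ell(C)$ of cycles containing $p$, and uses the divisor sequence $0\to\OO(-C_{p,\ell})\to\OO\to\OO_{C_{p,\ell}}\to 0$. The identities $N_{\ell,M}(-C_{p,\ell})\cong N_{\ell,M(-p)}$ and $N_{\ell,M}|_{C_{p,\ell}}\cong N_{\ell-1,M(-p)}$ then let one run a double induction on $(\ell,j)$, with the induction step using the residual pair $\xi=\eta\cap C_{p,\ell}$ and $\lambda$, whose lengths sum to $j+1$ — and, crucially, $\operatorname{length}(\lambda)\leq j$ because $\xi$ is nonempty by the choice of $p$.

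The gap in your proposal: after the trace-splitting reduction, you need surjectivity of $H^0\big(C\times\operatorname{Sym}^{\ell-1}(C),\, a^*N_{\ell,M}\big)\lra H^0(W', a^*N_{\ell,M}|_{W'})$ where $W'=a^{-1}(W)$. Since $a$ is finite flat of degree $\ell$, the scheme $W'$ has length $\ell(j+1)$, and as you note, its projections to each factor can have length up to $\ell(j+1)$. Neither the inductive hypothesis on $N_{\ell-1,M}$ (which only separates $(j+1)$-schemes) nor the $(\ell+j)$-separation of $M$ on $C$ covers schemes of that length. Your final paragraph acknowledges this and gestures at a d\'evissage via the twist by $\OO(-\mathcal{Z}_{\ell-1})$, but this is exactly the hard part and it is not carried out; you offer no argument that the "bookkeeping" closes. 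Pulling back along $a$ is a natural first instinct, but it blows up the length of the scheme one must separate — this is precisely what the paper's divisor-restriction trick is designed to avoid, by producing residual schemes $\xi$ and $\lambda$ both of length $\leq j+1$ (indeed $\lambda$ of length $\leq j$), so the two inductive hypotheses (on $\ell$ for the right vertical arrow, on $j$ for the left, using $M(-p)$ in place of $M$) apply directly. As written, your proposal does not constitute a proof of the lemma.
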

\begin{proof}
We proceed by induction on $\ell$ and $j$. If $\ell=1$, then $\operatorname{Sym}^{\ell} (C) = C$ and $N_{\ell, M}=M$ so that there is nothing to prove. If $j=0$, then $N_{\ell,M}$ is globally generated since $E_{\ell, M}$ is globally generated. Now, we assume that $\ell \geq 2$ and $j \geq 1$. Let $\eta$ be a $(j+1)$-scheme in $\operatorname{Sym}^\ell (C)$. Pick a point $p\in C$ contained in the support of a $0$-cycle corresponding to a closed point of $\eta$, and consider the divisor $C_{p,\ell}=q_\ell(\{p\}\times C^{\ell-1}) \subseteq \operatorname{Sym}^{\ell} (C)$. Note that $C_{p, \ell} \cong \operatorname{Sym}^{\ell-1}(C)$ and $\mathcal{O}_{\operatorname{Sym}^{\ell} (C)}(C_{p, \ell}) = S_{\ell, \mathcal{O}_C(p)}$. By \cite[Remark 3.3 (1), (7)]{Ein.Niu.Park.20}, we have $N_{\ell, M}(-C_{p,\ell})\cong N_{\ell, M(-p)}$ and $N_{\ell,M}|_{C_{p,\ell}}\cong N_{\ell - 1, M(-p)}$. Let $\xi:=\eta \cap C_{p,\ell}$, and $\lambda:=\eta\setminus \xi$ be the residue scheme defined by $\mathcal{I}_{\lambda} = (\mathcal{I}_\xi : \mathcal{I}_\eta)$. By definition, there is a commutative diagram
\begin{center}
\begin{tikzcd}
0 \ar[r]&  \mathcal{O}_{\operatorname{Sym}^\ell (C)} (-C_{p,\ell})\ar[r]\ar[d]&
\mathcal{O}_{\operatorname{Sym}^\ell (C)}\ar[r]\ar[d]& \mathcal{O}_{C_{p,\ell}}\ar[r]\ar[d]& 0\\
0 \ar[r]&  \mathcal{O}_\lambda (-C_{p,\ell})\ar[r]&
\mathcal{O}_{\eta}\ar[r]& \mathcal{O}_\xi\ar[r]& 0
\end{tikzcd}.
\end{center}
Twisting by $N_{\ell, M}$, we get a commutative diagram
\begin{equation}\label{inductive-diagram}
\begin{tikzcd}
0 \ar[r]&  H^0(N_{\ell, M(-p)})\ar[r]\ar[d]&
H^0(N_{\ell, M})\ar[r]\ar[d]& H^0(N_{\ell-1, M(-p)})\ar[r]\ar[d]& H^1(N_{\ell, M(-p)})\\
0 \ar[r]& H^0(N_{\ell, M(-p)}\vert_\lambda)\ar[r]&
H^0(N_{\ell, M}\vert_\eta)\ar[r]& H^0(N_{\ell-1, M(-p)}\vert_\xi)\ar[r]& 0
\end{tikzcd}.
\end{equation}
Note that $H^1(C,M(-p))=0$ and hence $H^1(\operatorname{Sym}^\ell(C), N_{\ell, M(-p)})=0$ by \cite[Lemma 3.7]{Ein.Niu.Park.20}. We may now conclude by inductive hypothesis and the snake lemma. 
\end{proof}

\begin{proposition}\label{prop:M-to-N-separation-nonseparation}
    Let $C$ be a smooth projective curve of genus $g$, and set $M:=\omega_C(D)$, where $D$ is a general effective divisor with $\operatorname{deg}(D)=\ell+j\leq g-1$ for some $\ell\geq 1$ and $j\geq 0$. Then the line bundle $N_{\ell,M}$ on $\operatorname{Sym}^{\ell}(C)$ separates $j$-schemes, and the set of $(j+1)$-schemes $\eta\subseteq \operatorname{Sym}^\ell (C)$ for which $H^0(\operatorname{Sym}^\ell (C), N_{\ell, M}) \lra H^0(\eta, N_{\ell, M}|_\eta)$ is not surjective is finite and non-empty. 
\end{proposition}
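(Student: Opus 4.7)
The first statement follows directly from Lemma \ref{M-to-N-separation}, applied with its $j$ replaced by our $j-1$ (the case $j=0$ being vacuous, as it asserts separation of $0$-schemes). Its hypotheses are verified as follows: $H^1(C,M)=H^0(C,\mathcal{O}_C(-D))^{\vee}=0$ since $\deg D\geq 1$, and $M$ separates $(\ell+j-1)$-schemes because for any length-$(\ell+j-1)$ subscheme $\xi\subseteq C$, Serre duality gives $H^1(C,M(-\xi))=H^0(C,\mathcal{O}_C(\xi-D))^{\vee}=0$ from $\deg(\xi-D)=-1$.

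For the second statement, the plan is to combine Danila's theorem $H^0(\operatorname{Sym}^\ell(C),N_{\ell,M})\cong\wedge^\ell H^0(C,M)$ with a residue-theoretic description of the restriction map. Trivializing $M|_D$, the evaluation $H^0(C,M)\twoheadrightarrow W$ lands in the hyperplane $W=\{(a_1,\ldots,a_{\ell+j})\in\mathbb{C}^{\ell+j}:\sum_i a_i=0\}$ with kernel $H^0(C,\omega_C)$. Writing $D=p_1+\cdots+p_{\ell+j}$ and $\xi_I:=\sum_{i\in I}p_i$ for each size-$\ell$ subset $I\subseteq\{1,\ldots,\ell+j\}$, the set $S:=\{\xi_I\}\subseteq\operatorname{Sym}^\ell(C)$ is a reduced finite subscheme of cardinality $\binom{\ell+j}{\ell}$. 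For any reduced $(j+1)$-scheme $\eta=\{\xi_{I_1},\ldots,\xi_{I_{j+1}}\}\subseteq S$, the restriction $H^0(N_{\ell,M})\to H^0(N_{\ell,M}|_\eta)\cong\mathbb{C}^{j+1}$ factors as $\wedge^\ell H^0(C,M)\twoheadrightarrow\wedge^\ell W\hookrightarrow\wedge^\ell\mathbb{C}^{\ell+j}\to\mathbb{C}^{j+1}$, the last arrow extracting the coefficients of $e_{I_k}$.

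For non-emptiness, I would fix any subset $J\subseteq\{1,\ldots,\ell+j\}$ of size $\ell-1$ and consider the reduced $(j+1)$-scheme $\eta_0=\{\xi_{J\cup\{i\}}:i\notin J\}\subseteq S$. Via the canonical identification $(\wedge^\ell W)^{\vee}=\wedge^\ell(\mathbb{C}^{\ell+j})^{\vee}/\bigl(\epsilon\wedge\wedge^{\ell-1}(\mathbb{C}^{\ell+j})^{\vee}\bigr)$, where $\epsilon=\sum_i e_i^{\vee}$, the element $\epsilon\wedge e_J^{\vee}=\sum_{i\notin J}(\pm 1)\,e_{J\cup\{i\}}^{\vee}$ witnesses an explicit linear relation among the $j+1$ functionals $\operatorname{coeff}_{e_{J\cup\{i\}}}|_{\wedge^\ell W}$, showing that the restriction at $\eta_0$ fails to be surjective. (When $j=0$, this gives $\eta_0=\{D\}$, recovering the base locus.) For finiteness, the key claim is that any failing $\eta$ has $\operatorname{Supp}(\eta)\subseteq S$. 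I would establish this via the commutative diagram in the proof of Lemma \ref{M-to-N-separation}: for each closed point $\xi^*\in\operatorname{Supp}(\eta)$, one either chooses $p\in\operatorname{Supp}(\xi^*)\cap\operatorname{Supp}(D)$ and reduces to the inductive cases $(\ell-1,j)$ and $(\ell,j-1)$ with $D$ replaced by the general effective divisor $D-p$ of degree $\ell+j-1$, or $\operatorname{Supp}(\xi^*)\cap\operatorname{Supp}(D)=\emptyset$, in which case the freedom of sections of $M$ at points of $C$ off $D$ furnishes enough sections to separate $\xi^*$ from the rest of $\eta$, contradicting failure.

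The main obstacle will be ruling out non-reduced failing schemes with support on $S$: \emph{a priori} the Hilbert scheme of non-reduced length-$(j+1)$ schemes supported at a point of $S$ is positive-dimensional, and the failure condition must cut it down to finitely many tangent directions. This should follow from a first-order jet analysis at each $\xi_I\in S$, exploiting that the image of $H^0(C,M)$ in the first jet of $M$ along $\xi_I$ has codimension at most one (computable from $\deg M=2g-2+\ell+j$ and the vanishing $H^0(\mathcal{O}_C(\xi_I-\xi_I^{\mathrm{c}}))=0$ for general $D$, where $\xi_I^{\mathrm{c}}=D-\xi_I$), so that failure of surjectivity at a non-reduced $\eta$ at $\xi_I$ imposes a nontrivial algebraic condition on the tangent direction. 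Making this infinitesimal analysis precise, and carefully tracking the inductive parameters through the Lemma \ref{M-to-N-separation} diagram, is where the bulk of the work will lie.
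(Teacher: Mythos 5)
Your treatment of the first statement is correct. Your non-emptiness argument is also correct and takes a genuinely different, more explicit route than the paper: via Danila's isomorphism $H^0(\operatorname{Sym}^\ell(C), N_{\ell,M})\cong\wedge^\ell H^0(C,M)$ and the multilinear identity $\epsilon\wedge e_J^\vee=\sum_{i\notin J}(\pm 1)\,e_{J\cup\{i\}}^\vee$, you exhibit a concrete linear dependence among the restriction functionals at $\eta_0$. The paper instead obtains non-emptiness as the converse half of its inductive claim; your by-hand witness is arguably cleaner.

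For finiteness there is a genuine gap, which you flag yourself. You propose to rule out non-reduced failing $\eta$ supported on $S$ by a first-order jet analysis, but you do not carry it out, and the mechanism the paper uses is different. Rather than prove $\operatorname{Supp}(\eta)\subseteq S$ and then treat non-reducedness separately, the paper runs a double induction on $\ell$ and $j$ whose inductive claim is already \emph{stronger}: any failing $(j+1)$-scheme $\eta$ is \emph{reduced}, with closed points $E_\eta+x_i$ for a single common degree-$(\ell-1)$ divisor $E_\eta$ satisfying $E_\eta+x_1+\cdots+x_{j+1}=D$. Cutting along $C_{p,\ell}$ hands off failure either to $\eta\subseteq C_{p,\ell}$ (induction on $\ell$, with $D$ replaced by $D-p$) or to the residual $j$-scheme $\lambda$ (induction on $j$, with $D-p$), and in both branches the inductive hypothesis already contains reducedness, so there is nothing left to rule out infinitesimally. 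Your weaker assertion $\operatorname{Supp}(\eta)\subseteq S$ does not exclude a positive-dimensional family of non-reduced $\eta$ supported on $S$, and the jet argument you sketch would be substantial additional work with no guarantee of closing; you should strengthen the inductive claim instead. (A minor slip in your dichotomy as stated: the case split at a closed point $\xi^*$ should be run for \emph{each} $p\in\operatorname{Supp}(\xi^*)$, asking whether $p\in\operatorname{Supp}(D)$, not whether the whole intersection $\operatorname{Supp}(\xi^*)\cap\operatorname{Supp}(D)$ is empty; when $p\notin\operatorname{Supp}(D)$, $M(-p)$ still separates $(\ell+j-1)$-schemes and Lemma \ref{M-to-N-separation} forces surjectivity, a contradiction.)
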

\begin{proof}
Note that $\operatorname{deg}(M)=2g+\ell+j-2$ and hence $M$ separates $(\ell+j-1)$-schemes and $H^1(C,M)=0$. By Lemma \ref{M-to-N-separation}, $N_{\ell, M}$ separates $j$-schemes. This proves the first part of the proposition. For the second part, let $\eta$ be a $(j+1)$-scheme in $\operatorname{Sym}^{\ell}(C)$ such that $H^0(\operatorname{Sym}^\ell (C), N_{\ell, M}) \lra H^0(\eta, N_{\ell, M}|_\eta)$ is not surjective. 
We claim that $\eta$ is a reduced scheme whose closed points correspond to the $0$-cycle s of the form $E_{\eta}+x_i$ for $1 \leq i \leq j+1$ such that $E_{\eta}$ is an effective divisor of degree $\ell-1$ and $E_{\eta}+x_1 + \cdots + x_{j+1} = D$. 

To prove the claim, we proceed by induction on $\ell$. If $\ell=1$ or $j =0$, then the claim follows from Serre duality and the fact that $h^0(C,D)=1$. We assume $\ell\geq 2$ and $j \geq 1$. As in the proof of Lemma \ref{M-to-N-separation}, pick a point $p \in C$ contained in the support of a $0$-cycle corresponding to a closed point of $\eta$, and let $\xi:=\eta \cap C_{p,\ell}$ and $\lambda:=\eta \setminus \xi$. Note that $H^1(C, M(-p))=0$ and $H^1(\operatorname{Sym}^\ell(C), N_{\ell, M(-p)})=0$. 
If $p \not\in \operatorname{Supp}(D)$, then $M(-p)$ still separates $(\ell+j-1)$-schemes. By Lemma \ref{M-to-N-separation}, $N_{\ell, M(-p)}$ separates $j$-schemes and $N_{\ell-1, M(-p)}$ separates $(j+1)$-schemes. Considering the commutative diagram (\ref{inductive-diagram}), we see that $H^0(\operatorname{Sym}^\ell (C), N_{\ell, M}) \lra H^0(\eta, N_{\ell, M}|_\eta)$ is surjective. Thus we must have $p\in \operatorname{Supp}(D)$. 

If $\eta$ is contained in $C_{p,\ell}$ (that is, $\lambda = \emptyset$), then the claim follows by induction on $\ell$. Suppose that $\eta$ is not contained in $C_{p,\ell}$ (that is, $\lambda \neq \emptyset$). As $M(-p)$ separates $(\ell+j-2)$-schemes, Lemma \ref{M-to-N-separation} shows $N_{\ell-1, M(-p)}$ separates $j$-schemes so that $H^0(N_{\ell-1, M(-p)}) \lra H^0(N_{\ell-1, M(-p)}\vert_{\xi})$ is surjective. In view of the commutative diagram (\ref{inductive-diagram}), it remains to investigate surjectivity of the restriction map
$H^0(N_{\ell, M(-p)}) \lra H^0(N_{\ell,M(-p)}\vert_{\lambda})$.
If $\operatorname{length}(\lambda)\leq j-1$, this map is surjective by Lemma \ref{M-to-N-separation}. Assume that $\operatorname{length}(\lambda)=j$. By induction on $j$, this restriction map is also surjective unless $\lambda$ is a reduced scheme satisfying the conditions in the claim. Replacing $p$ by a closed point in $E_{\eta}$, we see that the restriction map $H^0(\operatorname{Sym}^\ell (C), N_{\ell, M}) \lra H^0(\eta, N_{\ell, M}|_\eta)$ is surjective or $\eta$ is contained in $C_{p,\ell}$. We have shown the claim.

Now, by the claim, there are only finitely many possibilities for $\eta$. Conversely, by induction on $\ell$, one may also check that if $\eta$ is a reduced scheme as in the claim, then $H^0(\operatorname{Sym}^\ell (C), N_{\ell, M}) \lra H^0(\eta, N_{\ell, M}|_\eta)$ is not surjective. 
\end{proof}

\begin{corollary}\label{cor:non-normal-examples}
For given integers $n \geq 1$ and $k \geq 1$, let $C$ be a smooth projective curve of genus $g \geq n+2k-1$, and $M:=\omega_C(D)$ for a general effective divisor $D$ of degree $n+2k-2$ so that $\deg D = n+2k-2 \leq g-1$. If $X:=\operatorname{Sym}^n (C)$ and $L:=N_{n,M}$, then $L$ separates $(2k-1)$-schemes but $\sigma_k \setminus \sigma_{k-1}$ is not normal.
\end{corollary}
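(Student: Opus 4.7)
The plan is to apply Lemma \ref{lemma-not-normal} to the pair $(X,L)=(\operatorname{Sym}^n(C),N_{n,M})$, using Proposition \ref{prop:M-to-N-separation-nonseparation} as the sole input. Setting $\ell=n$ and $j$ so that $\ell+j$ agrees with $\deg(D)$, the hypothesis $g\ge n+2k-1$ guarantees $\deg(D)\le g-1$, so the Proposition applies and gives simultaneously that $L=N_{n,M}$ separates $(2k-1)$-schemes and that there is a finite, non-empty collection $\{\eta_1,\dots,\eta_s\}$ of length-$2k$ subschemes of $\operatorname{Sym}^n(C)$ on which the restriction map $H^0(X,L)\lra H^0(\eta_i,L|_{\eta_i})$ fails to be surjective, while surjectivity holds for every other length-$2k$ subscheme. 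This already matches conditions (1) and (2) of Lemma \ref{lemma-not-normal} aside from the reducedness of the $\eta_i$.

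Next, I would verify reducedness of each $\eta_i$ by reading off the explicit description of the non-separating schemes from the proof of Proposition \ref{prop:M-to-N-separation-nonseparation}. There, the support of each such $\eta_i$ consists of the $0$-cycles on $C$ of the form $E_{\eta_i}+x_s$, where $E_{\eta_i}$ is a fixed effective divisor on $C$ of degree $n-1$ and $x_1,\dots,x_{2k}$ are the points appearing in a decomposition $E_{\eta_i}+x_1+\cdots+x_{2k}=D$ of the divisor $D$. Since $D$ is a general effective divisor of large degree, $D$ consists of $\deg(D)$ distinct points, so the $0$-cycles $E_{\eta_i}+x_s\in\operatorname{Sym}^n(C)$ for $1\le s\le 2k$ are pairwise distinct; hence $\eta_i$ is a reduced length-$2k$ subscheme of $X$ supported at $2k$ distinct closed points.

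With each $\eta_i$ supported at $2k$ distinct closed points, I can partition its support arbitrarily into two disjoint sets of $k$ points, producing reduced length-$k$ subschemes $\xi_{i,1},\xi_{i,2}\subseteq X$ with $\eta_i=\xi_{i,1}\sqcup\xi_{i,2}$. This is exactly the decomposition hypothesis used in the proof of Lemma \ref{lemma-not-normal}, so the Lemma applies and yields that the linear spans $\alpha_k(\pi_k^{-1}(\xi_{i,1}))$ and $\alpha_k(\pi_k^{-1}(\xi_{i,2}))$ meet at a single point, producing a disconnected fiber of $\alpha_k$ and violating normality of $\sigma_k\setminus\sigma_{k-1}$ at that point.

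The only substantive obstacle is the bookkeeping needed to extract the correct parameters from Proposition \ref{prop:M-to-N-separation-nonseparation} and to confirm reducedness of the $\eta_i$ via the genericity of $D$; once those are settled, the non-normality of $\sigma_k\setminus\sigma_{k-1}$ is a formal consequence of the Proposition together with Lemma \ref{lemma-not-normal}. No new vanishing or deformation argument is required.
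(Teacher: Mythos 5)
Your strategy matches the paper's proof, which is simply to combine Lemma \ref{lemma-not-normal} with Proposition \ref{prop:M-to-N-separation-nonseparation} and observe that the bad subschemes produced by the Proposition are reduced because the general divisor $D$ consists of distinct points. However, there is an off-by-one error in the step where you read off the Proposition's output, and it is not cosmetic. With $\ell = n$ and $\ell + j = \deg(D) = n + 2k - 2$ you are forced to take $j = 2k-2$, and Proposition \ref{prop:M-to-N-separation-nonseparation} then says that $N_{n,M}$ separates $j = (2k-2)$-schemes (not $(2k-1)$-schemes) and that the finite non-empty set of schemes on which restriction fails to be surjective consists of $(j+1) = (2k-1)$-schemes (not $2k$-schemes). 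In particular $L$ does \emph{not} separate $(2k-1)$-schemes at this degree, since one of those bad $(2k-1)$-schemes already witnesses the failure, and a length-$(2k-1)$ scheme cannot be partitioned into two disjoint length-$k$ pieces, so Lemma \ref{lemma-not-normal} is not applicable. Your assertion that the Proposition ``gives simultaneously'' $(2k-1)$-separation and bad $2k$-schemes does not follow from the Proposition as stated.

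The chain of implications closes if you instead take $j = 2k-1$, i.e.\ $\deg(D) = n + 2k - 1$ (and correspondingly $g \geq n + 2k$ so that $\deg(D) \leq g-1$). With these parameters the Proposition yields exactly the $(2k-1)$-separation, the finitely many bad $2k$-schemes, and, via the explicit description of those schemes together with genericity of $D$, their reducedness; your partition of each $\eta_i$ into two disjoint length-$k$ subsets then feeds into Lemma \ref{lemma-not-normal} as intended. So the rest of your argument is sound, but you should recheck the parity between the Proposition's $(j, j+1)$ and the Lemma's $(2k-1, 2k)$ and flag that the degree of $D$ must be $n+2k-1$ rather than $n+2k-2$ for the intended combination of results to apply.
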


\begin{proof}
Immediate from Lemma \ref{lemma-not-normal} and Proposition \ref{prop:M-to-N-separation-nonseparation}.
\end{proof}

\begin{remark}
Even if $L$ is not positive enough, $\sigma_k(X, L)$ may still be normal. For instance, Landsberg--Weyman \cite[Theorem 1.1]{Landsberg.Weyman.07} proved that every $k$-secant variety of the Segre embedding of $\mathbb{P}^1 \times \mathbb{P}^{b-1} \times \mathbb{P}^{c-1}$ is normal for any $b, c \geq 2$.
\end{remark}

\section{Syzygies of secant varieties}\label{section-syzygies}
In this section, we prove Theorem \ref{thm:N_{k+2,p}} on syzygies of secant varieties, Corollary \ref{cor:idealofsecants} describing their ideal in projective space, and Theorem \ref{thm:weight-one-syzygies} regarding weight-one syzygies.  

\subsection{Property $N_{k+1,p}$ for secant varieties}
While our approach to Theorem \ref{theoremA} is quite different from the approach of \cite{Ein.Niu.Park.20}, the proof of Theorem \ref{thm:N_{k+2,p}}, granting the main vanishing (Theorem \ref{main-vanishing-serre}), is almost identical to the proof in \cite{Ein.Niu.Park.20}. We include all the details for the reader's convenience. Fix an integer $p \geq 0$. Let $L$ be a sufficiently positive line bundle on $X$ giving an embedding $X \subseteq \mathbb{P} H^0(X, L) = \mathbb{P}^{r}$. We assume that $n = \dim (X) \leq 2$ or $k \leq 3$ so that $\sigma_k$ is normal and $\sigma_k \subseteq \mathbb{P}^r$ is projectively normal by Theorem \ref{thm:normalsing}. We shall show that $\sigma_k \subseteq \mathbb{P}^r$ satisfies property $N_{k+1,p}$. 

Set $M_{\sigma_k}:=M_{\mathcal{O}_{\sigma_k}(1)}$. We may identify $H^0(B^k, H_k)\cong H^0(X^{[k]}, E_{k,L})\cong H^0(X, L)\cong H^0(\sigma_k, \mathcal{O}_{\sigma_k}(1))$. Then $M_{H_k}:=\alpha_k^* M_{\sigma_k}$ is the kernel of the evaluation map $H^0(B^k, H_k) \lra \mathcal{O}_{B^k}(H_k)$. By the projection formula and the normality of $\sigma_k$, we have $\alpha_{k,*} \wedge^j M_{H_k} = \wedge^j M_{\sigma_k}$. 
Note, however, that for $i\geq 1$ we may have $R^i \alpha_{k,*} \wedge^j M_{H_k} \neq 0$, since $\sigma_k$ may not have rational singularities (see Theorem \ref{thm:rational-sing}). Projection formula and the Du Bois-type condition (Theorem \ref{Du Bois-type condition} (1)) yield $\alpha_{k,*} \wedge^j M_{H_k} \otimes \mathcal{O}_{B^k}(kH_k - Z_k) \cong \wedge^j M_{\sigma_k} \otimes \mathcal{I}_{\sigma_{k-1}/\sigma_k}(k)$ and $R^i \alpha_{k,*} \wedge^j M_{H_k} \otimes \mathcal{O}_{B^k}(kH_k - Z_k) =0$ for $i>0$. Recall from Proposition \ref{A&Z} that $\mathcal{O}_{B^k}(kH_k - Z_k) = \pi_k^* A_{k,L}$. We have a commutative diagram 
\begin{center}
\begin{tikzcd}
& & &0\ar[d] & \\
& 0\ar[d] & & K\ar[d] &\\
0\ar[r]& \pi_k ^* M_{k,L}\ar[r]\ar[d]& H^0(X,L)\otimes \mathcal{O}_{B^k}\ar[r]\ar[d,equals]& \pi_k ^* E_{k,L}\ar[r]\ar[d]& 0\\
0\ar[r]& M_{H_k}\ar[r]\ar[d]& H^0(X,L)\otimes \mathcal{O}_{B^k}\ar[r]& \mathcal{O}_{B^k}(1)\ar[r]\ar[d]& 0\\
& K\ar[d] & & 0 & \\
& 0 & & &\\
\end{tikzcd}
\end{center}
where the right column is the relative Euler sequence, the top row is sequence (\ref{secSES}), and the rest is completed by exactness. 

\begin{lemma}\label{lem:vanishing-higher-direct-wedge-K}
We have the following:
\begin{enumerate}[topsep=0pt]
 \item $R^i \pi_{k,*} \wedge^j K = 0$ for $i \geq 0$ and $j \geq 1$. 
 \item $\pi_{k,*} \wedge^j M_{H_k} = \wedge^j M_{k,L}$ for $j \geq 0$ and $R^i \pi_{k,*} \wedge^j M_{H,k}=0$ for $i \geq 1$ and $j \geq 0$.
\end{enumerate} 
\end{lemma}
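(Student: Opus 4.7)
The key observation is that, by definition, the sheaf $K$ sits in the relative Euler sequence of the projective bundle $\pi_k \colon B^k = \mathbb{P}(E_{k,L}) \to X^{[k]}$, so we may identify
\[
K \;\cong\; \Omega^1_{B^k/X^{[k]}}(1), \qquad \wedge^j K \;\cong\; \Omega^j_{B^k/X^{[k]}}(j).
\]
For part (1), I will verify vanishing fiberwise. Each fiber of $\pi_k$ is a $\mathbb{P}^{k-1}$, on which $\wedge^j K$ restricts to $\Omega^j_{\mathbb{P}^{k-1}}(j)$. The classical Bott formula gives $H^i(\mathbb{P}^{k-1}, \Omega^j_{\mathbb{P}^{k-1}}(j)) = 0$ for every $i \geq 0$ and every $j \geq 1$, since $j > 0$ rules out the Serre-dual range $j < j-(k-1)$, the twist $j \neq 0$ excludes the diagonal case, and $j = j$ fails the strict inequality needed for the $i=0$ range. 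As $\pi_k$ is flat and projective, cohomology and base change (applied inductively) yields $R^i \pi_{k,*} \wedge^j K = 0$ for all $i \geq 0$ and $j \geq 1$.

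For part (2), I will use the left vertical column of the diagram preceding the lemma, namely the short exact sequence
\[
0 \;\longrightarrow\; \pi_k^* M_{k,L} \;\longrightarrow\; M_{H_k} \;\longrightarrow\; K \;\longrightarrow\; 0.
\]
Taking the $j$-th exterior power yields a filtration $0 = F^{j+1} \subseteq F^j \subseteq \cdots \subseteq F^0 = \wedge^j M_{H_k}$ with graded pieces
\[
F^a/F^{a+1} \;\cong\; \pi_k^* \wedge^a M_{k,L} \otimes \wedge^{j-a} K, \qquad 0 \leq a \leq j.
\]
Applying the projection formula together with part (1), the higher direct images of every graded piece with $a < j$ vanish in all cohomological degrees, while the top piece $a = j$ contributes $\wedge^j M_{k,L}$ in degree zero and nothing higher. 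A short induction up the filtration (using the long exact sequence attached to each step) then gives simultaneously $\pi_{k,*}\wedge^j M_{H_k} \cong \wedge^j M_{k,L}$ and $R^i \pi_{k,*} \wedge^j M_{H_k} = 0$ for $i \geq 1$.

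There is no real obstacle: the statement is a standard consequence of Bott vanishing on projective bundles combined with the Koszul filtration of an exterior power. The only subtlety is making sure the identification $K \cong \Omega^1_{\pi_k}(1)$ is consistent with the paper's convention that $\mathbb{P}(E_{k,L})$ parametrizes one-dimensional quotients, which fixes the sign of the twist and makes Bott's inequality $j \not> j$ the right one to cite.
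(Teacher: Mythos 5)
Your proof is correct and follows the paper's own argument essentially verbatim: identify $K|_F \cong \Omega^1_{\mathbb{P}^{k-1}}(1)$, apply Bott's formula fiberwise for part (1), and run the projection formula up the standard filtration of $\wedge^j M_{H_k}$ with graded pieces $\pi_k^*\wedge^a M_{k,L}\otimes\wedge^{j-a}K$ for part (2). One microscopic slip: in your Bott-formula bookkeeping, the top-degree case $H^{k-1}(\mathbb{P}^{k-1},\Omega^j(j))$ would require $j < j-(k-1)$, i.e.\ $k<1$, which is excluded because $k\geq 1$, not because $j>0$ --- the conclusion is unaffected.
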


\begin{proof}
Notice that if $F$ is a fiber of $\pi_k$, then $F \cong \mathbb{P}^{k-1}$ and $K|_{F} \cong \Omega^1_{\mathbb{P}^{k-1}}(1)$. Thus (1) immediately follows from Bott's formula. Part (2) is trivial when $j=0$, so we may assume $j \geq 1$. Consider a filtration
\[
\wedge^j M_{H_k} = F^0 \supseteq F^1\supseteq \cdots \supseteq F^j \supseteq F^{j+1}=0
\]
with $F^t / F^{t+1}\cong \pi_k ^* (\wedge ^t M_{k,L}) \otimes \wedge^{j-t}K$. Then (2) follows from the projection formula and (1). 
\end{proof}

\begin{lemma}\label{vanishing-M-inductive}
Pick $p\geq 0$. We have
$H^i(\sigma_k, \wedge^{j} M_{\sigma_k}\otimes \mathcal{I}_{\sigma_{k-1}/\sigma_k}(k))=0$ for $i \geq 1$ and $i \leq j \leq p$.
\end{lemma}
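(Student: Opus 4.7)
The plan is to chain two Leray spectral sequence computations together, reducing the desired cohomology on $\sigma_k$ to one on $X^{[k]}$ to which the Main Vanishing Theorem applies directly.

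First, I would descend from $\sigma_k$ to $B^k$ via $\alpha_k$. The projection formula together with the Du Bois-type condition (Theorem \ref{Du Bois-type condition}(1)) gives
\[
\alpha_{k,*}\bigl(\wedge^j M_{H_k}\otimes \mathcal{O}_{B^k}(kH_k - Z_k)\bigr) \cong \wedge^j M_{\sigma_k}\otimes \mathcal{I}_{\sigma_{k-1}/\sigma_k}(k),
\]
and all higher direct images vanish (this is precisely the calculation recorded in the paragraph preceding Lemma \ref{lem:vanishing-higher-direct-wedge-K}). By the Leray spectral sequence for $\alpha_k$, the target cohomology is therefore isomorphic to $H^i\bigl(B^k,\wedge^j M_{H_k}\otimes \mathcal{O}_{B^k}(kH_k - Z_k)\bigr)$.

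Next, I would descend from $B^k$ to $X^{[k]}$ via $\pi_k$. By Proposition \ref{A&Z} we have $\mathcal{O}_{B^k}(kH_k - Z_k)\cong \pi_k^* A_{k,L}$, so the projection formula rewrites our sheaf as $\wedge^j M_{H_k}\otimes \pi_k^* A_{k,L}$. By Lemma \ref{lem:vanishing-higher-direct-wedge-K}(2), $\pi_{k,*}\wedge^j M_{H_k} = \wedge^j M_{k,L}$ and the higher direct images vanish, so another application of Leray yields
\[
H^i\bigl(B^k,\wedge^j M_{H_k}\otimes \pi_k^* A_{k,L}\bigr) \cong H^i\bigl(X^{[k]},\wedge^j M_{k,L}\otimes A_{k,L}\bigr).
\]

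Finally, since $L$ is sufficiently positive relative to $p$, the Main Vanishing Theorem (Theorem \ref{main-vanishing-serre}), applied with $j_0 = p$, gives the vanishing of this last group for every $i\geq 1$ and every $0\leq j\leq p$, which is stronger than what is required. No step here looks to be a serious obstacle: all the hard work (the Du Bois-type condition and the Main Vanishing Theorem) has been done in the previous sections, and this lemma is essentially a bookkeeping consequence of those results together with the projective bundle structure of $\pi_k$.
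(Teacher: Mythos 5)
Your proposal is correct and follows essentially the same route as the paper: projection formula plus the Du Bois-type condition to pass from $\sigma_k$ to $B^k$, then projection formula plus Lemma \ref{lem:vanishing-higher-direct-wedge-K} and Proposition \ref{A&Z} to pass to $X^{[k]}$, and finally the Main Vanishing Theorem. Nothing to add --- this is the paper's own two-step descent.
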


\begin{proof}
Using the projection formula and the Du Bois-type condition (Theorem \ref{Du Bois-type condition} (1)), we obtain
$$
H^i(\sigma_k, \wedge^{j} M_{\sigma_k}\otimes \mathcal{I}_{\sigma_{k-1}/\sigma_k}(k)) \cong H^i(B^k, \wedge^{j} M_{H_k}(k H_k - Z_{k})).
$$
Since $\mathcal{O}_{B^k}(kH_k - Z_k) \cong \pi_k^* A_{k,L}$, projection formula and Lemma \ref{lem:vanishing-higher-direct-wedge-K} show 
$$
H^i(B^k, \wedge^{i} M_{H_k}(k H_k - Z_{k})) \cong H^i(X^{[k]}, \wedge^{i} M_{k,L} \otimes A_{k,L}). 
$$
Then the assertion follows from the main vanishing theorem (Theorem \ref{main-vanishing-serre}).
\end{proof}

The following is Theorem \ref{thm:N_{k+2,p}}. 

\begin{theorem}\label{thm:syz}
$\sigma_k \subseteq \mathbb{P}^{r}$ satisfies property $N_{k+1, p}$. 
\end{theorem}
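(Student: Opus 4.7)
The plan is to proceed by induction on $k$. The base case is $k = 1$, where $\sigma_1 = X$ and property $N_{2, p}$ is the classical property $N_p$ for the embedding of $X$ by a sufficiently positive line bundle, established by Green and Ein--Lazarsfeld.

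For the inductive step, assume $\sigma_{k-1}$ satisfies $N_{k, p}$. Theorem \ref{Du Bois-type condition}(2) gives $H^i(\sigma_k, \mathcal{O}_{\sigma_k}(\ell)) = 0$ for $i \geq 1$ and $\ell \geq 1$, so the standard Koszul identification $K_{i,j}(\sigma_k; \mathcal{O}_{\sigma_k}(1)) \cong H^1(\sigma_k, \wedge^{i+1} M_{\sigma_k}(j-1))$ reduces $N_{k+1, p}$ to proving $H^1(\sigma_k, \wedge^{i+1} M_{\sigma_k}(\ell)) = 0$ for all $0 \leq i \leq p$ and $\ell \geq k$. Projective normality of $\sigma_k$ and $\sigma_{k-1}$ (Theorem \ref{thm:normalsing}) identifies $M_{\sigma_k}|_{\sigma_{k-1}}$ with $M_{\sigma_{k-1}}$, so tensoring the restriction sequence $0 \to \mathcal{I}_{\sigma_{k-1}/\sigma_k} \to \mathcal{O}_{\sigma_k} \to \mathcal{O}_{\sigma_{k-1}} \to 0$ with $\wedge^{i+1} M_{\sigma_k}(\ell)$ and invoking the inductive hypothesis reduces the task further to proving
$$H^1(\sigma_k, \wedge^{i+1} M_{\sigma_k} \otimes \mathcal{I}_{\sigma_{k-1}/\sigma_k}(\ell)) = 0 \quad \text{for all } 0 \leq i \leq p, \ \ell \geq k.$$

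The case $\ell = k$ is exactly Lemma \ref{vanishing-M-inductive}. For $\ell > k$, combine the Koszul-type short exact sequences $0 \to \wedge^{j+1} M_{\sigma_k} \to \wedge^{j+1} H^0(X, L) \otimes \mathcal{O}_{\sigma_k} \to \wedge^j M_{\sigma_k}(1) \to 0$ (tensored with $\mathcal{I}_{\sigma_{k-1}/\sigma_k}(\ell - 1)$) with the vanishings $H^q(\sigma_k, \mathcal{I}_{\sigma_{k-1}/\sigma_k}(m)) = 0$ for $q \geq 1$ and $m \geq 1$ (deduced from Theorem \ref{Du Bois-type condition}). Iteration produces an isomorphism
$$H^1(\sigma_k, \wedge^{i+1} M_{\sigma_k} \otimes \mathcal{I}_{\sigma_{k-1}/\sigma_k}(\ell)) \cong H^{\ell - k + 1}(\sigma_k, \wedge^{i + \ell - k + 1} M_{\sigma_k} \otimes \mathcal{I}_{\sigma_{k-1}/\sigma_k}(k)),$$
and the regularity bound $\operatorname{reg}(\mathcal{O}_{\sigma_k}) \leq (n+1)(k+1)$ from Theorem \ref{theoremA} confines the non-trivial right-hand sides to a finite range of $(i, \ell)$ depending only on $n, k, p$. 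All these remaining vanishings are instances of Lemma \ref{vanishing-M-inductive}, applied with its $p$-parameter enlarged accordingly, which is permissible because $L$ is taken sufficiently positive relative to $p$. The main obstacle, the main vanishing theorem (Theorem \ref{main-vanishing-serre}), has already been established in the previous section; granting it, the present argument is a standard Koszul-cohomology induction in the style of \cite{Ein.Niu.Park.20}.
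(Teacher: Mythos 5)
Your proof is correct and follows essentially the same route as the paper: induction on $k$, the cohomology vanishing of Theorem \ref{Du Bois-type condition}(2) to identify Koszul groups with sheaf cohomology, the ideal sequence of $\sigma_{k-1} \subseteq \sigma_k$ to invoke the inductive hypothesis, and finally Lemma \ref{vanishing-M-inductive}. The only difference is presentational: the paper quotes the identification $K_{i,j}(\sigma_k;\mathcal{O}_{\sigma_k}(1)) \cong H^{j-k}(\sigma_k, \wedge^{i+j-k}M_{\sigma_k}(k))$ directly from \cite[Proposition 2.1]{Park2}, whereas you re-derive it by iterating the Koszul short exact sequences from the standard $H^1$ identification; in both cases the parameter in Lemma \ref{vanishing-M-inductive} must be taken strictly larger than $p$ (the paper leaves this implicit, while you flag it explicitly).
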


\begin{proof}
We need to prove that $K_{i,j}(\sigma_k; \OO_{\sigma_k}(1))=0$ for $0 \leq i \leq p$ and $j \geq k+1$. We proceed by induction on $k$. The case $k=1$ follows from \cite[Theorem 1]{Ein.Lazarsfeld}, so we may assume $k\geq 2$. 
Recall from Theorem \ref{thm:normalsing} that $H^i(\sigma_k, \OO_{\sigma_k}(m))=0$ for $i\geq 1$ and $m\geq 1$. Then \cite[Proposition 2.1]{Park2} gives
$$
K_{i,j}(\sigma_k; \OO_{\sigma_k}(1))\cong H^{j-k}(\sigma_k, \wedge^{i+j-k} M_{\sigma_k} (k))~~\text{ for $i \geq 0$ and $j \geq k+1$}.
$$
Now, consider the short exact sequence
$$
0 \longrightarrow \cI_{\sigma_{k-1}/\sigma_k} \longrightarrow \OO_{\sigma_k} \longrightarrow \OO_{\sigma_{k-1}} \longrightarrow 0.
$$
By induction, we have
$$
H^{j-k}(\sigma_{k-1}, \wedge^{i+j-k} M_{\sigma_{k-1}}(k)) \cong K_{i,j}(\sigma_{k-1}; \OO_{\sigma_{k-1}}(1))=0~~\text{ for $0 \leq i \leq p$ and $j \geq k+1$}.
$$
Thus it is enough to show that
$$
H^{j-k}(\sigma_k, \wedge^{i+j-k} M_{\sigma_k} \otimes \mathcal{I}_{\sigma_{k-1}/\sigma_k}(k))=0 ~~\text{ for $0 \leq i \leq p$ and $j \geq k+1$}.
$$
But this immediately follows from Lemma \ref{vanishing-M-inductive}.
\end{proof}

\begin{remark}\label{rem:effectiveforN_{k+1,p}}
Theorem \ref{thm:syz} holds if $L$ separates $2k$-schemes and
$$
H^i(X^{[m]}, \wedge^{j} M_{m,L} \otimes A_{m,L})~~\text{ for $i \geq 1$ and $0 \leq j \leq i+p$},
$$    
where $1 \leq m \leq k$ (see Remark \ref{rem:effectivefornormalsing}).
\end{remark}

Recall that Danila's theorem (Theorem \ref{Danila}) implies $H^0(\sigma_k, \OO_{\sigma_k}(\ell))=H^0(\mathbb{P}^r, \OO_{\mathbb{P}^r}(\ell))$ for $1 \leq \ell \leq k$. In the situation of Theorem \ref{thm:syz}, for $0 \leq i \leq p$ and $j \geq 0$, we have that 
$$
K_{i,j}(\sigma_k; \OO_{\sigma_k}(1)) \neq 0 ~\Longleftrightarrow~ \text{either $i=j=0$ or $i \geq 1$ and $j=k$}.
$$
In other words, the minimal free resolution of the section ring $R(\sigma_k, \OO_{\sigma_k}(1))$ is as simple as possible for the first $p$ steps.

\subsection{Equations defining secant varieties}
Theorem \ref{thm:syz} tells us that if $L$ is sufficiently positive, then $\sigma_k \subseteq \mathbb{P}^r$ satisfies property $N_{k+1,1}$. Together with Danila's theorem (Theorem \ref{Danila}), we have the following.

\begin{proposition}\label{prop:idealofsecants}
The ideal $I_{\sigma_k / \mathbb{P}^r}$ is generated by homogeneous polynomials of degree $k+1$, and 
$$
H^0(\mathbb{P}^r, \mathcal{I}_{\sigma_k/\mathbb{P}^r}(k+1)) \cong H^0(X^{[k+1]}, A_{k+1, L}).
$$
\end{proposition}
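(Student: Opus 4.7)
The plan is as follows. For the assertion that $I_{\sigma_k/\mathbb{P}^r}$ is generated by forms of degree $k+1$, I would combine upper and lower bounds on the minimal generating degrees. The upper bound follows immediately from property $N_{k+1,1}$ (Theorem \ref{thm:syz} applied with $p=1$): the vanishing of $K_{1,j}(\sigma_k;\mathcal{O}_{\sigma_k}(1))$ for all $j\geq k+1$ rules out minimal generators of $I_{\sigma_k/\mathbb{P}^r}$ of degree $\geq k+2$. For the lower bound, I would show $H^0(\mathbb{P}^r,\mathcal{I}_{\sigma_k/\mathbb{P}^r}(m))=0$ for $1 \leq m \leq k$. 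Normality of $\sigma_k$ (Theorem \ref{thm:normalsing}) combined with $\pi_{k,*}\mathcal{O}_{B^k}(mH_k)=S^m E_{k,L}$ yields $H^0(\sigma_k,\mathcal{O}_{\sigma_k}(m))\cong H^0(X^{[k]},S^m E_{k,L})$; Danila's Theorem \ref{Danila} identifies this with $S^m H^0(X,L)$ for $m\leq k$, and by projective normality of $\sigma_k\subseteq\mathbb{P}^r$ (Theorem \ref{thm:normalsing}) the restriction $S^m H^0(X,L)\to H^0(\sigma_k,\mathcal{O}_{\sigma_k}(m))$ is forced to be an isomorphism, with vanishing kernel.

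For the identification $H^0(\mathbb{P}^r,\mathcal{I}_{\sigma_k/\mathbb{P}^r}(k+1))\cong H^0(X^{[k+1]},A_{k+1,L})$, my plan is to work on the chain $\sigma_k\subseteq \sigma_{k+1}\subseteq \mathbb{P}^r$ and ultimately transfer the calculation to the secant bundle $B^{k+1}$. Applying the argument of the previous paragraph to $\sigma_{k+1}$ in degree $m=k+1$ gives $H^0(\mathbb{P}^r,\mathcal{I}_{\sigma_{k+1}/\mathbb{P}^r}(k+1))=0$, and projective normality of $\sigma_{k+1}\subseteq \mathbb{P}^r$ (Theorem \ref{thm:normalsing}) forces the injection $H^1(\mathbb{P}^r,\mathcal{I}_{\sigma_{k+1}/\mathbb{P}^r}(k+1))\hookrightarrow H^1(\mathbb{P}^r,\mathcal{O}_{\mathbb{P}^r}(k+1))=0$. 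The long exact sequence associated to
$$
0 \longrightarrow \mathcal{I}_{\sigma_{k+1}/\mathbb{P}^r}(k+1) \longrightarrow \mathcal{I}_{\sigma_k/\mathbb{P}^r}(k+1) \longrightarrow \mathcal{I}_{\sigma_k/\sigma_{k+1}}(k+1) \longrightarrow 0
$$
thus collapses to an isomorphism $H^0(\mathbb{P}^r,\mathcal{I}_{\sigma_k/\mathbb{P}^r}(k+1))\cong H^0(\sigma_{k+1},\mathcal{I}_{\sigma_k/\sigma_{k+1}}(k+1))$.

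The final step is to compute the right hand side via $B^{k+1}$. The Du Bois-type condition (Theorem \ref{Du Bois-type condition}(1)) applied at index $k+1$ gives $\alpha_{k+1,*}\mathcal{O}_{B^{k+1}}(-Z_{k+1})\cong \mathcal{I}_{\sigma_k/\sigma_{k+1}}$ together with the vanishing of the higher direct images, so the Leray spectral sequence and the projection formula for $\alpha_{k+1}$ (with $\alpha_{k+1}^*\mathcal{O}_{\sigma_{k+1}}(1)=\mathcal{O}_{B^{k+1}}(H_{k+1})$) identify $H^0(\sigma_{k+1},\mathcal{I}_{\sigma_k/\sigma_{k+1}}(k+1))$ with $H^0(B^{k+1},\mathcal{O}_{B^{k+1}}((k+1)H_{k+1}-Z_{k+1}))$. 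Proposition \ref{A&Z}(3) rewrites this line bundle as $\pi_{k+1}^*A_{k+1,L}$, and pushing down along the projective bundle $\pi_{k+1}\colon B^{k+1}\to X^{[k+1]}$ (for which $\pi_{k+1,*}\mathcal{O}_{B^{k+1}}=\mathcal{O}_{X^{[k+1]}}$) yields $H^0(X^{[k+1]},A_{k+1,L})$, completing the proof. The only real subtlety is ensuring that Theorems \ref{thm:normalsing} and \ref{Du Bois-type condition} can be invoked at index $k+1$ as well as at $k$; this amounts to asking that $L$ be sufficiently positive relative to $k+1$, a condition one can absorb into the standing "sufficiently positive" hypothesis by enlarging the implicit reference bundle.
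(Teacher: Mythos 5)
Your proof is correct, and the first assertion (generation in degree $k+1$) is handled exactly as in the paper: property $N_{k+1,1}$ gives the upper bound, and Danila's theorem plus normality give the vanishing $H^0(\mathbb{P}^r,\mathcal{I}_{\sigma_k/\mathbb{P}^r}(m))=0$ for $m\leq k$.

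For the second assertion you take a genuinely different route. You insert $\sigma_{k+1}$ between $\sigma_k$ and $\mathbb{P}^r$, run the long exact sequence for $0\to\mathcal{I}_{\sigma_{k+1}/\mathbb{P}^r}\to\mathcal{I}_{\sigma_k/\mathbb{P}^r}\to\mathcal{I}_{\sigma_k/\sigma_{k+1}}\to 0$ twisted by $k+1$, and then invoke the Du Bois-type condition $\alpha_{k+1,*}\mathcal{O}_{B^{k+1}}(-Z_{k+1})\cong\mathcal{I}_{\sigma_k/\sigma_{k+1}}$ (Theorem~\ref{Du Bois-type condition}(1) at index $k+1$) together with the projection formula. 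The paper instead never leaves $B^{k+1}$: it takes $H^0$ of the short exact sequence $0\to\mathcal{O}_{B^{k+1}}((k+1)H_{k+1}-Z_{k+1})\to\mathcal{O}_{B^{k+1}}(k+1)\to\mathcal{O}_{Z_{k+1}}(k+1)\to 0$ and identifies the first term with $H^0(X^{[k+1]},A_{k+1,L})$ by Proposition~\ref{A&Z}(3), the middle term with $H^0(\mathcal{O}_{\mathbb{P}^r}(k+1))$ by the critical degree-$(k+1)$ case of Danila's theorem, and the last term with $H^0(\mathcal{O}_{\sigma_k}(k+1))$ by Lemma~\ref{lem:alpha_*O_Z}. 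This is a bit lighter in prerequisites: it requires only normality of $\sigma_k$ (through Lemma~\ref{lem:alpha_*O_Z}), whereas your route additionally calls on normality and projective normality of $\sigma_{k+1}$ and the full Du Bois-type condition at index $k+1$ (of which only the $R^0$ piece is actually used). Both approaches ultimately rest on the same line bundle identity $\mathcal{O}_{B^{k+1}}((k+1)H_{k+1}-Z_{k+1})\cong\pi_{k+1}^*A_{k+1,L}$, and your closing remark about absorbing the shift from $k$ to $k+1$ into the ``sufficiently positive'' hypothesis is the right thing to say and applies equally to the paper's argument.
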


\begin{proof}
From property $N_{k+1,1}$, we see that $I_{\sigma_k / \mathbb{P}^r}$ is generated by homogeneous polynomials of degree $\leq k+1$. 
However, $H^0(\mathbb{P}^r, \OO_{\mathbb{P}^r}(m)) \cong H^0(\sigma_k, \OO_{\sigma_k}(m))$ for $1 \leq m \leq k$ by Theorem \ref{Danila}, so  $I_{\sigma_k / \mathbb{P}^r}$ does not contain homogeneous polynomials of degree strictly less than $k+1$. This proves the first assertion. Now, consider the short exact sequence
$$
0 \lra \mathcal{O}_{B^{k+1}}((k+1)H_{k+1}-Z_{k+1}) \lra \mathcal{O}_{B^{k+1}}(k+1) \lra \mathcal{O}_{Z_{k+1}}(k+1) \lra 0.
$$
Recall from Proposition \ref{A&Z} that $\mathcal{O}_{B^{k+1}}((k+1)H_{k+1}-Z_{k+1}) \cong \pi_{k+1}^* A_{k+1, L}$. By Theorem \ref{Danila} and Lemma \ref{lem:alpha_*O_Z}, we have
$$
H^0(\mathcal{O}_{B^{k+1}}(k+1)) \cong H^0(\mathcal{O}_{\mathbb{P}^r}(k+1))~~\text{ and }~~H^0(\mathcal{O}_{Z_{k+1}}(k+1)) \cong H^0(\mathcal{O}_{\sigma_k}(k+1)).
$$
Thus we obtain the second assertion.
\end{proof}

The following is Corollary \ref{cor:idealofsecants} and gives a nice description of the generators of $I_{\sigma_k/\mathbb{P}^r}$.

\begin{corollary}\label{thm:ideals}
The ideal $I_{\sigma_k / \mathbb{P}^r}$ is generated by the $(k+1)^{\text{th}}$ graded piece of $I_{X/\mathbb{P}^r}^{(k)}$, the $k^{\text{th}}$ symbolic power of the ideal $I_{X/\mathbb{P}^r}$.
\end{corollary}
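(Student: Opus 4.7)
The strategy is to combine Proposition \ref{prop:idealofsecants} with the theorem of Sidman--Sullivant on prolongations of ideals. Proposition \ref{prop:idealofsecants} tells us that $I_{\sigma_k/\mathbb{P}^r}$ is generated by its degree-$(k+1)$ piece, so it is enough to prove the equality of graded vector spaces
\[
(I_{\sigma_k/\mathbb{P}^r})_{k+1} \;=\; (I_{X/\mathbb{P}^r}^{(k)})_{k+1}
\]
inside $S^{k+1}H^0(\mathbb{P}^r,\mathcal{O}_{\mathbb{P}^r}(1))$. By the Nagata--Zariski theorem cited in the introduction, the right-hand side is the space of degree-$(k+1)$ forms on $\mathbb{P}^r$ vanishing to order at least $k$ at a general smooth point of $X$.

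The inclusion ``$\supseteq$'' is elementary: given such a form $F$ and $k$ general points $p_1,\ldots,p_k \in X$, the restriction of $F$ to the span $L := \operatorname{Span}_{\mathbb{P}^r}(p_1,\ldots,p_k)\cong \mathbb{P}^{k-1}$ is a form of degree $k+1$ with multiplicity at least $k$ at each of the $k$ points $p_i$, which are in linearly general position in $L$. A dimension count on $\mathbb{P}^{k-1}$ (the $k$ fat points impose $k\binom{2k-1}{k-1}$ independent conditions on the space $S^{k+1}H^0(\mathbb{P}^{k-1},\mathcal{O}(1))$ of dimension $\binom{2k}{k-1}$, and the former exceeds the latter for every $k \geq 2$, while the case $k=1$ is trivial since $\sigma_1 = X$) forces $F|_L \equiv 0$. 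Since the spans of $k$ general points of $X$ are dense in $\sigma_k$, it follows that $F \in I_{\sigma_k}$.

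For the reverse inclusion we appeal to \cite[Theorem 1.2]{Sidman.Sullivant}, which identifies the degree-$(k+1)$ piece of the secant ideal with the $k$-th prolongation of $I_X$ in that degree. The latter coincides with $(I_X^{(k)})_{k+1}$ by the standard correspondence between prolongations and symbolic powers. The hypothesis required by the theorem is precisely that the secant ideal be generated in degree $k+1$ (i.e.\ property $N_{k+1,1}$ for $\sigma_k$), which we have already established in Theorem \ref{thm:syz}. The main thing that needs care is the indexing convention between \cite{Sidman.Sullivant} and the present paper -- recall that we set $\sigma_k := \Sigma_{k-1}$ -- but this is a purely notational matter that does not affect the substance of the argument, and is the only real obstacle one meets in assembling the deduction.
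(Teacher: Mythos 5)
Your reduction (generation in degree $k+1$ by Proposition \ref{prop:idealofsecants}, so it suffices to identify the two degree-$(k+1)$ pieces) is fine, and the appeal to Sidman--Sullivant for the hard direction is the right idea. But there are two genuine gaps.

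The dimension count for the inclusion $(I_X^{(k)})_{k+1}\subseteq I_{\sigma_k}$ does not work as written. You assert that the $k$ fat points of multiplicity $k$ impose \emph{independent} conditions on degree-$(k+1)$ forms on $\mathbb{P}^{k-1}$ and conclude that the conditions exceed the ambient dimension. This begs the question: independence is precisely the content to be proved, and in fat-point interpolation (Alexander--Hirschowitz, etc.) it is routinely false. Even setting that aside, the count itself is off: on $\mathbb{P}^{k-1}$ a single point of multiplicity $k$ imposes $\binom{2k-2}{k-1}$, not $\binom{2k-1}{k-1}$, conditions, and for $k=2$ one gets $4 = 4$, so there is no strict inequality to use. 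The fact you want is nevertheless true and can be proved directly: put the $k$ points at the coordinate points $e_1,\dots,e_k$ of $L\cong\mathbb{P}^{k-1}$. A monomial $x_1^{a_1}\cdots x_k^{a_k}$ of degree $k+1$ vanishes to order $\geq k$ at $e_i$ if and only if $a_i\leq 1$; imposing this for all $i$ forces $\sum a_i\leq k<k+1$, so no monomial survives and $F|_L\equiv 0$.

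The second gap is that you never invoke the quadratic generation of $I_X$. The ``standard correspondence between prolongations and symbolic powers'' that you cite is exactly Sidman--Sullivant's Corollary 2.10, and it requires the ideal to be generated in degree two. The paper's own proof explicitly establishes this via Ein--Lazarsfeld (since $L$ is sufficiently positive, $X\subseteq\mathbb{P}^r$ satisfies $N_{2,1}$, so $I_X$ is generated by quadrics) before invoking Sidman--Sullivant \cite[Theorem 1.2, Corollary 2.10]{Sidman.Sullivant}. Without this hypothesis, the prolongation need not coincide with the symbolic power and your reverse inclusion does not follow. Note also that, once both generation hypotheses are in place, the Sidman--Sullivant results give the equality of the degree-$(k+1)$ pieces outright, so splitting the argument into two separate inclusions, while harmless, is unnecessary.
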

\begin{proof}
By Proposition \ref{prop:idealofsecants}, the ideal $I_{\sigma_k / \mathbb{P}^r}$ is generated in degree $k+1$. On the other hand, $X \subseteq \mathbb{P}^r$ satisfies property $N_{2,1}$ by \cite[Theorem 1]{Ein.Lazarsfeld}, so $I_{X/\mathbb{P}^r}$ is generated by quadrics. Then the result follows from \cite[Theorem 1.2, Corollary 2.10] {Sidman.Sullivant}.
\end{proof}

\subsection{A vanishing theorem on weight-one syzygies of algebraic surfaces}\label{subsection-weight-one-syzygies}
From a somewhat different perspective, we consider weight-one syzygies of algebraic surfaces. Let $S$ be a smooth projective surface, $B$ be a line bundle on $S$, and let $L$ be a sufficiently positive line bundle. Recall from \cite[Theorem A]{Agostini} that if $K_{p,1}(S, B; L)=0$, then $B$ is $p$-very ample. Here we prove that the converse also holds, so we obtain Theorem \ref{thm:weight-one-syzygies}.

\begin{theorem}\label{thm:wt1syz}
If $B$ is $p$-very ample, then $K_{p,1}(S, B; L)=0$.
\end{theorem}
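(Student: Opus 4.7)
The plan is to reduce the vanishing $K_{p,1}(S, B; L) = 0$ to a surjectivity statement that can be translated to cohomology on the Hilbert scheme $S^{[p+1]}$, where the $p$-very ampleness hypothesis on $B$ is encoded by the surjection $H^0(S, B) \otimes \mathcal{O}_{S^{[p+1]}} \twoheadrightarrow E_{p+1, B}$. Standard Koszul complex manipulations, using the vanishings $H^i(S, B \otimes L^m) = 0$ for $i \geq 1$ and $m \geq 1$ that hold for $L$ sufficiently positive, identify $K_{p,1}(S, B; L)$ with the cokernel of the Koszul differential
\[
d \colon \wedge^{p+1} H^0(S, L) \otimes H^0(S, B) \;\longrightarrow\; H^0(S, \wedge^p M_L \otimes B \otimes L),
\]
so it suffices to prove that $d$ is surjective.

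The next step is to reinterpret $d$ as a map of sections of sheaves on $S^{[p+1]}$. Combining Lemma \ref{diagonal-syzygy-bundle} (applied with $k=1$ and $j=p+1$) with Danila's theorem (Theorem \ref{Danila}), which identifies $H^0(S^{[p+1]}, E_{p+1, B})$ with $H^0(S, B)$, the plan is to produce a natural isomorphism
\[
H^0(S, \wedge^p M_L \otimes B \otimes L) \;\cong\; H^0(S^{[p+1]}, N_{p+1, L} \otimes E_{p+1, B})
\]
under which $d$ corresponds to the map on global sections induced by the sheaf surjection $\wedge^{p+1} H^0(L) \otimes E_{p+1, B} \twoheadrightarrow N_{p+1, L} \otimes E_{p+1, B}$, itself obtained by taking $\wedge^{p+1}$ of the standard sequence $0 \to M_{p+1, L} \to H^0(L) \otimes \mathcal{O}_{S^{[p+1]}} \to E_{p+1, L} \to 0$ and tensoring with $E_{p+1, B}$.

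Finally, surjectivity of this sheaf map on global sections follows from the Koszul resolution of $N_{p+1, L} \otimes E_{p+1, B}$ by terms of the form $\wedge^j M_{p+1, L} \otimes \wedge^{p+1-j} H^0(L) \otimes E_{p+1, B}$, provided that suitable cohomology vanishings $H^i(S^{[p+1]}, \wedge^j M_{p+1, L} \otimes E_{p+1, B}) = 0$ hold for $i \geq 1$ in the appropriate range of $j$. The latter can in turn be deduced from the Main Vanishing Theorem (Theorem \ref{main-vanishing-serre}) by further resolving $E_{p+1, B}$ via $0 \to M_{p+1, B} \to H^0(B) \otimes \mathcal{O}_{S^{[p+1]}} \to E_{p+1, B} \to 0$ — where local freeness of $M_{p+1, B}$ is exactly the $p$-very ampleness of $B$ — and applying the main-vanishing-type statements term by term, with the appropriate line bundle bookkeeping to land inside the range of Theorem \ref{main-vanishing-serre}. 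The main obstacle will be establishing the central identification in paragraph two in a way that precisely matches $d$ on the nose: for curves this identification is standard on the symmetric product, but on a surface $S^{[p+1]}$ is only a desingularization of $\operatorname{Sym}^{p+1}(S)$, so the identification must be lifted through the Hilbert--Chow morphism and controlled via the isospectral Hilbert scheme, mirroring the subtleties that drove the structure of Section \ref{section-cohomology-vanishing}.
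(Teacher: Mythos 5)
Your first two paragraphs are essentially correct and reproduce, in an expanded form, the reduction that Agostini's Lemma 5.1 (and Aprodu--Nagel, Corollary 5.5 and Remark 5.6, cited in the paper) supplies as a black box: $K_{p,1}(S,B;L)$ is the cokernel of $\wedge^{p+1}H^0(S,L)\otimes H^0(S,B)\lra H^0(S^{[p+1]}, N_{p+1,L}\otimes E_{p+1,B})$. Your worry at the end about lifting through the Hilbert--Chow morphism and the isospectral Hilbert scheme is misplaced --- that identification is known and is not where the work lies.

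The real gap is in your third paragraph, and it is a gap in the order of resolution. You propose to resolve the factor $N_{p+1,L}$ via a Koszul-type complex coming from $0\lra M_{p+1,L}\lra H^0(L)\otimes\mathcal{O}\lra E_{p+1,L}\lra 0$, leading to vanishings of the shape $H^i(S^{[p+1]},\wedge^j M_{p+1,L}\otimes E_{p+1,B})$, and then to resolve $E_{p+1,B}$ afterwards, which produces mixed terms $\wedge^j M_{p+1,L}\otimes M_{p+1,B}$. None of these are covered by Theorem \ref{main-vanishing-serre}, which vanishes precisely $\wedge^j M_{p+1,L}\otimes A_{p+1,L}$ and nothing involving the syzygy bundle of a second, fixed-positivity line bundle $B$. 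There is no ``line bundle bookkeeping'' that converts $E_{p+1,B}$ or $M_{p+1,B}$ into $A_{p+1,L}$. The correct move is the opposite one, and it is strictly simpler: do \emph{not} resolve $N_{p+1,L}$ at all. Tensor the sequence $0\lra M_{p+1,B}\lra H^0(B)\otimes\mathcal{O}\lra E_{p+1,B}\lra 0$ (which is exact and has locally free $M_{p+1,B}$ precisely because $B$ is $p$-very ample) with $N_{p+1,L}$; then surjectivity on global sections reduces to a single vanishing $H^1(S^{[p+1]},M_{p+1,B}\otimes N_{p+1,L})=0$. This is obtained by applying Theorem \ref{main-direct-image-vanishing} --- the relative statement $R^ih_*(\wedge^j M_{k,\mathcal{L}}\otimes\delta_k^{-\ell})=0$ --- with the line bundle being $B$, $k=p+1$, $j=1$, $\ell=1$ (the hypothesis ``separates $(p+1)$-schemes'' is exactly $p$-very ampleness of $B$), together with $N_{p+1,L}\cong h^*S_{p+1,L}\otimes\delta_{p+1}^{-1}$, Leray, and Fujita vanishing for $S_{p+1,L}$ on $\operatorname{Sym}^{p+1}(S)$. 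In short, the positivity of $L$ should enter only through Fujita on the symmetric product, while $B$ enters through the relative direct-image vanishing for $M_{p+1,B}$; your proposal entangles the two and ends up outside the reach of the available vanishing theorems.
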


\begin{proof}
By \cite[Proposition 5.4]{Agostini}, it is enough to check that $R^1 h_{*} (M_{k,B} \otimes \delta_{p+1}^{-1}) = 0$. But this follows from Theorem \ref{main-direct-image-vanishing} with $\ell=1$ and $j=1$. For the reader's convenience, we explain \cite[Proposition 5.4]{Agostini} more in detail. 
As $K_{p,1}(S, B; L)$ is the cokernel of the map 
$$
H^0(S, B) \otimes H^0(S^{[p+1]}, N_{p+1,L}) \lra H^0(S^{[p+1]}, E_{p+1,B} \otimes N_{p+1,L})
$$
by \cite[Lemma 5.1]{Agostini} (see also \cite[Corollary 5.5 and Remark 5.6]{Aprodu.Nagel.2010}), it suffices to show that $H^1(S^{[p+1]}, M_{p+1,B} \otimes N_{p+1,L})=0$. Then Theorem \ref{main-direct-image-vanishing} for $\ell=1$ and $j=1$ implies that
$$
H^1(S^{[p+1]}, M_{p+1,B} \otimes N_{p+1,L}) \cong H^1(\operatorname{Sym}^{p+1}(S), h_{*} (M_{p+1,B} \otimes \delta_{p+1}^{-1}) \otimes S_{p+1, L}),
$$
but this is equal to $0$ by Fujita's vanishing theorem.
\end{proof}

\begin{remark}\label{rem:weight-one-vanishing}
Note that $H^1(X^{[p+1]}, M_{p+1,B} \otimes N_{p+1,L})=0$ implies $K_{p,1}(X, B; L)=0$ even when $\dim(X) \geq 3$ and $0 \leq p \leq 2$ by \cite[Corollary 5.5 and Remark 5.6]{Aprodu.Nagel.2010}. Therefore, the above theorem holds also for $\dim(X) \geq 3$ and $0 \leq p \leq 2$ thanks to Theorem \ref{main-direct-image-vanishing} and Fujita's vanishing theorem.
\end{remark}

\begin{remark}\label{rmk:detailed-discussion-weight-one}
As mentioned in the introduction, the proof of the gonality conjecture by Ein and Lazarsfeld \cite{Ein.Lazarsfeld.Gonality} suggests that our Theorem \ref{thm:wt1syz} may be regarded as a generalization of the gonality conjecture (for curves) to surfaces. Moreover, the theorem can be regarded as fitting into a more general framework concerning the asymptotic non-vanishing of Koszul cohomology --- we take a moment here to discuss this connection in more detail. With $X$ a smooth projective variety of arbitrary dimension $n$, let $B$ denote a line bundle on $X$ and set $L_d = A^d\otimes P$ for $A$ ample and $P$ arbitrary. Theorem \cite[Theorem 1.1]{Park2}, which originated in Ein and Lazarsfeld's influential paper \cite{Ein.Lazarsfeld.Asymptotic} and was completed by the third author \cite{Park1, Park2}, states the following: for $1 \leq q \leq n$, there exist functions $c_q(d)$ and $c_q'(d)$ with
$$
c_q(d) = \Theta(d^{q-1})~~\text{ and }~~c_q'(d) = \begin{cases} \Theta(d^{n-q}) & \text{if $H^{q-1}(X, B) =0$ or $q=1$} \\  q-1 & \text{if $H^{q-1}(X, B) \neq 0$ and $q \geq 2$} \end{cases}
$$
such that if $d$ is sufficiently large, then
$$
K_{p,q}(X, B; L_d) \neq 0~~\Longleftrightarrow~~c_q(d) \leq p \leq r_d-c_q'(d)
$$
(here the $\Theta$ symbol is used to refer to the so-called ``big-Theta" notation\footnote{Recall that for real-valued $g$ we write $g(d) = \Theta(f(d))$ to mean that there are constants $k_1$ and $k_2$ for which $k_1f(d) \leq g(d) \leq k_2f(d)$ for all $d$.}). A similar statement holds when $q=0$ or $n+1$ (see \cite[Proposition 5.1 and Corollary 5.2]{Ein.Lazarsfeld.Asymptotic}).\\
\\
For curves, Ein--Lazarsfeld's generalization and proof of the gonality conjecture shows that $c_1(d)$ is determined exactly by the $p$-very ampleness of $B$, hence in particular is constant as a function of $d$. It is natural to ask to what extent $c_1(d)$ behaves similarly in higher dimensional cases. Yang \cite{Yang} proved that $c_1(d)$ is a constant function of $d$, answering \cite[Problem 7.2]{Ein.Lazarsfeld.Asymptotic}. Along similar lines, Ein--Lazarsfeld--Yang \cite[Theorem A]{Ein.Lazarsfeld.Yang} proved that if $B$ is $p$-jet very ample, then $K_{p,1}(X, B; L_d)=0$ (hence providing a lower bound on the constant $c_1(d)$ in terms of the $p$-jet very ampleness of $B$). Conversely, Agostini \cite[Theorem A]{Agostini} proved that if $K_{p,1}(X, B; L_d)=0$, then $B$ is $p$-very ample (i.e. $B$ is $p$-very ample whenever $p < c_1(d)$). Taken together, these statements are saying that a lower bound on the constant $c_1(d)$ tells us about $p$-very ampleness of $B$, while $p$-jet-very ampleness of $B$ gives us lower bounds on $c_1(d)$. However, $p$-jet-very ampleness is still a priori stronger than $p$-very ampleness even for the surface case (\cite[Remark 2.3]{Ein.Lazarsfeld.Yang}), so it is still unclear how $c_1(d)$ is determined. Note that $K_{p,1}(X, B; L)$ can be calculated from the smoothable component of $X^{[p+1]}$ even when $X^{[p+1]}$ has several components,(see \cite[\S2]{Voisin.2002} or \cite[\S5.1]{Aprodu.Nagel.2010}). In order to check if $B$ is $p$-very ample, however, one needs to check for all schemes of length $(p+1)$, not just the ones that are smoothable (see \cite[Footnote 9]{Ein.Lazarsfeld.survey}). Taking this point of view, Ein--Lazarsfeld--Yang suggested that $K_{p,1}(X, B; L_d) = 0$ could be equivalent to $p$-jet very ampleness (\cite[Remark 2.2]{Ein.Lazarsfeld.Yang}) of $B$. Theorem \ref{thm:weight-one-syzygies}, however, shows that $K_{p,1}(X, B; L_d) = 0$ is indeed equivalent to $p$-very ampleness of $B$ if $\operatorname{dim}(X)=2$, and completely resolves \cite[Problem 4.12]{Ein.Lazarsfeld.survey} for surfaces.
\end{remark}

\section{Questions}\label{section-questions}

We conclude the paper with some natural questions. We hope these questions will inspire and motivate future research. Unless otherwise stated, here $X$ is a smooth projective variety of arbitrary dimension $n$, $L$ is a sufficiently positive line bundle on $X$ giving an embedding $X \subseteq \mathbb{P} H^0(X, L) = \mathbb{P}^r$ unless otherwise stated, and $k$ is an arbitrary positive integer.

First, we would like to know if the $k$-secant variety $\sigma_k$ is always normal.

\begin{question}\label{question-secant-normal}
Is $\sigma_k$ normal for all $n$ and $k$?
\end{question}

\noindent We expect that Theorem \ref{thm:normalsing} (1) could be extended to the case when $X^{[k]}$ has normal singularities. However, we believe Theorem \ref{terracini} suggests that the answer to Question \ref{question-secant-normal} is negative in general. In particular, we ask the following.

\begin{question}\label{question-hilbert-normal}
Is $(\mathbb{A}^n)_{\operatorname{sm,Gor}}^{[k]}$ normal for all $n$ and $k$?
\end{question}

\noindent It seems very unlikely that Question \ref{question-hilbert-normal} has a positive answer. To the best of the author's knowledge, it is not even known whether $(\mathbb{A}^n)_{\operatorname{sm}}^{[k]}$ is normal for all $n$ and $k$. Notice that a counterexample to Question \ref{question-hilbert-normal} would provide a counterexample to Question \ref{question-secant-normal}. On the other hand, the converse implication is mysterious. It would be very interesting to see to what extent to which Question \ref{question-hilbert-normal} determines Question \ref{question-secant-normal}. 

\begin{question}\label{question-singularities-lower-secant}
Is it true that $\sigma_k$ is normal if and only if $X_{\operatorname{sm,Gor}}^{[k]}$ is normal?
\end{question}

As previously pointed out, the cactus scheme may be strictly larger than the secant variety in higher dimensions. It is natural to try understanding exactly when
they are the same and when not, and to investigate the geometry of the other components of the cactus variety (see for example \cite{GMR23}). A starting point would be the following.

\begin{question}\label{question-secant-vs-cactus}
For which $n$ and $k$, do we have $(\mathbb{A}^n)_{\operatorname{Gor}}^{[k]}\subseteq (\mathbb{A}^n)_{\operatorname{sm}}^{[k]}$? 
\end{question}

\noindent As mentioned in Section \ref{section-hilbert-scheme}, we know that $k\leq 13$ or $\dim (X) \leq 5$ and $k\leq 14$ suffices for $(\mathbb{A}^n)_{\operatorname{Gor}}^{[k]}\subseteq (\mathbb{A}^n)_{\operatorname{sm}}^{[k]}$ by \cite{Casnati.Jelisiejew.Notari.15}. We are asking for a complete answer.

Next, we would like to determine the precise singular locus of $\sigma_k$. In view of Corollary \ref{corollary-singularities-secant} and Remark \ref{rem:singular-locus}, we ask the following.

\begin{question}
What points in $(\kappa_{k-1} \cap \sigma_k) \setminus \sigma_{k-1}$ are singular points of $\sigma_k$? Is $\sigma_k$ normal along $(\kappa_{k-1} \cap \sigma_k) \setminus \sigma_{k-1}$?
\end{question}

Regarding singularities of secant varieties, Chou and Song \cite[Question 1.6]{Chou.Song.18} ask when $\sigma_2$ has log terminal or log canonical singularities. Chou--Song's original question is still open. Here we ask the following generalized question.

\begin{question}\label{question-klt-lc}
When does there exist a $\mathbb{Q}$-divisor $\Gamma$ on $\sigma_k$ such that
    \begin{enumerate}[topsep=0pt]
        \item $(\sigma_k,\Gamma)$ is Kawamata log terminal, or
        \item $(\sigma_k,\Gamma)$ is log canonical?
    \end{enumerate}
\end{question}

\noindent In the case $\dim(X)=1$, \cite[Theorem 1.1]{Ein.Niu.Park.20} gives a very satisfactory answer: (1) holds if and only if $X$ is isomorphic to $\mathbb{P}^1$, in which case $\sigma_k$ is a Fano variety, and (2) holds if and only if (1) holds or $X$ is an elliptic curve, in which case $\sigma_k$ is a Calabi--Yau variety. When $\dim(X)\geq 2$, we showed a necessary condition in Proposition \ref{not-klt-lc}. This implies that 
$(\sigma_k, \Gamma)$ is never Kawamata log terminal (respectively, log canonical) when $\dim(X)=2$ and $k \geq 8$ (respectively, $k \geq 9$), see Corollary \ref{cor:noklt/lc}.

Recently, Olano--Raychaudhury--Song \cite{Olano.Raychaudhury.Song} further studied singularities of $\sigma_2$ from a Hodge-theoretic perspective. In particular, they give a necessary and sufficient condition for $\sigma_2$ to have higher Du Bois singularities. One may hope to generalize their work to higher secant varieties.

\begin{question}
When does $\sigma_k$ have higher Du Bois singularities?
\end{question}

In a different direction, one might want a better understanding of how much positivity of the embedding line bundle is needed for our main results --- Theorem \ref{theoremA} and Theorem \ref{thm:N_{k+2,p}}. 

\begin{question}\label{question-effective-results}
Assume that $\dim (X) \leq 2$ or $k \leq 3$. Let $L:=\omega_X \otimes A^m \otimes B$, where $A$ is an ample line bundle and $B$ is a nef line bundle on $X$, and fix an integer $m_0 \geq 0$. Is there an effective bound on $m_0$ independent of $B$ for which Theorem \ref{theoremA} holds when $m \geq m_0$? Similarly, is there an effective bound on $m_p$ depending on $p$ but independent of $B$ for which Theorem \ref{thm:N_{k+2,p}} hold when $m \geq m_p$?
\end{question}

\begin{question}\label{question-precise}
If Question \ref{question-effective-results} admits a positive answer, what are the optimal values of $m_0$ and $m_p$?
\end{question}

Beside the assumption that $L$ separates $2k$-schemes, the only place we use the the sufficient positivity of $L$ is where we apply the main vanishing theorem \ref{main-vanishing-serre} (see Remark \ref{rem:effectivefornormalsing} and Remark \ref{rem:effectiveforN_{k+1,p}}). In view of Remark \ref{rem:meaningofsuffpos}, it is natural to ask the following.

\begin{question}\label{question-effective}
Assume that $\dim (X) \leq 2$ or $k \leq 3$. Let $L:=\omega_X \otimes A^m \otimes B$, where $A$ is an ample line bundle and $B$ is a nef line bundle on $X$, and fix an integer $j_0 \geq 0$. What is an effective choice of $m_0$ depending on $j_0$ but independent of $B$ such that if $m \geq m_0$, then
$$
H^i(X^{[k]}, \wedge^j M_{k,L}\otimes A_{k,L})=0~~\text{ for $i \geq 1$ and $0 \leq j \leq j_0$?}
$$
\end{question}

\noindent For $k=1$, the question is closely related to Mukai's question (\cite[Problem 4.1]{Ein.Lazarsfeld}), which is known to be notoriously difficult even for the surface case. It is quite reasonable to first consider the case that $A$ is very ample or ample and base point free as in \cite{Ein.Lazarsfeld} or \cite{Bangere.Lacini.24}. This seems to be already very challenging. We only have an answer for $k=2$ and $j \leq i$ when $A$ is very ample (see Theorem \ref{main-effective-vanishing}).

In the setting of Question \ref{question-effective}, the $2$-secant variety $\sigma_2$ has normal Du Bois singularities when $m \geq 2n+2$ by \cite{Ullery.16} and \cite{Chou.Song.18}, and $\sigma_2 \subseteq \mathbb{P}^r$ is projectively normal when $m \geq 4n$ by Theorem \ref{effective-projective-normal}. Drawing evidence from these results, a first step towards Question \ref{question-precise} would be answering the following.

\begin{question}
Let $L$ be a line bundle of the form $L:=\omega_X\otimes A^m\otimes B$ with $A$ very ample and $B$ nef. Is $\sigma_2 \subseteq \mathbb{P}^r$ projectively normal when $m\geq 2n+2$?
\end{question}

On the other hand, we have seen that $I_{\sigma_k/\mathbb{P}^r}$ is generated by the $(k+1)^{\text{th}}$ graded piece of $I_{X/\mathbb{P}^r}^{(k)}$ (see Corollary \ref{thm:ideals}).  Smoothness of $X$ is needed by \cite{BGL}.
We would like to know whether the smoothness of $X^{[k]}$ is essential.

\begin{question}
Is $I_{\sigma_k/\mathbb{P}^r}$ generated by the $(k+1)^{\text{th}}$ graded piece of $I_{X/\mathbb{P}^r}^{(k)}$ for all $n$ and $k$?
\end{question}

By work of Ein--Lazarsfeld \cite{Ein.Lazarsfeld.Asymptotic} and the third author \cite{Park1, Park2}, we know that there is a surprisingly uniform asymptotic behavior of vanishing and nonvanishing of $K_{p,q}(X; L)$ as the positivity of $L$ grows. When $X=C$ is a curve, Choe--Kwak--Park \cite{Choe.Kwak.Park.23} proved that the asymptotic behavior of vanishing and nonvanishing of $K_{p,q}(\sigma_k; \OO_{\sigma_k}(1))$ is quite similar to that of $K_{p,q}(C; L)$. We are wondering whether the same is true in higher dimensions. 

\begin{question}
What can be said about the asymptotic behavior of $K_{p,q}(\sigma_k; \OO_{\sigma_k}(1))$ as the positivity of $L$ grows?
\end{question}

One certainly wants an effective result for Theorem \ref{thm:weight-one-syzygies}. In the curve case, a sharp effective bound for the degree of line bundle to satisfy the gonality conjecture is established in \cite{Niu.Park}. In general, Agostini \cite[Theorem A]{Agostini} proved an effective result for nonvanishing of $K_{p,1}(X, B; L)$. We are wondering how much positivity of the line bundle $L$ is needed for vanishing of $K_{p,1}(X, B; L)$. 

\begin{question}
Let $S$ be a smooth projective surface, let $B$ be a line bundle on $S$, set $L:=\omega_S \otimes A^m \otimes P$, where $A$ is an ample line bundle and $P$ is a nef line bundle on $S$, and fix integers $k \geq 1$ and $p \geq 0$. Is there an effective bound on $m_0$ independent of $P$ for which Theorem \ref{thm:wt1syz} holds when $m \geq m_0$? If so, what is the optimal value of $m_0$?
\end{question}

\noindent It is also reasonable to first consider the case that $A$ is very ample or ample and base point free. One may need to put further positivity assumptions on $P$ as in \cite[Theorem A]{Agostini}. 

Finally, for the higher dimensional generalization of Theorem \ref{thm:weight-one-syzygies}, we ask the following.

\begin{question}
Assume that $\dim(X) \geq 3$ and $B$ is a line bundle on $X$. For a fixed integer $p \geq 0$, is it true that $B$ is $p$-very ample if and only if $K_{p,1}(X, B; L)=0$?
\end{question}

\noindent This is true when $0 \leq p \leq 2$ by Remark \ref{rem:weight-one-vanishing}, and the converse implication 
$(\Longleftarrow)$ for any $p$ is known by \cite[Theorem A]{Agostini}. However, as was remarked in \cite[Footnote 9]{Ein.Lazarsfeld.survey} and \cite[Remark 2.2]{Ein.Lazarsfeld.Yang}, the answer may not be positive. We are contemplating whether the answer is positive when $X^{[p+1]}$ is irreducible.

\appendix

\section{The cotangent sheaf of iterated Quot schemes}\label{Section-cotangent}

In this appendix we use different notation and conventions from the main body of the paper. Here instead we let $p \colon X\lra S$ be a morphism of Noetherian schemes with $p_*\OO_X \cong \OO_S$, $\mathcal{H}$
a coherent sheaf on $X$, and $\mathcal{L}$ a line bundle on $X$ which is relatively ample
over $S$. For a fixed numerical polynomial $P\in\mathbb{Q}[t]$, recall that the Quot scheme $\text{Quot}(\mathcal{H},P)$ represents the functor
\begin{center}
\begin{tikzcd}[row sep = tiny]
h \colon (\text{\emph{Sch}}/S)^{\text{op}} \ar[r] & \text{\emph{Sets}}\\
T \ar[r,mapsto] & \left\{\text{\stackanchor{isomorphism classes of quotients $\mathcal{H}_T\twoheadrightarrow F$}{flat over $T$, with $\chi((F\otimes \mathcal{L}^{\otimes n})_t) = P(n)$ for $t\in T$}}\right\}\\
(T'\overset{g}{\rightarrow} T) \ar[r,mapsto] & ([\mathcal{H}_T\twoheadrightarrow F] \mapsto [\mathcal{H}_{T'}\twoheadrightarrow F_{T'}]),
\end{tikzcd}
\end{center}
where the subscript denotes the base change. Assume that $p$ is smooth of relative dimension $d$. The main result of \cite{Lehn.98} identifies the cotangent sheaf of $\text{Quot}(\mathcal{H},P)$ as
$$
\Omega^1_{\text{Quot}(\mathcal{H},P)/S} \cong \SExt^d_{\pr_2}(\mathcal{F},\mathcal{K}\otimes \pr_1 ^*\omega_{X/S}),
$$
where the maps $\pr_1$ and $\operatorname{pr}_2$ out of $X\times_S \text{Quot}(\mathcal{H},P)$ are the natural projections, the right-hand side has definition $\SExt^d_{\pr_2}(\mathcal{F},-) := R^d(\pr_{2,*}\SHom(\mathcal{F},-))$, and \[0 \lra \mathcal{K} \lra \pr_2^*\mathcal{H} \lra \mathcal{F} \lra 0\] is the tautological exact sequence on $X\times_S \text{Quot}(\mathcal{H},P)$.

In what follows, our purpose is twofold. First, we aim to generalize the above description of the cotangent sheaf to the iterated Quot scheme $\text{Quot}(\mathcal{H},P_1,P_2)$, where $P_i \in \mathbb{Q}[t]$ are numerical polynomials with $P_1(n) \geq P_2(n)$ for $n \gg 0$. This scheme represents the functor
\begin{center}
    \begin{tikzcd}[row sep = tiny]
        h' \colon (\text{\emph{Sch}}/S)^{\text{op}} \ar[r] & \text{\emph{Sets}}\\
        T \ar[r,mapsto] & \left\{\begin{array}{c}\text{isomorphism classes of subsequent quotients $\mathcal{H}_T\twoheadrightarrow F \twoheadrightarrow G$}\\ \text{with $F$ and $G$ both flat over $T$,}\\ \text{$\chi((F\otimes \mathcal{L}^{\otimes n})_t)=P_1(n)$ and $\chi((G\otimes \mathcal{L}^{\otimes n})_t) = P_2(n)$ for $t\in T$}\end{array} \right\}\\
        (T'\overset{g}{\rightarrow} T) \ar[r,mapsto] & ([\mathcal{H}_T\twoheadrightarrow F \twoheadrightarrow G] \mapsto [\mathcal{H}_{T'}\twoheadrightarrow F_{T'}\twoheadrightarrow G_{T'}]).
    \end{tikzcd}
\end{center}
We then have the following:

\begin{theorem}\label{cotan-nested}
The cotangent sheaf of the scheme $\operatorname{Quot}(\mathcal{H},P_1,P_2)$ is isomorphic to
$$
\Omega_{\operatorname{Quot}(\mathcal{H},P_1,P_2)/S}^1 \cong \textnormal{coker}\left(\SExt^d_{\pr_2}(\mathcal{G},\mathcal{K}) \lra \SExt^d_{\pr_2}(\mathcal{F},\mathcal{K})\oplus \SExt^d_{\pr_2}(\mathcal{G},\mathcal{J})\right)\otimes \pr_1^*\omega_{X/S},
$$
where the maps $\pr_1$ and $\operatorname{pr}_2$ out of $X\times_S \operatorname{Quot}(\mathcal{H},P_1,P_2)$ are the natural projections and
\begin{center}
\begin{tikzcd}
    0 \ar[r] & \mathcal{K} \ar[r]\ar[d,hookrightarrow] & \pr_2^*\mathcal{H} \ar[r]\ar[d,equals] & \mathcal{F} \ar[r]\ar[d,twoheadrightarrow] & 0\\
    0 \ar[r] & \mathcal{J} \ar[r] & \pr_2^*\mathcal{H} \ar[r] & \mathcal{G} \ar[r] & 0
\end{tikzcd}
\end{center}
is the tautological commutative diagram on $X\times_S \operatorname{Quot}(\mathcal{H},P_1,P_2)$.
\end{theorem}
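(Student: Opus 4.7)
The plan is to realize $Q := \operatorname{Quot}(\mathcal{H},P_1,P_2)$ as a closed subscheme of $Q_1 \times_S Q_2$, where $Q_i := \operatorname{Quot}(\mathcal{H},P_i)$, and to deduce the formula from the conormal sequence of this immersion combined with Lehn's theorem applied to each factor. First I would construct the forgetful morphisms $\tau \colon Q \to Q_1$ and $\rho \colon Q \to Q_2$, sending $[\mathcal{H}\twoheadrightarrow F \twoheadrightarrow G]$ to $[\mathcal{H}\twoheadrightarrow F]$ and $[\mathcal{H}\twoheadrightarrow G]$ respectively. The joint morphism $(\tau,\rho)\colon Q \to Q_1\times_S Q_2$ is a closed immersion because, functorially, the condition that a pair $(\mathcal{H}_T\twoheadrightarrow F,\,\mathcal{H}_T\twoheadrightarrow G)$ comes from a nested quotient is exactly the vanishing of the composite $\ker(\mathcal{H}_T\twoheadrightarrow F)\hookrightarrow \mathcal{H}_T\twoheadrightarrow G$, which is a closed condition on $T$.

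With this immersion in hand, the standard conormal sequence reads
\[
\mathcal{C}_{Q/Q_1\times_S Q_2} \lra \Omega^1_{Q_1\times_S Q_2/S}\big|_Q \lra \Omega^1_{Q/S} \lra 0,
\]
and applying Lehn's theorem to each of $Q_1$ and $Q_2$, together with base-change compatibility of $\SExt^d_{\pr_2}$ along the pullbacks $Q \to Q_i$, identifies the middle term with $\bigl(\SExt^d_{\pr_2}(\mathcal{F},\mathcal{K})\oplus \SExt^d_{\pr_2}(\mathcal{G},\mathcal{J})\bigr)\otimes \pr_1^*\omega_{X/S}$. To handle the conormal term, note that the universal composite $\mathcal{K}\hookrightarrow\pr_2^*\mathcal{H}\twoheadrightarrow\mathcal{G}$ on $X\times(Q_1\times_S Q_2)$ is a homomorphism whose scheme-theoretic vanishing is exactly $X\times Q$; after pushforward along $\pr_2$ this presents it as a canonical section of $\pr_{2,*}\SHom(\mathcal{K},\mathcal{G})$ cutting out $Q \subseteq Q_1\times_S Q_2$. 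Dualizing this section and applying Grothendieck duality for the smooth projection $\pr_2$ (which converts $(\pr_{2,*}\SHom(\mathcal{K},\mathcal{G}))^\vee$ into $\SExt^d_{\pr_2}(\mathcal{G},\mathcal{K})\otimes \pr_1^*\omega_{X/S}$) then yields a natural surjection
\[
\SExt^d_{\pr_2}(\mathcal{G},\mathcal{K})\otimes \pr_1^*\omega_{X/S} \twoheadrightarrow \mathcal{C}_{Q/Q_1\times_S Q_2}.
\]
Composing with the conormal map and tracing through Lehn's Atiyah-class construction, the two resulting components $\SExt^d_{\pr_2}(\mathcal{G},\mathcal{K})\to\SExt^d_{\pr_2}(\mathcal{F},\mathcal{K})$ and $\SExt^d_{\pr_2}(\mathcal{G},\mathcal{K})\to\SExt^d_{\pr_2}(\mathcal{G},\mathcal{J})$ are the ones induced, with an overall sign, by the surjection $\mathcal{F}\twoheadrightarrow\mathcal{G}$ in the first argument of $\SExt$ and by the inclusion $\mathcal{K}\hookrightarrow\mathcal{J}$ in the second. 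Taking cokernels then produces the formula in the theorem.

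The hard part will be verifying surjectivity of the map onto $\mathcal{C}_{Q/Q_1\times_S Q_2}$ and the precise compatibility of its composition with the conormal map with Lehn's isomorphisms on each factor, including correct signs; the former may require the universal composite to be suitably regular as a section, and the latter is essentially a diagram-chase through Atiyah classes. To control both, I would fall back on a fiberwise verification at an arbitrary closed point $[F\twoheadrightarrow G]\in Q$: compatibility of first-order deformations of $F$ and of $G$ identifies the tangent space as the fiber product of $\Hom(K_F,F)$ and $\Hom(K_G,G)$ over $\Hom(K_F,G)$, and dualizing via Serre duality on the fiber $X_s$ produces precisely the cokernel appearing in the statement. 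Combined with the natural surjection constructed above, Nakayama's lemma upgrades this pointwise identification to the claimed global isomorphism.
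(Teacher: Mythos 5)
Your overall skeleton --- embed $Q := \operatorname{Quot}(\mathcal{H},P_1,P_2)$ as a closed subscheme of a fiber product, write the conormal sequence, identify the middle term via Lehn's theorem on each factor, and identify the conormal sheaf by Grothendieck duality --- is morally the same as the paper's, and the idea of embedding directly into $Q_1\times_S Q_2$ rather than into the auxiliary products of flag schemes that the paper uses is an attractive simplification if it could be made rigorous. The closed-immersion claim, and your fiberwise description of the tangent space as a fiber product of $\Hom$-spaces together with its Serre dual, are correct.

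The genuine gap is in identifying $\mathcal{C}_{Q/Q_1\times_S Q_2}$. You view $Q$ as the vanishing locus of the universal composite $\mathcal{K} \lra \mathcal{G}$ on $X\times_S (Q_1\times_S Q_2)$, push forward along $\pr_2$, and dualize. But the zero-locus construction of \S\ref{cotan-van} requires the target of the vanishing homomorphism to be \emph{locally free}, which $\mathcal{G}$ is not; after pushforward, $\pr_{2,*}\SHom(\mathcal{K},\mathcal{G})$ is generally neither locally free nor base-change compatible, and the Grothendieck duality isomorphism you invoke, $(\pr_{2,*}\SHom(\mathcal{K},\mathcal{G}))^\vee \cong \SExt^d_{\pr_2}(\mathcal{G},\mathcal{K})\otimes\pr_1^*\omega_{X/S}$, is contaminated by higher $\SExt^i_{\pr_2}$ terms that do not automatically vanish. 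This is exactly the difficulty that Lehn, and the paper in Lemma~\ref{vanishing} and the final subsection of the appendix, circumvent by replacing the untwisted sheaves with the locally free, base-change-compatible pushforwards $\mathcal{K}_{n_i}$, $\mathcal{J}_{n_i}$ for finitely many large twists $n_i$, and then recovering the untwisted statement via the short exact sequences defining $\mathcal{K}'$, $\mathcal{J}'$ and one last Grothendieck duality step. Note that the paper's proof of the flag-scheme case (Proposition~\ref{flag-cotangent}) uses precisely your fiber-product idea --- embedding $\mathbf{F}$ into $G(\mathcal{H},r_1)\times_S G(\mathcal{H},r_2)$ --- but there everything in sight is locally free by hypothesis, which is why it works without twists; the general case then reduces to the flag-scheme case rather than to $Q_1\times_S Q_2$ directly.

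Your fallback via Nakayama does not repair this. Nakayama can upgrade fiberwise surjectivity of a given global map of coherent sheaves, which would indeed suffice once you had the natural surjection onto $\mathcal{C}_{Q/Q_1\times_S Q_2}$ together with its compatibility with the conormal sequence and with Lehn's isomorphisms on the factors; but constructing that global map is precisely the missing step, and the pointwise identification of cotangent spaces alone does not produce it. If you want to pursue the direct route, you would need to reproduce the twisting and stabilization argument of Lemma~\ref{vanishing} at the level of $Q_1\times_S Q_2$, at which point the argument essentially collapses into the one in the paper.
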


Of course, the description of the cotangent sheaf of $\operatorname{Quot}(\mathcal{H},P)$ also yields a description of the tangent sheaf by Grothendieck duality:
\[
T_{\operatorname{Quot}(\mathcal{H}, P)/S}\cong
\operatorname{pr}_{2,*} \SHom(\mathcal{K}, \mathcal{F}).
\]
The natural pairing
\[
\mathcal{K}\otimes \operatorname{pr}_2 ^* 
\operatorname{pr}_{2,*} \SHom(\mathcal{K}, \mathcal{F})\lra
\mathcal{F}.
\]
induces a map
\[
\mathcal{K}\lra
\SHom\left(\operatorname{pr}_2 ^* T_{\operatorname{Quot}(\mathcal{H},P)/S} , \mathcal{F}\right).
\]
Our second aim is to describe this map geometrically as the differential of the universal sequence. 

Theorem \ref{cotan-nested} follows without difficulty from \cite{Lehn.98}, and is well-known to the experts. We reproduce Lehn's argument below for the reader's convenience, making the appropriate changes where needed.
The geometric description of the pairing is crucial for the results in the main body of the paper, and we obtain it along the way.

\subsection{Cotangent sequence of a vanishing locus}\label{cotan-van}

Let $S$ be a Noetherian scheme and let $T$ be an $S$-scheme. Let $\alpha \colon N' \lra N$ be a homomorphism of coherent $\OO_T$-modules, with $N$ locally free.
It gives rise to an adjoint $\alpha' \colon \SHom(N,N') \lra \mathcal{O}_T$ by setting $\alpha'(f):=\operatorname{trace}(\alpha\circ f)$.
The ideal $\operatorname{Im}(\alpha')\subseteq \mathcal{O}_T$ defines the largest closed subscheme $Z$ of $T$ for which $\alpha|_Z = 0$. We write
$Z = \text{Zeroes}(\alpha)$. The cotangent sequence 
of $Z\subseteq T$ over $S$ induces then an exact sequence
$$
\SHom(N\vert_Z,N'\vert_Z) \lra \Omega_{T/S}^1\vert_Z \lra \Omega_{Z/S}^1 \lra 0.
$$

\subsection{The cotangent sheaf of the flag scheme}\label{flag-cotangent-section}
In this subsection, we study $\text{Quot}(\mathcal{H},P_1,P_2)$ in the case
when $p \colon X\lra S$ is the identity.
In particular, $P_1 \equiv r_1$ and $P_2 \equiv r_2$ for some non-negative integers $r_1\geq r_2$. Under these hypotheses, we call $\operatorname{Quot}(\mathcal{H},P_1,P_2)$ the flag scheme $\text{$\mathbf{F}$} := \text{Fl}(\mathcal{H};r_1,r_2)$. We denote the structure morphism by $f : \mathbf{F} \lra S$. In the special case when $r_1=r_2=r$, we recover the Grassmannian scheme $G(\mathcal{H},r)$ over $S$.

To begin with, let
\begin{center}
    \begin{tikzcd}
        0 \ar[r] & \mathcal{K} \ar[r]\ar[d,hookrightarrow] & f^*\mathcal{H} \ar[r] \ar[d,equals] & \mathcal{F} \ar[r] \ar[d,twoheadrightarrow] & 0\\
        0 \ar[r] & \mathcal{J} \ar[r] & f^*\mathcal{H} \ar[r] & \mathcal{G} \ar[r] & 0
    \end{tikzcd}
\end{center}
be the tautological commutative diagram on $\mathbf{F}$.

\begin{proposition}\label{flag-cotangent}
The cotangent sheaf of $\mathbf{F}$ is isomorphic to
$$
\Omega_{\mathbf{F}/S}^1 \cong \text{coker}\left(\SHom(\mathcal{G},\mathcal{K}) \lra \SHom(\mathcal{F},\mathcal{K})\oplus \SHom(\mathcal{G},\mathcal{J})\right).
$$
\end{proposition}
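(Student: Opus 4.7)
The plan is to realize the flag scheme $\mathbf{F}$ as the zero locus of a natural morphism on the product of two Grassmannians, and then to deduce the description of $\Omega_{\mathbf{F}/S}^1$ by combining Subsection \ref{cotan-van} with the classical description of the cotangent sheaf of a Grassmannian.

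More precisely, let $G_1:=G(\mathcal{H},r_1)$ and $G_2:=G(\mathcal{H},r_2)$ with their tautological sequences $0\to \mathcal{K}\to \pi_1^*\mathcal{H}\to\mathcal{F}\to 0$ and $0\to \mathcal{J}\to \pi_2^*\mathcal{H}\to\mathcal{G}\to 0$ respectively, and set $P:=G_1\times_S G_2$ with projections $q_1,q_2$ to $G_1,G_2$. Pulling back the two tautological sequences to $P$ (and by abuse of notation still denoting the pulled back sheaves by $\mathcal{K},\mathcal{F},\mathcal{J},\mathcal{G}$), we have a canonical composition
\[
\alpha\colon \mathcal{K}\hookrightarrow q_1^*\pi_1^*\mathcal{H}=q_2^*\pi_2^*\mathcal{H}\twoheadrightarrow \mathcal{G}.
\]
By the universal property of $\mathbf{F}$, there is an identification $\mathbf{F}=\mathrm{Zeroes}(\alpha)\subseteq P$ as closed subschemes, and under this identification the universal flag on $\mathbf{F}$ is the pullback of the tautological data on $P$.

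I would then apply the construction of Subsection \ref{cotan-van} with $T=P$, $N=\mathcal{G}$ (which is locally free), and $N'=\mathcal{K}$. This yields the exact sequence
\[
\SHom(\mathcal{G},\mathcal{K})\big\vert_{\mathbf{F}}\xrightarrow{~\beta~} \Omega_{P/S}^1\big\vert_{\mathbf{F}}\lra \Omega_{\mathbf{F}/S}^1\lra 0.
\]
Since $P=G_1\times_S G_2$, we have $\Omega_{P/S}^1\cong q_1^*\Omega_{G_1/S}^1\oplus q_2^*\Omega_{G_2/S}^1$, and the classical identifications $\Omega_{G_1/S}^1\cong \SHom(\mathcal{F},\mathcal{K})$ and $\Omega_{G_2/S}^1\cong \SHom(\mathcal{G},\mathcal{J})$ produce the desired description, provided one identifies the two components of $\beta$ with the natural maps $(\cdot)\circ\pi$ and $\iota\circ(\cdot)$ coming respectively from the universal surjection $\pi\colon \mathcal{F}\twoheadrightarrow \mathcal{G}$ and the universal inclusion $\iota\colon \mathcal{K}\hookrightarrow \mathcal{J}$ on $\mathbf{F}$.

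The main step that requires care is this last identification of the components of $\beta$, since the sign/direction of each map must be pinned down from the functorial definition of the trace pairing in Subsection \ref{cotan-van}. I would check it by reducing to a local model: after trivializing in a neighborhood of a point of $\mathbf{F}$ and choosing bases compatible with both quotients, the map $\alpha$ becomes a concrete matrix, and $\beta$ can be computed by differentiating the defining equations. This reduces to the Grassmannian case, which is standard, carried out twice (once for each factor), and the two contributions assemble into the claimed map $\SHom(\mathcal{G},\mathcal{K})\to \SHom(\mathcal{F},\mathcal{K})\oplus\SHom(\mathcal{G},\mathcal{J})$. Everything else is a formal consequence.
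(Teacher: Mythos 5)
Your proposal is correct and follows essentially the same route as the paper: realize $\mathbf{F}$ as the zero locus of the natural composite $\mathcal{K}\hookrightarrow q^*\mathcal{H}\twoheadrightarrow \mathcal{G}$ inside $G(\mathcal{H},r_1)\times_S G(\mathcal{H},r_2)$, apply the exact sequence from \S\ref{cotan-van} with $(N',N)=(\mathcal{K},\mathcal{G})$, and then substitute Lehn's identification $\Omega^1_{G(\mathcal{H},r)/S}\cong\SHom(\mathcal{F},\mathcal{K})$ on each factor. The extra step you flag — pinning down the two components of $\beta$ as precomposition with $\mathcal{F}\twoheadrightarrow\mathcal{G}$ and postcomposition with $\mathcal{K}\hookrightarrow\mathcal{J}$ — is accurate but not actually needed to establish the cokernel formula as stated, which is why the paper's proof omits it.
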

\begin{proof}
There is a natural morphism
$$
\mathbf{F} \lra G(\mathcal{H},r_1)\times_S G(\mathcal{H},r_2)
$$
determined at the level of functors by sending
$$
[\mathcal{H}_T \twoheadrightarrow F \twoheadrightarrow G] \mapsto ([\mathcal{H}_T \twoheadrightarrow F],[\mathcal{H}_T\twoheadrightarrow G]).
$$
This is a closed immersion and its image is precisely the locus $\{([\mathcal{H}\twoheadrightarrow F],[\mathcal{H}\twoheadrightarrow G])\}$ where 
\begin{align*}
\operatorname{ker}(\mathcal{H} \lra F) \subseteq \text{ker}(\mathcal{H} \lra G).
\end{align*}
At the level of universal sheaves, the image of $\mathbf{F}$ is the vanishing locus of the homomorphism
$$
\pr_1^*\widetilde{\mathcal{K}} \lra \pr_2^*\widetilde{\mathcal{G}}
$$
for
\begin{align*}
0 \lra \widetilde{\mathcal{K}} \lra \mathcal{H}_{G(\mathcal{H},r_1)} \lra \widetilde{\mathcal{F}} \lra 0\\
0 \lra \widetilde{\mathcal{J}} \lra \mathcal{H}_{G(\mathcal{H},r_2)} \lra \widetilde{\mathcal{G}} \lra 0
\end{align*}
the universal exact sequences on $G(\mathcal{H},r_1)$ and $G(\mathcal{H},r_2)$ respectively. This means we have a surjection $\SHom(\pr_2^*\widetilde{\mathcal{G}},\pr_1^*\widetilde{\mathcal{K}}) \lra \mathcal{I}_{\mathbf{F}/G(\mathcal{H},r_1)\times_S G(\mathcal{H},r_2)}$ which, after restricting to $\mathbf{F}$ and composing with the cotangent sequence of $\mathbf{F} \subseteq G(\mathcal{H},r_1)\times_S G(\mathcal{H},r_2)$, yields an exact sequence \[\SHom(\mathcal{G},\mathcal{K}) \lra (\Omega_{G(\mathcal{H},r_1)\times_S G(\mathcal{H},r_2)/S}^1)\vert_{\mathbf{F}} \lra \Omega_{\mathbf{F}/S}^1 \lra 0 \tag{*}. \]
By \cite[Theorem 2.1]{Lehn.98} we have

\begin{lemma}\label{cotangent-grassmannian}
In the case of the Grassmannian scheme, the above sequence gives an isomorphism
\[
\SHom(\mathcal{F},\mathcal{K})\lra \Omega_{G(\mathcal{H},r)/S}^1.
\]
\end{lemma}

\noindent Combining Lemma \ref{cotangent-grassmannian}
with sequence $(*)$ above, we get
$$
\Omega_{\mathbf{F}/S}^1 \cong \operatorname{coker}\left(\SHom(\mathcal{G},\mathcal{K}) \lra \SHom(\mathcal{F},\mathcal{K})\oplus \SHom(\mathcal{G},\mathcal{J})\right). \qedhere
$$
\end{proof}

\begin{remark}
Note that there are at least three natural sequences
\[
0\lra \SHom(\mathcal{F}, \mathcal{K})
\lra
\SHom(\mathcal{F}, \mathcal{K})\oplus
\SHom(\mathcal{F}, \mathcal{K})\lra
\SHom(\mathcal{F}, \mathcal{K})\lra 0
\]
on $G(\mathcal{H}, r)$.
The first two come from the natural inclusions and projections. The third is induced
by a diagonal embedding as in Proposition \ref{flag-cotangent}.
If $r_1 = r_2 = r$, then the cotangent sequence
\[
0\lra \SHom(\mathcal{F}, \mathcal{K})
\lra \Omega_{G(\mathcal{H}, r)/S} ^1 \times_S 
\Omega_{G(\mathcal{H}, r)/S} ^1\lra
\Omega_{G(\mathcal{H}, r)/S} ^1\lra 0
\]
identifies with an exact sequence of the third type under the isomorphism 
\[
\Omega_{G(\mathcal{H}, r)/S} ^1\cong \SHom(\mathcal{F}, \mathcal{K}).
\]
\end{remark}

\subsection{The differential map of projective bundles}\label{conormal-sheaf-of-sequence}
Assume that $\mathcal{H}$ is locally free on $S$.
Let $g \colon Y\lra S$ be a flat projective morphism and let 
\[
0\lra K\lra g^*\mathcal{H}\lra F\lra 0
\]
be a short exact sequence of locally free sheaves on $Y$. Set $r:=\operatorname{rank}(F)$. By the universal property
of the Grassmannian scheme, there is a morphism
\[
\phi \colon Y\lra G(\mathcal{H}, r)
\]
such that the sequence on $Y$ is pulled back from the universal sequence on $G(\mathcal{H}, r)$.
Let $\mathcal{I}$ be the ideal of $\mathbb{P}(F)$
in $\mathbb{P}(g^*\mathcal{H})=Y\times_S \mathbb{P}(\mathcal{H})$ and let
$\pi \colon \mathbb{P}(F)\lra Y$ and $\lambda: \mathbb{P}(F)\lra \mathbb{P}(\mathcal{H})$ be the natural maps. 
Consider the diagram
\begin{center}
\begin{tikzcd}
& & 0\ar[d] & 0 \ar[d]& \\
& & \pi^* \Omega_Y ^1 (1)\ar[r, equals]\ar[d]&
\pi^* \Omega_Y ^1 (1)\ar[d]& \\
0\ar[r]& \mathcal{I}/\mathcal{I}^2 (1)\ar[r]\ar[d, "\cong"] &\pi^* \Omega_Y ^1 (1)\oplus \lambda^* \Omega_{\mathbb{P}(\mathcal{H})} ^1(1)\ar[r]\ar[d]& \Omega_{\mathbb{P}(F)}^1 (1) \ar[r]\ar[d]& 0\\
0\ar[r]& \pi^*K \ar[r]&\ar[r] \pi^* g^* \mathcal{H}\ar[r]\ar[d]&
\pi^* F\ar[r]\ar[d] & 0\\
& & \mathcal{O}_{\mathbb{P}(F)}(1)\ar[d]\ar[r, equals]&
\mathcal{O}_{\mathbb{P}(F)}(1)\ar[d]& \\
& & 0 & ~0 & 
\end{tikzcd}.
\end{center}
We have an induced map
\[
\pi^* K \lra 
{\Omega_{\mathbb{P}(g^* \mathcal{H})}^1}|_{\mathbb{P}(F)} (1)= \pi^* \Omega_Y ^1 (1) \oplus \lambda^* \Omega_{\mathbb{P}(\mathcal{H})} ^1(1)\lra
\pi^* \Omega_Y ^1 (1).
\]
By pushing forward to $Y$, we get a map
\[
K\lra \Omega_Y ^1 \otimes F.
\]
We have therefore and induced map
\[
K\otimes F^\vee \lra \Omega_Y ^1 \otimes F\otimes F^\vee \overset{\operatorname{id}\times \operatorname{tr}}{\lra} \Omega_Y ^1.
\]
This map coincides with the differential
\[
d \colon \phi^* \Omega_{G(H, r)} ^1\cong
\SHom(F, K)
\lra \Omega_Y ^1.
\]
To see that these two maps coincide, note that the above diagram is functorial in $Y$. Therefore, it suffices to check the statement for $Y=G(H, r)$. In this case, the differential $d$ is the identity and the claim follows from a careful analysis of the definition of all the maps involved.
As a consequence of this analysis, note that the map $\pi^*K \lra \pi^* \Omega_{G(\mathcal{H}, r)} ^1 (1)$ is induced by the natural pairing 
 \[
K \otimes  \SHom(K, F) \lra F.
 \]

\subsection{Inclusion of $\text{Quot}(\mathcal{H},P_1, P_2)$ in a large flag scheme}\label{Section-inclusion-Grassmannian}
Now we return to our original setting, with $p \colon X\lra S$ denoting a smooth morphism of Noetherian schemes of relative dimension $d$ with $p_*\OO_X \cong \OO_S$, $\mathcal{H}$ a coherent sheaf on $X$, and $P_1$ and $P_2$ a pair of numerical polynomials.

Let $\mathcal{L}$ be a line bundle on $X$ which is very ample relative to $p$. We may assume that the $\mathcal{O}_S$-algebra $\mathcal{S}:= \oplus_{i\geq 0}\mathcal{S}_i$, for $\mathcal{S}_i := p_*(\mathcal{L}^{\otimes i})$, is generated by $\mathcal{S}_1$ and that all summands $\mathcal{S}_i$ are locally free. Let $\widetilde{p} : X\times_S \textnormal{Quot}(\mathcal{H},P_1,P_2) \lra S$ and
$q:\textnormal{Quot}(\mathcal{H},P_1,P_2)\lra S$ denote the structure maps.
For any sheaf $\mathcal{E}$ on $X\times_S \textnormal{Quot}(\mathcal{H},P_1,P_2)$ and for each integer $m$ we define 
\[
\mathcal{E}_m:=\operatorname{pr}_{2,*} (\mathcal{E}\otimes \pr_1^*\mathcal{L}^{\otimes m}).
\]
We also denote $\mathcal{H}_m  := \operatorname{pr}_{2,*}(\pr_1^*(\mathcal{H}\otimes \mathcal{L}^{\otimes m}))$.
Consider the following commutative diagram
on $\operatorname{Quot}(\mathcal{H},P_1,P_2)$
\begin{equation}\label{m-diags}
    \begin{tikzcd}
        0 \ar[r] & \mathcal{K}_m \ar[r] \ar[d,hookrightarrow] & \mathcal{H}_m \ar[r]\ar[d,equals] & \mathcal{F}_m \ar[r]\ar[d] & 0\\
        0 \ar[r] & \mathcal{J}_m \ar[r] & \mathcal{H}_m \ar[r] & \mathcal{G}_m \ar[r] & 0
    \end{tikzcd}.
\end{equation}
Then there is an integer $m_0$ such that for all $m \geq m_0$, we have:
\begin{itemize}
    \item[(i)] the rows of Diagram \ref{m-diags} are exact (and hence the right vertical map is surjective);
    \item[(ii)] $\mathcal{K}_m$ (respectively $\mathcal{J}_m$) generates the graded $ \mathcal{S}$-module $\oplus_{m'\geq m}\mathcal{K}_{m'}$ (respectively $\oplus_{m'\geq m}\mathcal{J}_{m'}$);
    \item[(iii)] both $\mathcal{F}_m$ and $\mathcal{G}_m$ are locally free of ranks $P_1(m)$ and $P_2(m)$, respectively; and
    \item[(iv)] $q^* (p_*(\mathcal{H}\otimes \mathcal{L}^{\otimes m})) \cong \mathcal{H}_m$.
\end{itemize}
By (i), we have that $\mathcal{F}$
and $\mathcal{G}$ are determined by 
$\oplus_{m'\geq m}\mathcal{K}_{m'}$ and 
$\oplus_{m'\geq m}\mathcal{J}_{m'}$
respectively. By (ii) we have that the submodules
$$
    \oplus_{m'\geq m}\mathcal{K}_{m'}  \subseteq \oplus_{m'\geq m}\mathcal{H}_{m'} ~~\text{ and }~~\quad \oplus_{m'\geq m}\mathcal{J}_{m'}  \subseteq \oplus_{m'\geq m}\mathcal{H}_{m'}
$$
are determined by $\mathcal{K}_m \subseteq \mathcal{H}_m$ and $\mathcal{J}_m \subseteq \mathcal{H}_m$ respectively.
By (iii) and (iv)  the submodules $\mathcal{K}_m \subseteq \mathcal{J}_m \subseteq \mathcal{H}_m$ can be viewed as submodules
$$
\mathcal{K}_m \subseteq \mathcal{J}_m \subseteq q^*(p_*(\mathcal{H}\otimes \mathcal{L}^{\otimes m}))
$$
and are determined by a classifying morphism 
\begin{center}
\begin{tikzcd}
\iota_m \colon \textnormal{Quot}(\mathcal{H},P_1,P_2) \ar[r,hookrightarrow] & \mathbf{F}_m := \textnormal{Fl}(p_*(\mathcal{H}\otimes \mathcal{L}^{\otimes m}),P_1(m),P_2(m))
\end{tikzcd}
\end{center}
which is a closed embedding.

\subsection{Equations for $\textnormal{Quot}(\mathcal{H},P_1,P_2)$ in the flag scheme.}

Next, we use our computation of the cotangent sheaf of the flag scheme in \S\ref{flag-cotangent-section} to show we may pick a sequence of integers $m \leq n_0 < \cdots < n_{\ell}$ and a sheaf homomorphism $N'\lra N$, with $N$ locally free, vanishing along the image of the closed immersion
    \begin{center}
    \begin{tikzcd}
        \iota:=\prod_{i=0} ^{\ell} \iota_{n_i} \colon \textnormal{Quot}(\mathcal{H},P_1,P_2) \ar[r,hookrightarrow] & \Pi := 
        \prod_{i=0} ^{\ell} \mathbf{F}_{n_i}
    \end{tikzcd}
    \end{center}
For each $m$, let
\begin{center}
    \begin{tikzcd}
        0 \ar[r] & \mathcal{A}_{m} \ar[r,"\alpha_m"]\ar[d,hookrightarrow] & f_{m} ^*\mathcal{H}_{m} \ar[r,"\beta_m"] \ar[d,equals] & \mathcal{B}_{m} \ar[r] \ar[d,twoheadrightarrow] & 0\\
        0 \ar[r] & \mathcal{C}_{m} \ar[r, "\gamma_m"] & f_{m} ^*\mathcal{H}_{m} \ar[r,"\delta_m"] & \mathcal{D}_{m} \ar[r] & 0
    \end{tikzcd}
\end{center}
be the universal diagram on $\mathbf{F}_{m}$. For any pair of integers $m'\geq m$, there are multiplication maps
\[
\varphi_{m,m'} \colon \pr_1^*f_{m}^*\mathcal{S}_{m'-m}\otimes \pr_1^*\mathcal{A}_m \overset{\textnormal{id} \otimes \pr_1^*(\alpha_m)}{\lra} \pr_1^*f_m^*(\mathcal{S}_{m'-m}\otimes \mathcal{H}_m) \lra (f_m\circ\hspace{1pt} \pr_1)^*\mathcal{H}_{m'} = \pr_2^*f_{m'}^*\mathcal{H}_{m'} \overset{\pr_2^*(\beta_{m'})}{\lra} \operatorname{pr}_2 ^* \mathcal{B}_{m'}
\]
and (similarly)
\[
\psi_{m,m'}: \operatorname{pr}_1 ^* f_{m} ^*
\mathcal{S}_{m'-m}\otimes \operatorname{pr}_1 ^* \mathcal{C}_m \lra \operatorname{pr}_2 ^* \mathcal{D}_{m'}
\]
on $\mathbf{F}_m \times_S \mathbf{F}_{m'}$.

\begin{lemma}\label{vanishing}
There are integers $m\leq n_0 < \cdots < n_\ell$
such that the image of the closed immersion $\iota=\prod_{i=0}^{\ell} \iota_{n_i}$ is an open and closed subscheme of the vanishing locus of
\[
\varphi\oplus \psi:=\bigoplus_{i=1} ^{\ell} 
\varphi_{n_0, n_i}\oplus \psi_{n_0, n_i}.
\]
\end{lemma}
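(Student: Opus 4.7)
The first step is to verify that the closed immersion $\iota$ factors through the vanishing locus of $\varphi \oplus \psi$, for any choice of integers $m \leq n_0 < \cdots < n_\ell$. This is essentially tautological: on $\operatorname{Quot}(\mathcal{H},P_1,P_2)$, the subsheaves $\mathcal{K}_m \subseteq \mathcal{J}_m \subseteq \mathcal{H}_m$ arise as the degree-$m$ pushforwards of a nested pair of coherent subsheaves of $\widetilde{p}^*\mathcal{H}$, so they assemble into graded sub-$\mathcal{S}$-modules of $\bigoplus_m \mathcal{H}_m$. In particular, the multiplication $\mathcal{S}_{n_i - n_0} \otimes \mathcal{K}_{n_0} \to \mathcal{H}_{n_i}$ factors through $\mathcal{K}_{n_i}$, which kills its composition with $\mathcal{H}_{n_i} \twoheadrightarrow \mathcal{F}_{n_i}$; this gives $\varphi_{n_0,n_i}|_{\operatorname{Quot}} = 0$, and likewise $\psi_{n_0, n_i}|_{\operatorname{Quot}} = 0$.

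For the converse, I plan to use Castelnuovo--Mumford regularity to pin down the correct component of the vanishing locus $Z := \operatorname{Zeroes}(\varphi \oplus \psi)$. By Serre's vanishing theorem, we may take $n_0 \geq m_0$ sufficiently large (depending only on $\mathcal{H}$, $\mathcal{L}$, $P_1$, $P_2$) so that every coherent subsheaf $K \subseteq \mathcal{H}$ whose quotient has Hilbert polynomial $P_1$ (respectively $P_2$) is $n_0$-regular, and so the multiplication map $\mathcal{S}_{m' - n_0} \otimes p_*(K \otimes \mathcal{L}^{n_0}) \twoheadrightarrow p_*(K \otimes \mathcal{L}^{m'})$ is surjective for all $m' \geq n_0$. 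Working relatively, this means that on $\iota(\operatorname{Quot}(\mathcal{H},P_1,P_2))$ the induced maps
\[
\mu_i \colon \mathcal{S}_{n_i - n_0} \otimes \mathcal{A}_{n_0} \twoheadrightarrow \mathcal{A}_{n_i}
\]
are surjective, and likewise for their analogues $\nu_i$ built from $\mathcal{C}_{n_0}$.

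Next, choose $n_1, \ldots, n_\ell$ so that $\ell$ exceeds the maximum of the degrees of $P_1$ and $P_2$, and so that specifying the values of a numerical polynomial of that degree at $n_0, \ldots, n_\ell$ determines it uniquely. By upper-semicontinuity of cokernel rank, the loci on which $\mu_i$ and $\nu_i$ are surjective for all $i$ cut out a Zariski open neighborhood $Z^\circ \subseteq Z$ of $\iota(\operatorname{Quot})$. On $Z^\circ$, the subsheaves $\mathcal{A}_{n_0} \subseteq \mathcal{C}_{n_0}$ generate, via $\mathcal{S}$-multiplication, graded submodules whose degree-$n_i$ pieces coincide with $\mathcal{A}_{n_i} \subseteq \mathcal{C}_{n_i}$; these assemble into coherent subsheaves $\widetilde{\mathcal{K}} \subseteq \widetilde{\mathcal{J}} \subseteq \widetilde{p}^*\mathcal{H}$ whose quotients have Hilbert polynomials taking the prescribed values $P_1(n_i)$ and $P_2(n_i)$. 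The rigidity of polynomial interpolation from $\ell + 1$ values, combined with the local constancy of Hilbert polynomials on flat loci, identifies these quotients as having Hilbert polynomials $P_1$ and $P_2$ respectively, and yields a morphism $Z^\circ \to \operatorname{Quot}(\mathcal{H},P_1,P_2)$ inverse to $\iota$. Hence $\iota$ identifies $\operatorname{Quot}(\mathcal{H},P_1,P_2)$ with the open and closed subscheme $Z^\circ$ of $Z$.

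The main obstacle will be ensuring that on $Z^\circ$ the graded submodules generated by $\mathcal{A}_{n_0}$ and $\mathcal{C}_{n_0}$ actually produce \emph{flat} families of quotients of $\widetilde{p}^*\mathcal{H}$ with the correct Hilbert polynomials at \emph{all} degrees, not just at $n_0, \ldots, n_\ell$. This is where the $n_0$-regularity hypothesis is essential: it propagates compatibility of the multiplication maps to all higher degrees, and combined with the interpolation argument it forces constancy of the Hilbert polynomial on $Z^\circ$, hence flatness of the universal quotients there.
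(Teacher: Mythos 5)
Your forward direction is correct and matches the paper: the vanishing of $\varphi_{n_0,n_i}\oplus\psi_{n_0,n_i}$ on $\iota(\operatorname{Quot}(\mathcal{H},P_1,P_2))$ is indeed tautological, since there the $\mathcal{K}_{n}$, $\mathcal{J}_{n}$ assemble into graded $\mathcal{S}$-submodules.

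The converse, however, has a genuine circularity that the paper's argument is specifically built to avoid. You invoke a uniform regularity bound for ``every coherent subsheaf $K\subseteq\mathcal{H}$ whose quotient has Hilbert polynomial $P_1$ (resp.\ $P_2$).'' That bound is correct, but it only applies to subsheaves already known to have the desired quotient Hilbert polynomial. Over a general point $z\in Z^\circ$, the graded submodule generated by $\mathcal{A}_{n_0}(z)$ and the resulting coherent subsheaf $\widetilde{\mathcal{K}}_z\subseteq\mathcal{H}_z$ are not a priori in this class --- that is exactly what you are trying to prove. Without a regularity bound applicable to \emph{those} sheaves, you do not know that their Hilbert function has stabilized to its polynomial by degree $n_0$, so the interpolation argument from the values at $n_0,\dots,n_\ell$ does not determine the Hilbert polynomial of the quotient. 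You acknowledge the obstacle in your last paragraph, but the proposed fix (``$n_0$-regularity propagates compatibility of the multiplication maps'') again only uses regularity of the sheaves over $\iota(\operatorname{Quot})$, not those parametrized by $Z^\circ$ generally, so it does not close the gap. As a separate issue, openness of $Z^\circ$ in $Z$ alone does not make $\iota(\operatorname{Quot})$ open and closed in $Z$: one still needs $Z^\circ=\iota(\operatorname{Quot})$, which is where your argument breaks.

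The paper's route is different and sidesteps the circularity: it takes the flattening stratification of $\mathbf{F}_{n_0}$ for the cokernels $\mathscr{C}^i$, giving finitely many locally closed strata each carrying a definite Hilbert-polynomial pair, so that the relevant Hilbert polynomials are \emph{a priori} finite in number. Combined with properness of the target stratum $\mathbf{F}_{n_0,(P_1,P_2)}=\iota_{n_0}(\operatorname{Quot})$, this produces an open neighborhood $U$ and an integer $m_2$ with the uniform upper bound $\dim_{k(u)}C^i_n(u)\leq P_i(n)$ for all $n\geq m_2$ and $u\in U$, precisely the bound your argument lacks. From there, a Noetherian argument selects finitely many $n_i$ cutting $\iota(\operatorname{Quot})$ out of $U$, and the identification of each $Z_{n_i}$ with the vanishing locus of $\varphi_{n_0,n_i}\oplus\psi_{n_0,n_i}$ comes from the observation that (thanks to the bound on ranks over $U$) any surjection $C^j_{n_i}\vert_U\twoheadrightarrow N_j$ onto a locally free sheaf of rank $P_j(n_i)$ is forced to be an isomorphism, so the relative Grassmannian embeds as a closed subscheme. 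If you want to salvage your approach, you would first need to establish that $Z^\circ$ parametrizes a bounded family of subsheaves (not just those over $\iota(\operatorname{Quot})$), which is essentially equivalent to re-deriving the finiteness that the flattening stratification gives for free.
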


\begin{proof}
Pick $n_0\geq m$. The inclusions
\begin{center}\begin{tikzcd}[row sep = tiny]
\alpha_{n_0} \colon
\mathcal{A}_{n_0}\ar[r] &
f_{n_0} ^*(\mathcal{H}_{n_0})\\
\gamma_{n_0} \colon \mathcal{C}_{n_0} \ar[r] & f_{n_0} ^*(\mathcal{H}_{n_0})\end{tikzcd}\end{center}
induce morphisms
\begin{center}\begin{tikzcd}[row sep = tiny]
\chi_1 \colon f_{n_0}^* \mathcal{S}(-n_0)\otimes 
\mathcal{A}_{n_0}\ar[r] & 
\oplus_{n\geq 0} f_{n_0}^* (\mathcal{H}_n)\\
\chi_2 \colon f_{n_0}^*\mathcal{S}(-n_0) \otimes \mathcal{C}_{n_0}\ar[r] & \oplus_{n\geq 0}f_{n_0}^*(\mathcal{H}_n)
\end{tikzcd}\end{center}
on $\mathbf{F}_{n_0}$ and 
\begin{center}\begin{tikzcd}[row sep = tiny]
\chi_1' \colon \operatorname{pr}_1 ^* \mathcal{L}^{-n_0} 
\otimes \operatorname{pr}_2 ^* \mathcal{A}_{n_0} \ar[r] &
\operatorname{pr}_1 ^* (\mathcal{H})\\
\chi_2'\colon \pr_1^*\mathcal{L}^{-n_0}\otimes \pr_2^*\mathcal{C}_{n_0}\ar[r] & \pr_1^*(\mathcal{H})
\end{tikzcd}\end{center}
on $X\times_S \mathbf{F}_{n_0}$.
Let $C^i:=\operatorname{coker}(\chi_i)$ and $\mathscr{C}^i=\operatorname{coker}(\chi_i')$, for $i = 1,2$ in both cases.
Take the flattening stratification of $\mathscr{C}^i$ for $i=1,2$ over $\mathbf{F}_{n_0}$. The set of pairs of Hilbert polynomials 
$\mathcal{P}=\{(P(\mathscr{C}^1_g),P(\mathscr{C}^2_g))\hspace{3pt}|\hspace{3pt} g\in \mathbf{F}_{n_0}\}$ is finite and there is a decomposition of $\mathbf{F}_{n_0}$ into finitely many locally closed subschemes $\mathbf{F}_{n_0, (P_1',P_2')}$ with $(P_1',P_2')\in \mathcal{P}$ such that a morphism 
$\phi: T\lra \mathbf{F}_{n_0}$ factors through
\[
j \colon \bigsqcup_{(P_1',P_2')\in \mathcal{P}} \mathbf{F}_{n_0, (P_1',P_2')}\lra \mathbf{F}_{n_0}
\]
if and only if $\phi^* C^i$ is flat over $T$ for $i=1,2$. 
Moreover, there is an integer $m_1\geq n_0$ such that
each $\mathbf{F}_{n_0, (P_1',P_2')}$ is characterized as the maximal locally closed subscheme in $\mathbf{F}_{n_0}$
such that ${C^i_n}|_{\mathbf{F}_{n_0,(P_1',P_2')}}$ is locally free 
of rank $P_i'(n)$ for all $n\geq m_1$.
Then $\iota_{n_0} (\operatorname{Quot}(\mathcal{H}, P_1,P_2))=\mathbf{F}_{n_0,(P_1,P_2)}$ as subschemes. It turns out \cite[Lemme 3.7]{Grothendieck.60} that in fact $\mathbf{F}_{n_0, (P_1,P_2)}$ is proper. 
In particular, there is an open neighborhood $U$ of
$\mathbf{F}_{n_0, (P_1,P_2)}$ and an integer $m_2\geq m_1$ such that for all $n\geq m_2$ and all $u\in U$ we
have $P(\mathscr{C}^i_u, n)=\operatorname{dim}_{k(u)}(C^i_n (u))\leq P_i(n)$.
Denote by $Z^i_n\subseteq U$ the maximal closed subscheme of $U$ such that ${C^i_n}|_{Z^i_n}$ is locally free of rank $P_i(n)$ (so that $Z_n := Z^1_n\cap Z^2_n$ is the maximal closed subscheme of $U$ such that $C^i_n\vert_{Z^i_n}$ is locally free of rank $P_i(n)$ for both $i = 1,2$ simultaneously). Then
\[\iota_{n_0}(\text{Quot}(\mathcal{H},P_1,P_2))=\cap_{n\geq m_2} Z_n.\]
Since $\mathbf{F}_{n_0}$ is Noetherian, there will be finitely many integers 
$m_2\leq n_1\leq \cdots \leq n_{\ell}$ so that
\[
\iota_{n_0}(\text{Quot}(\mathcal{H},P_1,P_2))=\cap_{i=1}^{\ell} Z_{n_i}.
\] By our assumptions, for each $n_i$ and every morphism $\phi: T\lra U$, a diagram of surjections as follows
\begin{center}
    \begin{tikzcd}
        \phi^*C^1_{n_i} \ar[r, twoheadrightarrow] & N_1\ar[d,twoheadrightarrow]\\
        \phi^*C^2_{n_i} \ar[r,twoheadrightarrow] & N_2,
    \end{tikzcd}
\end{center}
where for $j = 1,2$ the sheaf $N_j$ is locally free of rank $P_j(n_i)$, must have that the horizontal maps are isomorphisms. This implies that for $j = 1,2$ we have $\textnormal{Grass}(C^j_{n_i}\vert_U,P_j(n_i)) \lra U$ is a closed immersion and the scheme theoretic intersection of the images is $Z_{n_i}$. In fact, the inclusion $Z_{n_i} \lra U$ factors through a closed immersion into $U\times \mathbf{F}_{n_i}$, whose image is precisely the vanishing locus of $\varphi_{n_0, n_i}\vert_{U\times \mathbf{F}_{n_i}}$ and $\psi_{n_0,n_i}\vert_{U\times \mathbf{F}_{n_i}}$. It follows that $\iota(\textnormal{Quot}(\mathcal{H},P_1,P_2)) \subseteq U\times \prod_{i=1}^{\ell}\mathbf{F}_{n_i}$ is the vanishing locus of $\varphi\oplus\psi\vert_{U\times \prod_{i=1}^{\ell}\mathbf{F}_{n_i}}$. Replacing $U$ by $\mathbf{F}_{n_0}$ we get the assertion of the lemma. 
\end{proof}

\begin{corollary}
There is an exact diagram on $\operatorname{Quot}(\mathcal{H}, P_1, P_2)$:
\begin{center}    
\begin{tikzcd}
\oplus_{i=1} ^{\ell} \left(\SHom(\mathcal{F}_{n_i},  \mathcal{K}_{n_0} \otimes S_{n_i - n_0})
\oplus
\SHom( \mathcal{G}_{n_i},  \mathcal{J}_{n_0} \otimes S_{n_i - n_0})\right) \ar[d]\ar[dr, "\Lambda^1 \oplus \Lambda^2"]& \\
\oplus_{i=0} ^{\ell} \Omega_{\mathbf{F}_{n_i}} ^1\ar[d]
&
\oplus_{i=1} ^{\ell} \left( \SHom( \mathcal{F}_{n_i},  \mathcal{K}_{n_i})\oplus \SHom ( \mathcal{G}_{n_i},  \mathcal{J}_{n_i})
\right) \ar[l]\\
\Omega_{\operatorname{Quot}(\mathcal{H}, P_1, P_2)/S} ^1 \ar[d]& \\
~0. &
\end{tikzcd}
\end{center}
\end{corollary}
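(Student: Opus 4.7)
The plan is to combine Lemma \ref{vanishing} with the cotangent sheaf description from \S\ref{cotan-van} and Proposition \ref{flag-cotangent}. First I would invoke Lemma \ref{vanishing} to fix integers $m \leq n_0 < \cdots < n_\ell$ realizing $Z := \operatorname{Quot}(\mathcal{H}, P_1, P_2)$ as an open-and-closed subscheme of the zero locus of $\varphi \oplus \psi$ inside $\Pi := \prod_{i=0}^\ell \mathbf{F}_{n_i}$. The vertical column of the claimed diagram will then come from the cotangent sequence of \S\ref{cotan-van} applied to the homomorphism $\varphi \oplus \psi$, namely
\[
\SHom(N|_Z, N'|_Z) \lra \Omega^1_{\Pi/S}|_Z \lra \Omega^1_{Z/S} \lra 0,
\]
where $N'$ and $N$ are the source and target of $\varphi \oplus \psi$. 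Using the identifications of \S\ref{Section-inclusion-Grassmannian}, the first term will restrict on $Z$ to $\bigoplus_{i=1}^\ell \big(\SHom(\mathcal{F}_{n_i}, \mathcal{K}_{n_0} \otimes S_{n_i - n_0}) \oplus \SHom(\mathcal{G}_{n_i}, \mathcal{J}_{n_0} \otimes S_{n_i - n_0})\big)$, and the middle term factors naturally as $\bigoplus_{i=0}^\ell \Omega^1_{\mathbf{F}_{n_i}/S}|_Z$ since $\Pi$ is a product over $S$. This already gives exactness of the leftmost column.

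Next I would construct the intermediate object and the diagonal map $\Lambda^1 \oplus \Lambda^2$. By Proposition \ref{flag-cotangent}, each $\Omega^1_{\mathbf{F}_{n_i}/S}|_Z$ is a natural quotient of $\SHom(\mathcal{F}_{n_i}, \mathcal{K}_{n_i}) \oplus \SHom(\mathcal{G}_{n_i}, \mathcal{J}_{n_i})$. I would define $\Lambda^1 \oplus \Lambda^2$ via post-composition with the multiplication maps $\mathcal{K}_{n_0} \otimes S_{n_i - n_0} \lra \mathcal{K}_{n_i}$ and $\mathcal{J}_{n_0} \otimes S_{n_i - n_0} \lra \mathcal{J}_{n_i}$, which are available by property (ii) of \S\ref{Section-inclusion-Grassmannian}.

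The main obstacle will be verifying that the composite $\Lambda^1 \oplus \Lambda^2$ followed by the surjection from Proposition \ref{flag-cotangent} actually coincides with the natural vertical map of the cotangent sequence. This requires a careful trace-adjoint computation: the map $\alpha' \colon \SHom(N, N') \lra \mathcal{O}_\Pi$ from \S\ref{cotan-van} has to be computed factor-by-factor on each $\mathbf{F}_{n_0} \times_S \mathbf{F}_{n_i}$, and its contribution in the $\mathbf{F}_{n_i}$ component along the restriction to $Z$ must match the differential of the Grassmannian classifying map on that factor. Using the explicit description in \S\ref{conormal-sheaf-of-sequence} of the latter --- as the one induced by the natural pairing $\mathcal{K} \otimes \SHom(\mathcal{K}, \mathcal{F}) \lra \mathcal{F}$ --- one sees that the multiplication map of the graded algebra $\mathcal{S}$ enters precisely as the component producing $\Lambda^1$, and analogously the $\mathcal{G}, \mathcal{J}$ side produces $\Lambda^2$. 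Once this compatibility is checked, the full exact diagram follows.
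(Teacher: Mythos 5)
Your approach matches the paper's: Lemma \ref{vanishing} realizes $\operatorname{Quot}(\mathcal{H},P_1,P_2)$ as open and closed in the vanishing locus of $\varphi\oplus\psi$ inside $\Pi=\prod_{i=0}^{\ell}\mathbf{F}_{n_i}$, the exact column is the cotangent sequence of a vanishing locus from \S\ref{cotan-van} together with $\Omega^1_{\Pi/S}\cong\bigoplus_i\Omega^1_{\mathbf{F}_{n_i}/S}$, and the diagonal factoring comes from Proposition \ref{flag-cotangent} after rewriting $\iota^*\mathcal{A}_{n_i}=\mathcal{K}_{n_i}$, $\iota^*\mathcal{B}_{n_i}=\mathcal{F}_{n_i}$, $\iota^*\mathcal{C}_{n_i}=\mathcal{J}_{n_i}$, $\iota^*\mathcal{D}_{n_i}=\mathcal{G}_{n_i}$. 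The paper's proof is a one-line citation of precisely these ingredients.

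Two small remarks. First, the verification you flag as the ``main obstacle'' --- that the map from the cotangent sequence genuinely factors through $\bigoplus_i\left(\SHom(\mathcal{F}_{n_i},\mathcal{K}_{n_i})\oplus\SHom(\mathcal{G}_{n_i},\mathcal{J}_{n_i})\right)$ and that $\Lambda^1\oplus\Lambda^2$ makes the triangle commute --- is not carried out inside this corollary in the paper; it is deferred to the subsequent Lemma \ref{entries} and the explicit identification of the entries $\Lambda^j_{ab}$ that follows. So you are anticipating the next step rather than filling a gap in the corollary itself. Second, you describe $\Lambda^1\oplus\Lambda^2$ as defined only by post-composition with the multiplication maps $\mathcal{S}_{n_i-n_0}\otimes\mathcal{K}_{n_0}\to\mathcal{K}_{n_i}$ and $\mathcal{S}_{n_i-n_0}\otimes\mathcal{J}_{n_0}\to\mathcal{J}_{n_i}$, i.e.\ only the $\lambda'_{j,i}$-type (diagonal) entries; the paper's identification of $\Lambda^j$ also records nonzero off-diagonal entries $\Lambda^j_{0i}=-\lambda''_{j,i}$ landing in the $n_0$-factor, which arise from the $\mathbf{F}_{n_0}$-side contribution of $\varphi_{n_0,n_i}\oplus\psi_{n_0,n_i}$. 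This omission is harmless for establishing exactness of the column, but your $\Lambda$ as stated does not on its own make the full triangle commute; keep that in mind if you were to carry the argument through.
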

\begin{proof}
This follows from Lemma \ref{vanishing}, Proposition \ref{flag-cotangent}, and the fact that $\iota^*\mathcal{A}_{n_i} = \mathcal{K}_{n_i}$, $\iota^*\mathcal{B}_{n_i} = \mathcal{F}_{n_i}$, $\iota^*\mathcal{C}_{n_i} = \mathcal{J}_{n_i}$ and $\iota^*\mathcal{D}_{n_i} = \mathcal{G}_{n_i}$. 
\end{proof}

Next, we identify the entries of $\Lambda^1$ and $\Lambda^2$.
Let $Z_{m,m'}\subseteq \mathbf{F}_m \times_S \mathbf{F}_{m'}$
be the vanishing locus of 
$\varphi_{m,m'}\oplus \psi_{m,m'}$. By definition, we have homomorphisms
\begin{align*}
&\operatorname{pr}_1 ^*(f_m ^* \mathcal{S}_{m'-m}\otimes\mathcal{A}_m |_{Z_{m,m'}})\lra \operatorname{pr}_2 ^* \mathcal{A}_{m'}|_{Z_{m,m'}}\\
&\operatorname{pr}_1 ^*( f_m ^* \mathcal{S}_{m'-m}\otimes \mathcal{C}_m |_{Z_{m,m'}})\lra \operatorname{pr}_2 ^* \mathcal{C}_{m'}|_{Z_{m,m'}}\\
&\operatorname{pr}_1 ^*f_m ^* \mathcal{S}_{m'-m}\otimes \pr_2^*\mathcal{B}_{m'} ^\vee |_{Z_{m,m'}}\lra \operatorname{pr}_1 ^* \mathcal{B}_{m} ^\vee |_{Z_{m,m'}} \\
&\operatorname{pr}_1 ^*f_m ^* \mathcal{S}_{m'-m}\otimes \pr_2^*\mathcal{D}_{m'} ^\vee |_{Z_{m,m'}}\lra \operatorname{pr}_1 ^* \mathcal{D}_{m} ^\vee |_{Z_{m,m'}}.
\end{align*}

\begin{lemma}\label{entries}
There is a commutative diagram
\begin{center}
\begin{tikzcd}
    \left.\stackMath\parenVectorstack[c]{{
        \pr_2 ^*\mathcal{B}_{m'}^\vee \otimes \operatorname{pr}_1 ^* \mathcal{A}_m\otimes \pr_1^*f_m^*\mathcal{S}_{m'-m}
    }
    {
        \oplus
    }
    {
        \operatorname{pr}_2 ^* \mathcal{D}_{m'} ^\vee \otimes \operatorname{pr}_1 ^* \mathcal{C}_m\otimes\pr_1^*f_m^*\mathcal{S}_{m'-m}
    }}\right\vert_{Z_{m,m'}} \ar[r] \ar[d] &
    \left.\stackMath\parenVectorstack[c]{{
        \pr_1^*((\mathcal{B}_m^{\vee}\otimes \mathcal{A}_m)\oplus(\mathcal{D}_m^{\vee}\otimes \mathcal{C}_m))
    }
    {
        \oplus
    }
    {
        \pr_2^*((\mathcal{B}_{m'}^{\vee}\otimes \mathcal{A}_{m'})\oplus(\mathcal{D}_{m'}^{\vee}\otimes \mathcal{C}_{m'}))
    }}\right\vert_{Z_{m,m'}} \ar[d]\\
    \mathcal{I}_{Z_{m,m'}}/\mathcal{I}_{Z_{m,m'}} ^2\ar[r] &
    \operatorname{pr}_1 ^* \Omega_{\mathbf{F}_m} ^1 |_{Z_{m,m'}}
    \oplus
    \operatorname{pr}_2 ^* \Omega_{\mathbf{F}_{m'}} ^1 |_{Z_{m,m'}},
\end{tikzcd}
\end{center}
where the top horizontal map is induced by the above multiplications. 
\end{lemma}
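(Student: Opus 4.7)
The plan is to construct the diagram term-by-term and then check commutativity by naturality of the cotangent sequence. The left vertical map is obtained directly from the general vanishing locus construction of \S\ref{cotan-van}: since $Z_{m,m'} \subseteq \mathbf{F}_m \times_S \mathbf{F}_{m'}$ is cut out as $\text{Zeroes}(\varphi_{m,m'} \oplus \psi_{m,m'})$, with target the locally free sheaf $\pr_2^*(\mathcal{B}_{m'} \oplus \mathcal{D}_{m'})$, the adjoint construction produces a canonical surjection from the top-left term onto $\mathcal{I}_{Z_{m,m'}}/\mathcal{I}_{Z_{m,m'}}^2$, after the evident identification of $\SHom(\pr_2^*\mathcal{B}_{m'}, \pr_1^*(f_m^*\mathcal{S}_{m'-m} \otimes \mathcal{A}_m))$ with $\pr_2^*\mathcal{B}_{m'}^\vee \otimes \pr_1^*\mathcal{A}_m \otimes \pr_1^*f_m^*\mathcal{S}_{m'-m}$ (and similarly for the $\psi$-component). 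The right vertical map is the direct sum of the canonical surjections onto $\Omega^1_{\mathbf{F}_m/S}$ and $\Omega^1_{\mathbf{F}_{m'}/S}$ afforded by Proposition \ref{flag-cotangent}, which express each cotangent sheaf as a quotient of the relevant $\SHom$ bundles.

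For the top horizontal map, each factor of the source pairs against the composition defining $\varphi_{m,m'}$ (respectively $\psi_{m,m'}$) in two complementary ways. A local element of $\pr_2^*\mathcal{B}_{m'}^\vee$ may be composed with the $\pr_2$-factor of the multiplication, producing a section of $\pr_2^*(\mathcal{B}_{m'}^\vee \otimes \mathcal{A}_{m'})$ after using the multiplication $\mathcal{S}_{m'-m} \otimes \mathcal{A}_m \to \mathcal{A}_{m'}$ on the source side, or dualized through the inclusion $\mathcal{B}_m \hookrightarrow \pr_1^*\mathcal{B}_m$ pulled forward to yield a section of $\pr_1^*(\mathcal{B}_m^\vee \otimes \mathcal{A}_m)$ after contracting with $\pr_1^*f_m^*\mathcal{S}_{m'-m}$ via evaluation. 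The analogous statement for the $\mathcal{D}, \mathcal{C}$ factors gives the remaining two components.

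Commutativity then follows from naturality of the differential applied to the factorization
\[
\varphi_{m,m'} \colon \pr_1^*(f_m^*\mathcal{S}_{m'-m} \otimes \mathcal{A}_m) \xrightarrow{\mathrm{id} \otimes \pr_1^*\alpha_m} \pr_1^*(f_m^*(\mathcal{S}_{m'-m} \otimes \mathcal{H}_m)) \xrightarrow{\mathrm{mult}} \pr_2^*f_{m'}^*\mathcal{H}_{m'} \xrightarrow{\pr_2^*\beta_{m'}} \pr_2^*\mathcal{B}_{m'}.
\]
The middle multiplication map is pulled back from $S$ and hence has zero differential in the relative directions. The differential of $\pr_1^*\alpha_m$ in the $\pr_1$ direction recovers, via Proposition \ref{flag-cotangent} and the discussion of \S\ref{conormal-sheaf-of-sequence}, the evaluation pairing that defines the $\SHom(\mathcal{B}_m, \mathcal{A}_m)$ summand, while the differential of $\pr_2^*\beta_{m'}$ in the $\pr_2$ direction recovers the $\SHom(\mathcal{B}_{m'}, \mathcal{A}_{m'})$ summand. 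Summing these contributions matches the top map on the $\varphi$-side; the same argument for $\psi_{m,m'}$ handles the $\mathcal{D}, \mathcal{C}$ side.

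The main obstacle is the bookkeeping: tracking how the single conormal/differential contributions on the bottom split across the two explicit summands on top, and checking that the two cotangent presentations for $\mathbf{F}_m$ and $\mathbf{F}_{m'}$ are compatible with the multiplication structure of $\mathcal{S}$. No new geometric input beyond Proposition \ref{flag-cotangent} and the explicit description of the differential in \S\ref{conormal-sheaf-of-sequence} is required; it reduces to a careful unwinding of functorial definitions.
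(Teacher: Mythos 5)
The paper states Lemma \ref{entries} without proof (the diagram is asserted, with only the remark that the top horizontal map is induced by the multiplications), so your task is to supply the argument the authors leave implicit. Your proposal identifies all four maps correctly: the left vertical is the adjoint surjection $\SHom(N|_Z,N'|_Z)\twoheadrightarrow \mathcal{I}_{Z_{m,m'}}/\mathcal{I}_{Z_{m,m'}}^2$ from \S\ref{cotan-van} (restricted to the diagonal summands, which is all that survives since $\varphi\oplus\psi$ has no off-diagonal components), the right vertical is the pullback of the cotangent presentations from Proposition \ref{flag-cotangent}, the bottom is the conormal--cotangent map, and the top is the bilinear pairing of the four displayed multiplications. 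The commutativity mechanism you give — factor $\varphi_{m,m'}$ through the $S$-linear multiplication, observe that the middle factor has zero relative differential, and then apply the product rule so that $d\varphi_{m,m'}$ splits into a $\pr_1$-contribution (the second fundamental form of $\mathcal{A}_m\hookrightarrow f_m^*\mathcal{H}_m$, which is the universal pairing of \S\ref{conormal-sheaf-of-sequence}) and a $\pr_2$-contribution (the differential of $\beta_{m'}$) — is exactly the right argument, and matches what the paper's subsequent identification of the entries $\Lambda^j_{ab}$ presupposes.

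One place to tighten: the phrase ``dualized through the inclusion $\mathcal{B}_m \hookrightarrow \pr_1^*\mathcal{B}_m$'' does not parse, since $\mathcal{B}_m$ lives on $\mathbf{F}_m$, not on the product. What you want to invoke is the third displayed homomorphism preceding Lemma \ref{entries}: on $Z_{m,m'}$ the composite $\pr_1^*(\mathcal{S}_{m'-m}\otimes\mathcal{A}_m)\to\pr_2^*\mathcal{H}_{m'}\to\pr_2^*\mathcal{B}_{m'}$ vanishes, so dualizing and using that the transpose annihilates the image of $\pr_1^*\mathcal{A}_m^\vee$ produces the factored map $\pr_1^*f_m^*\mathcal{S}_{m'-m}\otimes\pr_2^*\mathcal{B}_{m'}^\vee|_{Z_{m,m'}}\to\pr_1^*\mathcal{B}_m^\vee|_{Z_{m,m'}}$, which tensored with $\pr_1^*\mathcal{A}_m$ gives the first component of the top horizontal arrow.
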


Let 
\begin{align*}
&\lambda_{1,i} ' \colon \SHom( \mathcal{F}_{n_i},  \mathcal{S}_{n_i-n_0}\otimes  \mathcal{K}_{n_0})
\lra
\SHom( \mathcal{F}_{n_i},  \mathcal{K}_{n_i})\\
&\lambda_{1,i} '' \colon \SHom(\mathcal{F}_{n_i},  \mathcal{S}_{n_i-n_0}\otimes  \mathcal{K}_{n_0})
\lra
\SHom( \mathcal{F}_{n_0}, \mathcal{K}_{n_0})\\
&\lambda_{2,i} ' \colon \SHom(\mathcal{G}_{n_i},  \mathcal{S}_{n_i-n_0}\otimes \mathcal{J}_{n_0})
\lra
\SHom( \mathcal{G}_{n_i},  \mathcal{J}_{n_i})\\
&\lambda_{2,i} '' \colon \SHom( \mathcal{G}_{n_i},  \mathcal{S}_{n_i-n_0}\otimes  \mathcal{J}_{n_0})
\lra
\SHom( \mathcal{G}_{n_0},  \mathcal{J}_{n_0})
\end{align*}
be the natural morphisms induced by multiplication. Then we see that the entries of $\Lambda^j$ are 
\begin{align*}
&\Lambda_{0i}^j=-\lambda_{j,i}'' \text{ for } 1\leq i\leq l \text{ and } j=1,2,\\
&\Lambda_{ii} ^j=\lambda_{j,i}' \text{ for } 1\leq i\leq l \text{ and } j=1,2,\\
&\Lambda_{ab} ^j=0\text{ otherwise}.
\end{align*}
    

\subsection{Proof of Theorem \ref{cotan-nested}} 
We are now ready to prove Theorem \ref{cotan-nested}. Define
\begin{align*}
&\mathcal{K}' = \operatorname{ker}\left(\operatorname{pr}_2 ^* \mathcal{K}_{n_0} \otimes \pr_1^*\mathcal{L}^{-n_0}
\lra \mathcal{K}
\right)\\
&\mathcal{J}' = \operatorname{ker}\left(\operatorname{pr}_2 ^* \mathcal{J}_{n_0} \otimes \pr_1^*\mathcal{L}^{-n_0}
\lra \mathcal{J}
\right).
\end{align*}
Pick an integer $m_3\geq m_2$ (recall $m_2$ was chosen in the proof of Lemma \ref{vanishing}) such that for all $n\geq m_3$ we have, for $i > 0$,
\begin{align*}
& R^i \pr_{2,*} (\mathcal{K}'\otimes \pr_1^*\mathcal{L}^n) = 0,\\
& R^i \pr_{2,*} (\mathcal{J}'\otimes \pr_1^*\mathcal{L}^n) = 0,\\
& \operatorname{pr}_2 ^* \mathcal{K}_n ' \lra
\mathcal{K}' \otimes \pr_1^*\mathcal{L}^n \text{ is surjective and}\\
& \operatorname{pr}_2 ^* \mathcal{J}_n ' \lra
\mathcal{J}' \otimes \pr_1^*\mathcal{L}^n \text{ is surjective.}
\end{align*}
Consider the short exact sequences 
\begin{align*}
0\lra \mathcal{K}'\otimes \pr_1^*\mathcal{L}^n \lra
\operatorname{pr}_2 ^* \mathcal{K}_{n_0}
\otimes \pr_1^*\mathcal{L}^{n-n_0}\lra
\mathcal{K}\otimes \pr_1^*\mathcal{L}^n
\lra 0,\\
0\lra \mathcal{J}'\otimes\pr_1^*\mathcal{L}^n \lra
\operatorname{pr}_2 ^* \mathcal{J}_{n_0}
\otimes \pr_1^*\mathcal{L}^{n-n_0}\lra
\mathcal{J}\otimes \pr_1^*\mathcal{L}^n
\lra 0.
\end{align*}
Pushing forward along $\pr_2$, one gets exact sequences
\begin{align*}
0\lra \mathcal{K}' _n \lra
\mathcal{K}_{n_0}
\otimes \mathcal{S}_{n-n_0}\lra
\mathcal{K}_n 
\lra 0,\\
0\lra \mathcal{J}'_n \lra
\mathcal{J}_{n_0}
\otimes \mathcal{S}_{n-n_0}\lra
\mathcal{J}_n
\lra 0.
\end{align*}

\begin{corollary}\label{penultimate-corollary}
The cotangent sheaf $\Omega_{\operatorname{Quot}(\mathcal{H}, P_1, P_2)/S}^1$ is isomorphic to the cokernel of the map
\[
\bigoplus_{i=1} ^{\ell}
\SHom(\mathcal{F}_{n_i}, \mathcal{K}_{n_i}')\oplus
\SHom(\mathcal{G}_{n_i}, \mathcal{J}_{n_i}')
\lra
\Omega_{\mathbf{F}_{n_0}} ^1.
\]
\end{corollary}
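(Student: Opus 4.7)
The plan is to derive this corollary from the preceding one by an explicit simplification of the cokernel that appears there. The preceding corollary expresses $\Omega^1_{\operatorname{Quot}(\mathcal{H}, P_1, P_2)/S}$ as the cokernel of a map
\[
\Lambda \colon \bigoplus_{i=1}^\ell A_i \longrightarrow \bigoplus_{j=0}^\ell \Omega^1_{\mathbf{F}_{n_j}},
\]
where I abbreviate $A_i := \SHom(\mathcal{F}_{n_i}, \mathcal{K}_{n_0} \otimes \mathcal{S}_{n_i-n_0}) \oplus \SHom(\mathcal{G}_{n_i}, \mathcal{J}_{n_0} \otimes \mathcal{S}_{n_i-n_0})$ and set $M_j := \SHom(\mathcal{F}_{n_j}, \mathcal{K}_{n_j}) \oplus \SHom(\mathcal{G}_{n_j}, \mathcal{J}_{n_j})$, so that $\Omega^1_{\mathbf{F}_{n_j}}$ is a quotient of $M_j$ (by Proposition \ref{flag-cotangent}). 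Under the block decomposition of source and target, the $(i,i)$-entry of $\Lambda$ is $\lambda'_{1,i} \oplus \lambda'_{2,i}$ composed with the quotient $M_i \twoheadrightarrow \Omega^1_{\mathbf{F}_{n_i}}$, the $(0,i)$-entry is $(-\lambda''_{1,i}) \oplus (-\lambda''_{2,i})$ composed with the quotient $M_0 \twoheadrightarrow \Omega^1_{\mathbf{F}_{n_0}}$, and all other entries vanish.

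The first step is to exploit the short exact sequences
\[
0 \to \mathcal{K}'_{n_i} \to \mathcal{K}_{n_0} \otimes \mathcal{S}_{n_i-n_0} \to \mathcal{K}_{n_i} \to 0, \quad 0 \to \mathcal{J}'_{n_i} \to \mathcal{J}_{n_0} \otimes \mathcal{S}_{n_i-n_0} \to \mathcal{J}_{n_i} \to 0
\]
established just before the statement. Since $\mathcal{F}_{n_i}$ and $\mathcal{G}_{n_i}$ are locally free, applying $\SHom(\mathcal{F}_{n_i},-)$ and $\SHom(\mathcal{G}_{n_i},-)$ preserves exactness, so $\lambda'_{1,i} \oplus \lambda'_{2,i}$ is surjective onto $M_i$ with kernel $\SHom(\mathcal{F}_{n_i}, \mathcal{K}'_{n_i}) \oplus \SHom(\mathcal{G}_{n_i}, \mathcal{J}'_{n_i})$. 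In particular each diagonal block $\Lambda_{ii}$ is surjective onto $\Omega^1_{\mathbf{F}_{n_i}}$ for $i \geq 1$.

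The second step is a direct chase exploiting the block-triangular shape of $\Lambda$. Surjectivity of each $\Lambda_{ii}$ ($i \geq 1$) implies that every element $(\omega_0, \omega_1, \ldots, \omega_\ell)$ of the target is equivalent modulo $\operatorname{Image}(\Lambda)$ to some $(\omega'_0, 0, \ldots, 0)$, and two such representatives agree in $\operatorname{coker}(\Lambda)$ precisely when their difference in $\Omega^1_{\mathbf{F}_{n_0}}$ lies in the image of $\bigoplus_{i \geq 1}\ker(\Lambda_{ii})$ under the $(0, \bullet)$-block. Thus $\operatorname{coker}(\Lambda)$ identifies naturally with the cokernel of a map $\bigoplus_{i\geq 1}\ker(\Lambda_{ii}) \to \Omega^1_{\mathbf{F}_{n_0}}$.

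The main remaining task --- which I expect to be the technical heart of the argument --- is to identify this image inside $\Omega^1_{\mathbf{F}_{n_0}}$ with the image of the natural map from $\bigoplus_{i\geq 1}(\SHom(\mathcal{F}_{n_i}, \mathcal{K}'_{n_i}) \oplus \SHom(\mathcal{G}_{n_i}, \mathcal{J}'_{n_i}))$ induced by $-\lambda''_{1,i}$ and $-\lambda''_{2,i}$. The inclusion $\SHom(\mathcal{F}_{n_i}, \mathcal{K}'_{n_i}) \oplus \SHom(\mathcal{G}_{n_i}, \mathcal{J}'_{n_i}) \subseteq \ker(\Lambda_{ii})$ from the first step yields one inclusion of images immediately. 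For the reverse, one must show that any $a \in \ker(\Lambda_{ii})$ whose $\lambda'$-image lies in the image of $\SHom(\mathcal{G}_{n_i}, \mathcal{K}_{n_i}) \to M_i$ has its image under $-\lambda''$ sitting in the image of $\SHom(\mathcal{G}_{n_0}, \mathcal{K}_{n_0}) \to M_0$ modulo $(-\lambda'')(\ker(\lambda'_{1,i} \oplus \lambda'_{2,i}))$. This compatibility amounts to a direct verification that the multiplication maps $\varphi_{n_0,n_i}, \psi_{n_0,n_i}$ are functorial with respect to the natural map $\SHom(\mathcal{G}_{n_i}, \mathcal{K}_{n_i}) \to \SHom(\mathcal{G}_{n_0}, \mathcal{K}_{n_0})$ coming from the $\mathcal{S}$-module structure, together with the explicit description of the map $\SHom(\mathcal{G}_{n_j}, \mathcal{K}_{n_j}) \to M_j$ from Proposition \ref{flag-cotangent}, and is where the bookkeeping is most delicate.
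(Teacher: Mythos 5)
The paper gives no explicit proof of this corollary; it is stated directly after assembling the ingredients (the unlabelled corollary describing $\Omega^1_{\operatorname{Quot}(\mathcal{H},P_1,P_2)/S}$ as a cokernel, the block structure of $\Lambda^1\oplus\Lambda^2$, and the short exact sequences relating $\mathcal{K}'_n$, $\mathcal{K}_{n_0}\otimes\mathcal{S}_{n-n_0}$, $\mathcal{K}_n$). Your block-triangular reduction is precisely the argument those ingredients are set up to support, so in structure you are doing what the paper expects.

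Two points, though. First, you are candid that the compatibility step is "the technical heart" and you only sketch it; that step is genuine and cannot be waved away. The issue is exactly as you isolate it: after collapsing the blocks $i\ge 1$, the cokernel is $\Omega^1_{\mathbf{F}_{n_0}}$ modulo the image of $\bigoplus_i\ker(\Lambda_{ii})$, and you must show this image does not change when you shrink $\ker(\Lambda_{ii})$ (which is the full preimage of $\operatorname{im}(\SHom(\mathcal{G}_{n_i},\mathcal{K}_{n_i}))$ under $\lambda'$) down to $\ker(\lambda'_{1,i}\oplus\lambda'_{2,i})=\SHom(\mathcal{F}_{n_i},\mathcal{K}'_{n_i})\oplus\SHom(\mathcal{G}_{n_i},\mathcal{J}'_{n_i})$. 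Second, your description of what needs verifying is slightly off: there is no natural map $\SHom(\mathcal{G}_{n_i},\mathcal{K}_{n_i})\to\SHom(\mathcal{G}_{n_0},\mathcal{K}_{n_0})$ "coming from the $\mathcal{S}$-module structure" — both arrows out of $\SHom(\mathcal{G}_{n_i},\mathcal{K}_{n_0}\otimes\mathcal{S}_{n_i-n_0})$ (one to $\SHom(\mathcal{G}_{n_i},\mathcal{K}_{n_i})$ by multiplication into the target, the other to $\SHom(\mathcal{G}_{n_0},\mathcal{K}_{n_0})$ by adjunction on the source) point the wrong way to compose. What you actually need is this: the same short exact sequences (applied with $\SHom(\mathcal{G}_{n_i},-)$ rather than $\SHom(\mathcal{F}_{n_i},-)$) show that $\SHom(\mathcal{G}_{n_i},\mathcal{K}_{n_0}\otimes\mathcal{S}_{n_i-n_0})\to\SHom(\mathcal{G}_{n_i},\mathcal{K}_{n_i})$ is also surjective. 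Given $a\in\ker(\Lambda_{ii})$ with $\lambda'(a)=\phi_i(g)$ for some $g\in\SHom(\mathcal{G}_{n_i},\mathcal{K}_{n_i})$ (where $\phi_i$ is the map of Proposition \ref{flag-cotangent}), lift $g$ to $\tilde g\in\SHom(\mathcal{G}_{n_i},\mathcal{K}_{n_0}\otimes\mathcal{S}_{n_i-n_0})$ and feed it through the "$\phi$ for the $n_0$-level" map $\hat\phi(\tilde g):=(-\tilde g\circ q_i,\;(\iota_0\otimes\mathrm{id})\circ\tilde g)\in A_i$. Commutativity of the multiplication maps with the surjections $q_j:\mathcal{F}_{n_j}\to\mathcal{G}_{n_j}$ and inclusions $\iota_j:\mathcal{K}_{n_j}\hookrightarrow\mathcal{J}_{n_j}$ then gives $\lambda'(\hat\phi(\tilde g))=\phi_i(g)=\lambda'(a)$ and $\lambda''(\hat\phi(\tilde g))\in\operatorname{im}(\phi_0)=\ker(p_0)$, so $a-\hat\phi(\tilde g)\in\ker(\lambda')$ has the same image in $\Omega^1_{\mathbf{F}_{n_0}}$ as $a$. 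That closes the gap.
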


Consider now the exact sequences
\begin{align*}
\bigoplus_{i=1} ^{\ell} \operatorname{pr}_2 ^* \mathcal{K}_{n_i} ' \otimes \mathcal{L}^{-n_i}\lra
\operatorname{pr}_2 ^* \mathcal{K}_{n_0}\otimes \mathcal{L}^{-n_0}
\lra
\mathcal{K}\lra 0,\\
\bigoplus_{i=1} ^{\ell} \operatorname{pr}_2 ^* \mathcal{J}_{n_i} ' \otimes \mathcal{L}^{-n_i}\lra
\operatorname{pr}_2 ^* \mathcal{J}_{n_0}\otimes \mathcal{L}^{-n_0}
\lra
\mathcal{J}\lra 0.
\end{align*}
By applying $\SExt ^n _{\operatorname{pr}_2}(\mathcal{F}, -\otimes \omega)$
and
$\SExt ^n _{\operatorname{pr}_2}(\mathcal{G}, -\otimes \omega)$, respectively, we get the second row of the following commutative diagram
\begin{center}
\hspace*{-30pt}\begin{tikzcd}
\bigoplus_{i=1} ^{\ell} \SExt ^d _{\operatorname{pr}_2}(\mathcal{G}, \operatorname{pr}_1 ^* \mathcal{K}_{n_i}' (-n_i) \otimes \omega)\ar[r]\ar[d]&
\SExt ^d _{\operatorname{pr}_2}(\mathcal{G}, \operatorname{pr}_1 ^* \mathcal{K}_{n_0} (-n_0) \otimes \omega)\ar[r]\ar[d]&
\SExt ^d _{\operatorname{pr}_2}(\mathcal{G}, \mathcal{K}\otimes \omega)\ar[r]\ar[d]& 0\\
\bigoplus_{i=1}^{\ell}
\parenVectorstack{{
    \SExt ^d _{\operatorname{pr}_2}(\mathcal{F}, \operatorname{pr}_1 ^* \mathcal{K}_{n_i}' (-n_i) \otimes \omega)
    }
    {
        \oplus
    }
    {
        \SExt ^d _{\operatorname{pr}_2}(\mathcal{G}, \operatorname{pr}_1 ^* \mathcal{J}_{n_i}' (-n_i) \otimes \omega)
    }}\ar[r]&
\parenVectorstack{{
    \SExt ^d _{\operatorname{pr}_2}(\mathcal{F}, \operatorname{pr}_1 ^* \mathcal{K}_{n_0} (-n_0) \otimes \omega)
    }
    {
        \oplus
    }
    {
        \SExt ^d _{\operatorname{pr}_2}(\mathcal{G}, \operatorname{pr}_1 ^* \mathcal{J}_{n_0} (-n_0) \otimes \omega)
    }}\ar[r]&
\parenVectorstack{{
    \SExt ^d _{\operatorname{pr}_2}(\mathcal{F}, \mathcal{K} \otimes \omega)
    }
    {
        \oplus
    }
    {
        \SExt ^d _{\operatorname{pr}_2}(\mathcal{G}, \mathcal{J} \otimes \omega)
    }}\ar[r]&0.
\end{tikzcd}
\end{center}
By Grothendieck duality the above diagram may be rewritten as 
\begin{center}
\begin{tikzcd}
\bigoplus_{i=1}^{\ell} \SHom (\mathcal{G}_{n_i}, \mathcal{K}_{n_i}' )\ar[r]\ar[d]&
\SHom (\mathcal{G}_{n_0}, \mathcal{K}_{n_0})\ar[r]\ar[d]&
\SExt ^d _{\operatorname{pr}_2}(\mathcal{G}, \mathcal{K}\otimes \omega)\ar[r]\ar[d]& 0\\
\bigoplus_{i=1} ^{\ell}
\parenVectorstack{{
        \SHom (\mathcal{F}_{n_i}, \mathcal{K}_{n_i}')
    }
    {
        \oplus
    }
    {
        \SHom (\mathcal{G}_{n_i},  \mathcal{J}_{n_i}' )
    }}\ar[r]&
\parenVectorstack{{
        \SHom (\mathcal{F}_{n_0}, \mathcal{K}_{n_0})
    }
    {
        \oplus
    }
    {
        \SHom (\mathcal{G}_{n_0}, \mathcal{J}_{n_0})
    }}\ar[r]&
\parenVectorstack{{
        \SExt ^d _{\operatorname{pr}_2}(\mathcal{F}, \mathcal{K} \otimes \omega)
    }
    {
        \oplus
    }
    {
        \SExt ^d _{\operatorname{pr}_2}(\mathcal{G}, \mathcal{J} \otimes \omega)
    }}\ar[r]&0.
\end{tikzcd}
\end{center}
Now we can witness the desired result because the cokernel of the middle vertical map receives a (composed) morphism from the lower-left term in the diagram, and the cokernel of that map is precisely $\Omega_{\textnormal{Quot}(\mathcal{H},P_1,P_2)/S}^1$ by Corollary \ref{penultimate-corollary}. So by commutativity of the diagram, the right vertical map provides the desired resolution of $\Omega_{\textnormal{Quot}(\mathcal{H},P_1,P_2)/S}^1$.

\subsection{The differential map of the universal family}
We conclude by analyzing the natural pairing on $\operatorname{Quot}(\mathcal{H}, P)$. In this case, the above diagram simplifies to a sequence
\[
\bigoplus_{i=1} ^{\ell} \SHom(\mathcal{F}_{n_i}, \mathcal{K}_{n_i} )\lra
\SHom (\mathcal{F}_{n_0}, \mathcal{K}_{n_0})\lra
\SExt ^d _{\operatorname{pr}_2}(\mathcal{F}, \mathcal{K}\otimes \omega)\lra 0.
\]
The map
\[
\Omega_{G(\mathcal{H}_{n_0}, P(n_0))/S} ^1\vert_{\textnormal{Quot}(\mathcal{H},P)} \lra \Omega_{\operatorname{Quot}(\mathcal{H}, P)} ^1
\lra 0
\]
is simply the differential (pullback) of the inclusion map $\textnormal{Quot}(\mathcal{H},P) \hookrightarrow G(\mathcal{H}_{n_0},P(n_0))$. We also have a diagram
\begin{center}
\begin{tikzcd}
\operatorname{pr}_{2,*} \SHom (\mathcal{K}, \mathcal{F})\ar[r]\ar[d]& 
\SHom(\mathcal{K}_{n_0}, \mathcal{F}_{n_0})
\ar[d]\\
T_{\operatorname{Quot}(\mathcal{H}, P)/S} 
\ar[r]
 &T_{G(\mathcal{H}_{n_0}, P(n_0))/S},
\end{tikzcd}
\end{center}
where the vertical maps are isomorphisms, the bottom map is the differential (pushforward), and the top map is induced by twisting. Therefore, the natural pairing
\[
\mathcal{K}\lra
\SHom\left(\operatorname{pr}_2 ^* T_{\operatorname{Quot}(\mathcal{H},P)/S} , \mathcal{F}\right).
\]
is induced by the pairing on $G(\mathcal{H}_{n_0}, P(n_0))$, which by section \ref{conormal-sheaf-of-sequence} is finally induced by the differential map of the universal family.

\vspace{30pt}

\vspace*{-20pt}

\glssetwidest{*********}

\setglossarystyle{alttree}

\printglossary

\end{document}